\tikzset{overcross/.style={double, line width=1.5, white, double=#1, double distance=\knotlinewidth},
    overcross/.default={black},
    knot/.style={line width=\knotlinewidth, baseline=-.5ex}}
\newcommand{\knotlinewidth}{.7pt}
\let\emptyset\varnothing
\newtheorem{theorem}{Theorem}[section]
\newtheorem{corollary}[theorem]{Corollary}
\newtheorem{proposition}[theorem]{Proposition}
\newtheorem{lemma}[theorem]{Lemma}
\theoremstyle{definition}
\newtheorem{definition}[theorem]{Definition}
\newtheorem{example}[theorem]{Example}
\theoremstyle{remark}
\newtheorem*{remark}{Remark}
\newcommand{\abs}[1]{\left| #1 \right|}
\newcommand{\op}{\mathrm{op}}
\newcommand{\xRightarrow}[2][]{\ext@arrow 0359\Rightarrowfill@{#1}{#2}}
\def\l@subsection{\@tocline{2}{0pt}{2pc}{6pc}{}} 
\tikzset{
    partial ellipse/.style args={#1:#2:#3}{
        insert path={+ (#1:#3) arc (#1:#2:#3)}
    }
}
\DeclareRobustCommand{\nobrackbinom}{\genfrac{}{}{0pt}{}}
\newcommand{\nocontentsline}[3]{}
\let\origcontentsline\addcontentsline
\newcommand\stoptoc{\let\addcontentsline\nocontentsline}
\newcommand\resumetoc{\let\addcontentsline\origcontentsline}
\title{Hochschild (co)homology for odd Khovanov arc algebras}
\author{Dean Spyropoulos}
\date{}
\begin{document}

\begin{abstract}
We extend Hochschild homology and cohomology to quasi-associative algebras, which were defined initially by Albuquerque and Majid and generalized by Naisse and Putyra via grading categories. As an application, we use our construction to give an alternative proof of up-to-sign functoriality for odd Khovanov homology, which was recently proven by Migdail and Wehrli. Our proof generalizes an argument of Khovanov: in doing so, we also give the first proof that the tangle theory of Naisse and Putyra is functorial with respect to tangle cobordisms, up to unit.
\end{abstract}

\maketitle

\tableofcontents

\section{Introduction}

Hochschild homology is a homological invariant of associative algebras. Initiated by an observation of Przytycki \cite{MR2657644}, Hochschild homology has been used frequently as a tool to study Khovanov's link homology theory and its relatives (see, for example, \cite{MR2339573, rozansky2010categorification, MR3171092, MR4332675, MR4611197, stoffregen2024joneswenzlprojectorskhovanovhomotopy}). Speaking informally, we will show that Khovanov-like theories ``give back'': we present a way in which variants of Khovanov homology motivate a generalization of Hochschild homology to a particular class of non-associative algebras.

\emph{Khovanov homology} is a homological link invariant which assembles to a functor
\[
\mathbf{K}: \left\{ \nobrackbinom{\text{Links}~L\subset \mathbb{R}^3}{\text{Cobordisms}~ L_1 \xrightarrow{\Sigma} L_2~ \text{in}~ \mathbb{R}^3 \times I \big/\text{isotopy}} \right\}
\rightarrow
\left\{
\nobrackbinom{\text{Bigraded chain complexes}~\mathbf{K}(L)}{\text{Chain maps}~ \mathbf{K}(L_1) \xrightarrow{\mathbf{K}(\Sigma)} \mathbf{K}(L_2)\big/ \text{homotopy}}
\right\}.
\]
The foundations of this theory are thanks to its namesake: in a trio of papers \cite{10.1215/S0012-7094-00-10131-7, MR1928174, MR2171235}, Khovanov described the \emph{Khovanov complex} of a link $\mathbf{K}(L)$, extended his work to tangles, and (via the tangle theory) associated to a cobordism of tangles a chain map $\mathbf{K}(T_1) \to \mathbf{K}(T_2)$ which was well-defined up to sign. Via direct computation, Jacobsson \cite{MR2113903} proved that the sign ambiguity was necessary (see also \cite{MR2174270}), launching a search for a fully functorial model for Khovanov homology. Since then, a number of researchers have independently provided several such models \cite{MR2443094, MR2496052, MR2647055, MR4376719, MR4598808}. 

In this paper, we study a variant called \emph{odd Khovanov homology}. Odd Khovanov homology was developed by Ozsv\'ath, Rasmussen, and Szab\'o \cite{MR3071132} in order to provide a Khovnaov-like theory which was more proximate to various Floer theories (see, for example, Section 1.1 of \cite{ozsvath2003heegaard}), especially in integral coefficients. Indeed, it is now known that odd Khovanov homology is the $E_2$-page of spectral sequences in $\mathbb{Z}$-coefficients converging to plane Floer homology \cite{daemi2015abelian} and framed instanton Floer homology \cite{MR3394316}. To the author's knowledge, Conjecture 1.9 from \cite{MR3071132} (\textit{i.e.}, that there is a spectral sequence from reduced odd Khovanov homology to the Heegaard-Floer homology of the branched double cover) is still open.

Odd Khovanov homology resembles the original (frequently called ``even'' for differentiation) theory in some ways. Despite differing in general (see, for example, \cite{MR2777025}), they agree over $\mathbb{Z}/ 2\mathbb{Z}$-coefficients and their \emph{reduced} theories agree over $\mathbb{Z}$-coefficients for non-split alternating links. In contrast, many ostensibly foundational results have proven elusive for the odd theory. For example, Migdail and Wehrli recently gave the first proof that odd Khovanov homology is functorial up to a plus or minus sign \cite{migdail2024functoriality}. Impressively, their argument is ``short'' in the sense that they do not perform a brute-force computation (which entails over thirty separate checks) yet they do not appeal to a tangle theory. One reason for this may be that, until recently, a tangle theory for odd Khovanov homology was unknown. Now there are two, likely equivalent, theories due to Naisse and Putyra \cite{naisse2020odd} and Schelstraete and Vaz \cite{MR4190457, schelstraete2023odd}.

The motivation for this paper is to provide a second proof that odd Khovanov homology is functorial up to sign by updating Khovanov's original argument \cite{MR2171235} to the Naisse-Putyra tangle theory. There are a few reasons to desire such a result. First, it proves that the Naisse-Putyra odd tangle theory is functorial up to sign, generalizing the Migdail-Wehrli result to tangle cobordisms. Moreover, the Naisse-Putyra tangle theory is actually stated for \emph{unified Khovanov homology}, a variant with coefficients in the ring of truncated polynomials
\[
R := \mathbb{Z}[X, Y, Z^{\pm 1}] \big/ (X^2 = Y^2 = 1).
\]
Unified Khovanov homology has the extremely useful quality of bridging the gap between the odd and even theories, in the sense that the even (resp. odd) Khovanov complex of a link can be obtained from the unified one by specializing coefficients $X, Y, Z \mapsto 1$ (resp. $X, Z \mapsto 1$, $Y \mapsto -1$). Thus, our strategy will be to show a stronger statement: that the Naisse-Putyra unified tangle theory is functorial with respect to tangle cobordisms up to unit of $R$.

There is an essential problem which makes our line of argument interesting, boiling down to the same reasons which made developing an odd tangle theory arduous. The central objects of Khovanov's tangle theory \cite{MR1928174} are his \emph{arc algebras} $\{H^n\}_{n\ge 0}$; for example, flat $(m, n)$-tangles $t$ are associated to $(H^m, H^n)$-bimodules $\mathcal{F}(t)$, and tangles with crossings $T$ are associated to complexes of such bimodules $\mathcal{F}(T)$. However, Naisse and Vaz  \cite{naisse2017odd} showed that the odd (and the unified) analogs of these algebras are problematic: they are non-associative, non-unital, and their product structure fails to preserve the usual grading structure.

Failure of associativity poses a problem for our argument. We would like to argue, like Khovanov, that if $M$ is an invertible complex (see Subsection \ref{ss:ccenter}) of bimodules over an algebra $A$, then there are ring isomorphisms
\[
\mathrm{End}_{\mathcal{K}(A)}(M) \cong 
\mathrm{End}_{\mathrm{Bim}(A)}(A) \cong \mathsf{HH}^0(A)
\]
for $\mathsf{HH}^\bullet(A)$ the \emph{Hochschild cohomology} of $A$ (note that $\mathsf{HH}^0(A)$ is more frequently called the \emph{(derived) center} of $A$). This does not make sense if $A$ is non-associative---in particular, Hochschild homology and cohomology are invariants of associative algebras, but are not defined for non-associative algebras.

The key observation, first made by Putyra and Shumakovitch in unpublished work and then by Naisse and Putyra in \cite{naisse2020odd}, is that the odd arc algebras (while non-associative) are \emph{quasi-associative}, in a sense generalizing the same definition of Albuquerque and Majid \cite{albuquerque1998quasialgebra}. See Section \ref{s:algebra} for a complete definition: we call an algebra $A$ quasi-associative if there exists a category $\mathcal{C}$ along with a choice of 3-cocycle $\alpha$ on the nerve $N(\mathcal{C})$ such that $A$ admits a grading by the morphisms of $\mathcal{C}$ and for each homogeneous $x, y, z\in A$,
\[
\mu_A(\mu_A(x, y), z) = \alpha(\abs{x}, \abs{y}, \abs{z}) \mu_A(x, \mu_A(y,z)). 
\]
We call the pair $(\mathcal{C}, \alpha)$ a \emph{grading category}. In most situations, we want to consider a quasi-associative algebra with respect to a particular grading category, and thus call it a $(\mathcal{C}, \alpha)$-graded algebra. Outside of the introduction, we almost always say ``$\mathcal{C}$-graded,'' with the assumption that the 3-cocycle $\alpha$ remains fixed unless stated otherwise.

With our motivation from link homology, a large portion of this paper is devoted to proving that there exists a notion of Hochschild homology and cohomology for quasi-associative algebras, extending the classical theory. The Hochschild homology in our setting requires an additional choice of a particular function denoted throughout this paper by $\varepsilon$---we omit a description for now.

\begin{theorem}[Hochschild homology for $(\mathcal{C}, \alpha)$-graded algebras]
\label{thm1}
Suppose $A$ is a $(\mathcal{C}, \alpha)$-graded algebra, and that $M$ is a $(\mathcal{C}, \alpha)$-graded $(A, A)$-bimodule (see Definition \ref{def:bims}). Furthermore, assume that $(\mathcal{C}, \alpha)$ admits a function $\varepsilon$ as in Definition \ref{def:looper}. Then, fixing the choice of the triple $(\mathcal{C}, \alpha, \varepsilon)$, there is a \emph{$(\mathcal{C}, \alpha,\varepsilon)$-Hochschild homology} of $A$ with coefficients in $M$, denoted $\mathsf{HH}^{(\mathcal{C}, \alpha, \varepsilon)}_\bullet(A, M)$, extending classical Hochschild homology. That is, if $A$ is an associative, $G$-graded algebra for some group $G$, then 
\[
\mathsf{HH}^{(BG, 1, 1)}_\bullet(A, M) \cong \mathsf{HH}_\bullet(A, M).
\]
\end{theorem}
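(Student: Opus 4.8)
The plan is to prove the theorem in two stages: first construct an explicit chain complex computing $\mathsf{HH}^{(\mathcal{C},\alpha,\varepsilon)}_\bullet(A,M)$ and verify that its differential squares to zero, and then check that this complex degenerates to the classical Hochschild (bar) complex under the stated specialization.

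For the construction I would mimic the classical complex $C_n(A,M)=M\otimes A^{\otimes n}$, restricting to the subcomplex spanned by homogeneous simple tensors $m\otimes a_1\otimes\cdots\otimes a_n$ whose degrees $|m|,|a_1|,\ldots,|a_n|$ form a composable string in $\mathcal{C}$, so that all the products below are defined and the total degree is a loop in $\mathcal{C}$. I would then define face maps $d_0,\ldots,d_n$: for $0<i<n$, the map $d_i$ multiplies the adjacent pair $a_ia_{i+1}$, but scaled by a product of values of $\alpha$ read off from the string $(|m|,|a_1|,\ldots,|a_n|)$ that corrects for the parenthesization change this contraction induces; $d_0$ sends $m\mapsto ma_1$ (again with an $\alpha$-scalar); and the cyclic face $d_n$ sends $m\mapsto a_nm$. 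It is this last face — re-entering the bimodule from the left after wrapping around the cyclic end — that cannot be repaired by $\alpha$ alone, and this is precisely where the \emph{looper} function $\varepsilon$ of Definition~\ref{def:looper} is used: it supplies the scalar making the wrap-around consistent along the loop in $\mathcal{C}$ carried by the tensor. The differential is $d=\sum_{i=0}^{n}(-1)^i d_i$, and $\mathsf{HH}^{(\mathcal{C},\alpha,\varepsilon)}_\bullet(A,M)$ is its homology. (More conceptually, one could instead build a twisted bar resolution of $A$ by projective $(\mathcal{C},\alpha)$-graded bimodules and form the trace $M\otimes_{A^e}(-)$ internal to the monoidal category of graded modules with associator $\alpha$, with $\varepsilon$ entering exactly when this trace is defined; I would use whichever framing makes the next step cleanest.)

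The heart of the argument is $d^2=0$, which, exactly as classically, follows once one knows the face maps satisfy the simplicial relations $d_id_j=d_{j-1}d_i$ for $i<j$ on the nose — the terms of $d^2$ then cancel in pairs. I would verify these relation by relation. When the two contractions are disjoint and avoid the cyclic end, the relation holds because the multiplications commute and the accumulated $\alpha$-scalars match. When they are adjacent and interior, $d_id_{i+1}$ compares the bracketings $(a_ia_{i+1})a_{i+2}$ and $a_i(a_{i+1}a_{i+2})$, so the scalars differ by one value of $\alpha$, and the residual discrepancy cancels by the $3$-cocycle (pentagon) relation for $\alpha$ on $N(\mathcal{C})$. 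The relations involving $d_n$ together with $d_0$, $d_1$ or $d_{n-1}$ are where the defining condition on $\varepsilon$ is consumed: one checks that that condition is exactly what is needed to reconcile the $\alpha$-scalars produced when a contraction happens around the cyclic end. Since all corrections are scalars, the signs $(-1)^i$ behave exactly as in the classical bar complex. I expect this bookkeeping — pinning down the precise $\alpha$-products attached to each face and the exact coherence condition to demand of $\varepsilon$ — to be the main obstacle; it is also the step that genuinely departs from the associative theory and explains why $\varepsilon$ must be introduced at all.

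For the extension statement, take $\mathcal{C}=BG$, the one-object groupoid with $\mathrm{End}(\ast)=G$, together with $\alpha\equiv1$ and $\varepsilon\equiv1$. A grading of $A$ by morphisms of $BG$ is exactly a $G$-grading, every string of degrees is composable, the quasi-associativity relation with $\alpha\equiv1$ is ordinary associativity, and the constant function $1$ trivially meets the requirements of Definition~\ref{def:looper} in this case. With these choices every $\alpha$- and $\varepsilon$-scalar in the faces $d_i$ equals $1$, so $C^{(BG,1,1)}_\bullet(A,M)$ is literally $M\otimes A^{\otimes\bullet}$ with its standard Hochschild differential, and the isomorphism $\mathsf{HH}^{(BG,1,1)}_\bullet(A,M)\cong\mathsf{HH}_\bullet(A,M)$ is immediate. (As a consistency check, when $\mathcal{C}=BG$ for a general group $G$ the construction recovers the expected $\alpha$-twisted Hochschild homology of the Albuquerque--Majid quasialgebras, of which the case above is the specialization $\alpha=1$.)
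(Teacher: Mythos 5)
Your proposal is correct in substance but follows a genuinely different route from the paper's. You build the explicit cyclic complex $C_n(A,M)=M\otimes A^{\otimes n}$ directly, decorate each face $d_i$ with $\alpha$-correction scalars, let $\varepsilon$ repair the wrap-around face $d_n$, and verify $d^2=0$ face-pair by face-pair (interior pairs via the $3$-cocycle relation, cyclic pairs via the coherence condition on $\varepsilon$). The paper instead proves the theorem structurally: it first shows the bar construction $\mathcal{B}(A)$ is a $\mathcal{C}$-graded DG-bimodule, then defines $\mathsf{HC}_\bullet(A,M)=M\otimes_{A\otimes_\Bbbk A^{\op}}\mathcal{B}(A)$, where the entire difficulty is concentrated in showing that the coequalizer relation involves a well-defined scalar $\Theta$; this is done by showing that the ``space of topographies'' $\mathcal{T}(4)$ on a $4$-partitioned loop is simply connected, whose $2$-cells are exactly the cocycle relation, the $\alpha$-$\varepsilon$ coherence hexagon of Definition \ref{def:looper}, and distant commutativity. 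The explicit complex you describe is essentially the one the paper records afterwards in Subsection \ref{ss:hochcomplex} (with the same $d_0$, interior $\alpha$-scaled faces, and an $\Omega$-corrected $d_n$), but there the chain-level verification is left to the reader, precisely because the abstract route already guarantees well-definedness. What each approach buys: yours is more elementary and self-contained, but the decisive bookkeeping — pinning down the exact $\alpha$-products on each face and checking all simplicial identities involving $d_n$ (including the pair $d_0d_n=d_{n-1}d_0$, which also consumes the bimodule axiom (B.III) and identities like Lemma \ref{lem:eploops}) — is only sketched, and you should be aware it uses distant commutativity as well as the hexagon; the paper's route packages all of these coherences into one topological statement and, as a bonus, makes the $\mathbb{Z}\times\mathrm{Tr}(\mathcal{C})$-grading and the later shadow structure on $\mathsf{HH}_0$ fall out of the same $\otimes_{A^e}$ formalism. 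Your specialization argument for $(BG,1,1)$ is the same as the paper's and is fine, since $\mathrm{Tr}(BG)=G$ and all degree strings are loops, so every correction scalar is $1$ and the complex is literally the classical one.
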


We frequently drop the superscript, assuming that the choice of $(\mathcal{C}, \alpha, \varepsilon)$ is understood. A key observation is that the $\mathcal{C}$-grading does not survive when taking Hochschild homology; it is replaced with a grading by the \emph{universal trace} (see, for example, \cite{MR3910068}) of $\mathcal{C}$,
\[
\mathrm{Tr}(\mathcal{C}) = \coprod_{X\in \mathrm{Ob}(\mathcal{C})} \mathrm{End}_\mathcal{C}(X) \big / g\circ f \sim f \circ g.
\]
Note that this does not contradict classical Hoschschild homology since $\mathrm{Tr}(BG) = G$.

While the need for an additional witness $\varepsilon$ is initially perplexing, it has a satisfying resolution. Indeed, it is classically the case that $\{\mathsf{HH}_0(A, -)\}_{A}$ is a \emph{shadow} on the bicategory of bimodules \cite{MR3095324}. We will show that the choice of $\varepsilon$ actually fixes a choice of the natural isomorphism
\[
\theta_{M, N}: \mathsf{HH}_0(A, M \otimes_B N) \to \mathsf{HH}_0(B, N \otimes_A M)
\]
turning $\{\mathsf{HH}^{(\mathcal{C}, \alpha, \varepsilon)}_0(A, -)\}_{A}$ into a shadow on the bicategory of $(\mathcal{C}, \alpha)$-graded bimodules, with target category the category of modules graded by the trace of $\mathcal{C}$. 

In \cite{naisse2020odd}, Naisse and Putyra showed that each unified arc algebra $H^n$ admits a quasi-associative structure by a grading category denoted by $(\mathcal{G}, \alpha)$. In this paper, we construct a function $\varepsilon$ compatible with their grading category; consequently, we can study $\mathsf{HH}_\bullet^{(\mathcal{G}, \alpha, \varepsilon)} (H^n)$ in future work. The story is a little more complicated for $\mathsf{HH}_\bullet^{(\mathcal{G}, \alpha, \varepsilon)} (H^n, \mathcal{F}(T))$ when $T$ is an $(2n, 2n)$-tangle.

A curious anomaly presents itself once we study the dual object, Hochschild cohomology. On one hand, the Hochschild cohomology for $(\mathcal{C}, \alpha)$-graded algebras \emph{does not} require an additional choice, unlike the Hochschild homology (which required a choice of $\varepsilon$). On the other hand, the bicategory of $(\mathcal{C}, \alpha)$-graded bimodules is \emph{not} closed in the usual sense, thus Hochschild cohomology is not realizable as a coshadow (see \cite{MR4768952}) in our setting. Nevertheless, this theory does extend the classical one.

\begin{theorem}[Hochschild cohomology for $(\mathcal{C}, \alpha)$-graded algebras]
\label{thm2}
Suppose $A$ is a $(\mathcal{C}, \alpha)$-graded algebra, and that $M$ is a $(\mathcal{C}, \alpha)$-graded $(A, A)$-bimodule. Then there is a \emph{$(\mathcal{C}, \alpha)$-Hochschild cohomology} of $A$ with coefficients in $M$, denoted $\mathsf{HH}_{(\mathcal{C}, \alpha)}^\bullet(A, M)$, extending classical Hochschild cohomology. That is, if $A$ is an associative, $G$-graded algebra for some group $G$, then 
\[
\mathsf{HH}_{(BG, 1)}^\bullet(A, M) \cong \mathsf{HH}^\bullet(A, M).
\]
\end{theorem}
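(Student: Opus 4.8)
The plan is to mirror, on the dual side, the construction behind Theorem~\ref{thm1}, while observing why no analogue of the function $\varepsilon$ intervenes. First I would assemble the \emph{$\alpha$-twisted two-sided bar resolution} $\mathrm{Bar}_\bullet(A)$: in homological degree $n$ it is the tensor power $A^{\otimes(n+2)}$ (formed over the subalgebra of idempotents when $A$ is merely idempotented, as for the arc algebras), graded by composable $(n+2)$-tuples of morphisms of $\mathcal{C}$, with the outer left and right $A$-actions on the first and last factors each rescaled by the monomial in values of $\alpha$ on the relevant degrees needed to make $\mathrm{Bar}_n(A)$ a $(\mathcal{C},\alpha)$-graded $(A,A)$-bimodule in the sense of Definition~\ref{def:bims}. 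The differential $\partial$ is the classical alternating sum of ``multiply the $i$th and $(i+1)$st factors'' maps, each face rescaled by the $\alpha$-monomial on the degrees of its arguments that is forced by the requirement that $\partial$ be a morphism of graded bimodules; the $3$-cocycle condition on $\alpha$ is precisely what guarantees such a consistent choice exists and yields $\partial^2 = 0$. The augmentation $\mathrm{Bar}_0(A)\to A$ is multiplication, and $\mathrm{Bar}_\bullet(A)\to A$ is a resolution by the same argument as in Theorem~\ref{thm1}: the usual contracting homotopy, which inserts an idempotent of $A$ in the last slot and performs no reassociation, is untouched by the twisting, so acyclicity carries over verbatim from the classical case.

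With the resolution in hand I would \emph{define}
\[
\mathsf{HH}^\bullet_{(\mathcal{C},\alpha)}(A,M)\;:=\;H^\bullet\!\left(\mathrm{Hom}_{(A,A)}\big(\mathrm{Bar}_\bullet(A),\,M\big)\right),
\]
where $\mathrm{Hom}_{(A,A)}$ denotes morphisms of $(\mathcal{C},\alpha)$-graded bimodules, and then unwind $\mathrm{Hom}_{(A,A)}\big(A^{\otimes(n+2)},M\big)$ into spaces of degree-preserving maps $A^{\otimes n}\to M$ to obtain an explicit twisted cochain complex $C^\bullet_{(\mathcal{C},\alpha)}(A,M)$: its coboundary is the ordinary Hochschild one, with each of its $n+2$ terms re-weighted by the appropriate $\alpha$-monomial in the degrees of the arguments. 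The asymmetry with Theorem~\ref{thm1} is transparent here. The two ``outer'' terms $a_1\cdot f(a_2,\dots,a_{n+1})$ and $f(a_1,\dots,a_n)\cdot a_{n+1}$ keep the arguments in their original linear order, and the ``inner'' terms only reassociate adjacent products, so only associator values of $\alpha$ appear and there is no cyclic ``wrap-around'' face --- whereas in the homological theory that wrap-around face (the last letter acting around the cycle) was exactly what forced the choice of $\varepsilon$. Thus no extra datum is required, and as in homology the $\mathcal{C}$-grading descends only to a grading by $\mathrm{Tr}(\mathcal{C})$.

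It then remains to verify the extension claim. Setting $(\mathcal{C},\alpha)=(BG,1)$ makes every value of $\alpha$ equal to $1$, so all re-weighting monomials are trivial: $\mathrm{Bar}_\bullet(A)$ collapses to the ordinary $G$-graded two-sided bar resolution of $A$, and $C^\bullet_{(BG,1)}(A,M)$ collapses to the ordinary Hochschild cochain complex of the $G$-graded algebra $A$ with coefficients in $M$. One then only needs the standard fact that this resolution computes classical $\mathsf{HH}^\bullet(A,M)$ --- the sole point requiring a remark being the identification of the graded $\mathrm{Hom}$-groups with the ungraded ones, which is automatic under the (local) finiteness that holds for the arc algebras --- which gives $\mathsf{HH}^\bullet_{(BG,1)}(A,M)\cong\mathsf{HH}^\bullet(A,M)$.

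The main obstacle, as in the homological case, is the combinatorial bookkeeping with $\alpha$: pinning down the precise $\alpha$-monomial attached to each face of $\mathrm{Bar}_\bullet(A)$ so that, simultaneously, each term is a genuine $(\mathcal{C},\alpha)$-graded bimodule, $\partial$ is a morphism of such, and $\partial^2=0$, with all three reducing to iterated applications of the $3$-cocycle identity for $\alpha$ (the pentagon relation for the associator $\alpha$). A secondary subtlety, flagged in the discussion following the statement, is that the bicategory of $(\mathcal{C},\alpha)$-graded bimodules is not closed, so $\mathsf{HH}^\bullet$ cannot be packaged as an internal $\mathrm{Ext}$ or as a coshadow; consequently, independence of the chosen resolution and functoriality of $\mathsf{HH}^\bullet(A,-)$ (and, appropriately, in $A$) must be proved directly by a comparison-theorem argument adapted to the twisted setting rather than inherited from a general formalism.
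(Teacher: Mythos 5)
Your route is essentially the paper's: the $\alpha$-twisted bar object $\mathcal{B}(A)$ with its $\Phi$/$\Psi$-rescaled outer actions is already a $(\mathcal{C},\alpha)$-graded DG-bimodule, the cohomology is defined as the cohomology of $\mathrm{Hom}_{\mathrm{Bim}(A,A)^{\mathcal{C}}}(A^{\otimes n+2},M)$ with differential $-\circ\partial$, the rewriting of these groups as twisted maps $A^{\otimes n}\to M$ is exactly Theorem~\ref{thm:cohored} (your ``main obstacle'' of pinning down the $\alpha$-monomials, handled there by the explicit $\psi_n,\varphi_n$ and the cocycle/pentagon identities), and at $(BG,1)$ all correction factors are $1$, so the complex collapses to the classical Hochschild cochain complex and the theorem follows essentially by definition.

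Two corrections to side claims. First, your assertion that ``as in homology the $\mathcal{C}$-grading descends only to a grading by $\mathrm{Tr}(\mathcal{C})$'' is wrong: the trace grading is a homology phenomenon, forced by the wrap-around identification in $-\otimes_{A\otimes_\Bbbk A^{\op}}-$; on the cohomology side the surviving extra grading is by the monoid $\widetilde{\mathcal{I}}$ of $\mathcal{C}$-grading shifts (Subsection~\ref{ss:nonhomoHH}), with the grading-preserving theory being the $\mathrm{Id}$-component. Second, acyclicity of $\mathcal{B}(A)$ is not needed for the statement as posed, since the cohomology is defined directly from the complex and the extension claim only requires the specialization at $(BG,1)$; moreover, your remark that the contracting homotopy is ``untouched by the twisting'' is not literally true --- inserting units produces unitor values $\mathcal{L},\mathcal{R}$ and associator factors that must be cancelled via identities as in Lemma~\ref{lem:assocomps}, so if you do want $\mathcal{B}(A)\to A$ to be a resolution (e.g.\ for the derived-Hom description of Subsection~\ref{ss:nonhomoHH}), that verification requires the same kind of $\alpha$-bookkeeping rather than a verbatim transfer from the classical case.
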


Again, we will drop the subscript often. The $\mathcal{C}$-grading does not survive when taking cohomology, but is replaced with a grading by a \emph{monoid of grading shifts} for $\mathcal{C}$. Again, this generalizes the classical scenario, though it takes a bit more background (summarized in Subsection \ref{ss:nonhomoHH}) to see why. Basically, when $\mathcal{C} = BG$, the monoid of grading shifts can be taken to be the center of $G$, so that the Hochschild cohomology has the typical $\mathbb{Z} \times Z(G)$ graded structure.

We pay particular attention to the ``grading preserving'' parts of the Hochschild cohomology of a $(\mathcal{C}, \alpha)$-graded algebra and show that in homological degree zero, this Hochschild cohomology is a generalization of the center:
\[
Z^\mathcal{C}(A) = \{z\in A : \mu(a, z) = \zeta(\abs{a}) \mu(z,a)\}
\]
where $\zeta(\abs{\alpha})$ is a product of values related to the unitors of the category of $(\mathcal{C}, \alpha)$-graded modules. It is straightforward to show that this notion of center satisfies our uses: if $M$ is an invertible complex of $(\mathcal{C}, \alpha)$-graded $(A, A)$-bimodules, we prove that
\[
\mathrm{End}_{\mathcal{K}^\mathcal{C}(A)} (M) \cong \mathrm{End}_{\mathrm{Bim}(A,A)^\mathcal{C}}(A) \cong Z^\mathcal{C}(A).
\]
Specializing to the case when $\mathcal{C} = \mathcal{G}$ and $A = H^n$, we prove that
\[
Z^\mathcal{G} (H^n) \cong R
\]
for each $n\ge 1$. As in \cite{MR2171235}, this observation is the first of a sequence of observations concluding in a proof of the fact that the Naisse-Putyra tangle invariant is functorial with respect to tangle cobordisms up to units of $R$.

\begin{theorem}[Naisse-Putyra tangle invariant is functorial]
\label{thm3}
Let $\mathbf{Tang}$ denote the chronological 2-tangle 2-category (see Section \ref{s:utu_fun}), and $\check{\mathbb{K}}$ denote the 2-category of complexes of $\mathcal{G}$-graded $(H^m, H^n)$-bimodules, whose 2-morphisms are homotopy equivalences up to multiplication by a unit of $R$ (described fully in Subsection \ref{ss:NPinvariant}). The Naisse-Putyra tangle invariant extends to a 2-functor $\mathbf{Tang} \to \check{\mathbb{K}}$. See Section \ref{ss:NPinvariant} for the definition. Consequently,
\begin{enumerate}
	\item The Naisse-Putyra odd tangle invariant \cite{naisse2020odd} is functorial with respect to tangle cobordisms up to sign;
	\item Putyra's unified Khovanov homology \cite{putyra20152categorychronologicalcobordismsodd} is functorial with respect to link cobordisms up to unit of $R$;
	\item Odd Khovanov homology is functorial with respect to link cobordisms up to sign.
\end{enumerate}
\end{theorem}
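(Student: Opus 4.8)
The plan is to transplant Khovanov's argument for the even tangle invariant \cite{MR2171235} to the $\mathcal{G}$-graded, quasi-associative setting, systematically replacing ``sign'' by ``unit of $R$.'' The 2-category $\mathbf{Tang}$ is presented by generators and relations: the generating 2-morphisms are the elementary chronological cobordisms (births, deaths, saddles, and the Reidemeister moves R1, R2, R3 with their variants), and the relations are a chronological version of the Carter--Saito movie moves (the classical moves together with the chronology-change moves that reorder or reverse critical points). Thus to build the 2-functor $\mathbf{Tang}\to\check{\mathbb K}$ it suffices to: (i) assign to each generating cobordism a morphism of $\check{\mathbb K}$ --- a chain map of complexes of $\mathcal{G}$-graded bimodules, well defined up to a unit of $R$; and (ii) verify that the two sides of each defining relation induce chain maps that agree up to a unit of $R$. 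For the Reidemeister generators I use the homotopy equivalences already supplied by the Naisse--Putyra invariant \cite{naisse2020odd}, which are isomorphisms in the homotopy category; for births, deaths, and saddles I use the unit, counit, multiplication, and comultiplication maps of the chronological Frobenius structure carried by the $H^n$-bimodules, tracking the $\mathcal{G}$-grading and the associator $\alpha$ so that these are honest chain maps. Since we only require equality up to a unit, any freedom in normalizing these maps is immaterial.

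The crucial reduction is Khovanov's observation that most movie moves come for free. Every complex occurring at a vertex of a movie move built only from Reidemeister moves is an \emph{invertible} complex of $\mathcal{G}$-graded bimodules; by the identifications established earlier in the paper,
\[
\mathrm{End}_{\mathcal{K}^{\mathcal G}(H^n)}(M)\;\cong\;\mathrm{End}_{\mathrm{Bim}(H^n,H^n)^{\mathcal G}}(H^n)\;\cong\;Z^{\mathcal G}(H^n)\;\cong\;R
\]
for any invertible $M$, so any two isomorphisms between a fixed pair of invertible complexes differ by multiplication by a unit of $R$. Hence every movie move both of whose sides are composites of Reidemeister moves and cosmetic reorderings --- the bulk of the Carter--Saito moves, and all of the chronology-change moves --- is automatically satisfied up to a unit, with no computation. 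This is exactly the mechanism by which Khovanov avoids the thirty-odd brute-force checks, and it carries over verbatim once $Z^{\mathcal G}(H^n)\cong R$ has been established.

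What remains is a short, explicit list of movie moves in which a birth, death, or saddle interacts essentially with a Reidemeister move or with another Morse critical point. Each of these I check directly. The check is local: it reduces to an equality of chain maps between complexes over a single small arc algebra such as $H^1$ or $H^2$, and ultimately to an identity in the chronological Frobenius algebra of one circle, base-changed to $R$. Because we work up to a unit of $R$, every discrepancy between such a computation and its even counterpart --- the extra associator factors from $\alpha$, the $\mathcal{G}$-grading shifts, the chronological signs --- is absorbed into the ambient unit, so each of the finitely many remaining checks reduces to a base change of an identity already verified in \cite{MR2171235}. Combining (i), the invertibility reduction, and these direct checks shows that the assignment respects all defining relations up to units, and therefore descends to a well-defined 2-functor $\mathbf{Tang}\to\check{\mathbb K}$. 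The three numbered consequences are then specializations: (2) is the restriction to $(0,0)$-tangles and their cobordisms, i.e.\ links, so Putyra's unified Khovanov homology inherits functoriality up to a unit of $R$; and (1), (3) follow by setting $X,Z\mapsto 1$, $Y\mapsto -1$, under which the unified theory becomes the odd one and every unit of $R$ maps to $\pm 1$, yielding functoriality up to sign for the Naisse--Putyra odd tangle invariant (extending \cite{migdail2024functoriality} to tangle cobordisms) and, restricting to links once more, for odd Khovanov homology.

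The main obstacle I expect is precisely the finitely many non-invertible movie moves: for these the ``center-is-small'' shortcut does not apply and one must genuinely compute, and the real content of the argument --- the only place where it differs in substance from Khovanov's --- is confirming that in each such computation the failure of associativity (encoded by $\alpha$) and the $\mathcal{G}$-grading never contribute anything worse than a unit of $R$, so that the even calculations survive base change. A secondary, more bookkeeping-heavy obstacle is constructing the elementary birth, death, and saddle maps as genuine chain maps in the non-associative, non-unital category of $\mathcal{G}$-graded bimodules in the first place.
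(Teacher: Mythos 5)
Your overall skeleton matches the paper's: present $\mathbf{Tang}$ by generating movies and Carter--Saito relations, dispose of the moves whose frames are braids via $Z^{\mathcal G}(H^n)\cong R$ and the fact that automorphisms of invertible complexes are units, and then treat the remaining moves separately. But your plan for the remaining moves is where there is a genuine gap. You propose to ``check directly'' the moves in which births, deaths, saddles, or turnbacks appear, and to argue that every associator factor, $\mathcal G$-shift, and chronological sign is ``absorbed into the ambient unit,'' so that each check ``reduces to a base change of an identity already verified'' in Khovanov's even paper. The direction of specialization is wrong for this: the even theory is obtained from the unified one by setting $X,Y,Z\mapsto 1$, not the other way around, so an identity of chain maps in the even theory gives no control over the corresponding unified maps --- they could differ by a non-unit of $R$, or fail to be proportional at all, and the claim that all discrepancies are units is precisely the statement to be proved, not an ambient bookkeeping convention. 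The paper avoids any such computation by lifting Khovanov's Proposition~3 to $\mathcal G$-graded biadjointness of the cup and cap bimodules (Propositions \ref{prop:Kadjun}, \ref{prop:KadjunB}, \ref{prop:KadjunC}) and deducing, via delooping, that the relevant Hom-spaces --- endomorphisms of $\mathcal F(t)$ for $t$ a flat tangle, and the Hom-spaces targeted by the Morse maps in moves 22, 24, 27--30 --- are free of rank one over $R$ (Corollaries \ref{cor:cup_cap}--\ref{cor:endFT} and \ref{cor:totturns}); both sides of each such move are then generators of a rank-one module and hence agree up to a unit, with no Frobenius-algebra calculation and no appeal to the even case.

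A second point you overlook: the Reidemeister II and III equivalences are not $\mathcal G$-grading preserving, only quantum-grading preserving, so the invariant and the functoriality statement live in $\check{\mathcal K}_m^n$, and the shortcut ``invertible complex $\Rightarrow$ endomorphisms are units'' cannot be quoted from the $\mathcal G$-graded center alone for any move containing an RII or RIII (moves 6, 7, 14, 23a, 25, 26, 28, 30). The paper closes this with Lemma \ref{lem:qcenter}, showing that $\mathcal G$-homogeneous, $q$-preserving endomorphisms of $H^n$ are automatically $\mathcal G$-preserving, so the center in the larger category is still $R$; without this (or an equivalent argument) your reduction does not apply to exactly the moves you most need it for. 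Finally, note that Khovanov's own treatment of these moves already proceeds through the adjunction/rank-one mechanism rather than direct TQFT identities, so ``base change of an even identity'' is not available even as a template; the genuinely new content of the theorem is the construction of the $\mathcal G$-graded adjunctions and the $q$-center lemma, which your proposal does not supply.
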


The main caveat lies with the category $\check{\mathbb{K}}$. Namely, the Naisse-Putyra tangle invariant is not an invariant of tangles in a $\mathcal{G}$-graded sense (it is a bit too sensitive), but rather in a category obtained by reducing the $\mathcal{G}$-degree to an integral, quantum grading. See Section \ref{ss:NPinvariant} for a more complete description. Note that this does not have any impact on the corollary to odd Khovanov homology for links, since the $\mathcal{G}$-degree already collapses to the quantum degree in that case.

Of course, the $\mathcal{G}$-graded center is not the sole tool for proving functoriality---for example, there are several movie moves relating automorphisms of non-invertible complexes. For these, we appeal to a series of adjunctions, generalizing those used by Khovanov (see Proposition 3 of \cite{MR2171235}). The existence of these adjunctions continues a trend, first observed in \cite{spyropoulos2024}, that adjunctions frequently used in even Khovanov homology (\textit{e.g.}, see \cite{MR4193863, https://doi.org/10.48550/arxiv.1405.2574}) have satisfying lifts to the $\mathcal{G}$-grading setting.

\stoptoc

\subsection*{Impending investigations}

We list two immediate follow-up questions.

\subsubsection*{Unification of the cohomologies for $(n,n)$-Springer varieties}

In analogy with $Z(H^n) \cong \mathbb{Z}$, we prove that $Z^\mathcal{G}(UH^n) \cong R$ for $UH^n$ the unified arc algebra (in the rest of this paper, we do not use notation to distinguish between $UH^n$ and $H^n$). However, Khovanov also remarks that this notion of center is isomorphic to the \emph{degree preserving} endomorphisms of $H^n$ as a bimodule over itself. If we allow for homogeneous endomorphisms of possibly nonzero degree, we get something much larger: in \cite{MR2078414}, Khovanov proves that
\[
\mathsf{HH}^0(H^n) \cong H^\bullet(\mathfrak{B}_{n, n}; \mathbb{Z})
\]
for $\mathfrak{B}_{n, n}$ the $(n, n)$-Springer variety. Interestingly, in \cite{MR3257552} Lauda and Russell define an ``odd'' analog for the cohomology of $\mathfrak{B}_{n,n}$, building on work of Ellis-Khovanov-Lauda which showed that the ring of \emph{odd} symmetric functions is isomorphic to the space of skew polynomials fixed by the action of the Hecke algebra at $q = -1$ \cite{MR3168401}. While this work introduced further parallels between the ``even'' and odd nikHecke algebras, it also established that both the even and odd Khovanov homologies are supported by similar algebraic frameworks. 

Indeed, in \cite{naisse2017odd}, Naisse and Vaz provide an ad hoc definition of the ``odd center'' $OZ$ for the odd arc algebra $OH^n$ in order to prove that
\[
OZ(OH^n) \cong OH^\bullet(\mathfrak{B}_{n, n}; \mathbb{Z})
\]
for $OH^\bullet$ the odd cohomology of \cite{MR3257552}. It is likely that the zeroth Hochschild cohomology of $(\mathcal{G}, \alpha)$-graded algebras, introduced in this paper, recovers the Naisse-Vaz odd center. More accurately, we claim that $\mathsf{HH}^0_{(\mathcal{G}, \alpha)}(UH^n)$ provides a \emph{unified cohomology} for $\mathfrak{B}_{n,n}$, from which we can obtain $OH^\bullet(\mathfrak{B}_{n, n}; \mathbb{Z})$ or $H^\bullet(\mathfrak{B}_{n, n}; \mathbb{Z})$ upon specialization.

\subsubsection*{A odd invariant of links in $S^2 \times S^1$}

Khovanov homology has a well studied relationship with Hochschild homology, originating with observations by Przytycki \cite{MR2657644} and Khovanov \cite{MR2339573}. Namely, Przytycki proved that the stable Khovanov homology of the $(2,n)$-torus link as $n\to \infty$ is isomorphic to the Hochschild homology of the first arc algebra $\mathsf{HH}_\bullet(H^1)$. In the odd setting, $OH^1$ is still associative, and it is easy to see that Przytycki's result still holds if one replaces Khovanov homology with odd Khovanov homology (see \cite{spyropoulos2024}). However, for $n\ge 2$, Rozansky proved in \cite{rozansky2010categorification} that 
\[
\mathsf{HH}_{-i}(H^n) \cong Kh_{-i} (T_{n, nm})
\]
for $i \le 2m-2$. Thus, one would hope to prove that this lifts to the unified setting, and thus to a result for the odd Khovanov homology of torus links, by showing that
\[
\mathsf{HH}_\bullet^{(\mathcal{G}, \alpha, \varepsilon)}(UH^n) \cong OKh_\bullet(T_{n, nm})
\]
for some range.

In the same paper, Rozansky also proved that Hochschild homology provides an interesting invariant of tangle closures in $S^2 \times S^1$: if $T$ is a $(2n,2n)$-tangle and $\mathcal{F}(T)$ is Khovanov's $(H^n, H^n)$-bimodule, then Rozansky showed that $\mathsf{HH}_\bullet(H^n, \mathcal{F}(T))$ is an invariant of tangle closures $\widetilde{T} \subset S^2 \times S^1$. In \cite{MR4332675}, Willis generalized this result to links in $\#^r (S^2 \times S^1)$. Hence, given our notion of Hochschild homology for quasi-associative algebras, one would hope that we recover similar unified and odd invariants by studying
\[
\mathsf{HH}_\bullet^{(\mathcal{G}, \alpha, \varepsilon)}(UH^n, \mathcal{F}(T)). 
\]
There are a few barriers to a result like this. Technically, as we mentioned before, the $\mathcal{G}$-graded $(H^n, H^n)$-bimodule $\mathcal{F}(T)$ is not \emph{quite} an invariant of $T$; the $\mathcal{G}$-grading is, in a precise sense, too sensitive. However, a simple procedure of reducing the $\mathcal{G}$-grading to an integral (quantum) grading yields an honest tangle invariant. This means that we must adjust the definition of $\mathsf{HH}_\bullet^{(\mathcal{G}, \alpha, \varepsilon)}$ if we hope to obtain an odd analog of the Rozansky-Willis invariant. Unfortunately, comparisons between the odd categorified projectors from \cite{spyropoulos2024} (see also \cite{Sch_tz_2022}) and Stoffregen and Willis's spectral projectors \cite{stoffregen2024joneswenzlprojectorskhovanovhomotopy} make the existence of such an invariant doubtful; namely, Stoffregen-Willis proved that $\mathsf{THH}_\bullet(\mathcal{H}^n, \mathcal{X}(T))$ is \emph{not} an invariant of $\widetilde{T}$, where $\mathsf{THH}$ is \emph{topological Hochschild homology}, and $\mathcal{H}^n$ and $\mathcal{X}$ are the spectral versions of $H^n$ and $\mathcal{F}$ respectively.

\subsection*{Organization of the paper}

This paper is organized as follows.

In Section \ref{s:algebra}, we review grading categories and relevant algebraic constructions. This includes a new study of the relationship between $(\mathcal{C}, \alpha)$-graded algebras and their opposites in Subsection \ref{ss:algebras_and_ops}. One of our main goals is to provide a correspondence between $(\mathcal{C}, \alpha)$-graded $(A, B)$-bimodules and $(\mathcal{C}, \alpha)$-graded left $(A \otimes_\Bbbk B^\op)$-modules; this is given in Subsection \ref{ss:mods_and_bimods}. We conclude the section by describing a bar construction for $(\mathcal{C}, \alpha)$-graded algebras (which has the structure of a $(\mathcal{C}, \alpha)$-graded DG-$(A, A)$-bimodule) in Subsection \ref{ss:hochschild}.

Section \ref{s:hochschild} is devoted to the proof of Theorem \ref{thm1}, which follows from the construction of a theory of Hochschild homology for $(\mathcal{C}, \alpha)$-graded algebras with coefficients in $(\mathcal{C}, \alpha)$-graded bimodules. In Subsection \ref{ss:cotrace}, we reveal that the Hochschild homology of such objects is naturally $\mathbb{Z} \times \mathrm{Tr}(\mathcal{C})$-graded, for $\mathrm{Tr}(\mathcal{C})$ the universal trace of $\mathcal{C}$, though existence of a Hochschild homology depends on the existence of a particular function $\varepsilon$; moreover, if $\varepsilon$ exists, the choice of such a function must be fixed. In Subsection \ref{ss:hochcomplex}, we perform the typical elementary computations, obtaining a notion of the module of coinvariants. If $A$ is a ``commutative $(\mathcal{C}, \alpha, \varepsilon)$-graded algebra,'' we also show that there is a notion of K\"ahler differentials $\Omega_A^1$ and that $\mathsf{HH}_1(A) \cong \Omega_A^1$. However, we are unable to provide an example of such a non-associative algebra, and satiate ourselves with a list of potential candidates. Finally, in Subsection \ref{ss:shadows}, we prove that the choice of $\varepsilon$ fixes a choice of natural isomorphism guaranteeing that the zeroth Hochschild homology is a shadow on the bicategory of $(\mathcal{C}, \alpha)$-graded bimodules.

In Section \ref{s:cohochschild}, we see that a definition of Hochschild cohomology for quasi-associative algebras is far more straightforward, and follows immediately from the existence of a bar construction for such algebras; thus, the proof of Theorem \ref{thm2} follows basically from definition. In Subsection \ref{ss:ccenter}, we study the center, in a $(\mathcal{C},\alpha)$-graded sense, of quasi-associative algebras. This serves as the main input for the proof of up-to-unit functoriality in Section \ref{s:utu_fun}. As in the classical case, the Hochschild cocomplex for quasi-associative algebras can be reduced, replacing the $n$th chain group $\mathrm{Hom}(A^{\otimes n+2}, M)$ with $\mathrm{Hom}(A^{\otimes n}, M)$ in a certain sense; we prove this fact in Subsection \ref{ss:reduction}. This allows us to perform the usual computations, obtaining notions of the center, module of invariants, and module of derivations for quasi-associative structures. We also note that Hochschild cohomology, while easier to define, does not possess the structure of a coshadow (simply by the fact that the bicategory of $\mathcal{C}$-graded bimodules is not closed). All the work of Section \ref{s:cohochschild} prior to Subsection \ref{ss:nonhomoHH} occurs in the setting of $\mathcal{C}$-grading preserving morphisms; in Subsection \ref{ss:nonhomoHH}, we show that Hochschild cohomology for quasi-associative algebras exists in the more general setting of $\mathcal{C}$-homogeneous morphisms, though it requires background on $\mathcal{C}$-shifting systems.

Section \ref{s:odd} consists mostly of background. In Subsection \ref{ss:chroncob}, we start by introducing Putyra's category of chronological cobordisms $\mathbf{ChCob}$ and all other background necessary for studying unified Khovanov homology. From Putyra's \emph{chronological TQFT}, we introduce the unified (and odd) arc algebras in Subsection \ref{ss:oddarcs}. This background can be found in much greater detail in either of \cite{putyra20152categorychronologicalcobordismsodd} or \cite{naisse2020odd}. In Subsection \ref{ss:tracecat}, we conclude with a description of the grading category $(\mathcal{G}, \alpha)$; for more on this grading category, and generalizations of it, see \cite{naisse2020odd} and \cite{spyropoulos2024}. The only new result of Section \ref{s:odd} is that $\mathcal{G}$ admits a function $\varepsilon$ coherent with Naisse and Putyra's choice of associator which (by the work in Section \ref{s:hochschild}) is enough to conclude that there exists a notion of Hochschild homology for $(\mathcal{G}, \alpha)$-graded algebras.

We conclude this paper with the proof of Theorem \ref{thm3}, showing that the Naisse-Putyra tangle invariant is functorial up to unit of $R$ in Section \ref{s:utu_fun}. This section follows Khovanov's paper \cite{MR2171235} closely. In Subsection \ref{ss:unicent}, we prove that the $(\mathcal{G}, \alpha)$-graded center of any unified arc algebra is $R$. In Subsection \ref{ss:geobims}, we define generalizations of Khovanov's geometric bimodules, giving a brief outline of Naisse and Putyra's system of $\mathcal{G}$-grading shifts in the process. In Subsection \ref{ss:khosadjuns}, we provide a few ``$(\mathcal{G}, \alpha)$-lifts'' of familiar adjunctions provided by Khovanov. Finally, we define the Naisse-Putyra tangle invariant and the 2-functor candidate in Subsection \ref{ss:NPinvariant}, and prove up-to-unit functoriality in Subsection \ref{ss:conclusion}.
\medskip

For readers in a hurry, we list two possible routes with different objectives.

\subsubsection*{Proof of functoriality for Naisse-Putyra tangle invariant route}

Only the basic definitions of Section \ref{s:algebra} are needed. Section \ref{s:hochschild} can be skipped altogether. The majority of Subsection \ref{ss:ccenter} is needed, and we suggest that the reader refer back to Subsection \ref{ss:nonhomoHH} only as needed. Subsections \ref{ss:chroncob} and \ref{ss:oddarcs} are needed in their entirety, and only the definition of the grading category $(\mathcal{G}, \alpha)$ is needed from Subsection \ref{ss:tracecat}. Then, read Section \ref{s:utu_fun}.

\subsubsection*{Hochschild (co)homology for quasi-associative algebras route}

Read Sections \ref{s:algebra}, \ref{s:hochschild}, and \ref{s:cohochschild} only.

\subsection*{Acknowledgement}

The author would like to thank Matthew Stoffregen for his guidance and support, as well as Keegan Boyle, David Chan, Teena Gerhardt, Matt Hogancamp, Louisa Liles, and Stephan Wehrli for their interest and helpful discussions.

\resumetoc

\section{$\mathcal{C}$-graded algebras, bimodules, and bar resolution}
\label{s:algebra}

In this section, we define grading categories as introduced by Naisse-Putyra in \cite{naisse2020odd} and develop an expanded theory. This includes a careful study of algebras and modules graded by a grading category in Subsections \ref{ss:algebras_and_ops} and \ref{ss:mods_and_bimods}.  We also write down an explicit bar construction for such objects in Subsection \ref{ss:hochschild}. The bar construction will be essential to our definitions of Hochschild homology and cohomology for quasi-associative algebras in Sections \ref{s:hochschild} and \ref{s:cohochschild}.

Throughout, $\mathcal{C}$ is a small category, and $\Bbbk$ is a unital commutative ring whose units are denoted by $\Bbbk^\times$. The notation $\mathcal{C}^{[n]}$ denotes the set of paths of morphisms of length $n$ in $\mathcal{C}$.

\begin{definition}
	A \emph{grading category} is a pair $(\mathcal{C}, \alpha)$ where $\alpha: \mathcal{C}^{[3]} \to \Bbbk^\times$ is a 3-cocycle on the nerve of $\mathcal{C}$; that is, for any path of length four $f_4 \circ f_3 \circ f_2 \circ f_1 \in \mathcal{C}^{[4]}$, 
	\[
	d\alpha(f_1, f_2, f_3, f_4) := \alpha(f_1, f_2, f_3) \alpha(f_1, f_2, f_4 \circ f_3)^{-1} \alpha(f_1, f_3 \circ f_2, f_4) \alpha(f_2 \circ f_1, f_3, f_4)^{-1} \alpha(f_2, f_3, f_4) =1.
	\]
\end{definition}

A \emph{$(\mathcal{C}, \alpha)$-graded} (or, more commonly, \emph{$\mathcal{C}$-graded})  \emph{$\Bbbk$-module} is defined as a $\Bbbk$-module $M$ admitting decomposition
\[
	M = \bigoplus_{g\in \mathrm{Mor}(\mathcal{C})} M_g
\]
where $\mathrm{Mor}(\mathcal{C})$ denotes the set of morphisms in $\mathcal{C}$. If $x\in M_g$, we will write $\left| x \right|_{\mathcal{C}} = g$, or simply $\left| x \right| = g$ when clear. A graded map $f: M \to N$ between two $\mathcal{C}$-graded modules $M$ and $N$ is a $\Bbbk$-linear map satisfying $f(M_g) \subset N_g$ for all $g \in \mathrm{Mor}(\mathcal{C})$. Let $\mathrm{Mod}^{(\mathcal{C}, \alpha)}$ (or, more commonly, $\mathrm{Mod}^\mathcal{C}$) denote the category whose objects are $\mathcal{C}$-graded $\Bbbk$-modules and whose morphisms are graded maps.

The utility of the 3-cocyle $\alpha$ is that it endows $\mathrm{Mod}^\mathcal{C}$ with a monoidal structure. Define a monoidal product $M \otimes N$ by
\[
M \otimes N := \bigoplus_{g \in \mathrm{Mor}(\mathcal{C})} (M \otimes N)_g \qquad \text{where} \qquad (M \otimes N)_g:= \bigoplus_{g = g_2 \circ g_1} M_{g_1} \otimes N_{g_2}.
\]
Then, the coherence isomorphism is induced by the associator: fix $\alpha: (M_1 \otimes M_2) \otimes M_3 \to M_1 \otimes (M_2 \otimes M_3)$ by
\[
(x \otimes y) \otimes z \mapsto \alpha(\abs{x}, \abs{y}, \abs{z}) x \otimes (y \otimes z)
\]
for homogeneous elements $x$, $y$, and $z$. The fact that $\alpha$ satisfies the pentagon relation follows directly from the 3-cocycle condition of the grading category. The unit object is given by
\[
I_\mathcal{C} := \bigoplus_{X \in \mathrm{Ob}(\mathcal{C})} \Bbbk_{\mathrm{Id}_X}
\]
where $\mathrm{Id}_X$ denotes the identity morphism on $X \in \mathrm{Ob}(\mathcal{C})$. In general, left- and right-unitors $\mathcal{L}: I_\mathcal{C} \otimes M \to M$ and $\mathcal{R}: M \otimes I_\mathcal{C} \to M$ are given by any isomorphisms which satisfy the triangle relation:
\[
\begin{tikzcd}
(M \otimes I_\mathcal{C}) \otimes N \arrow[rr, "\alpha"] \arrow[dr, "\mathcal{R} \otimes \mathbbm{1}_{N}"'] & & M \otimes (I_\mathcal{C} \otimes N) \arrow[dl, "\mathbbm{1}_M \otimes \mathcal{L}"] \\
& M \otimes N &
\end{tikzcd}
\]
When needed, we will denote the chosen unitors for $\mathrm{Mod}^\mathcal{C}$ by $\mathcal{L}_\mathcal{C}$ and $\mathcal{R}_\mathcal{C}$. Indeed, the unitors can be chosen to be induced by the associator. For example, one can take
\begin{itemize}
	\item $\mathcal{L}: I_\mathcal{C} \otimes M \to M$ by $(k \otimes m ) \mapsto \mathcal{L}(\abs{k}, \abs{m}) km$, fixing
	\begin{equation}
	\label{eq:typicalL}
		\mathcal{L}(\abs{k}, \abs{m}) := \alpha(\mathrm{Id}_X, \mathrm{Id}_X, \abs{m})^{-1},
	\end{equation}
	and
	\item $\mathcal{R}: M \otimes I_\mathcal{C} \to M$ by $(m \otimes k ) \mapsto \mathcal{R}(\abs{m}, \abs{k}) km$, fixing
	\begin{equation}
	\label{eq:typicalR}
		\mathcal{R}(\abs{m}, \abs{k}) := \alpha(\abs{m}, \mathrm{Id}_Y, \mathrm{Id}_Y),
	\end{equation}
\end{itemize}
where $\abs{m} : X \to Y$. To see that the triangle relation is satisfied, notice that for $X \xrightarrow{g} Y \xrightarrow{h} Z$,
\[
1 = d\alpha(g, \mathrm{Id}_Y, \mathrm{Id}_Y, h) = \alpha(g, \mathrm{Id}_Y, \mathrm{Id}_Y) \alpha(g, \mathrm{Id}_Y, h)^{-1} \alpha(\mathrm{Id}_Y, \mathrm{Id}_Y, h).
\]
We will call the unitors given by equations (\ref{eq:typicalL}) and (\ref{eq:typicalR}) the \emph{typical unitors induced by $\alpha$}.

In general, the cocycle relation implies $\alpha(g, g, g)=1$ for any idempotent loop morphism $g: X \to X$. If the typical unitors induced by $\alpha$ are chosen, this means that whenever $\abs{m} = \mathrm{Id}_X$ for any $X\in \mathrm{Ob}(\mathcal{C})$, we have that $\mathcal{L}(k \otimes m) = km = \mathcal{R}(m \otimes k)$. Notice that, in general, the requirement that
\[
(\mathbbm{1}_M \otimes \mathcal{L}) \circ \alpha = \mathcal{R} \otimes \mathbbm{1}_N
\]
implies only that the values associated to $\mathcal{L}$ and $\mathcal{R}$ agree on $m\in M$ with $\abs{m} = \mathrm{Id}_X$. When the typical unitors are taken, we have that $\mathcal{L} \equiv 1$ and $\mathcal{R} \equiv 1$ on any elements $m \in M_{\mathrm{Id}_X} \subset M$ for any $X \in \mathrm{Ob}(\mathcal{C})$.

In conclusion, we list a few quick computations regarding the associator which we will want to have in our back-pocket.

\begin{lemma}
\label{lem:assocomps}
Let $g, h\in \mathrm{Mor}(\mathcal{C})$ and $g: X\to Y$ and $h: Y \to Z$. We have the following equivalences, with their paths pictured.
\begin{enumerate}[label = (\roman*)]
	\item $\alpha(\mathrm{Id}_X, g, \mathrm{Id}_Y) = 1$.
	\[
	\begin{tikzcd}
	X \arrow[r, "g"] \arrow[loop above, "\mathrm{Id}_X"] & Y \arrow[loop above, "\mathrm{Id}_Y"]
	\end{tikzcd}
	\]
	\item $\alpha(\mathrm{Id}_X, \mathrm{Id}_X, h \circ g) = \alpha(\mathrm{Id}_X, g, h) \alpha(\mathrm{Id}_X, \mathrm{Id}_X, g)$.
	\[
	\begin{tikzcd}
	X \arrow[r, "g"] \arrow[loop above, "\mathrm{Id}_X"] & Y \arrow[r, "h"] & Z
	\end{tikzcd}
	\]
	\item $\alpha(h \circ g, \mathrm{Id}_Z, \mathrm{Id}_Z) = \alpha(g, h, \mathrm{Id}_Z) \alpha(h, \mathrm{Id}_Z, \mathrm{Id}_Z)$.
	\[
	\begin{tikzcd}
	X \arrow[r, "g"] & Y \arrow[r, "h"] & Z \arrow[loop above, "\mathrm{Id}_Z"]
	\end{tikzcd}
	\]
	\item $\alpha(g, \mathrm{Id}_Y, h) = \alpha(g, \mathrm{Id}_Y, \mathrm{Id}_Y) \alpha(\mathrm{Id}_Y, \mathrm{Id}_Y, h)$.
	\[
	\begin{tikzcd}
	X \arrow[r, "g"] & Y \arrow[r, "h"] \arrow[loop above, "\mathrm{Id}_Y"] & Z
	\end{tikzcd}
	\]
	\item $\alpha(f, f, f) = 1$ whenever $f$ is an idempotent morphism (that is, a loop $f: X \to X$ with $f \circ f = f$).
\end{enumerate}
\end{lemma}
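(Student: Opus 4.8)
The plan is to read off all five identities as direct instances of the 3-cocycle condition $d\alpha(f_1,f_2,f_3,f_4)=1$, each obtained by feeding the cocycle relation one carefully chosen composable quadruple and then simplifying. Throughout I will use freely that $\Bbbk^\times$ is abelian, so that any two factors of $d\alpha$ which are mutually inverse cancel regardless of their position in the product, and the unit relations $\mathrm{Id}_Y\circ\varphi=\varphi=\varphi\circ\mathrm{Id}_X$ for $\varphi\colon X\to Y$.

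I would prove (v) first, since it underpins the normalization remarks that precede the lemma. Take $f_1=f_2=f_3=f_4=f$ with $f\circ f=f$. Every composite that appears inside $d\alpha(f,f,f,f)$ is again $f$, so the cocycle relation degenerates to $\alpha(f,f,f)\,\alpha(f,f,f)^{-1}\,\alpha(f,f,f)\,\alpha(f,f,f)^{-1}\,\alpha(f,f,f)=\alpha(f,f,f)=1$. In particular $\alpha(\mathrm{Id}_X,\mathrm{Id}_X,\mathrm{Id}_X)=1$ for every object $X$, recovering the normalization used elsewhere.

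For (i)--(iv) the recipe is identical: choose the quadruple so that, after replacing composites with the help of the unit relations (and (v) where a composite is an idempotent loop), exactly one pair of the five factors of $d\alpha$ is mutually inverse and drops out, leaving the asserted three-term relation. For (i) apply the cocycle relation to $(\mathrm{Id}_X,\mathrm{Id}_X,g,\mathrm{Id}_Y)$: the first two factors are $\alpha(\mathrm{Id}_X,\mathrm{Id}_X,g)^{\pm1}$ and cancel, while the last three each simplify to $\alpha(\mathrm{Id}_X,g,\mathrm{Id}_Y)^{\pm1}$ and multiply to $\alpha(\mathrm{Id}_X,g,\mathrm{Id}_Y)$, forcing it to be $1$. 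For (ii) apply it to $(\mathrm{Id}_X,\mathrm{Id}_X,g,h)$: the factors $\alpha(\mathrm{Id}_X,g,h)^{\pm1}$ collapse to a single copy, leaving $\alpha(\mathrm{Id}_X,\mathrm{Id}_X,g)\,\alpha(\mathrm{Id}_X,\mathrm{Id}_X,h\circ g)^{-1}\,\alpha(\mathrm{Id}_X,g,h)=1$. For (iii) apply it to $(g,h,\mathrm{Id}_Z,\mathrm{Id}_Z)$, which symmetrically yields $\alpha(g,h,\mathrm{Id}_Z)\,\alpha(h\circ g,\mathrm{Id}_Z,\mathrm{Id}_Z)^{-1}\,\alpha(h,\mathrm{Id}_Z,\mathrm{Id}_Z)=1$. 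For (iv) apply it to $(g,\mathrm{Id}_Y,\mathrm{Id}_Y,h)$ — this is exactly the computation already displayed just before the lemma, giving $\alpha(g,\mathrm{Id}_Y,\mathrm{Id}_Y)\,\alpha(g,\mathrm{Id}_Y,h)^{-1}\,\alpha(\mathrm{Id}_Y,\mathrm{Id}_Y,h)=1$. In each case rearranging produces the stated equality, and the displayed length-two paths are precisely the composites that survive the simplification.

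There is no genuine obstacle here: the content is entirely in choosing the right quadruple for each part, and those choices are essentially forced by which value of $\alpha$ one wants to isolate. The only care required is the bookkeeping of checking that, after the unit and idempotent simplifications, the five factors really do reduce to the claimed pattern of one cancelling pair plus the three-term relation; this is routine once one writes the five factors out explicitly.
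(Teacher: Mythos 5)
Your proof is correct and follows essentially the same route as the paper: the paper also derives (ii) from $d\alpha(\mathrm{Id}_X,\mathrm{Id}_X,g,h)=1$ (leaving the other parts as routine), obtains (iv) from $d\alpha(g,\mathrm{Id}_Y,\mathrm{Id}_Y,h)=1$ in the display preceding the lemma, and notes (v) via the degenerate quadruple $(f,f,f,f)$. Your choices of quadruples and the resulting cancellations all check out against the cocycle formula.
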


\begin{proof}
Each of these are routine; we will prove (ii) as demonstration. We have
\[
1 = d\alpha (\mathrm{Id}_X, \mathrm{Id}_X, g, h) = \alpha(\mathrm{Id}_X,\mathrm{Id}_X, g) \alpha(\mathrm{Id}_X,\mathrm{Id}_X, h \circ g)^{-1} \alpha(\mathrm{Id}_X, g, h)
\]
as desired.
\end{proof}

\begin{lemma}
\label{lem:genassocomps}
Suppose $f, g, h \in \mathrm{Mor}(\mathcal{C})$ form the following directed graph:
\[
\begin{tikzcd}
W \arrow[loop above, "\mathrm{Id}_W"] \arrow[r, "f"] & X \arrow[r, "g"] & Y \arrow[r, "h"] & Z \arrow[loop above, "\mathrm{Id}_Z"].
\end{tikzcd}
\]
Then
\[
\alpha(\mathrm{Id}_W, f, h\circ g) = \alpha(\mathrm{Id}_W, f, g) \alpha(\mathrm{Id}_W, g \circ f, h)
\qquad \text{and} \qquad
\alpha(g \circ f, h, \mathrm{Id}_Z) = \alpha(g, h, \mathrm{Id}_Z) \alpha(f, h\circ g, \mathrm{Id}_Z)
\]
generalizing (ii) and (iii) of Lemma \ref{lem:assocomps}
\end{lemma}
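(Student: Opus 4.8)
The plan is to imitate the proof of Lemma \ref{lem:assocomps}: each of the two identities will drop out of the 3-cocycle condition $d\alpha = 1$ applied to a single length-four path, once we pad the given path $W \xrightarrow{f} X \xrightarrow{g} Y \xrightarrow{h} Z$ on the appropriate side with an identity morphism and notice that two of the five factors of $d\alpha$ collapse against one another.

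For the first identity, I would evaluate $d\alpha$ on the path $(\mathrm{Id}_W, f, g, h) \in \mathcal{C}^{[4]}$. Since $f \circ \mathrm{Id}_W = f$, the fourth factor $\alpha(f\circ \mathrm{Id}_W, g, h)^{-1}$ equals $\alpha(f, g, h)^{-1}$, which cancels the fifth factor $\alpha(f, g, h)$; what remains is
\[
1 = \alpha(\mathrm{Id}_W, f, g)\,\alpha(\mathrm{Id}_W, f, h\circ g)^{-1}\,\alpha(\mathrm{Id}_W, g\circ f, h),
\]
and rearranging gives $\alpha(\mathrm{Id}_W, f, h\circ g) = \alpha(\mathrm{Id}_W, f, g)\,\alpha(\mathrm{Id}_W, g\circ f, h)$. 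For the second identity, I would instead evaluate $d\alpha$ on the path $(f, g, h, \mathrm{Id}_Z) \in \mathcal{C}^{[4]}$. Here $\mathrm{Id}_Z \circ h = h$, so the second factor $\alpha(f, g, \mathrm{Id}_Z \circ h)^{-1} = \alpha(f, g, h)^{-1}$ cancels the first factor $\alpha(f, g, h)$, leaving
\[
1 = \alpha(f, h\circ g, \mathrm{Id}_Z)\,\alpha(g\circ f, h, \mathrm{Id}_Z)^{-1}\,\alpha(g, h, \mathrm{Id}_Z),
\]
which rearranges to $\alpha(g\circ f, h, \mathrm{Id}_Z) = \alpha(g, h, \mathrm{Id}_Z)\,\alpha(f, h\circ g, \mathrm{Id}_Z)$. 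That these genuinely generalize parts (ii) and (iii) of Lemma \ref{lem:assocomps} is then just the specialization $f = \mathrm{Id}_W$ (resp. $h = \mathrm{Id}_Z$), combined with part (i) to discard the now-trivial factors.

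There is no genuine obstacle here; the computation is as routine as those in Lemma \ref{lem:assocomps}. The only points requiring care are (a) choosing which side to attach the identity---left for the first identity, right for the second---and (b) correctly identifying which pair of the five $d\alpha$-factors cancels in each case, which is dictated by which of the composites $f_2 \circ f_1$ or $f_4 \circ f_3$ reduces to a morphism already appearing among the remaining factors.
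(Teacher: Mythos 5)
Your proof is correct and is exactly the routine cocycle computation the paper intends (it omits the proof as an easy extension of Lemma \ref{lem:assocomps}): applying $d\alpha(\mathrm{Id}_W, f, g, h) = 1$ and $d\alpha(f, g, h, \mathrm{Id}_Z) = 1$ and cancelling the two coinciding factors yields the two identities as you state. The only trivial quibble is that the specializations $f = \mathrm{Id}_W$ and $h = \mathrm{Id}_Z$ already reproduce (ii) and (iii) verbatim, with no appeal to part (i) needed.
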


\begin{remark}
We separate Lemmas \ref{lem:assocomps} and \ref{lem:genassocomps} only because we frequently use the specializations (ii) and (iii) of Lemma \ref{lem:assocomps}, and hardly ever the more general statement of Lemma \ref{lem:genassocomps}.
\end{remark}

\subsection{$\mathcal{C}$-graded algebras and their opposites}
\label{ss:algebras_and_ops}

Granted a monoidal structure on $\mathrm{Mod}^\mathcal{C}$, it is simple to define the usual algebraic objects. 

\begin{definition}
\label{def:algebras}
	A \emph{$\mathcal{C}$-graded algebra} is a $\mathcal{C}$-graded $\Bbbk$-module $A = \bigoplus_{g\in \mathrm{Mor}(\mathcal{C})} A_g$ endowed with a $\Bbbk$-linear multiplication $\mu_A: A \otimes A \to A$ and unit element $1_X \in A_{\mathrm{Id}_X}$ for each $X \in \mathrm{Ob} (\mathcal{C})$ for which each of the following hold.
	\begin{enumerate}[label=(A.\Roman*), align=left]
		\item $\mu_A$ is a $\mathcal{C}$-graded map; that is, for each homogeneous $x, y \in A$,  $\abs{\mu_A(x, y)} = \abs{y} \circ \abs{x}$.
		\item $\mu_A$ is graded associative; that is, for each homogeneous $x, y, z \in A$, 
		\[
			\mu_A(\mu_A(x, y), z) = \alpha(\abs{x}, \abs{y}, \abs{z}) \mu_A(x, \mu_A(x,y)). 
		\]
		\item For each homogeneous $x \in A$,
		\[
			\mu_{A}(1_X, x) = \mathcal{L}(\mathrm{Id}_X, \abs{x}) x \qquad \text{and} \qquad \mu_{A}(x, 1_Y) = \mathcal{R}(\abs{x}, \mathrm{Id}_Y) x
		\]
		where $\abs{x}: X \to Y$. 
 	\end{enumerate}
\end{definition}

Notice that if our choice of unitors in $\mathrm{Mod}^\mathcal{C}$ is typical, we have that $\mu_A(1_X, 1_X) = 1_X$. It will be necessary to pick typical unitors induced by $\alpha$ later on (\textit{e.g.}, the proof of Proposition \ref{prop:bimod=mod} and thereafter, but not before). Also, notice that the ``units'' of a $\mathcal{C}$-graded algebra are not units in the usual sense, but should be thought of as a set of mutually orthogonal idempotents; the non-homogeneous element
\[
1 := \sum_{X \in \mathrm{Ob}(\mathcal{C})} 1_X
\]
is a more natural ``unit'' for $A$.

\begin{example}
\label{eg:trivgc}
Consider the category $\mathcal{Z} := B\mathbb{Z}$ with a single object $\star$ and $\mathrm{Hom}_\mathcal{Z}(\star,\star) = \mathbb{Z}$. Extend $\mathcal{Z}$ to a grading category trivially: that is, take $\alpha \equiv 1$. Thus, for the grading category $(\mathcal{Z}, 1)$, a $\mathcal{Z}$-graded object is the same thing as a $\mathbb{Z}$-graded object. In general, if $BG$ denotes the category with a single object $\star$ and $\mathrm{Hom}_{BG}(\star,\star) = G$ for $G$ a group, then we recover grading by arbitrary groups, as defined by Albequerque and Majid \cite{albuquerque1998quasialgebra}, as well as their notion of quasi-associativity.
\end{example}

Some of the usual operations performed on small categories can be extended to operations on grading categories. For motivation, suppose $A$ is a $\mathcal{C}$-graded algebra, and consider $A^\op$. Recall that $A^\op$ is simply $A$ but with multiplication defined by
\[
	\mu_{A^{\op}} (x, y) := \mu_A(y, x).
\]
Then, notice that $A^\op$ fails to be a $\mathcal{C}$-graded algebra: gradings are reversed, and the associator is inverted. However, $A^\op$ has a natural description as a $\mathcal{C}^\op$-graded algebra. Recall that the category \emph{opposite} $\mathcal{C}$, denoted $\mathcal{C}^\op$, is the category with
\begin{itemize}
	\item $\mathrm{Ob}(\mathcal{C}^\op) = \mathrm{Ob}(\mathcal{C})$, and
	\item $\mathrm{Hom}_{\mathcal{C}^\op}(X, Y) = \mathrm{Hom}_\mathcal{C}(Y, X)$.
\end{itemize}
Notice that, if $X \xrightarrow{f} Y \xrightarrow{g} Z$ is a sequence of morphisms in $\mathcal{C}$, then $(X \xrightarrow{f} Y \xrightarrow{g} Z)^\op = Z \xrightarrow{g^\op} Y \xrightarrow{f^\op} X$. That is, the functor $\op: \mathcal{C} \to \mathcal{C}^\op$ is contravariant, and $(\mathcal{C}^\op)^\op = \mathcal{C}$.

\begin{definition}
	Let $(\mathcal{C}, \alpha)$ be a grading category. Let $(\mathcal{C}, \alpha)^\op := (\mathcal{C}^\op, \alpha^\op)$ denote the \emph{opposite grading category}, with $\alpha^\op: (\mathcal{C}^\op)^{[3]} \to \Bbbk^\times$ defined by
	\[
	\alpha^\op(f_3^\op, f_2^\op, f_1^\op) := \alpha(f_1, f_2, f_3)^{-1}.
	\]
\end{definition}

\begin{proposition}
Assume $(\mathcal{C}, \alpha)$ is a grading category and $A$ is a $\mathcal{C}$-graded algebra. Then $(\mathcal{C}, \alpha)^\op$ is a grading category, and $A^\op$ is a $\mathcal{C}^\op$-graded algebra.
\end{proposition}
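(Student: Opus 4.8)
The plan is to verify the two assertions in sequence: first that $(\mathcal{C},\alpha)^\op$ is a grading category (i.e. $\alpha^\op$ is a $3$-cocycle on the nerve of $\mathcal{C}^\op$), and then that the reversed multiplication $\mu_{A^\op}(x,y) = \mu_A(y,x)$ satisfies (A.I)--(A.III) with respect to the grading category $(\mathcal{C}^\op, \alpha^\op)$. For the first part, I would fix a length-four path in $\mathcal{C}^\op$, which by definition of $\op$ corresponds to a length-four path $f_4 \circ f_3 \circ f_2 \circ f_1 \in \mathcal{C}^{[4]}$ read backwards, namely $f_1^\op \circ f_2^\op \circ f_3^\op \circ f_4^\op$. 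Writing out $d\alpha^\op(f_4^\op, f_3^\op, f_2^\op, f_1^\op)$ using the defining formula $\alpha^\op(g_3^\op, g_2^\op, g_1^\op) = \alpha(g_1,g_2,g_3)^{-1}$, each of the five factors becomes the inverse of a corresponding factor of $d\alpha(f_1,f_2,f_3,f_4)$ (with the middle two terms, whose composites appear, matching up because $(h\circ g)^\op = g^\op \circ h^\op$). Hence $d\alpha^\op = (d\alpha)^{-1} = 1$. This is a bookkeeping check on the order of arguments and which composites appear; the only mild subtlety is making sure the two ``mixed'' terms $\alpha(f_1,f_3\circ f_2,f_4)$ and $\alpha(f_2\circ f_1,f_3,f_4)$ land in the right slots after reversal, which they do.

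For the second part, I would check the three algebra axioms in turn. Axiom (A.I): if $x\in A_g$ and $y\in A_h$ with $g\colon X\to Y$ and $h\colon Y\to Z$ in $\mathcal{C}$ — wait, for $\mu_A(y,x)$ to be defined we need $x,y$ composable in the appropriate order, so say $|x|_{\mathcal{C}} = g$, $|y|_{\mathcal{C}} = h$ with $g\colon X \to Y$ and $h\colon Y\to Z$; then in $\mathcal{C}^\op$ we have $|x|_{\mathcal{C}^\op} = g^\op\colon Y\to X$ and $|y|_{\mathcal{C}^\op} = h^\op\colon Z\to Y$, and $\mu_{A^\op}(y,x) = \mu_A(x,y)$ has $\mathcal{C}$-degree $h\circ g$, whose opposite is $(h\circ g)^\op = g^\op \circ h^\op$, which is exactly $|y|_{\mathcal{C}^\op} \circ |x|_{\mathcal{C}^\op}$ — so (A.I) holds. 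Axiom (A.II): starting from $\mu_{A^\op}(\mu_{A^\op}(x,y),z) = \mu_A(z, \mu_A(y,x)) = \alpha(|z|,|y|,|x|)^{-1}\mu_A(\mu_A(z,y),x) = \alpha(|z|,|y|,|x|)^{-1}\mu_{A^\op}(x,\mu_{A^\op}(y,z))$, and then recognizing $\alpha(|z|_{\mathcal{C}},|y|_{\mathcal{C}},|x|_{\mathcal{C}})^{-1} = \alpha^\op(|z|_{\mathcal{C}}^\op,|y|_{\mathcal{C}}^\op,|x|_{\mathcal{C}}^\op) = \alpha^\op(|x|_{\mathcal{C}^\op},|y|_{\mathcal{C}^\op},|z|_{\mathcal{C}^\op})$, which is the required associator coefficient. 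Here one uses the (graded-)associativity of $\mu_A$ once, in the form $\mu_A(\mu_A(z,y),x) = \alpha(|z|,|y|,|x|)\mu_A(z,\mu_A(y,x))$.

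Axiom (A.III) concerns the units. The units of $A^\op$ are the same elements $1_X \in A_{\mathrm{Id}_X}$, now regarded as lying in $(A^\op)_{\mathrm{Id}_X^\op}$ and $\mathrm{Id}_X^\op = \mathrm{Id}_X$ in $\mathcal{C}^\op$. Then $\mu_{A^\op}(1_X, x) = \mu_A(x, 1_X)$, and if $|x|_{\mathcal{C}}\colon X\to Y$ then axiom (A.III) for $A$ gives this equals $\mathcal{R}_{\mathcal{C}}(|x|_{\mathcal{C}}, \mathrm{Id}_Y)\, x$; one then needs to observe that this scalar is the appropriate left-unitor scalar $\mathcal{L}_{\mathcal{C}^\op}$ for $\mathrm{Mod}^{\mathcal{C}^\op}$ evaluated on the relevant degrees — i.e. the monoidal structure on $\mathrm{Mod}^{\mathcal{C}^\op}$ has left-unitor scalar equal to the right-unitor scalar of $\mathrm{Mod}^\mathcal{C}$ under reversal, and symmetrically for the other equation $\mu_{A^\op}(x,1_Y) = \mu_A(1_Y, x) = \mathcal{L}_{\mathcal{C}}(\mathrm{Id}_X,|x|_{\mathcal{C}})\,x$. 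I expect this last point to be the only genuinely delicate step: it requires either fixing the typical unitors induced by $\alpha$ (in which case $\mathcal{L}_{\mathcal{C}^\op}$ and $\mathcal{R}_{\mathcal{C}^\op}$ are read off from $\alpha^\op$ via equations (\ref{eq:typicalL}) and (\ref{eq:typicalR}) and one checks the identity $\alpha^\op(\mathrm{Id}_X,\mathrm{Id}_X,h^\op)^{-1} = \alpha(h,\mathrm{Id}_Y,\mathrm{Id}_Y)$ directly from the definition of $\alpha^\op$ and the fact that the typical unitors satisfy the triangle relation via Lemma~\ref{lem:assocomps}), or else noting more abstractly that $\op$ induces a monoidal equivalence $(\mathrm{Mod}^\mathcal{C})^{\mathrm{rev}} \simeq \mathrm{Mod}^{\mathcal{C}^\op}$ which transports the chosen unitors correctly. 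The cleanest writeup picks the typical unitors and cites Lemma~\ref{lem:assocomps}(iii) (and its mirror) to identify the scalars; everything else is a routine unwinding of definitions.
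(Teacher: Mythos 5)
Your proposal is correct and follows essentially the same route as the paper: the paper checks $d\alpha^\op = (d\alpha)^{-1}$, verifies (A.I) and (A.II) by exactly your computations, and handles (A.III) by simply declaring the unitors of $\mathrm{Mod}^{\mathcal{C}^\op}$ to be $\mathcal{L}_{\mathcal{C}^\op} = \mathcal{R}_\mathcal{C}$ and $\mathcal{R}_{\mathcal{C}^\op} = \mathcal{L}_\mathcal{C}$, which is precisely your second (``transport the chosen unitors'') option — so no restriction to typical unitors is needed. (Only a cosmetic slip in your (A.I): the degree of $\mu_{A^\op}(y,x)$ is $\abs{x}_{\mathcal{C}^\op}\circ\abs{y}_{\mathcal{C}^\op}=g^\op\circ h^\op$, as your displayed identity shows.)
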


\begin{proof}
For the first claim, note that $d(\alpha^\op)(f_4^\op, f_3^\op, f_2^\op, f_1^\op) = d\alpha(f_1, f_2, f_3, f_4)^{-1}$, so the result follows from the assumption that $(\mathcal{C}, \alpha)$ is a grading category. For the second claim, given a decomposition $A = \bigoplus_{g\in \mathrm{Mor}(\mathcal{C})} A_g$, choose the decomposition $A^\op = \bigoplus_{g^\op \in \mathrm{Mor}(\mathcal{C}^\op)}$. Requirement (A.I) is satisfied since
\[
\abs{\mu_{A^\op}(x, y)}_{\mathcal{C}^\op} = \left(\abs{\mu_A(y,x)}_\mathcal{C}\right)^\op = \left(\abs{x}_\mathcal{C} \circ \abs{y}_\mathcal{C}\right)^\op = \abs{y}_\mathcal{C^\op} \circ \abs{x}_\mathcal{C^\op}
\]
using the fact that $(\abs{x}_\mathcal{C})^\op = \abs{x}_{\mathcal{C}^\op}$. Requirement (A.II) is similar:
\begin{align*}
\mu_{A^\op}(\mu_{A^\op}(x,y), z) &= \mu_A(z, \mu_A(y,x)) \\ &= \alpha(\abs{z}_\mathcal{C}, \abs{y}_\mathcal{C}, \abs{x}_\mathcal{C})^{-1}\mu_A(\mu_A(z,y), x)\\ &= \alpha^\op(\abs{x}_{\mathcal{C}^\op}, \abs{y}_{\mathcal{C}^\op}, \abs{z}_{\mathcal{C}^\op}) \mu_{A^\op} (x, \mu_{A^\op}(y,z)).
\end{align*}
(Notice that this is why we must invert the associator.) Finally, for (A.III), notice that the unit object $I_{\mathcal{C}^\op}$ is exactly $I_{\mathcal{C}}$. Then, sufficient unitors for $\mathrm{Mod}^{\mathcal{C}^\op}$ are provided by fixing $\mathcal{L}_{\mathcal{C}^\op} = \mathcal{R}_\mathcal{C}$ and $\mathcal{R}_{\mathcal{C}^\op} = \mathcal{L}_\mathcal{C}$.
\end{proof}

Now, suppose $A$ and $B$ are $\mathcal{C}$-graded and $\mathcal{D}$-graded algebras respectively. Abusing notation, we will write $A \otimes_\Bbbk B$ to denote the tensor product of $A$ and $B$ as $\Bbbk$-modules, rather than the monoidal product in either of $\mathrm{Mod}^\mathcal{C}$ or $\mathrm{Mod}^\mathcal{D}$. Confusing notation further, we sometimes write $\otimes$ for $\otimes_\Bbbk$ when context makes the meaning clear. The graded structure on $A$ and $B$ induces one on $A\otimes_\Bbbk B$ as follows. Recall that the product category $\mathcal{C} \times \mathcal{D}$ of two categories $\mathcal{C}$ and $\mathcal{D}$ is the one with
\begin{itemize}
	\item $\mathrm{Ob}(\mathcal{C} \times \mathcal{D}) = \mathrm{Ob}(\mathcal{C}) \times \mathrm{Ob}(\mathcal{D})$,
	\item $\mathrm{Hom}_{\mathcal{C} \times \mathcal{D}}((X_1, X_2), (Y_1, Y_2)) = \mathrm{Hom}_\mathcal{C}(X_1, Y_1) \times \mathrm{Hom}_\mathcal{D}(X_2, Y_2)$,
	\item composition defined by $(f_2, g_2) \circ (f_1, g_1) = (f_2\circ f_1, g_2 \circ g_1)$, and
	\item identity morphisms $\mathrm{Id}_{(X, Y)} = (\mathrm{Id}_X, \mathrm{Id}_Y)$.
\end{itemize}

\begin{definition}
Given grading categories $(\mathcal{C}, \alpha)$ and $(\mathcal{D}, \beta)$, define the \emph{product grading category} $(\mathcal{C}, \alpha) \times (\mathcal{D}, \beta) := (\mathcal{C} \times \mathcal{D}, \alpha \times \beta)$ where
\[
(\alpha \times \beta) ((f_1, g_1), (f_2, g_2), (f_3, g_3)) := \alpha(f_1, f_2, f_3) \beta(g_1, g_2, g_3).
\]
\end{definition}

\begin{proposition}
If $(\mathcal{C}, \alpha)$ and $(\mathcal{D}, \beta)$ are grading categories, then so is $(\mathcal{C} \times \mathcal{D}, \alpha \times \beta)$. Moreover, if $A$ is a $(\mathcal{C}, \alpha)$-graded algebra and $B$ is a $(\mathcal{D}, \beta)$-graded algebra, then $A\otimes_\Bbbk B$ is a $(\mathcal{C} \times \mathcal{D}, \alpha \times \beta)$-graded algebra.
\end{proposition}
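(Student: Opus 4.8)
The plan is to verify the two claims separately, with the second being the substantive one. For the first claim, I would compute the coboundary $d(\alpha\times\beta)$ directly on a length-four path $\bigl((f_4,g_4),(f_3,g_3),(f_2,g_2),(f_1,g_1)\bigr)$ in $\mathcal{C}\times\mathcal{D}$. Since composition in the product category is componentwise and $\alpha\times\beta$ is defined as a product of the two cocycle values, the five factors of $d(\alpha\times\beta)$ factor termwise into the product of the corresponding factors of $d\alpha(f_1,\dots,f_4)$ and $d\beta(g_1,\dots,g_4)$. Hence $d(\alpha\times\beta) = d\alpha \cdot d\beta = 1\cdot 1 = 1$, using that $(\mathcal{C},\alpha)$ and $(\mathcal{D},\beta)$ are grading categories. (One also notes $(\alpha\times\beta)$ does land in $\Bbbk^\times$ since it is a product of units.)

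For the second claim, I would equip $A\otimes_\Bbbk B$ with the grading $(A\otimes_\Bbbk B)_{(f,g)} := A_f \otimes_\Bbbk B_g$, indexed by $\mathrm{Mor}(\mathcal{C}\times\mathcal{D}) = \mathrm{Mor}(\mathcal{C})\times\mathrm{Mor}(\mathcal{D})$; multiplication is $\mu_{A\otimes B}(a_1\otimes b_1, a_2\otimes b_2) := \mu_A(a_1,a_2)\otimes\mu_B(b_1,b_2)$, and the unit elements are $1_{(X,Y)} := 1_X\otimes 1_Y$ for $(X,Y)\in\mathrm{Ob}(\mathcal{C}\times\mathcal{D})$. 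Then I would check (A.I)--(A.III) in turn. (A.I) is immediate: $|\mu_A(a_1,a_2)\otimes\mu_B(b_1,b_2)| = (|a_2|\circ|a_1|, |b_2|\circ|b_1|) = (|a_2|,|b_2|)\circ(|a_1|,|b_1|)$ by componentwise composition. For (A.II), I would expand both $\mu_{A\otimes B}(\mu_{A\otimes B}(\cdot,\cdot),\cdot)$ and $\mu_{A\otimes B}(\cdot,\mu_{A\otimes B}(\cdot,\cdot))$ using graded associativity of $A$ and of $B$ separately; the $A$-factor contributes $\alpha(|a_1|,|a_2|,|a_3|)$ and the $B$-factor contributes $\beta(|b_1|,|b_2|,|b_3|)$, and their product is exactly $(\alpha\times\beta)\bigl((|a_1|,|b_1|),(|a_2|,|b_2|),(|a_3|,|b_3|)\bigr)$, as required. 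For (A.III), one uses that the unit object $I_{\mathcal{C}\times\mathcal{D}}$ is $I_\mathcal{C}\otimes_\Bbbk I_\mathcal{D}$ and that a sufficient choice of unitors for $\mathrm{Mod}^{\mathcal{C}\times\mathcal{D}}$ is $\mathcal{L}_{\mathcal{C}\times\mathcal{D}}(\cdot,\cdot) = \mathcal{L}_\mathcal{C}\cdot\mathcal{L}_\mathcal{D}$ and similarly for $\mathcal{R}$; then $\mu_{A\otimes B}(1_{(X,Y)}, a\otimes b) = \mu_A(1_X,a)\otimes\mu_B(1_Y,b) = \mathcal{L}_\mathcal{C}(\mathrm{Id}_X,|a|)\mathcal{L}_\mathcal{D}(\mathrm{Id}_Y,|b|)\,(a\otimes b)$, which matches the prescribed left-unit law, and dually on the right.

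I do not expect any genuine obstacle: every axiom decouples into its $\mathcal{C}$- and $\mathcal{D}$-components because all the structure of the product grading category (composition, identities, the cocycle, and the induced monoidal data on $\mathrm{Mod}^{\mathcal{C}\times\mathcal{D}}$) is defined componentwise. The only mild care needed is bookkeeping: confirming that the monoidal product on $\mathrm{Mod}^{\mathcal{C}\times\mathcal{D}}$ restricts on the summands $A\otimes_\Bbbk B \subseteq$ (as a $\Bbbk$-module) to the $\Bbbk$-linear tensor product compatibly with the chosen grading — i.e., that $\mu_{A\otimes B}$ is $(\mathcal{C}\times\mathcal{D})$-graded with respect to the summand-wise grading above — and that the chosen unitors on $\mathrm{Mod}^{\mathcal{C}\times\mathcal{D}}$ indeed satisfy the triangle axiom, which again follows by taking the product of the triangle identities for $\mathrm{Mod}^\mathcal{C}$ and $\mathrm{Mod}^\mathcal{D}$. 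This is entirely parallel to the preceding proposition on opposite grading categories, so I would present it at the same level of detail, namely a short verification of (A.I)--(A.III).
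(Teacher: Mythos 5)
Your proposal is correct and follows essentially the same route as the paper: both verify the cocycle condition componentwise and then check (A.I)--(A.III) for $A\otimes_\Bbbk B$ with the grading $\abs{a\otimes b} = (\abs{a},\abs{b})$, units $1_{(X,Y)} = 1_X\otimes 1_Y$, and unitors $\mathcal{L}_{\mathcal{C}\times\mathcal{D}} = \mathcal{L}_\mathcal{C}\times\mathcal{L}_\mathcal{D}$, $\mathcal{R}_{\mathcal{C}\times\mathcal{D}} = \mathcal{R}_\mathcal{C}\times\mathcal{R}_\mathcal{D}$. No gaps; the level of detail you propose matches the paper's.
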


\begin{proof}
The first claim is immediate and the second is routine: in general, we interpret $A \otimes_\Bbbk B$ as a $(\mathcal{C} \times \mathcal{D})$-graded algebra by taking $\abs{a \otimes b}_{\mathcal{C} \times \mathcal{D}} := (\abs{a}_\mathcal{C}, \abs{b}_\mathcal{D})$ and defining the multiplication $\mu_{A \otimes_\Bbbk B}: (A \otimes_\Bbbk B) \otimes (A \otimes_\Bbbk B) \to A \otimes_\Bbbk B$ as
\[
\mu_{A \otimes_\Bbbk B} (a_1 \otimes b_1, a_2 \otimes b_2):= \mu_A(a_1, a_2) \otimes \mu_B(b_1, b_2).
\]
Then, for example, check (A.I) by computing
\begin{align*}
	\abs{\mu_{A\otimes B}(a_1 \otimes b_1, a_2 \otimes b_2)}_{\mathcal{C} \times \mathcal{D}} &= \abs{\mu_A(a_1, a_2) \otimes \mu_B(b_1, b_2)}_{\mathcal{C} \times \mathcal{D}}
	\\& = \left( \abs{\mu_A(a_1, a_2)}_\mathcal{C}, \abs{\mu_B(b_1, b_2)}_\mathcal{D} \right) 
	\\& = \left(\abs{a_2}_\mathcal{C} \circ \abs{a_1}_\mathcal{C}, \abs{b_2}_\mathcal{D} \circ \abs{b_1}_\mathcal{D} \right) 
	\\& = (\abs{a_2}_\mathcal{C}, \abs{b_2}_\mathcal{D}) \circ (\abs{a_1}_\mathcal{C}, \abs{b_1}_\mathcal{D}) =: \abs{a_2 \otimes b_2}_{\mathcal{C} \times \mathcal{D}} \circ \abs{a_1 \otimes b_1}_{\mathcal{C} \times \mathcal{D}}.
\end{align*}
Checking (A.II) is also routine. To check (A.III), we note that, as $\mathrm{Mod}^{\mathcal{C} \times \mathcal{D}}$ inherits its coherence isomorphism from $\mathrm{Mod}^\mathcal{C}$ and $\mathrm{Mod}^\mathcal{D}$, its unitors may also be chosen from these categories, defining $\mathcal{L}_{\mathcal{C} \times \mathcal{D}} := \mathcal{L}_\mathcal{C} \times \mathcal{L}_\mathcal{D}$, and similarly for the right unitor $\mathcal{R}_{\mathcal{C} \times \mathcal{D}}$. Also fix unit elements $1_{(X, Y)} \in A_{\mathrm{Id}_{(X, Y)}}$ to be $1_X \otimes 1_Y$, recalling that, by definition, $\mathrm{Id}_{(X, Y)} = (\mathrm{Id}_X, \mathrm{Id}_Y)$. Then the checks required for (A.III) are also straightforward: for example,
\begin{align*}
	\mu_{A \otimes_\Bbbk B} (1_{(X,Y)}, a \otimes b) &= \mu_A(1_X, a) \otimes \mu_B(1_Y, b)
	\\& = \mathcal{L}_\mathcal{C}(\mathrm{Id}_X, \abs{a}_\mathcal{C}) \mathcal{L}_\mathcal{D}(\mathrm{Id}_Y, \abs{b}_\mathcal{D}) a \otimes b
	\\& = \mathcal{L}_{\mathcal{C} \times \mathcal{D}} ((\mathrm{Id}_X, \mathrm{Id}_Y), (\abs{a}_\mathcal{C}, \abs{b}_\mathcal{D})) a \otimes b
	\\& = \mathcal{L}_{\mathcal{C} \times \mathcal{D}} (\mathrm{Id}_{(X,Y)}, \abs{a \otimes b}_{\mathcal{C}\times \mathcal{D}}) a \otimes b.
\end{align*}
The check for $\mathcal{R}_{\mathcal{C} \times \mathcal{D}}$ is totally analagous.
\end{proof}

\subsection{Modules and bimodules}
\label{ss:mods_and_bimods}

In this subsection, we study $\mathcal{C}$-graded modules and bimodules. Our goal is to describe in what way $(A, A)$-bimodules and left $A^e := A \otimes_\Bbbk A^\op$-modules coincide in the $\mathcal{C}$-graded context.

\begin{definition}
\label{def:bims}
Suppose $A$ and $B$ are $\mathcal{C}$-graded algebras. We define a \emph{$\mathcal{C}$-graded $(A, B)$-module} as a $\mathcal{C}$-graded $\Bbbk$-module with graded, $\Bbbk$-linear actions
\[
\rho_L: A \otimes M \to M \qquad \text{and} \qquad \rho_R: M \otimes B \to M
\]
which satisfy the following axioms for each homogeneous $a, a' \in A$, $b, b'\in B$, and $m\in M$.
\begin{enumerate}[label=(B.\Roman*), align=left]
	\item $\rho_L(\mu_A(a,a'), m) = \alpha(\abs{a}, \abs{a'}, \abs{m}) \rho_L(a, \rho_L(a', m))$;
	\item $\rho_R(\rho_R(m,b), b') = \alpha(\abs{m}, \abs{b}, \abs{b'}) \rho_R(m, \mu_A(b,b'))$;
	\item $\rho_R(\rho_L(a,m), b) = \alpha(\abs{a}, \abs{m}, \abs{b}) \rho_L(a, \rho_R(m,b))$;
	\item $\rho_L(1_X, m) = \mathcal{L}(\mathrm{Id}_X, \abs{m}) m$ and $\rho_R(m, 1_Y) = \mathcal{R}(\abs{m}, \mathrm{Id}_Y) m$.
\end{enumerate}
We define a \emph{$\mathcal{C}$-graded left $A$-module (resp. right $B$-module)} as a $\mathcal{C}$-graded $(A, I_\mathcal{C})$-bimodule (resp. $(I_\mathcal{C}, B)$-bimodule).
\end{definition}

Evidently, we can think of a left (resp. right) $\mathcal{C}$-graded $A$-module as a $\mathcal{C}$-graded $\Bbbk$-module with a single graded, $\Bbbk$-linear action $\rho_L$ (resp. $\rho_R$) satisfying (B.I) (resp. (B.II)) and the first (resp. second) half of (B.IV).

\begin{proposition}
$M$ is a $\mathcal{C}$-graded left (resp. right) $A$-module if and only if it is a $\mathcal{C}^\op$-graded right (resp. left) $A^\op$-module.
\end{proposition}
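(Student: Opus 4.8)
The plan is to unwind the definitions on both sides and check that the structure maps translate into one another, much as in the proof that $A^{\op}$ is a $\mathcal{C}^{\op}$-graded algebra. Since a $\mathcal{C}$-graded left $A$-module is, by definition, a $\mathcal{C}$-graded $(A, I_\mathcal{C})$-bimodule, and a $\mathcal{C}^\op$-graded right $A^\op$-module is an $(I_{\mathcal{C}^\op}, A^\op)$-bimodule, the statement amounts to saying: a $\Bbbk$-module with a single graded left action $\rho_L \colon A \otimes M \to M$ satisfying (B.I) and the left half of (B.IV) is the same data as a $\Bbbk$-module with a single graded right action satisfying (B.II) and the right half of (B.IV), for the algebra $A^\op$ graded by $\mathcal{C}^\op$.

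First I would fix the translation of data. Given $(M, \rho_L)$ a $\mathcal{C}$-graded left $A$-module, regard $M$ as a $\mathcal{C}^\op$-graded $\Bbbk$-module via $M = \bigoplus_{g^\op} M_g$, so that $\abs{m}_{\mathcal{C}^\op} = (\abs{m}_\mathcal{C})^\op$, and define a right action of $A^\op$ by $\rho_R(m, a) := \rho_L(a, m)$. This is manifestly $\Bbbk$-linear, and it is $\mathcal{C}^\op$-graded because $\abs{\rho_R(m,a)}_{\mathcal{C}^\op} = (\abs{\rho_L(a,m)}_\mathcal{C})^\op = (\abs{m}_\mathcal{C} \circ \abs{a}_\mathcal{C})^\op = \abs{a}_{\mathcal{C}^\op}\circ \abs{m}_{\mathcal{C}^\op}$, which is exactly the gradedness requirement for a right $A^\op$-action. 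The construction is plainly involutive: applying it to $(M,\rho_R)$ over $A^\op$ and $\mathcal{C}^\op$ returns $(M,\rho_L)$ over $A=(A^\op)^\op$ and $\mathcal{C}=(\mathcal{C}^\op)^\op$, using $(\mathcal{C}^\op)^\op = \mathcal{C}$, $(A^\op)^\op = A$, and the fact (established in the previous proposition's proof) that $\mathcal{L}_{\mathcal{C}^\op} = \mathcal{R}_\mathcal{C}$ and $\mathcal{R}_{\mathcal{C}^\op} = \mathcal{L}_\mathcal{C}$. Hence it suffices to check that the axioms correspond in one direction.

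Next I would verify the axioms. For (B.II) on the $A^\op$-side: using $\rho_R(m,a) = \rho_L(a,m)$, $\mu_{A^\op}(a,a') = \mu_A(a',a)$, and $\alpha^\op(\abs{m}_{\mathcal{C}^\op}, \abs{a}_{\mathcal{C}^\op}, \abs{a'}_{\mathcal{C}^\op}) = \alpha(\abs{a'}_\mathcal{C}, \abs{a}_\mathcal{C}, \abs{m}_\mathcal{C})^{-1}$, the desired identity $\rho_R(\rho_R(m,a),a') = \alpha^\op(\cdots)\,\rho_R(m, \mu_{A^\op}(a,a'))$ becomes $\rho_L(a', \rho_L(a,m)) = \alpha(\abs{a'},\abs{a},\abs{m})^{-1}\rho_L(\mu_A(a',a), m)$, which is precisely (B.I) for $(M,\rho_L)$ with the roles of $a,a'$ renamed and the cocycle moved to the other side. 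For the unit axiom, the right half of (B.IV) on the $A^\op$-side reads $\rho_R(m, 1_Y) = \mathcal{R}_{\mathcal{C}^\op}(\abs{m}_{\mathcal{C}^\op}, \mathrm{Id}_Y)\, m$; since $1_Y$ is the same element in $A$ and $A^\op$, $\rho_R(m,1_Y) = \rho_L(1_Y, m)$, and $\mathcal{R}_{\mathcal{C}^\op} = \mathcal{L}_\mathcal{C}$, this is exactly the left half of (B.IV) for $(M,\rho_L)$ — here one must be careful that the source/target of $\abs{m}$ swap under $\op$, so the object indexing the unitor matches. I do not expect a genuine obstacle here; the only thing requiring care is bookkeeping of which $\alpha$ (or $\alpha^\op$) appears and on which side of the equation, together with the source/target reversal of $\abs{m}$ under $\op$ so that the unitor conventions $\mathcal{L}_{\mathcal{C}^\op}=\mathcal{R}_\mathcal{C}$, $\mathcal{R}_{\mathcal{C}^\op}=\mathcal{L}_\mathcal{C}$ line up correctly. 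The right-module version is obtained by applying the left-module case with $A$ replaced by $A^\op$ and $\mathcal{C}$ by $\mathcal{C}^\op$, or proven symmetrically.
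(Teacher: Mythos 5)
Your proposal is correct and follows essentially the same route as the paper: define $\rho_R(m,a):=\rho_L(a,m)$ with the grading reversed, observe that (B.II) for $A^\op$ over $(\mathcal{C}^\op,\alpha^\op)$ is exactly (B.I) for $A$ with the cocycle inverted, and match the unit axioms via $\mathcal{R}_{\mathcal{C}^\op}=\mathcal{L}_\mathcal{C}$. The explicit gradedness check and the involutivity remark are harmless elaborations of what the paper leaves implicit.
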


\begin{proof}
Assuming $M$ is a $\mathcal{C}$-graded left $A$-module means that it has a left action $\rho_L: A \otimes M \to M$ which satisfies
\[
\rho_L(\mu_A(x,y), m) = \alpha(\abs{x}_\mathcal{C}, \abs{y}_\mathcal{C}, \abs{m}_\mathcal{C}) \rho_L(x, \rho_L(y,m))
\]
and
\[
\rho_L(1_Y, m) = \mathcal{L}(\mathrm{Id}_Y, \abs{m}_\mathcal{C}) m.
\]
We want to show that $M$ has a natural definition as a $\mathcal{C}^\op$-graded right $A^\op$-module. First, if $M = \bigoplus_{g\in \mathrm{Mor}(\mathcal{C})} M_g$, we reverse arrows as before to get an induced grading by $\mathcal{C}^\op$; \textit{i.e.}, $M = \bigoplus_{g^\op \in \mathrm{Mor}(\mathcal{C}^\op)} M_{g^\op}$. Then, define $\rho_R^\op: M \otimes A^\op \to M$ by $\rho_R^\op (m, a) := \rho_L(a, m)$. We compute
\begin{align*}
	\rho_R^\op (\rho_R^\op(m,x), y) &= \rho_L(y, \rho_L(x, m)) 
	\\& = \alpha(\abs{y}_\mathcal{C}, \abs{x}_\mathcal{C}, \abs{m}_\mathcal{C})^{-1} \rho_L(\mu_A(y,x),m) 
	\\& = \alpha^\op (\abs{m}_{\mathcal{C}^\op}, \abs{x}_{\mathcal{C}^\op}, \abs{y}_{\mathcal{C}^\op}) \rho_R^\op(m, \mu_{A^\op}(x,y))
\end{align*}
and
\[
\rho_R^\op(m, 1_X) = \rho_L(1_X, m) = \mathcal{L}_\mathcal{C}(\mathrm{Id}_X, \abs{m}_\mathcal{C}) m = \mathcal{R}_{\mathcal{C}^\op} (\abs{m}_{\mathcal{C}^\op}, \mathrm{Id}_X) m
\]
as desired. The other checks are similar.
\end{proof}

Assume $A$ and $B$ are both $\mathcal{C}$-graded algebras. To conclude this section, we want there to be an equivalence between $\mathcal{C}$-graded $(A, B)$-bimodules and $\mathcal{C}$-graded left $A\otimes_\Bbbk B^\op$-modules. The problem is that our current definition of modules assumes that the algebra and the module share the same grading category---in the latter instance, $A\otimes_\Bbbk B^\op$ is a $\mathcal{C} \times \mathcal{C}^\op$-graded (rather than $\mathcal{C}$-graded) algebra. This prompts the following definition.

\begin{definition}
\label{def:left-Ae-modules}
Fix $\mathcal{C}$-graded algebras $A$ and $B$. Define a \emph{$\mathcal{C}$-graded left $A \otimes_\Bbbk B^\op$-module} to be a $\mathcal{C}$-graded $\Bbbk$-module $M$ with a left, $\Bbbk$-bilinear action map
\[
\rho_L^e: (A \otimes_\Bbbk B^\op) \times M \to M
\]
which is graded in the sense that $\abs{\rho_L^e(a \otimes b, m)} = \abs{b}_\mathcal{C} \circ \abs{m}_\mathcal{C} \circ \abs{a}_\mathcal{C}$, and the following hold.
\begin{enumerate}[label=(E.\Roman*), align=left]
	\item For $a_1, a_2 \in A$, $b_1, b_2\in B^\op$, and $m\in M$ homogeneous,
	\[
	\rho_L^e (\mu_{A \otimes B^\op} (a_1 \otimes b_1, a_2 \otimes b_2), m) = \Delta(\abs{a_1 \otimes b_1}_{\mathcal{C}\times \mathcal{C}^\op}, \abs{a_2 \otimes b_2}_{\mathcal{C}\times\mathcal{C}^\op}, \abs{m}_\mathcal{C}) \rho_L^e(a_1 \otimes b_1, \rho_L^e(a_2 \otimes b_2, m));
	\]
	\item for $(X, Y) \in \mathrm{Ob}(\mathcal{C} \times \mathcal{C}^\op)$, 
	\[
	\rho_L^e(1_{(X, Y)}, m) = \mathcal{L}_\mathcal{C} (\mathrm{Id}_X, \abs{m}_\mathcal{C}) \mathcal{R}_\mathcal{C} (\abs{m}_\mathcal{C}, \mathrm{Id}_Y) m
	\]
\end{enumerate}
where 
\[
\Delta(\abs{a_1 \otimes b_1}_{\mathcal{C}\times \mathcal{C}^\op}, \abs{a_2 \otimes b_2}_{\mathcal{C}\times\mathcal{C}^\op}, \abs{m}_\mathcal{C}) := \alpha(\abs{a_1}, \abs{a_2}, \abs{m}) \alpha(\abs{m} \circ \abs{a_2} \circ \abs{a_1}, \abs{b_2}, \abs{b_1})^{-1} \alpha(\abs{a_1}, \abs{m} \circ \abs{a_2}, \abs{b_2})
\]
with all gradings on the right-hand side taken in $\mathcal{C}$, fixing $\abs{b}_\mathcal{C} := (\abs{b}_{\mathcal{C}^\op})^\op$. When $B = A$, we write $A^e := A \otimes_\Bbbk A^\op$.
\end{definition}

Note that under the canonical identification $\abs{m}_{\mathcal{C}^\op} := (\abs{m}_\mathcal{C})^\op$, 
\[
\mathcal{L}_\mathcal{C} (\mathrm{Id}_X, \abs{m}_\mathcal{C}) \mathcal{R}_\mathcal{C} (\abs{m}_\mathcal{C}, \mathrm{Id}_Y) = \mathcal{L}_{\mathcal{C} \times \mathcal{C}^\op} (\mathrm{Id}_{(X,Y)}, \abs{m}_{\mathcal{C} \times \mathcal{C}^\op}).
\]
In addition, notice that the value denoted $\Delta$ can be obtained in many different ways. Indeed, $\Delta$ is the value associated to \emph{any} path between the red-labeled vertices of the associahedron pictured below.
% https://q.uiver.app/#q=WzAsMTUsWzQsMCwiYSgoYmMpKGRlKSkiXSxbNywxLCJhKCgoYmMpZCllKSJdLFsxLDEsIihhKGJjKSkoZGUpIl0sWzAsMywiKChhYiljKShkZSkiLFswLDYwLDYwLDFdXSxbMiw0LCIoKGEoYmMpKWQpZSJdLFsxLDYsIigoKGFiKWMpZCllIl0sWzgsMywiYSgoYihjZCkpZSkiXSxbNiw0LCIoYSgoYmMpZCkpZSIsWzAsNjAsNjAsMV1dLFs3LDYsIihhKGIoY2QpKSllIl0sWzQsNywiKChhYikoY2QpKWUiXSxbNCwxLCJcXGJ1bGxldCJdLFszLDNdLFsyLDMsIlxcYnVsbGV0Il0sWzYsMywiXFxidWxsZXQiXSxbNCw1LCJcXGJ1bGxldCJdLFs0LDddLFs0LDJdLFszLDJdLFs1LDRdLFs1LDNdLFs3LDFdLFsxLDZdLFs3LDhdLFs4LDZdLFs1LDldLFsyLDBdLFsxLDBdLFs5LDhdLFswLDEwLCIiLDEseyJzdHlsZSI6eyJib2R5Ijp7Im5hbWUiOiJkYXNoZWQifX19XSxbMywxMiwiIiwxLHsic3R5bGUiOnsiYm9keSI6eyJuYW1lIjoiZGFzaGVkIn19fV0sWzYsMTMsIiIsMSx7InN0eWxlIjp7ImJvZHkiOnsibmFtZSI6ImRhc2hlZCJ9fX1dLFs5LDE0LCIiLDEseyJzdHlsZSI6eyJib2R5Ijp7Im5hbWUiOiJkYXNoZWQifX19XSxbMTQsMTMsIiIsMSx7InN0eWxlIjp7ImJvZHkiOnsibmFtZSI6ImRhc2hlZCJ9fX1dLFsxMywxMCwiIiwxLHsic3R5bGUiOnsiYm9keSI6eyJuYW1lIjoiZGFzaGVkIn19fV0sWzEyLDEwLCIiLDEseyJzdHlsZSI6eyJib2R5Ijp7Im5hbWUiOiJkYXNoZWQifX19XSxbMTQsMTIsIiIsMSx7InN0eWxlIjp7ImJvZHkiOnsibmFtZSI6ImRhc2hlZCJ9fX1dXQ==
\[
\begin{tikzcd}[column sep = tiny, row sep = normal, scale cd=0.8]
	&&&& {a((bc)(de))} \\
	& {(a(bc))(de)} &&& \bullet &&& {a(((bc)d)e)} \\
	\\
	\textcolor{rgb,255:red,214;green,92;blue,92}{{((ab)c)(de)}} && \bullet & {} &&& \bullet && {a((b(cd))e)} \\
	&& {((a(bc))d)e} &&&& \textcolor{rgb,255:red,214;green,92;blue,92}{{(a((bc)d))e}} \\
	&&&& \bullet \\
	& {(((ab)c)d)e} &&&&&& {(a(b(cd)))e} \\
	&&&& {((ab)(cd))e}
	\arrow[dashed, from=1-5, to=2-5]
	\arrow[from=2-2, to=1-5]
	\arrow[from=2-8, to=1-5]
	\arrow[from=2-8, to=4-9]
	\arrow[from=4-1, to=2-2]
	\arrow[dashed, from=4-1, to=4-3]
	\arrow[dashed, from=4-3, to=2-5]
	\arrow[dashed, from=4-7, to=2-5]
	\arrow[dashed, from=4-9, to=4-7]
	\arrow[from=5-3, to=2-2]
	\arrow[from=5-3, to=5-7]
	\arrow[from=5-7, to=2-8]
	\arrow[from=5-7, to=7-8]
	\arrow[dashed, from=6-5, to=4-3]
	\arrow[dashed, from=6-5, to=4-7]
	\arrow[from=7-2, to=4-1]
	\arrow[from=7-2, to=5-3]
	\arrow[from=7-2, to=8-5]
	\arrow[from=7-8, to=4-9]
	\arrow[dashed, from=8-5, to=6-5]
	\arrow[from=8-5, to=7-8]
\end{tikzcd}
\]
The edges of the associahedron are understood to be labeled by $\alpha$. To see that any path determines the same value, notice that each pentagonal face commutes by the 3-cocycle relation. The square faces also commute: for example,
\[
\begin{tikzcd}
	& ((a(bc))d)e \arrow[dr, "\alpha(c \circ b \circ a\text{,}\, d\text{,}\, e)"] & 
	\\
	(((ab)c)d)e \arrow[dr, "\alpha(c \circ b \circ a\text{,}\, d\text{,}\, e)"'] \arrow[ur, "\alpha(a\text{,}\, b\text{,}\, c)"] & & (a(bc)) (de)
	\\
	& ((ab)c) (de) \arrow[ur, "\alpha(a\text{,}\, b\text{,}\, c)"'] &
\end{tikzcd}
\]
commutes by the well-definedness of $\alpha$ and the fact that $\alpha$ takes values in a commutative ring. We call this property \emph{distant commutativity} for $\alpha$. These observations imply that the associahedron commutes, as asserted.

\begin{lemma}
\label{lem:bimod_to_mod}
If $A$ and $B$ are $\mathcal{C}$-graded algebras, then any $\mathcal{C}$-graded $(A, B)$-bimodule is a $\mathcal{C}$-graded left $A\otimes_\Bbbk B^\op$-module.
\end{lemma}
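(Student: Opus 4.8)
The plan is to start with a $\mathcal{C}$-graded $(A,B)$-bimodule $M$, equipped with its left and right actions $\rho_L$ and $\rho_R$, and to build out of these a single action $\rho_L^e$ of $A \otimes_\Bbbk B^\op$ that satisfies axioms (E.I) and (E.II). The natural candidate is, of course,
\[
\rho_L^e(a \otimes b, m) := \rho_R(\rho_L(a, m), b),
\]
extended $\Bbbk$-bilinearly. First I would check that this is well-defined and graded in the required sense: since $\abs{\rho_L(a,m)}_\mathcal{C} = \abs{m}_\mathcal{C} \circ \abs{a}_\mathcal{C}$ by (A.I)-type gradedness of $\rho_L$, and then $\abs{\rho_R(\rho_L(a,m), b)}_\mathcal{C} = \abs{b}_\mathcal{C} \circ \abs{m}_\mathcal{C} \circ \abs{a}_\mathcal{C}$ (here using the convention $\abs{b}_\mathcal{C} = (\abs{b}_{\mathcal{C}^\op})^\op$), this matches the stated grading condition for a $\mathcal{C}$-graded left $A \otimes_\Bbbk B^\op$-module. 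The unit axiom (E.II) is immediate from (B.IV): $\rho_L^e(1_X \otimes 1_Y, m) = \rho_R(\rho_L(1_X, m), 1_Y) = \mathcal{L}_\mathcal{C}(\mathrm{Id}_X, \abs{m}_\mathcal{C})\mathcal{R}_\mathcal{C}(\abs{m}_\mathcal{C}, \mathrm{Id}_Y)m$, using that $\rho_L(1_X,m)$ has the same $\mathcal{C}$-degree as $m$.

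The substance of the proof is verifying (E.I). Expanding both sides with the definition of $\rho_L^e$ and of $\mu_{A\otimes B^\op}$, one reduces to comparing
\[
\rho_R\bigl(\rho_L(\mu_A(a_1,a_2), m), \mu_{B^\op}(b_1,b_2)\bigr)
\qquad\text{and}\qquad
\rho_R\bigl(\rho_L(a_1, \rho_R(\rho_L(a_2,m), b_1)), b_2\bigr).
\]
I would move from the left expression to the right by applying the associativity-type axioms (B.I), (B.II), (B.III) one at a time, each introducing a single value of $\alpha$; at the same time I must unwind $\mu_{B^\op}(b_1,b_2) = \mu_B(b_2,b_1)$, so in the $\mathcal{C}^\op$-gradings the $b$'s appear in reverse order, which is exactly what produces the inverse and the reordered arguments in the middle $\alpha$-factor of $\Delta$. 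The cleanest way to organize this is to invoke the \emph{distant commutativity} and the commuting associahedron discussion just above the lemma: both sides correspond to two different bracketings of the word $(ab)m(cd)$-type expression — more precisely, the left side corresponds to the vertex $((a_1 a_2) m)(b_2 b_1)$ and the right to $(a_1((a_2 m) b_1)) b_2$, both among the "red" vertices — so the product of $\alpha$'s accumulated along any rebracketing path between them is the single well-defined value $\Delta(\abs{a_1\otimes b_1}, \abs{a_2 \otimes b_2}, \abs{m})$. I would present one explicit rebracketing path to pin down signs, then appeal to the associahedron for path-independence.

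The main obstacle I expect is bookkeeping the $\mathcal{C}$ versus $\mathcal{C}^\op$ gradings consistently — in particular, making sure that when the right-action associativity (B.II) is applied, the arguments of $\alpha$ are the $\mathcal{C}$-degrees $\abs{m}_\mathcal{C}, \abs{b_1}_\mathcal{C}, \abs{b_2}_\mathcal{C}$ in the correct order, and that these reassemble (after the $b_1 \leftrightarrow b_2$ transposition coming from $\mu_{B^\op}$) into the factor $\alpha(\abs{m}\circ\abs{a_2}\circ\abs{a_1}, \abs{b_2}, \abs{b_1})^{-1}$ appearing in $\Delta$; the inverse is the subtle point, and it is precisely the phenomenon already flagged in the proof that $A^\op$ is $\mathcal{C}^\op$-graded ("this is why we must invert the associator"). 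Once the three axioms (B.I)–(B.III) are deployed in the right order and the degrees are matched against the three factors of $\Delta$, the identity follows, and the lemma is proved. (The converse direction, recovering a bimodule from such a module, is the content of a separate statement and is not needed here.)
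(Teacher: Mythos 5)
Your proposal follows exactly the paper's proof: the same candidate action $\rho_L^e(a\otimes b, m) := \rho_R(\rho_L(a,m),b)$, the same immediate check of (E.II) from (B.IV), and the same verification of (E.I) by applying (B.I), (B.II), (B.III) in sequence so that the accumulated $\alpha$-factors assemble into $\Delta$, whose well-definedness is the commuting-associahedron observation. One bookkeeping slip to fix when you write it out: with your definition of $\rho_L^e$, the right-hand side of (E.I) expands to $\rho_R\bigl(\rho_L(a_1, \rho_R(\rho_L(a_2,m), b_2)), b_1\bigr)$, i.e.\ the vertex $(a_1((a_2 m) b_2)) b_1$, not $(a_1((a_2 m) b_1)) b_2$ as written — $b_2$ acts innermost and $b_1$ outermost.
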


\begin{proof}
This is rigged to work. Given a $\mathcal{C}$-graded $(A, B)$-bimodule $M$, we give it the structure of a $\mathcal{C}$-graded left $A \otimes_\Bbbk B^\op$-module by defining $\rho_L^e: (A \otimes_\Bbbk B^\op) \otimes M \to M$ to be
\[
\rho_L^e(a \otimes b, m) := \rho_R(\rho_L(a,m), b).
\]
To verify (E.I), we compute
\begin{align*}
	\rho_L^e(\mu_{A \otimes_\Bbbk B^\op}(a_1 \otimes b_1, a_2 \otimes b_2), m) &= \rho_R( \rho_L(\mu_A(a_1, a_2), m), \mu_A(b_2, b_1))
		\\ & = \alpha(\abs{a_1}, \abs{a_2}, \abs{m}) 
			\rho_R( \rho_L(a_1, \rho_L(a_2, m)), \mu_A(b_2, b_1))
		\\ & = \alpha(\abs{a_1}, \abs{a_2}, \abs{m}) 
			\alpha(\abs{m} \circ \abs{a_2} \circ \abs{a_1}, \abs{b_1}, \abs{b_2})^{-1}
			\\ & \phantom{{}={}} \rho_R( \rho_R(\rho_L(a_1, \rho_L(a_2, m)), b_2), b_1)
		\\ & = \alpha(\abs{a_1}, \abs{a_2}, \abs{m}) 
			\alpha(\abs{m} \circ \abs{a_2} \circ \abs{a_1}, \abs{b_1}, \abs{b_2})^{-1}
			\alpha(\abs{a_1}, \abs{m} \circ \abs{a_2}, \abs{b_2})
			\\ & \phantom{{}={}} \rho_R( \rho_L(a_1, \rho_R(\rho_L(a_2, m), b_2)), b_1)
		\\ & = \Delta(\abs{a_1 \otimes b_1}, \abs{a_2 \otimes b_2}, \abs{m})
			\rho_L^e(a_1 \otimes b_1, \rho_L^e(a_2 \otimes b_2, m))
\end{align*}
as desired. For (E.II), we fix $\abs{m}: X\to Y$ and compute that
\[
\rho_L^e(1_{(X, Y)}, m) = \rho_L^e(1_X \otimes 1_Y, m) = \mathcal{L}(\mathrm{Id}_X, \abs{m}) \mathcal{R}(\abs{m}, \mathrm{Id}_Y) m
\]
as well.
\end{proof}

Interestingly, for the other direction to work, we need to assume that the unitors of $\mathrm{Mod}^\mathcal{C}$ are typical (see equations (\ref{eq:typicalL}) and (\ref{eq:typicalR})).

\begin{proposition}
\label{prop:bimod=mod}
Suppose that $A$ and $B$ are $\mathcal{C}$-graded algebras, and that the unitors of $\mathrm{Mod}^\mathcal{C}$ are the typical unitors induced by $\alpha$. Then, every $\mathcal{C}$-graded left $A\otimes_\Bbbk B^\op$-module can be given the structure of a $\mathcal{C}$-graded $(A, B)$-bimodule, and vice-versa.
\end{proposition}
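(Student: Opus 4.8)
Since Lemma~\ref{lem:bimod_to_mod} already supplies the passage from a $\mathcal{C}$-graded $(A,B)$-bimodule to a $\mathcal{C}$-graded left $A\otimes_\Bbbk B^\op$-module, the content of the proposition is the reverse construction, so that is what I would build. Given a $\mathcal{C}$-graded left $A\otimes_\Bbbk B^\op$-module $(M,\rho_L^e)$ and a homogeneous $m\in M$ with $\abs m\colon X\to Y$, the plan is to define
\[
\rho_L(a,m):=\mathcal{R}_\mathcal{C}(\abs m\circ\abs a,\mathrm{Id}_Y)^{-1}\,\rho_L^e(a\otimes 1_Y,m),\qquad
\rho_R(m,b):=\mathcal{L}_\mathcal{C}(\mathrm{Id}_X,\abs m)^{-1}\,\rho_L^e(1_X\otimes b,m),
\]
extended $\Bbbk$-bilinearly, where $1_Y\in B_{\mathrm{Id}_Y}\subset B^\op$ and $1_X\in A_{\mathrm{Id}_X}$ are the idempotent units and $\mathcal{L}_\mathcal{C},\mathcal{R}_\mathcal{C}$ are the typical unitors. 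These formulas are \emph{forced}: applying the construction of Lemma~\ref{lem:bimod_to_mod} to a genuine bimodule and using axiom (B.IV) gives $\rho_L^e(a\otimes 1_Y,m)=\mathcal{R}_\mathcal{C}(\abs m\circ\abs a,\mathrm{Id}_Y)\rho_L(a,m)$ and $\rho_L^e(1_X\otimes b,m)=\mathcal{L}_\mathcal{C}(\mathrm{Id}_X,\abs m)\rho_R(m,b)$, so the correction scalars are exactly the ones needed. Since $\abs{\rho_L^e(a\otimes 1_Y,m)}=\mathrm{Id}_Y\circ\abs m\circ\abs a=\abs m\circ\abs a$, which is the $\mathcal{C}$-degree of $a\otimes m$ in $A\otimes M$, the map $\rho_L$ is graded, and symmetrically for $\rho_R$.

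The substance is then checking axioms (B.I)--(B.IV). Axiom (B.IV) falls out directly: $\rho_L(1_X,m)=\mathcal{R}_\mathcal{C}(\abs m,\mathrm{Id}_Y)^{-1}\rho_L^e(1_{(X,Y)},m)$, and feeding in (E.II) together with the typical-unitor formulas $\mathcal{L}_\mathcal{C}(\mathrm{Id}_X,\abs m)=\alpha(\mathrm{Id}_X,\mathrm{Id}_X,\abs m)^{-1}$ and $\mathcal{R}_\mathcal{C}(\abs m,\mathrm{Id}_Y)=\alpha(\abs m,\mathrm{Id}_Y,\mathrm{Id}_Y)$ cancels the spurious unitor, leaving $\mathcal{L}_\mathcal{C}(\mathrm{Id}_X,\abs m)m$; the computation for $\rho_R(m,1_Y)$ is identical. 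For (B.I) I would first note that $\mu_A(a,a')\otimes 1_Y=\mu_{A\otimes_\Bbbk B^\op}(a\otimes 1_Y,a'\otimes 1_Y)$---this is exactly where typicality of the unitors is used, since it is what makes $\mu_{B^\op}(1_Y,1_Y)=1_Y$---and then apply (E.I). The factor $\Delta$ that comes out has three $\alpha$-terms, and Lemma~\ref{lem:assocomps}(iii) collapses two of them to $\mathcal{R}_\mathcal{C}(\abs m\circ\abs{a'},\mathrm{Id}_Y)^{-1}$; the remaining term is precisely the $\alpha(\abs a,\abs{a'},\abs m)$ demanded by (B.I), and the unitor scalars produced by expanding $\rho_L(a,\rho_L(a',m))$ on the other side match the leftover one. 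Axiom (B.II) is the mirror computation for $\rho_R$, using Lemma~\ref{lem:assocomps}(ii) in place of (iii). For the mixed axiom (B.III) I would insert $1_Y$ on the left and $1_X$ on the right, unravel each side with (E.I), and simplify the two resulting copies of $\Delta$ using Lemma~\ref{lem:assocomps}(i)--(iv); the outcome is that $\rho_R(\rho_L(a,m),b)$ and $\rho_L(a,\rho_R(m,b))$ both reduce to multiples of the ``naive'' mixed action $\rho_L^e(a\otimes b,m)$, with coefficients $1$ and $\alpha(\abs a,\abs m,\abs b)^{-1}$ respectively, so (B.III) holds exactly because of the associator factor it carries. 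Finally, the two constructions are mutually inverse: one direction is recorded above, and the reverse roundtrip is the special case $\rho_R(\rho_L(a,m),b)=\rho_L^e(a\otimes b,m)$ of the (B.III) computation; hence we obtain a genuine bijective correspondence.

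The real obstacle is bookkeeping rather than ideas. Every step generates a thicket of values of $\alpha$ on paths through the four objects $W\to X\to Y\to Z$ (the endpoints of $\abs a$, $\abs m$, $\abs b$) interlaced with unitor scalars, and one must be careful to see how Lemma~\ref{lem:assocomps} collapses the $\Delta$-factors of (E.I) against the correction scalars built into $\rho_L$ and $\rho_R$ so that only the single associator value named in each of (B.I)--(B.III) remains. The mixed axiom (B.III) is the delicate case: neither unit insertion is ``trivial'' there, and it is the middle term of $\Delta$---not an outer term, as in (B.I) and (B.II)---that must be matched. It is also precisely the place where a non-typical choice of unitors would leave stray factors $\mathcal{L}_\mathcal{C}(\mathrm{Id}_X,\mathrm{Id}_X)^{\pm 1}$ that cannot be absorbed, which accounts for the hypothesis on the unitors.
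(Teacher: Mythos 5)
Your proposal is correct and follows essentially the same route as the paper: the same forced formulas $\rho_L(a,m)=\mathcal{R}(\abs{m}\circ\abs{a},\mathrm{Id}_Y)^{-1}\rho_L^e(a\otimes 1_Y,m)$ and $\rho_R(m,b)=\mathcal{L}(\mathrm{Id}_X,\abs{m})^{-1}\rho_L^e(1_X\otimes b,m)$, the same unit-insertion plus (E.I)/$\Delta$ expansions collapsed by Lemma \ref{lem:assocomps}, and the same use of typicality (e.g.\ $\mu_{B^\op}(1_Y,1_Y)=1_Y$). Your observation that the nontrivial roundtrip is exactly the identity $\rho_R(\rho_L(a,m),b)=\rho_L^e(a\otimes b,m)$ already established inside the (B.III) computation is a mild streamlining of the paper's separate inverse check, not a different argument.
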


\begin{proof}
The vice-versa statement is a corollary of Lemma \ref{lem:bimod_to_mod}. Otherwise, assume $M$ is a $\mathcal{C}$-graded $A\otimes_\Bbbk B^\op$-module. If $\abs{m}: X \to Y$, define
\[
\rho_L(a, m) := \mathcal{R}(\abs{m} \circ \abs{a}, \mathrm{Id}_Y)^{-1} \rho_L^e(a \otimes 1_Y, m) \qquad \text{and} \qquad \rho_R(m, b) := \mathcal{L}(\mathrm{Id}_X, \abs{m})^{-1} \rho_L^e(1_X \otimes b, m).
\]

First we check that the axioms of a $\mathcal{C}$-graded $(A, B)$-bimodule are satisfied. We take the time to perform the checks arduously as to not take the result for granted, although the entire proof might be a bit pedantic. To check (B.I), assume that $a_1, a_2 \in A$ and $m \in M$ are homogeneous so that
\[
\begin{tikzcd}
	W \arrow[r, "\abs{a_1}"] & X \arrow[r, "\abs{a_2}"] & Y \arrow[r, "\abs{m}"] & Z
\end{tikzcd}
\]
and compute
\begin{align*}
	\rho_L(\mu_A (a_1, a_2), m) &= \mathcal{R}(\abs{m} \circ \abs{a_2} \circ \abs{a_1}, \mathrm{Id}_z)^{-1} \rho_L^e(\mu_A(a_1, a_2) \otimes 1_Z, m)
		\\ & = \mathcal{R}(\abs{m} \circ \abs{a_2} \circ \abs{a_1}, \mathrm{Id}_z)^{-1} 
			\rho_L^e (\mu_A(a_1, a_2) \otimes \mu_{B^{\op}}(1_Z, 1_Z), m)
		\\ & = \mathcal{R}(\abs{m} \circ \abs{a_2} \circ \abs{a_1}, \mathrm{Id}_z)^{-1} 
			\rho_L^e (\mu_{A \otimes_\Bbbk B^{\op}}(a_1 \otimes 1_Z, a_2 \otimes 1_Z), m)
		\\ & = \mathcal{R}(\abs{m} \circ \abs{a_2} \circ \abs{a_1}, \mathrm{Id}_z)^{-1}
			\Delta(\abs{a_1 \otimes 1_Z}, \abs{a_2 \otimes 1_Z}, \abs{m})
			\rho_L^e(a_1 \otimes 1_Z, \rho_L^e(a_2 \otimes 1_Z, m))
		\\ & = \mathcal{R}(\abs{m} \circ \abs{a_2} \circ \abs{a_1}, \mathrm{Id}_z)^{-1}
			\Delta(\abs{a_1 \otimes 1_Z}, \abs{a_2 \otimes 1_Z}, \abs{m})
			\mathcal{R}(\abs{m} \circ \abs{a_2}, \mathrm{Id}_Z)
			\\ & \phantom{{}={}} \mathcal{R}(\abs{\rho_L^e(a_2 \otimes 1_Z, m)} \circ \abs{a_1}, \mathrm{Id}_Z) \rho_L(a_1, \rho_L(a_2, m)).
\end{align*}
Notice that the second equality assumes that the unitors are typical. The first and the last term written as a function of $\mathcal{R}$ cancel each other since $\abs{\rho_L^e(a_1 \otimes 1_Z, m)} = \abs{m} \circ \abs{a_2}$. Expanding the remaining terms, $\Delta(\abs{a_1 \otimes 1_Z}, \abs{a_2 \otimes 1_Z}, \abs{m})$ and $\mathcal{R}(\abs{m} \circ \abs{a_2}, \mathrm{Id}_Z)$, in terms of $\alpha$ (using the fact that the right unitor is the typical one induced by $\alpha$), we obtain
\[
\alpha(\abs{a_1}, \abs{a_2}, \abs{m}) \underbrace{\alpha(\abs{m} \circ \abs{a_2} \circ \abs{a_1}, \mathrm{Id}_Z, \mathrm{Id}_Z)^{-1} \alpha(\abs{a_1}, \abs{m}\circ \abs{a_2}, \mathrm{Id}_Z) \alpha(\abs{m} \circ \abs{a_2}, \mathrm{Id}_Z, \mathrm{Id}_Z)}_{(*)}.
\]
Then, the terms labeled $(*)$ cancel by (iii) of Lemma \ref{lem:assocomps}, and we have that 
\[
\rho_L(\mu_A(a_1, a_2), m) = \alpha(\abs{a_1}, \abs{a_2}, \abs{m}) \rho_L(a_1, \rho_L(a_2, m))
\]
as desired.

The argument for axiom (B.II) is very similar. Assume that $b_1, b_2 \in B$ and $m\in M$ are homogeneous so that
\[
\begin{tikzcd}
	W \arrow[r, "\abs{m}"] & X \arrow[r, "\abs{b_2}"] & Y \arrow[r, "\abs{b_1}"] & Z.
\end{tikzcd}
\]
We leave it to the reader to verify that
\begin{align*}
\rho_R(\rho_R(m, b_2), b_1) & = \mathcal{L}(\mathrm{Id}_W, \abs{m})^{-1}
						\mathcal{L}(\mathrm{Id}_W, \abs{b_2} \circ \abs{m})^{-1}
						\Delta(\abs{1_W \otimes b_1}, \abs{1_W \otimes b_2}, \abs{m})^{-1}
						\mathcal{L}(\mathrm{Id}_W, \abs{m})
						\\ & \phantom{{}={}} \rho_R(m, \mu_A(b_2, b_1)).
\end{align*}
The first and the last term which appear as a function of $\mathcal{L}$ cancel. Then, expanding the rest in terms of $\alpha$ gives
\[
\underbrace{\alpha(\mathrm{Id}_W, \mathrm{Id}_W, \abs{b_2} \circ \abs{m}) \alpha(\mathrm{Id}_W, \mathrm{Id}_W, \abs{m})^{-1}}_{(*)} \alpha(\abs{m}, \abs{b_2}, \abs{b_1}) \underbrace{\alpha(\mathrm{Id}_W, \abs{m}, \abs{b_2})^{-1}}_{(*)}.
\]
The terms labeled $(*)$ cancel by (ii) of Lemma \ref{lem:assocomps}, so we are left with the desired result.

Axiom (B.III) also follows from similar ideas, but requires a little more computation. Now, pick $a\in A$, $b\in B$, and $m\in M$ homogeneous so that
\[
\begin{tikzcd}
	W \arrow[r, "\abs{a}"] & X \arrow[r, "\abs{m}"] & Y \arrow[r, "\abs{b}"] & Z.
\end{tikzcd}
\]
We want to show that 
\[
\rho_R(\rho_L(a,m), b) = \alpha(\abs{a}, \abs{m}, \abs{b}) \rho_L(a, \rho_R(m,b)).
\]
On one hand, we compute
\begin{align*}
	\rho_R(\rho_L(a,m), b) & = \mathcal{R}(\abs{m} \circ \abs{a}, \mathrm{Id}_Y)^{-1} 
						\mathcal{L}(\mathrm{Id}_W, \abs{m} \circ \abs{a})^{-1}
						\rho_L^e(1_W \otimes b, \rho_L^e(a \otimes 1_Y, m))
					\\ & = \mathcal{R}(\abs{m} \circ \abs{a}, \mathrm{Id}_Y)^{-1} 
						\mathcal{L}(\mathrm{Id}_W, \abs{m} \circ \abs{a})^{-1}
						\Delta(\abs{1_W \otimes b}, \abs{a \otimes 1_Y}, \abs{m})^{-1}
						\\ & \phantom{{}={}} \rho_L^e(\mu_{A \otimes_\Bbbk B^\op}(1_W \otimes b, a \otimes 1_Y), m)
					\\ & = \mathcal{R}(\abs{m} \circ \abs{a}, \mathrm{Id}_Y)^{-1} 
						\mathcal{L}(\mathrm{Id}_W, \abs{m} \circ \abs{a})^{-1}
						\Delta(\abs{1_W \otimes b}, \abs{a \otimes 1_Y}, \abs{m})^{-1}
						\\ & \phantom{{}={}} \rho_L^e(\mu_A(1_W, a) \otimes \mu_A(1_Y, b)), m)
					\\ & = \mathcal{R}(\abs{m} \circ \abs{a}, \mathrm{Id}_Y)^{-1} 
						\mathcal{L}(\mathrm{Id}_W, \abs{m} \circ \abs{a})^{-1}
						\Delta(\abs{1_W \otimes b}, \abs{a \otimes 1_Y}, \abs{m})^{-1}
						\mathcal{L}(\mathrm{Id}_W, \abs{a}) \mathcal{L}(\mathrm{Id}_Y, \abs{b})
						\\ & \phantom{{}={}} \rho_L^e(a \otimes b, m).
\end{align*}
Expanding the values on the last line in terms of $\alpha$, we are left with the product
\begin{align*}
	&\underbrace{\alpha(\abs{m} \circ \abs{a}, \mathrm{Id}_Y, \mathrm{Id}_Y)^{-1}}_{(*)} \underbrace{\alpha(\mathrm{Id}_W, \mathrm{Id}_W, \abs{m} \circ \abs{a})}_{(**)}
	\\ & \underbrace{\alpha(\mathrm{Id}_W, \abs{a}, \abs{m})^{-1}}_{(**)}\underbrace{\alpha(\abs{m} \circ \abs{a}, \mathrm{Id}_Y, \abs{b})}_{(*)} \underbrace{\alpha(\mathrm{Id}_W, \abs{m} \circ \abs{a}, \mathrm{Id}_Y)^{-1}}_{(***)}
	\\ & \underbrace{\alpha(\mathrm{Id}_W, \mathrm{Id}_W, \abs{a})^{-1}}_{(**)} \underbrace{\alpha(\mathrm{Id}_Y, \mathrm{Id}_Y, \abs{b})^{-1}}_{(*)}.
\end{align*}
The terms marked by $(*)$ cancel by (iv) of Lemma \ref{lem:assocomps}, those marked by $(**)$ cancel by (ii), and the term marked by $(***)$ is trivial by (i). On the other hand, one can verify in the same way that
\begin{align*}
\rho_L(a, \rho_R(m,b)) & = \mathcal{L}(\mathrm{Id}_X, \abs{m})^{-1}
					\mathcal{R}(\abs{b} \circ \abs{m} \circ \abs{a}, \mathrm{Id}_Z)^{-1}
					\Delta(\abs{a \otimes 1_Z}, \abs{1_X \otimes b}, \abs{m})^{-1}
					\mathcal{R}(\abs{a}, \mathrm{Id}_X)
					\mathcal{R}(\abs{b}, \mathrm{Id}_Z)
					\\ & \phantom{{}={}} \rho_L^e(a \otimes b, m).
\end{align*}
Then, expanding in terms of $\alpha$, we have
\begin{align*}
	& \underbrace{\alpha(\mathrm{Id}_X, \mathrm{Id}_X, \abs{m})}_{(*)} \underbrace{\alpha(\abs{b} \circ \abs{m} \circ \abs{a}, \mathrm{Id}_Z, \mathrm{Id}_Z)^{-1}}_{(**)}
	\\ & \underbrace{\alpha(\abs{a}, \mathrm{Id}_X, \abs{m})^{-1}}_{(*)} \underbrace{\alpha(\abs{m} \circ \abs{a}, \abs{b}, \mathrm{Id}_Z)}_{(**)}\underbrace{\alpha(\abs{a}, \abs{m} \abs{b})^{-1}}_{(***)}
	\\ & \underbrace{\alpha(\abs{a}, \mathrm{Id}_X, \mathrm{Id}_X)}_{(*)} \underbrace{\alpha(\abs{b}, \mathrm{Id}_Z, \mathrm{Id}_Z)}_{(**)}.
\end{align*}
The terms marked by $(*)$ cancel by (iv) and the terms marked by $(**)$ cancel by (iii) of Lemma \ref{lem:assocomps}. The term marked by $(***)$ remains, and we are left with the desired equality.

Axiom (B.IV) is quickly verified. If $\abs{m}: X\to Y$, recall that $1_X \otimes 1_Y = 1_{(X,Y)}$ and 
\[
\rho_L^e(1_{(X, Y)}, m) = \mathcal{L}(\mathrm{Id}_X, \abs{m}) \mathcal{R}(\abs{m}, \mathrm{Id}_Y)m.
\]
Then
\[
\rho_L(1_X, m) = \mathcal{R}(\abs{m}, \mathrm{Id}_Y)^{-1} \rho_L(1_X \otimes 1_Y, m) = \mathcal{L}(\mathrm{Id}_X, \abs{m}) m
\]
and
\[
\rho_R(m, 1_Y) = \mathcal{L}(\mathrm{Id}_X, \abs{m})^{-1} \rho_L^e(1_X \otimes 1_Y, m) = \mathrm{R}(\abs{m}, \mathrm{Id}_Y)m
\]
as desired.

Finally, we check that this assignment is inverse to the one $\rho_L^e(a \otimes b, m):= \rho_R(\rho_L(a,m), b)$ in the proof of Lemma \ref{lem:bimod_to_mod}. Per usual, one direction is rigged to work: we have
\[
\rho_L(a,m) = \mathcal{R}(\abs{m} \circ \abs{a}, \mathrm{Id}_Y)^{-1} \rho_L^e(a \otimes 1_Y, m) = \mathcal{R}(\abs{m} \circ \abs{a}, \mathrm{Id}_Y)^{-1} \rho_R(\rho_L(a,m), 1_Y) = \rho_L(a,m),
\]
since $\abs{\rho_L(a, m)} = \abs{m} \circ \abs{a}$, and
\[
\rho_R(m, b) = \mathcal{L}(\mathrm{Id}_X, \abs{m})^{-1} \rho_L^e(1_X \otimes b, m) = \mathcal{L}(\mathrm{Id}_X, \abs{m})^{-1} \rho_R(\rho_L(1_X, m), b) = \rho_R(m, b).
\]
For the other direction, we have to assume that the unitors are the typical ones induced by $\alpha$. We assume the relevant gradings fit into the diagram 
\[
\begin{tikzcd}
W \arrow[r, "\abs{a}"] \arrow[loop above, "\mathrm{Id}_W"] & X \arrow[r, "\abs{m}"] & Y \arrow[r, "\abs{a'}"] \arrow[loop above, "\mathrm{Id}_Y"] & Z.
\end{tikzcd}
\]
First, we compute
\begin{align*}
	\rho_L^e(a \otimes b, m) &= \rho_R(\rho_L(a,m), b) 
		\\ &= \mathcal{L}(\mathrm{Id}_W, \abs{m} \circ \abs{a})^{-1}
		\rho_L^e(1_W \otimes b, \rho_L(a,m))
		\\ &= \mathcal{L}(\mathrm{Id}_W, \abs{m} \circ \abs{a})^{-1}
			\mathcal{R}(\abs{m} \circ \abs{a}, \mathrm{Id}_Y)^{-1}
		\rho_L^e(1_W \otimes b, \rho_L^e(a \otimes 1_Y, m))
		\\ &= \mathcal{L}(\mathrm{Id}_W, \abs{m} \circ \abs{a})^{-1}
			\mathcal{R}(\abs{m} \circ \abs{a}, \mathrm{Id}_Y)^{-1}
			\Delta(\abs{1_W \otimes b}, \abs{a \otimes 1_Y}, \abs{m})^{-1}
			\\ & \phantom{{}={}} \rho_L^e(\mu_{A \otimes_\Bbbk B^{\op}} (1_W \otimes b, a \otimes 1_Y), m)
		\\ &= \mathcal{L}(\mathrm{Id}_W, \abs{m} \circ \abs{a})^{-1}
			\mathcal{R}(\abs{m} \circ \abs{a}, \mathrm{Id}_Y)^{-1}
			\Delta(\abs{1_W \otimes b}, \abs{a \otimes 1_Y}, \abs{m})^{-1}
			\mathcal{L}(\mathrm{Id}_W, \abs{a})
			\mathcal{L}(\mathrm{Id}_Y, \abs{b})
			\\ & \phantom{{}={}} \rho_L^e(a \otimes b, m)
\end{align*}
where all gradings, apart from the first two entries of $\Delta$, are taken in $\mathcal{C}$. Now we rewrite all the terms of the last line in terms of the associator to get the product
\begin{align*}
&\underbrace{\alpha(\mathrm{Id}_W, \mathrm{Id}_W, \abs{m} \circ \abs{a})}_{(*)} \underbrace{\alpha(\abs{m} \circ \abs{a}, \mathrm{Id}_Y, \mathrm{Id}_Y)^{-1}}_{(**)}\\
&\underbrace{\alpha(\mathrm{Id}_W, \abs{a}, \abs{m})^{-1}}_{(*)} \underbrace{\alpha(\abs{m} \circ \abs{a}, \mathrm{Id}_Y, \abs{b})}_{(**)} \underbrace{\alpha(\mathrm{Id}_W, \abs{m} \circ \abs{a}, \mathrm{Id}_Y)^{-1}}_{(***)}\\
&\underbrace{\alpha(\mathrm{Id}_W, \mathrm{Id}_W, \abs{a})^{-1}}_{(*)} \underbrace{\alpha(\mathrm{Id}_Y, \mathrm{Id}_Y, \abs{b})^{-1}}_{(**)}
\end{align*}
Then, the terms labeled $(*)$ cancel by (ii) of Lemma \ref{lem:assocomps}, the terms labeled $(**)$ cancel by (iv) of Lemma \ref{lem:assocomps}, and the term labeled $(***)$ is trivial by (i) of Lemma \ref{lem:assocomps}.
\end{proof}

One can also define $\mathcal{C}$-graded \emph{right $A \otimes_\Bbbk B^\op$-modules}, as well as $A^\op \otimes_\Bbbk B$-modules. Notice that, for a right $A\otimes_\Bbbk B^\op$-module with action map
	\[
	\rho_R^e(m, a\otimes b^\op) := \rho_L(b, \rho_R(m,a)),
	\]
the $\Delta$-value of axiom (E.I) in Definition \ref{def:left-Ae-modules} is replaced by the value assoicated to any path
\[
\begin{tikzcd}
b_2 (( b_1 ( m a_1)) a_2) \arrow[r] & (b_2 b_1) (m (a_1 a_2)).
\end{tikzcd}
\]
Also note that, in the case of $A^\op \otimes_\Bbbk B$-modules, the grading of the action map is reversed. Adjusting Definition \ref{def:left-Ae-modules} accordingly, the following is straightforward.

\begin{proposition}
If $A$ and $B$ are $\mathcal{C}$-graded algebras and $M$ is a $\mathcal{C}$-graded $(A, B)$-bimodule, then $M$ also has the structure of a
\begin{itemize}
	\item $\mathcal{C}$-graded left $A \otimes_\Bbbk B^\op$-module by
	\[
	\rho_L^e(a \otimes b^\op, m) := \rho_R(\rho_L(a, m), b);
	\]
	\item $\mathcal{C}$-graded right $A^\op \otimes_\Bbbk B$-module by
	\[
	\rho_R^e(m, a^\op \otimes b) := \rho_L(a, \rho_R(m,b));
	\]
	\item $\mathcal{C}$-graded left $A^\op \otimes_\Bbbk B$-module by
	\[
	\rho_L^e(a^\op \otimes b, m) := \rho_R(\rho_L(b,m), a);
	\]
	\item $\mathcal{C}$-graded right $A \otimes_\Bbbk B^\op$-module by
	\[
	\rho_R^e(m, a\otimes b^\op) := \rho_L(b, \rho_R(m,a)).
	\]
\end{itemize}
If unitors are the typical ones induced by $\alpha$, then each of the structures listed above give rise to a $\mathcal{C}$-graded $(A, B)$-bimodule.
\end{proposition}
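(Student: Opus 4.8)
The plan is to bootstrap from the single case already established in Lemma \ref{lem:bimod_to_mod}—which is precisely the first bullet—by exploiting the symmetry of the setup under the two ``op'' operations developed in Subsection \ref{ss:algebras_and_ops}. First I would record a bimodule version of the left/right opposite correspondence already proven for one-sided modules: a $\mathcal{C}$-graded $(A,B)$-bimodule $M$ is canonically the same data as a $\mathcal{C}^\op$-graded $(B^\op, A^\op)$-bimodule, obtained by reversing the grading and interchanging $\rho_L$ and $\rho_R$. The point is that axioms (B.I) and (B.II) get swapped with $\alpha$ replaced by $\alpha^\op = \alpha^{-1}$, axiom (B.III) is symmetric under this replacement, and (B.IV) uses $\mathcal{L}_{\mathcal{C}^\op} = \mathcal{R}_\mathcal{C}$; this is the same argument used for the one-sided statement, run on both sides simultaneously.

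Next I would combine this with Lemma \ref{lem:bimod_to_mod}, the one-sided opposite correspondence, and the identities $(\mathcal{C}\times\mathcal{D})^\op = \mathcal{C}^\op\times\mathcal{D}^\op$ and $(A\otimes_\Bbbk B)^\op = A^\op\otimes_\Bbbk B^\op$. For instance, the second bullet (a right $A^\op\otimes_\Bbbk B$-module) is obtained by viewing $M$ as a $\mathcal{C}^\op$-graded $(B^\op, A^\op)$-bimodule, applying Lemma \ref{lem:bimod_to_mod} to get a $\mathcal{C}^\op$-graded left $B^\op\otimes_\Bbbk A$-module, and then passing back across the one-sided opposite correspondence; unwinding the composite of these identifications yields exactly the asserted action $\rho_R^e(m, a^\op\otimes b) := \rho_L(a,\rho_R(m,b))$. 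The third and fourth bullets are entirely analogous, differing only in which tensor factor one dualizes first. In each case the coherence scalar playing the role of $\Delta$ in axiom (E.I) becomes the value attached to the indicated path in the associahedron—e.g. any path $b_2((b_1(ma_1))a_2)\to (b_2 b_1)(m(a_1 a_2))$ for the last bullet—and well-definedness (independence of the path) is exactly the distant-commutativity observation made after Definition \ref{def:left-Ae-modules}: every pentagonal face commutes by the 3-cocycle condition and every square face commutes because $\Bbbk^\times$ is abelian.

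Rather than chase identifications, an equally valid and arguably more transparent route is to repeat, mutatis mutandis, the short computation in the proof of Lemma \ref{lem:bimod_to_mod} in each of the three remaining cases: expand the relevant $\rho^e(\mu(\cdots),m)$ by iterated application of (B.I)--(B.III), collect the resulting three $\alpha$-factors, and recognize their product as the claimed $\Delta$-type scalar. The final sentence of the proposition—that, when the unitors are the typical ones induced by $\alpha$, each of these structures comes from a genuine $(A,B)$-bimodule—follows by substituting Proposition \ref{prop:bimod=mod} for Lemma \ref{lem:bimod_to_mod} at the corresponding step, since Proposition \ref{prop:bimod=mod} already runs the construction in both directions under that hypothesis, and the opposite correspondences above carry typical unitors to typical unitors (using $\mathcal{L}_{\mathcal{C}^\op} = \mathcal{R}_\mathcal{C}$ together with equations (\ref{eq:typicalL}) and (\ref{eq:typicalR})).

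The main obstacle I anticipate is purely bookkeeping: keeping the four grading conventions straight—remembering that passing to $A^\op$ both reverses arrows and inverts the associator, and that for an $A^\op\otimes_\Bbbk B$-module the grading of the action map is itself reversed—and then confirming that the scalar produced by the computation genuinely coincides with the associahedron value claimed in each bullet. Once the first case and the two op-dualities are in hand there is no new conceptual content; the only real risk is a sign or direction error, which is why it is worth writing out one of the remaining cases in full and citing distant commutativity for well-definedness.
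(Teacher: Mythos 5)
Your proposal is correct, and your fallback route is in fact what the paper does: the paper offers no written proof, saying only that the claims follow by ``adjusting Definition \ref{def:left-Ae-modules} accordingly,'' i.e.\ by rerunning the computation of Lemma \ref{lem:bimod_to_mod} (and of Proposition \ref{prop:bimod=mod} for the converse direction under typical unitors) with the $\Delta$-type scalar replaced by the appropriate associahedron path value, exactly as you describe — including your identification of the path $b_2((b_1(ma_1))a_2)\to(b_2b_1)(m(a_1a_2))$ for the last bullet, which matches the paper's remark verbatim, and well-definedness via the cocycle condition plus distant commutativity. Your primary route via the two op-dualities is a genuinely different and arguably slicker packaging: it derives the remaining three cases formally from the first, at the cost of (i) first proving the bimodule-level statement that a $\mathcal{C}$-graded $(A,B)$-bimodule is the same as a $\mathcal{C}^\op$-graded $(B^\op,A^\op)$-bimodule (the paper only records the one-sided version, but your check that (B.I)/(B.II) swap with $\alpha\mapsto\alpha^{-1}$ and (B.III) is self-dual is right), and (ii) some extra care with the tensor-factor order: unwinding your composite for the second bullet naturally lands you in a right module over $(B^\op\otimes_\Bbbk A)^\op = B\otimes_\Bbbk A^\op$ rather than literally $A^\op\otimes_\Bbbk B$, so you must also invoke the evident swap isomorphism of grading categories $\mathcal{C}^\op\times\mathcal{C}\cong\mathcal{C}\times\mathcal{C}^\op$ and of algebras. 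That is pure bookkeeping, as you anticipate, and the direct-computation route avoids it entirely; either way the argument goes through, including the final assertion via Proposition \ref{prop:bimod=mod} and the observation that $\mathcal{L}_{\mathcal{C}^\op}=\mathcal{R}_{\mathcal{C}}$ carries typical unitors to typical unitors.
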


\subsection{$\mathcal{C}$-graded bar resolution}
\label{ss:hochschild}

We will provide a definition of Hochschild homology for $\mathcal{C}$-graded algebras (potentially with coefficients in $\mathcal{C}$-graded $(A,A)$-bimodules $M$) which generalizes the usual notion if $\mathcal{C}$ is replaced by the grading category $(\mathcal{Z}, 1)$ or $(BG, 1)$ of Example \ref{eg:trivgc}. The first step toward this goal is defining a bar construction for $\mathcal{C}$-graded algebras, which we provide in this subsection.

\begin{definition}
A \emph{$\mathcal{C}$-graded DG-$(A, B)$-bimodule} is a pair $(M, \partial_M)$ of a $\mathcal{Z} \times \mathcal{C}$-graded $(A,B)$-bimodule $M = \bigoplus_{n\in \mathbb{Z}, g\in \mathrm{Mor}(\mathcal{C})} M_g^n$ and a $\Bbbk$-linear map $\partial_M: M \to M$, called the \emph{differential}, satisfying the following:
\begin{enumerate}[label=(DG.\Roman*), align=left]
	\item $\partial_M(M_g^n)\subset M_g^{n-1}$;
	\item $\partial_M(\rho_L(a, m)) = \rho_L(a, \partial_M(m))$;
	\item $\partial_M(\rho_R(m, b)) = \rho_R(\partial_M(m), b)$;
	\item $\partial_M \circ \partial_M = 0$,
\end{enumerate}
for each $a\in A$, $b\in B$, and $m\in M$. If $m\in M$ is homogeneous with $\abs{m} = (\abs{m}_{\mathcal{Z}}, \abs{m}_\mathcal{C})$, we call $\abs{m}_\mathcal{Z} \in \mathbb{Z}$ the \emph{homological degree} of $m$. We call $(M, \partial_M)$ a \emph{$\mathcal{C}$-graded chain complex} if $A = B = I_\mathcal{C}$, so that the left- and right-actions are just scalar multiplication.

A \emph{$\mathcal{C}$-graded left DG-$A\otimes_\Bbbk B^\op$-module} is a pair $(M, \partial_M)$ of a $\mathcal{Z} \times \mathcal{C}$-graded left $A\otimes B^\op$-module which is defined exactly the same way, except that axioims (DG.II) and (DG.III) are replaced by the single axiom
\begin{enumerate}[label=(DG.II'), align=left]
	\item $\partial_M(\rho^e_L(a\otimes b, m)) = \rho^e_L(a\otimes b, \partial_M(m))$.
\end{enumerate}
\end{definition}

Axiom (DG.I) says that the differential decreases homological degree by 1 and preserves $\mathcal{C}$-degree. For clarity, we note that we could have just as easily defined $\mathcal{C}$-graded DG-$(A, B)$-bimodules where axiom (DG.I) is replaced with the requirement that $\partial_M(M_g^n) \subset M_g^{n+1}$ (see, for example, Definition 4.24 of \cite{naisse2020odd}). Finally, note that the homology $H(M,\partial_M) = \ker(\partial_M)/\mathrm{im}(\partial_M)$ of a $\mathcal{C}$-graded DG-$(A, B)$-bimodule $(M, \partial_M)$ is a $\mathcal{Z} \times \mathcal{C}$-graded bimodule.

\begin{proposition}
Suppose $A$ and $B$ are $\mathcal{C}$-graded algebras, and that the unitors of $\mathrm{Mod}^\mathcal{C}$ are the typical unitors induced by $\alpha$. Then every $\mathcal{C}$-graded left DG-$A\otimes_\Bbbk B^\op$-module can be given the structure of a $\mathcal{C}$-graded DG-$(A, B)$-bimodule, and vice-versa.
\end{proposition}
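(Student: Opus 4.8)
The plan is to bootstrap everything from Proposition~\ref{prop:bimod=mod}, which already provides the desired correspondence at the level of underlying graded (bi)modules. A $\mathcal{C}$-graded DG-$(A,B)$-bimodule is, by definition, a $\mathcal{Z}\times\mathcal{C}$-graded $(A,B)$-bimodule equipped with a differential, where $A$ and $B$ are regarded as $\mathcal{Z}\times\mathcal{C}$-graded algebras supported in homological degree $0$. Since $\mathcal{Z}=B\mathbb{Z}$ carries the trivial associator, the typical unitors on $\mathrm{Mod}^{\mathcal{Z}\times\mathcal{C}}$ are induced by $1\times\alpha$ and coincide with $\mathcal{L}_\mathcal{Z}\times\mathcal{L}_\mathcal{C}$, $\mathcal{R}_\mathcal{Z}\times\mathcal{R}_\mathcal{C}$ with $\mathcal{L}_\mathcal{Z}\equiv\mathcal{R}_\mathcal{Z}\equiv 1$; in particular the scalar factors appearing in Proposition~\ref{prop:bimod=mod} depend only on the $\mathcal{C}$-degree. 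Thus the hypothesis on unitors carries over, Proposition~\ref{prop:bimod=mod} applies verbatim with $\mathcal{C}$ replaced by $\mathcal{Z}\times\mathcal{C}$, and the only thing left to check is that the differential $\partial_M$ stays compatible with the translated actions.

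First I would treat the passage from a $\mathcal{C}$-graded left DG-$A\otimes_\Bbbk B^\op$-module $(M,\partial_M)$ to a $\mathcal{C}$-graded DG-$(A,B)$-bimodule: keep $\partial_M$ untouched and equip $M$ with the actions $\rho_L(a,m)=\mathcal{R}(\abs{m}\circ\abs{a},\mathrm{Id}_Y)^{-1}\rho_L^e(a\otimes 1_Y,m)$ and $\rho_R(m,b)=\mathcal{L}(\mathrm{Id}_X,\abs{m})^{-1}\rho_L^e(1_X\otimes b,m)$ from the proof of Proposition~\ref{prop:bimod=mod}, where $\abs{m}\colon X\to Y$ denotes the $\mathcal{C}$-degree. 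Axioms (DG.I) and (DG.IV) involve only the underlying $\mathcal{Z}\times\mathcal{C}$-graded module and $\partial_M$, neither of which changes, so they persist. The content is (DG.II) and (DG.III), and the key point is that by (DG.I) the differential preserves $\mathcal{C}$-degree, so $\abs{\partial_M m}_\mathcal{C}=\abs{m}_\mathcal{C}$ and the scalar factors $\mathcal{R}(\abs{m}\circ\abs{a},\mathrm{Id}_Y)^{-1}$, $\mathcal{L}(\mathrm{Id}_X,\abs{m})^{-1}$ are unchanged when $m$ is replaced by $\partial_M m$. Using $\Bbbk$-linearity of $\partial_M$ to pull these scalars out and then (DG.II$'$) to commute $\partial_M$ past $\rho_L^e$, one gets $\partial_M(\rho_L(a,m))=\rho_L(a,\partial_M m)$, and symmetrically $\partial_M(\rho_R(m,b))=\rho_R(\partial_M m,b)$.

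Conversely, given a $\mathcal{C}$-graded DG-$(A,B)$-bimodule $(M,\partial_M)$, keep $\partial_M$ and set $\rho_L^e(a\otimes b,m):=\rho_R(\rho_L(a,m),b)$ as in Lemma~\ref{lem:bimod_to_mod}; then (DG.I) and (DG.IV) are immediate, and (DG.II$'$) follows by chaining (DG.III) with (DG.II): $\partial_M(\rho_R(\rho_L(a,m),b))=\rho_R(\partial_M(\rho_L(a,m)),b)=\rho_R(\rho_L(a,\partial_M m),b)=\rho_L^e(a\otimes b,\partial_M m)$. Finally, the two assignments are mutually inverse because they are mutually inverse on the underlying (bi)module structures by Proposition~\ref{prop:bimod=mod} and leave $\partial_M$ literally untouched. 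There is no genuine obstacle here beyond the one bookkeeping observation that makes the whole argument go through, namely that the unitor scalars of Proposition~\ref{prop:bimod=mod} are functions of the $\mathcal{C}$-degree alone, which the differential preserves; after that remark everything is routine.
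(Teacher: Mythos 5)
Your proposal is correct and follows essentially the same route as the paper, which simply cites Proposition~\ref{prop:bimod=mod} and leaves the DG-axiom check as an exercise; you have carried out that exercise, correctly isolating the one observation that makes it routine, namely that the unitor scalars depend only on the $\mathcal{C}$-degree, which the differential preserves by (DG.I).
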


\begin{proof}
This is a direct consequence of Proposition \ref{prop:bimod=mod} and the proof thereof. It is an easy exercise, left to the reader, to verify that the actions defined there satisfy the conditions of a DG-(bi)module.
\end{proof}

Let $A$ be a $\mathcal{C}$-graded algebra. The \emph{bar construction} $\mathcal{B}(A)$ of $A$ is a primary example of a $\mathcal{C}$-graded DG-$(A,A)$-bimodule. As a complex, it takes the form
\[
\mathcal{B}(A) := 
\begin{tikzcd}
	\cdots \arrow[r] & ((A \otimes A) \otimes A) \otimes A \arrow[r] & (A \otimes A) \otimes A \arrow[r] & A \otimes A \arrow[r] & 0
\end{tikzcd}
\]
with differential $\partial: A^{\otimes n+2} \to A^{\otimes n+1}$ given by
\[
\partial (a_0 \otimes a_1 \otimes \cdots \otimes a_{n+1}) = \sum_{i=0}^n (-1)^i \alpha(\abs{a_{i-1}} \circ \cdots \circ \abs{a_0}, \abs{a_i}, \abs{a_{i+1}}) a_0 \otimes \cdots \otimes \mu_A(a_i, a_{i+1}) \otimes \cdots \otimes a_{n+1}
\]
where we fix $\alpha(\emptyset, \abs{a_0}, \abs{a_1}) = 1$ in the $i=0$ summand. The tensor product in $A^{\otimes n}$ is the monoidal product of $\mathrm{Mod}^\mathcal{C}$; in particular, $A^{\otimes n}$ is $\mathcal{C}$-graded. We will view $\mathcal{B}(A)$ as $\mathcal{Z} \times \mathcal{C}$-graded taking
\[
\abs{a_0 \otimes a_1 \otimes \cdots \otimes a_{n+1}}_{\mathcal{Z} \times \mathcal{C}} = (n, \abs{a_{n+1}}_\mathcal{C} \circ \cdots \circ \abs{a_1}_\mathcal{C} \circ \abs{a_0}_\mathcal{C}).
\]
Then we have that $(\mathcal{B}(A), \partial)$ satisfies (DG.I) clearly.

\begin{lemma}
\label{lem:barcomplex}
If $A$ is a $\mathcal{C}$-graded algebra, $\mathcal{B}(A)$ is a chain complex; that is, $\partial \circ \partial = 0$.
\end{lemma}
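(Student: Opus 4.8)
The plan is to mimic the classical proof that the two-sided bar differential squares to zero, while tracking the scalar coefficients coming from $\alpha$. Write $g_k := \abs{a_k}_\mathcal{C}$ and $g_{[0,k]} := g_k \circ \cdots \circ g_0$ for a homogeneous generator $a_0 \otimes \cdots \otimes a_{n+1}$, with the convention $\alpha(\emptyset, g_0, g_1) = 1$ for the empty composite as in the definition of $\partial$. Expanding $\partial \circ \partial$ produces a double sum: the outer differential multiplies some adjacent pair $(a_i, a_{i+1})$, yielding a length-$(n+1)$ tensor whose $\mathcal{C}$-gradings are $g_0, \dots, g_{i-1}, g_{i+1}\circ g_i, g_{i+2}, \dots, g_{n+1}$, and the inner differential then multiplies an adjacent pair in this new tensor. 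Since each multiplication merges two adjacent factors into one, every term of $\partial^2$ falls into exactly one of two classes: \emph{distant} terms, in which the inner pair is disjoint from and not adjacent to the outer-merged factor; and \emph{adjacent} terms, in which the inner multiplication touches the outer-merged factor, so that the net effect merges three consecutive original factors.

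For the distant terms I would argue as in the associative case. Each such configuration arises from exactly two orderings of the same pair of multiplications; their total signs are $(-1)^{i+j}$ and $(-1)^{i+j-1}$. The $\alpha$-coefficient attached to a given multiplication depends only on the composite grading of the factors strictly to its left together with the two gradings it merges, and none of these change when a multiplication is performed at a disjoint, non-adjacent location (this is the distant commutativity of $\alpha$ noted after Definition~\ref{def:left-Ae-modules}). Hence the two orderings produce identical coefficients and opposite signs, and all distant terms cancel in pairs.

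The substance of the proof is the adjacent case. Fixing a consecutive triple $a_p, a_{p+1}, a_{p+2}$, precisely two terms of $\partial^2$ involve it: one obtained by first multiplying $(a_p, a_{p+1})$ and then the result with $a_{p+2}$, contributing
\[
\alpha(g_{[0,p-1]}, g_p, g_{p+1})\,\alpha(g_{[0,p-1]}, g_{p+1}\circ g_p, g_{p+2})\;\mu_A(\mu_A(a_p,a_{p+1}),a_{p+2})
\]
(with sign $(-1)^{2p} = +1$); and one obtained by first multiplying $(a_{p+1}, a_{p+2})$ and then $a_p$ with the result, contributing
\[
-\alpha(g_{[0,p]}, g_{p+1}, g_{p+2})\,\alpha(g_{[0,p-1]}, g_p, g_{p+2}\circ g_{p+1})\;\mu_A(a_p,\mu_A(a_{p+1},a_{p+2}))
\]
(with sign $(-1)^{2p+1} = -1$). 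Rewriting the first via quasi-associativity (A.II), $\mu_A(\mu_A(a_p,a_{p+1}),a_{p+2}) = \alpha(g_p,g_{p+1},g_{p+2})\,\mu_A(a_p,\mu_A(a_{p+1},a_{p+2}))$, both contributions become scalar multiples of the same tensor, and their sum vanishes exactly when
\[
\alpha(f_1,f_2,f_3)\,\alpha(f_1, f_3\circ f_2, f_4)\,\alpha(f_2,f_3,f_4) = \alpha(f_2\circ f_1, f_3, f_4)\,\alpha(f_1, f_2, f_4\circ f_3),
\]
where $f_1 = g_{[0,p-1]}$, $f_2 = g_p$, $f_3 = g_{p+1}$, $f_4 = g_{p+2}$ and $g_{[0,p]} = f_2\circ f_1$; this is precisely the 3-cocycle condition $d\alpha(f_1,f_2,f_3,f_4) = 1$ rearranged (the ring $\Bbbk$ being commutative). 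When $p = 0$ the empty-composite convention reduces both sides to $\alpha(g_0,g_1,g_2)$, so the identity still holds at the left boundary, and there is nothing special at the right boundary since $a_{n+1}$ carries no convention. I expect the main obstacle to be exactly this adjacent bookkeeping: correctly tracking the sign and the precise arguments of each $\alpha$ after reindexing the tensor, and recognizing the resulting relation as the cocycle condition; the distant case, by contrast, is formal.
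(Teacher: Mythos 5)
Your proposal is correct and follows essentially the same route as the paper's proof: pair the terms of $\partial\circ\partial$ as in the classical bar complex, observe that disjoint pairs cancel because $\abs{\mu_A(a_i,a_{i+1})}=\abs{a_{i+1}}\circ\abs{a_i}$ leaves the relevant $\alpha$-coefficients unchanged, and reduce the overlapping (consecutive-triple) pairs to quasi-associativity (A.II) together with the $3$-cocycle condition $d\alpha=1$, with the $p=0$ boundary handled by the empty-composite convention exactly as the paper handles its $(0,0)$ versus $(1,0)$ case.
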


\begin{proof}
Consider $\partial(\partial(a_0 \otimes \cdots \otimes a_{n+1})$. We will denote summands in the ensuing expansion by pairs $(i, j)$ for $i = 0, 1, \ldots, n$ coming from the first differential and $j = 0, 1, \ldots n-1$ coming from the second. Then, fixing $i \le j$, observe that in the proof that the regular bar complex is a chain complex, the $(i, j)$ summand cancels with the $(j+1, i)$ summand (see, for example, Lemma 1.1.2 of \cite{MR1600246}). We claim that this is also how terms cancel in the $\mathcal{C}$-graded setting. Thus, since the signs are as they appear in the original setting, we do not need to keep track of them. There are three cases to consider.

The first is when $(i,j) = (0,0)$. This term is always
\[
\mu(\mu(a_0, a_1), a_2) \otimes a_3 \otimes \cdots \otimes a_{n+1}
\]
and it clearly cancels with the $(j+1, i) = (1, 0)$ term
\[
\alpha(\abs{a_0}, \abs{a_1}, \abs{a_2}) \mu(a_0, \mu(a_1, a_2)) \otimes a_3 \otimes \cdots \otimes a_{n+1}.
\]

For the second case, assume that $i < j$. Then the $(i,j)$ term is
\[
\alpha(\abs{a_{i-1}} \circ \cdots \circ \abs{a_0}, \abs{a_i}, \abs{a_{i+1}})
\alpha(\abs{a_j} \circ \cdots \circ \abs{\mu(a_i, a_{i+1})} \circ \cdots \circ \abs{a_0}, \abs{a_{j+1}}, \abs{a_{j+2}})
\]
times $a_0 \otimes \cdots \otimes \mu(a_{i}, a_{i+1}) \otimes \cdots \otimes \mu(a_{j+1}, a_{j+2}) \otimes \cdots \otimes a_{n+1}$. The $(j+1, i)$ term is clearly alike, with coefficient
\[
\alpha(\abs{a_j} \circ \cdots \circ \abs{a_0}, \abs{a_{j+1}}, \abs{a_{j+2}})
\alpha(\abs{a_{i-1}} \circ \cdots \circ \abs{a_0}, \abs{a_i}, \abs{a_{i+1}})
\]
Thus, these two terms cancel, as $\abs{\mu(a_i, a_{i+1})} = \abs{a_{i+1}} \circ \abs{a_i}$.

Finally, suppose that $i = j > 0$. Then, the $(i,i)$-term is 
\[
\alpha(\abs{a_{i-1}} \circ \cdots \circ \abs{a_0}, \abs{a_i}, \abs{a_{i+1}})
\alpha(\abs{a_{i-1}} \circ \cdots \circ \abs{a_0}, \abs{\mu(a_i, a_{i+1})}, \abs{a_{i+2}})
\]
times $a_0 \otimes \cdots \otimes \mu(\mu(a_i, a_{i+1}), a_{i+2}) \otimes \cdots \otimes a_{n+1}$, and the $(i+1, i)$-term is 
\[
\alpha(\abs{a_i} \circ \abs{a_{i-1}} \circ \cdots \circ \abs{a_0}, \abs{a_{i+1}}, \abs{a_{i+2}})
\alpha(\abs{a_{i-1}} \circ \cdots \circ \abs{a_0}, \abs{a_i}, \abs{\mu(a_{i+1}, a_{i+2})})
\]
times $a_0 \otimes \cdots \otimes \mu(a_i, \mu(a_{i+1}, a_{i+2})) \otimes \cdots \otimes a_{n+1}$. Write $f = \abs{a_{i-1}} \circ \cdots \circ \abs{a_0}$, $g = \abs{a_i}$, $h = \abs{a_{i+1}}$ and $\ell = \abs{a_{i+2}}$. Then, the cocycle relation $d\alpha(f, g, h, \ell) = 1$ implies that these two terms are equivalent, since
\[
\mu(\mu(a_i, a_{i+1}), a_{i+2}) = 
\alpha(\abs{a_i}, \abs{a_{i+1}}, \abs{a_{i+2}})
\mu(a_i, \mu(a_{i+1}, a_{i+2})).
\]
This concludes the proof.
\end{proof}

Suppose $a, a_0, a_1,\ldots, a_{n+1} \in A$. We define the following values: let
\[
\Phi(\abs{a}, \abs{a_0}, \abs{a_1}, \ldots, \abs{a_{n+1}}) := \prod_{i=1}^{n+1} \alpha(\abs{a}, \abs{a_{i-1}} \circ \cdots \circ \abs{a_0}, \abs{a_i})^{-1}
\]
and
\[
\Psi(\abs{a_0}, \ldots, \abs{a_n}, \abs{a_{n+1}}, \abs{a}) := \alpha(\abs{a_n} \circ \cdots \circ \abs{a_0}, \abs{a_{n+1}}, \abs{a}).
\]

\begin{proposition}
\label{prop:A-DGA}
If $A$ is a $\mathcal{C}$-graded algebra, $(\mathcal{B}(A), \partial)$ is a $\mathcal{C}$-graded DG-$(A,A)$-bimodule, with left-action
\[
\rho_L(a, a_0 \otimes a_1 \otimes \cdots \otimes a_{n+1}) := \Phi(\abs{a}, \abs{a_0}, \abs{a_1}, \ldots, \abs{a_{n+1}}) \mu_A(a, a_0) \otimes a_1 \otimes \cdots \otimes a_{n+1}
\]
and right-action
\[
\rho_R(a_0 \otimes a_1 \otimes \cdots \otimes a_{n+1}, a) := \Psi(\abs{a_0}, \ldots, \abs{a_n}, \abs{a_{n+1}}, \abs{a}) a_0 \otimes a_1 \otimes \cdots \otimes \mu_A(a_{n+1}, a).
\]
\end{proposition}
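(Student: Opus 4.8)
The plan is a direct verification against the axioms. We must show that $\rho_L$ and $\rho_R$ are $\mathcal{C}$-graded maps, that the pair satisfies the bimodule axioms (B.I)--(B.IV) of Definition \ref{def:bims}, and that each commutes with $\partial$ in the sense of (DG.II)--(DG.III); axiom (DG.I) is immediate from the declared $\mathcal{Z}\times\mathcal{C}$-grading on $\mathcal{B}(A)$ and (DG.IV) is Lemma \ref{lem:barcomplex}. Throughout we take the typical unitors induced by $\alpha$ (equations (\ref{eq:typicalL})--(\ref{eq:typicalR})), in keeping with the standing convention from Proposition \ref{prop:bimod=mod} onward. For the grading claims there is nothing to do: the scalars $\Phi$ and $\Psi$ are degree-blind, and $\abs{\mu_A(a,a_0)} = \abs{a_0}\circ\abs{a}$, $\abs{\mu_A(a_{n+1},a)} = \abs{a}\circ\abs{a_{n+1}}$ by (A.I), so $\abs{\rho_L(a,m)} = \abs{m}\circ\abs{a}$ and $\abs{\rho_R(m,a)} = \abs{a}\circ\abs{m}$. (One could instead build a single left $A^e$-action on $\mathcal{B}(A)$ and invoke the DG-analogue of Proposition \ref{prop:bimod=mod}, but the two scalars $\Phi$ and $\Psi$ are more transparent to handle directly.)

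For the unit axiom (B.IV), substitute $a = 1_X$ into $\rho_L$ and use (A.III) to replace $\mu_A(1_X,a_0)$ by $\mathcal{L}(\mathrm{Id}_X,\abs{a_0})\,a_0$; what remains is the scalar identity $\Phi(\mathrm{Id}_X,\abs{a_0},\dots,\abs{a_{n+1}})\,\mathcal{L}(\mathrm{Id}_X,\abs{a_0}) = \mathcal{L}(\mathrm{Id}_X,\abs{m})$. With the typical left unitor $\mathcal{L}(\mathrm{Id}_X,g)=\alpha(\mathrm{Id}_X,\mathrm{Id}_X,g)^{-1}$, the factors of $\Phi(\mathrm{Id}_X,-)$ telescope by iterated use of Lemma \ref{lem:assocomps}(ii) to collapse the product to $\alpha(\mathrm{Id}_X,\mathrm{Id}_X,\abs{m})^{-1}$, as needed. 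The check for $\rho_R(m,1_Y)$ is the mirror, via (A.III), $\mathcal{R}(g,\mathrm{Id}_Y)=\alpha(g,\mathrm{Id}_Y,\mathrm{Id}_Y)$, and Lemma \ref{lem:assocomps}(iii).

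The associativity axioms (B.I)--(B.III) all run the same way. Expand both sides; the underlying tensors agree after one application of graded associativity (A.II) to reassociate a product $\mu_A(\mu_A(\cdot,\cdot),\cdot)$ appearing in the extreme slot---for (B.III) the two actions touch disjoint slots and the tensors agree outright---so everything comes down to an identity among products of values of $\alpha$. Writing $G_k := \abs{a_k}\circ\cdots\circ\abs{a_0}$, axiom (B.I) requires that $\Phi(\abs{a'}\circ\abs{a},\,\abs{a_0},\dots)$ times the (A.II)-correction $\alpha(\abs{a},\abs{a'},\abs{a_0})$ equal $\alpha(\abs{a},\abs{a'},\abs{m})\,\Phi(\abs{a},\,\abs{a_0}\circ\abs{a'},\abs{a_1},\dots)\,\Phi(\abs{a'},\,\abs{a_0},\dots)$; substituting the 3-cocycle relations $d\alpha(\abs{a},\abs{a'},G_{i-1},\abs{a_i})=1$ for $i=1,\dots,n+1$ and telescoping the resulting product $\prod_i \alpha(\abs{a},\abs{a'},G_{i-1})^{-1}\alpha(\abs{a},\abs{a'},G_i)$ yields the claim. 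Axiom (B.II) is the $\Psi$-mirror, and (B.III) collapses to the single cocycle relation $d\alpha(\abs{a},\,G_n,\,\abs{a_{n+1}},\,\abs{b})=1$ together with distant commutativity of $\alpha$.

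For the DG-compatibility (DG.II) we compare $\partial\,\rho_L(a,-)$ and $\rho_L(a,-)\,\partial$ summand by summand over the sum defining $\partial$. For each index $i\ge 1$ the differential leaves the leftmost slot alone, and matching the two sides is exactly the cocycle relation $d\alpha(\abs{a},\,G_{i-1},\,\abs{a_i},\,\abs{a_{i+1}})=1$, which expresses how $\Phi$ changes when $a_i$ and $a_{i+1}$ are merged. The index $i=0$ summand is the subtle one: here $a_0$ and $a_1$ are merged, the comparison pits $\mu_A(\mu_A(a,a_0),a_1)$ against $\mu_A(a,\mu_A(a_0,a_1))$, and one must see that the $i=1$ factor $\alpha(\abs{a},\abs{a_0},\abs{a_1})^{-1}$ of $\Phi$ cancels precisely the (A.II)-correction, consistently with the convention $\alpha(\emptyset,\abs{a_0},\abs{a_1})=1$ carried by $\partial$ in its $i=0$ term. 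The argument for $\rho_R$ and (DG.III) is symmetric, with the $i=n$ summand special and $\Psi$ in place of $\Phi$. I expect this boundary bookkeeping---the interlock of the empty-composite convention, the graded-associativity correction, and the extreme factor of $\Phi$ (resp. $\Psi$)---to be the one place needing genuine care; by the design of $\Phi$ and $\Psi$, every interior term is an immediate consequence of the 3-cocycle condition.
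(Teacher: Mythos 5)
Your proposal is correct and follows essentially the same route as the paper: a direct check of (B.I)--(B.IV) and (DG.I)--(DG.IV), with the unit axioms handled by the typical unitors and telescoping via Lemma \ref{lem:assocomps}, the associativity axioms reduced to iterated (or, for (B.III), a single) applications of the 3-cocycle relation, and (DG.II) verified summand-by-summand with the $i=0$ term absorbed by the graded-associativity correction $\alpha(\abs{a},\abs{a_0},\abs{a_1})$. The only cosmetic difference is that you flag the $\mathcal{G}$-grading statement and the $A^e$-module alternative explicitly, which the paper leaves implicit.
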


\begin{proof}
After Lemma \ref{lem:barcomplex}, we need to verify axioms (B.I)--(B.IV) and axioms (DG.II) and (DG.III). Like many proofs to this point, the argument is straightforward, but tedious. We'll verify the more difficult (B.I), (B.III), and (DG.II), leaving the rest to the reader. These three are more tedious to check because of the involvement of the left action.

For (B.I), we must show that
\[
\rho_L(\mu(a, a'), a_0 \otimes a_1 \otimes \cdots \otimes a_{n+1}) = \Phi(\abs{\mu(a,a')}, \abs{a_0}, \ldots, \abs{a_{n+1}}) \mu(\mu(a,a'), a_0) \otimes a_1 \otimes \ldots \otimes a_{n+1}
\]
is equal to
\begin{align*}
&\alpha(\abs{a}, \abs{a'}, \abs{a_0 \otimes \cdots \otimes a_{n+1}}) \rho_L(a, \rho_L(a', a_0 \otimes \ldots \otimes a_{n+1}))
\\ & =
\alpha(\abs{a}, \abs{a'}, \abs{a_0 \otimes \cdots \otimes a_{n+1}}) \Phi(\abs{a}, \abs{\mu(a', a_0)}, \abs{a_1}, \ldots, \abs{a_{n+1}}) \Phi(\abs{a'}, \abs{a_0},\ldots, \abs{a_{n+1}})
\\ & \phantom{{}={}}\mu(a, \mu(a', a_0)) \otimes a_1 \otimes \cdots \otimes a_{n+1}.
\end{align*}
Thus, it suffices to prove that
\begin{align*}
\alpha(\abs{a},& \abs{a'}, \abs{a_0 \otimes \cdots \otimes a_{n+1}}) \times \\&\Phi(\abs{\mu(a,a')}, \abs{a_0}, \ldots, \abs{a_{n+1}})^{-1} \Phi(\abs{a}, \abs{\mu(a', a_0)}, \abs{a_1}, \ldots, \abs{a_{n+1}}) \Phi(\abs{a'}, \abs{a_0},\ldots, \abs{a_{n+1}})
\end{align*}
is equal to $\alpha(\abs{a}, \abs{a'}, \abs{a_0})$. This can be seen via an iterative process. Start with the ``$n+1$'' terms from the expansions of each of the $\Phi$ products. These look like
\[
\alpha(\abs{a'} \circ \abs{a}, \abs{a_n} \circ \cdots \circ \abs{a_0}, \abs{a_{n+1}})
\alpha(\abs{a}, \abs{a_n}\circ \cdots \circ \abs{a_0} \circ \abs{a'}, \abs{a_{n+1}})^{-1}
\alpha(\abs{a'}, \abs{a_n} \circ \cdots \circ \abs{a_0}, \abs{a_{n+1}})^{-1}.
\]
Taking $f = \abs{a}$, $g = \abs{a'}$, $h = \abs{a_n} \circ \cdots \circ \abs{a_0}$, and $\ell = \abs{a_{n+1}}$, and inspecting the cocycle relation for $d\alpha(f, g, h ,\ell)$, we see that the above product is equal to
\[
\alpha(\abs{a}, \abs{a'}, \abs{a_{n+1}} \circ \abs{a_n} \circ \cdots \circ \abs{a_0})^{-1}
\alpha(\abs{a}, \abs{a'}, \abs{a_n} \circ \abs{a_{n-1}} \circ  \cdots \circ \abs{a_0}).
\]
On one hand, the first term cancels with the original $\alpha(\abs{a}, \abs{a'}, \abs{a_0 \otimes \cdots \otimes a_{n+1}})$ term. On the other, consider the product of the second term with the ``$n$'' terms from the $\Phi$-expansions: these look like
\[
\alpha(\abs{a'} \circ \abs{a}, \abs{a_n} \circ \cdots \circ \abs{a_0}, \abs{a_{n+1}})
\alpha(\abs{a}, \abs{a_n}\circ \cdots \circ \abs{a_0} \circ \abs{a'}, \abs{a_{n+1}})^{-1}
\alpha(\abs{a'}, \abs{a_n} \circ \cdots \circ \abs{a_0}, \abs{a_{n+1}})^{-1}.
\]
Then, taking $f = \abs{a}$, $g = \abs{a'}$, $h = \abs{a_{n-1}} \circ \cdots \circ \abs{a_0}$, and $\ell = \abs{a_{n}}$, the cocycle relation for $d\alpha(f, g, h ,\ell)$ tells us that this product is equal to
\[
\alpha(\abs{a}, \abs{a'}, \abs{a_{n-1}} \circ \abs{a_{n-2}} \circ \cdots \circ \abs{a_0}).
\]
To conclude the proof, iterate this process, which terminates with the leftover term being $\alpha(\abs{a}, \abs{a'}, \abs{a_0})$

The proof of (B.II) is far easier given that $\Psi$ is expressed by only one $\alpha$ term---it follows by only one application of the cocycle relation. Similarly, though there are more terms, the proof of (B.III) requires only one application of the cocycle relation. Both are left to the reader.

The proof of the first part of (B.IV) requires an iteration. By definition, we have
\begin{align*}
\rho_L(1_X, a_0 \otimes \cdots \otimes a_{n+1}) &= \Phi(\mathrm{Id}_X, \abs{a_0}, \ldots, \abs{a_{n+1}})
\mu(1_X, a_0) \otimes a_1 \otimes \ldots \otimes a_{n+1}
\\&= \Phi(\mathrm{Id}_X, \abs{a_0}, \ldots, \abs{a_{n+1}}) \mathcal{L}(\mathrm{Id}_X, \abs{a_0}) a_0 \otimes \cdots \otimes a_{n+1}.
\end{align*}
By assumption, $\mathcal{L}(\mathrm{Id}_X, \abs{a_0}) = \alpha(\mathrm{Id}_X, \mathrm{Id}_X, \abs{a_0})^{-1}$. Expanding $\Phi$, we have
\begin{align*}
	\alpha(\mathrm{Id}_X, & \abs{a_n} \circ \cdots \circ \abs{a_0}, \abs{a_{n+1}})^{-1} \cdots \alpha(\mathrm{Id}_X, \abs{a_1} \circ \abs{a_0}, \abs{a_2})^{-1}\alpha(\mathrm{Id}_X, \abs{a_0}, \abs{a_1})^{-1} \alpha(\mathrm{Id}_X, \mathrm{Id}_X, \abs{a_0})^{-1}
	\\ &= \alpha(\mathrm{Id}_X, \abs{a_n} \circ \cdots \circ \abs{a_0}, \abs{a_{n+1}})^{-1} \cdots \alpha(\mathrm{Id}_X, \abs{a_1} \circ \abs{a_0}, \abs{a_2})^{-1} \alpha(\mathrm{Id}_X, \mathrm{Id}_X, \abs{a_1} \circ \abs{a_0})
	\\ & \phantom{{}..{}}\vdots
	\\ & = \alpha(\mathrm{Id}_X, \mathrm{Id}_X, \abs{a_{n+1}} \circ \cdots \circ \abs{a_0})^{-1}
	\\ & = \mathcal{L}(\mathrm{Id}_X, \abs{a_0} \otimes \cdots \otimes \abs{a_{n+1}})
\end{align*}
by iterative applications of (ii) from Lemma \ref{lem:assocomps}. Similarly, the proof of the second half of (B.IV) follows from a single application of (iii) from Lemma \ref{lem:assocomps}

We proceed to proving the DG-axioms. As noted earlier, (DG.I) is immediate, and (DG.IV) is Lemma \ref{lem:barcomplex}. Checking (DG.II) directly, we compute that
\begin{align*}
&\partial(\rho_L(a, a_0 \otimes a_1 \otimes \cdots \otimes a_{n+1}))
	\\ & = \alpha(\abs{a}, \abs{a_n} \circ \cdots \circ \abs{a_0}, \abs{a_{n+1}})^{-1} 
		\alpha(\abs{a}, \abs{a_{n-1}} \circ \cdots \circ \abs{a_0}, \abs{a_n})^{-1}
		\cdots
		\alpha(\abs{a}, \abs{a_0}, \abs{a_1})^{-1}
		\\ & \phantom{{}={}} \partial(\mu(a, a_1) \otimes a_1 \otimes \cdots \otimes{a_{n+1}})
	\\ & = \alpha(\abs{a}, \abs{a_n} \circ \cdots \circ \abs{a_0}, \abs{a_{n+1}})^{-1} 
		\alpha(\abs{a}, \abs{a_{n-1}} \circ \cdots \circ \abs{a_0}, \abs{a_n})^{-1}
		\cdots
		\alpha(\abs{a}, \abs{a_0}, \abs{a_1})^{-1}
		\\ & \phantom{{}={}} \mu(\mu(a, a_0), a_1) \otimes a_2 \otimes \cdots \otimes a_{n+1}
		\\ & \phantom{{}={}} + \alpha(\abs{a}, \abs{a_n} \circ \cdots \circ \abs{a_0}, \abs{a_{n+1}})^{-1} 
		\alpha(\abs{a}, \abs{a_{n-1}} \circ \cdots \circ \abs{a_0}, \abs{a_n})^{-1}
		\cdots
		\alpha(\abs{a}, \abs{a_0}, \abs{a_1})^{-1}
		\\ & \phantom{{}={}} \sum_{i=1}^{n} (-1)^i \alpha(\abs{a_{i-1}} \circ \cdots \circ \abs{a_0} \circ \abs{a}, \abs{a_i}, \abs{a_{i+1}}) \mu(a, a_0) \otimes \cdots \otimes \mu(a_i, a_{i+1}) \otimes \cdots \otimes a_{n+1}.
\end{align*}
On the other hand,
\begin{align*}
& \rho_L(a, \partial(a_0 \otimes a_1 \otimes \cdots \otimes a_{n+1}))
	\\ & = \rho_L(a, \mu(a_0, a_1) \otimes \cdots \otimes a_{n+1})
		\\ & \phantom{{}={}} + \sum_{i=1}^n (-1)^i \alpha(\abs{a_{i-1}} \circ \cdots \circ \abs{a_0}, \abs{a_i}, \abs{a_{i+1}}) \rho_L(a, a_0 \otimes \cdots \otimes \mu(a_i, a_{i+1}) \otimes \cdots \otimes a_{n+1})
	\\ & = \alpha(\abs{a}, \abs{a_n} \circ \cdots \circ \abs{a_0}, \abs{a_{n+1}})^{-1} 
		\alpha(\abs{a}, \abs{a_{n-1}} \circ \cdots \circ \abs{a_0}, \abs{a_n})^{-1}
		\cdots
		\alpha(\abs{a}, \abs{a_1} \circ \abs{a_0}, \abs{a_2})^{-1}
		\\ & \phantom{{}={}} \mu(a, \mu(a_0, a_1)) \otimes a_2 \otimes \cdots \otimes a_{n+1}
		\\ & \phantom{{}={}} + \sum_{i=1}^n (-1)^i \alpha(\abs{a_{i-1}} \circ \cdots \circ \abs{a_0}, \abs{a_i}, \abs{a_{i+1}}) \alpha(\abs{a}, \abs{a_n} \circ \cdots \circ \abs{a_0}, \abs{a_{n+1}})^{-1} \cdots
		\\ & \phantom{{}={}} \alpha(\abs{a}, \abs{a_{i+1}} \circ \abs{a_i} \circ \abs{a_{i-1}} \circ \cdots \circ \abs{a_0}, \abs{a_{i+2}})^{-1}
		\alpha(\abs{a}, \abs{a_{i-1}} \circ \cdots \abs{a_0}, \abs{a_{i+1}} \circ \abs{a_i})^{-1} 
		\\ & \phantom{{}={}}\alpha(\abs{a}, \abs{a_{i-2}} \circ \cdots \circ \abs{a_0}, \abs{a_{i-1}})^{-1}
		\cdots
		\alpha(\abs{a}, \abs{a_0}, \abs{a_1})^{-1}
		\mu(a, a_0) \otimes \cdots \otimes \mu(a_i, a_{i+1}) \otimes \cdots \otimes a_{n+1}
\end{align*}
There are two cases to consider. First, observe that the coefficients leading the $i=0$ summands in both expansions are exactly the same outside of the $\alpha(\abs{a}, \abs{a_0}, \abs{a_1})^{-1}$ appearing in front of the first, but not the second. But this is as we hoped, as
\[
\mu(\mu(a, a_0), a_1) \otimes a_2 \otimes \cdots \otimes a_{n+1}
=
\alpha(\abs{a}, \abs{a_0}, \abs{a_1})
\mu(a, \mu(a_0, a_1)) \otimes a_2 \otimes \cdots \otimes a_{n+1}.
\]
In the second case, we can consider any of the summands when $i \ge 1$. The coefficients of these summands are exactly the same outside of the appearance of the terms
\[
\alpha(\abs{a_{i-1}} \circ \cdots \circ \abs{a_0}, \abs{a_i}, \abs{a_{i+1}})
\alpha(\abs{a}, \abs{a_i} \circ \abs{a_{i-1}} \circ \cdots \circ \abs{a_0}, \abs{a_{i+1}})^{-1}
\alpha(\abs{a}, \abs{a_{i-1}} \circ \cdots \circ \abs{a_0}, \abs{a_i})^{-1}
\]
appearing in the first expansion, and the terms
\[
\alpha(\abs{a_{i-1}} \circ \cdots \circ \abs{a_0}, \abs{a_i}, \abs{a_{i+1}})
\alpha(\abs{a}, \abs{a_{i-1}} \circ \cdots \circ \abs{a_0}, \abs{a_{i+1}} \circ \abs{a_i})^{-1}
\]
appearing in the second. However, these values are equivalent by the cocycle condition. The proof of (DG.III) is similar but much less tedious, and is left to the reader.
\end{proof}

\begin{remark}
The $\Phi$ and $\Psi$ terms are decided naturally by the following processes: the $\Phi$ term is chosen according to the path
\[
\begin{tikzcd}
a (((a_0 a_1) a_2) \cdots a_n) \arrow[r, "\alpha^{-1}"] \arrow[drrr, "\Phi"', bend right=10]
 & (a ((a_0 a_1) a_2) \cdots) a_n \arrow[r, "\alpha^{-1}"]
 & \cdots \arrow[r, "\alpha^{-1}"]
 & (((a (a_0 a_1)) a_2) \cdots a_n) \arrow[d, "\alpha^{-1}"]
 \\
 &&& ((((a a_0) a_1) a_2) \cdots) a_n
\end{tikzcd}
\]
and, similarly, the much simpler $\Psi$ term is chosen according to the path
\[
\begin{tikzcd}
\Psi: ((((a_0 a_1) a_2) \cdots) a_n) a' \arrow[r, "\alpha"]
& (((a_0 a_1) a_2) \cdots) (a_n a').
\end{tikzcd}
\]
Hunting for ``$\alpha$-correction terms'' generally consists of this process.
\end{remark}

\section{Hochschild homology for $\mathcal{C}$-graded algebras and bimodules}
\label{s:hochschild}

Now we will use the concepts of the previous section to give a first definition of Hochschild homology for $\mathcal{C}$-graded algebras via the bar construction of Subsection \ref{ss:hochschild}. Notice that this gives an extension of Hochschild homology to the class of quasi-associative algebras. However, unlike the classical setting, this takes some preparation.

Recall that, in general, the Hochschild homology of an algebra $A$ with coefficients in an $(A, A)$-bimodule $M$ can be taken as the homology of the complex
\[
M \otimes_{A \otimes A^\op} \mathcal{B}(A).
\]
Naisse and Putyra \cite{naisse2020odd} describe the tensor product of two $\mathcal{C}$-graded modules over an intermediary $\mathcal{C}$-graded algebra. Suppose $A$, $B$, and $C$ are $\mathcal{C}$-graded algebras, and that $M$ is a $\mathcal{C}$-graded $(A, B)$-bimodule and $N$ is a $\mathcal{C}$-graded $(B,C)$-bimodule. We view $M \otimes N$ as a $\mathcal{C}$-graded $(A, C)$-bimodule by defining actions 
\[
\begin{tikzcd}[column sep = tiny]
	A \otimes (M \otimes N) \arrow[dr, "\alpha^{-1}"'] \arrow[rr, "\rho_L^{M \otimes N}"] && M \otimes N
	\\
	& (A \otimes M) \otimes N \arrow[ur, "\rho_L^M \otimes \mathbbm{1}_N"'] &
\end{tikzcd}
\qquad
\text{and}
\qquad
\begin{tikzcd}[column sep = tiny]
	(M \otimes N) \otimes C \arrow[dr, "\alpha"'] \arrow[rr, "\rho_R^{M \otimes N}"] && M \otimes N
	\\
	& M \otimes (N \otimes C) \arrow[ur, "\mathbbm{1}_M \otimes \rho_R^N"'] &
\end{tikzcd}
\]
We define the tensor product of $M$ and $N$ over the intermediary algebra $B$ as
\begin{equation}
\label{eq:Acoequal}
M \otimes_B N := M \otimes N 
{ /} 
\left( \rho_R^M(m, b) \otimes n - \alpha(\abs{m}, \abs{b}, \abs{n}) m \otimes \rho_L^N(b, n)\right)
\end{equation}
for any homogeneous $m\in M$, $b\in B$, and $n\in N$, which is the coequalizer of the following diagram.
\[
\begin{tikzcd}[row sep = small, column sep = large]
	(M \otimes B) \otimes N \arrow[dd, "\alpha"'] \arrow[dr, "\rho_R \otimes \mathbbm{1}_N"] & 
	\\
	& M \otimes N
	\\
	M \otimes (B \otimes N) \arrow[ur, "\mathbbm{1}_M \otimes \rho_L"'] &
\end{tikzcd}
\]
The $\mathcal{C}$-graded $(A, C)$-bimodule structure on $M \otimes N$ induces one on $M \otimes_B N$. Finally, if $M$ and $N$ are $\mathcal{C}$-graded DG-bimodules, we define their tensor product over $B$ as
\[
(M, \partial_M) \otimes_B (N, \partial_N) := (M \otimes_B N, \partial_\otimes)
\]
where
\[
\partial_\otimes(m \otimes n) := \partial_M(m) \otimes n + (-1)^{\abs{m}_\mathcal{Z}} m \otimes \partial_N(n).
\]

The issue for us is that $\otimes_{A \otimes_\Bbbk A^\op}$ is not defined, as $A \otimes_\Bbbk A^\op$ is not canonically $\mathcal{C}$-graded, but rather $\mathcal{C} \times \mathcal{C}^\op$-graded. Explicitly, to define a tensor product over $A \otimes_\Bbbk A^\op$, we would like to take the coequalizer of the following diagram, where $M$ (resp. $N$) is a $\mathcal{C}$-graded right (resp. left) $A\otimes_\Bbbk A^\op$-module.
\[
\begin{tikzcd}[row sep = small, column sep = large]
	\left(M \times (A \otimes_\Bbbk A^\op)\right) \otimes N \arrow[dr, "\rho_R^e \otimes \mathbbm{1}_N"] \arrow[dd, dotted, "\Theta"'] & 
	\\
	& M \otimes N
	\\
	M \otimes \left((A \otimes_\Bbbk A^\op) \times N\right) \arrow[ur, "\mathbbm{1}_M \otimes \rho_L^e"'] &
\end{tikzcd}
\]
However, the connecting map $\Theta$ cannot be as simple as $\alpha$: in the tensor product over $A \otimes_\Bbbk A^\op$, we hope to identify
\[
\rho_R^e(m, a \otimes a') \otimes n \sim m \otimes \rho_L^e(a \otimes a', n)
\]
up to a unit determined by $\Theta$. However the former has grading 
\[
\abs{n} \circ \abs{a} \circ \abs{m} \circ \abs{a'}
\]
while the latter has grading
\[
\abs{a'} \circ \abs{n} \circ \abs{a} \circ \abs{m}.
\]

We see this as having two consequences. First, this means that the gradings of the elements involved must form a loop of length four in $\mathcal{C}$:
\[
\tikz[scale=1]{
	\node(N) at (0,1) {$\bullet$};
	\node(E) at (1,0) {$\bullet$};
	\node(S) at (0,-1) {$\bullet$};
	\node(W) at (-1,0) {$\bullet$};
	\draw[->] (N) to[out=0, in=90] (E);
		\node at (1,1) {$\abs{a}$};
	\draw[->] (E) to[out=270, in=0] (S);
		\node at (1,-1) {$\abs{n}$};
	\draw[->] (S) to[out=180, in=270] (W);
		\node at (-1,-1) {$\abs{a'}$};
	\draw[->] (W) to[out=90, in=180] (N);
		\node at (-1,1) {$\abs{m}$};
}
\]
else they are killed in the tensor over $A\otimes_\Bbbk A^\op$. More interestingly, this also means that $M \otimes_{A \otimes_\Bbbk A^\op} N$, if it is definable, is \emph{not} $\mathcal{C}$-graded, but rather graded by the \emph{universal trace of $\mathcal{C}$}:
\[
\mathrm{Tr}(\mathcal{C}) = \coprod_{X\in \mathrm{Ob}(\mathcal{C})} \mathrm{End}_\mathcal{C}(X) \big / g\circ f \sim f \circ g
\]
which is also referred to by some authors as the \emph{zeroth Hochschild homology} of $\mathcal{C}$.

\begin{remark}
The first of the two consequences is interesting, as it means that the ``size'' of the tensor product over $A \otimes_\Bbbk A^\op$ (and, thus, the Hochschild homology) in the $\mathcal{C}$-graded setting depends largely on the abundance of loops in $\mathcal{C}$. Notice that this doesn't have any impact on the $\mathbb{Z}$- or $G$-graded settings, as all paths are loops in $BG$, thus nothing ``extra'' dies in the tensor.
\end{remark}

We describe a well-defined tensor product $- \otimes_{A\otimes_\Bbbk A^\op} -$ by providing the connecting morphism $\Theta$ in Subsection \ref{ss:cotrace}; Theorem \ref{thm1} follows with little work. The key player is a new witnessing function $\varepsilon$. The defining characteristic of $\varepsilon$ works in tandem with the pentagon relation of $\alpha$ to show that $\Theta$ is well-defined. Moreover, in Subsection \ref{ss:hochcomplex}, we give an explicit description of the Hochschild complex for $\mathcal{C}$-graded pair $(A, M)$. Indeed, the defining characteristic of $\epsilon$ is exactly the missing piece necessary for the last canceling pair in $b^2$, where $b$ is the differential on the Hochschild complex. We provide a few elementary computations as well. In conclusion, Subsection \ref{ss:shadows} is devoted to investigating hidden implications of $\varepsilon$; among other things, we show that the choice of $\varepsilon$ fixes a choice of natural isomorphism turning the zeroth Hochschild homology into a shadow on the bicategory of $\mathcal{C}$-graded bimodules.

\subsection{Coherence in the universal trace of a grading category}
\label{ss:cotrace}

Fix a grading category $(\mathcal{C}, \alpha)$. Recall that the \emph{trace} of a small category is defined as the coend of its $\mathrm{Hom}$-functor $\mathcal{C}(-,-): \mathcal{C}^\op \times \mathcal{C} \to \mathbf{Set}$,
\[
\mathrm{Tr}(\mathcal{C}) := \int^{X\in\mathrm{Ob}(\mathcal{C})} \mathcal{C}(X,X).
\]
This means that $\mathrm{Tr}(\mathcal{C})$ is a set together with a set of canonical maps
\[
\{\mathrm{tr}_X: \mathrm{End}_\mathcal{C}(X) \to \mathrm{Tr}(\mathcal{C})\}_{X \in \mathrm{Ob}(\mathcal{C}}
\]
satisfying the relation that, for each $f:X \rightleftarrows Y: g$, 
\[
\mathrm{tr}_X(g\circ f) = \mathrm{tr}_Y(f \circ g);
\]
\textit{i.e.}, the diagram
\begin{equation}
\label{eq:cohotrace}
\begin{tikzcd}
	\mathrm{Hom}_\mathcal{C}(X, Y) \widetilde{\times} \mathrm{Hom}_\mathcal{C}(Y, X) \arrow[rr, equal] \arrow[d, "\circ"] & & \mathrm{Hom}_{\mathcal{C}}(Y, X) \widetilde{\times} \mathrm{Hom}_\mathcal{C}(X, Y) \arrow[d, "\circ"]
	\\
	\mathrm{End}_\mathcal{C}(X) \arrow[dr, "\mathrm{tr}_X"'] & & \mathrm{End}_\mathcal{C}(Y) \arrow[dl, "\mathrm{tr}_Y"]
	\\
	& \mathrm{Tr}(\mathcal{C}) & 
\end{tikzcd}
\end{equation}
(where $\widetilde{\times}$ denotes unordered cartesian product) commutes. 

\begin{remark}
While we write 
\[
\mathrm{Tr}(\mathcal{C}) = \coprod_{X\in \mathrm{Ob}(\mathcal{C})} \mathrm{End}_\mathcal{C}(X) \big / g\circ f \sim f \circ g
\]
it is worth noting that the relation ``$u \sim v$ for $u\in \mathrm{End}_\mathcal{C}(X)$ and $v\in \mathrm{End}_\mathcal{C}(Y)$ if and only if there exists pairs of maps $f: A \rightleftarrows B: g$ such that $u = g \circ f$ and $v = g \circ f$'' is just, \emph{a priori}, a reflexive and symmetric relation. It is an equivalence relation when $\mathcal{C}$ is, \textit{e.g.}, a groupoid, but we do not assume this to be the case (in particular,  the grading category $(\mathcal{G}, \alpha)$ of Section \ref{s:odd} is not a groupoid). We refer interested readers to \cite{faro2008trace}.
\end{remark}

For grading by the trace of a grading category to make sense, we need a witness to diagram (\ref{eq:cohotrace}), serving as a sort of extension of the role played by the associator $\alpha$. Let $\Omega_n\mathcal{C}$ denote paths of length $n$ in $\mathcal{C}$ which form loops.

\begin{definition}
\label{def:looper}
A \emph{looper} for a grading category $(\mathcal{C}, \alpha)$ is a function $\varepsilon: \Omega_2\mathcal{C} \to \Bbbk^\times$ which satisfies
	\begin{enumerate}[label=(\roman*)]
	\item $\varepsilon(f, g)^{-1} = \varepsilon(g,f)$, and
	\item $\varepsilon$ is \emph{coherent} with $\alpha$; that is, if $h \circ g \circ f$ is a loop of length three in $\mathcal{C}$, then
	\begin{equation}
	\label{eq:a-e_coherence}
		\alpha(f, g, h) \varepsilon(f, h \circ g)
		\alpha(g, h, f) \varepsilon(g, f \circ h)
		\alpha(h, f, g) \varepsilon(h, g \circ f)
		=1.
	\end{equation}
	\end{enumerate}
	If such an $\varepsilon$ exists, we say that $(\mathcal{C}, \alpha)$ \emph{admits a looper}.
\end{definition}

\begin{lemma}
\label{lem:eploops}
Suppose that $\ell \in \mathrm{End}_\mathcal{C}(X)$ is a loop in $\mathcal{C}$. Then
\[
\varepsilon(\ell, \mathrm{Id}_X) = \alpha(\mathrm{Id}_X, \mathrm{Id}_X, \ell) \alpha(\ell, \mathrm{Id}_X, \mathrm{Id}_X).
\]
\end{lemma}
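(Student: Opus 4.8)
The plan is to feed the most degenerate admissible loop of length three at $X$ into the coherence relation (\ref{eq:a-e_coherence}) that defines a looper, namely the triple $(f,g,h) = (\ell,\mathrm{Id}_X,\mathrm{Id}_X)$. First I would check this substitution is legitimate: since $\ell \colon X \to X$ and $\mathrm{Id}_X \colon X \to X$, the path $\mathrm{Id}_X \circ \mathrm{Id}_X \circ \ell$ is genuinely a loop of length three based at $X$, so equation (\ref{eq:a-e_coherence}) applies verbatim with these choices.

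Next I would simplify the six factors. On the $\alpha$ side, two of the three cyclic rotations give $\alpha(\ell,\mathrm{Id}_X,\mathrm{Id}_X)$ and $\alpha(\mathrm{Id}_X,\mathrm{Id}_X,\ell)$, while the remaining one is $\alpha(\mathrm{Id}_X,\ell,\mathrm{Id}_X)$, which equals $1$ by part (i) of Lemma \ref{lem:assocomps} (the case $Y = X$). On the $\varepsilon$ side, the three composites $h\circ g$, $f\circ h$, $g\circ f$ collapse respectively to $\mathrm{Id}_X$, $\ell$, $\ell$, producing one factor $\varepsilon(\ell,\mathrm{Id}_X)$ and two factors $\varepsilon(\mathrm{Id}_X,\ell)$. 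Invoking the antisymmetry property (i) of a looper, $\varepsilon(\mathrm{Id}_X,\ell) = \varepsilon(\ell,\mathrm{Id}_X)^{-1}$, so the product of the three $\varepsilon$-factors reduces to $\varepsilon(\ell,\mathrm{Id}_X)^{-1}$.

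Assembling these simplifications, (\ref{eq:a-e_coherence}) becomes $\alpha(\ell,\mathrm{Id}_X,\mathrm{Id}_X)\,\alpha(\mathrm{Id}_X,\mathrm{Id}_X,\ell)\,\varepsilon(\ell,\mathrm{Id}_X)^{-1} = 1$; solving for $\varepsilon(\ell,\mathrm{Id}_X)$, and using that $\Bbbk$ is commutative so the two $\alpha$-factors may be written in either order, yields exactly the asserted identity. I do not expect a genuine obstacle here: the only mild care needed is in tracking which composite lands in each $\varepsilon$-slot after the cyclic permutations in (\ref{eq:a-e_coherence}), and in confirming that the one surviving $\alpha$-term is precisely the instance killed by Lemma \ref{lem:assocomps}(i). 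Everything else is bookkeeping, and the argument is short.
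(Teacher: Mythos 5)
Your proposal is correct and follows essentially the same strategy as the paper: both arguments feed a degenerate length-three loop containing identities into the $\alpha$-$\varepsilon$ coherence relation (\ref{eq:a-e_coherence}) and then cancel using the antisymmetry $\varepsilon(f,g)^{-1}=\varepsilon(g,f)$. The only (harmless) difference is that you specialize directly to the triple $(\ell,\mathrm{Id}_X,\mathrm{Id}_X)$, so that only part (i) of Lemma \ref{lem:assocomps} is needed, whereas the paper works with $(f,\mathrm{Id}_X,h)$ for a factorization $\ell = f\circ h$ and consequently also invokes parts (ii) and (iii); your route is marginally shorter.
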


\begin{proof}
More generally, consider the loop of length three $h \circ \mathrm{Id}_X \circ f$, pictured 
$\begin{tikzcd}
	\bullet \arrow[r, bend left, "f"] & \bullet \arrow[l, bend left, "h"] \arrow[loop right, "\mathrm{Id}_X"]
\end{tikzcd}$. Condition (ii) of Definition \ref{def:looper} tells us that 
\[
		\alpha(f, \mathrm{Id}_X, h) \varepsilon(f, h)
		\alpha(\mathrm{Id}_X, h, f) \varepsilon(\mathrm{Id}_X, f \circ h)
		\alpha(h, f, \mathrm{Id}_X) \varepsilon(h, f)
		=1.
\]
Condition (i) of Definition \ref{def:looper} tells us that $\varepsilon(f, h) \varepsilon(h, f) = 1$. Then, apply (ii) and (iii) of Lemma \ref{lem:assocomps} in addition to condition (i) to obtain
\[
\varepsilon(f \circ h, \mathrm{Id}_X) = \alpha(\mathrm{Id}_X, \mathrm{Id}_X, f \circ h) \alpha(f \circ h, \mathrm{Id}_X, \mathrm{Id}_X).
\]
\end{proof}

\begin{remark}
Notice that Lemma \ref{lem:eploops} implies that for a $\mathcal{C}$-graded module $M$ with gradings supported entirely in loops, the diagram
\[
\begin{tikzcd}
M \otimes I_\mathcal{C} \arrow[rr, "\varepsilon"] \arrow[dr, "\mathcal{R}"'] & & I_\mathcal{C} \otimes M\arrow[dl, "\mathcal{L}"]
\\
& M & 
\end{tikzcd}
\]
commutes.
\end{remark}

The formula (\ref{eq:a-e_coherence}) is called \emph{$\alpha$-$\varepsilon$ coherence}. It comes from the observation that the choices in ``smoothing'' a loop should be witnessed: consider the following diagram.
\[
\tikz[]{
%LEVEL ZERO
\node(000) at (0,0) {$
	\tikz[scale=0.8]{
		%\draw (0,0) circle [radius=1];
		%
		%\node at (0,-1) {$\bullet$};
			\node(Zr) at (0+.27,-1) {$Z$};
			\node(Zl) at (0-.27,-1) {$Z$};
		%\node at (1.73205/2, 0.5) {$\bullet$};
			\node(Yl) at (0.71, 0.77024) {$Y$};
			\node(Yr) at (1.01, 0.25063) {$Y$};
		%\node at (-1.73205/2, 0.5) {$\bullet$};
			\node(Xr) at (-0.71, 0.77024) {$X$};
			\node(Xl) at (-1.01, 0.25063) {$X$};
		%\draw[->] (Yr) to node[below, pos=0.3]{$g$} (Zr);
		\draw[->] (Yr) to node[below, sloped]{$g$} (Zr);
		\draw[->] (Xr) to node[above]{$f$} (Yl);
		%\draw[->] (Zl) to node[below, pos=0.7]{\\ $h$} (Xl);
		\draw[->] (Zl) to node[below, sloped]{\\ $h$} (Xl);
		}
	$};
%LEVEL ONE
\node(100) at (4.5,3) {$
	\tikz[scale=0.8]{
			\node(Zr) at (0+.27,-1) {$Z$};
			\node(Zl) at (0-.27,-1) {$Z$};
			\node(Yl) at (0.71, 0.77024) {$Y$};
			\node(Yr) at (1.01, 0.25063) {$Y$};
		\draw[->] (Yr) to node[below, sloped]{$g$} (Zr);
		\draw[->] (Zl) to[out=120, in=180] node[above, sloped]{$f \circ h$} (Yl);
		}
	$};
\node(010) at (4.5,0) {$
	\tikz[scale=0.8]{
			\node(Zr) at (0+.27,-1) {$Z$};
			\node(Zl) at (0-.27,-1) {$Z$};
			\node(Xr) at (-0.71, 0.77024) {$X$};
			\node(Xl) at (-1.01, 0.25063) {$X$};
		\draw[->] (Xr) to[out=0, in=60] node[above, sloped]{$g \circ f$} (Zr);
		\draw[->] (Zl) to node[below, sloped]{$h$} (Xl);
		}
	$};
\node(001) at (4.5,-3) {$
	\tikz[scale=0.8]{
			\node(Yl) at (0.71, 0.77024) {$Y$};
			\node(Yr) at (1.01, 0.25063) {$Y$};
			\node(Xr) at (-0.71, 0.77024) {$X$};
			\node(Xl) at (-1.01, 0.25063) {$X$};
		\draw[->] (Xr) to node[above]{$f$} (Yl);
		\draw[->] (Yr) to[out=240, in=300] node[below, sloped]{$h \circ g$} (Xl);
		}
	$};
%LEVEL TWO
\node(110) at (9, 3) {$
	\tikz[scale=0.8]{
			\node(Zr) at (0+.27,-1) {$Z$};
			\node(Zl) at (0-.27,-1) {$Z$};
		\draw[->] (Zl) to[out=120, in=180] (0,0) to[out=0, in=60] (Zr);
		\node[anchor=south] at (0,0) {$g \circ f \circ h$};
		}
	$};
\node(101) at (9,0) {$
	\tikz[scale=0.8]{
			\node(Yl) at (0.71, 0.77024) {$Y$};
			\node(Yr) at (1.01, 0.25063) {$Y$};
		\draw[->] (Yr) to[out=240, in=300] (0,0) to[out=120, in=180] (Yl);
		\node[anchor=north east] at (0,0) {$f \circ h \circ g$};
		}
	$};
\node(011) at (9,-3) {$
	\tikz[scale=0.8]{
			\node(Xr) at (-0.71, 0.77024) {$X$};
			\node(Xl) at (-1.01, 0.25063) {$X$};
		\draw[->] (Xr) to[out=0, in=60] (0,0) to[out=-120, in=-60] (Xl);
		\node[anchor=north west] at (0,0) {$h \circ g \circ f$};
		}
	$};
%LEVEL THREE
\node(111) at (13.5,0) {$
	\tikz[scale=0.8]{
		\draw (0,0) circle [radius=.8];
		}
	$};
% EDGE and FACE ARROWS
	%behind alpha face arrow
%\draw[double,-{Implies[]}] (001) to[out=120, in=-90] node[left]{\tiny$\alpha(g,h,f)$} (2.6,0) to[out=90, in=240] (100);
\draw[double, -{Implies[]}, bend left=40] (001) to node[left, pos=0.3]{\tiny$\alpha(g,h,f)$} (100);
	%leftmost edges
\draw[->] (000) to node[above, sloped]{\small$\circ$} (100);
\draw[draw=white, ultra thick] (000) to (010);
\draw[->, dashed] (000) to node[above, sloped]{\small$\circ$} (010);
\draw[->] (000) to node[below,sloped]{\small$\circ$} (001);
	%middle edges
\draw[->] (100) to node[above,sloped]{\small$\circ$} (110);
\draw[->] (100) to node[near start, above, sloped]{\small$\circ$} (101);
\draw[->] (001) to node[near start, above, sloped]{\small$\circ$} (101);
\draw[draw=white, ultra thick] (010) to (110);
\draw[->] (010) to node[near start, below, sloped]{\small$\circ$} (110);
\draw[draw=white, ultra thick] (010) to (011);
\draw[->, dashed] (010) to node[near start, above, sloped]{\small$\circ$} (011);
\draw[->] (001) to node[above, sloped]{\small$\circ$} (011);
	%rightmost edges
\draw[->] (110) to node[above=1mm, pos=0.6]{\small$\mathrm{tr}_Z$} (111);
\draw[->] (101) to node[above, sloped]{\small$\mathrm{tr}_Y$} (111);
\draw[->, dashed] (011) to node[below=1mm, pos=0.6]{\small$\mathrm{tr}_X$} (111);
% other FACE ARROWS
	%alphas
\draw[double, -{Implies[]}, bend left=10] (010) to node[right]{\tiny$\alpha(f,g,h)$} (001);
\draw[double, -{Implies[]}, bend left=10] (100) to node[right]{\tiny$\alpha(h,f,g)$} (010);
	%epsilons
\draw[double, -{Implies[]}, bend right=10] (101) to node[right]{\tiny$\varepsilon(g, f\circ h)$} (110);
\draw[double, -{Implies[]}, bend right=10] (011) to node[right]{\tiny$\varepsilon(f, h\circ g)$} (101);
\draw[draw=white, ultra thick, double,-{Implies[]}, bend right=40] (110) to (011);
\draw[double, -{Implies[]}, bend right=40] (110) to node[left]{\tiny$\varepsilon(h, g \circ f)$} (011);
}
\]
The idea is to take formula (\ref{eq:a-e_coherence}) as a map which pushes the dotted path around the cube and back onto itself. As a relation between composites in $\mathcal{C}$, $\alpha$-$\varepsilon$ coherence states that the following hexagon commutes:
\[
\begin{tikzcd}[column sep=large, row sep=large]
	& (h \circ g) \circ f \arrow[r, "\varepsilon \text{(} f\text{,}\, h \circ g\text{)}"] & f \circ (h \circ g) \arrow[dr, "\alpha\text{(}g\text{,}\,h\text{,}\,f\text{)}"] & 
	\\
	h \circ (g \circ f) \arrow[ur, "\alpha\text{(}f\text{,}\,g\text{,}\,h\text{)}"]  & & & (f \circ h) \circ g \arrow[dl, "\varepsilon\text{(}g\text{,}\, f \circ h\text{)}"]
	\\
	& (g \circ f) \circ h \arrow[ul, "\varepsilon\text{(}h\text{,}\, g \circ f\text{)}"] & g \circ (f \circ h) \arrow[l, "\alpha\text{(}h\text{,}\, f\text{,}\, g\text{)}"] &
\end{tikzcd}
\]
It is very useful to index the binary matchings encoded above via pictures, as so:
\[
\tikz[x=1.2cm, y=0.7cm]{
	\node(1) at (0,0) {$
	\tikz[scale=0.8,x=1cm, y=1cm]{
		%\draw (0,0) circle [radius=1];
	%DOTS
	\node(x) at (0,-1) {$\bullet$};
	\node(z) at (1.73205/2, 0.5) {$\bullet$};
	\node(y) at (-1.73205/2, 0.5) {$\bullet$};
	% ARROWS
	\draw[->] (x) to node(f)[left, pos=0.35]{$f$} (y);
	\draw[->] (y) to node(g)[above]{$g$} (z);
	\draw[->] (z) to node(h)[right, pos=0.6]{$h$} (x);
	%
	% MATCHINGS
	\draw[rounded corners] (0.3,-0.85) -- (0,-0.5) -- (-0.3,-0.85) -- (-1.1, -0.69) -- (-1.2, 0.7) -- (0,1.25) -- (1.2,0.7) --  (1.1, -0.69) -- cycle;
	\draw[rounded corners, rotate around={240:(0,0)}] (-.975, -.7) rectangle (.975, -.1);
	}
	$};
	\node(2) at (3,3) {$
	\tikz[scale=0.8,x=1cm, y=1cm]{
		%\draw (0,0) circle [radius=1];
	%DOTS
	\node(x) at (0,-1) {$\bullet$};
	\node(z) at (1.73205/2, 0.5) {$\bullet$};
	\node(y) at (-1.73205/2, 0.5) {$\bullet$};
	% ARROWS
	\draw[->] (x) to node(f)[left, pos=0.35]{$f$} (y);
	\draw[->] (y) to node(g)[above]{$g$} (z);
	\draw[->] (z) to node(h)[right, pos=0.65]{$h$} (x);
	%
	% MATCHINGS
	\draw[rounded corners] (0.3,-0.85) -- (0,-0.5) -- (-0.3,-0.85) -- (-1.1, -0.69) -- (-1.2, 0.7) -- (0,1.25) -- (1.2,0.7) --  (1.1, -0.69) -- cycle;
	\draw[rounded corners, rotate around={120:(0,0)}] (-.975, -.7) rectangle (.975, -.1);
		}
	$};
	\node(3) at (7,3) {$
	\tikz[scale=0.8,x=1cm, y=1cm]{
		%\draw (0,0) circle [radius=1];
	%DOTS
	\node(x) at (0,-1) {$\bullet$};
	\node(z) at (1.73205/2, 0.5) {$\bullet$};
	\node(y) at (-1.73205/2, 0.5) {$\bullet$};
	% ARROWS
	\draw[->] (x) to node(f)[left, pos=0.35]{$f$} (y);
	\draw[->] (y) to node(g)[above]{$g$} (z);
	\draw[->] (z) to node(h)[right, pos=0.6]{$h$} (x);
	%
	% MATCHINGS
	\draw[rounded corners, rotate around={240:(0,0)}] (0.3,-0.85) -- (0,-0.5) -- (-0.3,-0.85) -- (-1.1, -0.69) -- (-1.2, 0.7) -- (0,1.25) -- (1.2,0.7) --  (1.1, -0.69) -- cycle;
	\draw[rounded corners, rotate around={120:(0,0)}] (-.975, -.7) rectangle (.975, -.1);
		}
	$};
	\node(4) at (10,0) {$
	\tikz[scale=0.8,x=1cm, y=1cm]{
		%\draw (0,0) circle [radius=1];
	%DOTS
	\node(x) at (0,-1) {$\bullet$};
	\node(z) at (1.73205/2, 0.5) {$\bullet$};
	\node(y) at (-1.73205/2, 0.5) {$\bullet$};
	% ARROWS
	\draw[->] (x) to node(f)[left, pos=0.35]{$f$} (y);
	\draw[->] (y) to node(g)[above]{$g$} (z);
	\draw[->] (z) to node(h)[right, pos=0.65]{$h$} (x);
	%
	% MATCHINGS
	\draw[rounded corners, rotate around={240:(0,0)}] (0.3,-0.85) -- (0,-0.5) -- (-0.3,-0.85) -- (-1.1, -0.69) -- (-1.2, 0.7) -- (0,1.25) -- (1.2,0.7) --  (1.1, -0.69) -- cycle;
	\draw[rounded corners] (-.975, -.7) rectangle (.975, -.1);
		}
	$};
	\node(5) at (7,-3) {$
	\tikz[scale=0.8,x=1cm, y=1cm]{
		%\draw (0,0) circle [radius=1];
	%DOTS
	\node(x) at (0,-1) {$\bullet$};
	\node(z) at (1.73205/2, 0.5) {$\bullet$};
	\node(y) at (-1.73205/2, 0.5) {$\bullet$};
	% ARROWS
	\draw[->] (x) to node(f)[left, pos=0.35]{$f$} (y);
	\draw[->] (y) to node(g)[above]{$g$} (z);
	\draw[->] (z) to node(h)[right, pos=0.65]{$h$} (x);
	%
	% MATCHINGS
	\draw[rounded corners, rotate around={120:(0,0)}] (0.3,-0.85) -- (0,-0.5) -- (-0.3,-0.85) -- (-1.1, -0.69) -- (-1.2, 0.7) -- (0,1.25) -- (1.2,0.7) --  (1.1, -0.69) -- cycle;
	\draw[rounded corners] (-.975, -.7) rectangle (.975, -.1);
		}
	$};
	\node(6) at (3,-3) {$
	\tikz[scale=0.8,x=1cm, y=1cm]{
		%\draw (0,0) circle [radius=1];
	%DOTS
	\node(x) at (0,-1) {$\bullet$};
	\node(z) at (1.73205/2, 0.5) {$\bullet$};
	\node(y) at (-1.73205/2, 0.5) {$\bullet$};
	% ARROWS
	\draw[->] (x) to node(f)[left, pos=0.35]{$f$} (y);
	\draw[->] (y) to node(g)[above]{$g$} (z);
	\draw[->] (z) to node(h)[right, pos=0.65]{$h$} (x);
	%
	% MATCHINGS
	\draw[rounded corners, rotate around={120:(0,0)}] (0.3,-0.85) -- (0,-0.5) -- (-0.3,-0.85) -- (-1.1, -0.69) -- (-1.2, 0.7) -- (0,1.25) -- (1.2,0.7) --  (1.1, -0.69) -- cycle;
	\draw[rounded corners, rotate around={240:(0,0)}] (-.975, -.7) rectangle (.975, -.1);
		}
	$};
	% arrows
	\draw[->] (1) to node[left, near end]{$\alpha(f, g, h)$} (2);
	\draw[->] (2) to node[above]{$\varepsilon(f, h\circ g)$} (3);
	\draw[->] (3) to node[right, near start]{$\alpha(g, h, f)$} (4);
	\draw[->] (4) to node[right, near end]{$\varepsilon(g, f\circ h)$} (5);
	\draw[->] (5) to node[above]{$\alpha(h, f, g)$} (6);
	\draw[->] (6) to node[left, near start]{$\varepsilon(h, g\circ f)$} (1);
}
\]

\begin{definition}
We will call an element of $\Omega_n\mathcal{C}$ an \emph{$n$-partitioning} of a loop it represents in $\mathrm{Tr}(\mathcal{C})$. A presentation of an $n$-partition with a choice of $n-1$ binary matchings (depicted as above) is called a \emph{topography} on the $n$-partitioned loop. Denote the set of topographies on an arbitrary $n$-partitioning by $T(n)$.
\end{definition}

\begin{lemma}
\label{lem:topo_counting}
The number of topographies on an $n$-partitioned loop is the $n$th central binomial coefficient:
\[
\abs{T(n)} 
= 
\binom{2(n-1)}{n-1}.
\]
\end{lemma}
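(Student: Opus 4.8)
The plan is to recognize a topography as a purely combinatorial object attached to a cyclic word of length $n$, to stratify the set $T(n)$ by a canonically defined ``basepoint,'' and to identify each stratum with the set of ordinary (linear) full bracketings of $n$ factors; the count then becomes $n$ times a Catalan number, which is exactly the central binomial coefficient. To set this up precisely I would first observe that an $n$-partitioned loop carries $n$ \emph{slots}, one between each pair of cyclically consecutive morphisms, and that a topography is the nesting pattern of its $n-1$ binary matchings --- it records which matchings sit inside which, but \emph{not} the order in which the matchings are performed. In particular $|T(n)|$ depends only on $n$ (not on $\mathcal{C}$, and not on whether objects occurring in the loop happen to coincide), so it suffices to count nesting patterns abstractly.

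Next I would note that each binary matching composes two cyclically adjacent morphisms and hence ``consumes'' exactly the slot between them; distinct matchings consume distinct slots, and since there are $n$ slots and $n-1$ matchings, precisely one slot is left uninvolved. Call this the \emph{basepoint} of the topography. It is recovered from the topography, so the assignment ``topography $\mapsto$ basepoint'' is a well-defined map $T(n) \to \{1, \dots, n\}$. For a fixed slot $s$ I would check that cutting the cyclic word open at $s$ sets up a bijection between the fiber over $s$ and the full linear bracketings of the resulting length-$n$ word of morphisms: the requirement that $s$ is never consumed is precisely the condition that distinguishes a linear bracketing from a cyclic one. Since the number of full bracketings of $n$ factors is the Catalan number $C_{n-1}$, every fiber has size $C_{n-1}$.

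Summing over the $n$ slots gives $|T(n)| = n\,C_{n-1}$, and the identity $C_{n-1} = \tfrac{1}{n}\binom{2(n-1)}{n-1}$ yields the claimed formula $|T(n)| = \binom{2(n-1)}{n-1}$. I expect the only genuinely delicate step to be the middle one: one has to keep the bookkeeping straight --- slots versus objects, and unordered nesting versus ordered history of matchings --- so that the basepoint map is well-defined and the fibers really are the sets of linear bracketings. Everything after that is the standard Catalan count, and the small cases $n = 1, 2, 3$ (producing $1$, $2$, $6$, the last matching the six vertices of the displayed hexagon) serve as a sanity check.
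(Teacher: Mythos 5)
Your proposal is correct and follows essentially the same route as the paper: choose/identify a basepoint (the $n$ choices), observe that the remaining nesting data is a full binary bracketing of $n$ factors counted by $C_{n-1}$, and conclude $\abs{T(n)} = n\,C_{n-1} = \binom{2(n-1)}{n-1}$. Your refinement of making the basepoint intrinsic (the unique slot not consumed by any matching), which guarantees the fibers are disjoint and the product count is exact, is a slightly more careful justification of the step the paper states without comment, but it is the same argument.
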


\begin{proof}
Choose a basepoint of an $n$-partitioned loop (there are $n$ choices). Doing so represents that loop as an element of $\mathrm{End}_\mathcal{C}(X)$ for some $X \in \mathrm{Ob}(\mathcal{C})$. Then, a choice of binary matchings after this first choice is equivalent to the number of distinct full binary trees on $n$ leaves, which is equal to the $(n-1)$st Catalan number. Thus $\abs{T(n)} = n \cdot C_{n-1} = \binom{2(n-1)}{n-1}$, as desired.
\end{proof}

Recall that $\alpha$ satisfies a relation called \emph{distant commutativity}, which says that 
\[
\alpha(a, b, c) \alpha(c \circ b \circ a, d, e) = \alpha(c \circ b \circ a, d, e)\alpha(a, b, c).
\]
Similarly, there is $\alpha$-$\varepsilon$ distant commutativity; for a loop partitioned into enough morphisms (at least four), commutative diagrams of the following form start to appear.
\[
\tikz{
\node(a) at (-1,0) {$
	\tikz[baseline={([yshift=-.5ex]current bounding box.center)}, scale=0.675, x=1cm, y=1cm]{
		\node(N) at (0,1) {\tiny$\bullet$};
		\node(E) at (1,0) {\tiny$\bullet$};
		\node(S) at (0,-1) {\tiny$\bullet$};
		\node(W) at (-1,0) {\tiny$\bullet$};
		\draw[->] (N) to[out=0, in=90] (E);
			\node at (1,1) {\tiny$h$};
		\draw[->] (E) to[out=270, in=0] (S);
			\node at (1,-1) {\tiny$\ell$};
		\draw[->] (S) to[out=180, in=270] (W);
			\node at (-1,-1) {\tiny$f$};
		\draw[->] (W) to[out=90, in=180] (N);
			\node at (-1,1) {\tiny$g$};
		\draw[rounded corners] (-1.3, -1.3) rectangle (-0.7, 1.3);
		\draw[rounded corners] (-1.5, 0) -- (-1.5, -1.5) -- (-0.5, -1.5) -- (-0.5, 0.5) -- (1.5, 0.5) -- (1.5, 1.5) -- (-1.5, 1.5) -- (-1.5, 0);
		\draw[rounded corners] (-1.7, 0) -- (-1.7, -1.7) -- (-0.3, -1.7) -- (-0.3, -0.5) -- (0.3, -0.5) -- (0.3, -1.7) -- (1.7, -1.7) -- (1.7, 1.7) -- (-1.7, 1.7) -- (-1.7, 0);
	}
	$};
\node(b) at (4,2) {$
	\tikz[baseline={([yshift=-.5ex]current bounding box.center)}, scale=0.675, x=1cm, y=1cm]{
		\node(N) at (0,1) {\tiny$\bullet$};
		\node(E) at (1,0) {\tiny$\bullet$};
		\node(S) at (0,-1) {\tiny$\bullet$};
		\node(W) at (-1,0) {\tiny$\bullet$};
		\draw[->] (N) to[out=0, in=90] (E);
			\node at (1,1) {\tiny$h$};
		\draw[->] (E) to[out=270, in=0] (S);
			\node at (1,-1) {\tiny$\ell$};
		\draw[->] (S) to[out=180, in=270] (W);
			\node at (-1,-1) {\tiny$f$};
		\draw[->] (W) to[out=90, in=180] (N);
			\node at (-1,1) {\tiny$g$};
		\draw[rounded corners] (-1.3, 0.7) rectangle (1.3, 1.3);
		\draw[rounded corners] (-1.5, 0) -- (-1.5, -1.5) -- (-0.5, -1.5) -- (-0.5, 0.5) -- (1.5, 0.5) -- (1.5, 1.5) -- (-1.5, 1.5) -- (-1.5, 0);
		\draw[rounded corners] (-1.7, 0) -- (-1.7, -1.7) -- (-0.3, -1.7) -- (-0.3, -0.5) -- (0.3, -0.5) -- (0.3, -1.7) -- (1.7, -1.7) -- (1.7, 1.7) -- (-1.7, 1.7) -- (-1.7, 0);
	}
	$};
\node(c) at (4,-2) {$
	\tikz[baseline={([yshift=-.5ex]current bounding box.center)}, scale=0.675, x=1cm, y=1cm]{
		\node(N) at (0,1) {\tiny$\bullet$};
		\node(E) at (1,0) {\tiny$\bullet$};
		\node(S) at (0,-1) {\tiny$\bullet$};
		\node(W) at (-1,0) {\tiny$\bullet$};
		\draw[->] (N) to[out=0, in=90] (E);
			\node at (1,1) {\tiny$h$};
		\draw[->] (E) to[out=270, in=0] (S);
			\node at (1,-1) {\tiny$\ell$};
		\draw[->] (S) to[out=180, in=270] (W);
			\node at (-1,-1) {\tiny$f$};
		\draw[->] (W) to[out=90, in=180] (N);
			\node at (-1,1) {\tiny$g$};
		\draw[rounded corners] (-1.3, -1.3) rectangle (-0.7, 1.3);
		\draw[rounded corners] (-1.5, 0) -- (-1.5, -1.5) -- (-0.5, -1.5) -- (-0.5, 0.5) -- (1.5, 0.5) -- (1.5, 1.5) -- (-1.5, 1.5) -- (-1.5, 0);
		\draw[rounded corners] (-1.7, 0) -- (-1.7, 1.7) -- (1.7, 1.7) -- (1.7, 0.3) -- (0.5, 0.3) -- (0.5, -0.3) -- (1.7, -0.3) -- (1.7, -1.7) -- (-1.7, -1.7) -- (-1.7,0);
	}
	$};
\node(d) at (9,0) {$
	\tikz[baseline={([yshift=-.5ex]current bounding box.center)}, scale=0.675, x=1cm, y=1cm]{
		\node(N) at (0,1) {\tiny$\bullet$};
		\node(E) at (1,0) {\tiny$\bullet$};
		\node(S) at (0,-1) {\tiny$\bullet$};
		\node(W) at (-1,0) {\tiny$\bullet$};
		\draw[->] (N) to[out=0, in=90] (E);
			\node at (1,1) {\tiny$h$};
		\draw[->] (E) to[out=270, in=0] (S);
			\node at (1,-1) {\tiny$\ell$};
		\draw[->] (S) to[out=180, in=270] (W);
			\node at (-1,-1) {\tiny$f$};
		\draw[->] (W) to[out=90, in=180] (N);
			\node at (-1,1) {\tiny$g$};
		\draw[rounded corners] (-1.3, 0.7) rectangle (1.3, 1.3);
		\draw[rounded corners] (-1.5, 0) -- (-1.5, -1.5) -- (-0.5, -1.5) -- (-0.5, 0.5) -- (1.5, 0.5) -- (1.5, 1.5) -- (-1.5, 1.5) -- (-1.5, 0);
		\draw[rounded corners] (-1.7, 0) -- (-1.7, 1.7) -- (1.7, 1.7) -- (1.7, 0.3) -- (0.5, 0.3) -- (0.5, -0.3) -- (1.7, -0.3) -- (1.7, -1.7) -- (-1.7, -1.7) -- (-1.7,0);
	}
	$};
\draw[->] (a) to node[above=2.5mm, pos=0.4]{\small$\alpha(f, g, h)$} (b);
\draw[->] (a) to node[below=2.5mm, pos=0.4]{\small$\varepsilon(h \circ g\circ f, \ell)$} (c);
\draw[->] (b) to node[above=2.5mm, pos=0.6]{\small$\varepsilon(h \circ g\circ f, \ell)$} (d);
\draw[->] (c) to node[below=2.5mm, pos=0.6]{\small$\alpha(f, g, h)$} (d);
}
\]
We refer to both properties
\[
\alpha(a, b, c) \alpha(c \circ b \circ a, d, e) = \alpha(c \circ b \circ a, d, e)\alpha(a, b, c)
\quad
\text{and}
\quad
\alpha(f, g, h) \varepsilon(h\circ g \circ f, \ell) = \varepsilon(h\circ g \circ f, \ell)  \alpha(f, g, h)
\]
ambiguously as \emph{distant commutativity}.

\begin{definition}
We denote by $\mathcal{T}(n)$ the \emph{space of topographies} associated to an arbitrary $n$-partitioned loop, defined as the following 2-dimensional CW-complex:
	\begin{enumerate}
		\item $\mathcal{T}(n)^0 := T(n)$;
		\item $\mathcal{T}(n)^1$ is an $(n-1)$-valent graph with $\abs{T(n)}$-many vertices corresponding to changing a single binary matching ($n-2$ edges correspond to a single application of $\alpha$, and one edge corresponds to a basepoint change, \textit{i.e.}, an application of $\varepsilon$);
		\item $\mathcal{T}(n)^2 = \mathcal{T}(n)$ is obtained by gluing 2-cells along all words corresponding to
		\begin{enumerate}
			\item the cocycle condition on $\alpha$,
			\item $\alpha$-$\varepsilon$ coherence, or
			\item distant commutativity.
		\end{enumerate}
	\end{enumerate}
\end{definition}

Note that $\mathcal{T}(2) \simeq \mathcal{T}(3) \simeq \mathbb{D}^2$ by definition.

\begin{lemma}
\label{lem:simpcon}
The space of topographies $\mathcal{T}(4)$ is simply connected.
\end{lemma}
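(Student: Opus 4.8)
The plan is to identify $\mathcal{T}(4)$ with the boundary of a familiar three-dimensional polytope — the cyclohedron (Bott–Taubes polytope) on four points, which I will write $W_4$ — and then invoke the fact that the boundary of any convex $3$-polytope is homeomorphic to $S^2$.

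First I would make the cell structure of $\mathcal{T}(4)$ completely explicit. By Lemma \ref{lem:topo_counting} there are $\binom{6}{3}=20$ vertices; unwinding the definitions, a $4$-topography is precisely the data of a cyclic basepoint for the loop together with a complete parenthesization (full binary tree) of the four resulting linearly-ordered factors, i.e.\ $4\cdot C_3 = 20$. The $1$-cells are the $\alpha$-moves (tree rotations fixing the basepoint, two per vertex) and the $\varepsilon$-moves (top-level basepoint changes, one per vertex), so $\mathcal{T}(4)$ is a $3$-valent graph with $20$ vertices and $30$ edges. Finally the $2$-cells fall into exactly three families: (a) for each of the $4$ basepoints, the pentagon filled by the $\alpha$-cocycle relation, which is a copy of the $2$-dimensional associahedron $K_4$; (b) for each of the $4$ consecutive pairs of factors, a hexagon given by $\alpha$-$\varepsilon$ coherence applied to the induced $3$-partition, a copy of the $2$-dimensional cyclohedron $W_3$; and (c) for each of the $4$ consecutive triples of factors, a square given by distant commutativity (two choices of associating the triple, two choices of the top-level $\varepsilon$-swap with the remaining factor). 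This gives $4+4+4 = 12$ two-cells, and $20 - 30 + 12 = 2$.

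Next I would observe that this cell data is exactly the face poset, through dimension $2$, of $W_4$: the vertices of $W_4$ are the cyclic bracketings of four points, which are precisely $4$-topographies; the facets of the three-dimensional cyclohedron are, in the tubing description, $4$ pentagons $K_4\times W_1$, $4$ hexagons $(\mathrm{pt})\times W_3$, and $4$ squares $K_3\times W_2$; and one checks the edges (codimension-$2$ faces) correspond to the $\alpha$- and $\varepsilon$-moves. Since $W_4$ is a convex $3$-polytope its boundary $\partial W_4$ is homeomorphic to $S^2$, and because $\partial W_4$ is $2$-dimensional it coincides with its own $2$-skeleton; hence $\mathcal{T}(4)\cong \partial W_4\cong S^2$ is simply connected.

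The main obstacle is the matching of $2$-cells: one must check that every coherence relation appearing in the definition of $\mathcal{T}(n)$ for $n=4$ really does occur as a $2$-face of $W_4$ and, conversely, that none is missed — in particular that the distant-commutativity squares are genuinely present (they require a loop partitioned into at least four morphisms, which is exactly the $n=4$ case) and that there are no spurious $\alpha$-$\alpha$ distant squares, since a binary tree on four leaves has no two disjoint rotation sites. If one prefers to avoid appealing to the polytope, the same conclusion can be reached combinatorially: collapse each of the $4$ pentagonal subcomplexes $K_4$ (each a contractible CW subcomplex) to a point, a homotopy equivalence, reducing to a $2$-complex on $4$ vertices and $10$ edges whose relations come only from the hexagons and squares; the squares identify all parallel $\varepsilon$-edges and the hexagons then kill the remaining free-group generators, so the quotient is simply connected. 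Either way, the key input is that the pentagon, hexagon, and distant-commutativity relations are not merely present but suffice.
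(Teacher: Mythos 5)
Your proposal is correct, and its combinatorial core is the same as the paper's: both arguments exhibit $\mathcal{T}(4)$ as a closed polyhedral surface with $20$ trivalent vertices, $30$ edges, and $12$ two-cells (four pentagons from the cocycle relation, four hexagons from $\alpha$-$\varepsilon$ coherence, four squares from distant commutativity). Where you differ is in how sphericity is certified: the paper ``computes by hand,'' drawing the polyhedron explicitly (Figure \ref{fig:tensorpf}) and checking each face commutes, whereas you identify the complex with the boundary of the three-dimensional cyclohedron $W_4$ and invoke the fact that the boundary of a convex $3$-polytope is $S^2$. This is a genuine upgrade in two respects: it removes any reliance on the correctness of the hand-drawn figure (which is, in effect, a picture of $\partial W_4$ with its $4+4+4$ facets $K_4\times W_1$, $K_2\times W_3$, $K_3\times W_2$), and it points toward the general statement that $\mathcal{T}(n)$ is the $2$-skeleton of $\partial W_n$, which would yield simple connectivity for all $n\ge 4$ rather than just the case needed here. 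The one step you rightly single out as the real work is the exhaustiveness of the face list --- that for $n=4$ the only relation words are the four pentagons, four hexagons, and four $\alpha$-$\varepsilon$ squares, with no $\alpha$-$\alpha$ distant squares (these need at least five factors) and no coherence words built from composite blocks (these also need at least five factors); your treatment of this is correct and is exactly the verification implicit in the paper's figure. Your alternative collapsing argument is fine as a sketch (the four closed pentagons are indeed pairwise disjoint contractible subcomplexes, leaving $4$ vertices and $10$ edges), but the final claim that the eight remaining $2$-cells kill the rank-$7$ free fundamental group of the quotient graph is asserted rather than checked, so the cyclohedron route should be regarded as your actual proof.
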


\begin{proof}
We compute by hand that $\mathcal{T}(4) \simeq S^2$; see Figure \ref{fig:tensorpf}. Indeed, $\mathcal{T}(4)$ is a polyhedron with $\binom{6}{3} = 20$ vertices by Lemma \ref{lem:topo_counting} (each of valence 3) and twelve faces: four square, four pentagonal, and four hexagonal. Each square face commutes by $\alpha$-$\varepsilon$ distant commutativity, each pentagonal face commutes by the cocycle condition, and each hexagonal face commutes by $\alpha$-$\varepsilon$ coherence. 
\end{proof}

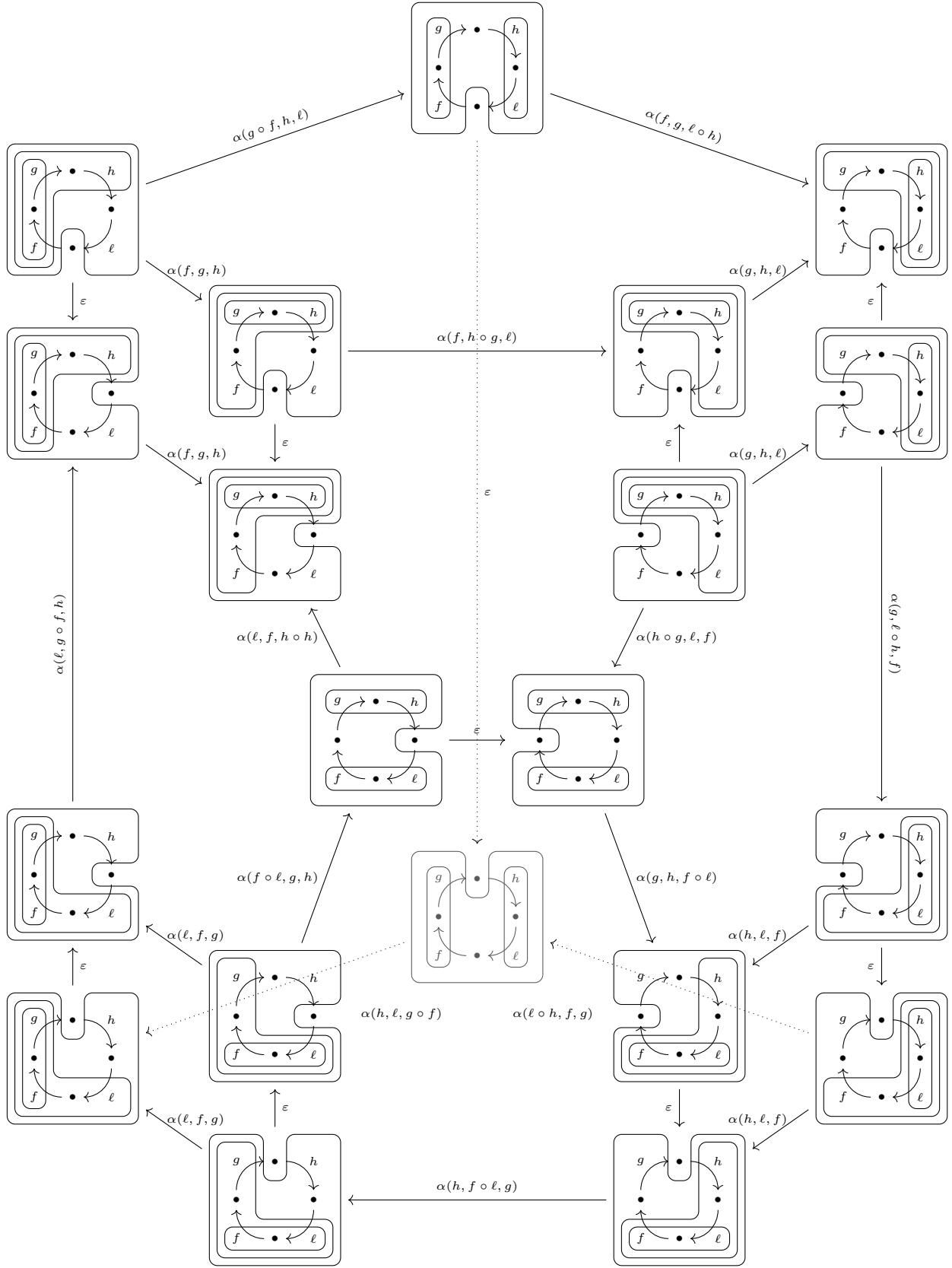
\begin{figure}[p]
\centering
\begin{tikzpicture}[x=2.5cm, y=1.75cm]
% NODES
	\node(start) at (0, 8*1.4142) {$
	\tikz[baseline={([yshift=-.5ex]current bounding box.center)}, scale=0.675, x=1cm, y=1cm]{
		\node(N) at (0,1) {\tiny$\bullet$};
		\node(E) at (1,0) {\tiny$\bullet$};
		\node(S) at (0,-1) {\tiny$\bullet$};
		\node(W) at (-1,0) {\tiny$\bullet$};
		\draw[->] (N) to[out=0, in=90] (E);
			\node at (1,1) {\tiny$h$};
		\draw[->] (E) to[out=270, in=0] (S);
			\node at (1,-1) {\tiny$\ell$};
		\draw[->] (S) to[out=180, in=270] (W);
			\node at (-1,-1) {\tiny$f$};
		\draw[->] (W) to[out=90, in=180] (N);
			\node at (-1,1) {\tiny$g$};
		\draw[rounded corners] (-1.3, -1.3) rectangle (-0.7, 1.3);
		\draw[rounded corners] (1.3, -1.3) rectangle (0.7, 1.3);
		\draw[rounded corners] (-1.7, 0) -- (-1.7, -1.7) -- (-0.3, -1.7) -- (-0.3, -0.5) -- (0.3, -0.5) -- (0.3, -1.7) -- (1.7, -1.7) -- (1.7, 1.7) -- (-1.7, 1.7) -- (-1.7, 0);
	}
	$};
	\node(s1a) at (-2*1.4142, 7*1.4142) {$
	\tikz[baseline={([yshift=-.5ex]current bounding box.center)}, scale=0.675, x=1cm, y=1cm]{
		\node(N) at (0,1) {\tiny$\bullet$};
		\node(E) at (1,0) {\tiny$\bullet$};
		\node(S) at (0,-1) {\tiny$\bullet$};
		\node(W) at (-1,0) {\tiny$\bullet$};
		\draw[->] (N) to[out=0, in=90] (E);
			\node at (1,1) {\tiny$h$};
		\draw[->] (E) to[out=270, in=0] (S);
			\node at (1,-1) {\tiny$\ell$};
		\draw[->] (S) to[out=180, in=270] (W);
			\node at (-1,-1) {\tiny$f$};
		\draw[->] (W) to[out=90, in=180] (N);
			\node at (-1,1) {\tiny$g$};
		\draw[rounded corners] (-1.3, -1.3) rectangle (-0.7, 1.3);
		\draw[rounded corners] (-1.5, 0) -- (-1.5, -1.5) -- (-0.5, -1.5) -- (-0.5, 0.5) -- (1.5, 0.5) -- (1.5, 1.5) -- (-1.5, 1.5) -- (-1.5, 0);
		\draw[rounded corners] (-1.7, 0) -- (-1.7, -1.7) -- (-0.3, -1.7) -- (-0.3, -0.5) -- (0.3, -0.5) -- (0.3, -1.7) -- (1.7, -1.7) -- (1.7, 1.7) -- (-1.7, 1.7) -- (-1.7, 0);
	}
	$};
	\node(s2a) at (2*1.4142, 7*1.4142) {$
	\tikz[baseline={([yshift=-.5ex]current bounding box.center)}, scale=0.675, x=1cm, y=1cm]{
		\node(N) at (0,1) {\tiny$\bullet$};
		\node(E) at (1,0) {\tiny$\bullet$};
		\node(S) at (0,-1) {\tiny$\bullet$};
		\node(W) at (-1,0) {\tiny$\bullet$};
		\draw[->] (N) to[out=0, in=90] (E);
			\node at (1,1) {\tiny$h$};
		\draw[->] (E) to[out=270, in=0] (S);
			\node at (1,-1) {\tiny$\ell$};
		\draw[->] (S) to[out=180, in=270] (W);
			\node at (-1,-1) {\tiny$f$};
		\draw[->] (W) to[out=90, in=180] (N);
			\node at (-1,1) {\tiny$g$};
		\draw[rounded corners] (1.3, -1.3) rectangle (0.7, 1.3);
		\draw[rounded corners] (1.5, 0) -- (1.5, -1.5) -- (0.5, -1.5) -- (0.5, 0.5) -- (-1.5, 0.5) -- (-1.5, 1.5) -- (1.5, 1.5) -- (1.5, 0);
		\draw[rounded corners] (-1.7, 0) -- (-1.7, -1.7) -- (-0.3, -1.7) -- (-0.3, -0.5) -- (0.3, -0.5) -- (0.3, -1.7) -- (1.7, -1.7) -- (1.7, 1.7) -- (-1.7, 1.7) -- (-1.7, 0);
	}
	$};
	\node(s1b) at (-1*1.4142,6*1.4142) {$
	\tikz[baseline={([yshift=-.5ex]current bounding box.center)}, scale=0.675, x=1cm, y=1cm]{
		\node(N) at (0,1) {\tiny$\bullet$};
		\node(E) at (1,0) {\tiny$\bullet$};
		\node(S) at (0,-1) {\tiny$\bullet$};
		\node(W) at (-1,0) {\tiny$\bullet$};
		\draw[->] (N) to[out=0, in=90] (E);
			\node at (1,1) {\tiny$h$};
		\draw[->] (E) to[out=270, in=0] (S);
			\node at (1,-1) {\tiny$\ell$};
		\draw[->] (S) to[out=180, in=270] (W);
			\node at (-1,-1) {\tiny$f$};
		\draw[->] (W) to[out=90, in=180] (N);
			\node at (-1,1) {\tiny$g$};
		\draw[rounded corners] (-1.3, 0.7) rectangle (1.3, 1.3);
		\draw[rounded corners] (-1.5, 0) -- (-1.5, -1.5) -- (-0.5, -1.5) -- (-0.5, 0.5) -- (1.5, 0.5) -- (1.5, 1.5) -- (-1.5, 1.5) -- (-1.5, 0);
		\draw[rounded corners] (-1.7, 0) -- (-1.7, -1.7) -- (-0.3, -1.7) -- (-0.3, -0.5) -- (0.3, -0.5) -- (0.3, -1.7) -- (1.7, -1.7) -- (1.7, 1.7) -- (-1.7, 1.7) -- (-1.7, 0);
	}
	$};
	\node(s2b) at (1*1.4142,6*1.4142) {$
	\tikz[baseline={([yshift=-.5ex]current bounding box.center)}, scale=0.675, x=1cm, y=1cm]{
		\node(N) at (0,1) {\tiny$\bullet$};
		\node(E) at (1,0) {\tiny$\bullet$};
		\node(S) at (0,-1) {\tiny$\bullet$};
		\node(W) at (-1,0) {\tiny$\bullet$};
		\draw[->] (N) to[out=0, in=90] (E);
			\node at (1,1) {\tiny$h$};
		\draw[->] (E) to[out=270, in=0] (S);
			\node at (1,-1) {\tiny$\ell$};
		\draw[->] (S) to[out=180, in=270] (W);
			\node at (-1,-1) {\tiny$f$};
		\draw[->] (W) to[out=90, in=180] (N);
			\node at (-1,1) {\tiny$g$};
		\draw[rounded corners] (-1.3, 0.7) rectangle (1.3, 1.3);
		\draw[rounded corners] (1.5, 0) -- (1.5, -1.5) -- (0.5, -1.5) -- (0.5, 0.5) -- (-1.5, 0.5) -- (-1.5, 1.5) -- (1.5, 1.5) -- (1.5, 0);
		\draw[rounded corners] (-1.7, 0) -- (-1.7, -1.7) -- (-0.3, -1.7) -- (-0.3, -0.5) -- (0.3, -0.5) -- (0.3, -1.7) -- (1.7, -1.7) -- (1.7, 1.7) -- (-1.7, 1.7) -- (-1.7, 0);
	}
	$};
	\node(s1c) at (-2*1.4142,5.7*1.4142) {$
	\tikz[baseline={([yshift=-.5ex]current bounding box.center)}, scale=0.675, x=1cm, y=1cm]{
		\node(N) at (0,1) {\tiny$\bullet$};
		\node(E) at (1,0) {\tiny$\bullet$};
		\node(S) at (0,-1) {\tiny$\bullet$};
		\node(W) at (-1,0) {\tiny$\bullet$};
		\draw[->] (N) to[out=0, in=90] (E);
			\node at (1,1) {\tiny$h$};
		\draw[->] (E) to[out=270, in=0] (S);
			\node at (1,-1) {\tiny$\ell$};
		\draw[->] (S) to[out=180, in=270] (W);
			\node at (-1,-1) {\tiny$f$};
		\draw[->] (W) to[out=90, in=180] (N);
			\node at (-1,1) {\tiny$g$};
		\draw[rounded corners] (-1.3, -1.3) rectangle (-0.7, 1.3);
		\draw[rounded corners] (-1.5, 0) -- (-1.5, -1.5) -- (-0.5, -1.5) -- (-0.5, 0.5) -- (1.5, 0.5) -- (1.5, 1.5) -- (-1.5, 1.5) -- (-1.5, 0);
		\draw[rounded corners] (-1.7, 0) -- (-1.7, 1.7) -- (1.7, 1.7) -- (1.7, 0.3) -- (0.5, 0.3) -- (0.5, -0.3) -- (1.7, -0.3) -- (1.7, -1.7) -- (-1.7, -1.7) -- (-1.7,0);
	}
	$};
	\node(s2c) at (2*1.4142,5.7*1.4142) {$
	\tikz[baseline={([yshift=-.5ex]current bounding box.center)}, scale=0.675, x=1cm, y=1cm]{
		\node(N) at (0,1) {\tiny$\bullet$};
		\node(E) at (1,0) {\tiny$\bullet$};
		\node(S) at (0,-1) {\tiny$\bullet$};
		\node(W) at (-1,0) {\tiny$\bullet$};
		\draw[->] (N) to[out=0, in=90] (E);
			\node at (1,1) {\tiny$h$};
		\draw[->] (E) to[out=270, in=0] (S);
			\node at (1,-1) {\tiny$\ell$};
		\draw[->] (S) to[out=180, in=270] (W);
			\node at (-1,-1) {\tiny$f$};
		\draw[->] (W) to[out=90, in=180] (N);
			\node at (-1,1) {\tiny$g$};
		\draw[rounded corners] (1.3, -1.3) rectangle (0.7, 1.3);
		\draw[rounded corners] (1.5, 0) -- (1.5, -1.5) -- (0.5, -1.5) -- (0.5, 0.5) -- (-1.5, 0.5) -- (-1.5, 1.5) -- (1.5, 1.5) -- (1.5, 0);
		\draw[rounded corners] (1.7, 0) -- (1.7, 1.7) -- (-1.7, 1.7) -- (-1.7, 0.3) -- (-0.5, 0.3) -- (-0.5, -0.3) -- (-1.7, -0.3) -- (-1.7, -1.7) -- (1.7, -1.7) -- (1.7,0);
	}
	$};
	\node(s1d) at (-1*1.4142,4.7*1.4142) {$
	\tikz[baseline={([yshift=-.5ex]current bounding box.center)}, scale=0.675, x=1cm, y=1cm]{
		\node(N) at (0,1) {\tiny$\bullet$};
		\node(E) at (1,0) {\tiny$\bullet$};
		\node(S) at (0,-1) {\tiny$\bullet$};
		\node(W) at (-1,0) {\tiny$\bullet$};
		\draw[->] (N) to[out=0, in=90] (E);
			\node at (1,1) {\tiny$h$};
		\draw[->] (E) to[out=270, in=0] (S);
			\node at (1,-1) {\tiny$\ell$};
		\draw[->] (S) to[out=180, in=270] (W);
			\node at (-1,-1) {\tiny$f$};
		\draw[->] (W) to[out=90, in=180] (N);
			\node at (-1,1) {\tiny$g$};
		\draw[rounded corners] (-1.3, 0.7) rectangle (1.3, 1.3);
		\draw[rounded corners] (-1.5, 0) -- (-1.5, -1.5) -- (-0.5, -1.5) -- (-0.5, 0.5) -- (1.5, 0.5) -- (1.5, 1.5) -- (-1.5, 1.5) -- (-1.5, 0);
		\draw[rounded corners] (-1.7, 0) -- (-1.7, 1.7) -- (1.7, 1.7) -- (1.7, 0.3) -- (0.5, 0.3) -- (0.5, -0.3) -- (1.7, -0.3) -- (1.7, -1.7) -- (-1.7, -1.7) -- (-1.7,0);
	}
	$};
	\node(s2d) at (1*1.4142,4.7*1.4142) {$
	\tikz[baseline={([yshift=-.5ex]current bounding box.center)}, scale=0.675, x=1cm, y=1cm]{
		\node(N) at (0,1) {\tiny$\bullet$};
		\node(E) at (1,0) {\tiny$\bullet$};
		\node(S) at (0,-1) {\tiny$\bullet$};
		\node(W) at (-1,0) {\tiny$\bullet$};
		\draw[->] (N) to[out=0, in=90] (E);
			\node at (1,1) {\tiny$h$};
		\draw[->] (E) to[out=270, in=0] (S);
			\node at (1,-1) {\tiny$\ell$};
		\draw[->] (S) to[out=180, in=270] (W);
			\node at (-1,-1) {\tiny$f$};
		\draw[->] (W) to[out=90, in=180] (N);
			\node at (-1,1) {\tiny$g$};
		\draw[rounded corners] (-1.3, 0.7) rectangle (1.3, 1.3);
		\draw[rounded corners] (1.5, 0) -- (1.5, -1.5) -- (0.5, -1.5) -- (0.5, 0.5) -- (-1.5, 0.5) -- (-1.5, 1.5) -- (1.5, 1.5) -- (1.5, 0);
		\draw[rounded corners] (1.7, 0) -- (1.7, 1.7) -- (-1.7, 1.7) -- (-1.7, 0.3) -- (-0.5, 0.3) -- (-0.5, -0.3) -- (-1.7, -0.3) -- (-1.7, -1.7) -- (1.7, -1.7) -- (1.7,0);
	}
	$};
	\node(end') at (-1*1.4142/2,3.25*1.4142) {$
	\tikz[baseline={([yshift=-.5ex]current bounding box.center)}, scale=0.675, x=1cm, y=1cm]{
		\node(N) at (0,1) {\tiny$\bullet$};
		\node(E) at (1,0) {\tiny$\bullet$};
		\node(S) at (0,-1) {\tiny$\bullet$};
		\node(W) at (-1,0) {\tiny$\bullet$};
		\draw[->] (N) to[out=0, in=90] (E);
			\node at (1,1) {\tiny$h$};
		\draw[->] (E) to[out=270, in=0] (S);
			\node at (1,-1) {\tiny$\ell$};
		\draw[->] (S) to[out=180, in=270] (W);
			\node at (-1,-1) {\tiny$f$};
		\draw[->] (W) to[out=90, in=180] (N);
			\node at (-1,1) {\tiny$g$};
		\draw[rounded corners] (-1.3, -0.7) rectangle (1.3, -1.3);
		\draw[rounded corners] (-1.3, 0.7) rectangle (1.3, 1.3);
		\draw[rounded corners] (-1.7, 0) -- (-1.7, 1.7) -- (1.7, 1.7) -- (1.7, 0.3) -- (0.5, 0.3) -- (0.5, -0.3) -- (1.7, -0.3) -- (1.7, -1.7) -- (-1.7, -1.7) -- (-1.7,0);
	}
	$};
	\node(end) at (1*1.4142/2,3.25*1.4142) {$
	\tikz[baseline={([yshift=-.5ex]current bounding box.center)}, scale=0.675, x=1cm, y=1cm]{
		\node(N) at (0,1) {\tiny$\bullet$};
		\node(E) at (1,0) {\tiny$\bullet$};
		\node(S) at (0,-1) {\tiny$\bullet$};
		\node(W) at (-1,0) {\tiny$\bullet$};
		\draw[->] (N) to[out=0, in=90] (E);
			\node at (1,1) {\tiny$h$};
		\draw[->] (E) to[out=270, in=0] (S);
			\node at (1,-1) {\tiny$\ell$};
		\draw[->] (S) to[out=180, in=270] (W);
			\node at (-1,-1) {\tiny$f$};
		\draw[->] (W) to[out=90, in=180] (N);
			\node at (-1,1) {\tiny$g$};
		\draw[rounded corners] (-1.3, -0.7) rectangle (1.3, -1.3);
		\draw[rounded corners] (-1.3, 0.7) rectangle (1.3, 1.3);
		\draw[rounded corners] (1.7, 0) -- (1.7, 1.7) -- (-1.7, 1.7) -- (-1.7, 0.3) -- (-0.5, 0.3) -- (-0.5, -0.3) -- (-1.7, -0.3) -- (-1.7, -1.7) -- (1.7, -1.7) -- (1.7,0);
	}
	$};
	\node(start') at (0,2*1.4142) {$
	\tikz[baseline={([yshift=-.5ex]current bounding box.center)}, scale=0.675, x=1cm, y=1cm, black!30!gray]{
		\node(N) at (0,1) {\tiny$\bullet$};
		\node(E) at (1,0) {\tiny$\bullet$};
		\node(S) at (0,-1) {\tiny$\bullet$};
		\node(W) at (-1,0) {\tiny$\bullet$};
		\draw[->] (N) to[out=0, in=90] (E);
			\node at (1,1) {\tiny$h$};
		\draw[->] (E) to[out=270, in=0] (S);
			\node at (1,-1) {\tiny$\ell$};
		\draw[->] (S) to[out=180, in=270] (W);
			\node at (-1,-1) {\tiny$f$};
		\draw[->] (W) to[out=90, in=180] (N);
			\node at (-1,1) {\tiny$g$};
		\draw[rounded corners] (-1.3, -1.3) rectangle (-0.7, 1.3);
		\draw[rounded corners] (1.3, -1.3) rectangle (0.7, 1.3);
		\draw[rounded corners] (-1.7, 0) -- (-1.7, 1.7) -- (-0.3, 1.7) -- (-0.3, 0.5) -- (0.3, 0.5) -- (0.3, 1.7) -- (1.7, 1.7) -- (1.7, -1.7) -- (-1.7, -1.7) -- (-1.7, 0);
	}
	$};
	\node(s3a) at (-2*1.4142,2.3*1.4142) {$
	\tikz[baseline={([yshift=-.5ex]current bounding box.center)}, scale=0.675, x=1cm, y=1cm]{
		\node(N) at (0,1) {\tiny$\bullet$};
		\node(E) at (1,0) {\tiny$\bullet$};
		\node(S) at (0,-1) {\tiny$\bullet$};
		\node(W) at (-1,0) {\tiny$\bullet$};
		\draw[->] (N) to[out=0, in=90] (E);
			\node at (1,1) {\tiny$h$};
		\draw[->] (E) to[out=270, in=0] (S);
			\node at (1,-1) {\tiny$\ell$};
		\draw[->] (S) to[out=180, in=270] (W);
			\node at (-1,-1) {\tiny$f$};
		\draw[->] (W) to[out=90, in=180] (N);
			\node at (-1,1) {\tiny$g$};
		\draw[rounded corners] (-1.3, -1.3) rectangle (-0.7, 1.3);
		\draw[rounded corners] (-1.5, 0) -- (-1.5, 1.5) -- (-0.5, 1.5) -- (-0.5, -0.5) -- (1.5, -0.5) -- (1.5, -1.5) -- (-1.5, -1.5) -- (-1.5, 0);
		\draw[rounded corners] (-1.7, 0) -- (-1.7, 1.7) -- (1.7, 1.7) -- (1.7, 0.3) -- (0.5, 0.3) -- (0.5, -0.3) -- (1.7, -0.3) -- (1.7, -1.7) -- (-1.7, -1.7) -- (-1.7,0);
	}
	$};
	\node(s4a) at (2*1.4142,2.3*1.4142) {$
	\tikz[baseline={([yshift=-.5ex]current bounding box.center)}, scale=0.675, x=1cm, y=1cm]{
		\node(N) at (0,1) {\tiny$\bullet$};
		\node(E) at (1,0) {\tiny$\bullet$};
		\node(S) at (0,-1) {\tiny$\bullet$};
		\node(W) at (-1,0) {\tiny$\bullet$};
		\draw[->] (N) to[out=0, in=90] (E);
			\node at (1,1) {\tiny$h$};
		\draw[->] (E) to[out=270, in=0] (S);
			\node at (1,-1) {\tiny$\ell$};
		\draw[->] (S) to[out=180, in=270] (W);
			\node at (-1,-1) {\tiny$f$};
		\draw[->] (W) to[out=90, in=180] (N);
			\node at (-1,1) {\tiny$g$};
		\draw[rounded corners] (1.3, -1.3) rectangle (0.7, 1.3);
		\draw[rounded corners] (1.5, 0) -- (1.5, 1.5) -- (0.5, 1.5) -- (0.5, -0.5) -- (-1.5, -0.5) -- (-1.5, -1.5) -- (1.5, -1.5) -- (1.5, 0);
		\draw[rounded corners] (1.7, 0) -- (1.7, 1.7) -- (-1.7, 1.7) -- (-1.7, 0.3) -- (-0.5, 0.3) -- (-0.5, -0.3) -- (-1.7, -0.3) -- (-1.7, -1.7) -- (1.7, -1.7) -- (1.7,0);
	}
	$};
	\node(s3b) at (-1*1.4142,1.3*1.4142) {$
	\tikz[baseline={([yshift=-.5ex]current bounding box.center)}, scale=0.675, x=1cm, y=1cm]{
		\node(N) at (0,1) {\tiny$\bullet$};
		\node(E) at (1,0) {\tiny$\bullet$};
		\node(S) at (0,-1) {\tiny$\bullet$};
		\node(W) at (-1,0) {\tiny$\bullet$};
		\draw[->] (N) to[out=0, in=90] (E);
			\node at (1,1) {\tiny$h$};
		\draw[->] (E) to[out=270, in=0] (S);
			\node at (1,-1) {\tiny$\ell$};
		\draw[->] (S) to[out=180, in=270] (W);
			\node at (-1,-1) {\tiny$f$};
		\draw[->] (W) to[out=90, in=180] (N);
			\node at (-1,1) {\tiny$g$};
		\draw[rounded corners] (-1.3, -1.3) rectangle (1.3, -0.7);
		\draw[rounded corners] (-1.5, 0) -- (-1.5, 1.5) -- (-0.5, 1.5) -- (-0.5, -0.5) -- (1.5, -0.5) -- (1.5, -1.5) -- (-1.5, -1.5) -- (-1.5, 0);
		\draw[rounded corners] (-1.7, 0) -- (-1.7, 1.7) -- (1.7, 1.7) -- (1.7, 0.3) -- (0.5, 0.3) -- (0.5, -0.3) -- (1.7, -0.3) -- (1.7, -1.7) -- (-1.7, -1.7) -- (-1.7,0);
	}
	$};
	\node(s4b) at (1*1.4142,1.3*1.4142) {$
	\tikz[baseline={([yshift=-.5ex]current bounding box.center)}, scale=0.675, x=1cm, y=1cm]{
		\node(N) at (0,1) {\tiny$\bullet$};
		\node(E) at (1,0) {\tiny$\bullet$};
		\node(S) at (0,-1) {\tiny$\bullet$};
		\node(W) at (-1,0) {\tiny$\bullet$};
		\draw[->] (N) to[out=0, in=90] (E);
			\node at (1,1) {\tiny$h$};
		\draw[->] (E) to[out=270, in=0] (S);
			\node at (1,-1) {\tiny$\ell$};
		\draw[->] (S) to[out=180, in=270] (W);
			\node at (-1,-1) {\tiny$f$};
		\draw[->] (W) to[out=90, in=180] (N);
			\node at (-1,1) {\tiny$g$};
		\draw[rounded corners] (-1.3, -1.3) rectangle (1.3, -0.7);
		\draw[rounded corners] (1.5, 0) -- (1.5, 1.5) -- (0.5, 1.5) -- (0.5, -0.5) -- (-1.5, -0.5) -- (-1.5, -1.5) -- (1.5, -1.5) -- (1.5, 0);
		\draw[rounded corners] (1.7, 0) -- (1.7, 1.7) -- (-1.7, 1.7) -- (-1.7, 0.3) -- (-0.5, 0.3) -- (-0.5, -0.3) -- (-1.7, -0.3) -- (-1.7, -1.7) -- (1.7, -1.7) -- (1.7,0);
	}
	$};
	\node(s3c) at (-2*1.4142,1*1.4142) {$
	\tikz[baseline={([yshift=-.5ex]current bounding box.center)}, scale=0.675, x=1cm, y=1cm]{
		\node(N) at (0,1) {\tiny$\bullet$};
		\node(E) at (1,0) {\tiny$\bullet$};
		\node(S) at (0,-1) {\tiny$\bullet$};
		\node(W) at (-1,0) {\tiny$\bullet$};
		\draw[->] (N) to[out=0, in=90] (E);
			\node at (1,1) {\tiny$h$};
		\draw[->] (E) to[out=270, in=0] (S);
			\node at (1,-1) {\tiny$\ell$};
		\draw[->] (S) to[out=180, in=270] (W);
			\node at (-1,-1) {\tiny$f$};
		\draw[->] (W) to[out=90, in=180] (N);
			\node at (-1,1) {\tiny$g$};
		\draw[rounded corners] (-1.3, -1.3) rectangle (-0.7, 1.3);
		\draw[rounded corners] (-1.5, 0) -- (-1.5, 1.5) -- (-0.5, 1.5) -- (-0.5, -0.5) -- (1.5, -0.5) -- (1.5, -1.5) -- (-1.5, -1.5) -- (-1.5, 0);
		\draw[rounded corners] (-1.7, 0) -- (-1.7, 1.7) -- (-0.3, 1.7) -- (-0.3, 0.5) -- (0.3, 0.5) -- (0.3, 1.7) -- (1.7, 1.7) -- (1.7, -1.7) -- (-1.7, -1.7) -- (-1.7, 0);
	}
	$};
	\node(s4c) at (2*1.4142,1*1.4142) {$
	\tikz[baseline={([yshift=-.5ex]current bounding box.center)}, scale=0.675, x=1cm, y=1cm]{
		\node(N) at (0,1) {\tiny$\bullet$};
		\node(E) at (1,0) {\tiny$\bullet$};
		\node(S) at (0,-1) {\tiny$\bullet$};
		\node(W) at (-1,0) {\tiny$\bullet$};
		\draw[->] (N) to[out=0, in=90] (E);
			\node at (1,1) {\tiny$h$};
		\draw[->] (E) to[out=270, in=0] (S);
			\node at (1,-1) {\tiny$\ell$};
		\draw[->] (S) to[out=180, in=270] (W);
			\node at (-1,-1) {\tiny$f$};
		\draw[->] (W) to[out=90, in=180] (N);
			\node at (-1,1) {\tiny$g$};
		\draw[rounded corners] (1.3, -1.3) rectangle (0.7, 1.3);
		\draw[rounded corners] (1.5, 0) -- (1.5, 1.5) -- (0.5, 1.5) -- (0.5, -0.5) -- (-1.5, -0.5) -- (-1.5, -1.5) -- (1.5, -1.5) -- (1.5, 0);
		\draw[rounded corners] (-1.7, 0) -- (-1.7, 1.7) -- (-0.3, 1.7) -- (-0.3, 0.5) -- (0.3, 0.5) -- (0.3, 1.7) -- (1.7, 1.7) -- (1.7, -1.7) -- (-1.7, -1.7) -- (-1.7, 0);
	}
	$};
	\node(s3d) at (-1*1.4142,0) {$
	\tikz[baseline={([yshift=-.5ex]current bounding box.center)}, scale=0.675, x=1cm, y=1cm]{
		\node(N) at (0,1) {\tiny$\bullet$};
		\node(E) at (1,0) {\tiny$\bullet$};
		\node(S) at (0,-1) {\tiny$\bullet$};
		\node(W) at (-1,0) {\tiny$\bullet$};
		\draw[->] (N) to[out=0, in=90] (E);
			\node at (1,1) {\tiny$h$};
		\draw[->] (E) to[out=270, in=0] (S);
			\node at (1,-1) {\tiny$\ell$};
		\draw[->] (S) to[out=180, in=270] (W);
			\node at (-1,-1) {\tiny$f$};
		\draw[->] (W) to[out=90, in=180] (N);
			\node at (-1,1) {\tiny$g$};
		\draw[rounded corners] (-1.3, -1.3) rectangle (1.3, -0.7);
		\draw[rounded corners] (-1.5, 0) -- (-1.5, 1.5) -- (-0.5, 1.5) -- (-0.5, -0.5) -- (1.5, -0.5) -- (1.5, -1.5) -- (-1.5, -1.5) -- (-1.5, 0);
		\draw[rounded corners] (-1.7, 0) -- (-1.7, 1.7) -- (-0.3, 1.7) -- (-0.3, 0.5) -- (0.3, 0.5) -- (0.3, 1.7) -- (1.7, 1.7) -- (1.7, -1.7) -- (-1.7, -1.7) -- (-1.7, 0);
	}
	$};
	\node(s4d) at (1*1.4142,0) {$
	\tikz[baseline={([yshift=-.5ex]current bounding box.center)}, scale=0.675, x=1cm, y=1cm]{
		\node(N) at (0,1) {\tiny$\bullet$};
		\node(E) at (1,0) {\tiny$\bullet$};
		\node(S) at (0,-1) {\tiny$\bullet$};
		\node(W) at (-1,0) {\tiny$\bullet$};
		\draw[->] (N) to[out=0, in=90] (E);
			\node at (1,1) {\tiny$h$};
		\draw[->] (E) to[out=270, in=0] (S);
			\node at (1,-1) {\tiny$\ell$};
		\draw[->] (S) to[out=180, in=270] (W);
			\node at (-1,-1) {\tiny$f$};
		\draw[->] (W) to[out=90, in=180] (N);
			\node at (-1,1) {\tiny$g$};
		\draw[rounded corners] (-1.3, -1.3) rectangle (1.3, -0.7);
		\draw[rounded corners] (1.5, 0) -- (1.5, 1.5) -- (0.5, 1.5) -- (0.5, -0.5) -- (-1.5, -0.5) -- (-1.5, -1.5) -- (1.5, -1.5) -- (1.5, 0);
		\draw[rounded corners] (-1.7, 0) -- (-1.7, 1.7) -- (-0.3, 1.7) -- (-0.3, 0.5) -- (0.3, 0.5) -- (0.3, 1.7) -- (1.7, 1.7) -- (1.7, -1.7) -- (-1.7, -1.7) -- (-1.7, 0);
	}
	$};
% ARROWS
	% top pentagon
	\draw[->] (s1a) to node[above, sloped]{\tiny$\alpha(g\circ f, h, \ell)$} (start);
	\draw[->] (start) to node[above, sloped]{\tiny$\alpha(f, g, \ell \circ h)$} (s2a);
	\draw[->] (s2b) to node[left, near end]{\tiny$\alpha(g, h, \ell)$} (s2a);
	\draw[->] (s1a) to node[right, near start]{\tiny$\alpha(f, g, h)$} (s1b);
	\draw[->] (s1b) to node[above]{\tiny$\alpha(f, h \circ g, \ell)$} (s2b);
	% left-top square
	\draw[->] (s1a) to node[right]{\tiny$\varepsilon$} (s1c);
	\draw[->] (s1c) to node[right, near start]{\tiny$\alpha(f, g, h)$} (s1d);
	\draw[->] (s1b) to node[right]{\tiny$\varepsilon$} (s1d);
	% top-right sqare
	\draw[->] (s2d) to node[left]{\tiny$\varepsilon$} (s2b);
	\draw[->] (s2c) to node[left]{\tiny$\varepsilon$} (s2a);
	\draw[->] (s2d) to node[left, near end]{\tiny$\alpha(g, h, \ell)$} (s2c);
	% left pentagon
	\draw[->] (s3b) to node[right, near end]{\tiny$\alpha(\ell, f, g)$} (s3a);
	\draw[->] (s3a) to node[above, sloped]{\tiny$\alpha(\ell, g \circ f, h)$} (s1c);
	\draw[->] (s3b) to node[left]{\tiny$\alpha(f \circ \ell, g, h)$} (end');
	\draw[->] (end') to node[left]{\tiny$\alpha(\ell, f, h \circ h)$} (s1d);
	% right pentagon
	\draw[->] (s2c) to node[above, sloped]{\tiny$\alpha(g, \ell\circ h, f)$} (s4a);
	\draw[->] (s4a) to node[left, near start]{\tiny$\alpha(h, \ell, f)$} (s4b);
	\draw[->] (s2d) to node[right]{\tiny$\alpha(h\circ g, \ell, f)$} (end);
	\draw[->] (end) to node[right]{\tiny$\alpha(g, h, f \circ \ell)$} (s4b);
	% bot-left square
	\draw[->] (s3d) to node[right, near end]{\tiny$\alpha(\ell, f, g)$} (s3c);
	\draw[->] (s3d) to node[right]{\tiny$\varepsilon$} (s3b);
	\draw[->] (s3c) to node[right]{\tiny$\varepsilon$} (s3a);
	% bor-right square
	\draw[->] (s4a) to node[left]{\tiny$\varepsilon$} (s4c);
	\draw[->] (s4c) to node[left, near start]{\tiny$\alpha(h, \ell, f)$} (s4d);
	\draw[->] (s4b) to node[left]{\tiny$\varepsilon$} (s4d);
	% bot pentagon
	\draw[->] (s4d) to node[above]{\tiny$\alpha(h, f \circ \ell, g)$} (s3d);
	\draw[dotted, ->] (s4c) to node[below=10mm, pos=0.99]{\tiny$\alpha(\ell \circ h, f, g)$} (start');
	\draw[dotted, ->] (start') to node[below=10mm, pos=0.01]{\tiny$\alpha(h, \ell, g \circ f)$} (s3c);
	% hexagon cobounds
	\draw[dotted, ->] (start) to node[right]{\tiny$\varepsilon$} (start');
	\draw[->] (end') to node[above]{\tiny$\varepsilon$} (end);
\end{tikzpicture}
\caption{The space of topographies for a $4$-partitioned loop, $\mathcal{T}(4)$.}
\label{fig:tensorpf}
\end{figure}

\begin{theorem}
\label{thm:Aetensor}
Assume that $(\mathcal{C}, \alpha)$ is a grading category admitting a looper $\varepsilon$. Fix the triple $(\mathcal{C}, \alpha, \varepsilon)$ and suppose that $A$ is a $\mathcal{C}$-graded algebra and $M$ and $N$ are $\mathcal{C}$-graded $(A, A)$-bimodules, interpreting $M$ as a right $\mathcal{C}$-graded $A\otimes_\Bbbk A^\op$-module and $N$ as a left $\mathcal{C}$-graded $A\otimes_\Bbbk A^\op$-module. Let $\Theta(\abs{m}, \abs{a\otimes a'}_{\mathcal{C} \times \mathcal{C}^\op}, \abs{n})$ denote the value associated to any path
	\[
	\abs{n} \circ \left( \left(\abs{a} \circ \abs{m} \right) \circ \abs{a'} \right) \to \left( \abs{a'} \circ \left( \abs{n} \circ \abs{a} \right) \right) \circ \abs{m}
	\]
	or, in terms of topographies,
	\[
	\tikz{
	\node(a) at (0,0) {$
	\Theta(\abs{m}, \abs{a\otimes a'}_{\mathcal{C} \times \mathcal{C}^\op}, \abs{n}): 
	\tikz[baseline={([yshift=-.5ex]current bounding box.center)}, scale=0.675]{
	\node(N) at (0,1) {$\bullet$};
	\node(E) at (1,0) {$\bullet$};
	\node(S) at (0,-1) {$\bullet$};
	\node(W) at (-1,0) {$\bullet$};
	\draw[->] (N) to[out=0, in=90] (E);
		\node at (1,1) {$\abs{a}$};
	\draw[->] (E) to[out=270, in=0] (S);
		\node at (1,-1) {$\abs{n}$};
	\draw[->] (S) to[out=180, in=270] (W);
		\node at (-1,-1) {$\abs{a'}$};
	\draw[->] (W) to[out=90, in=180] (N);
		\node at (-1,1) {$\abs{m}$};
	\draw[rounded corners] (-1.3, 0.7) rectangle (1.3, 1.3);
	\draw[rounded corners] (-1.5, 0) -- (-1.5, -1.5) -- (-0.5, -1.5) -- (-0.5, 0.5) -- (1.5, 0.5) -- (1.5, 1.5) -- (-1.5, 1.5) -- (-1.5, 0);
	\draw[rounded corners] (-1.7, 0) -- (-1.7, -1.7) -- (-0.3, -1.7) -- (-0.3, -0.5) -- (0.3, -0.5) -- (0.3, -1.7) -- (1.7, -1.7) -- (1.7, 1.7) -- (-1.7, 1.7) -- (-1.7, 0);
}$
};
	\node(b) at (7,0) {$
	\tikz[baseline={([yshift=-.5ex]current bounding box.center)}, scale=0.675]{
	\node(N) at (0,1) {$\bullet$};
	\node(E) at (1,0) {$\bullet$};
	\node(S) at (0,-1) {$\bullet$};
	\node(W) at (-1,0) {$\bullet$};
	\draw[->] (N) to[out=0, in=90] (E);
		\node at (1,1) {$\abs{a}$};
	\draw[->] (E) to[out=270, in=0] (S);
		\node at (1,-1) {$\abs{n}$};
	\draw[->] (S) to[out=180, in=270] (W);
		\node at (-1,-1) {$\abs{a'}$};
	\draw[->] (W) to[out=90, in=180] (N);
		\node at (-1,1) {$\abs{m}$};
	\draw[rounded corners] (0.7, -1.3) rectangle (1.3, 1.3);
	\draw[rounded corners] (1.5, 0) -- (1.5,1.5) -- (0.5, 1.5) -- (0.5, -0.5) -- (-1.5, -0.5) -- (-1.5,-1.5) -- (1.5, -1.5) -- (1.5,0);
	\draw[rounded corners] (1.7, 0) -- (1.7, 1.7) -- (-1.7, 1.7) -- (-1.7, 0.3) -- (-0.5, 0.3) -- (-0.5, -0.3) -- (-1.7, -0.3) -- (-1.7, -1.7) -- (1.7, -1.7) -- (1.7,0);
}$
};
\draw[->] (a) to (b);
}
	\]
Then, $M\otimes_{A \otimes_\Bbbk A^\op} N$ is a $\mathrm{Tr}(\mathcal{C})$-graded module, where
	\begin{equation}
	\label{eq:Aedefrel}
	M \otimes_{A \otimes_\Bbbk A^\op} N : = M \otimes N \big/ \left( \rho_R^e(m, a \otimes a') \otimes n - \Theta(\abs{m}, \abs{a\otimes a'}_{\mathcal{C} \times \mathcal{C}^\op}, \abs{n}) m \otimes \rho_L^e(a \otimes a', n)\right).
	\end{equation}
\end{theorem}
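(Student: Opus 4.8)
The plan is to prove two things in turn: that the scalar $\Theta$ appearing in (\ref{eq:Aedefrel}) is well-defined, independently of the path chosen; and that the quotient it defines is $\mathrm{Tr}(\mathcal{C})$-graded. First I would record that the source $\abs{n}\circ((\abs{a}\circ\abs{m})\circ\abs{a'})$ of $\Theta$ is precisely the $\mathcal{C}$-degree of $\rho_R^e(m,a\otimes a')\otimes n$ and that its target $(\abs{a'}\circ(\abs{n}\circ\abs{a}))\circ\abs{m}$ is precisely the $\mathcal{C}$-degree of $m\otimes\rho_L^e(a\otimes a',n)$ — a short computation from $\rho_R^e(m,a\otimes a')=\rho_L(a',\rho_R(m,a))$ and $\rho_L^e(a\otimes a',n)=\rho_R(\rho_L(a,n),a')$ together with the composition rule for actions. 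Both expressions are topographies on one and the same $4$-partitioned loop, the square in the statement with edges $\abs{m},\abs{a},\abs{n},\abs{a'}$, so they are two vertices of $\mathcal{T}(4)$, and $\Theta$ is the scalar obtained by reading off $\alpha^{\pm 1}$ and $\varepsilon^{\pm 1}$ along any edge-path connecting them. The two topographies are based at different objects, so every such path must traverse at least one basepoint-change edge; this is exactly why the hypothesis that $(\mathcal{C},\alpha)$ admit a looper $\varepsilon$ (Definition \ref{def:looper}) is needed.

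To see that this scalar is path-independent, I would observe that every $2$-cell of $\mathcal{T}(4)$ is a relation whose boundary evaluates to $1$ in $\Bbbk^\times$: the pentagonal faces by the $3$-cocycle condition on $\alpha$, the hexagonal faces by $\alpha$-$\varepsilon$ coherence (\ref{eq:a-e_coherence}), and the square faces by distant commutativity (in both of its forms). Hence two edge-paths in $\mathcal{T}(4)$ that are homotopic rel endpoints determine the same scalar. Since $\mathcal{T}(4)$ is simply connected by Lemma \ref{lem:simpcon} (in fact $\mathcal{T}(4)\simeq S^2$, with the $\binom{6}{3}=20$ vertices counted by Lemma \ref{lem:topo_counting} and the twelve faces drawn in Figure \ref{fig:tensorpf}), any two paths from the source topography to the target topography are homotopic rel endpoints, and therefore $\Theta(\abs{m},\abs{a\otimes a'}_{\mathcal{C}\times\mathcal{C}^\op},\abs{n})$ is well-defined.

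Granting this, (\ref{eq:Aedefrel}) is a sensible quotient of $M\otimes N$, because the twisted assignment $m\otimes(a\otimes a')\otimes n\mapsto\Theta\cdot m\otimes\rho_L^e(a\otimes a',n)$ is now a well-defined $\Bbbk$-linear map and $M\otimes_{A\otimes_\Bbbk A^\op}N$ is its coequalizer with $\rho_R^e\otimes\mathbbm{1}_N$, which in particular exists. For the grading I would coarsen the $\mathcal{C}$-grading of $M\otimes N$ along $\mathrm{tr}$ and verify two points. First, a nonzero defining relator $\rho_R^e(m,a\otimes a')\otimes n-\Theta\cdot m\otimes\rho_L^e(a\otimes a',n)$ has both of its summands in a single $\mathrm{Tr}(\mathcal{C})$-degree: it is nonzero only when $\abs{m},\abs{a},\abs{n},\abs{a'}$ compose cyclically, and then its two summands carry the $\mathcal{C}$-degrees $\abs{n}\circ\abs{a}\circ\abs{m}\circ\abs{a'}$ and $\abs{a'}\circ\abs{n}\circ\abs{a}\circ\abs{m}$, which are cyclic rotations of one another and hence represent the same class of $\mathrm{Tr}(\mathcal{C})$; so the relation submodule is homogeneous for the coarsened grading. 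Second, the components of $M\otimes N$ supported on non-loop degrees vanish in the quotient: if $\abs{n}\circ\abs{m}$ is not a loop, then applying (\ref{eq:Aedefrel}) with $a$ and $a'$ the unit elements of $A$ at $\mathrm{source}(\abs{n})$ and $\mathrm{target}(\abs{n})$ makes $\rho_R^e(m,a\otimes a')\otimes n$ vanish (its outer left action is by an identity at an object different from $\mathrm{source}(\abs{m})$), while $m\otimes\rho_L^e(a\otimes a',n)$ equals a unit of $\Bbbk$ times $m\otimes n$, forcing $m\otimes n=0$. Together these give $M\otimes_{A\otimes_\Bbbk A^\op}N=\bigoplus_{\ell\in\mathrm{Tr}(\mathcal{C})}(M\otimes_{A\otimes_\Bbbk A^\op}N)_\ell$, which is the assertion; applied to the bar construction, Theorem \ref{thm1} then follows with little further work.

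The hard part will be the well-definedness of $\Theta$, which rests entirely on Lemma \ref{lem:simpcon}; the delicate point there is combinatorial rather than homotopy-theoretic — correctly enumerating the twenty vertices and twelve faces of $\mathcal{T}(4)$ and checking in particular that the basepoint-change edges sit so that each hexagon is closed by $\alpha$-$\varepsilon$ coherence (and not by some relation unavailable to us) and each square by distant commutativity. Once the cell structure of $\mathcal{T}(4)$ is pinned down, as in Figure \ref{fig:tensorpf}, the remaining steps are bookkeeping.
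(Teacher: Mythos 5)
Your proposal is correct and follows essentially the same route as the paper: the entire content of the theorem is the path-independence of $\Theta$, which you, like the paper, reduce to the simple connectivity of $\mathcal{T}(4)$ (Lemma \ref{lem:simpcon}) together with the fact that every $2$-cell (pentagon, hexagon, square) evaluates to $1$ by the cocycle condition, $\alpha$-$\varepsilon$ coherence, and distant commutativity respectively. Your additional verifications — that the two sides of the relator are cyclic rotations and hence lie in one $\mathrm{Tr}(\mathcal{C})$-degree, and that non-loop components are killed by specializing $a,a'$ to units — are details the paper leaves implicit, and they are consistent with its intended reading.
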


\begin{proof}
The result holds as long as the value $\Theta$ is well-defined, but this is an obvious consequence of Lemma \ref{lem:simpcon}.
\end{proof}

To be clear, we mean that $M \otimes_{A \otimes_\Bbbk A^\op} N$ is $\mathrm{Tr}(\mathcal{C})$-graded in the usual sense of being graded by a set.

\begin{corollary}
Assume that $(\mathcal{C}, \alpha)$ admits a looper and fix the triple $(\mathcal{C}, \alpha, \varepsilon)$. If $A$ is a $\mathcal{C}$-graded algebra and $M$ is a $\mathcal{C}$-graded $(A,A)$-bimodule, then the Hochschild complex
\[
\mathsf{HC}_\bullet (A, M) :=  M \otimes_{A \otimes_\Bbbk A^\op} \mathcal{B}(A)
\]
is a well-defined $(\mathbb{Z} \times \mathrm{Tr}(\mathcal{C}))$-graded chain complex.
\end{corollary}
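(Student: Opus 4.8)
The plan is to realize $\mathsf{HC}_\bullet(A,M)$ as the image of the bar complex under the functor $M\otimes_{A\otimes_\Bbbk A^\op}-$, and then simply read off what structure survives. Throughout we assume, as in Proposition \ref{prop:bimod=mod}, that the unitors of $\mathrm{Mod}^\mathcal{C}$ are the typical ones induced by $\alpha$, so that $M$ may be viewed as a right and each $A^{\otimes n+2}$ as a left $\mathcal{C}$-graded $A\otimes_\Bbbk A^\op$-module. For each $n\ge 0$ the $\Bbbk$-module $A^{\otimes n+2}$ is a $\mathcal{C}$-graded $(A,A)$-bimodule by Proposition \ref{prop:A-DGA}, so Theorem \ref{thm:Aetensor} guarantees that $M\otimes_{A\otimes_\Bbbk A^\op}A^{\otimes n+2}$ is well-defined and carries a $\mathrm{Tr}(\mathcal{C})$-grading. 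I would set $\mathsf{HC}_n(A,M):=M\otimes_{A\otimes_\Bbbk A^\op}A^{\otimes n+2}$ and assemble these into a single $\mathbb{Z}\times\mathrm{Tr}(\mathcal{C})$-graded module by placing the $n$th summand in homological degree $n$; this takes care of the underlying graded object.

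Next I would supply the differential. The key step is the observation that $M\otimes_{A\otimes_\Bbbk A^\op}-$ is functorial with respect to $\mathcal{C}$-graded $(A,A)$-bimodule maps: if $\phi\colon N\to N'$ is a degree-zero $\Bbbk$-linear map commuting with both $A$-actions, then $\mathbbm{1}_M\otimes\phi$ carries the defining element $\rho_R^e(m,a\otimes a')\otimes n-\Theta(\abs{m},\abs{a\otimes a'}_{\mathcal{C}\times\mathcal{C}^\op},\abs{n})\,m\otimes\rho_L^e(a\otimes a',n)$ of \eqref{eq:Aedefrel} to the corresponding element for $N'$ — the coefficient $\Theta$ is unchanged because $\phi$ preserves $\mathcal{C}$-degree, and $\phi(\rho_L^e(a\otimes a',n))=\rho_L^e(a\otimes a',\phi(n))$ because $\phi$ commutes with $\rho_L^e$ — so it descends to $M\otimes_{A\otimes_\Bbbk A^\op}N\to M\otimes_{A\otimes_\Bbbk A^\op}N'$. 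Now the bar differential $\partial\colon A^{\otimes n+2}\to A^{\otimes n+1}$ is precisely such a map: it preserves $\mathcal{C}$-degree by (DG.I) and commutes with the left and right $A$-actions (hence with $\rho_L^e$ and $\rho_R^e$) by (DG.II) and (DG.III) of Proposition \ref{prop:A-DGA}. Therefore $\mathbbm{1}_M\otimes\partial$ is a well-defined $\Bbbk$-linear endomorphism of $\mathsf{HC}_\bullet(A,M)$ that lowers homological degree by one and preserves the $\mathrm{Tr}(\mathcal{C})$-grading. Since $M$ sits in homological degree zero with zero differential, the Koszul-signed DG tensor differential $\partial_\otimes(m\otimes x)=\partial_M(m)\otimes x+(-1)^{\abs{m}_\mathcal{Z}}m\otimes\partial(x)$ reduces exactly to $\mathbbm{1}_M\otimes\partial$, so there is no sign subtlety and $(\mathbbm{1}_M\otimes\partial)^2=\mathbbm{1}_M\otimes\partial^2=0$ by Lemma \ref{lem:barcomplex} (equivalently (DG.IV)). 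This exhibits $\mathsf{HC}_\bullet(A,M)$ as a $\mathbb{Z}\times\mathrm{Tr}(\mathcal{C})$-graded chain complex.

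The one place where anything genuinely happens — as opposed to formal bookkeeping — is the descent of $\partial$ to the coequalizer, and it is worth emphasizing why: the relation \eqref{eq:Aedefrel} was deliberately twisted by the $\mathcal{C}$-degree-dependent scalar $\Theta$, so for $\mathbbm{1}_M\otimes\partial$ to respect it one needs (DG.I) to ensure $\partial$ does not move that scalar and (DG.II)–(DG.III) to move $\partial$ past the identified action terms. Everything else is inherited: existence and the $\mathrm{Tr}(\mathcal{C})$-grading from Theorem \ref{thm:Aetensor}, and $\partial^2=0$ from Lemma \ref{lem:barcomplex}. As an alternative packaging one could instead apply Theorem \ref{thm:Aetensor} in one stroke to $\mathcal{B}(A)$ regarded as a bimodule graded by the product grading category $(\mathcal{Z},1)\times(\mathcal{C},\alpha)$, which admits the looper $1\times\varepsilon$, and note $\mathrm{Tr}(\mathcal{Z}\times\mathcal{C})\cong\mathbb{Z}\times\mathrm{Tr}(\mathcal{C})$; but the degreewise argument above avoids even that and uses only the theorem exactly as stated.
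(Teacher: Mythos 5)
Your proposal is correct and follows essentially the same route the paper intends: the corollary is left as an immediate consequence of Theorem \ref{thm:Aetensor} together with Proposition \ref{prop:A-DGA} and Lemma \ref{lem:barcomplex}, and your degreewise application of the theorem plus the explicit check that $\mathbbm{1}_M\otimes\partial$ descends through the coequalizer (using (DG.I)--(DG.III) so that the $\Theta$-twisted relation is preserved) is exactly the routine verification the paper leaves implicit.
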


Thus, Theorem \ref{thm1} follows, noting that $\mathrm{Tr}(BG) = G$ for any group $G$. Notice that Hochschild complexes for $\mathcal{C}$-graded algerbas depend on both of the choices of $\alpha$ and $\varepsilon$. It is interesting to ask how much the complex, or its homology, depends on these choices, though.

\begin{definition}
The \emph{Hochschild homology} of a $\mathcal{C}$-graded algebra $A$ with coefficients in the $\mathcal{C}$-graded $(A, A)$-bimodule $M$ is the homology of the Hochschild complex $\mathsf{HC}_\bullet(A, M)$, and is denoted $\mathsf{HH}_\bullet(A, M)$. When $M = A$, we write denoted the Hochschild homology by $\mathsf{HH}_\bullet(A)$. 
\end{definition}

\subsection{The Hochschild complex}
\label{ss:hochcomplex}

Using the results of the previous section, it is easy to give an explicit model for the Hochschild complex $\mathsf{HC}_\bullet$. Suppose $A$ is a $\mathcal{C}$-graded algebra and $M$ is a $\mathcal{C}$-graded $(A, A)$-bimodule. Interpreting $A^{\otimes n}$ as an $(A, A)$-bimodule, consider the chain complex $(C_n(A, M), b_n)$ where
\[
C_n(A, M) := M \otimes A^{\otimes n}
\]
via the monoidal product of $\mathrm{Mod}^\mathcal{C}$, and $b_n = \sum_{i=0}^n (-1)^i d_i$ where
\begin{align*}
	d_0 (m \otimes a_1 \otimes \cdots \otimes a_n) & = \rho_R(m, a_1) \otimes a_2 \otimes \cdots \otimes a_n
	\\
	d_i (m \otimes a_1 \otimes \cdots \otimes a_n) &= \alpha(\abs{a_{i-1}} \circ \cdots \circ \abs{a_1} \circ \abs{m}, \abs{a_i}, \abs{a_{i+1}}) m \otimes a_1 \otimes \cdots \otimes \mu(a_i, a_{i+1}) \otimes \cdots \otimes a_n
	\\
	d_n (m \otimes a_1 \otimes \cdots \otimes a_n) &= \Omega(\abs{m}, \abs{a_1}, \ldots, \abs{a_n}) \rho_L(a_n, m) \otimes a_1 \otimes \cdots \otimes a_{n-1}
\end{align*}
and
\[
\Omega(\abs{m}, \abs{a_1}, \ldots, \abs{a_n}) :=
\varepsilon(\abs{a_{n-1}} \circ \cdots \circ \abs{a_1} \circ \abs{m}, \abs{a_n}) 
\alpha(\abs{a_n}, \abs{m}, \abs{a_1})^{-1}
\prod_{i=1}^{n-2}
\alpha(\abs{a_n}, \abs{a_i} \circ \cdots \circ \abs{m}, \abs{a_{i+1}})^{-1}
\]
on homogeneous elements $a_1, \ldots, a_n\in A$ and $m\in M$. Notice that $b_n$ is only nontrivial when the gradings $\abs{a_n} \cdots \abs{a_1} \circ \abs{m}$ form a path, and that the summand $d_n$ is only nontrivial if $\abs{m} \circ \abs{a_n}$ is defined (that is, if this path forms a loop of length $n+1$). We leave the proof of the following to the reader.

\begin{proposition}
Fix the triple of a grading category admitting a looper $(\mathcal{C}, \alpha, \varepsilon)$. If $A$ is a $\mathcal{C}$-graded algebra and $M$ is a $\mathcal{C}$-graded $(A, A)$-bimodule, then
$(C_n(A, M), b_n)$ is a $(\mathrm{Tr}(\mathcal{C}) \times\mathbb{Z})$-graded chain complex, and $\mathsf{HC}_\bullet(A, M) \cong (C_n(A, M), b_n)$ as chain complexes.
\end{proposition}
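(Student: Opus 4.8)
The plan is to establish three things in order: (1) that $b_n$ lowers homological degree by one and preserves the $\mathrm{Tr}(\mathcal{C})$-grading; (2) that $b \circ b = 0$; and (3) that the resulting complex is isomorphic to $M \otimes_{A \otimes_\Bbbk A^\op} \mathcal{B}(A)$. For (1): each interior face $d_i$ ($1 \le i \le n-1$) replaces $a_i \otimes a_{i+1}$ by $\mu(a_i, a_{i+1})$, which by (A.I) leaves the composite $\abs{a_n} \circ \cdots \circ \abs{a_1} \circ \abs{m}$ unchanged; the face $d_0$ absorbs $a_1$ into $m$ via $\rho_R$, cyclically rotating that composite, and $d_n$ absorbs $a_n$ via $\rho_L$, rotating it the other way. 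In all cases the image in $\mathrm{Tr}(\mathcal{C})$ is preserved, which also makes transparent the remark that $b_n$ vanishes unless the relevant gradings close into a loop (and that $d_n$ in particular requires $\abs{m} \circ \abs{a_n}$ to be defined).

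For (2), I would follow the classical bookkeeping (as in Lemma~1.1.2 of \cite{MR1600246}, which the excerpt already invokes for Lemma~\ref{lem:barcomplex}): in the expansion of $b^2$, a summand coming from $d_j \circ d_i$ with $i \le j$ cancels against the summand from $d_i \circ d_{j+1}$, with signs working exactly as in the ungraded case. There are three regimes. The interior cancellations among $d_1, \dots, d_{n-1}$ are literally the computation of Lemma~\ref{lem:barcomplex} transported to $M \otimes A^{\otimes n}$: folds use the cocycle relation on $\alpha$, and the remaining pairs use distant commutativity of $\alpha$. The cancellations pairing $d_0$ with each $d_i$ and with $d_n$, and pairing $d_n$ with each $d_i$, are the only genuinely new ones; they realize the ``last canceling pair'' anticipated in the preamble of Section~\ref{s:hochschild}. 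For these one fixes a basepoint on the loop of gradings and compares the two composite correction coefficients; their ratio is a product of $\alpha$- and $\varepsilon$-values trivialized by $\alpha$-$\varepsilon$ coherence~(\ref{eq:a-e_coherence}), its distant-commutativity companion, and Lemma~\ref{lem:assocomps}. Conceptually this says that the loop in the space of topographies traced out by ``$d_n$ then the other face'' versus the reverse order is null-homotopic, which is guaranteed by the simple-connectivity results around Lemma~\ref{lem:simpcon}; in practice I would just expand $\Omega$ and check the identity by hand, as only finitely many $\alpha, \varepsilon$ terms appear.

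For (3), at the chain level $M \otimes_{A^e} \mathcal{B}(A)$ is a $\Bbbk$-linear quotient of $\bigoplus_n M \otimes A^{\otimes n+2}$; using the defining relation~(\ref{eq:Aedefrel}) to push the two outer bar factors $a_0, a_{n+1}$ across $\otimes_{A^e}$ so that they are absorbed into $m$ yields, degree by degree, a $\Bbbk$-linear isomorphism with $M \otimes A^{\otimes n}$ --- the class of $m \otimes a_0 \otimes \cdots \otimes a_{n+1}$ going to a scalar (a product of $\Theta$-, $\Phi$-, and $\Psi$-values) times $\rho_L(a_{n+1}, \rho_R(m, a_0)) \otimes a_1 \otimes \cdots \otimes a_n$, with inverse sending $m \otimes a_1 \otimes \cdots \otimes a_n$ to the class of $m \otimes 1_X \otimes a_1 \otimes \cdots \otimes a_n \otimes 1_Y$ for the appropriate objects $X, Y$. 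Transporting the bar differential $\partial$ of Proposition~\ref{prop:A-DGA} through this isomorphism, the interior summands of $\partial$ become $d_1, \dots, d_{n-1}$ (the stray $\Phi, \Psi$ factors drop out because $\abs{a_0}$ and $\abs{a_{n+1}}$ are identities, via Lemma~\ref{lem:assocomps}(i)--(iv)), the $i=0$ summand becomes the right-action face $d_0$, and the $i=n$ summand becomes the left-action face $d_n$; the renormalization of this last summand is routed through a $\Theta$-value and is exactly what produces the $\varepsilon$ sitting inside $\Omega$. Matching the coefficients is the iterative ``$\alpha$-correction'' bookkeeping of the Remark after Proposition~\ref{prop:A-DGA}. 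Since $\partial$ is already known to descend to the quotient (it is the differential of a DG-$A^e$-module), this coefficient matching upgrades the degreewise $\Bbbk$-linear isomorphism to an isomorphism of chain complexes.

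I expect the main obstacle to be item (2), and within it the $\varepsilon$-laden cancellations involving $d_n$: this is the only place where the looper does real work, and the delicacy is entirely in keeping the choice of basepoint of the loop of gradings consistent between the $d_0$--$d_n$ cancellation and the $d_i$--$d_n$ cancellations. Everything else is the classical argument with distant commutativity of $\alpha$ substituting for commutativity of scalars, and the isomorphism in (3) is then a bookkeeping verification that the same correction factors appear on both sides.
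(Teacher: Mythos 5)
Your overall architecture is the natural one (and note the paper itself leaves this verification to the reader), but step (3) as you have pinned it down contains a step that fails: the specific degreewise identification you commit to, with scalar-free inverse $m \otimes a_1 \otimes \cdots \otimes a_n \mapsto \bigl[\, m \otimes 1_X \otimes a_1 \otimes \cdots \otimes a_n \otimes 1_Y \,\bigr]$, does \emph{not} intertwine $\mathrm{id}_M \otimes \partial$ with $b_n$. On a normalized representative the first bar entry has grading $\mathrm{Id}_X$, so the interior summand of $\partial$ carries the coefficient $\alpha(\abs{a_{i-1}} \circ \cdots \circ \abs{a_1}, \abs{a_i}, \abs{a_{i+1}})$, and since the interior faces return normalized representatives, no renormalization scalar ever appears there under your maps; the transported interior coefficient is therefore $\alpha(\abs{a_{i-1}} \circ \cdots \circ \abs{a_1}, \abs{a_i}, \abs{a_{i+1}})$ rather than the coefficient $\alpha(\abs{a_{i-1}} \circ \cdots \circ \abs{a_1} \circ \abs{m}, \abs{a_i}, \abs{a_{i+1}})$ appearing in $d_i$, and these differ for a general 3-cocycle and a general $\abs{m}$. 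Your parenthetical claim that ``the stray $\Phi, \Psi$ factors drop out because $\abs{a_0}$ and $\abs{a_{n+1}}$ are identities'' is exactly where this goes wrong: a $\Phi$-type correction recording $\abs{m}$ must \emph{not} drop out, it has to be built into the isomorphism itself.

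The fix is to rescale the identification by a grading-dependent unit, e.g.\ send $m \otimes a_1 \otimes \cdots \otimes a_n$ to $\prod_{i=1}^{n-1} \alpha\bigl(\abs{m}, \abs{a_i} \circ \cdots \circ \abs{a_1}, \abs{a_{i+1}}\bigr)^{\pm 1}$ times the class of $m \otimes 1_X \otimes a_1 \otimes \cdots \otimes a_n \otimes 1_Y$ (a product of exactly the $\Phi$-shape from Proposition \ref{prop:A-DGA}). Then the pentagon relation $d\alpha(\abs{m}, \abs{a_{i-1}} \circ \cdots \circ \abs{a_1}, \abs{a_i}, \abs{a_{i+1}}) = 1$ converts the bar coefficient into the $d_i$ coefficient, and the $i=0$ and $i=n$ faces produce $d_0$ and $d_n$ (with the $\varepsilon$ inside $\Omega$ coming out of the $\Theta$-renormalization, as you say, using Lemmas \ref{lem:assocomps}, \ref{lem:eploops} and Lemma \ref{lem:simpcon}). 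With the corrected isomorphism your item (2) also becomes redundant: $b_n^2 = 0$ follows from Lemma \ref{lem:barcomplex} and the well-definedness of $\partial_\otimes$ on $M \otimes_{A^e} \mathcal{B}(A)$, so the only real work is the well-definedness of the two maps modulo the relation (\ref{eq:Aedefrel}) and the coefficient bookkeeping just described.
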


This description makes the usual elementary computations easy. In particular, we can define the \emph{module of coinvaraints} in the usual way, declaring
\[
M^\mathcal{C}_A := \mathsf{HH}_0(A, M) = M / \{\rho_R(m, a) - \varepsilon(\abs{m}, \abs{a}) \rho_L(a, m) : a \in A, m\in M\}.
\]
Similarly, when $M = A$, we can define the \emph{commutator sub-module} $[A, A]_\varepsilon^\mathcal{C}$ generated by $[a_1, a_2]_\varepsilon := \mu(a_1, a_2) - \varepsilon(\abs{a_1}, \abs{a_2}) \mu(a_2, a_1)$. It follows that
\[
\mathsf{HH}_0(A) = A / [A, A]_\varepsilon^\mathcal{C}.
\]

Typically, the next easiest computation (see, for example, Section 1.1.9 of \cite{MR1600246}) requires restricting to the case of commutative algebras. Define a \emph{commutative $(\mathcal{C}, \alpha, \varepsilon)$-graded algebra} $A$ to be a $\mathcal{C}$-graded algebra which satisfies $\mu(x, y) = \varepsilon(\abs{x}, \abs{y}) \mu(y, x)$. This added assumption is very restrictive; in particular, it implies that the gradings of $A$ are supported entirely in some subset of $\mathrm{Mor}(\mathcal{C})$ for which $\abs{y} \circ \abs{x} = \abs{x} \circ \abs{y}$ for each $x, y\in A$. Thus, for each homogeneous $a \in A$, $\abs{a} \in Z(\mathrm{End}_\mathcal{C}(X))$ for some $X \in \mathrm{Ob}(\mathcal{C})$, where $Z$ denotes the center. Notice that this implies that $A$ has exactly one unit in the sense of Definition \ref{def:algebras}. In summary, a commutative $\mathcal{C}$-graded algebra is a quasi-associative algebra whose gradings are supported in a commutative sub-monoid of $\mathcal{C}$.

Now, the ``commutativity up to $\varepsilon$'' assumption $\mu(x, y) = \varepsilon(\abs{x}, \abs{y}) \mu(y, x)$ implies that $b_2: A \otimes A \to A$ is the zero map, therefore $\mathsf{HH}_1(A)$ is the quotient of $A \otimes A$ by the relation
\begin{equation}
\label{eq:relh1}
\mu(a, b) \otimes c - \alpha(\abs{a}, \abs{b}, \abs{c}) a \otimes \mu(b,c) + \varepsilon(\abs{b} \circ \abs{a}, \abs{c}) \alpha(\abs{c}, \abs{a}, \abs{b})^{-1} \mu(c, a) \otimes b
=0.
\end{equation}
Notice that $\mathsf{HH}_1(A)$ inherits the structure of an $A$-module, defining $\rho_L(x, a \otimes b) := \alpha(\abs{x}, \abs{a}, \abs{b})^{-1} \mu(x, a) \otimes b$.

\begin{lemma}
\label{lem:comalg}
Assume $A$ is a commutative $(\mathcal{C}, \alpha, \varepsilon)$-graded algebra. For any homogeneous $a, b\in A$, the following hold for elements of $\mathsf{HH}_1(A)$:
\begin{enumerate}[label=(\roman*)]
	\item $a \otimes 1 = 0$;
	\item $a \otimes b + \varepsilon(\abs{a}, \abs{b}) \, b \otimes a = \mathcal{L}(\mathrm{Id}, \abs{b} \circ \abs{a})^{-1} \, 1 \otimes \mu(a, b)$.
\end{enumerate}
\end{lemma}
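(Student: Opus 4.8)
The plan is to work directly with the presentation of $\mathsf{HH}_1(A)$ as the quotient of $A\otimes A$ by relation~(\ref{eq:relh1}), substituting convenient choices of homogeneous elements into that relation and using the defining property $\mu(x,y)=\varepsilon(\abs{x},\abs{y})\mu(y,x)$ of a commutative $(\mathcal{C},\alpha,\varepsilon)$-graded algebra. Part (i) should fall out of~(\ref{eq:relh1}) by setting one of the three slots equal to a unit $1_X$; part (ii) should follow from a second, slightly more delicate substitution, again using commutativity to rewrite the terms, plus the identities for $\alpha$ on identity morphisms from Lemma~\ref{lem:assocomps} and the formula for $\varepsilon$ on loops through an identity from Lemma~\ref{lem:eploops}.

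For (i): I would like to plug $c = 1_X$ (appropriately chosen so $\abs{a}\colon\, \cdot \to X$ and $\abs{b}\colon X\to X$, or after reduction $b$ must also be a loop at $X$ by the commutativity constraint) into~(\ref{eq:relh1}). Since $\mu(b,1_X) = \mathcal{R}(\abs b,\mathrm{Id}_X) b$ and $\mu(1_X,a)=\mathcal{L}(\mathrm{Id}_X,\abs a)a$ by axiom (A.III), the middle and last terms of~(\ref{eq:relh1}) become scalar multiples of $a\otimes b$ and $b\otimes a$ respectively; using commutativity $a\otimes b$ and $\varepsilon(\abs a,\abs b) b\otimes a$ are identified in $\mathsf{HH}_1$ up to the $\mathcal{L}$-correction, and with the typical unitors ($\mathcal L\equiv 1\equiv\mathcal R$ on identity-graded elements, and more generally the explicit formulas~(\ref{eq:typicalL}),~(\ref{eq:typicalR})) the scalars collapse via Lemma~\ref{lem:assocomps}(i),(ii),(iii) to give a relation of the form $\lambda\,(a\otimes b) = \mu(a,b)\otimes b$-type expression; combined with the defining relation for the $A$-module structure $\rho_L(x,a\otimes 1)=\cdots$ and part (i) being the statement for $b=1$, the cleanest route is to take $b=1_X$ directly in~(\ref{eq:relh1}) — then $\mu(a,1)\otimes c$, $a\otimes\mu(1,c)$, $\mu(c,a)\otimes 1$ appear, the first two are $\mathcal R/\mathcal L$-scalar multiples of $a\otimes c$, and one reads off $a\otimes 1 = 0$ after noticing $\mu(c,a)\otimes 1$ is itself of the target form so the relation is self-referential; a short bookkeeping of the $\alpha$- and $\varepsilon$-scalars via Lemmas~\ref{lem:assocomps} and~\ref{lem:eploops} closes it.

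For (ii): I would substitute $c=1$ into~(\ref{eq:relh1}) again, but now keep $a,b$ general (both loop-graded at the same object by the commutativity constraint). The three terms become, respectively, a scalar times $\mu(a,b)\otimes 1$ — which vanishes by part (i) — a scalar times $a\otimes b$ (since $\mu(b,1)$ is a unitor multiple of $b$), and a scalar times $\mu(1,a)\otimes b = $ unitor multiple of $a\otimes b$; that is not yet what we want. Instead the right move is $a\to 1$: put the first slot equal to $1$, so~(\ref{eq:relh1}) reads $\mu(1,b)\otimes c - \alpha(\mathrm{Id},\abs b,\abs c)\, 1\otimes\mu(b,c) + \varepsilon(\abs b,\abs c)\alpha(\abs c,\mathrm{Id},\abs b)^{-1}\mu(c,1)\otimes b = 0$; the first term is a unitor scalar times $b\otimes c$, the last is a unitor scalar times $c\otimes b$, and using commutativity $\mu(c,b)=\varepsilon(\abs c,\abs b)\mu(b,c)$ to rewrite $1\otimes\mu(b,c)$, one gets exactly the asserted identity after simplifying the bundle of $\alpha$'s and $\varepsilon$'s via Lemma~\ref{lem:assocomps}(i),(iv) and the looper axioms (i),(ii) of Definition~\ref{def:looper} together with Lemma~\ref{lem:eploops}.

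\textbf{Main obstacle.} The conceptual content is light — both statements are ``classical,'' appearing in Loday for ordinary commutative algebras — so the real work is the scalar bookkeeping: correctly tracking the composite of $\alpha$-values, $\varepsilon$-values, and the typical unitors $\mathcal L,\mathcal R$ that accumulate when one substitutes identity-graded elements into~(\ref{eq:relh1}) and into the $A$-module structure map, and verifying they telescope to $1$ (for (i)) or to the single factor $\mathcal{L}(\mathrm{Id},\abs b\circ\abs a)^{-1}$ (for (ii)). The danger is a sign/inverse error in one of the $\varepsilon$'s, since $\varepsilon(f,g)^{-1}=\varepsilon(g,f)$ and the $\Omega$-prefactor in $d_n$ already carries an $\varepsilon$ and several inverse-$\alpha$'s; I would mitigate this by first specializing~(\ref{eq:relh1}) symbolically to the case where all relevant gradings are $\mathrm{Id}_X$ or loops at $X$ (so Lemma~\ref{lem:assocomps}(i),(ii),(iii),(iv),(v) and Lemma~\ref{lem:eploops} apply verbatim), and only then reading off the two identities.
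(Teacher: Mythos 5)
Your proposal is correct and follows essentially the same route as the paper: both parts come from substituting unit elements into relation~(\ref{eq:relh1}) and collapsing the resulting $\alpha$-, $\varepsilon$-, and unitor factors via Lemma~\ref{lem:assocomps}, Lemma~\ref{lem:eploops}, and the typical-unitor formulas. For (i) the paper simply takes $b=c=1$ (your $b=1$ with general $c$ works just as well), and for (ii) your substitution of $1$ into the first slot is exactly the paper's computation applied to $\mu(1,a)\otimes b$; note that no commutativity of $\mu$ is actually needed in that last scalar simplification.
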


\begin{proof}
To explain notation, since $A$ is graded by a commutative monoid, there is a unique object in the grading category, thus the unique unit of $A$ is denoted unambiguously by 1. Similarly, there is a unique identity morphism, denoted by $\mathrm{Id}$. By way of (\ref{eq:relh1}), we compute
\begin{align*}
	a \otimes 1 &= \mathcal{R}(\abs{a}, \mathrm{Id})^{-1} \mu(a, 1) \otimes 1
	\\
	&= \mathcal{R}(\abs{a}, \mathrm{Id})^{-1} \left(
	\alpha(\abs{a}, \mathrm{Id}, \mathrm{Id}) a \otimes \mu(1,1)
	-
	\varepsilon(\abs{a}, \mathrm{Id}) \alpha(\mathrm{Id}, \abs{a}, \mathrm{Id})^{-1} \mu(1, a) \otimes 1
	\right)
	\\
	&= a \otimes 1 - a \otimes 1 = 0.
\end{align*}
The third equality follows from parts (i) and (iv) of Lemma \ref{lem:assocomps} and Lemma \ref{lem:eploops}. Similarly, applying (\ref{eq:relh1}) to the expression $\mu(1, a) \otimes b$ yields
\[
\mu(1, a) \otimes b - \alpha(\mathrm{Id}, \abs{a}, \abs{b}) \, 1 \otimes \mu(a,b) + \varepsilon(\abs{a}, \abs{b}) \alpha(\abs{b}, \mathrm{Id}, \abs{a})^{-1} \, \mu(b, 1) \otimes a = 0.
\]
Then (ii) follows from the usual arguments via Lemma \ref{lem:assocomps}.
\end{proof}

Let $\Omega_A^1$ denote the $(A, A)$-bimodule of K\"ahler differentials, which is generated by symbols $da$ for each $a\in A$ where $d: A \to A$ is a $\mathcal{C}$-grading preserving $\Bbbk$-linear map satisfying
\begin{equation}
\label{eq:relkahler}
d\mu(a, b) = \mu(a, db) + \mu(da, b).
\end{equation}
It is a $\mathcal{C}$-graded $(A, A)$-bimodule in a trivial sense, setting $\rho_L(a, db) = \mu(a, db)$ and $\rho_R(da, b) = \mu(da, b)$. This bimodule is definable for any $\mathcal{C}$-graded algebra, and takes on the graded structure of $A$; in particular, when $A$ is commutative, then $\Omega_A^1$ has gradings supported entirely in a commutative monoid. Notice that for $\mathcal{M}$ a commutative monoid, $\mathrm{Tr}(\mathcal{M}) = \mathcal{M}$.

\begin{proposition}
Assume $\mathcal{M}$ is a commutative sub-monoid of $(\mathcal{C}, \alpha, \varepsilon)$. If $A$ is a commutative $\mathcal{M}$-graded algebra, then $\mathsf{HH}_1(A) \cong \Omega_A^1$ as $\mathcal{M}$-graded $A$-modules.
\end{proposition}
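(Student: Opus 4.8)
The plan is to produce the usual pair of mutually inverse maps between $\mathsf{HH}_1(A)$ and $\Omega^1_A$, exactly as in the classical commutative case (cf. Section 1.3 of \cite{MR1600246}), with Lemma \ref{lem:comalg} carrying all of the $\alpha$-$\varepsilon$ bookkeeping. Since $A$ is commutative its gradings lie in the commutative sub-monoid $\mathcal{M}$, which has a single object and a single identity $\mathrm{Id}$; because $\mathrm{Tr}(\mathcal{M}) = \mathcal{M}$, both $\mathsf{HH}_1(A)$ (the quotient of $A\otimes A$ by (\ref{eq:relh1})) and $\Omega^1_A$ (which carries the $\mathcal{M}$-grading of $A$) are honestly $\mathcal{M}$-graded $A$-modules, and throughout we use the typical unitors induced by $\alpha$.

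First I would define $\phi\colon \mathsf{HH}_1(A)\to\Omega^1_A$ on representatives by $a\otimes b\mapsto \rho_L(a,db)$, extended $\Bbbk$-linearly. To see this descends to the quotient, apply $\phi$ to the left-hand side of (\ref{eq:relh1}), expand $d\mu(b,c)$ via the Leibniz rule (\ref{eq:relkahler}), and collect terms using axioms (B.I) and (B.III) for $\Omega^1_A$, whose $\alpha$-factors come verbatim from (A.II). After the $\alpha$-dependent contributions telescope, what survives is a single identity of the form $\rho_R(\omega,c) = \varepsilon(\abs\omega,\abs c)\,\rho_L(c,\omega)$ with $\omega = \rho_L(a,db)$ — i.e. the statement that $\Omega^1_A$ is $\varepsilon$-commutative as an $A$-bimodule, which itself must be deduced from $\mu(x,y) = \varepsilon(\abs x,\abs y)\mu(y,x)$ in $A$ together with (\ref{eq:relkahler}). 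Left $A$-linearity of $\phi$ is immediate from (A.II)/(B.I), and $\phi$ is $\mathcal{M}$-graded since $\abs{\rho_L(a,db)} = \abs b\,\abs a$ matches the degree of $a\otimes b$.

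For the inverse I would first observe that the $\Bbbk$-linear map $D\colon A\to\mathsf{HH}_1(A)$ given by $b\mapsto \mathcal{L}(\mathrm{Id},\abs b)^{-1}\,1\otimes b$ is grading-preserving and kills $1_X$ by Lemma \ref{lem:comalg}(i). Regarding $\mathsf{HH}_1(A)$ as an $\varepsilon$-symmetric $A$-bimodule via $\rho_R(n,a) := \varepsilon(\abs n,\abs a)\rho_L(a,n)$, and using the identity $\rho_L(a,1\otimes b) = \mathcal{L}(\mathrm{Id},\abs b)\,a\otimes b$ (which comes from (\ref{eq:typicalL}), (\ref{eq:typicalR}) and Lemma \ref{lem:assocomps}(iv)), the Leibniz relation $D(\mu(a,b)) = \rho_L(a,D(b)) + \rho_R(D(a),b)$ reduces to exactly Lemma \ref{lem:comalg}(ii). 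Hence $D$ is a derivation in the sense of (\ref{eq:relkahler}), so it factors uniquely through an $\mathcal{M}$-graded $A$-module map $\psi\colon \Omega^1_A\to\mathsf{HH}_1(A)$ with $\psi(db) = D(b)$.

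It then remains to check $\phi$ and $\psi$ are mutually inverse, which is a short computation on generators: $\phi(\psi(db)) = \mathcal{L}(\mathrm{Id},\abs b)^{-1}\rho_L(1_X,db) = \mathcal{L}(\mathrm{Id},\abs b)^{-1}\mathcal{L}(\mathrm{Id},\abs b)\,db = db$ by (B.IV), and $\psi(\phi(a\otimes b)) = \rho_L(a,\psi(db)) = \mathcal{L}(\mathrm{Id},\abs b)^{-1}\rho_L(a,1\otimes b) = a\otimes b$ by the same identity used above. I expect the main obstacle to be the well-definedness of $\phi$ — more precisely, establishing that $\Omega^1_A$ is $\varepsilon$-commutative — since this is the one step where the commutativity hypothesis and the looper $\varepsilon$ genuinely have to be combined, the remaining verifications being degree counts or substitutions of Lemmas \ref{lem:assocomps} and \ref{lem:comalg}.
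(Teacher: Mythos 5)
Your proposal is correct and follows essentially the same route as the paper: the same pair of maps $a\otimes b\mapsto \mu(a,db)$ and $db\mapsto \mathcal{L}(\mathrm{Id},\abs{b})^{-1}\,1\otimes b$, with well-definedness of the first handled by the Leibniz rule and quasi-associativity, of the second by Lemma \ref{lem:comalg}(ii), and the same unit cancellations (via Lemma \ref{lem:assocomps} and the typical unitors) showing the maps are mutually inverse $A$-module morphisms. The only difference is presentational: you isolate the $\varepsilon$-symmetry of $\Omega^1_A$ as an explicit substep, which the paper only asserts in passing when declaring $\Omega^1_A$ an $A$-module.
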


\begin{proof}
Notice that $\Omega_A^1$ has the structure of an $A$-module, as the usual left- and right-actions differ up to unit of $\Bbbk$ when $A$ is commutative. Define a map $\mathsf{HH}_1(A) \to \Omega_A^1$ by $a \otimes b \mapsto \mu(a, db)$. This map is well-defined: the value
\[
\mu(a, b) \otimes c - \alpha(\abs{a}, \abs{b}, \abs{c}) a \otimes \mu(b,c) + \varepsilon(\abs{b} \circ \abs{a}, \abs{c}) \alpha(\abs{c}, \abs{a}, \abs{b})^{-1} \mu(c, a) \otimes b
\]
is sent to
\[
\mu(\mu(a, b), dc) - \alpha(\abs{a}, \abs{b}, \abs{c}) \mu(a, d\mu(b,c)) + \varepsilon(\abs{b} \circ \abs{a}, \abs{c}) \alpha(\abs{c}, \abs{a}, \abs{b})^{-1} \mu(\mu(c, a), db).
\]
Well-definedness follows since 
\begin{align*}
\mu(\mu(a, b), dc) + \mu(\mu(a, db), c) &= \alpha(\abs{a}, \abs{b}, \abs{c}) \mu(a, d\mu(b,c)) 
\\
&= \alpha(\abs{a}, \abs{b}, \abs{c}) \left(
\mu(a,\mu (b, dc) ) + \mu(a, \mu(db, c))
\right).
\end{align*}
It is easy to check that this map is an $A$-module homomorphism:
\[
\rho_L(x, a\otimes b) = \alpha(\abs{x}, \abs{a}, \abs{b})^{-1} \mu(x, a) \otimes b \mapsto \alpha(\abs{x}, \abs{a}, \abs{b})^{-1} \mu(\mu(x,a), db) = \mu(x, \mu(a, db)).
\]
Conversely, define a map $\Omega_A^1 \to \mathsf{HH}_1(A)$ by $da \mapsto \mathcal{L}(\mathrm{Id}, \abs{a})^{-1} 1 \otimes a$. The map is well defined since the relation (\ref{eq:relkahler}) is sent to (ii) of Lemma \ref{lem:comalg}. It is an $A$-module homomorphism, sending $\mu(a, db) \mapsto a \otimes b$ since
\[
\alpha(\abs{a}, \mathrm{Id}, \abs{b})^{-1} \mathcal{L}(\mathrm{Id}, \abs{b})^{-1} \mathcal{R}(\abs{a}, \mathrm{Id}) = 1.
\]
These two maps are inverse to one another, establishing the isomorphism.
\end{proof}

\begin{remark}
The main examples in this paper of non-associative but $\mathcal{C}$-graded algebras are the unified versions of Khovanov's arc aglebras, denoted $H^n$ for $n\ge 1$. The arc algebras are noncommutative, and the grading category required has many objects. Therefore, we still need an example of a $\mathcal{M}$-graded commutative algebra in order for the result on $\mathsf{HH}_1(A)$ to be interesting. One can generate non-associative, commutative algebras by considering the \emph{Jordan algebras}, especially the Jordan algebra associated to some underlying non-commutative associative algebra, with mutliplication
\[
x \bullet y := \cfrac{xy+yx}{2}.
\]
However, the failure of associativity of this family is not easily controlled. The simplest example of a non-associative Jordan algebra is the one associated to the quaternions $\mathbb{H}$. This example does not work since, for example, the values $(\mathbf{i} \bullet \mathbf{i}) \bullet \mathbf{j} = -\mathbf{j}$ and $\mathbf{i} \bullet (\mathbf{i} \bullet \mathbf{j}) = 0$ do not differ up to unit. We will not investigate more examples in this paper.
\end{remark}

\subsection{Generalities on $\mathrm{Mod}^{\mathrm{Tr}(\mathcal{C})}$ and Hochschild shadows}
\label{ss:shadows}

Suppose $\mathcal{C}$ is a small category. Recall that a $\mathrm{Tr}(\mathcal{C})$-graded $\Bbbk$-module is simply a $\Bbbk$-module which admits a grading by the set $\mathrm{Tr}(\mathcal{C})$. Let $\mathrm{Mod}^{\mathrm{Tr}(\mathcal{C})}$ denote the category of $\mathrm{Tr}(\mathcal{C})$-graded $\Bbbk$-modules whose morphisms are grading preserving $\Bbbk$-linear maps, and let $\mathrm{End}(\mathcal{C}) = \coprod_{X \in \mathrm{Ob}(\mathcal{C})} \mathrm{End}_\mathcal{C}(X)$. There is an essentially surjective functor
\[
(-)^{\circ}: \mathrm{Mod}^\mathcal{C} \to \mathrm{Mod}^{\mathrm{Tr}(\mathcal{C})}
\]
which sends the $\mathcal{C}$-graded $\Bbbk$-module $M = \bigoplus_{g \in \mathrm{Mor}(\mathcal{C})} M_g$ to the $\Bbbk$-module $M^\circ = \bigoplus_{\gamma \in \mathrm{Tr}(\mathcal{C})} M_\gamma$ with
\[
M_\gamma := \bigoplus_{\substack{g\in \mathrm{End}(\mathcal{C}) \\ \mathrm{tr}_X(g) = \gamma}} M_g.
\]
In the process, summands $M_g$ vanish when $g$ does not represent an element of $\mathrm{Tr}(\mathcal{C})$.

It is interesting to study $\mathrm{Mod}^{\mathrm{Tr}(\mathcal{C})}$ for grading categories with fixed loopers $(\mathcal{C}, \alpha, \varepsilon)$. In particular, we noticed in Section \ref{s:algebra} that the choice of $\alpha$ is the main ingredient to give $\mathrm{Mod}^\mathcal{C}$ the structure of a monoidal category. What does the choice of $\varepsilon$ do? It is tempting to claim that this choice provides a witness for the swap map in a symmetric monoidal category, but this is not the case; for example, notice that the $\alpha$-$\varepsilon$ coherence relation of Definition \ref{def:looper} is \emph{not} quite the hexagon relation for a symmetric monoidal category. We will show, among other properties, that the choice of $\varepsilon$ is not only necessary to define Hochschild homology for quasi-associative algebras, but that the choice makes $\mathsf{HH}_0$ into a shadow with target category $\mathrm{Mod}^{\mathrm{Tr}(\mathcal{C})}$.

Start by fixing the grading category $(\mathcal{C}, \alpha)$ and let $\mathrm{Bim}(A, B)^\mathcal{C}$ denote the category of $\mathcal{C}$-graded $(A, B)$-bimodules, whose morphisms are $\mathcal{C}$-grading preserving $\Bbbk$-linear maps. Notice that $(\mathrm{Bim}(A, A)^\mathcal{C}, \otimes_A)$ is also a (graded) monoidal category. As for $\mathrm{Mod}^\mathcal{C}$, we can pick the coherence isomorphism 
\[
(M \otimes_A N) \otimes_A L \xrightarrow{\sim} M \otimes_A (N \otimes_A L)
\]
induced by the associator via $(m \otimes n) \otimes \ell \mapsto \alpha(\abs{m}, \abs{n} \abs{\ell}) \, m \otimes (n \otimes \ell)$. The unit object of this monoidal category is given by the algebra $A$.

\begin{proposition}
\label{prop:AAmonoidal}
Assume that the unitors of $\mathrm{Mod}^\mathcal{C}$ are typical. Then $(\mathrm{Bim}(A, A)^\mathcal{C}, \otimes_A)$ is a monoidal category with coherence isomorphism $\alpha$ and trivial unitors. That is, $A$ is the unit object, and the unitors are 
\[
\begin{tikzcd}[column sep = small, row sep = tiny]
A \otimes_A M \arrow[r] & M
\\
a \otimes m \arrow[r, mapsto] & \rho_L(a, m)
\end{tikzcd}
\qquad \text{and} \qquad
\begin{tikzcd}[column sep = small, row sep = tiny]
M \otimes_A A \arrow[r] & M
\\
m \otimes a \arrow[r, mapsto] & \rho_R(m,a)
\end{tikzcd}
\]
\end{proposition}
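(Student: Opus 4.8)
The statement has two parts: that $\otimes_A$ is a well-defined monoidal product on $\mathrm{Bim}(A,A)^\mathcal{C}$ with associator $\alpha$, and that the left/right actions of $A$ on a bimodule $M$ furnish unit isomorphisms $A \otimes_A M \xrightarrow{\sim} M$ and $M \otimes_A A \xrightarrow{\sim} M$ satisfying the triangle identity. The first part is essentially bookkeeping already set up in Subsection~\ref{ss:hochschild}: the bimodule structure on $M \otimes_A N$ was described there, the coequalizer defining $\otimes_A$ makes sense, and the pentagon for the induced associator $(m\otimes n)\otimes \ell \mapsto \alpha(\abs{m},\abs{n},\abs{\ell})\,m\otimes(n\otimes\ell)$ is inherited from the pentagon for $\alpha$ on $\mathrm{Mod}^\mathcal{C}$ — one just checks the associator is compatible with the two relations being quotiented (which follows from distant commutativity of $\alpha$, exactly as in the associahedron discussion after Definition~\ref{def:left-Ae-modules}). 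So the plan is to state this part briefly and spend the effort on the unitors.

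For the unit isomorphisms, first I would check that the maps $a \otimes m \mapsto \rho_L(a,m)$ and $m \otimes a \mapsto \rho_R(m,a)$ are well-defined on the coequalizers $A \otimes_A M$ and $M \otimes_A A$. For $A\otimes_A M$ this amounts to checking that $\mu_A(a,a') \otimes m$ and $\alpha(\abs a,\abs{a'},\abs m)\, a \otimes \rho_L(a',m)$ have the same image, which is precisely axiom (B.I) for $M$; for $M\otimes_A A$ it is axiom (B.II). Next I would exhibit inverses: for $A\otimes_A M$ the map $m \mapsto \mathcal{L}_\mathcal{C}(\mathrm{Id}_X,\abs m)^{-1}\, 1_X \otimes m$ (with $\abs m : X \to Y$), and symmetrically $m \mapsto \mathcal{R}_\mathcal{C}(\abs m,\mathrm{Id}_Y)^{-1}\, m \otimes 1_Y$ for $M \otimes_A A$. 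That these are two-sided inverses uses (B.IV) in one direction; in the other direction one needs to move the $1_X$ across the tensor: $1_X \otimes m = \mathrm{(relation)} \Rightarrow 1_X \otimes \rho_L(a,m) \sim \ldots$, and here the hypothesis that the unitors of $\mathrm{Mod}^\mathcal{C}$ are \emph{typical} enters — it is what makes $\mu_A(1_X,1_X) = 1_X$ and lets the $\mathcal{L},\mathcal{R}$ scalars telescope correctly, via parts (i)--(iv) of Lemma~\ref{lem:assocomps}, just as in the proof of Proposition~\ref{prop:bimod=mod}. Then I would verify these unit maps are bimodule morphisms (immediate from axioms (B.I)--(B.III): e.g. the left $A$-action on $A\otimes_A M$ pushes through $\mu_A$, and (B.I) says that agrees with the left action on $M$, up to the $\alpha$ that the associator already accounts for).

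Finally I would check the triangle identity: for $X\xrightarrow{g}Y\xrightarrow{h}Z$ bimodules-worth of gradings, the composite $(M\otimes_A A)\otimes_A N \xrightarrow{\alpha} M\otimes_A(A\otimes_A N) \to M\otimes_A N$ via $\mathbbm{1}\otimes \mathcal{L}$ must equal $\mathcal{R}\otimes\mathbbm{1}$. Tracing an element $(m\otimes a)\otimes n$, one side gives $\rho_R(m,a)\otimes n$ and the other gives $\alpha(\abs m,\abs a,\abs n)\, m \otimes \rho_L(a,n)$; but these are identified in $M\otimes_A N$ by the very defining relation \eqref{eq:Acoequal} of $\otimes_A$. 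So the triangle is automatic. The main obstacle — such as it is — is the well-definedness-on-the-quotient bookkeeping for the inverse unit maps, i.e. verifying that sliding a unit element $1_X$ across the coequalizer relation produces exactly the scalar $\mathcal{L}_\mathcal{C}(\mathrm{Id}_X,\abs m)\mathcal{R}_\mathcal{C}(\abs m,\mathrm{Id}_Y)^{-1}$ or its inverse; this is the place the ``typical unitors'' assumption is genuinely needed and where a short $\alpha$-manipulation using Lemma~\ref{lem:assocomps} is required. Everything else reduces to the bimodule axioms (B.I)--(B.IV) and the already-established pentagon/distant-commutativity facts, so I would present it as a sequence of short verifications rather than a long computation.
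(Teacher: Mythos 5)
Your proposal is correct and follows essentially the same route as the paper: the decisive step in both is rewriting $a \otimes m$ (via the coequalizer relation, the typical-unitor assumption, and Lemma \ref{lem:assocomps}(ii)) as $\mathcal{L}(\mathrm{Id}_X, \abs{m} \circ \abs{a})^{-1}\, 1_X \otimes \rho_L(a,m)$, and the triangle identity is read off directly from the defining relation of $\otimes_A$. The only difference is presentational — you build an explicit inverse $m \mapsto \mathcal{L}(\mathrm{Id}_X,\abs{m})^{-1} 1_X \otimes m$, while the paper packages the same computation through Lemma \ref{lem:tautological} and the unitor of $\mathrm{Mod}^{\mathcal{C}}$.
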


We highlight this observation to point out that the category of $\mathcal{C}$-graded $A$-bimodules requires no additional witnesses when compared to the classical setting (unlike $\mathrm{Mod}^\mathcal{C}$). The proposition follows from the following tautological lemma.

\begin{lemma}
\label{lem:tautological}
Let $A$ be a $\mathcal{C}$-graded algebra, and denote its units by $\{1_X^A \in A_{\mathrm{Id}_X} : X \in \mathrm{Ob}(\mathcal{C})\}$. Then
\[
\mathrm{Span}_\Bbbk\{1_X^A \in A_{\mathrm{Id}_X} : X \in \mathrm{Ob}(\mathcal{C})\} \cong I_\mathcal{C}
\]
as $\mathcal{C}$-graded algebras under the canonical identification $1_X^A \mapsto 1_X \in \Bbbk_{\mathrm{Id}_X}$.
\end{lemma}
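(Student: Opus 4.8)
The plan is to chase definitions. The only map that can realise the asserted isomorphism is the $\Bbbk$-linear extension $\varphi\colon I_\mathcal{C}\to A$ of $1_X\mapsto 1_X^A$, so I would check that $\varphi$ is a morphism of $\mathcal{C}$-graded algebras whose image is exactly $\mathrm{Span}_\Bbbk\{1_X^A : X\in\mathrm{Ob}(\mathcal{C})\}$, and then that it is injective.

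First I would record that $\varphi$ is $\mathcal{C}$-grading preserving, since $1_X\in\Bbbk_{\mathrm{Id}_X}$ and, by hypothesis, $1_X^A\in A_{\mathrm{Id}_X}$, so both homogeneous components live in the morphism $\mathrm{Id}_X$. For multiplicativity, note that by axiom (A.I) the product $\mu_A(1_X^A,1_Y^A)$ is homogeneous of degree $\mathrm{Id}_Y\circ\mathrm{Id}_X$; this composite is defined only when $X=Y$, so $\mu_A(1_X^A,1_Y^A)=0$ whenever $X\ne Y$. When $X=Y$, axiom (A.III) applied with $x=1_X^A$ gives $\mu_A(1_X^A,1_X^A)=\mathcal{L}(\mathrm{Id}_X,\mathrm{Id}_X)\,1_X^A$. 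Running the identical argument inside the unit object $I_\mathcal{C}$ — which is itself a $\mathcal{C}$-graded algebra, its multiplication being the unitor $\mathcal{L}\colon I_\mathcal{C}\otimes I_\mathcal{C}\to I_\mathcal{C}$ and its unit elements being the $1_X$ — yields $\mu_{I_\mathcal{C}}(1_X,1_Y)=0$ for $X\ne Y$ and $\mu_{I_\mathcal{C}}(1_X,1_X)=\mathcal{L}(\mathrm{Id}_X,\mathrm{Id}_X)\,1_X$ with the very same unitor scalar. Hence $\varphi$ intertwines the two multiplications on generators, and so in general by $\Bbbk$-bilinearity. (When the typical unitors are used this scalar is $\alpha(\mathrm{Id}_X,\mathrm{Id}_X,\mathrm{Id}_X)^{-1}=1$ by Lemma \ref{lem:assocomps}, so both products are just componentwise multiplication, matching the picture in Proposition \ref{prop:AAmonoidal}; but this is not needed here.) Since $\varphi$ sends units to units by construction, it is a morphism of $\mathcal{C}$-graded algebras, and its image is visibly $\mathrm{Span}_\Bbbk\{1_X^A\}$, so $\varphi$ corestricts to a surjection $I_\mathcal{C}\twoheadrightarrow\mathrm{Span}_\Bbbk\{1_X^A\}$.

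It remains to see that $\varphi$ is injective, which is the only point with any content. Because identity morphisms on distinct objects are distinct, the elements $1_X^A$ lie in pairwise distinct homogeneous summands of $A$, so $\ker\varphi=\bigoplus_{X}\mathrm{Ann}_\Bbbk(1_X^A)$, and injectivity reduces to the statement that each $1_X^A$ generates a free rank-one $\Bbbk$-submodule of $A$. This holds automatically whenever $A$ is torsion-free over $\Bbbk$ — in particular whenever $A$ is $\Bbbk$-free, as for all algebras of interest in this paper, e.g.\ the unified arc algebras $H^n$, which are free over $R$ — and it is exactly this non-degeneracy that is tacit in the phrase ``under the canonical identification.'' So the main (and essentially only) thing to be careful about is this freeness hypothesis; everything else is a direct unwinding of Definition \ref{def:algebras}, which is why the lemma is tautological.
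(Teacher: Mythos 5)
Your argument is correct and follows essentially the same, purely definitional route as the paper, whose entire proof consists of the remark that the claim is immediate once one notes that $I_\mathcal{C}$ is itself a $\mathcal{C}$-graded algebra with multiplication induced componentwise by $\Bbbk$ and vanishing across distinct identity degrees --- precisely the orthogonality (via (A.I)) and unit (via (A.III)) checks you spell out. The injectivity caveat is the one point the paper passes over in silence, and it is well taken: over a general commutative $\Bbbk$ the span of the units is a priori only a quotient of $I_\mathcal{C}$ (already $\Bbbk=\mathbb{Z}$, $A=\mathbb{Z}/2$ trivially graded gives a counterexample to the literal statement), so the ``canonical identification'' tacitly assumes the $1_X^A$ generate a free $\Bbbk$-module, which indeed holds in every application in the paper (e.g.\ the arc algebras $H^n$, free over $R$).
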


\begin{proof}
This is immediate by definition. Notice that $I_\mathcal{C}$ is a $\mathcal{C}$-graded algebra with multiplication induced by $\Bbbk$ when $\abs{x} = \abs{y}$, fixing $\mu_{I_\mathcal{C}}(x, y) = 0$ whenever $\abs{x} \not= \abs{y}$.
\end{proof}

\begin{proof}[Proof of Proposition \ref{prop:AAmonoidal}]
Pick homogeneous elements $a\in A$ and $m\in M$; to reduce notation, we will write $\mathrm{dom}(\abs{a}) = X$. Any element $a \otimes m$ of $A \otimes_A M$ can be rewritten as 
\[
\mathcal{L}(\mathrm{Id}_X, \abs{a})^{-1} \mu(1_X, a) \otimes m
\sim
\mathcal{L}(\mathrm{Id}_X, \abs{a})^{-1} \alpha(\mathrm{Id}_X, \abs{a}, \abs{m}) 1_X \otimes \rho_L(a, m)
\]
by equation (\ref{eq:Acoequal}). By assumption, 
\begin{align*}
\mathcal{L}(\mathrm{Id}_X, \abs{a})^{-1} \alpha(\mathrm{Id}_X, \abs{a}, \abs{m}) &= \alpha(\mathrm{Id}_X, \mathrm{Id}_X, \abs{a}) \alpha(\mathrm{Id}_X, \abs{a}, \abs{m})
\\ &= \alpha(\mathrm{Id}_X, \mathrm{Id}_X, \abs{m} \circ \abs{a})
\\ &= \mathcal{L}(\mathrm{Id}_X, \abs{m} \circ \abs{a})^{-1}
\end{align*}
where the second equality comes from (ii) of Lemma \ref{lem:assocomps}. By Lemma \ref{lem:tautological}, this establishes an isomorphism
\[
A \otimes_A M \xrightarrow{\mathcal{L}^{-1}} I_\mathcal{C} \otimes M \xrightarrow{\mathcal{L}} M
\]
by $a \otimes m \mapsto \rho_L(a, m)$, as desired. The same argument can be used to show that the right unitor has the stated structure. Then the triangle diagram
\[
\begin{tikzcd}[column sep = small]
(M \otimes_A A) \otimes_A N \arrow[dr] \arrow[rr, "\alpha"] & & M \otimes_A (A \otimes_A N) \arrow[dl]
\\
& M \otimes_A N &
\end{tikzcd}
\]
commutes exactly by the definition of $M \otimes_A N$, as $\rho_R^M(m, a) \otimes n \sim \alpha(\abs{m}, \abs{a}, \abs{n}) m \otimes \rho_L^N(a, n)$.
\end{proof}

Now, assume that $(\mathcal{C}, \alpha)$ admits a looper, and fix the triple $(\mathcal{C}, \alpha, \varepsilon)$. Let $A^e := A \otimes_\Bbbk A^\op$; next, we will study
\[
- \otimes_{A^e} - : \mathrm{Bim}(A, A)^\mathcal{C} \times \mathrm{Bim}(A, A)^\mathcal{C} \to \mathrm{Mod}^{\mathrm{Tr}(\mathcal{C})}.
\]
It does not take much work beyond Theorem \ref{thm:Aetensor} to prove that this assignment extends to a functor. One may interpret Definition \ref{def:looper} as establishing some properties of this functor---for example, it has a natural symmetry:

\begin{proposition}
\label{prop:Aesymmetry}
Assuming unitors are typical, there is an isomorphism 
\[
\varepsilon_{M, N}: M \otimes_{A^e} N \to N \otimes_{A^e} M
\]
given by $m \otimes n \mapsto \varepsilon(\abs{m}, \abs{n}) \, n \otimes m$.
\end{proposition}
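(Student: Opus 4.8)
The plan is to produce the candidate map first at the level of the unquotiented tensor products, then show it descends, and finally read off its inverse. Define $\tilde\varepsilon_{M,N}\colon M\otimes N\to N\otimes M$ on homogeneous simple tensors by $m\otimes n\mapsto \varepsilon(\abs m,\abs n)\,n\otimes m$, extended $\Bbbk$-linearly, where $\varepsilon(\abs m,\abs n)$ is understood whenever $\abs n\circ\abs m$ is a loop — which is exactly the part of $M\otimes_{A^e}N$ that survives the quotient of Theorem~\ref{thm:Aetensor}. Since $\mathrm{tr}_X(\abs n\circ\abs m)=\mathrm{tr}_Y(\abs m\circ\abs n)$ in $\mathrm{Tr}(\mathcal{C})$, the map $\tilde\varepsilon_{M,N}$ is $\mathrm{Tr}(\mathcal{C})$-grading preserving. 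Composing $\tilde\varepsilon_{N,M}$ with $\tilde\varepsilon_{M,N}$ and invoking condition (i) of Definition~\ref{def:looper} in the form $\varepsilon(\abs m,\abs n)\varepsilon(\abs n,\abs m)=1$ gives $\tilde\varepsilon_{N,M}\circ\tilde\varepsilon_{M,N}=\mathbbm{1}$; hence once we know that $\tilde\varepsilon_{M,N}$ descends to a map $\varepsilon_{M,N}\colon M\otimes_{A^e}N\to N\otimes_{A^e}M$ (and, symmetrically, that $\tilde\varepsilon_{N,M}$ descends), these descended maps are mutually inverse and the proposition follows.

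The real content is therefore to check that $\tilde\varepsilon_{M,N}$ carries the defining submodule of $M\otimes_{A^e}N$, generated by the elements $\rho_R^e(m,a\otimes a')\otimes n-\Theta(\abs m,\abs{a\otimes a'}_{\mathcal{C}\times\mathcal{C}^\op},\abs n)\,m\otimes\rho_L^e(a\otimes a',n)$, into the defining submodule of $N\otimes_{A^e}M$. Using $\rho_R^e(m,a\otimes a'^{\op})=\rho_L(a',\rho_R(m,a))$ and axiom (B.III) to rewrite this as $\alpha(\abs{a'},\abs m,\abs a)^{-1}\rho_R(\rho_L(a',m),a)=\alpha(\abs{a'},\abs m,\abs a)^{-1}\rho_L^e(a'\otimes a^{\op},m)$, one sees that $\tilde\varepsilon_{M,N}$ sends the generator above to a $\Bbbk^\times$-multiple of $\rho_R^e(n,a'\otimes a^{\op})\otimes m-\Theta(\abs n,\abs{a'\otimes a^{\op}}_{\mathcal{C}\times\mathcal{C}^\op},\abs m)\,n\otimes\rho_L^e(a'\otimes a^{\op},m)$, which is precisely a defining generator of $N\otimes_{A^e}M$; the scalar correction is a product of $\alpha$-, $\varepsilon$-, and $\Theta$-values that must be shown to be $1$.

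This last identity is exactly a commutativity statement for two composites around the $4$-partitioned loop with edges $\abs m,\abs a,\abs n,\abs a'$: the map $\tilde\varepsilon$ contributes a basepoint-change edge (an application of $\varepsilon$), the two occurrences of $\Theta$ are paths, and axiom (B.III) together with the typical-unitor normalizations of Lemma~\ref{lem:assocomps} supply the remaining edges, so the scalar correction is the holonomy of a closed path in the space of topographies $\mathcal{T}(4)$. By Lemma~\ref{lem:simpcon} this space is simply connected, so the holonomy is trivial and the scalar equals $1$; the hypothesis that the unitors are typical is what makes Theorem~\ref{thm:Aetensor} and axiom (B.III) available in the precise form used here.

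I expect the main obstacle to be exactly this bookkeeping step — assembling $\tilde\varepsilon$, the two instances of $\Theta$, axiom (B.III), and the unitor identities into a single closed path in $\mathcal{T}(4)$ and confirming that every edge involved is one of the generators of $\mathcal{T}(4)^2$ (the cocycle relation, $\alpha$-$\varepsilon$ coherence, and distant commutativity), so that no move outside $\mathcal{T}(4)$ is needed. Once that combinatorial identification is in place, simple-connectedness of $\mathcal{T}(4)$ closes the argument, and the remaining checks (grading preservation, and that $\varepsilon_{N,M}$ is a two-sided inverse) are the formal manipulations indicated above.
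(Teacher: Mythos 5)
Your proposal is correct and follows essentially the same route as the paper: rewrite the defining generator of $M\otimes_{A^e}N$ using the $\rho^e$ conventions and axiom (B.III), observe that the resulting scalar discrepancy is the value of a path (equivalently, a closed loop) in the space of topographies on the $4$-partitioned loop $\abs{m},\abs{a},\abs{n},\abs{a'}$, and invoke Lemma \ref{lem:simpcon} to conclude it matches $\Theta(\abs{n},\abs{a'\otimes a},\abs{m})$, with condition (i) of Definition \ref{def:looper} giving that $\varepsilon_{N,M}$ is the inverse. The only difference is cosmetic (holonomy of a closed path versus comparing two paths with common endpoints), so no further comment is needed.
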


\begin{proof}
Note that the defining relation of $M\otimes_{A^e} N$ can be written 
\[
\rho_L(a', \rho_R(m,a)) \otimes n = \Theta(\abs{m}, \abs{a \otimes a'}_{\mathcal{C} \times \mathcal{C}^\op}, \abs{n}) m \otimes \rho_R(\rho_L(a, n), a').
\]
Under $\varepsilon_{M, N}$, this is sent to
\[
\varepsilon(\abs{a} \circ \abs{m} \circ \abs{a'}, \abs{n}) n \otimes \rho_L(a', \rho_R(m, a))
=
\Theta(\abs{m}, \abs{a \otimes a'}_{\mathcal{C} \times \mathcal{C}^\op}, \abs{n})
\varepsilon(\abs{m}, \abs{a'} \circ \abs{n} \circ \abs{a})
\rho_L(a, \rho_R(n,a')) \otimes m
\]
or, equivalently,
\[
n \otimes \rho_R(\rho_L(a', m), a)
=
(\star) \,
\rho_L(a, \rho_R(n, a')) \otimes m
\]
for 
\[
(\star) = \varepsilon(\abs{n}, \abs{a} \circ \abs{m} \circ \abs{a'}) \alpha(\abs{a'}, \abs{m}, \abs{a})
\Theta(\abs{m}, \abs{a \otimes a'}_{\mathcal{C} \times \mathcal{C}^\op}, \abs{n})
\varepsilon(\abs{m}, \abs{a'} \circ \abs{n} \circ \abs{a})
\alpha(\abs{a}, \abs{n}, \abs{a'}).
\]
The value $(\star)$ describes a path from the topography $\abs{m} \circ ((\abs{a'} \circ \abs{n}) \circ \abs{a})$ to $((\abs{a'} \circ \abs{m}) \circ \abs{a}) \circ \abs{n}$. By Lemma \ref{lem:simpcon}, this is equal to $\Theta(\abs{n}, \abs{a' \otimes a}_{\mathcal{C} \times \mathcal{C}^\op}, \abs{m})$, thus $\varepsilon_{M, N}$ is well-defined. Similarly $\varepsilon_{N, M}$ is well-defined, and these maps are inverse to each other as the diagram
\[
\begin{tikzcd}
M \otimes_{A^e} N \arrow[rr, equals] \arrow[dr, "\varepsilon_{M, N}"'] & & M \otimes_{A^e} N
\\
& N \otimes_{A^e} M \arrow[ur, "\varepsilon_{N, M}"']&
\end{tikzcd}
\]
commutes by (i) of Definition \ref{def:looper}.
\end{proof}

Secondly, $\alpha$ extends to yet another type of coherence:

\begin{proposition}
\label{prop:Aeassoc}
Assuming unitors are typical, there is an isomorphism
\[
\alpha_{M, N, L}: (M \otimes_A N) \otimes_{A^e} L \to M \otimes_{A^e} (N \otimes_A L)
\]
given by $(m \otimes n) \otimes \ell \mapsto \alpha(\abs{m}, \abs{n}, \abs{\ell}) \, m \otimes (n \otimes \ell)$.
\end{proposition}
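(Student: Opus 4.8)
The plan is to show that the formula $(m\otimes n)\otimes \ell \mapsto \alpha(\abs{m},\abs{n},\abs{\ell})\, m\otimes(n\otimes\ell)$ descends to a well-defined $\Bbbk$-linear map on the quotients defining the two tensor products, then exhibit an inverse. The source object $(M\otimes_A N)\otimes_{A^e} L$ carries three relations: the $\otimes_A$-relation on $M\otimes_A N$, the $\otimes_A$-relation inside $N\otimes_A L$ on the target side, and the $\otimes_{A^e}$-relation coming from the $A^e=A\otimes_\Bbbk A^\op$-action, whose witness is the function $\Theta$ of Theorem \ref{thm:Aetensor}. Correspondingly I must check that the candidate map sends each generating relation of the source to a generating relation of the target (and vice versa for the inverse). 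As in the proof of Proposition \ref{prop:Aesymmetry}, the substance is a cocycle/topography bookkeeping argument: after applying the map, the scalar discrepancy between the two sides of a relation is a product of $\alpha$'s (and possibly one $\varepsilon$, coming from the $A^e$-relation), and I claim this product describes a path in the space of topographies $\mathcal{T}(4)$ — hence equals the intended value by Lemma \ref{lem:simpcon}.

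First I would treat the ``easy'' relations. For the $\otimes_A$-relation on the $M$–$N$ side, $(\rho_R^M(m,a)\otimes n)\otimes \ell \sim \alpha(\abs{m},\abs{a},\abs{n})\,(m\otimes \rho_L^N(a,n))\otimes\ell$, applying the map to both sides and comparing coefficients reduces to a single instance of the pentagon (3-cocycle) relation for $\alpha$ with entries $\abs{m},\abs{a},\abs{n},\abs{\ell}$; this is exactly the computation that makes the classical associator of $\otimes_A$ well-defined, so it goes through verbatim. The $\otimes_A$-relation appearing on the $N$–$L$ side of the target is likewise a single pentagon in $\abs{m},\abs{n},\abs{a},\abs{\ell}$. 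These two steps require no $\varepsilon$ and are genuinely routine.

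The main obstacle is the $\otimes_{A^e}$-relation. On the source, the relation reads $\rho_R^e(m\otimes n,\, a\otimes a')\otimes \ell \sim \Theta(\abs{m\otimes n},\abs{a\otimes a'}_{\mathcal{C}\times\mathcal{C}^\op},\abs{\ell})\;(m\otimes n)\otimes\rho_L^e(a\otimes a',\ell)$, where $\abs{m\otimes n}=\abs{n}\circ\abs{m}$, and I must unfold $\rho_R^e$ and $\rho_L^e$ in terms of $\rho_L,\rho_R$ (via Lemma \ref{lem:bimod_to_mod}) and push everything through the map. The resulting scalar, call it $(\star)$, is a product of: the source $\Theta$-value (on the loop built from $\abs{m},\abs{n}$ glued, $\abs{a}$, $\abs{\ell}$, $\abs{a'}$), two $\alpha$-factors from the $(m\otimes n)\mapsto \alpha\, m\otimes(n\otimes -)$ rescalings on the two sides, and the various $\alpha$'s hidden in $\rho_L^e,\rho_R^e$. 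I expect $(\star)$ to be precisely the value associated to a path between two topographies on the $5$-partitioned loop $\abs{a'}\circ\abs{\ell}\circ\abs{a}\circ\abs{n}\circ\abs{m}$ — one topography bracketing $\abs{m},\abs{n}$ together first (the source side), the other bracketing $\abs{n},\abs{\ell}$ together first (the target side) — and therefore equals $\Theta(\abs{m},\abs{a\otimes a'}_{\mathcal{C}\times\mathcal{C}^\op},\abs{\ell})$ evaluated after the target-side $\otimes_A$-collapse, i.e. the target relation. To justify ``any path gives the same value'' on a $5$-partitioned loop I would invoke the standard reduction: the space of topographies $\mathcal{T}(5)$ is simply connected because its $2$-cells are all pentagons (cocycle), hexagons ($\alpha$-$\varepsilon$ coherence), and squares (distant commutativity), exactly as in Lemma \ref{lem:simpcon}; one can either cite this as the evident $n=5$ analogue or reduce it to $\mathcal{T}(4)$ by contracting the sub-bracket $\abs{m}\otimes\abs{n}$, which is legitimate since that sub-bracket is untouched by all the moves involved in comparing the source and target sides. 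I would spell out the $\mathcal{T}(4)$-reduction as the clean argument.

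Finally I would record the inverse map $m\otimes(n\otimes\ell)\mapsto \alpha(\abs{m},\abs{n},\abs{\ell})^{-1}\,(m\otimes n)\otimes\ell$, noting it is well-defined by the same analysis run backwards, and that the two composites are the identity because the two scalars cancel. Since the entire verification is ``scalar bookkeeping against a simply connected polytope of topographies,'' once the $\otimes_{A^e}$ case is identified with a path in $\mathcal{T}(4)$ (via the contraction trick) the remaining checks are mechanical; I would leave them to the reader in the same spirit as the surrounding propositions, presenting only the reduction explicitly.
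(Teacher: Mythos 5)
Your overall architecture (check that the defining relations are preserved, then write down the inverse with the inverse scalar) is the same as the paper's, but the one step that actually carries the weight of the proposition is exactly the step you leave to coherence, and neither of your two justifications for it holds up. Your option (a) appeals to simple connectivity of $\mathcal{T}(5)$ as ``the evident $n=5$ analogue'' of Lemma \ref{lem:simpcon}; but the paper only establishes $\mathcal{T}(4)\simeq S^2$, and it does so by an explicit by-hand polyhedral computation (Figure \ref{fig:tensorpf}) --- there is no general-$n$ coherence theorem anywhere in the paper for you to cite, so this would be a new lemma requiring its own proof. Your option (b), the reduction to $\mathcal{T}(4)$ by ``contracting the sub-bracket $\abs{m}\otimes\abs{n}$,'' is justified by the claim that this sub-bracket is untouched by the moves comparing the two sides; that premise is false. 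The whole content of $\alpha_{M,N,L}$ is that the $(m,n)$ bracketing of the source is dissolved and the $(n,\ell)$ bracketing of the target is formed, and in the $\otimes_{A^e}$-relation the algebra element $a$ that acts on the right of $m\otimes n$ (hence on $n$) migrates across precisely that edge pair. If the bracket really were untouched there would be nothing to prove. The paper avoids any appeal to $\mathcal{T}(5)$: it rewrites both sides so that the only $\Theta$-factors occurring are $\Theta_1=\Theta(\abs{n}\circ\abs{m},\abs{a\otimes a'},\abs{\ell})$ and $\Theta_2=\Theta(\abs{m},\abs{1_X\otimes a'},\abs{\ell}\circ\abs{a}\circ\abs{n})$, both on $4$-partitioned loops already covered by Lemma \ref{lem:simpcon}, and then verifies the resulting scalar identity $\Theta_2=A_1^{-1}\,\mathcal{R}\,\Theta_1\,A_2\,\mathcal{L}^{-1}$ directly (the $\varepsilon$'s cancel, one application of (iv) of Lemma \ref{lem:assocomps}, three applications of the $3$-cocycle relation). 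As written, your proposal does not contain this computation or a valid substitute for it, so the central well-definedness claim is a genuine gap.

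A secondary inaccuracy: your ``easy'' relations are not quite the classical pentagon check and are not $\varepsilon$-free. Because the outer tensor is over $A^e$ rather than over $A$, transporting the internal $M$--$N$ relation (and, for the inverse, the internal $N$--$L$ relation) requires the target's $\otimes_{A^e}$-relation with a unit in one slot, i.e.\ a factor $\Theta(\abs{m},\abs{a\otimes 1},\abs{w})$; reducing that factor to $\alpha(\abs{m},\abs{a},\abs{w})$ costs an $\varepsilon(\mathrm{Id}_X,\cdot)$ term eliminated via Lemma \ref{lem:eploops} together with unitor bookkeeping --- this is the computation the paper records separately as the canonical surjection $(M\otimes_A N)^\circ\twoheadrightarrow M\otimes_{A^e}N$, not a verbatim repeat of the classical associativity check. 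So even the parts you call routine need more than a single pentagon, and the parts you call clean need either the paper's explicit $\Theta_1$/$\Theta_2$ computation or a proof of coherence beyond $\mathcal{T}(4)$ that you would have to supply.
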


\begin{proof}
As before, the main difficulty is in proving that this map is well-defined. If so, then the map $m \otimes (n \otimes \ell) \mapsto \alpha(\abs{m}, \abs{n}, \abs{\ell})^{-1} (m \otimes n) \otimes \ell$ is similarly well-defined, and provides an inverse, concluding the proof. To prove that $\alpha_{M, N, L}$ is well-defined, we show that the defining relation
\begin{equation}
\label{eq:proofAeassoc}
\rho_L(a', \rho_R(m\otimes n, a)) \otimes \ell = \underbrace{\Theta(\abs{m \otimes n}_\mathcal{C}, \abs{a \otimes a'}_{\mathcal{C} \times \mathcal{C}^\op}, \abs{\ell}_\mathcal{C})}_{\Theta_1} (m \otimes n) \otimes \rho_R(\rho_L(a, \ell), a')
\end{equation}
(compare with (\ref{eq:Aedefrel})) is preserved. The left-hand side of (\ref{eq:proofAeassoc}) can be rewritten as
\[
\alpha(\abs{m}, \abs{n}, \abs{a}) \alpha(\abs{a'}, \abs{m}, \abs{a} \circ \abs{n})^{-1} \left( \rho_L(a', m) \otimes \rho_R(n, a)\right) \otimes \ell
\]
which is sent by $\alpha_{M, N, L}$ to 
\[
\alpha(\abs{m}, \abs{n}, \abs{a}) \alpha(\abs{a'}, \abs{m}, \abs{a} \circ \abs{n})^{-1} \alpha(\abs{m} \circ \abs{a'}, \abs{a} \circ \abs{n}, \abs{\ell})
\rho_L(a', m) \otimes \left(\rho_R(n, a) \otimes \ell\right).
\]
We rewrite this as
\[
\underbrace{\alpha(\abs{m}, \abs{n}, \abs{a}) \alpha(\abs{a'}, \abs{m}, \abs{a} \circ \abs{n})^{-1} \alpha(\abs{m} \circ \abs{a'}, \abs{a} \circ \abs{n}, \abs{\ell})}_{A_1} \mathcal{R}(\abs{m}, \mathrm{Id}_X)^{-1}
\rho_L(a', \rho_R(m, 1_X)) \otimes \left( \rho_R(n, a) \otimes \ell \right)
\]
fixing $X = \mathrm{codom}(\abs{m}) = \mathrm{dom}(\abs{n})$.

Similarly, the right-hand side of (\ref{eq:proofAeassoc}), omitting the $\Theta$-term, is sent to
\[
\alpha(\abs{m}, \abs{n}, \abs{a'} \circ \abs{\ell} \circ \abs{a})
m \otimes \left(n \otimes \rho_R(\rho_L(a, \ell), a')\right)
\]
by $\alpha_{M, N, L}$. Then, since the terms within the larger parentheses occur in $N \otimes_A L$, we can rewrite this value as
\[
\underbrace{
\alpha(\abs{m}, \abs{n}, \abs{a'} \circ \abs{\ell} \circ \abs{a})
\alpha(\abs{a}, \abs{\ell}, \abs{a'})
\alpha(\abs{n}, \abs{a}, \abs{a'} \circ \abs{\ell})^{-1}
\alpha(\abs{a} \circ \abs{n}, \abs{\ell}, \abs{a'})^{-1}}_{A_2}
m \otimes \rho_R(\rho_R(n,a) \otimes \ell, a')
\]
which we then finally rewrite as 
\[
A_2 \, \mathcal{L}(\mathrm{Id}_X, \abs{\ell} \circ \abs{a} \circ \abs{n})^{-1}
m \otimes \left(\rho_L(1_X, n) \otimes \rho_R(\rho_L(a, \ell), a') \right).
\]

Now, the defining relation of $M \otimes_{A^e} (N \otimes_A L)$ tells us that 
\[
\rho_L(a', \rho_R(m, 1_X)) \otimes \left( \rho_R(n, a) \otimes \ell \right)
=
\underbrace{\Theta(\abs{m}_\mathcal{C}, \abs{1_X \otimes a'}_{\mathcal{C} \times \mathcal{C}^\op}, \abs{\rho_R(n, a)\otimes \ell}_\mathcal{C})}_{\Theta_2}
m \otimes \rho_R(\rho_L(1_X, \rho_R(n, a) \otimes \ell), a').
\]
Thus, the result follows as long as
\[
\Theta_2 = A_1^{-1} \, \mathcal{R}(\abs{m}, \mathrm{Id}_X) \, \Theta_1 \, A_2 \, \mathcal{L}(\mathrm{Id}_X, \abs{\ell} \circ \abs{a} \circ \abs{n})^{-1}.
\]
This is true and follows as a routine, though tedious, computation. After expanding the $\Theta_1$ and $\Theta_2$ terms as the shortest path between relevant topographies in Figure \ref{fig:tensorpf}, the $\varepsilon$ terms cancel immediately, and three terms cancel from one application of (iv) from Lemma \ref{lem:assocomps}. Then, the result follows from three applications of the 3-cocycle relation for $\alpha$; we leave this verification to the reader.
\end{proof}

Propositions \ref{prop:Aesymmetry} and \ref{prop:Aeassoc} immediately compile to say that the following diagram commutes, thanks to (ii) of Definition \ref{def:looper}.
\[
\begin{tikzcd}[column sep=large]
& M\otimes_{A^e} (N \otimes_{A} L) \arrow[r, "\varepsilon_{M, N \otimes_A L}"] & (N \otimes_A L) \otimes_{A^e} M \arrow[dr, "\alpha_{N, L, M}"] & 
\\
(M\otimes_A N) \otimes_{A^e} L \arrow[ur, "\alpha_{M, N, L}"] & & & N \otimes_{A^e} (L \otimes_A M) \arrow[dl, "\varepsilon_{N, L\otimes_A M}"]
\\
& L \otimes_{A^e} (M \otimes_A N) \arrow[ul, "\varepsilon_{L, M \otimes_A N}"] & (L \otimes_A M) \otimes_{A^e} N \arrow[l, "\alpha_{L, M, N}"] &
\end{tikzcd}
\]

As a penultimate observation, we note some relationships between $- \otimes_{A^e} -$ and $(- )^\circ \circ (- \otimes_{A} -)$.

\begin{proposition}
Assume that the unitors of $\mathrm{Mod}^\mathcal{C}$ are typical. If $M$ and $N$ are $\mathcal{C}$-graded $(A, A)$-bimodules, there is a canonical surjection
\[
(M \otimes_A N)^\circ \twoheadrightarrow M \otimes_{A^e} N.
\]
\end{proposition}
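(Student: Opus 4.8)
The plan is to produce the surjection as the unique $\mathrm{Tr}(\mathcal{C})$-graded $\Bbbk$-linear map $\psi\colon (M\otimes_A N)^\circ \to M\otimes_{A^e}N$ that sends the class of a loop-supported simple tensor $m\otimes n$ to its class in $M\otimes_{A^e}N$. Concretely, $\psi$ should be the descent, along the canonical comparison maps $(M\otimes N)^\circ \hookrightarrow M\otimes N \twoheadrightarrow M\otimes_A N$, of the defining quotient map $M\otimes N\twoheadrightarrow M\otimes_{A^e}N$ of (\ref{eq:Aedefrel}). So the whole content is: (a) $M\otimes_{A^e}N$ is supported on loops, so that the composite $(M\otimes N)^\circ \twoheadrightarrow M\otimes_{A^e}N$ makes sense and is onto; and (b) the loop-supported part of the relation defining $M\otimes_A N$ already maps to zero in $M\otimes_{A^e}N$, so that $\psi$ is well-defined.

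For (a) I would argue as follows. Since the defining relation of (\ref{eq:Aedefrel}) is $\mathcal{C}$-homogeneous, the relation submodule $R\subseteq M\otimes N$ is graded, hence $(M\otimes_A N)^\circ = (M\otimes N)^\circ/(\text{loop-supported part of the }\otimes_A\text{-relations})$; the same applies to $M\otimes_{A^e}N$. Now, feeding $a\otimes a' = 1_X\otimes 1_{\mathrm{dom}(\abs{m})}$, with $X=\mathrm{codom}(\abs{m})=\mathrm{dom}(\abs{n})$, into (\ref{eq:Aedefrel}) and using typicality of the unitors together with (A.III), the class of $m\otimes n$ gets identified with a nonzero scalar times a tensor whose second factor is $\rho_R(n,1_{\mathrm{dom}(\abs{m})})$, which vanishes unless $\mathrm{codom}(\abs{n})=\mathrm{dom}(\abs{m})$. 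Hence every nonzero class in $M\otimes_{A^e}N$ is represented by a simple tensor of endomorphism total degree, so $(M\otimes N)^\circ\twoheadrightarrow M\otimes_{A^e}N$ is onto, and (b) reduces to showing that each loop-supported generator $\rho_R^M(m,a)\otimes n-\alpha(\abs{m},\abs{a},\abs{n})\,m\otimes\rho_L^N(a,n)$ dies in $M\otimes_{A^e}N$.

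The remaining verification is the computational heart. Loop-supportedness of the generator means $\abs{n}\circ\abs{a}\circ\abs{m}$ is an endomorphism of $X_1:=\mathrm{dom}(\abs{m})$, which is exactly the length-$4$ loop needed to apply (\ref{eq:Aedefrel}) with $a\otimes a'=a\otimes 1_{X_1}$. Doing so and unwinding the two $A^e$-actions (using typical unitors) turns the left-hand side into $\mathcal{L}(\mathrm{Id}_{X_1},\abs{a}\circ\abs{m})\,\rho_R(m,a)\otimes n$ and the right-hand side into $\Theta(\abs{m},\abs{a}\otimes 1_{X_1},\abs{n})\,\mathcal{R}(\abs{n}\circ\abs{a},\mathrm{Id}_{X_1})\,m\otimes\rho_L(a,n)$, so it remains to prove the scalar identity
\[
\mathcal{L}(\mathrm{Id}_{X_1},\abs{a}\circ\abs{m})^{-1}\,\Theta(\abs{m},\abs{a}\otimes 1_{X_1},\abs{n})\,\mathcal{R}(\abs{n}\circ\abs{a},\mathrm{Id}_{X_1})=\alpha(\abs{m},\abs{a},\abs{n}).
\]
By Lemma \ref{lem:simpcon}, the value $\Theta$ may be computed along any path in $\mathcal{T}(4)$; since here one of the four strands is an identity, I would choose a path that only shuffles that identity strand. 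Then any $\varepsilon$-factor that appears trivializes by Lemma \ref{lem:eploops}, every $\mathrm{Id}$-laden associator collapses via parts (i)--(iv) of Lemma \ref{lem:assocomps}, and — because the unitors are typical, hence themselves $\alpha$-values — the two unitor corrections cancel against the leftover $\mathrm{Id}$-associators, leaving precisely $\alpha(\abs{m},\abs{a},\abs{n})$. This makes $\psi$ well-defined; surjectivity is then immediate from (a), and naturality of $\psi$ in $M$ and $N$ is automatic since all three functors act by the same formula on simple tensors.

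The main obstacle is exactly this scalar computation: one must locate $\Theta(\abs{m},\abs{a}\otimes 1_{X_1},\abs{n})$ among the vertices of Figure \ref{fig:tensorpf} and check that it, together with the unitor factors, telescopes to a single associator — everything else is formal bookkeeping with Lemma \ref{lem:assocomps}. It is worth emphasizing that $\psi$ is generally \emph{not} injective: the identifications produced by (\ref{eq:Aedefrel}) for $a'\neq 1$ (moving an algebra element between the left of $m$ and the right of $n$) are genuinely new relative to those defining $\otimes_A$.
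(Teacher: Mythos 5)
Your proposal follows essentially the same route as the paper: specialize the defining relation (\ref{eq:Aedefrel}) to $a' = 1_X$ with $X = \mathrm{dom}(\abs{m}) = \mathrm{codom}(\abs{n})$, reduce well-definedness to the scalar identity $\Theta(\abs{m}, \abs{a \otimes 1_X}, \abs{n})\,\mathcal{R}(\abs{n}\circ\abs{a}, \mathrm{Id}_X)\,\mathcal{L}(\mathrm{Id}_X, \abs{a}\circ\abs{m})^{-1} = \alpha(\abs{m},\abs{a},\abs{n})$, and verify it by expanding $\Theta$ along a path in $\mathcal{T}(4)$ (Lemma \ref{lem:simpcon}), handling the $\varepsilon$-term via Lemma \ref{lem:eploops} and collapsing the identity-laden associators with Lemma \ref{lem:assocomps}. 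Your extra step (a) on loop-supportedness/surjectivity is a harmless supplement to what the paper treats implicitly through the $\mathrm{Tr}(\mathcal{C})$-grading of Theorem \ref{thm:Aetensor}.
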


\begin{proof}
For $m \otimes n \in (M \otimes_A N)^\circ$, fix $\mathrm{dom}(\abs{m}) = \mathrm{codom}(\abs{n}) = X$. Notice that the identity
\[
\rho_R^e(m, a \otimes a') \otimes n = \Theta(\abs{m}, \abs{a \otimes a'}, \abs{n}) m \otimes \rho_L^e(a \otimes a', n)
\]
can be rewritten in the case that $a' = 1_X$ as 
\[
\mathcal{L}(\mathrm{Id}_X, \abs{a} \circ \abs{m})
 \rho_R(m,a) \otimes n
=
\Theta(\abs{m}, \abs{a \otimes 1_X}_{\mathcal{C} \times \mathcal{C}^\op}, \abs{n}) 
\mathcal{R}(\abs{n} \circ \abs{a}, \mathrm{Id}_X)
m \otimes \rho_L(a,n).
\]
Thus the claim follows as long as 
\[
\Theta(\abs{m}, \abs{a \otimes 1_X}_{\mathcal{C} \times \mathcal{C}^\op}, \abs{n}) 
\mathcal{R}(\abs{n} \circ \abs{a}, \mathrm{Id}_X)
\mathcal{L}(\mathrm{Id}_X, \abs{a} \circ \abs{m})^{-1}
=
\alpha(\abs{m}, \abs{a}, \abs{n}).
\]
To see this, we expand $\Theta(\abs{m}, \abs{a \otimes 1_X}_{\mathcal{C} \times \mathcal{C}^\op}, \abs{n})$ as
\[
\alpha(\mathrm{Id}_X, \abs{a} \circ \abs{m}, \abs{n}) \varepsilon(\mathrm{Id}_X, \abs{n} \circ \abs{a} \circ \abs{m}) \alpha(\abs{m}, \abs{a}, \abs{n}) \alpha(\abs{m}, \abs{n} \circ \abs{a}, \mathrm{Id}_X)
\]
using the shortest path between the relevant vertices in Figure \ref{fig:tensorpf}. Since $\abs{n} \circ \abs{a} \circ \abs{m}$ is a loop by assumption, we can expand the $\varepsilon(\mathrm{Id}_X, \abs{n} \circ \abs{a} \circ \abs{m})$ term via Lemma \ref{lem:eploops}. Then, writing $\mathcal{L}$ and $\mathcal{R}$ in terms of $\alpha$, it is apparent that after one application of each of (ii) and (iii) of Lemma \ref{lem:assocomps} the only surviving term is $\alpha(\abs{m}, \abs{a}, \abs{n})$.
\end{proof}

We also have a more familiar relationship between $-\otimes_{A^e}-$ and the module of coinvaraints.

\begin{proposition}
For $A$ a $\mathcal{C}$-graded algebra and $M$ a $\mathcal{C}$-graded $(A, A)$-bimodule,
\[
M \otimes_{A^e} A \cong M_A^\mathcal{C} = \mathsf{HH}_0(A, M).
\]
\end{proposition}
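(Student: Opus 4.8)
The plan is to build an explicit isomorphism, mimicking the associative case (cf.\ \cite{MR1600246}, §1.1.9) while carrying along the $\alpha$- and $\varepsilon$-corrections. Throughout, interpret $M$ as a right $A^e$-module and $A$ as a left $A^e$-module via the formulas of the four-item Proposition at the end of Subsection \ref{ss:mods_and_bimods}, so that $\rho_R^e(m,a\otimes a') = \rho_L(a',\rho_R(m,a))$ on $M$ and $\rho_L^e(a\otimes a',x) = \rho_R(\rho_L(a,x),a')$ on $A$. First I would define the forward map
\[
\phi\colon M\otimes_{A^e}A \to M_A^\mathcal{C},\qquad \phi(m\otimes a) = [\rho_R(m,a)],
\]
where $[-]$ is the class in $M_A^\mathcal{C} = M/\{\rho_R(m,a)-\varepsilon(\abs{m},\abs{a})\rho_L(a,m)\}$; since $\rho_R(m,a)$ and $\rho_L(a,m)$ agree up to a unit modulo the relator, the choice between them is immaterial. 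To see $\phi$ is well-defined, apply it to the relator (\ref{eq:Aedefrel}): the two sides become $[\rho_R(\rho_L(a',\rho_R(m,a)),x)]$ and $\Theta(\abs{m},\abs{a\otimes a'}_{\mathcal{C}\times\mathcal{C}^\op},\abs{x})\,[\rho_R(m,\rho_R(\rho_L(a,x),a'))]$, and unwinding both through axioms (B.I)--(B.IV) together with one application of the defining relation of $M_A^\mathcal{C}$ (used to cyclically permute the outermost factor $x$) shows they coincide, the leftover scalar being exactly $\Theta$ by Lemma \ref{lem:simpcon}. One also checks that $\phi$ respects the $\mathrm{Tr}(\mathcal{C})$-grading: on the loop-supported summands (the only ones surviving the tensor) $\rho_R(m,a)$ has $\mathcal{C}$-degree $\abs{a}\circ\abs{m}$, whose trace equals that of $\abs{m}\circ\abs{a}$.

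Next I would define the backward map $\psi\colon M_A^\mathcal{C}\to M\otimes_{A^e}A$ on a homogeneous $m$ with $\abs{m}\colon X\to X$ a loop by $\psi([m]) = \lambda(\abs{m})\, m\otimes 1_X$, and by $0$ on summands not indexed by loops, where $\lambda(\abs{m})\in\Bbbk^\times$ is a product of unitor values of $\alpha$ fixed by the normalization below. The real content is that $\psi$ is well-defined, i.e.\ that $\rho_R(m,a)\otimes 1 = \varepsilon(\abs{m},\abs{a})\,\rho_L(a,m)\otimes 1$ in $M\otimes_{A^e}A$ up to the correct scalar: using (\ref{eq:Aedefrel}) with $a'=1$ (resp.\ $a=1$) together with (B.IV), one rewrites $\rho_R(m,a)\otimes 1$ (resp.\ $\rho_L(a,m)\otimes 1$) as a unit times $m\otimes a$, and the two units differ precisely by $\varepsilon(\abs{m},\abs{a})$ after the usual cancellations and one application of $\alpha$-$\varepsilon$ coherence (Definition \ref{def:looper}). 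Since the relator of $M_A^\mathcal{C}$ is supported in loop-graded pieces (either both $\rho_R(m,a)$ and $\rho_L(a,m)$ are loop-graded or neither is), $\psi$ descends to $M_A^\mathcal{C}$.

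Finally I would check the two composites are identities. For $\phi\circ\psi$ we have $\phi(\psi([m])) = \lambda(\abs{m})\,[\rho_R(m,1_X)] = \lambda(\abs{m})\,\mathcal{R}(\abs{m},\mathrm{Id}_X)\,[m]$, and $\lambda$ is chosen so this is $[m]$. For $\psi\circ\phi$, the computation from the previous paragraph shows that every element of $M\otimes_{A^e}A$ is equivalent to a unit multiple of $\rho_R(m,a)\otimes 1_X$ (take $a'=x=1$ in (\ref{eq:Aedefrel})), so it suffices to evaluate $\psi\phi$ there; since $\phi(\rho_R(m,a)\otimes 1_X) = \mathcal{R}(\cdots)\,[\rho_R(m,a)]$ and $\psi$ then returns a unit multiple of $\rho_R(m,a)\otimes 1_X$, matching the scalars recovers the identity. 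The main obstacle is the bookkeeping in the second paragraph --- identifying the kernel of the surjection $m\mapsto m\otimes 1$, which amounts to verifying that all the $\Theta$-, $\alpha$-, $\varepsilon$-, and unitor-corrections compose to exactly $\varepsilon(\abs{m},\abs{a})$; this is where one leans on simple-connectivity of $\mathcal{T}(4)$ (Lemma \ref{lem:simpcon}, see Figure \ref{fig:tensorpf}) instead of a direct computation.

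\textbf{Remark (alternative route).} One may argue homologically instead: $M\otimes_{A^e}-$ is right exact (being a coequalizer), and the augmented bar complex $\mathcal{B}(A)\to A$ is a resolution of $A$ as a $\mathcal{C}$-graded $A^e$-module via the contracting homotopy $a_0\otimes\cdots\otimes a_n\mapsto 1\otimes a_0\otimes\cdots\otimes a_n$ (corrected by $\alpha$); hence $M\otimes_{A^e}A \cong H_0(M\otimes_{A^e}\mathcal{B}(A)) = \mathsf{HH}_0(A,M)$ straight from the definition $\mathsf{HC}_\bullet(A,M) = M\otimes_{A^e}\mathcal{B}(A)$.
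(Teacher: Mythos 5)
Your proposal is correct and follows essentially the same route as the paper's proof: every element of $M\otimes_{A^e}A$ is normalized to a unit multiple of $\rho_R(m,a)\otimes 1_X$ using the defining relation (\ref{eq:Aedefrel}) with $a'=x=1$, and the relator is matched against the coinvariants relation up to a single $\varepsilon$ by invoking path-independence of $\Theta$ (Lemma \ref{lem:simpcon}) together with $\alpha$-$\varepsilon$ coherence and Lemma \ref{lem:eploops}; the paper merely packages this as a normal-form argument rather than a pair of explicit mutually inverse maps. Two small caveats: in your well-definedness check for $\phi$ it is $a'$, not $x$, that gets cycled by the coinvariants relation, and your alternative homological remark additionally requires exactness of the augmented $\mathcal{C}$-graded bar complex (with $\alpha$-corrected contracting homotopy), which the paper does not verify.
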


\begin{proof}
The first step in this proof is similar in spirit to the proof of Proposition \ref{prop:AAmonoidal}; we claim that any element $m \otimes a$ of $M \otimes_{A^e} A$ can be written in the form $\mathcal{R}(\abs{a} \circ \abs{m}, \mathrm{Id}_X)^{-1} \rho_R(m, a) \otimes 1_X$, working with the assumption that gradings involved take the form $\begin{tikzcd} X \arrow[r, bend left, "\abs{m}"] & Y \arrow[l, bend left, "\abs{a}"] \end{tikzcd}$. Indeed, 
\begin{align*}
	m \otimes a &= \left(\mathcal{R}(\abs{a}, \mathrm{Id}_X)^{-1}\right)^{2} m \otimes \rho_R(\rho_L(a, 1_X), 1_X) 
		\\
		&= \Theta(\abs{m}, \abs{a \otimes 1_X}_{\mathcal{C} \times \mathcal{C}^\op}, \mathrm{Id}_X)^{-1}
			\left(\mathcal{R}(\abs{a}, \mathrm{Id}_X)^{-1}\right)^{2}
			\rho_L(1_X, \rho_R(m, a)) \otimes 1_X
		\\
		&= \Theta(\abs{m}, \abs{a \otimes 1_X}_{\mathcal{C} \times \mathcal{C}^\op}, \mathrm{Id}_X)^{-1}
			\left(\mathcal{R}(\abs{a}, \mathrm{Id}_X)^{-1}\right)^{2}
			\mathcal{L}(\mathrm{Id}_X, \abs{a} \circ \abs{m})
			\rho_R(m, a) \otimes 1_X.
\end{align*}
Then, it is an easy exercise to verify using Lemmas \ref{lem:assocomps} and \ref{lem:eploops} that 
\[
\Theta(\abs{m}, \abs{a \otimes 1_X}_{\mathcal{C} \times \mathcal{C}^\op}, \mathrm{Id}_X)^{-1}
			\left(\mathcal{R}(\abs{a}, \mathrm{Id}_X)^{-1}\right)^{2}
			\mathcal{L}(\mathrm{Id}_X, \abs{a} \circ \abs{m})
=
\mathcal{R}(\abs{a} \circ \abs{m}, \mathrm{Id}_X)^{-1}.
\]
Now, let's rewrite the defining relation (\ref{eq:Aedefrel}) of $M\otimes_{A^e} A$ as
\[
\rho_L(a_2, \rho_R(m, a_1)) \otimes a = \Theta(\abs{a}, \abs{a_1 \otimes a_2}_{\mathcal{C} \times \mathcal{C}^\op}, \abs{a}) m \otimes \rho_R(\rho_L(a_1, a), a_2)
\]
using this identification. Omitting the input values for the $\mathcal{R}$ terms (written $\mathcal{R}_1$ and $\mathcal{R}_2$ respectively), the left-hand side can be rewritten as
\[
\mathcal{R}_1^{-1}
\underbrace{\alpha(\abs{a_2}, \abs{a_1} \circ \abs{m}, \abs{a})
\alpha(\abs{m}, \abs{a_1}, \abs{a})}_{(*)}
\rho_L(a_2, \rho_R(m, \mu(a_1, a))) \otimes 1_X
\]
and the right-hand side can be rewritten as
\[
\underbrace{\Theta(\abs{m}, \abs{a_1 \otimes a_2}_{\mathcal{C} \times \mathcal{C}^\op}, \abs{a})}_{(*)}
\mathcal{R}_2^{-1}
\underbrace{\alpha(\abs{m}, \abs{a} \circ \abs{a_1}, \abs{a_2})^{-1}}_{(*)}
\rho_R(\rho_R(m, \mu(a_1, a)), a_2) \otimes 1_Y.
\]
Expanding the $\Theta$ term using the shortest path between relevant topographies in Figure \ref{fig:tensorpf}, we see that the terms marked by $(*)$ cancel up to the value $\varepsilon(\abs{a_2}, \abs{a} \circ \abs{a_1} \circ \abs{m})$. To conclude the proof, use Lemma \ref{lem:tautological} to send values $\mathcal{R}^{-1} \rho_R(m, a) \otimes 1_X$ isomorphically to $\rho_R(m, a)$. This establishes an isomorphism
\[
M \otimes_{A^e} A \cong M / \rho_L(a', m') \sim \varepsilon(\abs{a'}, \abs{m'}) \rho_R(m', a')
\]
taking $a':= a_2$ and $m':= \rho_R(m, \mu(a_1, a_2))$. This is isomorphic to $M_A^\mathcal{C}$ by $\varepsilon_{A, M}$, concluding the proof.
\end{proof}

Finally, we conclude with the observation that $\varepsilon$ endows $\mathsf{HH}_0$ with the structure of a symmetric shadow on $\mathrm{Mod}^{\mathrm{Tr}(\mathcal{C})}$. It is probably also true that $\varepsilon$ provides a more general Hochschild shadow (see, for example, Proposition 3.1.5 of \cite{MR4644963} or Proposition 6.2 of \cite{MR2928988}, or the Dennis-Waldhausen Lemma, p. 319 of \cite{10.1007/BFb0088094}); we omit this deliberation. Recall that a \emph{bicategory} is a category $\mathcal{B}$ enriched in categories together with a composition functor
\[
- \odot - : \mathrm{Hom}_\mathcal{B} (X, Y) \times \mathrm{Hom}_\mathcal{B}(Y, Z) \to \mathrm{Hom}_\mathcal{B}(X, Z)
\]
and natural isomorphisms
\[
\mathfrak{a}: (M \odot N) \odot P \xrightarrow{\cong} M \odot (N \odot P)
\]
\[
\mathfrak{l}: \mathrm{Id}_X \odot M \xrightarrow{\cong} M
\]
\[
\mathfrak{r}: M \odot \mathrm{Id}_Y \xrightarrow{\cong} M
\]
which satisfy the same coherence axioms as those for a monoidal category. For a more precise definition, see Definition 1.2.1 of \cite{MR2056094}. 

\begin{example}
\label{eg:bimc}
Fix grading category $(\mathcal{C}, \alpha)$. Define the category $\mathrm{Bim}^\mathcal{C}$ whose objects are $\mathcal{C}$-graded algebras, and set
\[
\mathrm{Hom}_{\mathrm{Bim}^\mathcal{C}}( A, B) = \mathrm{Bim}(A, B)^\mathcal{C}.
\]
Then, $- \odot - :  \mathrm{Bim}(A, B)^\mathcal{C} \times  \mathrm{Bim}(B, C)^\mathcal{C} \to  \mathrm{Bim}(A, C)^\mathcal{C}$ is given by $ - \otimes_B -$, $\mathfrak{a} = \alpha$, $\mathrm{Id}_A = A$ viewed as an $(A, A)$-bimodule, and $\mathfrak{l}$ and $\mathfrak{r}$ are the (trivial) unitors of Proposition \ref{prop:AAmonoidal}.
\end{example}

\begin{definition}
A \emph{shadow} for a bicategory $\mathcal{B}$ is a category $\mathbf{T}$ and a collection of functors
\[
\left\{ \left \langle \! \left \langle - \right \rangle \! \right \rangle_X: \mathrm{End}_\mathcal{B}(X) \to \mathbf{T} \right\}_{X \in \mathrm{Ob}(\mathcal{B})}
\]
equipped with natural isomorphisms
\[
\theta_{f,g}: \left \langle \! \left \langle  f \odot g\right \rangle \! \right \rangle_X \xrightarrow{\cong} \left \langle \! \left \langle  g \odot f \right \rangle \! \right \rangle_Y
\]
whenever $f \in \mathrm{Ob}(\mathrm{Hom}_\mathcal{B}(X, Y))$ and $g \in \mathrm{Ob}(\mathrm{Hom}_\mathcal{B}(Y, X))$, such that the diagrams
\begin{equation}
\label{eq:shadow1}
\begin{tikzcd}
\left \langle \! \left \langle (f \odot g) \odot h \right \rangle \! \right \rangle_X
\arrow[r, "\left \langle \! \left \langle\mathfrak{a}\right \rangle \! \right \rangle"] 
\arrow[d, "\theta_{f \odot g, h}"']
&
\left \langle \! \left \langle f \odot (g \odot h) \right \rangle \! \right \rangle_X
\arrow[r, "\theta_{f, g \odot h}"]
&
\left \langle \! \left \langle (g \odot h) \odot f \right \rangle \! \right \rangle_Y
\arrow[d, "\left \langle \! \left \langle\mathfrak{a}\right \rangle \! \right \rangle"]
\\
\left \langle \! \left \langle h \odot (f \odot g) \right \rangle \! \right \rangle_Z
\arrow[r, "\left \langle \! \left \langle\mathfrak{a}^{-1}\right \rangle \! \right \rangle"]
&
\left \langle \! \left \langle (h \odot f) \odot g \right \rangle \! \right \rangle_Z
\arrow[r, "\theta_{h \odot f, g}"]
&
\left \langle \! \left \langle g \odot (h \odot f) \right \rangle \! \right \rangle_Y
\end{tikzcd}
\end{equation}
and
\begin{equation}
\label{eq:shadow2}
\begin{tikzcd}
\left \langle \! \left \langle f \odot \mathrm{Id}_X \right \rangle \! \right \rangle_X
\arrow[r, "\theta_{f, \mathrm{Id}_X}"]
\arrow[dr, "\left \langle \! \left \langle\mathfrak{r}\right \rangle \! \right \rangle"']
&
\left \langle \! \left \langle \mathrm{Id}_X \odot f \right \rangle \! \right \rangle_X
\arrow[r, "\theta_{\mathrm{Id}_X, f}"]
\arrow[d, "\left \langle \! \left \langle\mathfrak{l}\right \rangle \! \right \rangle"]
&
\left \langle \! \left \langle f \odot \mathrm{Id}_X \right \rangle \! \right \rangle_X
\arrow[dl, "\left \langle \! \left \langle\mathfrak{r}\right \rangle \! \right \rangle"]
\\
&
\left \langle \! \left \langle f \right \rangle \! \right \rangle_X
&
\end{tikzcd}
\end{equation}
commute whenever they make sense. A shadow is called \emph{symmetric} if $\theta_{f, g}^{-1} = \theta_{g, f}$.
\end{definition}

Note the similarity between the first diagram and the $\alpha$-$\varepsilon$ coherence relation of Definition \ref{def:looper}.

\begin{proposition}
Suppose $(\mathcal{C}, \alpha)$ is a grading category admitting a looper $\varepsilon$, and fix the triple $(\mathcal{C}, \alpha, \varepsilon)$. Then the zeroth Hochschild homology
\[
\left \langle \! \left \langle - \right \rangle \! \right \rangle_A := \mathsf{HH}_0(A, -)
\]
is a symmetric shadow with target category $\mathbf{T} = \mathrm{Mod}^{\mathrm{Tr}(\mathcal{C})}$ on the bicategory $\mathcal{B} = \mathrm{Bim}^\mathcal{C}$.
\end{proposition}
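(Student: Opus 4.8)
The plan is to reduce everything to the coherences already in hand. First, $\langle\!\langle-\rangle\!\rangle_A=\mathsf{HH}_0(A,-)$ is manifestly a functor $\mathrm{End}_{\mathcal{B}}(A)=\mathrm{Bim}(A,A)^{\mathcal{C}}\to\mathrm{Mod}^{\mathrm{Tr}(\mathcal{C})}$: a $\mathcal{C}$-grading preserving map of $(A,A)$-bimodules descends to the quotient defining the module of coinvariants, and the isomorphism $\mathsf{HH}_0(A,M)\cong M\otimes_{A^e}A$ together with Theorem~\ref{thm:Aetensor} shows the target really is $\mathrm{Tr}(\mathcal{C})$-graded. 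So the work is in constructing the natural isomorphism $\theta$ and verifying the two coherence squares. For $f\in\mathrm{Bim}(A,B)^{\mathcal{C}}$ and $g\in\mathrm{Bim}(B,A)^{\mathcal{C}}$, I would first record a presentation of $\mathsf{HH}_0(A,f\otimes_B g)$, for the $(A,A)$-bimodule $f\otimes_B g$, as a quotient of the plain $\Bbbk$-module $f\otimes g$ by the $B$-balancing relation $\rho_R^f(x,b)\otimes y\sim\alpha(\abs{x},\abs{b},\abs{y})\,x\otimes\rho_L^g(b,y)$ and the cyclic $A$-balancing relation $x\otimes\rho_R^g(y,a)\sim(\mathrm{unit})\,\rho_L^f(a,x)\otimes y$, the unit being read off from the shortest path between the relevant topographies exactly as in the proof that $M\otimes_{A^e}A\cong M^{\mathcal{C}}_A$. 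Presenting $\mathsf{HH}_0(B,g\otimes_A f)$ symmetrically as a quotient of $g\otimes f$, I would then define $\theta_{f,g}$ to be the $\varepsilon$-twisted swap $[x\otimes y]\mapsto\varepsilon(\abs{x},\abs{y})\,[y\otimes x]$ --- the same formula as the symmetry $\varepsilon_{M,N}$ of Proposition~\ref{prop:Aesymmetry}, now with distinct base algebras on the two sides.

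The step I expect to be the main obstacle is checking that $\theta_{f,g}$ is well-defined, though this is not a new idea. Under the swap, the $B$-balancing relation on $f\otimes g$ must match the cyclic $B$-balancing relation on $g\otimes f$ up to exactly the coefficient $\varepsilon(\abs{x},\abs{y})$, and likewise the cyclic $A$-balancing relation must match the $A$-balancing relation; each comparison is a statement about the coefficients of two paths in a space of topographies on a length-three loop with mixed base algebras, and it holds because that space is contractible ($\mathcal{T}(3)\simeq\mathbb{D}^2$, as in Theorem~\ref{thm:Aetensor}), so there is at most one such coefficient. The cleanest way to formalize this is to prove the evident two-algebra analogue of Theorem~\ref{thm:Aetensor}; alternatively one can shuffle with the $\alpha$-isomorphism of Proposition~\ref{prop:Aeassoc} so that each comparison becomes an instance of Proposition~\ref{prop:Aesymmetry} itself. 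Given well-definedness, naturality of $\theta$ in $f$ and $g$ is immediate from the formula, and $\theta_{f,g}^{-1}=\theta_{g,f}$ is immediate from condition (i) of Definition~\ref{def:looper}, so the shadow will be symmetric.

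It then remains to verify the two shadow axioms. For (\ref{eq:shadow2}), using $\mathrm{Id}_A=A$ and the trivial unitors $\mathfrak{l},\mathfrak{r}$ of Proposition~\ref{prop:AAmonoidal}, chasing a representative $[x\otimes a]$ through $\theta_{f,\mathrm{Id}_A}$ followed by $\mathsf{HH}_0(\mathfrak{l})$ produces $\varepsilon(\abs{x},\abs{a})[\rho_L(a,x)]$, which equals $[\rho_R(x,a)]=\mathsf{HH}_0(\mathfrak{r})[x\otimes a]$ by the very defining relation of the module of coinvariants; the other triangle is identical, so (\ref{eq:shadow2}) commutes (well-definedness of the $\theta$'s involving $\mathrm{Id}_A$ invokes Lemma~\ref{lem:eploops}). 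For (\ref{eq:shadow1}), take $f\in\mathrm{Bim}(A,B)^{\mathcal{C}}$, $g\in\mathrm{Bim}(B,C)^{\mathcal{C}}$, $h\in\mathrm{Bim}(C,A)^{\mathcal{C}}$, present all the objects appearing as quotients of $f\otimes g\otimes h$ and its cyclic rotations, and chase a representative $[x\otimes y\otimes z]$ around the hexagon: expanding $\langle\!\langle\mathfrak{a}^{\pm1}\rangle\!\rangle$ into $\alpha^{\pm1}(\abs{x},\abs{y},\abs{z})$-type coefficients and each $\theta$ into an $\varepsilon$-coefficient, the composite is multiplication by
\[
\alpha(\abs{x},\abs{y},\abs{z})\,\varepsilon(\abs{x},\abs{z}\circ\abs{y})\,\alpha(\abs{y},\abs{z},\abs{x})\,\varepsilon(\abs{y},\abs{x}\circ\abs{z})\,\alpha(\abs{z},\abs{x},\abs{y})\,\varepsilon(\abs{z},\abs{y}\circ\abs{x}),
\]
which is the left-hand side of the $\alpha$-$\varepsilon$ coherence relation (\ref{eq:a-e_coherence}), hence equals $1$ by Definition~\ref{def:looper}; equivalently, the hexagon commutes because $\mathcal{T}(4)$ in Figure~\ref{fig:tensorpf} is simply connected (Lemma~\ref{lem:simpcon}). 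This is precisely the ``similarity'' between (\ref{eq:shadow1}) and $\alpha$-$\varepsilon$ coherence already flagged after Proposition~\ref{prop:Aeassoc}, now promoted from $A^e$-bimodules to the mixed setting. Assembling these steps yields the claimed symmetric shadow.
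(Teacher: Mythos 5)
Your proposal is correct and follows essentially the same route as the paper: the $\varepsilon$-twisted swap for $\theta_{f,g}$, well-definedness via $\alpha$-$\varepsilon$ coherence (equivalently the simply connected spaces of topographies), symmetry from condition (i) of the looper definition, and the hexagon/triangle axioms reducing respectively to the $\alpha$-$\varepsilon$ coherence relation and the coinvariants relation with the trivial unitors. The only difference is that you spell out some verifications (both balancing relations in the well-definedness check, and the explicit coefficient chases for the two shadow diagrams) that the paper states briefly or leaves to the reader.
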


\begin{proof}
For $M$ (respectively, $N$) a $\mathcal{C}$-graded $(A, B)$ (respectively $(B, A)$)-bimodule, define 
\[
\varepsilon_{M, N}: \left \langle \! \left \langle M \odot N \right \rangle \! \right \rangle_A \to \left \langle \! \left \langle N \odot M \right \rangle \! \right \rangle_B
\]
by $m \otimes n \mapsto \varepsilon(\abs{m}, \abs{n}) n \otimes m$, noting that
\begin{align*}
 \left \langle \! \left \langle M \odot N \right \rangle \! \right \rangle_A &= \mathsf{HH}_0(A, M \otimes_B N) \\ & = M \otimes_B N / \left ( \rho_R(m \otimes n, a) - \varepsilon(\abs{m\otimes n}, \abs{a}) \rho_L(a, m\otimes n)\right )
\end{align*}
We claim that this map is an isomorphism. Again, if this map is well-defined, then it is an isomorphism with inverse $\varepsilon_{M, N}^{-1} = \varepsilon_{N, M}$. This also means that if we verify commutativity of Diagrams (\ref{eq:shadow1}) and (\ref{eq:shadow2}), then the shadow is automatically symmetric. 

Well-definedness of $\varepsilon_{M, N}$ follows from $\alpha$-$\varepsilon$ coherence. By definition, recall that 
\[
\rho_R(m \otimes n, a) - \varepsilon(\abs{n} \circ \abs{m}, \abs{a}) \rho_L(a, m\otimes n)
=
\alpha(\abs{m}, \abs{n}, \abs{a})
m \otimes (n \cdot a)
-
\varepsilon(\abs{n} \circ \abs{m}, \abs{a})
\alpha(\abs{a}, \abs{m}, \abs{n})^{-1}
(a \cdot m) \otimes n.
\]
This is sent to 
\[
\alpha(\abs{m}, \abs{n}, \abs{a})
\varepsilon(\abs{m}, \abs{a} \circ \abs{n})
(n \cdot a) \otimes m
-
\varepsilon(\abs{n} \circ \abs{m}, \abs{a})
\alpha(\abs{a}, \abs{m}, \abs{n})^{-1}
\varepsilon(\abs{m} \circ \abs{a}, \abs{n})
n \otimes (a \cdot m)
\]
by $\varepsilon_{M, N}$, which we can rewrite using the definition of $N \otimes_A M$ as
\[
\alpha(\abs{m}, \abs{n}, \abs{a})
\varepsilon(\abs{m}, \abs{a} \circ \abs{n})
\alpha(\abs{n}, \abs{a}, \abs{m})
n \otimes (a \cdot m)
-
\varepsilon(\abs{n} \circ \abs{m}, \abs{a})
\alpha(\abs{a}, \abs{m}, \abs{n})^{-1}
\varepsilon(\abs{m} \circ \abs{a}, \abs{n})
n \otimes (a \cdot m)
\]
which is equal to zero by $\alpha$-$\varepsilon$ coherence.

In conclusion, Diagram (\ref{eq:shadow1}) commutes exactly by $\alpha$-$\varepsilon$-coherence. Diagram (\ref{eq:shadow2}) is also seen to commute, where $\mathfrak{r}$ and $\mathfrak{l}$ are the unitors of Proposition \ref{prop:AAmonoidal}; we leave this verification to the reader.
\end{proof}

\section{Hochschild cohomology for $\mathcal{C}$-graded algebras and bimodules}
\label{s:cohochschild}

Notice that the left and right actions of Proposition \ref{prop:A-DGA} imply that for each $n\ge 0$, $A^{\otimes n+ 2}$ has the structure of a $\mathcal{C}$-graded $(A,A)$-bimodule. Given a $\mathcal{C}$-graded algebra $A$ and module $M$, we define the $k$th \emph{$\mathcal{C}$-graded Hochschild cohomology} $\mathsf{HH}^k(A; M)$ as the $k$th cohomology group of the cochain complex
\[
\begin{tikzcd}
	0 
	\arrow[r] 
	&
	\mathrm{Hom}_{\mathrm{Bim}(A,A)^\mathcal{C}} (A^{\otimes 2}, M)
	\arrow[r, "- \circ \partial_1"]
	&
	\mathrm{Hom}_{\mathrm{Bim}(A,A)^\mathcal{C}} (A^{\otimes 3}, M)
	\arrow[r, "- \circ \partial_2"]
	&
	\mathrm{Hom}_{\mathrm{Bim}(A,A)^\mathcal{C}} (A^{\otimes 4}, M)
	\arrow[r, "- \circ \partial_3"]
	&
	\cdots
\end{tikzcd}
\]
where $\partial_k$ is the differential of the $\mathcal{C}$-graded Bar complex $\mathcal{B}(A)$. Therefore, Hochschild cohomology is, in a sense, more natural that Hochschild homology for $\mathcal{C}$-graded structures, as it does not depend on the existence and choice of a looper $\varepsilon$. 

In this section, we prove that there is an isomorphism
\[
\mathrm{Hom}_{\mathrm{Bim}(I_\mathcal{C}, I_\mathcal{C})^\mathcal{C}}(A^{\otimes n}, M)
\cong
\mathrm{Hom}_{\mathrm{Bim}(A,A)^\mathcal{C}}(A^{\otimes n+2}, M)
\]
generalizing the usual scenario (recall that $I_\mathcal{C} := \bigoplus_{X \in \mathrm{Ob}(\mathcal{C})} \Bbbk_{\mathrm{Id}_X}$). Since the morphisms of $\mathrm{Bim}(A, A)^\mathcal{C}$ are $\mathcal{C}$-grading preserving, the equivalence is an isomorphism on the level of $\Bbbk$-modules.

Of particular interest is the zeroth cohomology $\mathsf{HH}^0(A; M)$ in the $\mathcal{C}$-graded setting---we hope that it is the center of $A$, but in what sense? In Subsection \ref{ss:ccenter}, we posit a guess and study its relationship with invertible complexes of $\mathcal{C}$-graded $(A,A)$-bimodules, mirroring Khovanov \cite{MR2171235}. In Subsection \ref{ss:reduction}, we prove the above isomorphism, and verify our guess for $\mathsf{HH}^0$, among other elementary computations. Finally, the Hochschild cohomology defined above is the ``$\mathcal{C}$-degree preserving'' part of a more general notion of Hochschild cohomology, which we describe in Subsection \ref{ss:nonhomoHH}; the zeroth Hochschild cohomology therefore provides a natural definition of a larger \emph{derived center} for $\mathcal{C}$-graded algebras.

\subsection{The center of $\mathcal{C}$-graded algebras}
\label{ss:ccenter}

We start this section with a meditation on the ring of endomorphisms of a $\mathcal{C}$-graded algebra viewed as a $\mathcal{C}$-graded $(A,A)$-bimodule. As is classically the case, we give $A$ this structure by declaring $\rho_L(x,a) = \mu(x,a)$ and $\rho_R(a,y) = \mu(a,y)$. It is well known that the endomorphisms of a unital, associative algebra as an $(A, A)$-bimodule are multiplications by elements of the center of $A$. However, in the $\mathcal{C}$-graded context, it is not obvious how one should define the center \textit{a priori}.

Take $f\in \mathrm{Hom}_{\mathrm{Bim}(A,A)^\mathcal{C}}(A,A)$; \textit{i.e.}, $f$ is a $\mathcal{C}$-graded endomorphism of $A$ as an $(A,A)$-bimodule. The classical isomorphism is obtained by sending $f \mapsto f(1)$ for $1$ the unit of $A$. In the $\mathcal{C}$-graded setting, the unit of $A$ is a sum of mutually orthogonal components
\[
1 = \sum_{X \in \mathrm{Ob}(\mathcal{C})} 1_X
\]
where $\{1_X \in A_{\mathrm{Id}_X: X \in \mathrm{Ob}(\mathcal{C})}\}$ are the units of $A$ in the sense of Definition \ref{def:algebras}. Now, it is clear that $f\in \mathrm{Hom}_{\mathrm{Bim}(A,A)^\mathcal{C}}(A,A)$ is determined by $f(1)$: indeed, for any $a\in A$, with $\abs{a}: X \to Y$, 
\[
f(a) = \mathcal{R}(\abs{a}, \mathrm{Id}_Y)^{-1} f(\mu(a, 1_Y)) = \mathcal{R}(\abs{a}, \mathrm{Id}_Y)^{-1} \rho_L(a, f(1))
\]
since $\rho_R = \mu = \rho_L$ and $f$ is a map of bimodules. On the other hand,
\[
f(a) = \mathcal{L}(\mathrm{Id}_X, \abs{a})^{-1} \rho_R(f(1), a).
\]
Thus, we define the \emph{center} of the $\mathcal{C}$-graded algebra $A$ as
\[
Z^\mathcal{C}(A):= \{z\in A: \mu(a,z) = \zeta(\abs{a}) \mu(z,a))~ \text{for each}~a\in A\}
\]
where
\[
\zeta(\abs{a}) := \alpha(\mathrm{Id}_X, \mathrm{Id}_X, \abs{a}) \alpha(\abs{a}, \mathrm{Id}_Y, \mathrm{Id}_Y).
\]
In particular, we must assume always that the unitors are the typical ones induced by $\alpha$.

\begin{remark}
Indeed, when $\abs{a}$ is a loop (say, based at $X$), then we have that $\zeta(\abs{a}) = \varepsilon(\abs{a}, \mathrm{Id}_X)$, thanks to Lemma \ref{lem:eploops}!
\end{remark}

\begin{proposition}
\label{prop:c-gradedcenter}
Assume unitors are typical. If $A$ is a $\mathcal{C}$-graded algebra, $\mathrm{Hom}_{\mathrm{Bim}(A,A)^\mathcal{C}}(A,A)$ is isomorphic to $Z^\mathcal{C}(A)$ as $\Bbbk$-modules.
\end{proposition}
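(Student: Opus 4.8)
The plan is to exhibit mutually inverse $\Bbbk$-linear maps between $\mathrm{Hom}_{\mathrm{Bim}(A,A)^\mathcal{C}}(A,A)$ and $Z^\mathcal{C}(A)$. In one direction, send $f \mapsto f(1)$ where $1 = \sum_{X} 1_X$; in the other, send $z \in Z^\mathcal{C}(A)$ to the map $\lambda_z$ defined by $\lambda_z(a) := \mathcal{R}(\abs{a},\mathrm{Id}_Y)^{-1}\rho_L(a,z)$ for $\abs{a}\colon X\to Y$ (equivalently, using the centrality of $z$, by $\mathcal{L}(\mathrm{Id}_X,\abs{a})^{-1}\rho_R(z,a)$). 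The discussion immediately preceding the statement already shows that any $f\in\mathrm{Hom}_{\mathrm{Bim}(A,A)^\mathcal{C}}(A,A)$ satisfies $f(a) = \mathcal{R}(\abs{a},\mathrm{Id}_Y)^{-1}\rho_L(a,f(1)) = \mathcal{L}(\mathrm{Id}_X,\abs{a})^{-1}\rho_R(f(1),a)$, so the two maps are inverse to each other once we check that each is well-defined, i.e.\ that $f(1)\in Z^\mathcal{C}(A)$ and that $\lambda_z$ is genuinely a morphism of $\mathcal{C}$-graded $(A,A)$-bimodules.

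First I would check that $f(1)\in Z^\mathcal{C}(A)$. Since $f$ preserves the $\mathcal{C}$-grading and each $1_X$ lives in degree $\mathrm{Id}_X$, $f(1_X)$ also lies in $A_{\mathrm{Id}_X}$, so write $z := f(1)$ with homogeneous components supported on identity morphisms. For homogeneous $a$ with $\abs{a}\colon X\to Y$, apply $f$ to $\mu(a,1_Y) = \mathcal{R}(\abs{a},\mathrm{Id}_Y)a$ and to $\mu(1_X,a) = \mathcal{L}(\mathrm{Id}_X,\abs{a})a$, using that $f$ commutes with $\rho_L$ and $\rho_R$ and that $\rho_L=\rho_R=\mu$ on $A$. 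This yields $\mu(a,z_Y) = \mathcal{R}(\abs{a},\mathrm{Id}_Y)^{-1}\mu(a, \mathcal{R}(\abs{a},\mathrm{Id}_Y)\,z) $-type identities; comparing the two expressions for $f(a)$ and substituting the typical-unitor formulas \eqref{eq:typicalL}, \eqref{eq:typicalR} gives $\mu(a,z) = \zeta(\abs{a})\mu(z,a)$ with $\zeta(\abs{a}) = \alpha(\mathrm{Id}_X,\mathrm{Id}_X,\abs{a})\alpha(\abs{a},\mathrm{Id}_Y,\mathrm{Id}_Y)$ exactly as defined. (One must be slightly careful that only the components $z_X, z_Y$ relevant to $\abs{a}$ participate; this is handled by the orthogonality of the $1_X$.)

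The main obstacle, and the bulk of the work, is verifying that $\lambda_z$ is a bimodule map, i.e.\ that $\lambda_z(\mu(x,a)) = \rho_L(x,\lambda_z(a))$ and $\lambda_z(\mu(a,y)) = \rho_R(\lambda_z(a),y)$ for homogeneous $x,a,y\in A$. Expanding both sides using the definition of $\lambda_z$, the associativity axiom (A.II), the unitor formulas, and the centrality relation $\mu(a,z)=\zeta(\abs{a})\mu(z,a)$, one reduces each identity to an equality of products of $\alpha$-values over a fixed diagram of composable morphisms (with several identity loops inserted). These then collapse by repeated application of parts (i)--(iv) of Lemma~\ref{lem:assocomps} together with the cocycle condition — this is the same flavor of $\alpha$-bookkeeping carried out in the proof of Proposition~\ref{prop:bimod=mod}, and I would mimic that style: isolate the ``$(*)$'' terms, cancel them via the relevant part of Lemma~\ref{lem:assocomps}, and check that the surviving factor is $1$. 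Also one should confirm $\lambda_z$ preserves the $\mathcal{C}$-grading, which is immediate since $\abs{\lambda_z(a)} = \abs{z}\circ\abs{a} = \abs{a}$ because $\abs{z}$ is an identity morphism. Finally, $\Bbbk$-linearity of both assignments is clear, and the computation at the start of the subsection shows $f\mapsto f(1)\mapsto \lambda_{f(1)} = f$ and $z\mapsto \lambda_z \mapsto \lambda_z(1) = z$ (using $\rho_L(1_X,z) = \mathcal{L}(\mathrm{Id}_X,\mathrm{Id}_X)z = z$ by Lemma~\ref{lem:assocomps}(v)), completing the isomorphism.
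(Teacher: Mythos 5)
Your proposal is correct and follows essentially the same route as the paper's own proof: evaluation at the unit $f \mapsto f(1)$, with inverse given by $z \mapsto \bigl(a \mapsto \mathcal{R}(\abs{a},\mathrm{Id}_Y)^{-1}\mu(a,z)\bigr)$, centrality of $f(1)$ extracted by comparing $f(\mu(a,1_Y))$ with $f(\mu(1_X,a))$, and the bimodule property of the candidate inverse verified by the same $\alpha$-cancellations from Lemma \ref{lem:assocomps} under the typical-unitor assumption. No further comment is needed.
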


Note that since the maps are $\mathcal{C}$-grading preserving, the graded structure of $\mathrm{Hom}_{\mathrm{Bim}(A,A)^\mathcal{C}}(A,A)$ is trivial: each element has non-homogeneous $\mathcal{C}$-grading given by $\prod_{X\in \mathcal{C}} \mathrm{Id}_X$. Implicit in the definition is that $Z^\mathcal{C}(A)$ bears the same, trivial $\mathcal{C}$-grading---it is for this reason that Proposition \ref{prop:c-gradedcenter} is stated on the level of $\Bbbk$-modules. Moreover, it makes sense to talk about the group
 \[
 Z_*^\mathcal{C}(A):= \{z\in Z^\mathcal{C}(A):\, \text{there exists}~z' \in Z^\mathcal{C}(A)~\text{such that}~ \mu(z, z
') = 1\}
 \]
 of invertible elements of $Z^\mathcal{C}(A)$ (which we can define without reference to a looper). Therefore, Proposition \ref{prop:c-gradedcenter} has the following immediate corollary.
  
 \begin{corollary}
 The group of automorphisms of $A$ as a $\mathcal{C}$-graded $(A,A)$-bimodule is isomorphic to $Z_*^\mathcal{C}(A)$.
 \end{corollary}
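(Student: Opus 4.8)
The corollary follows almost immediately from Proposition \ref{prop:c-gradedcenter}, so the plan is to track how the isomorphism $\mathrm{Hom}_{\mathrm{Bim}(A,A)^\mathcal{C}}(A,A) \cong Z^\mathcal{C}(A)$ behaves under composition and then restrict to the invertible elements on each side. The key observation is that the isomorphism of Proposition \ref{prop:c-gradedcenter} is not merely a $\Bbbk$-module isomorphism but is multiplicative in the appropriate sense: if $f \mapsto f(1)$ and $g \mapsto g(1)$, then I claim $(f \circ g)(1) = \mu(f(1), g(1))$ (up to a unit coming from the unitors, which one must check carefully, but which will turn out to be trivial since $\abs{f(1)}$ and $\abs{g(1)}$ are supported on identity morphisms $\mathrm{Id}_X$). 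Indeed, since $g$ is a map of $(A,A)$-bimodules and $f(1) = \sum_X f(1_X)$ with $f(1_X) \in A_{\mathrm{Id}_X}$, one computes $(f\circ g)(1) = f(g(1)) = f(\rho_L(g(1), 1)) $ — here I would use that $g(1)$ acts on each component and that $f$ respects the left action — landing on $\mu(f(1), g(1))$ after using axiom (A.III) together with Lemma \ref{lem:assocomps}(i) and the fact that $\mathcal{L}, \mathcal{R}$ are trivial on elements graded by idempotent loops (as noted after the typical-unitor discussion in Section \ref{s:algebra}).

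Given multiplicativity, the argument is formal. First I would record that $\mathrm{Hom}_{\mathrm{Bim}(A,A)^\mathcal{C}}(A,A)$ is a (unital, associative) ring under composition, with unit $\mathrm{Id}_A$, and that under the isomorphism $\mathrm{Id}_A \mapsto \mathrm{Id}_A(1) = 1$. Then $Z^\mathcal{C}(A)$ — equipped with the multiplication $\mu$ restricted to it — is a ring with unit $1$, and the map $f \mapsto f(1)$ is a ring isomorphism by Proposition \ref{prop:c-gradedcenter} together with the multiplicativity just established. (One should note $Z^\mathcal{C}(A)$ is closed under $\mu$: this also follows from transporting the ring structure, or can be checked directly from the centrality condition $\mu(a,z) = \zeta(\abs{a})\mu(z,a)$ using associativity and distant commutativity of $\alpha$.) A ring isomorphism restricts to an isomorphism of groups of units. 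The group of units of the endomorphism ring is exactly $\mathrm{Aut}_{\mathrm{Bim}(A,A)^\mathcal{C}}(A)$ — an endomorphism is invertible in the ring iff it has a two-sided inverse, iff it is an automorphism of $\mathcal{C}$-graded bimodules. The group of units of $(Z^\mathcal{C}(A), \mu)$ is exactly $Z^\mathcal{C}_*(A)$ by its definition. Hence $\mathrm{Aut}_{\mathrm{Bim}(A,A)^\mathcal{C}}(A) \cong Z^\mathcal{C}_*(A)$.

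The main obstacle — and really the only place that requires care — is verifying the multiplicativity $(f\circ g)(1) = \mu(f(1), g(1))$ with no stray unit, i.e. confirming that all the $\mathcal{L}, \mathcal{R}, \alpha$ correction terms that appear in the computation (of the kind seen throughout Section \ref{s:algebra}) collapse to $1$ because every grading in sight is an identity morphism $\mathrm{Id}_X$. This is exactly the regime where Lemma \ref{lem:assocomps}(i) ($\alpha(\mathrm{Id}_X, g, \mathrm{Id}_Y) = 1$), part (v) ($\alpha(f,f,f)=1$ for idempotent $f$), and the remarks on typical unitors guarantee triviality, so it is routine; I would state it as a short lemma and leave the bookkeeping to the reader, as is done elsewhere in the paper. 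Everything downstream — ``ring iso restricts to unit-group iso'' — is standard and needs no computation.
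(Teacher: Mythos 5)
Your proposal is correct and follows essentially the same route as the paper: the multiplicativity $\eta(g\circ f)=\mu(\eta(g),\eta(f))$ that you plan to isolate as a lemma is already established verbatim inside the paper's proof of Proposition \ref{prop:c-gradedcenter} (using that all gradings involved lie in $\prod_X \mathrm{Id}_X$, so the unitor/associator corrections are trivial), after which the paper likewise treats the corollary as immediate by restricting the resulting multiplicative bijection to invertible elements on both sides.
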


\begin{proof}[Proof of Proposition \ref{prop:c-gradedcenter}]
Define $\eta: \mathrm{Hom}_{\mathrm{Bim}(A,A)^\mathcal{C}}(A,A) \to Z^\mathcal{C}(A)$ by $\eta(f) := f(1)$ for $1 = \sum_{X \in \mathrm{Ob}(\mathcal{C})} 1_X \in A$, where $\{1_X\}_{X \in \mathrm{Ob}(\mathcal{C})}$ are the units of $A$. Since the domain is the set of bimodule endomorphisms of $A$,
\[
\eta(g\circ f) = (g\circ f)(1) = g(f(1)) = g(\rho_R(1, f(1)) = \mu(g(1), f(1)) = \mu(\eta(g), \eta(f)).
\]
Note that the third equality follows since gradings are supported entirely in $\prod_{X\in \mathrm{Ob}(\mathcal{C})} \mathrm{Id}_X$. By a similar argument, $\eta(f)$ is a central element of the $\mathcal{C}$-graded algebra $A$, as for any $a\in A$ with homogeneous grading $\abs{a}: X \to Y$,
\[
\mu(a, \eta(f)) = \rho_L(a, f(1)) = f(\mu(a, 1)) = \mathcal{R}(\abs{a}, \mathrm{Id}_Y) f(a) = \mathcal{R}(\abs{a}, \mathrm{Id}_Y) \mathcal{L}(\mathrm{Id}_X, \abs{a})^{-1} f(\mu(1,a))
\]
which is equal to $\zeta(\abs{a}) \mu(\eta(f), a)$. For bijectivity, on one hand, it is apparent that $\ker(\eta) = \{\mathrm{Id}_A\}$. On the other hand, for $z\in Z^\mathcal{C}(A)$, there is $f\in \mathrm{Hom}_{\mathrm{Bim}(A,A)^\mathcal{C}}(A,A)$ for which $\eta(f) = f(1) = z$---indeed, $f$ is the map
\[
f(a) = \mathcal{R}(\abs{a}, \mathrm{Id}_Y)^{-1} \mu(a, z).
\]
It remains to check that $f$ respects the bimodule action. Indeed,
\[
f(\rho_L(x,a)) = \mathcal{R}(\abs{a} \circ \abs{x}, \mathrm{Id}_Y)^{-1} \mu(\mu(x,a), z) =  \mathcal{R}(\abs{a} \circ \abs{x}, \mathrm{Id}_Y)^{-1} \alpha(\abs{x}, \abs{a}, \mathrm{Id}_Y) \mu(x, \mu(a, z))
\]
is equal to $\rho_L(x, f(a)) = \mathcal{R}(\abs{a}, \mathrm{Id}_Y)^{-1} \mu(x, \mu(a,z))$, appealing to (iii) of Lemma \ref{lem:assocomps}, assuming that unitors are typically chosen. The other check is similar, but slightly more interesting: we compute
\[
f(\rho_R(a,y)) = \mathcal{R}(\abs{y} \circ \abs{a}, \mathrm{Id}_Z)^{-1} \mu(\mu(a,y),z)
	= \mathcal{R}(\abs{y} \circ \abs{a}, \mathrm{Id}_Z)^{-1} \zeta(\abs{y} \circ \abs{a}) \mu(z, \mu(a,y))
\]
on one hand, which simplifies to $\alpha(\mathrm{Id}_X, \mathrm{Id}_X, \abs{y} \circ \abs{a})$ assuming typical unitors. On the other,
\[
\rho_R(f(a), y) = \mathcal{R}(\abs{a}, \mathrm{Id}_Y)^{-1} \mu(\mu(a,z), y) 
	= \mathcal{R}(\abs{a}, \mathrm{Id}_Y)^{-1} \zeta(\abs{a}) \mu(\mu(z,a), y)
\]
which simplifies to $\alpha(\mathrm{Id}_X, \mathrm{Id}_X, \abs{a})$, again assuming typical unitors are chosen. Then, equivalence is granted by (ii) of Lemma \ref{lem:assocomps}.
\end{proof}

Let $\mathcal{K}^\mathcal{C}(A)$ denote the category whose objects are bounded complexes of $\mathcal{C}$-graded $(A, A)$-bimodules with grading preserving differential, and whose morphisms are grading-preserving chain maps up to chain-homotopy. Later on, we will include a brief description of the theory of $\mathcal{C}$-grading shifts (for more on this theory, refer to \cite{naisse2020odd} or \cite{spyropoulos2024}), but we will only need to consider grading-preserving differentials and chain-maps at the current time.

We say that a complex of $\mathcal{C}$-graded $(A, A)$-bimodules $M\in \mathrm{Ob}(\mathcal{K}^\mathcal{C}(A))$ is \emph{invertible} if there exists $N \in \mathrm{Ob}(\mathcal{K}^\mathcal{C}(A))$ such that $N \otimes_A M \simeq A$ and $M \otimes_A N \simeq A$, for $A$ the complex $\begin{tikzcd}[column sep = small] 0 \arrow[r] & A \arrow[r] & 0\end{tikzcd}$. Thus, it is immediate that
\[
\mathrm{Hom}_{\mathcal{K}^\mathcal{C}(A)}(A, A) \cong \mathrm{Hom}_{\mathrm{Bim}(A,A)^\mathcal{C}}(A,A).
\]
Moreover, we have the following, as in \cite{MR2171235}.

\begin{proposition}
\label{prop:invcomplex}
If $M$ is invertible, then
\[
\mathrm{Hom}_{\mathcal{K}^\mathcal{C}(A)}(M, M) \cong  \mathrm{Hom}_{\mathrm{Bim}(A,A)^\mathcal{C}}(A,A)
\]
as $\Bbbk$-modules. Consequently, the group of automorphisms of $M$ in $\mathcal{K}^\mathcal{C}(A)$ is isomorphic to $Z_*^\mathcal{C}(A)$.
\end{proposition}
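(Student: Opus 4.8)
The plan is to adapt Khovanov's argument from Proposition 3 of \cite{MR2171235} to the $\mathcal{C}$-graded setting, leveraging invertibility of $M$ to transport endomorphisms of $M$ to endomorphisms of $A$. Since $M$ is invertible, fix $N \in \mathrm{Ob}(\mathcal{K}^\mathcal{C}(A))$ with chosen homotopy equivalences $N \otimes_A M \simeq A$ and $M \otimes_A N \simeq A$. First I would define a map $\mathrm{Hom}_{\mathcal{K}^\mathcal{C}(A)}(M, M) \to \mathrm{Hom}_{\mathcal{K}^\mathcal{C}(A)}(A, A)$ by sending a morphism $f: M \to M$ to the composite
\[
\begin{tikzcd}[column sep = large]
A \arrow[r, "\simeq"] & N \otimes_A M \arrow[r, "\mathbbm{1}_N \otimes_A f"] & N \otimes_A M \arrow[r, "\simeq"] & A,
\end{tikzcd}
\]
using the fixed homotopy equivalence in both directions. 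I would then define a candidate inverse $\mathrm{Hom}_{\mathcal{K}^\mathcal{C}(A)}(A, A) \to \mathrm{Hom}_{\mathcal{K}^\mathcal{C}(A)}(M, M)$ by sending $g: A \to A$ to the composite $M \simeq A \otimes_A M \xrightarrow{g \otimes_A \mathbbm{1}_M} A \otimes_A M \simeq M$, where now the left unitor isomorphism of Proposition \ref{prop:AAmonoidal} is used (this is where typicality of the unitors enters). Finally I would chain these together with the already-noted isomorphism $\mathrm{Hom}_{\mathcal{K}^\mathcal{C}(A)}(A, A) \cong \mathrm{Hom}_{\mathrm{Bim}(A,A)^\mathcal{C}}(A,A)$.

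The key steps, in order, are: (1) check that the two maps above are well-defined $\Bbbk$-linear maps — this is routine since $- \otimes_A M$ and $N \otimes_A -$ are functors on $\mathcal{K}^\mathcal{C}(A)$ and homotopy equivalences are preserved under tensoring; (2) check that the two composites $\mathrm{Hom}_{\mathcal{K}^\mathcal{C}(A)}(M,M) \to \mathrm{Hom}_{\mathcal{K}^\mathcal{C}(A)}(A,A) \to \mathrm{Hom}_{\mathcal{K}^\mathcal{C}(A)}(M,M)$ and its reverse are the identity; (3) deduce the consequence for automorphisms. For step (2) in the $M \to M$ direction, the composite sends $f$ to a morphism that, after the identifications $M \simeq A \otimes_A M \simeq (N \otimes_A M) \otimes_A M$ (using associativity $\alpha_{N,M,M}$ of Example \ref{eg:bimc}) and the triangle/unit coherences, is homotopic to $f$ itself; this uses only that $\mathcal{K}^\mathcal{C}(A)$ is the homotopy category of a genuine bicategory (Example \ref{eg:bimc}), so that the coherence axioms for $\mathfrak{a}, \mathfrak{l}, \mathfrak{r}$ hold on the nose up to the chosen isomorphisms. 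The reverse composite is symmetric, swapping the roles of $M$ and $N$ and using the other homotopy equivalence.

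The main obstacle will be step (2): making precise that the two-sided inverse property of $N$ interacts correctly with the associator $\alpha$ and the (typical) unitors, so that the round-trip composite is genuinely homotopic — not merely isomorphic up to some uncontrolled unit of $\Bbbk$ — to the identity. In the classical associative case this is transparent; here one must track that the isomorphism $A \simeq N \otimes_A M$, once tensored and re-associated, produces exactly the scalar $1$ rather than some $\alpha$-value, which is precisely what Proposition \ref{prop:AAmonoidal} (trivial unitors, coherence isomorphism $\alpha$) is designed to guarantee. I expect this to go through cleanly by invoking the bicategory structure of $\mathrm{Bim}^\mathcal{C}$ abstractly rather than computing with elements, so the verification reduces to the standard fact that in any bicategory an invertible $1$-cell induces an isomorphism of endomorphism monoids of source and target. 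The consequence for automorphisms is then immediate: a $\Bbbk$-module isomorphism carrying $\mathbbm{1}_M$ to $\mathbbm{1}_A$ restricts to an isomorphism of the groups of units on each side, and the group of units on the $A$-side is $Z_*^\mathcal{C}(A)$ by the Corollary following Proposition \ref{prop:c-gradedcenter}.
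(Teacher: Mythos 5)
Your overall strategy (transporting endomorphisms across the invertible object, as in Khovanov) is the same as the paper's, but your key step (2) has a genuine gap: the claim that the two explicit composites are mutually inverse ``by coherence'' does not hold as stated. Track the round trip $\mathrm{End}(M)\to\mathrm{End}(A)\to\mathrm{End}(M)$ through your definitions: writing $\theta := \alpha\circ(u^{-1}\otimes\mathbbm{1}_M)\circ\mathfrak{l}^{-1}\colon M\to N\otimes_A(M\otimes_A M)$, naturality of the unitor, the interchange law, and naturality of $\alpha$ give $\theta\circ f=(\mathbbm{1}_N\otimes(\mathbbm{1}_M\otimes f))\circ\theta$, whereas the round trip equals $\theta^{-1}\circ(\mathbbm{1}_N\otimes(f\otimes\mathbbm{1}_M))\circ\theta$. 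So recovering $f$ requires knowing that $f\otimes\mathbbm{1}_M$ and $\mathbbm{1}_M\otimes f$ agree after this conjugation; that is not a consequence of the coherence axioms for $\mathfrak{a},\mathfrak{l},\mathfrak{r}$ (it fails for general $M$), and for invertible $M$ it is essentially equivalent to the statement you are trying to prove. Mixing left-tensoring ($\mathbbm{1}_N\otimes f$) in one direction with right-tensoring ($g\otimes\mathbbm{1}_M$) in the other also means your two maps are not formally dual, so neither composite reduces to a triangle-identity computation.

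There are two honest ways to close the gap. The paper's route (following Khovanov) proves the two properties separately: the map $\phi(f)=f\otimes\mathrm{id}_N$ is injective because it admits a one-sided inverse built from $-\otimes_A M$, the associator, and $N\otimes_A M\simeq A$; surjectivity is then \emph{not} formal, but uses Proposition \ref{prop:c-gradedcenter}: every endomorphism of $M\otimes_A N\simeq A$ is multiplication by an element of $Z^{\mathcal{C}}(A)$, all the $\mathrm{End}$-modules in sight are $Z^{\mathcal{C}}(A)$-modules, and $\mathrm{End}_{\mathcal{K}^{\mathcal{C}}(A)}(M\otimes_A N)$ is generated by $\phi(\mathrm{id}_M)=\mathrm{id}_M\otimes\mathrm{id}_N$, so $\phi$ is onto. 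Alternatively, you could make your abstract appeal precise: show that $-\otimes_A M$ is an equivalence of $\mathcal{K}^{\mathcal{C}}(A)$ with quasi-inverse $-\otimes_A N$ (the natural isomorphisms being assembled from $\alpha$, the unitors of Proposition \ref{prop:AAmonoidal}, and the fixed homotopy equivalences), and then invoke fully faithfulness of an equivalence to get $\mathrm{End}(A)\cong\mathrm{End}(A\otimes_A M)\cong\mathrm{End}(M)$. Either repair works, but as written your step (2) asserts a coherence identity that does not exist; also note that for the final statement about automorphism groups you should record that your isomorphism sends $\mathbbm{1}_M$ to $\mathbbm{1}_A$ and is multiplicative, which is immediate for $\phi$ but should be said.
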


\begin{proof}
We claim that the map $\phi: \mathrm{End}_{\mathcal{K}^\mathcal{C}(A)}(M) \to \mathrm{End}_{\mathcal{K}^\mathcal{C}(A)}(M \otimes_A N) \cong \mathrm{End}_{\mathcal{K}^\mathcal{C}(A)} (A) \cong  \mathrm{End}_{\mathrm{Bim}(A,A)^\mathcal{C}}(A)$ given by $f \mapsto f \otimes \mathrm{id}_N$ is an isomorphism. It is easy to see that $\phi$ is injective, as it has a left-inverse given by the composite
\[
\mathrm{End}_{\mathcal{K}^\mathcal{C}(A)}(M \otimes_A N) 
\to 
\mathrm{End}_{\mathcal{K}^\mathcal{C}(A)}((M \otimes_A N) \otimes_A M)
\xrightarrow{\alpha}
\mathrm{End}_{\mathcal{K}^\mathcal{C}(A)}(M \otimes_A (N \otimes_A M))
\]
where the first arrow is the map $g \mapsto g \otimes \mathrm{id}_M$ (note that $\mathrm{End}_{\mathcal{K}^\mathcal{C}(A)}(M \otimes_A (N \otimes_A M))
\cong
\mathrm{End}_{\mathcal{K}^\mathcal{C}(A)} (M \otimes_A A)
\cong
\mathrm{End}_{\mathcal{K}^\mathcal{C}(A)}(M)$). To see that $\phi$ is surjective, notice that multiplication on the left by central elements gives each of the $\Bbbk$-modules above the structure of a $Z^\mathcal{C}(A)$-bimodule. Appealing to Proposition \ref{prop:c-gradedcenter}, $\mathrm{End}_{\mathcal{K}^\mathcal{C}(A)}(M \otimes_A N) \cong Z^\mathcal{C}(A)$ is generated by $\phi(\mathrm{id}_M) = \mathrm{id}_M \otimes \mathrm{id}_N$, thus $\phi$ is an isomorphism.
\end{proof}

To conclude our study of the center of $\mathcal{C}$-graded algebras, we note that the usual relationship with $\mathsf{HH}_\bullet(A, M)$ exists. Define an action of $Z^\mathcal{C}(A)$ on $C_n(A, M)$ by
\begin{equation}
\label{eq:centeracts}
z \cdot (m \otimes a_1 \otimes \cdots \otimes a_n) := \alpha(\mathrm{Id}_X, \mathrm{Id}_X, \abs{m}) \, \rho_L(z, m) \otimes a_1 \otimes \cdots \otimes a_n
\end{equation}
where $X = \mathrm{dom}(\abs{m}) = \mathrm{codom}(\abs{a_n})$.

\begin{proposition}
The Hochschild homology of a $\mathcal{C}$-graded algebra $\mathsf{HH}_\bullet(A, M)$ is a module over $Z^\mathcal{C}(A)$ by the action (\ref{eq:centeracts}).
\end{proposition}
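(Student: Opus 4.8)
The plan is to verify directly that formula (\ref{eq:centeracts}) defines a chain map, so that it descends to homology. First I would record the basic compatibility: for a central element $z$ with $\abs{z} = \mathrm{Id}_X$ (note $Z^\mathcal{C}(A)$ carries only the trivial $\mathcal{C}$-grading, so each homogeneous $z$ lives in $A_{\mathrm{Id}_X}$ for a single $X$, and $z \cdot -$ acts only on the summand of $C_n(A,M)$ with $\mathrm{dom}(\abs{m}) = X$), the key input is the defining relation $\mu(a,z) = \zeta(\abs{a})\mu(z,a)$ together with the associativity axioms (B.I)--(B.III) for $M$ as a $\mathcal{C}$-graded bimodule and the cocycle identities of Lemma \ref{lem:assocomps}. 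The first thing to check is that the action is well-defined on $C_n(A,M) = M \otimes A^{\otimes n}$ as a quotient — i.e., that it is compatible with the $\mathrm{Tr}(\mathcal{C})$-grading and with the tensor relations over $\Bbbk$ — but this is immediate since $\rho_L(z,-)$ preserves $\mathcal{C}$-degree (as $\abs{z}$ is an identity) and the scalar $\alpha(\mathrm{Id}_X,\mathrm{Id}_X,\abs{m})$ depends only on $\abs{m}$.

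The main step is to show $b_n \circ (z \cdot -) = (z\cdot -) \circ b_n$, where $b_n = \sum_{i=0}^n (-1)^i d_i$ as in Subsection \ref{ss:hochcomplex}. I would check this term-by-term. For the ``inner'' faces $d_i$ with $1 \le i \le n-1$, commutation is essentially formal: $\rho_L(z,-)$ only touches the $M$-slot while $d_i$ only multiplies $a_i$ with $a_{i+1}$, and the $\alpha$-coefficient in $d_i$ has $\abs{z} = \mathrm{Id}_X$ prepended to its first argument, which is trivial by part (i) of Lemma \ref{lem:assocomps}; so these terms match once the extra $\alpha(\mathrm{Id}_X,\mathrm{Id}_X,\abs{m})$ factor is carried along (and note applying $d_i$ does not change $\abs{m}$). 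For $d_0$, which sends $m \otimes a_1 \otimes \cdots$ to $\rho_R(m,a_1)\otimes a_2 \otimes \cdots$, I must compare $\rho_R(\rho_L(z,m),a_1)$ (up to $\alpha(\mathrm{Id}_X,\mathrm{Id}_X,\abs{m})$) with $\rho_L(z,\rho_R(m,a_1))$ (up to $\alpha(\mathrm{Id}_X,\mathrm{Id}_X,\abs{\rho_R(m,a_1)}) = \alpha(\mathrm{Id}_X,\mathrm{Id}_X,\abs{a_1}\circ\abs{m})$); this follows from axiom (B.III) for $M$ applied to $z$, $m$, $a_1$ — with $\abs{z}=\mathrm{Id}_X$ the coefficient $\alpha(\mathrm{Id}_X,\abs{m},\abs{a_1})$ is again trivial — together with part (ii) of Lemma \ref{lem:assocomps} to reconcile the two powers of $\alpha(\mathrm{Id}_X,\mathrm{Id}_X,-)$.

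The only genuinely delicate term is $d_n$, which sends $m\otimes a_1 \otimes \cdots \otimes a_n$ to $\Omega(\abs{m},\abs{a_1},\ldots,\abs{a_n})\,\rho_L(a_n,m)\otimes a_1 \otimes \cdots \otimes a_{n-1}$; here I must commute $\rho_L(z,-)$ past $\rho_L(a_n,-)$, which requires axiom (B.I) for $M$, i.e., $\rho_L(\mu(a_n,z),m) = \alpha(\abs{a_n},\mathrm{Id}_X,\abs{m})\rho_L(a_n,\rho_L(z,m))$, and then the centrality relation $\mu(a_n,z) = \zeta(\abs{a_n})\mu(z,a_n)$, and finally (B.I) again in the other order. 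I expect this $d_n$-term to be the main obstacle: one has to track the looper $\varepsilon$ appearing in $\Omega$ and check that the $\zeta$-factor — which by the Remark after the definition of $Z^\mathcal{C}(A)$ equals $\varepsilon(\abs{a_n},\mathrm{Id}_X)$ when $\abs{a_n}\circ\cdots\circ\abs{a_1}\circ\abs{m}$ is a loop, precisely the case in which $d_n$ is nonzero — is exactly absorbed by the comparison of $\Omega$-values before and after applying $\rho_L(z,-)$ (the $\mathcal{C}$-degree of $m$ changes from $\abs{m}$ to $\abs{m}$ again since $\abs{z}=\mathrm{Id}_X$, but the loop-basepoint bookkeeping in $\Omega$ shifts), using Lemma \ref{lem:eploops} and parts (ii)--(iv) of Lemma \ref{lem:assocomps}. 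Once $b_n$ commutes with the action, it descends to $\mathsf{HH}_\bullet(A,M)$, and the fact that $z \mapsto (z\cdot -)$ is a ring homomorphism $Z^\mathcal{C}(A) \to \mathrm{End}(\mathsf{HC}_\bullet(A,M))$ follows from axiom (B.I) for $M$ applied to two central elements together with part (ii) of Lemma \ref{lem:assocomps} to handle the composition of the two $\alpha(\mathrm{Id}_X,\mathrm{Id}_X,-)$ prefactors, completing the proof. (This is stated as a proposition left to the reader in the paper, so I would present the $d_n$-verification in full and dispatch the rest with the remark that they are routine given Lemma \ref{lem:assocomps}.)
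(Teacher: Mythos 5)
Your overall strategy coincides with the paper's: show the action (\ref{eq:centeracts}) commutes with each face map $d_i$ of the complex $(C_n(A,M),b_n)$, so that it is a chain endomorphism descending to $\mathsf{HH}_\bullet(A,M)$, with the inner faces trivial and $d_0$, $d_n$ handled by (B.III), (B.I), centrality, and Lemma \ref{lem:assocomps}. Two of your anticipated coefficient bookkeepings, however, are off. For $d_0$: the factor $\alpha(\mathrm{Id}_X,\abs{m},\abs{a_1})$ produced by (B.III) with $\abs{z}=\mathrm{Id}_X$ is \emph{not} trivial in general (part (i) of Lemma \ref{lem:assocomps} only kills $\alpha(\mathrm{Id}_X,g,\mathrm{Id}_Y)$); rather it is exactly the term which, via part (ii), combines with the prefactor $\alpha(\mathrm{Id}_X,\mathrm{Id}_X,\abs{m})$ to give $\alpha(\mathrm{Id}_X,\mathrm{Id}_X,\abs{a_1}\circ\abs{m})$, the prefactor on the other side of the comparison. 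Similarly, for the inner faces the correct (and simpler) reason they commute is just that $\abs{\rho_L(z,m)}=\abs{m}\circ\mathrm{Id}_X=\abs{m}$, so the $d_i$-coefficients are literally unchanged; part (i) plays no role.

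The more substantive misprediction concerns $d_n$. Because $\abs{\rho_L(z,m)}=\abs{m}$, the coefficient $\Omega(\abs{m},\abs{a_1},\ldots,\abs{a_n})$ is identical before and after applying the action, so the looper $\varepsilon$ never enters the verification: there is no basepoint shift to track and Lemma \ref{lem:eploops} is not needed. (Your identification $\zeta(\abs{a_n})=\varepsilon(\abs{a_n},\mathrm{Id}_X)$ also misapplies the Remark, which requires $\abs{a_n}$ itself to be a loop; here only the full word $\abs{a_n}\circ\cdots\circ\abs{a_1}\circ\abs{m}$ is one.) In the paper's computation the $\zeta(\abs{a_n})$ coming from centrality is absorbed entirely by associator factors: writing $\abs{a_n}:W\to X$, part (iv) gives $\alpha(\abs{a_n},\mathrm{Id}_X,\abs{m})^{-1}\,\alpha(\mathrm{Id}_X,\mathrm{Id}_X,\abs{m})\,\alpha(\abs{a_n},\mathrm{Id}_X,\mathrm{Id}_X)=1$, cancelling the (B.I) factor against the prefactor and one half of $\zeta(\abs{a_n})$, and the remaining $\alpha(\mathrm{Id}_W,\mathrm{Id}_W,\abs{a_n})\,\alpha(\mathrm{Id}_W,\abs{a_n},\abs{m})=\alpha(\mathrm{Id}_W,\mathrm{Id}_W,\abs{m}\circ\abs{a_n})$ by part (ii), matching the prefactor on the $z\cdot d_n$ side. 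With these corrections your plan goes through as in the paper; your closing point about also verifying multiplicativity of $z\mapsto z\cdot(-)$ is a reasonable addition that the paper leaves implicit.
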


\begin{proof}
We just have to show that the action (\ref{eq:centeracts}) of $Z^\mathcal{C}(A)$ on $C_n(A, M)$ commutes with the differential, so that the action is an endomorphism of the complex. Writing the differential of $\mathsf{HC}_\bullet$ as $b_n = \sum_{i=0}^n (-1)^i d_i$, it suffices to check that $z \cdot d_i(m \otimes a_1 \otimes \cdots \otimes a_n) = d_i(z \cdot m \otimes a_1 \otimes \cdots \otimes a_n)$ for each $i=0, \ldots, n$. For $i=0$,
\[
z \cdot d_0(m \otimes a_1 \otimes \cdots \otimes a_n) = \alpha(\mathrm{Id}_X, \mathrm{Id}_X, \abs{a_1} \circ \abs{m}) \, \rho_L(z, \rho_R(m, a_1)) \otimes a_2 \otimes \cdots \otimes a_n
\]
and
\begin{align*}
d_0(z \cdot m \otimes a_1 \otimes \cdots \otimes a_n) &= \alpha(\mathrm{Id}_X, \mathrm{Id}_X, \abs{m}) \, \rho_R(\rho_L(z, m), a_1) \otimes a_2 \otimes \cdots \otimes a_n
\\
&= \alpha(\mathrm{Id}_X, \mathrm{Id}_X, \abs{m}) \, 
\alpha(\mathrm{Id}_X, \abs{m}, \abs{a_1}) \, 
\rho_L(z, \rho_R(m, a_1)) \otimes a_2 \otimes \cdots \otimes a_n.
\end{align*}
Thus, equality follows from (ii) of Lemma \ref{lem:assocomps}. For $i=1, \ldots, n-1$ the result follows trivially. The only interesting case is when $i=n$. We omit the writing of the $\Omega(\abs{m}, \abs{a_1}, \ldots, \abs{a_n})$ term because, obviously,
\[
\Omega(\abs{m}, \abs{a_1}, \ldots, \abs{a_n})
=
\Omega(\abs{m} \circ \mathrm{Id}_X, \abs{a_1}, \ldots, \abs{a_n}).
\]
Let $W = \mathrm{dom}(\abs{a_n})$; we compute
\[
z \cdot d_n(m \otimes a_1 \otimes \cdot \otimes a_n) = \alpha(\mathrm{Id}_W, \mathrm{Id}_W, \abs{m} \circ \abs{a_n}) 
\rho_L(z, \rho_L(a_n, m)) \otimes a_1 \otimes \cdots \otimes a_{n-1}
\]
and
\begin{align*}
d_n(z \cdot m \otimes a_1 \otimes \cdots \otimes a_n) &= \alpha(\mathrm{Id}_X, \mathrm{Id}_X, \abs{m}) \rho_L(a_n, \rho_L(z, m)) a_1 \otimes \cdots \otimes a_{n-1}
\\
&= \alpha(\mathrm{Id}_X, \mathrm{Id}_X, \abs{m}) \,
\alpha(\abs{a_n}, \mathrm{Id}_X, \abs{m})^{-1} \, 
\rho_L(\mu(a_n, z), m) a_1 \otimes \cdots \otimes a_{n-1}
\\
&= \alpha(\mathrm{Id}_X, \mathrm{Id}_X, \abs{m}) \, 
\alpha(\abs{a_n}, \mathrm{Id}_X, \abs{m})^{-1} \, 
\zeta(\abs{a_n}) \, 
\rho_L(\mu(z, a_n), m) a_1 \otimes \cdots \otimes a_{n-1}
\\
&= \alpha(\mathrm{Id}_X, \mathrm{Id}_X, \abs{m}) \, 
\alpha(\abs{a_n}, \mathrm{Id}_X, \abs{m})^{-1} \, 
\zeta(\abs{a_n}) \, 
\alpha(\mathrm{Id}_W, \abs{a_n}, \abs{m}) \, 
\\ & \phantom{{}={}}
\rho_L(z, \rho_L(a_n, m)) a_1 \otimes \cdots \otimes a_{n-1}.
\end{align*}
Using (iv) of Lemma \ref{lem:assocomps}, $\alpha(\abs{a_n}, \mathrm{Id}_X, \abs{m})^{-1}$ cancels with $\alpha(\mathrm{Id}_X, \mathrm{Id}_X, \abs{m})$ and one part of $\zeta(\abs{a_n})$. The remaining terms are
\[
\alpha(\mathrm{Id}_W, \mathrm{Id}_W, \abs{a_n}) \alpha(\mathrm{Id}_W, \abs{a_n}, \abs{m}) = \alpha(\mathrm{Id}_W, \mathrm{Id}_W, \abs{m} \circ \abs{a_n})
\]
concluding the proof.
\end{proof}

\subsection{Reduction of the Hochschild cocomplex}
\label{ss:reduction}

In this subsection, we prove the following theorem, and note its consequences.

\begin{theorem}
\label{thm:cohored}
Suppose $A$ is a $\mathcal{C}$-graded algebra and $M$ is a $\mathcal{C}$-graded $(A,A)$-bimodule. Then
\[
\mathrm{Hom}_{\mathrm{Bim}(I_\mathcal{C}, I_\mathcal{C})^\mathcal{C}}(A^{\otimes n}, M)
\cong
\mathrm{Hom}_{\mathrm{Bim}(A,A)^\mathcal{C}}(A^{\otimes n+2}, M).
\]
\end{theorem}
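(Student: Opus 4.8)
The strategy is to realize $A^{\otimes n+2} = A \otimes A^{\otimes n} \otimes A$, with the bar-complex actions of Proposition~\ref{prop:A-DGA}, as the \emph{free} $\mathcal{C}$-graded $(A,A)$-bimodule on the $\mathcal{C}$-graded $\Bbbk$-module $A^{\otimes n}$, so that the claimed isomorphism becomes the $\mathcal{C}$-graded avatar of the classical free--forgetful adjunction; here the forgetful functor is restriction of scalars along the unit map $I_{\mathcal{C}} \to A$ of Lemma~\ref{lem:tautological}, and $\mathrm{Bim}(I_{\mathcal{C}},I_{\mathcal{C}})^{\mathcal{C}}$ is identified with $\mathrm{Mod}^{\mathcal{C}}$, so that its morphisms are just grading-preserving $\Bbbk$-linear maps. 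Throughout I assume, as elsewhere in this part of the paper, that the unitors of $\mathrm{Mod}^{\mathcal{C}}$ are typical. First I would define the restriction map $\mathrm{res}$, sending $f \in \mathrm{Hom}_{\mathrm{Bim}(A,A)^{\mathcal{C}}}(A^{\otimes n+2}, M)$ to the map $a_1 \otimes \cdots \otimes a_n \mapsto f(1_{X_0} \otimes a_1 \otimes \cdots \otimes a_n \otimes 1_{X_n})$, where $X_0 \to X_1 \to \cdots \to X_n$ is the composable path carrying the homogeneous summand under consideration (all non-composable tensors being zero). This is manifestly grading-preserving and $\Bbbk$-linear, hence a morphism in $\mathrm{Bim}(I_{\mathcal{C}},I_{\mathcal{C}})^{\mathcal{C}}$.

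For the inverse I would send $g \in \mathrm{Hom}_{\mathrm{Bim}(I_{\mathcal{C}},I_{\mathcal{C}})^{\mathcal{C}}}(A^{\otimes n}, M)$ to the map $\widetilde{g}$ defined on homogeneous elements by
\[
\widetilde{g}(a_0 \otimes a_1 \otimes \cdots \otimes a_{n+1}) := \Xi(\abs{a_0},\ldots,\abs{a_{n+1}})\,\rho_L^M\bigl(a_0,\ \rho_R^M(g(a_1 \otimes \cdots \otimes a_n),\, a_{n+1})\bigr),
\]
where $\Xi$ is the $\Phi$/$\Psi$-style $\alpha$-correction: the unique product of $\alpha$-values and unitors characterised by
\[
\rho_L^{\mathcal{B}(A)}\bigl(a_0,\ \rho_R^{\mathcal{B}(A)}(1_{X_0}\otimes a_1 \otimes \cdots \otimes a_n \otimes 1_{X_n},\, a_{n+1})\bigr) = \Xi(\abs{a_0},\ldots,\abs{a_{n+1}})^{-1}\,a_0 \otimes a_1 \otimes \cdots \otimes a_{n+1},
\]
equivalently, the value attached to any reassociation path between the corresponding vertices of the associahedron as in the remark following Proposition~\ref{prop:A-DGA}. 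That $\widetilde{g}$ preserves the $\mathcal{C}$-grading is immediate from grading-preservation of $g$ and axioms (A.I), (B.I)--(B.III). The one genuine verification is that $\widetilde{g}$ is a map of $\mathcal{C}$-graded $(A,A)$-bimodules, i.e.\ $\widetilde{g}(\rho_L^{\mathcal{B}(A)}(a,\xi)) = \rho_L^M(a,\widetilde{g}(\xi))$ and its right-handed analogue; unwinding the definitions, this is a matching of $\Phi$- and $\Psi$-products against the bimodule axioms (B.I)--(B.III) of $M$ and the $3$-cocycle relation, carried out exactly as for (B.I), (B.III), and (DG.II) in the proof of Proposition~\ref{prop:A-DGA}, together with distant commutativity of $\alpha$.

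Finally I would check that $\mathrm{res}$ and $g \mapsto \widetilde{g}$ are mutually inverse. The composite $g \mapsto \mathrm{res}(\widetilde{g})$ is ``rigged'': by construction $\mathrm{res}(\widetilde{g})(a_1 \otimes \cdots \otimes a_n) = \Xi(\mathrm{Id}_{X_0},\abs{a_1},\ldots,\abs{a_n},\mathrm{Id}_{X_n})\,\rho_L^M(1_{X_0},\rho_R^M(g(a_1\otimes\cdots\otimes a_n),1_{X_n}))$, and after expanding $\Xi$ at identity morphisms (which telescopes via Lemma~\ref{lem:assocomps} to $\alpha(\mathrm{Id}_{X_0},\mathrm{Id}_{X_0},\ell)\alpha(\ell,\mathrm{Id}_{X_n},\mathrm{Id}_{X_n})^{-1}$, with $\ell = \abs{a_n}\circ\cdots\circ\abs{a_1}$) and the two unitors of $M$ (which contribute the reciprocal factor, again by typicality and Lemma~\ref{lem:assocomps}), everything cancels and one recovers $g$. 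Conversely, any $f \in \mathrm{Hom}_{\mathrm{Bim}(A,A)^{\mathcal{C}}}(A^{\otimes n+2}, M)$ satisfies $f(a_0 \otimes \cdots \otimes a_{n+1}) = \Xi(\abs{a_0},\ldots,\abs{a_{n+1}})\,\rho_L^M(a_0,\rho_R^M(f(1_{X_0}\otimes a_1 \cdots a_n \otimes 1_{X_n}),a_{n+1}))$, obtained by applying $f$ to the characterising identity for $\Xi$ above and using bimodule-linearity of $f$; hence $\widetilde{\mathrm{res}(f)} = f$. I expect the bimodule-linearity check for $\widetilde{g}$ in the second paragraph to be the main obstacle, but it is of precisely the same flavour as the several $\alpha$-bookkeeping arguments already completed in Section~\ref{s:algebra}, so no new idea is needed; the only real care goes into pinning down $\Xi$, for which the associahedron picture is the organizing device.
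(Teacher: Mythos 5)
Your proposal is correct in outline and is, at heart, the paper's own argument for Theorem \ref{thm:cohored}: the paper likewise defines an evaluation-at-units map $\varphi_n$ and an extension map $\psi_n(f)(a_0\otimes\cdots\otimes a_{n+1})=\prod_{i=1}^{n-1}\alpha(\abs{a_0},\abs{a_i}\circ\cdots\circ\abs{a_1},\abs{a_{i+1}})\,\rho_R(\rho_L(a_0,f(a_1\otimes\cdots\otimes a_n)),a_{n+1})$, proves in Lemma \ref{lem:psiact} that $\psi_n(f)$ is a bimodule map (an iterated pentagon computation), and then checks the two composites. The differences are organizational but worth recording. First, your normalization: expanding your implicitly defined $\Xi$ through the $\Phi$, $\Psi$ and unit axioms of Proposition \ref{prop:A-DGA} and (iv) of Lemma \ref{lem:assocomps}, one finds $\Xi=\mathcal{L}(\mathrm{Id}_{X_0},\abs{a_1})^{-1}\mathcal{R}(\ell,\mathrm{Id}_{X_n})^{-1}\prod_{j=1}^{n}\alpha(\abs{a_0},\abs{a_j}\circ\cdots\circ\abs{a_1},\abs{a_{j+1}})$ with $\ell=\abs{a_n}\circ\cdots\circ\abs{a_1}$, so after commuting $\rho_L$ and $\rho_R$ via (B.III) your $\widetilde{g}$ agrees with the paper's $\psi_n(g)$ up to a scalar depending only on the middle gradings; since the bar actions never alter the middle entries, your deferred bimodule-linearity check is exactly Lemma \ref{lem:psiact} (same pentagon iteration, not shorter in your normalization) -- this is the one substantive verification you have flagged but not carried out, and it is genuinely of the advertised flavour, so the sketch goes through. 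Second, your ``apply $f$ to the characterizing identity'' step buys a real simplification: it yields $\widetilde{\mathrm{res}(f)}=f$ purely from bimodule-linearity of $f$, whereas the paper verifies the corresponding composite $\psi(\varphi(g))=g$ by a longer explicit cancellation; conversely your check $\mathrm{res}(\widetilde{g})=g$ reduces, as you say, to the unitor telescoping, and your claimed value of $\Xi$ at identity morphisms is correct. Two small points to tidy: the $n=0$ case (middle factor $I_\mathcal{C}$, treated separately in the paper) should be stated, and $\Xi$ should be fixed by its explicit $\alpha$-expression on summands whose gradings do not compose, where the characterizing identity reads $0=0$ (harmless, since both sides of every equation vanish there).
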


Consider the following maps.
\begin{itemize}
	\item For $n=0$ and $\xi \in \mathrm{Hom}_{\mathrm{Bim}(I_\mathcal{C}, I_\mathcal{C})^\mathcal{C}}(I_\mathcal{C}, M)$,
	\[
	\psi_0(\xi)(a \otimes b) :=
	\mathcal{R}(\abs{a}, \mathrm{Id}_{\mathrm{codom}(\abs{a})})^{-1}
	\rho_R(\rho_L(a, \xi(1)), b)
	\]
	where $1 = \sum_{x \in \mathrm{Ob}(\mathcal{C})} 1_X$ for $1_X \in \Bbbk_{\mathrm{Id}_X} \subset I_\mathcal{C}$. Note that $\xi$ is determined by $\xi(1) = \sum_{x\in \mathrm{Ob}(\mathcal{C})} \xi(1_X)$.
	\item For $n\ge 1$ and $f\in \mathrm{Hom}_{\mathrm{Bim}(I_\mathcal{C}, I_\mathcal{C})^\mathcal{C}}(A^{\otimes n}, M)$,
	\[
	\psi(f)(a_0 \otimes a_1 \otimes \cdots \otimes a_n \otimes a_{n+1}) := 
	\prod_{i=1}^{n-1} \alpha(\abs{a_0}, \abs{a_i} \circ \cdots \circ \abs{a_1}, \abs{a_{i+1}})
	\rho_R(\rho_L(a_0, f(a_1 \otimes \ldots \otimes a_n)), a_{n+1}).
	\]
\end{itemize}
Note that the action maps $\rho_L$ and $\rho_R$ are the actions of $A$ on the $(A, A)$-bimodule $M$.

\begin{lemma}
\label{lem:psiact}
For each $n\ge 0$, $\psi_n(f) \in \mathrm{Hom}_{\mathrm{Bim}(A, A)^\mathcal{C}} (A^{\otimes n+2}, M)$.
\end{lemma}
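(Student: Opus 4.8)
The plan is to verify, for each $n\ge 0$, that $\psi_n(f)$ is (i) $\mathcal{C}$-grading preserving and (ii) a morphism of $(A,A)$-bimodules, i.e.\ that it intertwines the left and right $A$-actions on $A^{\otimes n+2}$ (given by $\rho_L, \rho_R$ of Proposition \ref{prop:A-DGA}) with the actions on $M$. Grading preservation is immediate: the maps $f$, $\rho_L$, and $\rho_R$ on the right-hand side are all $\mathcal{C}$-graded, so the $\mathcal{C}$-degree of $\psi_n(f)(a_0\otimes\cdots\otimes a_{n+1})$ is $\abs{a_{n+1}}\circ\cdots\circ\abs{a_0}$, matching the degree of its input; the scalar factors $\alpha(\cdots)$ and $\mathcal{R}(\cdots)^{-1}$ do not affect degree. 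So the real content is the bimodule-compatibility.

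First I would handle the left action. Recall that the left $A$-action on $\mathcal{B}(A)$ (hence on $A^{\otimes n+2}$) is $\rho_L(a, a_0\otimes\cdots\otimes a_{n+1}) = \Phi(\abs{a},\abs{a_0},\ldots,\abs{a_{n+1}})\,\mu_A(a,a_0)\otimes a_1\otimes\cdots\otimes a_{n+1}$, where $\Phi = \prod_{i=1}^{n+1}\alpha(\abs{a},\abs{a_{i-1}}\circ\cdots\circ\abs{a_0},\abs{a_i})^{-1}$. The task is to show $\psi_n(f)(\rho_L(a, a_0\otimes\cdots\otimes a_{n+1})) = \rho_L^M(a, \psi_n(f)(a_0\otimes\cdots\otimes a_{n+1}))$. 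Expanding the left side, the $\mu_A(a,a_0)$ gets fed into the head slot of $\psi_n(f)$, producing $\rho_L^M(\mu_A(a,a_0), f(\cdots))$; by axiom (B.I) for the bimodule $M$ this equals $\alpha(\abs{a},\abs{a_0},\abs{f(\cdots)})\,\rho_L^M(a,\rho_L^M(a_0, f(\cdots)))$, which is the shape we want. The bookkeeping is then to check that the accumulated scalars — the $\Phi$ factor from $\rho_L$, the product-of-$\alpha$ factor in the definition of $\psi_n$ applied with head $\mu_A(a,a_0)$, the single $\alpha$ from (B.I), and the $\mathcal{R}^{-1}$ correction factors (whose arguments change when $\abs{a_0}$ is replaced by $\abs{a_0}\circ\abs{a}$) — collapse to $\Phi(\abs{a},\ldots)$ times the scalar built into $\psi_n(f)(a_0\otimes\cdots\otimes a_{n+1})$. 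This should follow by repeated application of the cocycle relation $d\alpha=1$ together with parts (ii)--(iv) of Lemma \ref{lem:assocomps}, exactly in the style of the proof of Proposition \ref{prop:A-DGA}; indeed the $\Phi$-hunting there and the product-of-$\alpha$ in $\psi_n$ were both chosen to make this telescoping work. The right action is much easier: the right $A$-action on $\mathcal{B}(A)$ is $\rho_R(a_0\otimes\cdots\otimes a_{n+1}, a) = \Psi(\ldots)\,a_0\otimes\cdots\otimes\mu_A(a_{n+1},a)$ with $\Psi$ a single $\alpha$-term, so the compatibility with $\rho_R^M$ reduces to one application of (B.II) for $M$ and one use of the cocycle relation (together with, for $n=0$, the defining relation of $\psi_0$ which already has an $\mathcal{R}^{-1}$ built in to absorb the unit). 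The $n=0$ case should be checked separately but is genuinely shorter.

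The main obstacle I anticipate is the left-action telescoping for general $n$: keeping track of the index shifts in the product $\prod_{i=1}^{n-1}\alpha(\abs{a_0},\abs{a_i}\circ\cdots\circ\abs{a_1},\abs{a_{i+1}})$ when $\abs{a_0}$ is prepended with $\abs{a}$, and matching these against the $\Phi$-factors, is exactly the kind of inductive cancellation that is conceptually routine but error-prone. I would organize it as an induction on $n$ (or, more cleanly, as an iterated application of the cocycle relation peeling off one tensor factor at a time, mirroring the ``iterate this process'' arguments in Lemma \ref{lem:barcomplex} and the proof of Proposition \ref{prop:A-DGA}), and I expect the paper defers the detailed computation to the reader, since the $\Phi$ and $\Psi$ and the $\psi_n$-scalars were all reverse-engineered from associahedron paths precisely so that this works out.
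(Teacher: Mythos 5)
Your outline is essentially the paper's proof: the whole content is compatibility with the two actions, where for $n\ge 1$ the right-action check is immediate from (B.II) (the $\Psi$-term was rigged for it), the left-action check expands both sides using (B.I) \emph{and} (B.III) on $M$ (the latter to move $\rho_L(a,-)$ past $\rho_R(-,a_{n+1})$) and reduces to a product of associators equal to $1$ by exactly the iterated pentagon argument you anticipate (which the paper carries out explicitly rather than deferring), and $n=0$ is treated separately via Lemma \ref{lem:assocomps}. Two small corrections to your scalar ledger: for $n\ge 1$ the definition of $\psi_n$ contains no $\mathcal{R}^{-1}$ factor (those occur only in $\psi_0$), and you must include the associator produced by the (B.III) step, which cancels against the $i=n+1$ term of the $\Phi$-product coming from $\rho_L$ on $A^{\otimes n+2}$ — as listed, your bookkeeping would not close.
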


\begin{proof}
It suffices to show that $\psi_n(f)$ preserves actions. We have to check the $n=0$ and $n\ge 1$ cases separately.

The $n=0$ case is understandably easier. We compute
\begin{align*}
	\psi_0(\xi)(x \cdot a \otimes b) & =
		\alpha(\abs{x}, \abs{a}, \abs{b})^{-1} \psi_0(\xi) (\mu(x, a) \otimes b)
		\\&=
		\alpha(\abs{x}, \abs{a}, \abs{b})^{-1} 
		\mathcal{R}(\abs{a} \circ \abs{x}, \mathrm{Id}_{\mathrm{codom}(\abs{a})})^{-1}
		(\mu(x, a) \cdot \xi(1)) \cdot b.
\end{align*}
On the other hand,
\begin{align*}
	x \cdot \psi_0(\xi)(a \otimes b) &=
		\mathcal{R}(\abs{a}, \mathrm{Id}_{\mathrm{codom}(\abs{a})})^{-1}
		 x \cdot ((a \cdot \xi(1)) \cdot b)
		\\&=
		\mathcal{R}(\abs{a}, \mathrm{Id}_{\mathrm{codom}(\abs{a})})^{-1}
		\alpha(\abs{x}, \abs{a}, \abs{b})^{-1}
		(x \cdot (a \cdot \xi(1))) \cdot b
		\\&=
		\mathcal{R}(\abs{a}, \mathrm{Id}_{\mathrm{codom}(\abs{a})})^{-1}
		\alpha(\abs{x}, \abs{a}, \abs{b})^{-1}
		\alpha(\abs{x}, \abs{a}, \mathrm{Id}_{\mathrm{codom}(\abs{a})})^{-1}
		(\mu(x,a) \cdot \xi(1)) \cdot b.
\end{align*}
Equivalence follows since
\[
\mathcal{R}(\abs{a} \circ \abs{x}, \mathrm{Id}_{\mathrm{codom}(\abs{a})})
=
\mathcal{R}(\abs{a}, \mathrm{Id}_{\mathrm{codom}(\abs{a})})
\alpha(\abs{x}, \abs{a}, \mathrm{Id}_{\mathrm{codom}(\abs{a})})
\]
by (iii) of Lemma \ref{lem:assocomps}. We leave it to the reader to verify that $\psi_0(\xi)(a\otimes b \cdot y) = \psi_0(\xi)(a \otimes b) \cdot y$ follows by definition.

For the $n\ge 1$ case, it is also immediate that 
\[
\psi_n(f)(a_0 \otimes \cdots \otimes a_{n+1} \cdot a) = \psi(f)(a_0 \otimes \cdots \otimes a_{n+1}) \cdot a.
\]
The check for the right action is tedious and requires iterative applications of the pentagon relation. We compute
\begin{align*}
	\psi_n(f)(\rho_L(a, a_0 \otimes a_1 \otimes \cdots \otimes a_{n+1})) &=
		\prod_{i=1}^{n+1} \alpha(\abs{a}, \abs{a_i-1} \circ \cdots \circ \abs{a_0}, \abs{a_i})^{-1}
		\psi_n(f)(\mu(a, a_0) \otimes a_1 \otimes \cdots \otimes a_{n+1})
		\\ & =
		\prod_{i=1}^{n+1} \alpha(\abs{a}, \abs{a_{i-1}} \circ \cdots \circ \abs{a_0}, \abs{a_i})^{-1}
			\\ & \phantom{{}={}}
		\prod_{i=1}^{n-1} \alpha(\abs{a} \circ \abs{a_0}, \abs{a_i} \circ \cdots \circ \abs{a_1}, \abs{a_{i+1}})
			\\ & \phantom{{}={}}
		\rho_R(\rho_L(\mu(a, a_0), f(a_1 \otimes \cdots \otimes a_n)), a_{n+1}).
\end{align*}
Conversely,
\begin{align*}
	\rho_L(a, \psi(f)(a_0 \otimes \cdots \otimes a_{n+1})) &=
		\prod_{i=1}^{n-1} \alpha(\abs{a_0}, \abs{a_i} \circ \cdots \circ \abs{a_1}, \abs{a_{i+1}})
		\rho_L(a, \rho_R(\rho_L(a_0, f(a_1\otimes \cdots \otimes a_n)), a_{n+1}))
		\\ & = 
		\prod_{i=1}^{n-1} \alpha(\abs{a_0}, \abs{a_i} \circ \cdots \circ \abs{a_1}, \abs{a_{i+1}})
		\alpha(\abs{a}, \abs{a_n} \circ \cdots \circ \abs{a_1} \circ \abs{a_0}, \abs{a_{n+1}})^{-1}
			\\ & \phantom{{}={}}
		\alpha(\abs{a}, \abs{a_0}, \abs{a_n} \circ \cdots \circ \abs{a_1})^{-1}
		\rho_R(\rho_L(\mu(a, a_0), f(a_1 \otimes \cdots \otimes a_n)), a_{n+1}).
\end{align*}
Thus, the lemma follows if the product
\begin{align*}
\alpha&(\abs{a}, \abs{a_0}, \abs{a_n}\circ \cdots \circ \abs{a_1})
\prod_{i=1}^{n-1} \alpha(\abs{a_0}, \abs{a_i} \circ \cdots \circ \abs{a_1}, \abs{a_{i+1}})^{-1}
\prod_{i=1}^{n-1} \alpha(\abs{a} \circ \abs{a_0}, \abs{a_i} \circ \cdots \circ \abs{a_1}, \abs{a_{i+1}})\\ &
\prod_{i=1}^{n} \alpha(\abs{a}, \abs{a_{i-1}} \circ \cdots \circ \abs{a_1} \circ \abs{a_0}, \abs{a_i})^{-1}
\end{align*}
equals one. This follows from an iteration of the pentagon relation: first, consider the ultimate term from each of the products. The pentagon relation $d\alpha(\abs{a}, \abs{a_0}, \abs{a_{n-1}} \circ \cdots \circ \abs{a_1}, \abs{a_n}) = 1$ states that
\begin{align*}
\alpha(\abs{a}, \abs{a_0}, \abs{a_n}\circ \cdots \circ \abs{a_1})&
\alpha(\abs{a_0}, \abs{a_i} \circ \cdots \circ \abs{a_1}, \abs{a_{i+1}})^{-1}
\alpha(\abs{a} \circ \abs{a_0}, \abs{a_i} \circ \cdots \circ \abs{a_1}, \abs{a_{i+1}}) \\ &
\alpha(\abs{a}, \abs{a_{i-1}} \circ \cdots \circ \abs{a_1} \circ \abs{a_0}, \abs{a_i})^{-1}
=
\alpha(\abs{a}, \abs{a_0}, \abs{a_{n-1}} \circ \cdots \circ \abs{a_1})
\end{align*}
This provides the term necessary to repeat this reduction via the pentagon $d\alpha(\abs{a}, \abs{a_0}, \abs{a_{n-2}} \circ \cdots \circ \abs{a_1}, \abs{a_{n-1}}) = 1$. The process terminates with the pentagon $1 = d\alpha(\abs{a}, \abs{a_0}, \abs{a_1}, \abs{a_2})$, providing
\[
\alpha(\abs{a}, \abs{a_0}, \abs{a_2} \circ \abs{a_1})
\alpha(\abs{a}, \abs{a_1} \circ \abs{a_0}, \abs{a_2})^{-1}
\alpha(\abs{a} \circ \abs{a_0}, \abs{a_1}, \abs{a_2})
\alpha(\abs{a_0}, \abs{a_1}, \abs{a_2})^{-1}
=
\alpha(\abs{a}, \abs{a_0}, \abs{a_1})
\]
which cancels with the extra term in the length-$n$ product, $\alpha(\abs{a}, \abs{a_0}, \abs{a_1})^{-1}$.
\end{proof}

\begin{proof}[Proof of Theorem \ref{thm:cohored}]
We claim that the maps $\{\psi_n\}_{n\ge 0}$ are the isomorphisms. Following Lemma \ref{lem:psiact}, it suffices to provide inverses $\{\varphi_n\}_{n\ge 0}$: let $g \in \mathrm{Hom}_{\mathrm{Bim}(A,A)^\mathcal{C}} (A^{\otimes n+2}, M)$.
\begin{itemize}
	\item For $n=0$, define $\varphi_0(g) \in \mathrm{Hom}_{\mathrm{Bim}(I_\mathcal{C}, I_\mathcal{C})^\mathcal{C}} (I_\mathcal{C}, M)$ by
	\[
	\varphi_0(g) (1) := \sum_{X \in \mathrm{Ob}(\mathcal{C})} g(1^A_X \otimes 1^A_X)
	\]
	where the $1 = \sum_{\mathrm{Ob}(\mathcal{C})} 1_X$ appearing on the left is the unit of $I_\mathcal{C} = \bigoplus (\Bbbk)_{\mathrm{Id}_X}$, and the units $\{1^A_X\}_{X\in\mathrm{Ob}(\mathcal{C})}$ appearing on the right are the units of $A$.
	\item For $n\ge 1$, define $\varphi_n(g) \in \mathrm{Hom}_{\mathrm{Bim}(I_\mathcal{C}, I_\mathcal{C})^\mathcal{C}} (A^{\otimes n}, M)$ by
	\begin{align*}
	\varphi_n(g)(a_1 \otimes \cdots \otimes a_n) & :=
	\mathcal{L}(\mathrm{Id}_{\mathrm{dom}(\abs{a_1})}, \abs{a_1 \otimes \cdots \otimes a_n})^{-1}
	\mathcal{R}(\abs{a_1 \otimes \cdots \otimes a_n}, \mathrm{Id}_{\mathrm{codom}(\abs{a_n})})^{-1}
	\\ & \phantom{{}={}}
	\prod_{i=1}^{n-1} \alpha(\mathrm{Id}_{\mathrm{dom}(\abs{a_1})}, \abs{a_i} \circ \cdots \circ \abs{a_1}, \abs{a_{i+1}})^{-1}
	g(1^A_{\mathrm{dom}(\abs{a_1})} \otimes a_1 \otimes \cdots \otimes a_n \otimes 1^A_{\mathrm{codom}(\abs{a_n})}).
	\end{align*}
	Notice that $g(1 \otimes a_1 \otimes \cdots \otimes a_n \otimes 1) = g(1^A_{\mathrm{dom}(\abs{a_1})} \otimes a_1 \otimes \cdots \otimes a_n \otimes 1^A_{\mathrm{codom}(\abs{a_n})})$.
\end{itemize}
The fact that these are maps of $\mathcal{C}$-graded $(I_\mathcal{C}, I_\mathcal{C})$-bimodules is apparent.

For the $n=0$ case, we compute
\begin{align*}
	\varphi_0(\psi_0(\xi)) (1) &= 
		\sum_{X \in \mathrm{Ob}(\mathcal{C})} \psi_0(\xi)(1^A_X \otimes 1^A_X)
		\\ &=
		\sum_{X \in \mathrm{Ob}(\mathcal{C})} \rho_R(\rho_L(1^A_X, \xi(1)), 1^A_X)
		\\ &=
		\xi(1)
\end{align*}
since $\mathcal{R}(\mathrm{Id}_X, \mathrm{Id}_X) = 1$ assuming unitors are typically chosen. On the other hand, assume that $a, b\in A$ are homogeneous satisfying $\mathrm{codom}(\abs{a}) = \mathrm{dom}(\abs{b}) = X$. We compute
\begin{align*}
	\psi_0(\varphi_0(g)) (a \otimes b) &=
		\mathcal{R}(\abs{a}, \mathrm{Id}_X)^{-1} \rho_R(\rho_L(a, \varphi_0(g)(1)), b)
		\\ & = 
		\mathcal{R}(\abs{a}, \mathrm{Id}_X)^{-1} g\left(\rho_R(\rho_L(a, 1^A_X \otimes 1^A_X), b)\right)
		\\ & = \mathcal{R}(\abs{a}, \mathrm{Id}_X)^{-1} \alpha(\abs{a}, \mathrm{Id}_X, \mathrm{Id}_X)^{-1} \mathcal{R}(\abs{a}, \mathrm{Id}_X)
			\\ & \phantom{{}={}} g\left(\rho_R(a \otimes 1^A_X, b)\right)
		\\ &= \mathcal{R}(\abs{a}, \mathrm{Id}_X)^{-1} \alpha(\abs{a}, \mathrm{Id}_X, \abs{b}) \mathcal{L}(\mathrm{Id}_X, \abs{b})
			\\ & \phantom{{}={}} g(a \otimes b)
\end{align*}
To explain, the second equality follows from grading considerations and the assumption that $g$ is a map of $(A, A)$-bimodules. The third equality follows from the definition of the left action of $A$ on $A \otimes A$, together with the fact that $\mu(a, 1_X^A) = \mathcal{R}(\abs{a}, \mathrm{Id}_X) a$. Since unitors are typically chosen, these two new terms cancel. The fourth equality follows from the definition of the right action of $A$ on $A\otimes A$, and the fact that $\mu(1_X^A, b) = \mathcal{L}(\mathrm{Id}_X, \abs{b}) b$. Finally, 
\[
\mathcal{R}(\abs{a}, \mathrm{Id}_X)^{-1} \alpha(\abs{a}, \mathrm{Id}_X, \abs{b}) \mathcal{L}(\mathrm{Id}_X, \abs{b}) = 1
\]
is the triangle identity.

The $n\ge 1$ case is slightly more tedious. Write $\varphi:= \varphi_n$ and $\psi:= \psi_n$. For $f \in \mathrm{Hom}_{\mathrm{Bim}(I_\mathcal{C}, I_\mathcal{C})^\mathcal{C}}(A^{\otimes n}, M)$, we leave it to the reader to verify that $\varphi(\psi(f)) (a_1 \otimes \cdots \otimes a_n) = f(a_1 \otimes \cdots \otimes a_n)$ follows strictly from definitions (it requires no applications of the pentagon relation). The rest of the proof is verification that $\psi(\varphi(g)) = g$. To reduce notation, we'll write $\mathrm{dom}(\abs{a_1})=X$ and $\mathrm{codom}(\abs{a_n})=Y$. We compute
\begin{align*}
	\psi(\varphi(g)) (a_0 \otimes a_1 \otimes \cdots \otimes a_n \otimes a_{n+1}) &= 
		\prod_{i=1}^{n-1} \alpha(\abs{a_0}, \abs{a_i} \circ \cdots \circ \abs{a_1}, \abs{a_{i+1}})
		\rho_R(\rho_L(a_0, \varphi(g)(a_1 \otimes \ldots \otimes a_n)), a_{n+1})
		\\ & =
		\underbrace{\prod_{i=1}^{n-1} \alpha(\abs{a_0}, \abs{a_i} \circ \cdots \circ \abs{a_1}, \abs{a_{i+1}})}_{(*)}
		\mathcal{L}(\mathrm{Id}_{X}, \abs{a_1 \otimes \cdots \otimes a_n})^{-1}
		\\ & \phantom{{}={}}
		\underbrace{\mathcal{R}(\abs{a_1 \otimes \cdots \otimes a_n}, \mathrm{Id}_{Y})^{-1}}_{(**)}
		\prod_{i=1}^{n-1} \alpha(\mathrm{Id}_{X}, \abs{a_i} \circ \cdots \circ \abs{a_1}, \abs{a_{i+1}})^{-1}
		\\ & \phantom{{}={}}
		\rho_R(\rho_L(a_0, g(1^A_{X} \otimes a_1 \otimes \cdots \otimes a_n \otimes 1^A_{Y})), a_{n+1})
\end{align*}
simply unraveling definitions. Continuing,
\begin{align*}
	\rho_R(\rho_L(a_0, g(1^A_X \otimes a_1 \otimes \cdots \otimes a_n \otimes 1^A_Y)), a_{n+1}) & = 
		g \left(\rho_R(\rho_L(a_0, 1^A_X \otimes a_1 \otimes \cdots \otimes a_n \otimes 1^A_Y), a_{n+1})\right)
		\\ & =
		\alpha(\abs{a_0}, \mathrm{Id}_X, \abs{a_1})^{-1}
		\prod_{i=1}^{n-1} \alpha(\abs{a_0}, \abs{a_i} \circ \cdots \circ \abs{a_1}, \abs{a_{i+1}})^{-1}
				\\ & \phantom{{}={}}
		\alpha(\abs{a_0}, \abs{a_n} \circ \cdots \circ \abs{a_1}, \mathrm{Id}_Y)^{-1}
		\mathcal{R}(\abs{a_0}, \mathrm{Id}_X)
				\\ & \phantom{{}={}}
		g \left(\rho_R(a_0 \otimes a_1 \otimes \cdots \otimes a_n \otimes 1^A_Y, a_{n+1})\right)
		\\ & =
		\alpha(\abs{a_0}, \mathrm{Id}_X, \abs{a_1})^{-1}
		\underbrace{\prod_{i=1}^{n-1} \alpha(\abs{a_0}, \abs{a_i} \circ \cdots \circ \abs{a_1}, \abs{a_{i+1}})^{-1}}_{(*)}
				\\ & \phantom{{}={}}
		\underbrace{\alpha(\abs{a_0}, \abs{a_n} \circ \cdots \circ \abs{a_1}, \mathrm{Id}_Y)^{-1}}_{(**)}
		\mathcal{R}(\abs{a_0}, \mathrm{Id}_X)
				\\ & \phantom{{}={}}
		\underbrace{\alpha(\abs{a_n} \circ \cdots \circ \abs{a_0}, \mathrm{Id}_Y, \abs{a_{n+1}})
		\mathcal{L}(\mathrm{Id}_Y, \abs{a_{n+1}})}_{(**)}
				\\ & \phantom{{}={}}
		g \left(a_0 \otimes a_1 \otimes \cdots \otimes a_n \otimes a_{n+1}\right).
\end{align*}
The terms labeled $(*)$ cancel obviously. Also, the terms labeled $(**)$ cancel by way of Lemma \ref{lem:assocomps}, since
\begin{align*}
\alpha(\abs{a_n} \circ \cdots \circ \abs{a_1}, \mathrm{Id}_Y, \mathrm{Id}_Y)
\alpha(\mathrm{Id}_Y, \mathrm{Id}_Y, \abs{a_{n+1}})^{-1}
& \stackrel{(iv)}{=}
\alpha(\abs{a_n} \circ \cdots \circ \abs{a_1} \circ \abs{a_0}, \mathrm{Id}_Y, \mathrm{Id}_Y)
\\ & \stackrel{(iii)}{=}
\alpha(\abs{a_n} \circ \cdots \circ \abs{a_1}, \mathrm{Id}_Y, \mathrm{Id}_Y)
\alpha(\abs{a_0}, \abs{a_n} \circ \cdots \circ \abs{a_1}, \mathrm{Id}_Y)
\end{align*}
So, the theorem follows as long as
\[
\mathcal{L}(\mathrm{Id}_X, \abs{a_1 \otimes \cdots \otimes a_n})^{-1}
\prod_{i=1}^{n-1} \alpha(\mathrm{Id}_X, \abs{a_i} \circ \cdots \circ \abs{a_1}, \abs{a_{i+1}})^{-1}
\alpha(\abs{a_0}, \mathrm{Id}_X, \abs{a_1})^{-1}
\mathcal{R}(\abs{a_0}, \mathrm{Id}_X) = 1.
\]
This follows from an iteration. First, notice that 
\[
\alpha(\abs{a_0}, \mathrm{Id}_X, \abs{a_1})^{-1}
\mathcal{R}(\abs{a_0}, \mathrm{Id}_X)
=
\alpha(\mathrm{Id}_X, \mathrm{Id}_X, \abs{a_1})^{-1}
\]
by (iv) of Lemma \ref{lem:assocomps}. Comparing this term with the first term in the product notation, we have
\[
\alpha(\mathrm{Id}_X, \mathrm{Id}_X, \abs{a_1})^{-1}
\alpha(\mathrm{Id}_X, \abs{a_1}, \abs{a_2})^{-1}
=
\alpha(\mathrm{Id}_X, \mathrm{Id}_X, \abs{a_2} \circ \abs{a_1})^{-1}
\]
by (ii) of Lemma \ref{lem:assocomps}. At this point the iteration becomes apparent. Finally, notice that the iteration terminates with the value $\alpha(\mathrm{Id}_X, \mathrm{Id}_X, \abs{a_n} \circ \cdots \circ \abs{a_1})^{-1}$, which cancels with the remaining term $\mathcal{L}(\mathrm{Id}_X, \abs{a_1 \otimes \cdots \otimes a_n})^{-1}$.
\end{proof}

\subsubsection{Center and derivations}

We start by forming an isomorphic chain complex 
\[
\left(\mathrm{Hom}_{\mathrm{Bim}(I_\mathcal{C}, I_\mathcal{C})^\mathcal{C}}(A^{\otimes n}, M), \mathsf{d}^n\right)_{n\ge 0}
\]
with differential
\[
\mathsf{d}^n := \varphi_{n+1} \circ (- \circ \partial_{n+1}) \circ \psi_n
\]
as pictured below.
\[
\begin{tikzcd}
0
\arrow[r]
&
\mathrm{Hom}_{\mathrm{Bim}(A, A)^\mathcal{C}}(A^{\otimes 2}, M)\
\arrow[r, "- \circ \partial_1"]
&
\mathrm{Hom}_{\mathrm{Bim}(A, A)^\mathcal{C}}(A^{\otimes 3}, M)
\arrow[r, "- \circ \partial_2"]
\arrow[d, dashrightarrow, "\varphi_1", bend left=20]
&
\mathrm{Hom}_{\mathrm{Bim}(A, A)^\mathcal{C}}(A^{\otimes 4}, M)
\arrow[r, "- \circ \partial_3"]
\arrow[d, dashrightarrow, "\varphi_2", bend left=20]
&
\cdots
\\
0
\arrow[r]
&
\mathrm{Hom}_{\mathrm{Bim}(I_\mathcal{C}, I_\mathcal{C})^\mathcal{C}}(I^\mathcal{C}, M)
\arrow[r, "\mathsf{d}_0"]
\arrow[u, "\psi_0"]
&
\mathrm{Hom}_{\mathrm{Bim}(I_\mathcal{C}, I_\mathcal{C})^\mathcal{C}}(A, M)
\arrow[r, "\mathsf{d}_1"]
\arrow[u, "\psi_1"]
&
\mathrm{Hom}_{\mathrm{Bim}(I_\mathcal{C}, I_\mathcal{C})^\mathcal{C}}(A^{\otimes 2}, M)
\arrow[r, "\mathsf{d}_2"]
\arrow[u, "\psi_2"]
&
\cdots
\end{tikzcd}
\]
It is easy to check that, consequently,
\[
\mathsf{d}^0(\xi)(a) = 
	\mathcal{R}(\abs{a}, \mathrm{Id}_Y)^{-1} \rho_L(a, \xi(1_Y))
	-
	\mathcal{L}(\mathrm{Id}_X, \abs{a})^{-1} \rho_R(\xi(1_X), a)
\]
for $\xi \in \mathrm{Hom}_{\mathrm{Bim}(I_\mathcal{C}, I_\mathcal{C})^\mathcal{C}}(I_\mathcal{C}, M)$ and $\abs{a}: X \to Y$, and for $n > 1$
\begin{align*}
\mathsf{d}^{n-1}(f)(a_1 \otimes \cdots \otimes a_n) & = 
	\prod_{i=2}^{n-1} \alpha(\abs{a_1}, \abs{a_i} \circ \cdots \circ \abs{a_2}, \abs{a_{i+1}})
	\rho_L(a_1, f(a_2 \otimes \cdots \otimes a_n))
		\\ & \phantom{{}={}}
	+ \sum_{i=1}^{n-1} (-1)^i \alpha(\abs{a_{i-1}} \circ \cdots \circ \abs{a_1}, \abs{a_i} \abs{a_{i+1}}) f(a_1 \otimes \cdots \otimes \mu(a_i, a_{i+1}) \otimes \cdots \otimes a_n)
		\\ & \phantom{{}={}}
	+ (-1)^n \rho_R(f(a_1 \otimes \cdots \otimes a_{n-1}), a_n)
\end{align*}
for $f\in \mathrm{Hom}_{\mathrm{Bim}(I_\mathcal{C}, I_\mathcal{C})^\mathcal{C}}(A^{\otimes n-1}, M)$, assuming $\mathrm{dom}(\abs{a_1}) = X$ and $\mathrm{codom}(\abs{a_n}) = Y$. 

We conclude by listing some consequences of Theorem \ref{thm:cohored}. Define the \emph{module of invariants} as
\[
M_\mathcal{C}^A := \{m \in M: \rho_L(a, m) = \zeta(\abs{a}) \rho_R(m, a)\}.
\]
Then we have the following immediate corollary.

\begin{corollary}
As long as unitors are chosen typically,
\[
\mathsf{HH}^0(A; M) = M_\mathcal{C}^A.
\]
In particular, when $M = A$,
\[
\mathsf{HH}^0(A) = Z^\mathcal{C}(A).
\]
\end{corollary}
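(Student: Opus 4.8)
The plan is to read off the corollary directly from Theorem \ref{thm:cohored} together with the explicit description of the reduced differential $\mathsf{d}^0$ recorded just above. Since $\mathsf{d}^n = \varphi_{n+1} \circ (- \circ \partial_{n+1}) \circ \psi_n$ and the maps $\psi_n$ (with inverses $\varphi_n$) are isomorphisms by Theorem \ref{thm:cohored}, the cochain complex $\bigl(\mathrm{Hom}_{\mathrm{Bim}(A,A)^\mathcal{C}}(A^{\otimes n+2}, M),\, - \circ \partial_{n+1}\bigr)$ is isomorphic \emph{as a complex} to $\bigl(\mathrm{Hom}_{\mathrm{Bim}(I_\mathcal{C}, I_\mathcal{C})^\mathcal{C}}(A^{\otimes n}, M),\, \mathsf{d}^n\bigr)$. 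In particular $\mathsf{HH}^0(A; M) = \ker \mathsf{d}^0$, so it suffices to compute this kernel.

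First I would identify $\mathrm{Hom}_{\mathrm{Bim}(I_\mathcal{C}, I_\mathcal{C})^\mathcal{C}}(I_\mathcal{C}, M)$ with a submodule of $M$: a $\mathcal{C}$-grading preserving $(I_\mathcal{C}, I_\mathcal{C})$-bimodule map $\xi$ is determined by $m := \xi(1) = \sum_{X} \xi(1_X)$, and $\xi(1_X) \in M_{\mathrm{Id}_X}$, so $\xi \mapsto \xi(1)$ is a $\Bbbk$-linear bijection onto $\bigoplus_X M_{\mathrm{Id}_X} \subseteq M$. Under this identification, and using that $\rho_L(a, \xi(1_{X'})) = 0$ unless $X' = \mathrm{codom}(\abs{a})$ (and dually for $\rho_R$), the stated formula for $\mathsf{d}^0$ becomes, for homogeneous $a$ with $\abs{a} : X \to Y$,
\[
\mathsf{d}^0(\xi)(a) = \mathcal{R}(\abs{a}, \mathrm{Id}_Y)^{-1}\, \rho_L(a, m) - \mathcal{L}(\mathrm{Id}_X, \abs{a})^{-1}\, \rho_R(m, a).
\]
Hence $\xi \in \ker \mathsf{d}^0$ precisely when $\rho_L(a, m) = \mathcal{R}(\abs{a}, \mathrm{Id}_Y)\,\mathcal{L}(\mathrm{Id}_X, \abs{a})^{-1}\, \rho_R(m, a)$ for all homogeneous $a \in A$.

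Next I would recognize the scalar. Since the unitors are the typical ones induced by $\alpha$, equations (\ref{eq:typicalL}) and (\ref{eq:typicalR}) give $\mathcal{R}(\abs{a}, \mathrm{Id}_Y) = \alpha(\abs{a}, \mathrm{Id}_Y, \mathrm{Id}_Y)$ and $\mathcal{L}(\mathrm{Id}_X, \abs{a})^{-1} = \alpha(\mathrm{Id}_X, \mathrm{Id}_X, \abs{a})$, whose product is exactly $\zeta(\abs{a})$. Therefore $\ker \mathsf{d}^0 = \{\, m \in M : \rho_L(a, m) = \zeta(\abs{a})\, \rho_R(m, a) \,\} = M_\mathcal{C}^A$, which proves the first assertion; the \emph{a priori} restriction of $m$ to the identity-graded summands is not a loss, since the $a = 1_X$ instances of the defining relation already kill the off-identity components, exactly as in the discussion preceding Proposition \ref{prop:c-gradedcenter}. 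Specializing $M = A$, where $\rho_L = \rho_R = \mu$, the defining relation of $M_\mathcal{C}^A$ becomes $\mu(a, z) = \zeta(\abs{a})\, \mu(z, a)$, i.e.\ $z \in Z^\mathcal{C}(A)$ by definition, giving $\mathsf{HH}^0(A) = Z^\mathcal{C}(A)$.

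The only content beyond Theorem \ref{thm:cohored} is the (already-asserted) passage from $- \circ \partial_1$ to the formula for $\mathsf{d}^0$ via $\varphi_1 \circ (- \circ \partial_1) \circ \psi_0$, and the one-line verification that $\mathcal{R}(\abs{a}, \mathrm{Id}_Y)\,\mathcal{L}(\mathrm{Id}_X, \abs{a})^{-1} = \zeta(\abs{a})$ under typical unitors; I do not expect any genuine obstacle, only the mild bookkeeping about supports noted above.
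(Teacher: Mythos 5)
Your proof is correct and is essentially the paper's own argument: the paper states this as an immediate corollary of Theorem \ref{thm:cohored} together with the displayed formula for $\mathsf{d}^0$, and your computation of $\ker \mathsf{d}^0$ together with the identification $\mathcal{R}(\abs{a}, \mathrm{Id}_Y)\,\mathcal{L}(\mathrm{Id}_X, \abs{a})^{-1} = \zeta(\abs{a})$ under typical unitors is exactly the intended verification. One small caveat on your side remark: the $a = 1_X$ instances of the invariance relation do kill homogeneous components in non-loop degrees, but a component in a non-identity \emph{loop} degree is only constrained by an equality of unit scalars and need not vanish, so the identification of $\ker \mathsf{d}^0$ with $M_\mathcal{C}^A$ really rests (as it does in the paper, cf.\ the remark before Proposition \ref{prop:c-gradedcenter}) on the implicit convention that the invariants carry the trivial, identity-supported $\mathcal{C}$-grading.
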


Notice that the kernel of $\mathsf{d}^1$ consists of those maps $f: A \to M$ which satisfy
\[
f(\mu(a_1, a_2)) = \rho_L(a_1, f(a_2)) + \rho_R(f(a_1), a_2).
\]
That is, $\ker \mathsf{d}^1 = \mathrm{Der}(A, M)$ is the \emph{module of derivations from $A$ to $M$} in the usual sense.

\subsubsection{A note on Hochschild coshadows}

Before proceeding, we offer an explanation as to why the definition of Hochschild cohomology in the $\mathcal{C}$-graded setting requires no additional witnesses, despite the fact that a definition of Hochschild homology depended on a choice of $\varepsilon$. Recall that $\mathsf{HH}_0$ extends to a shadow with target category $\mathrm{Mod}^{\mathrm{Tr}(\mathcal{C})}$, but that the description depended on an isomorphism induced by $\varepsilon$. Classically, $\mathsf{HH}^0$ is an example of the dual concept, called a \emph{coshadow}. Outside of the observations related $\mathcal{C}$-graded structures, the information summarized below comes from Sections 3 and 4 of \cite{MR4768952}.

Unlike shadows, to define a coshadow on a bicategory, the bicategory must be assumed to be closed. A \emph{closed bicategory} is a bicategory $\mathcal{B}$ so that for each 1-morphism $f$ there is a right adjoint $f \triangleright -$ to the functor $- \odot f$ and similarly a left adjoint $- \triangleleft f$ to the functor $f \odot -$. Closed bicategories are known to come with natural isomorphisms
\[
t: (f \odot g) \triangleright h \xrightarrow{\cong} f \triangleright (g \triangleright h)
\]
called the \emph{tensor-hom adjunction}, and
\[
\overline{\mathfrak{a}}: (g \triangleright h) \triangleleft f \xrightarrow{\cong} g \triangleright (h \triangleleft f)
\qquad
\overline{\mathfrak{l}}: f \xrightarrow{\cong} f \triangleleft \mathrm{Id}_X
\qquad
\overline{\mathfrak{r}}: f \xrightarrow{\cong} \mathrm{Id}_X \triangleright f
\]
which are called the \emph{transposes} of the associator $\mathfrak{a}$, left unitor $\mathfrak{l}$, and right unitor $\mathfrak{r}$ of the bicategory $\mathcal{B}$.

\begin{definition}
A \emph{coshadow} for a closed bicategory $\mathcal{B}$ is the pair of a category $\mathbf{T}$ and a family of functors
\[
\left\{ \left \langle \! \left \langle - \right \langle \! \right \langle_X: \mathrm{End}_\mathcal{B}(X) \to \mathbf{T} \right\}_{X \in \mathrm{Ob}(\mathcal{B})}
\]
equipped with natural isomorphisms
\[
\vartheta_{f,g}: \left \langle \! \left \langle  f \triangleright g \right \langle \! \right \langle_X \xrightarrow{\cong} \left \langle \! \left \langle  g \triangleleft f  \right \langle \! \right \langle_Y
\]
whenever $f, g\in  \mathrm{Ob}(\mathrm{Hom}_\mathcal{B}(X, Y))$, such that the diagrams
%\begin{equation}
%\label{eq:coshadow1}
\[
\begin{tikzcd}
\left \langle \! \left \langle (f \odot g) \triangleright h \right \langle \! \right \langle_X
\arrow[r, "\left \langle \! \left \langle t \right \langle \! \right \langle"] 
\arrow[d, "\vartheta"']
&
\left \langle \! \left \langle f \triangleright (g \triangleright h) \right \langle \! \right \langle_X
\arrow[r, "\vartheta"]
&
\left \langle \! \left \langle (g \triangleright h) \triangleleft f \right \langle \! \right \langle_Y
\arrow[d, "\left \langle \! \left \langle \overline{\mathfrak{a}} \right \langle \! \right \langle"]
\\
\left \langle \! \left \langle h \triangleleft (f \odot g) \right \langle \! \right \langle_Z
\arrow[r, "\left \langle \! \left \langle t \right \langle \! \right \langle"]
&
\left \langle \! \left \langle (h \triangleleft f) \triangleleft g \right \langle \! \right \langle_Z
\arrow[r, "\vartheta"]
&
\left \langle \! \left \langle g \triangleright (h \triangleleft f) \right \langle \! \right \langle_Y
\end{tikzcd}
\]
%\end{equation}
and
%\begin{equation}
%\label{eq:coshadow2}
\[
\begin{tikzcd}
\left \langle \! \left \langle \mathrm{Id}_X \triangleright f \right \langle \! \right \langle_X
\arrow[r, "\vartheta"]
\arrow[dr, " \langle \!  \langle \overline{\mathfrak{r}}^{-1}  \langle \!  \langle"']
&
\left \langle \! \left \langle f \triangleright \mathrm{Id}_X \right \langle \! \right \langle_X
\arrow[r, "\vartheta"]
\arrow[d, " \langle \!  \langle \overline{\mathfrak{l}}^{-1}  \langle \!  \langle"]
&
\left \langle \! \left \langle \mathrm{Id}_X \triangleright f \right \langle \! \right \langle_X
\arrow[dl, "\langle \! \langle \overline{\mathfrak{r}}^{-1} \langle \! \langle"]
\\
&
\left \langle \! \left \langle f \right \langle \! \right \langle_X
&
\end{tikzcd}
\]
%\end{equation}
commute whenever they make sense.
\end{definition}

The most frequently used example of a bicategory, the bicategory of (non-$\mathcal{C}$-graded) bimodules over $\Bbbk$-algebras, is also an example of a closed bicategory: for bimodules ${}_AM_B$, ${}_BN_C$ and ${}_AP_C$, 
\[
P \triangleleft M := \mathrm{Hom}_{(A,-)}(M, P)
\qquad \text{and} \qquad
N \triangleright P := \mathrm{Hom}_{(-,C)}(N, P)
\]
are $(B, C)$- and $(A, B)$-bimodules respectively. With these choices, the natural isomorphism $t$ becomes the common tensor-hom adjunction
\[
t: \mathrm{Hom}_{(-, C)} (M \otimes_B N, P) \xrightarrow{\cong} \mathrm{Hom}_{(-,B)} (M, \mathrm{Hom}_{(-,C)}(N, P)).
\]
Indeed, since $\mathsf{HH}^0(A, M) \cong M^A \cong \mathrm{Hom}_{(A,A)} (A, M)$ for $M$ an $(A, A)$-bimodule, the zeroth Hochschild cohomology defines a coshadow $\left \langle \! \left \langle -\right \langle \! \right \langle_A:= \mathsf{HH}^0(A, -)$ on $\mathrm{Bim}$ with target category the category of $\Bbbk$-modules. Importantly, the isomorphism 
\[
\vartheta:  \left \langle \! \left \langle  M \triangleright N \right \langle \! \right \langle_A \xrightarrow{\cong} \left \langle \! \left \langle  N \triangleleft M  \right \langle \! \right \langle_B
\]
for ${}_AM_B$ and ${}_AN_B$ has a canonical choice given by compositions of the tensor-hom isomorphism.

Unfortunately, this means that the more relevant category $\mathrm{Bim}^\mathcal{C}$ of Example \ref{eg:bimc} is \emph{not} a closed bicategory by the typical internal Hom-functors of $\mathrm{Bim}$, as the hom-sets possess no $\mathcal{C}$-grading. Thus, we conclude by observing an interesting duality: on one hand Hochschild homology defines a shadow on the (not closed) bicategory $\mathrm{Bim}^\mathcal{C}$, but requires an additional choice; on the other hand, Hochschild cohomology for $\mathcal{C}$-graded structures does not require any additional choices, but does not define a coshadow.

\subsection{Homogeneous Hochschild cohomology}
\label{ss:nonhomoHH}

Following the classical setting, there should be a notion of Hochschild cohomology where the chain groups are maps $A^{\otimes n+2} \to M$ of $\mathcal{C}$-graded $(A, A)$-bimodules with homogeneous $\mathcal{C}$-degree which are not necessarily graded maps. Such a theory follows directly from the results of the previous two sections given the relevant background on 
\begin{enumerate}
	\item systems of $\mathcal{C}$-grading shifts, and
	\item composition of homogeneous functions preserving $\mathcal{C}$-shifting degree.
\end{enumerate}
For a complete discussion of these ideas, consult Sections 4.2--6 of \cite{naisse2020odd}. We will give a brief description here, omitting all proofs.

\begin{definition}
\label{def:shiftingsyst}
A \emph{$\mathcal{C}$-grading shift} $\varphi$ is a collection of maps
\[
\varphi = \{\varphi^{X \to Y}: \mathsf{D}^{X \to Y} \subset \mathrm{Hom}_{\mathcal{C}}(X, Y) \to \mathrm{Hom}_{\mathcal{C}}(X, Y)\}_{X, Y \in \mathrm{Ob}(\mathcal{C})}.
\]
We make the abbreviation $\varphi(g):= \varphi^{X \to Y}(g)$ whenever $g\in \mathsf{D}^{X \to Y}$. Then, a \emph{$\mathcal{C}$-shifting system} $S = (\mathcal{I}, \Phi, Z)$ consists of a monoid $\mathcal{I}$ with composition $\bullet: \mathcal{I} \times \mathcal{I} \to \mathcal{I}$, unit element $\mathbf{e} \in \mathcal{I}$, a collection of \emph{$\mathcal{C}$-grading shifts} $\Phi = \{\varphi_i\}_{i \in \mathcal{I}}$, and a choice of wide subcategory $Z$ of $\mathcal{C}$ such that
\begin{itemize}
	\item $\Sigma(\mathcal{C})$ is a wide subcategory of $Z$, where $\Sigma(\mathcal{C})$ is the wide subcategory of $\mathcal{C}$ whose Hom-sets are all commuting endomorphisms of $\mathcal{C}$,  $Z(\mathrm{End}_\mathcal{C}(X))$;
	\item $\varphi_\mathbf{e}$ is the \emph{neutral shift}, which has $\varphi_e^{X \to Y} = \mathrm{Id}_{\mathsf{D}_\mathbf{e}^{X \to Y}}$;
	\item For each pair of $\mathcal{C}$-grading shifts $\varphi_j^{Y \to Z}$ and $\varphi_i^{X \to Y}$, we have that
	\[
	\mathsf{D}_{i \bullet j}^{X \to Z} = \mathsf{D}_j^{Y \to Z} \circ \mathsf{D}_i^{X \to Y}
	\subset
	\mathrm{Hom}_\mathcal{C}(Y,Z) \circ \mathrm{Hom}_\mathcal{C}(X,Y)
	\subset
	\mathrm{Hom}_\mathcal{C}(X,Z)
	\]
	and the diagram
	\[
	\begin{tikzcd}
	\mathsf{D}_i^{X \to Y} \times \mathsf{D}_j^{Y \to Z}
	\arrow[d, "\varphi_i \times \varphi_j"]
	\arrow[r, "\circ"]
	&
	\mathsf{D}_{i \bullet j}^{Z \to X}
	\arrow[d, "\varphi_{i \bullet j}"]
	\\
	\mathrm{Hom}_\mathcal{C}(X, Y) \times \mathrm{Hom}_\mathcal{C}(Y, Z)
	\arrow[r, "\circ"]
	&
	\mathrm{Hom}_\mathcal{C}(X, Z)
	\end{tikzcd}
	\]
	commutes;
	\item There is a subset $\mathcal{I}_\mathrm{Id} \subset \mathcal{I}$ such that for each pair $X, Y \in \mathrm{Ob}(\mathcal{C})$, there is a partition of $\mathrm{Hom}_\mathcal{C}(X, Y)$ into $\bigsqcup_{i \in \mathcal{I}_\mathrm{Id}} \mathsf{D}_i^{X \to Y}$ and $\varphi_i = \mathrm{Id}_{\mathsf{D}_i^{X \to Y}}$ for each $i \in \mathcal{I}_{\mathrm{Id}}$.
\end{itemize}
It is also possible that $I$ contains an absoribing element $0$: in this case, we require that $\varphi_0$ is the \emph{null shift}, with $\mathsf{D}_0^{X\to Y} = \emptyset$.
\end{definition}

Then, for each $i \in \mathcal{I}$, there is a \emph{grading shift functor}
\[
\varphi_i: \mathrm{Mod}^\mathcal{C} \to \mathrm{Mod}^\mathcal{C}
\]
which sends elements in degree $g \in \mathsf{D}_i$ to elements in degree $\varphi_i(g)$, and elements not in $\mathsf{D}_i$ to zero. Then, the \emph{identity shift functor} is given by 
\[
\varphi_{\mathrm{Id}}:= \bigoplus_{i \in \mathcal{I}_\mathrm{Id}} \varphi_i.
\]
In particular, notice that the neutral shift $\varphi_{\mathbf{e}}$ preserves only $Z$ (see Remark 4.10 in \cite{naisse2020odd}). The identity shift functor, on the other hand, has $\varphi_{\mathrm{Id}}(M) \cong M$. We write 
\[
\widetilde{\mathcal{I}} := \mathcal{I} \sqcup \mathrm{Id}.
\]

To find a $\mathcal{C}$-shifting system \emph{compatible} with a choice of associator for $\mathcal{C}$ means to provide a collection of maps 
\[
\beta_{i, j}^{X \to Y \to Z}: \mathsf{D}_i^{X \to Y} \times \mathsf{D}_j^{Y \to Z} \to \Bbbk^\times
\]
satisfying the condition of Definition 4.11 in \cite{naisse2020odd}. These maps extend to canonical isomorphisms
\[
\beta_{i, j}: \varphi_i(M) \otimes \varphi_j(N) \to \varphi_{i \bullet j} (M \otimes N)
\]
defined by $m \otimes n \mapsto \beta_{i, j}(\abs{m}, \abs{n}) m \otimes n$, where $m$ and $n$ are homogeneous elements of the $\mathcal{C}$-graded $\Bbbk$-modules $M$ and $N$ respectively.

Suppose $S = (\mathcal{I}, \Phi, Z)$ is a $\mathcal{C}$-shifting system compatible with $\alpha$ through $\beta$. We assume that all $\mathcal{C}$-graded algebras $A$ are supported by $Z$ in the sense that $A_g = 0$ whenever $g\not \in \mathrm{Hom}_Z$; thus $\varphi_\mathbf{e}(A) \cong A$ for all $\mathcal{C}$-graded algebras. Then, the function of $Z$ is that we can define shifted bimodules $\varphi_i(M)$ for any $\mathcal{C}$-graded $(A, B)$-bimodule $M$ to be the $\mathcal{C}$-graded $\Bbbk$-module $M$ with $\mathcal{C}$-grading shifted by $\varphi_i$ and left and right actions given by the diagrams
\[
\begin{tikzcd}
	A \otimes \varphi_i(M) \arrow[r, dashrightarrow, "\varphi_i(\rho_L)"] \arrow[d, "\cong"'] & \varphi_i(M) \arrow[dd, equal]
	\\
	\varphi_\mathbf{e}(A) \otimes \varphi_i(M) \arrow[d, "\beta_{\mathbf{e}, i}"']&
	\\
	\varphi_{\mathbf{e} \bullet i}(A \otimes M) \arrow[r, "\rho_L"] & \varphi_{\mathbf{e} \bullet i}(M)
\end{tikzcd}
\qquad
\text{and}
\qquad
\begin{tikzcd}
	\varphi_i(M) \otimes B \arrow[r, dashrightarrow, "\varphi_i(\rho_R)"] \arrow[d, "\cong"'] & \varphi_i(M) \arrow[dd, equal]
	\\
	\varphi_i(M) \otimes \varphi_\mathbf{e}(B) \arrow[d, "\beta_{i, \mathbf{e}}"'] & 
	\\
	\varphi_{i \bullet \mathbf{e}} (M \otimes B) \arrow[r, "\rho_R"] & \varphi_{i \bullet \mathbf{e}}(M)
	\end{tikzcd}
\]
In particular, it is easy to show that if $M$ is a $\mathcal{C}$-graded $(A, B)$-bimodule and $N$ is a $\mathcal{C}$-graded $(B, C)$-bimodule, then the maps $\beta$ induce an isomorphism of $\mathcal{C}$-graded $(A, C)$-bimodules
\[
\varphi_i(M) \otimes_B \varphi_j(N) \cong \varphi_{i \bullet j} (M \otimes_B N).
\]

\begin{definition}
\label{def:homogeneousmaps}
Suppose $M$ and $N$ are $\mathcal{C}$-graded $(A, B)$-bimodules. A map $f: M \to N$ is called \emph{homogeneous of degree $i\in \widetilde{\mathcal{I}}$} if for all $m\in M$
\begin{itemize}
	\item $f(m) = 0$ whenever $\abs{m} \not\in \mathsf{D}_i$,
	\item $\abs{f(m)} = \varphi_i(\abs{m})$,
	\item $\rho_L(a, f(m)) = \beta_{\mathbf{e}, i}(\abs{a}, \abs{m}) f(\rho_L(a, m))$ for each homogeneous $a \in A$, and
	\item $\rho_R(f(m), b) = \beta_{i ,\mathbf{e}}(\abs{m}, \abs{b}) f(\rho_R(m, b))$ for each homogeneous $b\in B$.
\end{itemize}
\end{definition}

The main note here is that if $f: M \to N$ is homogeneous of degree $i$, then the induced map $\widetilde{f}: \varphi_i(M) \to N$ defined by $\widetilde{f}(\varphi_i(m)) := f(m)$ is a graded map.

Suppose $f:M \to L$ and $g:N \to L$ are maps of $\mathcal{C}$-graded $(A, B)$- and $(B, C)$-modules respectively, and that each can be written as the sum of homogeneous parts; \textit{e.g.}, $f = \sum_{j \in J \subset \widetilde{\mathcal{I}}}$. Then there is a natural tensor product 
\[
g \otimes f = \sum_{k \in \widetilde{\mathcal{I}}} (f \otimes g)_k : M \otimes_B N \to L
\]
where
\[
(f \otimes g)_k(m \otimes n) = \sum_{i \bullet j = k} \beta_{\abs{f_{i}}, \abs{g_{j}}}(\abs{m}, \abs{n}) f_{i}(m) \otimes g_{j}(n)
\]
for any homogeneous elements $m \in M$ and $n \in N$. 

The final step in defining a category of $\mathcal{C}$-graded bimodules with homogeneous maps, rather than graded maps, is defining a composition of homogeneous maps which is itself homogeneous. To do this, it is necessary to pass to a natural specialization of the $\mathcal{C}$-shifting system.

\begin{definition}
A \emph{$\mathcal{C}$-shifting 2-system} is a $\mathcal{C}$-shifting system $S = \{\mathcal{I}, \Phi, Z\}$ such that $\mathcal{I}$ is additionally equipped with an associative vertical composition map
\[
\circ: \mathcal{I} \times \mathcal{I} \to \mathcal{I}
\]
such that, for each $i, i', j, j' \in \mathcal{I}$, 
\begin{itemize}
	\item $\mathbf{e} \circ \mathbf{e} = \mathbf{e}$,
	\item	$\mathsf{D}_{j \circ i} = \mathsf{D}_i \cap \varphi_i^{-1} (\mathsf{D}_j)$,
	\item $\varphi_{j \circ i} = \varphi_j|_{\varphi_i(\mathsf{D}_i) \cap \mathsf{D}_j} \circ \varphi_i|_{\mathsf{D}_{j \circ i}}$, and
	\item $\varphi_{(j' \circ i') \bullet (j \circ i)} = \varphi_{(j' \bullet j) \circ (i' \bullet i)}$.
\end{itemize}
\end{definition}

To find a $\mathcal{C}$-shifting 2-system \emph{compatible} with a choice of associator $\alpha$ and choice of underlying (compatible by way of $\beta$) $\mathcal{C}$-shifting system means to provide two more collections:
\[
\gamma_{i, j}^{X \to Y}: \mathsf{D}_i \to \Bbbk^\times
\qquad
\text{and}
\qquad
\Xi_{\substack{i, i' \\ j, j'}}^{X \to Y \to Z} \in \Bbbk^\times
\]
which satisfy the conditions of Definition 4.30 of \cite{naisse2020odd}. The value of these maps is that they, in analogy with the $\beta$ maps, extend to canonical isomorphisms
\[
\varphi_j \circ \varphi_i (M) \to \varphi_{j \circ i}(M)
\]
given by $m \mapsto \gamma_{i, j}(\abs{m}) m$ and
\[
\varphi_{(j' \circ i') \bullet (j \circ i)}(M) \to \varphi_{(j' \bullet j) \circ (i' \bullet i)}(M)
\]
given by $m \mapsto \Xi_{\substack{i, i' \\ j, j'}} (\abs{m}) m$.

Suppose $A$ and $B$ are two $\mathcal{C}$-graded algebras. Finally, we denote by $\mathrm{BIM}(A, B)^\mathcal{C}$ the category whose objects are $\mathcal{C}$-graded $(A, B)$-bimodules, and whose morphisms are homogeneous maps (rather than just graded maps). The composition in this category is called \emph{$\mathcal{C}$-graded composition}, and is defined by 
\begin{equation}
\label{eq:cgradedcomp}
(g \circ_{\mathcal{C}} f)(m) := \gamma_{i, j}(\abs{m})^{-1} (g \circ f)(m).
\end{equation}
Notice that $\mathrm{Bim}(A, B)^\mathcal{C}$ is a full subcategory of $\mathrm{BIM}(A, B)^\mathcal{C}$. In particular, the hom-sets in this category satisfy
\[
\mathrm{Hom}_{\mathrm{BIM}(A, B)^\mathcal{C}} (M, N)
=
\bigoplus_{i \in \widetilde{\mathcal{I}}} \mathrm{Hom}_{\mathrm{Bim}(A, B)^\mathcal{C}} (\varphi_i(M), N).
\]
From this observation, Theorem \ref{thm:cohored} immediately implies the following.

\begin{corollary}
Suppose $A$ is a $\mathcal{C}$-graded algebra and $M$ is a $\mathcal{C}$-graded $(A, A)$-bimodule. Then
\[
\mathrm{Hom}_{\mathrm{BIM}(I_\mathcal{C}, I_\mathcal{C})^\mathcal{C}}(A^{\otimes n}, M)
\cong
\mathrm{Hom}_{\mathrm{BIM}(A,A)^\mathcal{C}}(A^{\otimes n+2}, M).
\]
\end{corollary}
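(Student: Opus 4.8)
The plan is to deduce the corollary directly from Theorem~\ref{thm:cohored} by decomposing both sides along the monoid $\widetilde{\mathcal{I}}$ of shifting degrees. Recall that the previous paragraph records the identity
\[
\mathrm{Hom}_{\mathrm{BIM}(A, B)^\mathcal{C}}(M, N) = \bigoplus_{i \in \widetilde{\mathcal{I}}} \mathrm{Hom}_{\mathrm{Bim}(A, B)^\mathcal{C}}(\varphi_i(M), N),
\]
so the strategy is: apply this to both $(A,B) = (I_\mathcal{C}, I_\mathcal{C})$ with $M = A^{\otimes n}$ and $(A,B) = (A,A)$ with $M = A^{\otimes n+2}$, and then match the $i$-th summands via Theorem~\ref{thm:cohored} applied to the shifted bimodule $\varphi_i(A^{\otimes n})$ (respectively $\varphi_i(A^{\otimes n+2})$).

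First I would fix $i \in \widetilde{\mathcal{I}}$ and observe that $\varphi_i(A^{\otimes n})$ is a $\mathcal{C}$-graded $(I_\mathcal{C}, I_\mathcal{C})$-bimodule and $\varphi_i(A^{\otimes n+2})$ is a $\mathcal{C}$-graded $(A,A)$-bimodule, using the shifted-bimodule construction recalled in Subsection~\ref{ss:nonhomoHH} (the left and right actions being defined by the $\beta$-diagrams). The key point is that the shift functor $\varphi_i$ is compatible with the monoidal product: the $\beta$-maps furnish an isomorphism $\varphi_i(M) \otimes_B \varphi_j(N) \cong \varphi_{i \bullet j}(M \otimes_B N)$, and in particular, since $A$ is supported by $Z$ so that $\varphi_\mathbf{e}(A) \cong A$, we get $\varphi_i(A^{\otimes n+2}) \cong A \otimes_{\mathrm{stuff}} \varphi_i(A^{\otimes n}) \otimes_{\mathrm{stuff}} A$ realized as the bar-type bimodule over $\varphi_i(A^{\otimes n})$. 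Thus Theorem~\ref{thm:cohored}, applied with the $\mathcal{C}$-graded $(A,A)$-bimodule $M$ replaced by $\varphi_i(A^{\otimes n})$ viewed appropriately, yields
\[
\mathrm{Hom}_{\mathrm{Bim}(I_\mathcal{C}, I_\mathcal{C})^\mathcal{C}}(A^{\otimes n}, \varphi_i(A^{\otimes n}))
\cong
\mathrm{Hom}_{\mathrm{Bim}(A,A)^\mathcal{C}}(A^{\otimes n+2}, \varphi_i(A^{\otimes n})),
\]
and I would then re-index using the canonical identification $\mathrm{Hom}_{\mathrm{Bim}(A,B)^\mathcal{C}}(\varphi_i(M), N) \cong \mathrm{Hom}_{\mathrm{BIM}(A,B)^\mathcal{C},\, \deg i}(M, N)$ (homogeneous maps of degree $i$ correspond to graded maps out of the shift) to conclude that the degree-$i$ piece of $\mathrm{Hom}_{\mathrm{BIM}(I_\mathcal{C}, I_\mathcal{C})^\mathcal{C}}(A^{\otimes n}, M)$ matches the degree-$i$ piece of $\mathrm{Hom}_{\mathrm{BIM}(A,A)^\mathcal{C}}(A^{\otimes n+2}, M)$. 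Summing over $i \in \widetilde{\mathcal{I}}$ gives the result.

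The main obstacle I anticipate is \emph{bookkeeping of the shift}: one must check that the isomorphism $\psi_n$ of Theorem~\ref{thm:cohored} is natural in the coefficient bimodule $M$ (it is, since $\psi_n$ is built from the structure maps $\rho_L, \rho_R$ of $M$ and the associator, with no ad hoc choices), and that under this naturality the shift degree is transported cleanly — i.e., that $\psi_n$ sends a map homogeneous of degree $i$ to a map homogeneous of degree $i$, with the $\gamma$- and $\Xi$-correction terms of the $\mathcal{C}$-shifting 2-system not introducing any obstruction. Concretely, one verifies that for $f \in \mathrm{Hom}_{\mathrm{Bim}(I_\mathcal{C}, I_\mathcal{C})^\mathcal{C}}(\varphi_i(A^{\otimes n}), M)$, the formula for $\psi_n(f)$ (which only prepends and appends $\rho_L, \rho_R$ actions together with explicit $\alpha$-coefficients) commutes with the $\beta$-twisted actions defining $\varphi_i$, so that $\widetilde{\psi_n(f)} := \psi_n$ applied to the induced map is again graded out of $\varphi_i(A^{\otimes n+2})$. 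This is the content of Lemma~\ref{lem:psiact} verbatim once one notes that the proof there never used gradedness of $M$, only the bimodule-action identities (B.I)--(B.IV), which hold for $\varphi_i(M)$ by construction. Hence the corollary follows without further work.
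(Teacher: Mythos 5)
Your proposal is essentially the paper's own argument: the paper deduces the corollary "immediately" from the hom-set decomposition $\mathrm{Hom}_{\mathrm{BIM}(A,B)^\mathcal{C}}(M,N)=\bigoplus_{i\in\widetilde{\mathcal{I}}}\mathrm{Hom}_{\mathrm{Bim}(A,B)^\mathcal{C}}(\varphi_i(M),N)$ together with Theorem \ref{thm:cohored}, which is exactly your decompose-and-match-summands strategy, and your final paragraph supplies the only real content (that $\psi_n$, $\varphi_n$ carry degree-$i$ homogeneous maps to degree-$i$ homogeneous maps, with the $\beta$-, $\gamma$-, $\Xi$-twists causing no obstruction). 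The one slip is the intermediate display placing the shift on the coefficient module, i.e.\ $\mathrm{Hom}(-,\varphi_i(A^{\otimes n}))$: the decomposition puts the shift on the source, so the summand-wise isomorphism you actually need is $\mathrm{Hom}_{\mathrm{Bim}(I_\mathcal{C},I_\mathcal{C})^\mathcal{C}}(\varphi_i(A^{\otimes n}),M)\cong\mathrm{Hom}_{\mathrm{Bim}(A,A)^\mathcal{C}}(\varphi_i(A^{\otimes n+2}),M)$ with $M$ fixed, which is precisely what your degree-preservation check for $\psi_n$ yields.
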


More importantly, we obtain a notion of Hochschild cohomology for $\mathcal{C}$-graded algebras in higher generality, defining $\mathsf{HH}^{k, i}(A; M)$ as the $k$th cohomology group of the cochain complex 
\[
\begin{tikzcd}
	0 
	\arrow[r] 
	&
	\mathrm{Hom}_{\mathrm{BIM}(A,A)^\mathcal{C}} (A^{\otimes 2}, M)
	\arrow[r, "- \circ \partial_1"]
	&
	\mathrm{Hom}_{\mathrm{BIM}(A,A)^\mathcal{C}} (A^{\otimes 3}, M)
	\arrow[r, "- \circ \partial_2"]
	&
	\mathrm{Hom}_{\mathrm{BIM}(A,A)^\mathcal{C}} (A^{\otimes 4}, M)
	\arrow[r, "- \circ \partial_3"]
	&
	\cdots
\end{tikzcd}
\]
restricting to those maps of homogeneous degree $i\in \widetilde{\mathcal{I}} = \mathcal{I} \sqcup \{\mathrm{Id}\}$. This makes sense since $\partial_k$ is grading preserving. Thus, we see that Hochschild cohomology is $\mathbb{Z} \times \widetilde{\mathcal{I}}$-graded; in particular, the Hochschild cohomology of the previous two subsections was $\mathsf{HH}^{k, \mathrm{Id}} (A; M)$. We conclude that Theorem \ref{thm2} follows by definition.

\subsubsection{Derived Hom and Hochschild cohomology}

Briefly, we note that the Hochschild cohomology in the $\mathcal{C}$-graded setting also has a description in terms of a derived $\mathrm{Hom}$. Fix two $\mathcal{C}$-graded  DG-$(A, B)$-bimodules $(M, d_M)$ and $(N, d_N)$---for ease of exposition, we assume that each has $\mathcal{C}$-grading preserving differentials. Define their \emph{Hom complex} as the $\mathbb{Z} \times \widetilde{\mathcal{I}}$-graded chain complex $\mathrm{Hom}^\bullet(M, N) = (\mathrm{Hom}^k(M, N), D_k)$ where
\[
\mathrm{Hom}^k(M, N) = \bigoplus_{i \in \widetilde{\mathcal{I}}} \left( \prod_{k = p + q} \mathrm{Hom}_{\mathrm{Bim}(A, B)^\mathcal{C}} (\varphi_i(M^{-q}), N^p) \right)
\]
and
\[
D(f) = d_M \circ_\mathcal{C} f - (-1)^n f \circ_\mathcal{C} d_N.
\]
Thus, if $f$ has homogeneous degree $(n, j) \in \mathbb{Z} \times \widetilde{\mathcal{I}}$, then $D(f)$ has homogeneous degree $(n+1, j)$.

As in \cite{naisse2020odd}, we say that a $\mathcal{C}$-graded DG-$(A,A)$-bimodule is \emph{relatively projective} if it is a direct summand of a direct sum of shifted copies (in both $\mathcal{C}$-degree and homological degree) of $A$ (trivially interpreted as a free DG-$(A,A)$-bimodule). Then, a $\mathcal{C}$-graded DG-bimodule is \emph{cofibrant} if it is a direct summand of the inverse limit of a filtration
\[
0 = F_0 \subset F_1 \subset F_2 \subset \cdots \subset F_r \subset F_{r+1} \subset \cdots,
\]
where each $F_{r+1}/F_r$ is isomorphic to a relatively projective DG-bimodule. That is, any ($\mathcal{C}$-graded) DG-module with \emph{property $\mathrm{(P)}$}, as in \cite{MR1258406}, is homotopy equivalent to a cofibrant ($\mathcal{C}$-graded) DG-module. Naisse and Putyra provide the following for us.

\begin{proposition}[Proposition 4.27 in \cite{naisse2020odd}]
For any $\mathcal{C}$-graded DG-bimodule $M$, there exists a cofibrant DG-module $\mathbf{p}(M)$ with surjective quasi-isomorphism
\[
\mathbf{p}(M) \twoheadrightarrow M.
\]
\end{proposition}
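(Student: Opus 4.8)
The plan is to construct $\mathbf{p}(M)$ as a two-sided bar resolution, directly generalizing the bar complex $\mathcal{B}(A)$ of Proposition~\ref{prop:A-DGA}. Using Proposition~\ref{prop:bimod=mod}, regard $M$ as a $\mathcal{C}$-graded left $A^e$-DG-module and set $\mathbf{p}(M) := \bigoplus_{n \ge 0} A^e \otimes (A^e)^{\otimes n} \otimes M$, where each tensor power is the monoidal product of $\mathcal{C}$-graded modules, the result transported back to a $\mathcal{C}$-graded $(A,A)$-bimodule along the equivalence of Proposition~\ref{prop:bimod=mod}. The differential has a \emph{bar part}, which contracts each pair of adjacent tensor factors using $\mu_{A^e}$ -- except the last pair $(A^e) \otimes M$, contracted using the action $\rho_L^e$ of $A^e$ on $M$ -- every summand weighted by the associator correction produced by the ``$\alpha$-hunting'' recipe described after Proposition~\ref{prop:A-DGA}; and an \emph{internal part} $(-1)^{n} \partial_M$ on the final factor. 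The left $A^e$-module structure is carried by the first tensor factor. Both parts preserve the $\mathcal{C}$-grading, the bar part lowers $n$ by one, and the internal part preserves $n$.

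Filter $\mathbf{p}(M)$ by bar length, $F_r := \bigoplus_{0 \le n \le r} A^e \otimes (A^e)^{\otimes n} \otimes M$. Since the bar part lowers $n$ and the internal part preserves it, each $F_r$ is a sub-DG-bimodule, $\mathbf{p}(M) = \bigcup_r F_r$, and $F_r / F_{r-1}$ is the $n = r$ summand equipped only with the internal differential; as a DG-$(A,A)$-bimodule this is $A^e \otimes_\Bbbk W_r$ for a DG-$\Bbbk$-module $W_r$, hence relatively projective. (If one insists on building blocks with zero differential, one instead runs the standard cell-attachment argument of \cite{MR1258406} inside the category $\mathrm{Bim}(A,A)^\mathcal{C}$ of Example~\ref{eg:bimc}: begin with a relatively free surjection $F_1 \twoheadrightarrow M$, and at each later stage attach a direct sum of shifted copies of $A$ along a cycle-generating map into the shifted mapping cone of $F_r \to M$; the only inputs are that shifted copies of $A$ lift against $\Bbbk$-split epimorphisms and that mapping cones of $\mathcal{C}$-graded DG-bimodules exist, both of which hold verbatim.) Either way $\mathbf{p}(M)$ is cofibrant.

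The augmentation $\epsilon\colon \mathbf{p}(M) \to M$ collapsing onto the $n = 0$ summand and applying the action is a morphism of $\mathcal{C}$-graded DG-bimodules, and it is surjective since $\rho_L^e(1_{(X,Y)}, -)$ equals, up to the (typical) unitors, the identity. To see $\epsilon$ is a quasi-isomorphism, exhibit the usual extra-degeneracy contracting homotopy on the augmented complex $[\,\cdots \to \mathbf{p}(M) \xrightarrow{\epsilon} M \to 0\,]$, obtained by inserting the non-homogeneous unit $1 = \sum_{X \in \mathrm{Ob}(\mathcal{C})} 1_X$ of $A^e$ in front of a string, and check the homotopy identity as a homotopy of complexes of $\Bbbk$-modules; the associator factors created by the insertion telescope via Lemma~\ref{lem:assocomps} together with the typical-unitor identities, exactly as in the verification of (B.IV) in the proof of Proposition~\ref{prop:A-DGA}. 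Since a morphism of $\mathcal{C}$-graded DG-bimodules is a quasi-isomorphism precisely when it is one after forgetting to complexes of $\Bbbk$-modules, this completes the argument.

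The only genuine work is bookkeeping, and the main obstacle will be verifying $d \circ d = 0$, that $d$ is a bimodule map, and the homotopy identity, all with the associator coefficients in place. Because $\alpha$ sees only the $\mathcal{C}$-grading and $\partial_M$ preserves it, the associator corrections and the Koszul signs decouple: $d^2 = 0$ splits into the bar--bar part, which is Lemma~\ref{lem:barcomplex}, the internal--internal part, which is (DG.IV), and the mixed part, which cancels by (DG.II$'$)/(DG.III); compatibility of $d$ with the actions reduces to (B.I)--(B.IV) and (DG.II)--(DG.III) as established for $\mathcal{B}(A)$; and the homotopy identity reduces to the unital computations already carried out for Proposition~\ref{prop:A-DGA}. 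A secondary subtlety, as throughout Section~\ref{s:algebra}, is that the relevant ``unit'' is the sum of the idempotents $1_X$, so one must track the $\mathrm{Ob}(\mathcal{C})$-components at each insertion and contraction; this is routine.
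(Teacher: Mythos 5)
The paper does not actually prove this statement: it is quoted verbatim from Naisse--Putyra (their Proposition 4.27) and used as a black box, so there is no in-paper proof to compare against. Judged on its own, your relative two-sided bar resolution is the natural classical route, and the augmentation/extra-degeneracy part of your sketch is fine, but two steps do not go through as written. First, the construction itself does not typecheck in the $\mathcal{C}$-graded world: $A^e = A \otimes_\Bbbk A^{\op}$ is $\mathcal{C}\times\mathcal{C}^{\op}$-graded, not $\mathcal{C}$-graded, so ``$A^e \otimes (A^e)^{\otimes n} \otimes M$ with the monoidal product of $\mathrm{Mod}^{\mathcal{C}}$'' is undefined. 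You must instead use the sandwich convention of Definition \ref{def:left-Ae-modules} (a generator $(a_0\otimes b_0)\otimes\cdots\otimes m$ sits in degree $\abs{b_0}\circ\cdots\circ\abs{m}\circ\cdots\circ\abs{a_0}$) and then produce the $\Delta$-type associator corrections for the bar differential; this is precisely the subtlety Sections \ref{s:algebra}--\ref{s:hochschild} are organized around, and it is not covered by the one-sided ``$\alpha$-hunting'' remark after Proposition \ref{prop:A-DGA}. It is fixable by Mac Lane coherence (no looper is needed, since you never tensor over $A^e$), but it is genuinely new bookkeeping, not bookkeeping already done.

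Second, and more seriously, the cofibrancy argument fails under the paper's definitions. A relatively projective DG-bimodule is a summand of a direct sum of shifted copies of $A$, hence carries zero differential; your filtration quotient $F_r/F_{r-1} \cong A^e\otimes_\Bbbk W_r$ carries the internal differential $1\otimes\partial_M$, so it is not relatively projective unless $\partial_M = 0$. Thus the bar-length filtration alone does not exhibit property $(\mathrm{P})$, and the cell-attachment argument you relegate to a parenthesis is not optional -- it is the proof. As sketched it also has a gap of its own: a $\mathcal{C}$-graded bimodule map out of a (shifted) copy of $A$ only reaches invariant-type elements of the target (compare Proposition \ref{prop:c-gradedcenter}, and note that even $\mathrm{Hom}_{\mathrm{Bim}(A,A)^{\mathcal{C}}}(A^{\otimes 2}, M)$ sees only identity-degree elements by Theorem \ref{thm:cohored} unless one allows $\mathcal{C}$-homogeneous maps), so ``attaching shifted copies of $A$ along cycle-generating maps'' cannot generate arbitrary cycles. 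The cells must be $\mathcal{C}$-shifted free bimodules $\varphi_i(A\otimes A)$, with surjectivity onto cycles arranged via the shifting-system formalism of Subsection \ref{ss:nonhomoHH}; carrying this out, together with either a refined filtration (e.g.\ incorporating the homological filtration of $M$, with a colimit argument when $M$ is unbounded) or the Keller-style inductive construction, is where the real content of the statement lives. So the claim that ``the only genuine work is bookkeeping'' understates the problem.
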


Notice that if $M = A$, then $\mathbf{p}(A)$ can be taken as $\mathcal{B}(A)$. Finally, if $M$ and $N$ are two $\mathcal{C}$-graded DG-$(A, B)$-bimodules, the \emph{derived Hom} is defined as
\[
R\mathrm{Hom}(M, N) := \mathrm{Hom}^\bullet(\mathbf{p}(M), N)
\]
for $\mathbf{p}(M)$ any projective resolution of $M$. One can verify that $R\mathrm{Hom}(M, N)$ is independent of the choice of cofibrant DG-module up to quasi-isomorphism via the standard arguments. Since $\mathcal{B}(A)$ is a projective resolution of $A$, we obtain the following corollary immediately.

\begin{corollary}
If $M$ is a $\mathcal{C}$-graded $(A, A)$-bimodule, then
\[
H^\bullet(R\mathrm{Hom}(A, M)) \cong \mathsf{HH}^\bullet(A, M).
\]
\end{corollary}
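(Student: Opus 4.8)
The plan is to compute $R\mathrm{Hom}(A,M)$ using the bar construction $\mathcal{B}(A)$ as the cofibrant replacement of $A$, and then to identify the resulting Hom complex, degree by degree, with the cochain complex defining $\mathsf{HH}^\bullet(A,M)$.

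First I would record that $\mathcal{B}(A)$, together with its augmentation $\mathcal{B}(A)\twoheadrightarrow A$, $a_0\otimes a_1\mapsto \mu_A(a_0,a_1)$, is a legitimate choice of $\mathbf{p}(A)$. This augmentation is $(\mathcal{Z}\times\mathcal{C})$-grading preserving, and $\mu_A\circ\partial_1=0$ is exactly the quasi-associativity axiom (A.II); the augmented complex is acyclic by the standard contracting homotopy that inserts a unit $1_X$ into the leftmost tensor slot (the necessary $\alpha$-corrections are already built into $\partial$ via Lemma \ref{lem:barcomplex}), and $\mathcal{B}(A)$ has property (P) because it is filtered by the relatively projective subcomplexes generated in each homological degree. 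In any case this is precisely the observation stated just before the definition of $R\mathrm{Hom}$, so I may simply cite it. Since $R\mathrm{Hom}(A,M)$ is independent of the choice of cofibrant replacement up to quasi-isomorphism, I take $R\mathrm{Hom}(A,M)=\mathrm{Hom}^\bullet(\mathcal{B}(A),M)$.

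Next I would unwind $\mathrm{Hom}^\bullet(\mathcal{B}(A),M)$. Viewed as a $\mathcal{C}$-graded DG-$(A,A)$-bimodule, $M$ is concentrated in homological degree $0$ with $d_M=0$, while $\mathcal{B}(A)^{-q}=\mathcal{B}(A)_q=A^{\otimes q+2}$ carries the bimodule structure of Proposition \ref{prop:A-DGA} — which is exactly the bimodule structure used in the definition of $\mathsf{HH}^\bullet(A,M)$. Hence in cohomological degree $k$ only the summand $p=0$, $q=k$ of the defining product survives, and
\[
\mathrm{Hom}^k(\mathcal{B}(A),M)=\bigoplus_{i\in\widetilde{\mathcal{I}}}\mathrm{Hom}_{\mathrm{Bim}(A,A)^\mathcal{C}}(\varphi_i(A^{\otimes k+2}),M)=\mathrm{Hom}_{\mathrm{BIM}(A,A)^\mathcal{C}}(A^{\otimes k+2},M),
\]
using the decomposition of Hom-sets in $\mathrm{BIM}(A,A)^\mathcal{C}$ recorded just before the corollary of Subsection \ref{ss:nonhomoHH}. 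This is the $k$th cochain group in the complex computing $\mathsf{HH}^{\bullet,i}(A,M)$. Finally, since $d_M=0$ the differential of the Hom complex reduces to $D(f)=\pm\, f\circ_{\mathcal{C}}\partial_{k+1}$; because $\partial_{k+1}$ is $\mathcal{C}$-grading preserving (homogeneous of degree $\mathrm{Id}$), the $\gamma$-twist in \eqref{eq:cgradedcomp} is inert, so $D(f)=\pm\, f\circ\partial_{k+1}$, which is the Hochschild codifferential $-\circ\partial_{k+1}$ up to an overall sign. Taking cohomology yields $H^\bullet(R\mathrm{Hom}(A,M))\cong\mathsf{HH}^\bullet(A,M)$.

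I expect the main (and really the only) obstacle to be bookkeeping rather than anything structural: keeping the homological versus cohomological grading conventions aligned so that the Hom complex of a resolution supported in non-negative homological degrees against a module in degree zero reindexes correctly to a cochain complex in non-negative cohomological degrees, and checking that the three normalizations in play — the $\alpha$-corrections inside $\partial$, the $\gamma$-correction inside $\circ_{\mathcal{C}}$, and the Koszul sign in $D$ — are each either canceled or inert in this special case. No input beyond Proposition \ref{prop:A-DGA}, Theorem \ref{thm:cohored}, the already-established facts about $\mathcal{C}$-shifting (2-)systems, and the stated acyclicity of $\mathcal{B}(A)$ is required.
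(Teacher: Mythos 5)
Your proposal is correct and follows the same route as the paper, which deduces the corollary immediately by taking $\mathcal{B}(A)$ as the cofibrant replacement $\mathbf{p}(A)$ and identifying $\mathrm{Hom}^\bullet(\mathcal{B}(A),M)$ with the Hochschild cochain complex of Section \ref{s:cohochschild}. Your extra verifications (acyclicity of the augmented bar complex, inertness of the $\gamma$-twist since $\partial$ is grading preserving, and the degree bookkeeping) simply spell out what the paper treats as immediate.
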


\section{Odd arc algebras and the grading category $\mathcal{G}$}
\label{s:odd}

The formalism of grading categories was first defined and successfully utilized by Naisse and Putyra \cite{naisse2020odd} to give the first known extension of odd Khovanov homology to tangles (see also \cite{MR4190457} and \cite{schelstraete2023odd}). Subsections \ref{ss:chroncob}  and \ref{ss:oddarcs} are background (found in, say, \cite{naisse2020odd} or \cite{putyra20152categorychronologicalcobordismsodd}) in which we define the linearized category of chronological cobordisms and the odd and unified arc algebras (the latter of which were first considered in \cite{naisse2017odd}). In Subsection \ref{ss:tracecat}, we recall Naisse and Putyra's grading category $(\mathcal{G}, \alpha)$, and show that their grading category admits a looper in the sense of Definition \ref{def:looper}.

\subsection{Chronological cobordisms}
\label{ss:chroncob}

\begin{definition}
A \emph{chronological cobordism} between two closed 1-manifolds $S_0$ and $S_1$ is a cobordism $W: S_0 \to S_1$ embedded in $\mathbb{R}^2 \times [0,1]$ so that
\begin{enumerate}
	\item $W$ is collared; \textit{i.e.}, there is an $\epsilon >0$ so that 
	\[
	W \cap (\mathbb{R}^2 \times [0, \epsilon]) = S_0 \times [0, \epsilon]
	 \qquad \text{and} \qquad
	W \cap (\mathbb{R}^2 \times [1- \epsilon, 1]) = S^1 \times [1-\epsilon, 1]
	\]
	and
	\item the height function $h: W \to [0,1]$ given by projection onto the third coordinate is a \emph{separative} Morse function; \textit{i.e.}, $h$ is Morse and $h^{-1}(\{c\})$ contains exactly one critical point of $h$ whenever $c$ is a critical value of $h$.
\end{enumerate}
In this paper, we assume that every chronological cobordism comes with a \emph{framing}, by which we mean a choice of orientation on a basis for each unstable manifold $W_p \subset W$ whenever $p$ is a critical point of $h$ of index one or two.
\end{definition}

It is standard to visualize the framing by an arrow through critical points, which is then adapted to planar diagrams. For example:
\[
\tikz[baseline={([yshift=-.5ex]current bounding box.center)}, scale=.5]{
	\draw  (1,2) .. controls (1,3) and (0,3) .. (0,4);
	\draw  (2,2) .. controls (2,3) and (3,3) .. (3,4);
	\draw (1,4) .. controls (1,3) and (2,3) .. (2,4);
	\draw (0,4) .. controls (0,3.75) and (1,3.75) .. (1,4);
	\draw (0,4) .. controls (0,4.25) and (1,4.25) .. (1,4);
	\draw (2,4) .. controls (2,3.75) and (3,3.75) .. (3,4);
	\draw (2,4) .. controls (2,4.25) and (3,4.25) .. (3,4);
	\draw (1,2) .. controls (1,1.75) and (2,1.75) .. (2,2);
	\draw[dashed] (1,2) .. controls (1,2.25) and (2,2.25) .. (2,2);
        \draw[->] (1.8,3.7) -- (1.2,2.8);
} ~=~ \tikz[baseline={([yshift=-.5ex]current bounding box.center)}, scale = .5]{
    \draw[red,thick,->] (0,1) -- (0,-1);
    \draw[knot] (-1,0) arc (180:360:1 and 1);
    \draw[knot] (1,0) arc (0:180:1 and 1);
    } \qquad \text{and} \qquad 
    \tikz[baseline={([yshift=-.5ex]current bounding box.center)}, scale=.5]{
	\draw  (1,2) .. controls (1,3) and (0,3) .. (0,4);
	\draw  (2,2) .. controls (2,3) and (3,3) .. (3,4);
	\draw (1,4) .. controls (1,3) and (2,3) .. (2,4);
	\draw (0,4) .. controls (0,3.75) and (1,3.75) .. (1,4);
	\draw (0,4) .. controls (0,4.25) and (1,4.25) .. (1,4);
	\draw (2,4) .. controls (2,3.75) and (3,3.75) .. (3,4);
	\draw (2,4) .. controls (2,4.25) and (3,4.25) .. (3,4);
	\draw (1,2) .. controls (1,1.75) and (2,1.75) .. (2,2);
	\draw[dashed] (1,2) .. controls (1,2.25) and (2,2.25) .. (2,2);
        \draw[<-] (1.8,3.7) -- (1.2,2.8);
} ~=~ \tikz[baseline={([yshift=-.5ex]current bounding box.center)}, scale = .5]{
    \draw[red,thick,<-] (0,1) -- (0,-1);
    \draw[knot] (-1,0) arc (180:360:1 and 1);
    \draw[knot] (1,0) arc (0:180:1 and 1);
    }~.
\]
Notice that each chronological cobordism admits a unique handle decomposition. Thus, each framed chronological cobordism can be described as a sequence of the following \emph{elementary chronological cobordisms}, with identity tubes elsewhere.
\[
\tikz[baseline={([yshift=-.5ex]current bounding box.center)}, scale=.2]{
    \draw [domain=180:360] plot ({2*cos(\x)}, {2*sin(\x)});
    \draw (-2,0) arc (180:360:2 and 0.6);
    \draw (2,0) arc (0:180:2 and 0.6);
}
\qquad
\tikz[baseline={([yshift=-.5ex]current bounding box.center)}, scale=.5]{
	\draw (0,0) .. controls (0,1) and (1,1) .. (1,2);
	\draw (1,0) .. controls (1,1) and (2,1) .. (2,0);
	\draw (3,0) .. controls (3,1) and (2,1) .. (2,2);
	\draw (0,0) .. controls (0,-.25) and (1,-.25) .. (1,0);
	\draw[dashed] (0,0) .. controls (0,.25) and (1,.25) .. (1,0);
	\draw (2,0) .. controls (2,-.25) and (3,-.25) .. (3,0);
	\draw[dashed] (2,0) .. controls (2,.25) and (3,.25) .. (3,0);
	\draw (1,2) .. controls (1,1.75) and (2,1.75) .. (2,2);
	\draw (1,2) .. controls (1,2.25) and (2,2.25) .. (2,2);
        \draw[<-] (0.9,0.75) -- (2.1,0.75);
}
\qquad
\tikz[baseline={([yshift=-.5ex]current bounding box.center)}, scale=.5]{
	\draw (0,0) .. controls (0,1) and (1,1) .. (1,2);
	\draw (1,0) .. controls (1,1) and (2,1) .. (2,0);
	\draw (3,0) .. controls (3,1) and (2,1) .. (2,2);
	\draw (0,0) .. controls (0,-.25) and (1,-.25) .. (1,0);
	\draw[dashed] (0,0) .. controls (0,.25) and (1,.25) .. (1,0);
	\draw (2,0) .. controls (2,-.25) and (3,-.25) .. (3,0);
	\draw[dashed] (2,0) .. controls (2,.25) and (3,.25) .. (3,0);
	\draw (1,2) .. controls (1,1.75) and (2,1.75) .. (2,2);
	\draw (1,2) .. controls (1,2.25) and (2,2.25) .. (2,2);
        \draw[->] (0.9,0.75) -- (2.1,0.75);
}
\qquad
\tikz[baseline={([yshift=-.5ex]current bounding box.center)}, scale=.5]{
	\draw  (1,2) .. controls (1,3) and (0,3) .. (0,4);
	\draw  (2,2) .. controls (2,3) and (3,3) .. (3,4);
	\draw (1,4) .. controls (1,3) and (2,3) .. (2,4);
	\draw (0,4) .. controls (0,3.75) and (1,3.75) .. (1,4);
	\draw (0,4) .. controls (0,4.25) and (1,4.25) .. (1,4);
	\draw (2,4) .. controls (2,3.75) and (3,3.75) .. (3,4);
	\draw (2,4) .. controls (2,4.25) and (3,4.25) .. (3,4);
	\draw (1,2) .. controls (1,1.75) and (2,1.75) .. (2,2);
	\draw[dashed] (1,2) .. controls (1,2.25) and (2,2.25) .. (2,2);
        \draw[<-] (1.8,3.7) -- (1.2,2.8);
}
\qquad
\tikz[baseline={([yshift=-.5ex]current bounding box.center)}, scale=.5]{
	\draw  (1,2) .. controls (1,3) and (0,3) .. (0,4);
	\draw  (2,2) .. controls (2,3) and (3,3) .. (3,4);
	\draw (1,4) .. controls (1,3) and (2,3) .. (2,4);
	\draw (0,4) .. controls (0,3.75) and (1,3.75) .. (1,4);
	\draw (0,4) .. controls (0,4.25) and (1,4.25) .. (1,4);
	\draw (2,4) .. controls (2,3.75) and (3,3.75) .. (3,4);
	\draw (2,4) .. controls (2,4.25) and (3,4.25) .. (3,4);
	\draw (1,2) .. controls (1,1.75) and (2,1.75) .. (2,2);
	\draw[dashed] (1,2) .. controls (1,2.25) and (2,2.25) .. (2,2);
        \draw[->] (1.8,3.7) -- (1.2,2.8);
}
\qquad
\tikz[baseline={([yshift=-.5ex]current bounding box.center)}, scale=.2]{
        \draw[domain=0:180] plot ({2*cos(\x)}, {2*sin(\x)});
        \draw (-2,0) arc (180:360:2 and 0.6);
        \draw[dashed] (2,0) arc (0:180:2 and 0.6);
        \draw[->] (0,2.8) [partial ellipse=0:270:5ex and 2ex];
}
\qquad
\tikz[baseline={([yshift=-.5ex]current bounding box.center)}, scale=.2]{
        \draw[domain=0:180] plot ({2*cos(\x)}, {2*sin(\x)});
        \draw (-2,0) arc (180:360:2 and 0.6);
        \draw[dashed] (2,0) arc (0:180:2 and 0.6);
        \draw[<-] (0,2.8) [partial ellipse=-90:180:5ex and 2ex];
}
\]
Technically, we need to include the swap cobordism as well---that is, the identity cobordism which simply transposes 1-manifolds.

Two chronological cobordisms are considered equivalent if they can be related by a diffeotopy $H_t$, for $t\in [0,1]$, so that projection of $H_t(W)$ onto the third coordinate is a separative Morse function at each time $t$. A diffeotopy breaking this condition is called a change of chronology:

\begin{definition}
A \emph{change of chronology} is a diffeotopy $H_t$ such that the projection of $H_t(W)$ onto the third coordinate is a generic homotopy of Morse functions, together with a smooth choice of framings on $H_t(W)$. Two changes of chronology between equivalent chronological cobordisms are equivalent if they are homotopic in the space of oriented Igusa functions after composing with the equivalences of cobordisms; for a thorough description, consult \cite{putyra20152categorychronologicalcobordismsodd}.
\end{definition}

We write $H: W_0 \Rightarrow W_1$ for a change of chronology $H$ between chronological cobordisms $W_0$ and $W_1$. There are two different ways of composing changes of chronology:
\begin{enumerate}
	\item Given a sequence of cobordisms $A \xrightarrow{W} B \xrightarrow{W'} C$ and changes of chronology $H$ on $W$ and $H'$ on $W'$, denote by $H' \circ H$ the change of chronology on $W' \circ W$. 
	\item Given a sequence of changes of chronology $W \xRightarrow{H} W' \xRightarrow{H'} W''$, denote their composition by $H' \star H$.
\end{enumerate}

Finally, there is a special family of changes of chronology we consider frequently.

\begin{definition}
A change of chronology $H$ on a chronological cobordism $W$ is called \emph{locally vertical} if there is a finite collection of cylinders $\{C_i\}_i$ in $\mathbb{R}^2 \times I$ such that $H$ is the identity on $W - \bigcup_i C_i$.
\end{definition}

The main utility of locally vertical changes of chronology is that they are unique up to homotopy.

\begin{proposition}[Proposition 4.4 of \cite{putyra20152categorychronologicalcobordismsodd}]
\label{putyrahammer}
If $H$ and $H'$ are locally vertical changes of chronology (with respect to the same cylinders) with the same source and target, then they are homotopic in the space of framed diffeotopies.
\end{proposition}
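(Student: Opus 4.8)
The statement is quoted verbatim as Proposition 4.4 of \cite{putyra20152categorychronologicalcobordismsodd}, so the plan is to reconstruct the idea of that proof rather than to give a fully self-contained argument. The strategy has two stages: \emph{localize} the change of chronology to a single cylinder, and then invoke the high connectivity of the space of framed (generic homotopies of Morse) functions on a ball with fixed boundary behavior.

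First I would exploit the hypothesis that $H$ and $H'$ are locally vertical with respect to the \emph{same} finite collection of cylinders $\{C_i\}_i$. Since both $H$ and $H'$ restrict to the identity on $W - \bigcup_i C_i$, any candidate homotopy that is the identity off $\bigcup_i C_i$ at every time and acts independently inside each $C_i$ is automatically a homotopy through framed diffeotopies. Hence it suffices to produce, for each $i$ separately, a path (rel the two endpoint chronological cobordisms) from $H|_{C_i}$ to $H'|_{C_i}$ inside the space of framed changes of chronology supported in $C_i \cong \mathbb{D}^2 \times I$. Reassembling these cylinder-by-cylinder homotopies gives the global one.

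Second, inside a fixed cylinder the piece $W \cap C_i$ carries only finitely many critical points of the height function, each with a framing, and a locally vertical change of chronology there is precisely a generic homotopy of separative Morse functions fixing the boundary, together with a smooth family of framings; the relevant moduli space $\mathcal{F}$ is therefore low-dimensional. The content to extract is $\pi_0(\mathcal{F}) = \pi_1(\mathcal{F}) = 0$. This is where Igusa's framed function theorem and its connectivity estimates enter: one stratifies $\mathcal{F}$ by the birth/death, crossing, and framing-reversal behavior of the critical points, observes that the codimension-one walls are exactly the elementary changes of chronology and the codimension-two walls are exactly the relations among them (the hexagon/pentagon-type coherences catalogued in Putyra's work), and checks that these suffice to connect any two vertices and to fill in any loop. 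Given this, $H|_{C_i}$ and $H'|_{C_i}$ lie in one path component and the connecting path can be rigidified rel endpoints using simple connectivity.

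The main obstacle is the second stage: establishing (or precisely citing) the path-connectivity and simple connectivity of the space of framed functions/changes of chronology in the appropriate dimension range. This is the technical heart of \cite{putyra20152categorychronologicalcobordismsodd} and ultimately rests on Igusa's results on the space of framed functions; in the present paper it is entirely legitimate to invoke it as a black box, since the proposition is imported directly, and the only genuinely new input here is the routine localization argument of the first stage.
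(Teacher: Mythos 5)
The paper does not prove this statement at all: it is imported verbatim as Proposition 4.4 of Putyra's work and used as a black box (indeed, it is the tool invoked repeatedly under the label ``Proposition \ref{putyrahammer}''), so there is no in-paper proof to measure your reconstruction against. Your two-stage sketch --- localize to the cylinders on which $H$ and $H'$ are supported, then appeal to connectivity results for the space of framed/oriented Igusa functions on a cylinder rel boundary --- is a reasonable account of where the content of Putyra's argument lives, and explicitly deferring the connectivity statement to Igusa's framed function theorem is exactly as legitimate here as the paper's own decision to cite the whole proposition without proof. The only caveat is that your second stage is a plausible outline rather than a verification: the identification of codimension-one and codimension-two strata with the elementary changes of chronology and their relations is precisely the technical work done in Putyra's paper, and nothing in the present paper lets one check that your stratification argument reproduces his; but since the proposition is being used as an external citation, this does not affect anything downstream.
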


\subsubsection{Linearized chronological cobordism category}

\begin{definition}
Let $\textbf{ChCob}$ denote the strict 2-category whose 
\begin{itemize}
	\item objects are finite disjoint collections of circles in $\mathbb{R}^2$,
	\item 1-morphisms $\mathrm{Hom}(S_0, S_1)$ are classes of embedded chronological cobordisms from $S_0$ to $S_1$, with composition $\circ_1$ defined by stacking, and
	\item 2-morphisms are given by classes of changes of chronology up to equivalence with vertical and horizontal compositions of 2-morphisms given by $\star$ and $\circ$, defined above.
\end{itemize}
Proceeding, we drop the subscripts on $\circ_i$. Each 1-morphism is endowed with a $(\mathbb{Z} \times \mathbb{Z})$-grading, defined as
\[
\deg(W) := \abs{W} := (\# \text{births} - \# \text{merges}, \# \text{deaths} - \# \text{splits}).
\]
\end{definition}

Mirroring the fact that all cobordisms can be decomposed into sequences of elementary chronological cobordisms, any change of chronology can be decomposed into a sequence of \emph{elementary changes of chronology}; these are pictured in Figure \ref{fig:elementaryCoC} and are exactly those pairs of cobordisms described in the commutation chart (\textit{i.e.}, Figure 2) of \cite{MR3071132}. 

\begin{figure}
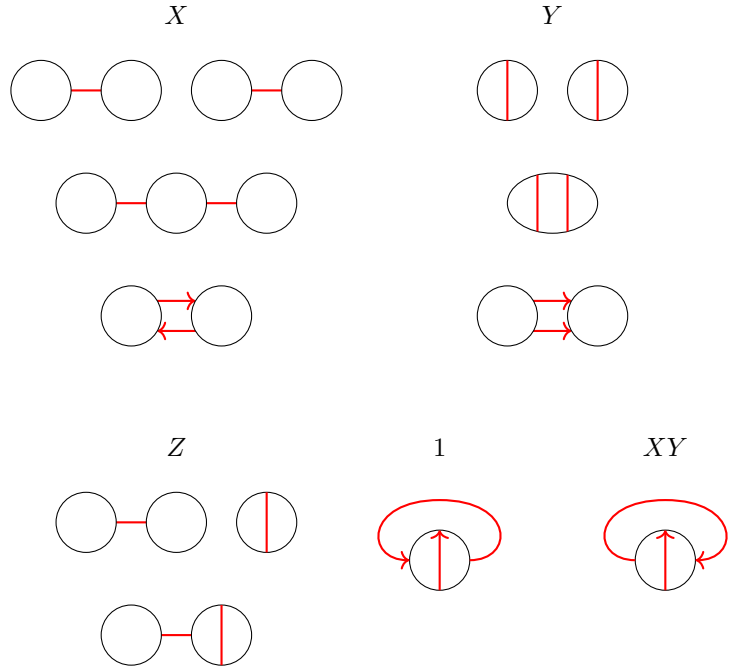

\[
\tikz{
    \node at (-3,1) {$\tikz{
  	\node at (0,0) {$X$};
  	\draw[red, thick] (-1.8,-1) -- (-.6,-1);
  	\draw[red, thick] (.6,-1) -- (1.8,-1);
  	\draw[fill=white] (-1.8,-1) circle (.4cm);
  	\draw[fill=white] (-.6,-1) circle (.4cm);
  	\draw[fill=white] (.6,-1) circle (.4cm);
  	\draw[fill=white] (1.8,-1) circle (.4cm);
  	\draw[red, thick] (-1.2,-2.5) -- (1.2,-2.5);
  	\draw[fill=white] (-1.2,-2.5) circle (.4cm);
  	\draw[fill=white] (0,-2.5) circle (.4cm);
  	\draw[fill=white] (1.2,-2.5) circle (.4cm);
  	\draw[red, thick,->] (-.6,-3.8) -- (.254,-3.8);
  	\draw[red, thick,->] (.6,-4.2) -- (-.254,-4.2);
  	\draw[fill=white] (-.6,-4) circle (.4cm);
  	\draw[fill=white] (.6,-4) circle (.4cm);
}$};
    \node at (2,1) {$\tikz{
  	\node at (0,0) {$Y$};
  	\draw[fill=white] (-.6,-1) circle (.4cm);
  	\draw[fill=white] (.6,-1) circle (.4cm);
  	\draw[red, thick] (-.6,-1.4) -- (-.6,-.6);
  	\draw[red, thick] (.6,-1.4) -- (.6,-.6);
  	\draw[fill=white] (0,-2.5) ellipse (.6cm and .4cm);
	\begin{scope}
	   	\clip (0,-2.5) ellipse (.6cm and .4cm);
  		\draw[red, thick] (-.2,-2.9) -- (-.2,-2.1);
  		\draw[red, thick] (.2,-2.9) -- (.2,-2.1);
	\end{scope}
  	\draw[red, thick,->] (-.6,-3.8) -- (.254,-3.8);
  	\draw[red, thick,->] (-.6,-4.2) -- (.254,-4.2);
  	\draw[fill=white] (-.6,-4) circle (.4cm);
  	\draw[fill=white] (.6,-4) circle (.4cm);
}$};
    \node at (-3, -4) {$\tikz{
  	\node at (0,0) {$Z$};
  	\draw[red, thick] (-1.2,-1) -- (0,-1);
  	\draw[fill=white] (-1.2,-1) circle (.4cm);
  	\draw[fill=white] (0,-1) circle (.4cm);
  	\draw[fill=white] (1.2,-1) circle (.4cm);
  	\draw[red, thick] (1.2,-1.4) -- (1.2,-.6);
  	\draw[red, thick] (-.6,-2.5) -- (.6,-2.5);
  	\draw[fill=white] (-.6,-2.5) circle (.4cm);
  	\draw[fill=white] (.6,-2.5) circle (.4cm);
  	\draw[red, thick] (.6,-2.9) -- (.6,-2.1);
  }$};
    \node at (2,-4) {$\tikz{
  	\node at (-1.5,0) {$1$};
  	\draw[fill=white] (-1.5,-1.5) circle (.4cm);
  	\draw[red, thick,->] (-1.5,-1.9) -- (-1.5,-1.1);
  	\draw[red, thick,->] (-1.1,-1.5) .. controls (-.5,-1.5) and (-.5,-.7) .. (-1.5,-.7)
  		.. controls (-2.5,-.7) and (-2.5,-1.5) .. (-1.9,-1.5);
  	\node at (1.5,0) {$XY$};
  	\draw[fill=white] (1.5,-1.5) circle (.4cm);
  	\draw[red, thick,->] (1.5,-1.9) -- (1.5,-1.1);
  	\draw[red, thick,->] (1.1,-1.5) .. controls (.5,-1.5) and (.5,-.7) .. (1.5,-.7)
  		.. controls (2.5,-.7) and (2.5,-1.5) .. (1.9,-1.5);
  	\draw[opacity=0] (.6,-2.5) circle (.4cm);
  }$};
}
\]
\caption{This is the collection of elementary changes of chronologies, together with their evaluation by $\iota$. Notice that taking $X=Z=1$ and $Y=-1$ yields the commutation chart of~\cite{MR3071132}. Framings are omitted if evaluation by $\iota$ does not depend on them. Finally, for those elementary cobordims $H$ which $\iota(H) = Z$, it is assumed that $H$ takes a merge followed by a split to a split followed by a merge. If the opposite is true, $\iota(H) = Z^{-1}$.}
\label{fig:elementaryCoC}
\end{figure}

Consider the commutataive ring
\[
R := \mathbb{Z}[X, Y, Z^{\pm1}] \big/ (X^2 = Y^2 = 1).
\]
As in \cite{naisse2020odd} and \cite{spyropoulos2024}, we denote by $\iota$ the map that associates to each elementary change of chronology the unit in $R$ as pictured in Figure \ref{fig:elementaryCoC}. Putyra proves this association is well-defined in \cite{putyra20152categorychronologicalcobordismsodd}; $\iota$ then extends to any change of chronology, since 
\[
\iota(H' \circ H) = \iota(H')\iota(H)
\qquad\text{and}\qquad
\iota(H' \star H) = \iota(H') \iota(H).
\]

\begin{definition}
The linearized category of cobordisms $R\textbf{ChCob}$ is the $R$-linear monoidal category whose objects are the same as $\textbf{ChCob}$, and whose morphisms are $R$-linear combinations of morphisms in $\textbf{ChCob}$ modulo the relation that 
\[
W' = \iota(H) W
\]
for each change of chronology $H: W\Rightarrow W'$. The monoidal product is given by juxtaposition, where we specify the chronology on $W \otimes W'$ by applying $W$ first and then $W'$ (that is, left-then-right juxtaposition).
\end{definition}

\begin{remark}
In Figure \ref{fig:elementaryCoC}, there is a choice made for the two ``ladybug'' configurations. There is another choice which swaps the values associated to these configurations. This amounts to picking the ``type'' of odd Khovanov homology obtained in the end: as presented, we obtain the type-$Y$ theory. Consult \cite{putyra20152categorychronologicalcobordismsodd} and \cite{migdail2024functoriality} for more on the differences between the (ultimately isomorphic) type-$Y$ and type-$X$ theories.
\end{remark}

Finally, it is helpful to observe that $R\textbf{ChCob}$ admits a combinatorial description. First, we have the following \emph{change of framing} moves.
\begin{equation}
\label{eq:changeofframingmoves}
\tikz[baseline={([yshift=-.5ex]current bounding box.center)}, scale=.5]{
	\draw (0,0) .. controls (0,1) and (1,1) .. (1,2);
	\draw (1,0) .. controls (1,1) and (2,1) .. (2,0);
	\draw (3,0) .. controls (3,1) and (2,1) .. (2,2);
	\draw (0,0) .. controls (0,-.25) and (1,-.25) .. (1,0);
	\draw[dashed] (0,0) .. controls (0,.25) and (1,.25) .. (1,0);
	\draw (2,0) .. controls (2,-.25) and (3,-.25) .. (3,0);
	\draw[dashed] (2,0) .. controls (2,.25) and (3,.25) .. (3,0);
	\draw (1,2) .. controls (1,1.75) and (2,1.75) .. (2,2);
	\draw (1,2) .. controls (1,2.25) and (2,2.25) .. (2,2);
        \draw[<-] (0.9,0.75) -- (2.1,0.75);
}
 = X~
 \tikz[baseline={([yshift=-.5ex]current bounding box.center)}, scale=.5]{
	\draw (0,0) .. controls (0,1) and (1,1) .. (1,2);
	\draw (1,0) .. controls (1,1) and (2,1) .. (2,0);
	\draw (3,0) .. controls (3,1) and (2,1) .. (2,2);
	\draw (0,0) .. controls (0,-.25) and (1,-.25) .. (1,0);
	\draw[dashed] (0,0) .. controls (0,.25) and (1,.25) .. (1,0);
	\draw (2,0) .. controls (2,-.25) and (3,-.25) .. (3,0);
	\draw[dashed] (2,0) .. controls (2,.25) and (3,.25) .. (3,0);
	\draw (1,2) .. controls (1,1.75) and (2,1.75) .. (2,2);
	\draw (1,2) .. controls (1,2.25) and (2,2.25) .. (2,2);
        \draw[->] (0.9,0.75) -- (2.1,0.75);
}
\qquad
\tikz[baseline={([yshift=-.5ex]current bounding box.center)}, scale=.5]{
	\draw  (1,2) .. controls (1,3) and (0,3) .. (0,4);
	\draw  (2,2) .. controls (2,3) and (3,3) .. (3,4);
	\draw (1,4) .. controls (1,3) and (2,3) .. (2,4);
	\draw (0,4) .. controls (0,3.75) and (1,3.75) .. (1,4);
	\draw (0,4) .. controls (0,4.25) and (1,4.25) .. (1,4);
	\draw (2,4) .. controls (2,3.75) and (3,3.75) .. (3,4);
	\draw (2,4) .. controls (2,4.25) and (3,4.25) .. (3,4);
	\draw (1,2) .. controls (1,1.75) and (2,1.75) .. (2,2);
	\draw[dashed] (1,2) .. controls (1,2.25) and (2,2.25) .. (2,2);
        \draw[<-] (1.8,3.7) -- (1.2,2.8);
}
= Y~
\tikz[baseline={([yshift=-.5ex]current bounding box.center)}, scale=.5]{
	\draw  (1,2) .. controls (1,3) and (0,3) .. (0,4);
	\draw  (2,2) .. controls (2,3) and (3,3) .. (3,4);
	\draw (1,4) .. controls (1,3) and (2,3) .. (2,4);
	\draw (0,4) .. controls (0,3.75) and (1,3.75) .. (1,4);
	\draw (0,4) .. controls (0,4.25) and (1,4.25) .. (1,4);
	\draw (2,4) .. controls (2,3.75) and (3,3.75) .. (3,4);
	\draw (2,4) .. controls (2,4.25) and (3,4.25) .. (3,4);
	\draw (1,2) .. controls (1,1.75) and (2,1.75) .. (2,2);
	\draw[dashed] (1,2) .. controls (1,2.25) and (2,2.25) .. (2,2);
        \draw[->] (1.8,3.7) -- (1.2,2.8);
}
\qquad
\tikz[baseline={([yshift=-.5ex]current bounding box.center)}, scale=.2]{
        \draw[domain=0:180] plot ({2*cos(\x)}, {2*sin(\x)});
        \draw (-2,0) arc (180:360:2 and 0.6);
        \draw[dashed] (2,0) arc (0:180:2 and 0.6);
        \draw[->] (0,2.8) [partial ellipse=0:270:5ex and 2ex];
}
~= Y~
\tikz[baseline={([yshift=-.5ex]current bounding box.center)}, scale=.2]{
        \draw[domain=0:180] plot ({2*cos(\x)}, {2*sin(\x)});
        \draw (-2,0) arc (180:360:2 and 0.6);
        \draw[dashed] (2,0) arc (0:180:2 and 0.6);
        \draw[<-] (0,2.8) [partial ellipse=-90:180:5ex and 2ex];
}
\end{equation}
There are also three \emph{handle cancelation moves}, as follows.
\begin{equation}
\label{eq:handlecancelationmoves}
\tikz[scale=.5, baseline={([yshift=-.5ex]current bounding box.center)}]{
	\draw (1,-2) .. controls (1,.-2.25) and (2,-2.25) .. (2,-2);
	\draw[dashed] (1,-2) .. controls (1,-1.75) and (2,-1.75) .. (2,-2);
	\draw (1,-2) -- (1,2);
	\draw (2,-2) -- (2,2);
	\draw (1,2) .. controls (1,1.75) and (2,1.75) .. (2,2);
	\draw (1,2) .. controls (1,2.25) and (2,2.25) .. (2,2);
}
\quad
=
\quad
\tikz[scale=.5, baseline={([yshift=-.5ex]current bounding box.center)}]{
	\draw (2,0) .. controls (2,.-.25) and (3,-.25) .. (3,0);
	\draw[dashed] (2,0) .. controls (2,.25) and (3,.25) .. (3,0);
	\draw (2,0) -- (2,2);
	\draw (3,0) -- (3,2);
	\draw (2,2) .. controls (2,1.75) and (3,1.75) .. (3,2);
	\draw[dashed] (2,2) .. controls (2,2.25) and (3,2.25) .. (3,2);
	\draw (0,2) .. controls (0,1) and (1,1) .. (1,2);
	\draw (0,2) .. controls (0,1.75) and (1,1.75) .. (1,2);
	\draw[dashed] (0,2) .. controls (0,2.25) and (1,2.25) .. (1,2);
	\draw (0,2) .. controls (0,3) and (1,3) .. (1,4);
	\draw (1,2) .. controls (1,3) and (2,3) .. (2,2);
	\draw (3,2) .. controls (3,3) and (2,3) .. (2,4);
%	%
%	\draw (0,0) .. controls (0,-.25) and (1,-.25) .. (1,0);
%	\draw[dashed] (0,0) .. controls (0,.25) and (1,.25) .. (1,0);
%	\draw (2,0) .. controls (2,-.25) and (3,-.25) .. (3,0);
%	\draw[dashed] (2,0) .. controls (2,.25) and (3,.25) .. (3,0);
	\draw (1,4) .. controls (1,3.75) and (2,3.75) .. (2,4);
	\draw (1,4) .. controls (1,4.25) and (2,4.25) .. (2,4);
	\draw [<-] (1.15,2.15) -- (1.85,2.15);
}
\quad
=
\quad
\tikz[xscale=.5,yscale=-.5, baseline={([yshift=-.5ex]current bounding box.center)}]{
	\draw (2,0) .. controls (2,.-.25) and (3,-.25) .. (3,0);
	\draw (2,0) .. controls (2,.25) and (3,.25) .. (3,0);
	\draw (2,0) -- (2,2);
	\draw (3,0) -- (3,2);
	\draw[dashed]  (2,2) .. controls (2,1.75) and (3,1.75) .. (3,2);
	\draw(2,2) .. controls (2,2.25) and (3,2.25) .. (3,2);
	\draw (0,2) .. controls (0,1) and (1,1) .. (1,2);
	\draw[dashed] (0,2) .. controls (0,1.75) and (1,1.75) .. (1,2);
	\draw (0,2) .. controls (0,2.25) and (1,2.25) .. (1,2);
	\draw[->] (.5,.5) [partial ellipse=-180:90:3ex and 1ex];
	\draw (0,2) .. controls (0,3) and (1,3) .. (1,4);
	\draw (1,2) .. controls (1,3) and (2,3) .. (2,2);
	\draw (3,2) .. controls (3,3) and (2,3) .. (2,4);
	\draw[dashed]  (1,4) .. controls (1,3.75) and (2,3.75) .. (2,4);
	\draw (1,4) .. controls (1,4.25) and (2,4.25) .. (2,4);
	\draw [->] (1.25,2.45) -- (1.75,1.85);
}
\quad
=
\quad
\tikz[xscale=-.5,yscale=-.5, baseline={([yshift=-.5ex]current bounding box.center)}]{
	\draw (2,0) .. controls (2,.-.25) and (3,-.25) .. (3,0);
	\draw (2,0) .. controls (2,.25) and (3,.25) .. (3,0);
	\draw (2,0) -- (2,2);
	\draw (3,0) -- (3,2);
	\draw[dashed]  (2,2) .. controls (2,1.75) and (3,1.75) .. (3,2);
	\draw(2,2) .. controls (2,2.25) and (3,2.25) .. (3,2);
	\draw (0,2) .. controls (0,1) and (1,1) .. (1,2);
	\draw[dashed] (0,2) .. controls (0,1.75) and (1,1.75) .. (1,2);
	\draw (0,2) .. controls (0,2.25) and (1,2.25) .. (1,2);
	\draw[->] (.5,.5) [partial ellipse=-180:90:3ex and 1ex];
	\draw (0,2) .. controls (0,3) and (1,3) .. (1,4);
	\draw (1,2) .. controls (1,3) and (2,3) .. (2,2);
	\draw (3,2) .. controls (3,3) and (2,3) .. (2,4);
	\draw[dashed]  (1,4) .. controls (1,3.75) and (2,3.75) .. (2,4);
	\draw (1,4) .. controls (1,4.25) and (2,4.25) .. (2,4);
	\draw [->] (1.75,2.45) -- (1.25,1.85);
}
\end{equation}
Finally, we have all moves which take the form
\begin{equation}
\label{eq:slide1}
\tikz[baseline={([yshift=-.5ex]current bounding box.center)}, scale=.35]{
	\draw (0,0) .. controls (0,.-.25) and (1,-.25) .. (1,0);
	\draw[dashed] (0,0) .. controls (0,.25) and (1,.25) .. (1,0);
	\draw (0,0) -- (0,4);
	\draw (1,0) -- (1,4);
	\draw (0,4) .. controls (0,3.75) and (1,3.75) .. (1,4);
	\draw (0,4) .. controls (0,4.25) and (1,4.25) .. (1,4);
	%\node at(1.5,.25) {\tiny $\dots$};
	%\node at(1.5,3.75) {\tiny $\dots$};
	\draw (2,0) .. controls (2,.-.25) and (3,-.25) .. (3,0);
	\draw[dashed] (2,0) .. controls (2,.25) and (3,.25) .. (3,0);
	\draw (2,0) -- (2,4);
	\draw (3,0) -- (3,4);
	\draw (2,4) .. controls (2,3.75) and (3,3.75) .. (3,4);
	\draw (2,4) .. controls (2,4.25) and (3,4.25) .. (3,4);
	%\node at(3.5,.25) {\tiny $\dots$};
	%\node at(3.5,3.75) {\tiny $\dots$};
	\draw (4,0) .. controls (4,.-.25) and (5,-.25) .. (5,0);
	\draw[dashed] (4,0) .. controls (4,.25) and (5,.25) .. (5,0);
	\draw (4,0) -- (4,4);
	\draw (5,0) -- (5,4);
	\draw (4,4) .. controls (4,3.75) and (5,3.75) .. (5,4);
	\draw (4,4) .. controls (4,4.25) and (5,4.25) .. (5,4);
	\filldraw [fill=white, draw=black,rounded corners] (-.5,.5) rectangle (2.5,1.5) node[midway] { $W'$};
	\filldraw [fill=white, draw=black,rounded corners] (2.5,2.5) rectangle (5.5,3.5) node[midway] { $W$};
} 
= \lambda(\abs{W}, \abs{W'})
\tikz[baseline={([yshift=-.5ex]current bounding box.center)}, scale=.35]{
	\draw (0,0) .. controls (0,.-.25) and (1,-.25) .. (1,0);
	\draw[dashed] (0,0) .. controls (0,.25) and (1,.25) .. (1,0);
	\draw (0,0) -- (0,4);
	\draw (1,0) -- (1,4);
	\draw (0,4) .. controls (0,3.75) and (1,3.75) .. (1,4);
	\draw (0,4) .. controls (0,4.25) and (1,4.25) .. (1,4);
	%\node at(1.5,.25) {\tiny $\dots$};
	%\node at(1.5,3.75) {\tiny $\dots$};
	\draw (2,0) .. controls (2,.-.25) and (3,-.25) .. (3,0);
	\draw[dashed] (2,0) .. controls (2,.25) and (3,.25) .. (3,0);
	\draw (2,0) -- (2,4);
	\draw (3,0) -- (3,4);
	\draw (2,4) .. controls (2,3.75) and (3,3.75) .. (3,4);
	\draw (2,4) .. controls (2,4.25) and (3,4.25) .. (3,4);
	%\node at(3.5,.25) {\tiny $\dots$};
	%\node at(3.5,3.75) {\tiny $\dots$};
	\draw (4,0) .. controls (4,.-.25) and (5,-.25) .. (5,0);
	\draw[dashed] (4,0) .. controls (4,.25) and (5,.25) .. (5,0);
	\draw (4,0) -- (4,4);
	\draw (5,0) -- (5,4);
	\draw (4,4) .. controls (4,3.75) and (5,3.75) .. (5,4);
	\draw (4,4) .. controls (4,4.25) and (5,4.25) .. (5,4);
	\filldraw [fill=white, draw=black,rounded corners] (-.5,2.5) rectangle (2.5,3.5) node[midway] { $W'$};
	\filldraw [fill=white, draw=black,rounded corners] (2.5,.5) rectangle (5.5,1.5) node[midway] { $W$};
}
\end{equation}
and
\begin{equation}
\label{eq:slide2}
\tikz[baseline={([yshift=-.5ex]current bounding box.center)}, scale=.35]{
	\draw (0,0) .. controls (0,.-.25) and (1,-.25) .. (1,0);
	\draw[dashed] (0,0) .. controls (0,.25) and (1,.25) .. (1,0);
	\draw (0,0) -- (0,4);
	\draw (1,0) -- (1,4);
	\draw (0,4) .. controls (0,3.75) and (1,3.75) .. (1,4);
	\draw (0,4) .. controls (0,4.25) and (1,4.25) .. (1,4);
	%\node at(1.5,.25) {\tiny $\dots$};
	%\node at(1.5,3.75) {\tiny $\dots$};
	\draw (2,0) .. controls (2,.-.25) and (3,-.25) .. (3,0);
	\draw[dashed] (2,0) .. controls (2,.25) and (3,.25) .. (3,0);
	\draw (2,0) -- (2,4);
	\draw (3,0) -- (3,4);
	\draw (2,4) .. controls (2,3.75) and (3,3.75) .. (3,4);
	\draw (2,4) .. controls (2,4.25) and (3,4.25) .. (3,4);
	\filldraw [fill=white, draw=black,rounded corners] (-.5,.5) rectangle (3.5,1.5) node[midway] { $W'$};
} \ 
\tikz[baseline={([yshift=-.5ex]current bounding box.center)}, scale=.35]{
	\draw (0,0) .. controls (0,.-.25) and (1,-.25) .. (1,0);
	\draw[dashed] (0,0) .. controls (0,.25) and (1,.25) .. (1,0);
	\draw (0,0) -- (0,4);
	\draw (1,0) -- (1,4);
	\draw (0,4) .. controls (0,3.75) and (1,3.75) .. (1,4);
	\draw (0,4) .. controls (0,4.25) and (1,4.25) .. (1,4);
	%\node at(1.5,.25) {\tiny $\dots$};
	%\node at(1.5,3.75) {\tiny $\dots$};
	\draw (2,0) .. controls (2,.-.25) and (3,-.25) .. (3,0);
	\draw[dashed] (2,0) .. controls (2,.25) and (3,.25) .. (3,0);
	\draw (2,0) -- (2,4);
	\draw (3,0) -- (3,4);
	\draw (2,4) .. controls (2,3.75) and (3,3.75) .. (3,4);
	\draw (2,4) .. controls (2,4.25) and (3,4.25) .. (3,4);
	\filldraw [fill=white, draw=black,rounded corners] (-.5,2.5) rectangle (3.5,3.5) node[midway] { $W$};
}
= \lambda(\abs{W}, \abs{W'})
\tikz[baseline={([yshift=-.5ex]current bounding box.center)}, scale=.35]{
	\draw (0,0) .. controls (0,.-.25) and (1,-.25) .. (1,0);
	\draw[dashed] (0,0) .. controls (0,.25) and (1,.25) .. (1,0);
	\draw (0,0) -- (0,4);
	\draw (1,0) -- (1,4);
	\draw (0,4) .. controls (0,3.75) and (1,3.75) .. (1,4);
	\draw (0,4) .. controls (0,4.25) and (1,4.25) .. (1,4);
	%\node at(1.5,.25) {\tiny $\dots$};
	%\node at(1.5,3.75) {\tiny $\dots$};
	\draw (2,0) .. controls (2,.-.25) and (3,-.25) .. (3,0);
	\draw[dashed] (2,0) .. controls (2,.25) and (3,.25) .. (3,0);
	\draw (2,0) -- (2,4);
	\draw (3,0) -- (3,4);
	\draw (2,4) .. controls (2,3.75) and (3,3.75) .. (3,4);
	\draw (2,4) .. controls (2,4.25) and (3,4.25) .. (3,4);
	\filldraw [fill=white, draw=black,rounded corners] (-.5,2.5) rectangle (3.5,3.5) node[midway] { $W'$};
} \ \tikz[baseline={([yshift=-.5ex]current bounding box.center)}, scale=.35]{
	\draw (0,0) .. controls (0,.-.25) and (1,-.25) .. (1,0);
	\draw[dashed] (0,0) .. controls (0,.25) and (1,.25) .. (1,0);
	\draw (0,0) -- (0,4);
	\draw (1,0) -- (1,4);
	\draw (0,4) .. controls (0,3.75) and (1,3.75) .. (1,4);
	\draw (0,4) .. controls (0,4.25) and (1,4.25) .. (1,4);
	%\node at(1.5,.25) {\tiny $\dots$};
	%\node at(1.5,3.75) {\tiny $\dots$};
	\draw (2,0) .. controls (2,.-.25) and (3,-.25) .. (3,0);
	\draw[dashed] (2,0) .. controls (2,.25) and (3,.25) .. (3,0);
	\draw (2,0) -- (2,4);
	\draw (3,0) -- (3,4);
	\draw (2,4) .. controls (2,3.75) and (3,3.75) .. (3,4);
	\draw (2,4) .. controls (2,4.25) and (3,4.25) .. (3,4);
	\filldraw [fill=white, draw=black,rounded corners] (-.5,.5) rectangle (3.5,1.5) node[midway] { $W$};
}
\end{equation}
where $\lambda:\mathbb{Z}^2 \times \mathbb{Z}^2 \to R$ is the bilinear map given by
\[
\lambda((a,b), (c,d)) = X^{ac} Y^{bd} Z^{ad - bc}.
\]
Let $x, y, z \in \mathbb{Z}^2$. Bilinearity here means that $\lambda(x, y + z) = \lambda(x, y) \lambda(x, z)$ and $\lambda(x + y, z) = \lambda(x, z) \lambda(y, z)$. Note that $\lambda(x, y)^{-1} = \lambda(y, x)$.

\subsection{Odd arc algebras}
\label{ss:oddarcs}

Let $V = R \langle v_+ \rangle \oplus R \langle v_- \rangle$ be the free $\mathbb{Z}^2$-graded $R$-module generated by $v_+$ and $v_-$ with gradings 
\[
\deg_{\mathbb{Z}^2}(v_+) = (1,0) 
\qquad \text{and} \qquad
\deg_{\mathbb{Z}^2}(v_-) = (0,-1).
\]
In \cite{putyra20152categorychronologicalcobordismsodd}, Putyra defines a so-called \emph{chronological TQFT}, which takes the form of a functor $\mathcal{F}$ from $R\textbf{ChCob}$ to the category of $\mathbb{Z}^2$-graded $R$-modules. On objects, we set
\[
\mathcal{F}(\underbrace{\bigcirc \sqcup \cdots \sqcup \bigcirc}_{n}) = V^{\otimes n}
\]
and on each elementary chronological cobordism, we define
\begin{align*}
\mathcal{F}\left(\tikz[baseline={([yshift=-.5ex]current bounding box.center)}, scale=.5]{
	\draw (0,0) .. controls (0,1) and (1,1) .. (1,2);
	\draw (1,0) .. controls (1,1) and (2,1) .. (2,0);
	\draw (3,0) .. controls (3,1) and (2,1) .. (2,2);
	\draw (0,0) .. controls (0,-.25) and (1,-.25) .. (1,0);
	\draw[dashed] (0,0) .. controls (0,.25) and (1,.25) .. (1,0);
	\draw (2,0) .. controls (2,-.25) and (3,-.25) .. (3,0);
	\draw[dashed] (2,0) .. controls (2,.25) and (3,.25) .. (3,0);
	\draw (1,2) .. controls (1,1.75) and (2,1.75) .. (2,2);
	\draw (1,2) .. controls (1,2.25) and (2,2.25) .. (2,2);
        \draw[<-] (0.9,0.75) -- (2.1,0.75);
}\right) : V \otimes V \rightarrow V &= 
\begin{cases}
v_+ \otimes v_+ \mapsto v_+, & v_+ \otimes v_- \mapsto v_-, \\
v_- \otimes v_- \mapsto 0, & v_- \otimes v_+ \mapsto XZ v_-,
\end{cases}
\\
\mathcal{F}\left(\tikz[baseline={([yshift=-.5ex]current bounding box.center)}, scale=.5]{
	\draw  (1,2) .. controls (1,3) and (0,3) .. (0,4);
	\draw  (2,2) .. controls (2,3) and (3,3) .. (3,4);
	\draw (1,4) .. controls (1,3) and (2,3) .. (2,4);
	\draw (0,4) .. controls (0,3.75) and (1,3.75) .. (1,4);
	\draw (0,4) .. controls (0,4.25) and (1,4.25) .. (1,4);
	\draw (2,4) .. controls (2,3.75) and (3,3.75) .. (3,4);
	\draw (2,4) .. controls (2,4.25) and (3,4.25) .. (3,4);
	\draw (1,2) .. controls (1,1.75) and (2,1.75) .. (2,2);
	\draw[dashed] (1,2) .. controls (1,2.25) and (2,2.25) .. (2,2);
        \draw[<-] (1.8,3.7) -- (1.2,2.8);
}\right) : V  \rightarrow V \otimes V &= 
\begin{cases}
v_+ \mapsto v_- \otimes v_+ + YZ v_+ \otimes v_-,  &\\
v_- \mapsto v_- \otimes v_- , &
\end{cases}
\\
\mathcal{F}\left(\tikz[baseline={([yshift=-.5ex]current bounding box.center)}, scale=.5]{
	\draw (1,2) .. controls (1,1) and (2,1) .. (2,2);
	\draw (1,2) .. controls (1,1.75) and (2,1.75) .. (2,2);
	\draw (1,2) .. controls (1,2.25) and (2,2.25) .. (2,2);
}\right) : R  \rightarrow V  &= 
\begin{cases}
1 \mapsto v_+, & 
\end{cases}
\\
\mathcal{F}\left(\tikz[baseline={([yshift=-.5ex]current bounding box.center)}, scale=.5]{
	\draw (1,0) .. controls (1,1) and (2,1) .. (2,0);
	\draw (1,0) .. controls (1,-.25) and (2,-.25) .. (2,0);
	\draw[dashed] (1,0) .. controls (1,.25) and (2,.25) .. (2,0);
        \draw[->] (1.5,1.1) [partial ellipse=0:270:3ex and 1ex];
}\right) : V  \rightarrow R &= 
\begin{cases}
v_+ \mapsto 0, & \\
v_- \mapsto 1, &
\end{cases}
\end{align*}
applying the change of framing local relations to obtain a complete description.

In keeping with the notation of \cite{naisse2020odd}, let $B_m^n$ denote the set of isotopy classes (fixing endpoints) of flat tangles from $2m$ fixed points on the horizontal line $\mathbb{R} \times \{0\}$ to $2n$ fixed points on $\mathbb{R} \times \{1\}$. We write $B^n:= B_0^n$ to denote the set of \emph{crossingless matchings}. We similarly define $B_n$. If $B^\bullet := \sqcup_{n\ge 0} B^n$, we write $\abs{a} = n$ for $a \in B^\bullet$ whenever $a\in B^n$. There are two notions of composition.
\begin{itemize}
	\item \emph{Stacking}: for $t\in B_m^n$ and $s \in B_n^p$, write $s\circ t$ to denote the flat tangle in $B_m^p$ obtained by stacking $s$ on top of $t$ and then shrinking to the unit square. For each $n$, there is a unit to stacking denoted by $1_n$ given by the tangle comprising of $2n$ vertical strands.
	\item \emph{Juxtaposing}: for $t\in B_{m_1}^{n_1}$ and $s \in B_{m_2}^{n_2}$, write $t \otimes s \in B_{m_1 + m_2}^{n_1 + n_2}$ for the juxtaposition of $t$ on the left and $s$ on the right and then shrinking to the unit square. A unit for juxtaposition is given by the empty tangle $\emptyset \in B_0^0$.
\end{itemize}

We will study cobordisms of flat tangles. For any $t\in B_m^n$, we denote by $\mathbbm{1}_t$ the identity cobordism given by $t \times [0,1] \subset \mathbb{R}\times [0,1]^2$. Abusing notation, we also denote the vertical stacking of cobordisms by $\circ$, and the horizontal stacking of cobordisms by $\otimes$.

Now, notice that given two crossingless matchings $a\in B^m$ and $b\in B_n$, and any flat tangle $t\in B_m^n$, the composition $b \circ t \circ a$ is a closed 1-manifold. Denote by $\overline{b}$ the reflection of $b\in B_n$ about the line $\mathbb{R}\times \{1/2\}$. Then, given a flat tangle $s \in B_n^p$ and another crossingless matching $c \in B_p$, there is a unique, minimal cobordism
\[
(c \circ s \circ \overline{b}) \circ ( b\circ t \circ a) \to c \circ (s \circ t) \circ a
\]
obtained by contracting symmetric arcs of $\overline{b} \circ b$ using saddles to obtain $1_n$. A chronology is fixed on this cobordism by adding saddle points from right to left and choosing the ``upwards'' framing. The resulting cobordism is denoted by $W_{abc} (t, s)$. Note that this cobordism has Euler characteristic $-n$

\begin{example}
If 
\[
a = \overline{c} = 
\tikz[baseline={([yshift=-.5ex]current bounding box.center)}, scale=.5, thick]{
	\draw (0,0) .. controls (0,-.5) and (1,-.5) .. (1,0);
	\draw (2,0) .. controls (2,-.5) and (3,-.5) .. (3,0);
}
\qquad
\overline{b} = 
\tikz[baseline={([yshift=-.5ex]current bounding box.center)}, scale=.5, thick]{
	\draw (0,0) .. controls (0,-1) and (3,-1) .. (3,0);
	\draw (1,0) .. controls (1,-.5) and (2,-.5) .. (2,0);
}
\]
then $W_{abc}(1_2, 1_2)$ is given by the following movie.
\[
\tikz[baseline={([yshift=-.5ex]current bounding box.center)}, scale=.5]{
	\draw (.5,0) -- (.5,4);
	\node at(2.5,2) {\tikz[scale=.5,  thick]{
    % +(0, 2.4)
		\draw (1,3.4) .. controls (1,2.4) and (4,2.4) .. (4,3.4);
		\draw (2,3.4) .. controls (2,2.9) and (3,2.9) .. (3,3.4);
		\draw (1,3.4) .. controls (1,3.9) and (2,3.9) .. (2,3.4);
		\draw (3,3.4) .. controls (3,3.9) and (4,3.9) .. (4,3.4);
		\draw[red, ->] (2.5,1.5) -- (2.5,2.5);
    % +(0, -2.4)
		%
		\draw (1,0.6) .. controls (1,0.1) and (2,0.1) .. (2,0.6);
		\draw (3,0.6) .. controls (3,0.1) and (4,0.1) .. (4,0.6);
		\draw (1,0.6) .. controls (1,1.6) and (4,1.6) .. (4,0.6);
		\draw (2,0.6) .. controls (2,1.1) and (3,1.1) .. (3,0.6);
	}};
	\draw (4.5,0) -- (4.5,4);
	\node at(6.5,2) {\tikz[scale=.5,  thick]{
    % +(0, 2.4)
		\draw (2,3.4) .. controls (2,2.9) and (3,2.9) .. (3,3.4);
		\draw (1,3.4) .. controls (1,3.9) and (2,3.9) .. (2,3.4);
		\draw (3,3.4) .. controls (3,3.9) and (4,3.9) .. (4,3.4);
		\draw[red,->] (2.5,1.25) -- (2.5,2.75);
    % +(0, -2.4)
		%
		\draw (1,0.6) .. controls (1,0.1) and (2,0.1) .. (2,0.6);
		\draw (3,0.6) .. controls (3,0.1) and (4,0.1) .. (4,0.6);
		\draw (2,0.6) .. controls (2,1.1) and (3,1.1) .. (3,0.6);
        \draw (1,0.6) -- (1,3.4);
        \draw (4,0.6) -- (4,3.4);
	}};
	\draw (8.5,0) -- (8.5,4);
	\node at(10.5,2) {\tikz[scale=.5,  thick]{
    % +(0, 2.4)
		%
		\draw (1,3.4) .. controls (1,3.9) and (2,3.9) .. (2,3.4);
		\draw (3,3.4) .. controls (3,3.9) and (4,3.9) .. (4,3.4);
		%
    % +(0, -2.4)
		%
		\draw (1,0.6) .. controls (1,0.1) and (2,0.1) .. (2,0.6);
		\draw (3,0.6) .. controls (3,0.1) and (4,0.1) .. (4,0.6);
        \draw (1,0.6) -- (1,3.4);
        \draw (4,0.6) -- (4,3.4);
        \draw (2,0.6) -- (2,3.4);
        \draw (3,0.6) -- (3,3.4);
	}};
	\draw (12.5,0) -- (12.5,4);
	\node at(14.5,2) {\tikz[scale=.5,  thick]{
		\draw (1,0) .. controls (1,0.5) and (2,0.5) .. (2,0);
		\draw (3,0) .. controls (3,0.5) and (4,0.5) .. (4,0);
		%
    % +(0, -2.4)
		%
		\draw (1,0) .. controls (1,-0.5) and (2,-0.5) .. (2,0);
		\draw (3,0) .. controls (3,-0.5) and (4,-0.5) .. (4,0);
	}};
	\draw (16.5,0) -- (16.5,4);
 	\draw[double,double distance=2pt,line cap=rect] (0,0) -- (17,0);
	\draw[dashed, line width=2pt] (0,0) -- (17,0);
	\draw[double,double distance=2pt,line cap=rect] (0,4) -- (17,4);
	\draw[dashed, line width=2pt] (0,4) -- (17,4);
 }
\]
\end{example}

For $t\in B_m^n$, we define the \emph{arc space}
\[
\mathcal{F}(t) := \bigoplus_{a \in B^m, b\in B_n} \mathcal{F}(b \circ t \circ a).
\]
Then, given another flat tangle $s\in B_n^p$, there is a composition map $\mu[t, s]$ defined as follows:
\begin{align*}
&\mu[t,s]:\mathcal{F}(b \circ t \circ a) \otimes \mathcal{F}(c \circ s \circ b') \to \mathcal{F}(c \circ (s\circ t) \circ a)\\ &\text{by}~\mu[t,s] = \begin{cases} 0 & \text{if}~ b' \not= \overline{b}, \\ \mathcal{F}(W_{abc}(t,s)) & \text{if}~ b=\overline{b}. \end{cases}
\end{align*}
The \emph{unified arc algebra} $H^n$ is defined as the arc space on $t=1_n$:
\begin{definition}
The \textit{unified arc algebra}, which we still denote $H^n$, is the unified arc space
\[
H^n = \mathcal{F}(1_n) = \bigoplus_{a\in B^m, b\in B_m} \mathcal{F}(a1_nb)
\]
with multiplication $\mu[1_n,1_n]$. The \emph{arc algebra} of Khovanov (see, \textit{e.g.}, \cite{MR1928174}) is the algebra obtained by setting $X = Y = Z = 1$. The \emph{odd arc algebra} of Naisse and Vaz \cite{naisse2017odd} is obtained by setting $X = Z = 1$ and $Y = -1$.
\end{definition}

For example, $H^1 \cong V$ with $\mu(v_+, v_+) = v_+$, $\mu(v_+, v_-) = v_-$, $\mu(v_-, v_+) = XZ v_-$, and $\mu(v_-, v_-) = 0$. The following observation by Naisse and Vaz implies that we cannot import Khovanov's arguments \cite{MR1928174} to obtain a tangle theory, and motivates the construction of grading categories in this setting.

\begin{proposition}[Proposition 3.2 of \cite{naisse2017odd}]
The unified (or odd) arc algebra $H^n$ is not associative for any $n\ge 2$.
\end{proposition}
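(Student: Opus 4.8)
The plan is to exhibit an explicit failure of associativity in $H^n$ for $n \geq 2$ by finding crossingless matchings $a \in B^n$, $b \in B_n$ and homogeneous elements whose triple products disagree by more than a unit of $R$. Since $H^n$ for $n \geq 2$ contains $H^2$ as a "corner" subalgebra (via the summand of matchings supported on a fixed pair of adjacent strands, with the remaining $n-2$ strands carried along by identity tubes), it suffices to do the computation for $n = 2$ and then observe that the same obstruction persists after juxtaposing with identity arcs; I would state this reduction first and then focus entirely on $H^2$.

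First I would set up $H^2$ concretely following Subsection~\ref{ss:oddarcs}: list the two crossingless matchings in $B^2$ (the "cup-cup" matching $\cup\cup$ and the "nested" matching), describe $\mathcal{F}(b \circ 1_2 \circ a)$ for the relevant choices of $a, b$ (each is $V$, $V \otimes V$, or $V^{\otimes 3}$ depending on the number of circles), and write down the multiplication maps $\mu[1_2, 1_2] = \mathcal{F}(W_{abc}(1_2, 1_2))$ using the elementary cobordism formulas for the merge map $m$ and the comultiplication $\Delta$ given explicitly in the excerpt, namely $m(v_+ \otimes v_-) = v_-$, $m(v_- \otimes v_+) = XZ\, v_-$, $m(v_- \otimes v_-) = 0$, $\Delta(v_+) = v_- \otimes v_+ + YZ\, v_+ \otimes v_-$, $\Delta(v_-) = v_- \otimes v_-$, together with unit $v_+$ and counit picking out $v_-$.

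The heart of the argument is then a direct computation of $\mu(\mu(x,y),z)$ versus $\mu(x,\mu(y,z))$ for a carefully chosen triple of homogeneous elements $x, y, z$ supported on composable summands of $H^2$ so that the underlying surface is forced to contain a configuration where the two bracketings correspond to performing a saddle/merge in two different orders on overlapping circles — precisely the "ladybug"-type configuration whose two resolutions are assigned the non-proportional values in Figure~\ref{fig:elementaryCoC}. The non-associativity of the \emph{type-$Y$} theory should surface as a discrepancy involving a factor that is not a monomial in $X, Y, Z^{\pm 1}$ — e.g. a sum like $v_- \otimes v_+ + YZ\, v_+ \otimes v_-$ appearing on one side against $v_- \otimes v_+ - \ldots$ or a different linear combination on the other — so that no unit of $R$ can reconcile them; I would isolate the two resulting elements of some $\mathcal{F}(c \circ 1_2 \circ a)$ and check they are $R^\times$-independent by comparing coefficients in the $R$-basis of tensor powers of $\{v_+, v_-\}$.

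The main obstacle I anticipate is bookkeeping: the chronologies on $W_{abc}(1_2,1_2)$ must be fixed consistently (saddles added right-to-left, "upwards" framings), and the intermediate cobordisms in the two bracketings differ by a change of chronology whose $\iota$-value must be tracked via the local relations~(\ref{eq:changeofframingmoves}) and the sliding relations~(\ref{eq:slide1})--(\ref{eq:slide2}); a sign or $Z$-power error there would spuriously make the products look proportional. To control this I would choose the triple $x, y, z$ so that at least one factor is $v_+$ (the unit of $H^1$) on the strand where the reordering happens, minimizing the number of framed critical points that move, and then invoke Proposition~\ref{putyrahammer} to pin down the change-of-chronology value up to homotopy. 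Once the two normal forms are written in the fixed $R$-basis, the conclusion — that they differ by an element of $R \setminus R^\times$, hence $H^n$ is not associative — is immediate, and I would close by noting this recovers Proposition~3.2 of~\cite{naisse2017odd} (the odd case $X = Z = 1$, $Y = -1$) and the analogous statement for Khovanov's algebra shows associativity is \emph{restored} exactly when $X = Y = Z = 1$, which is the phenomenon the grading category $(\mathcal{G}, \alpha)$ is designed to encode.
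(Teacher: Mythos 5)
There is a genuine conceptual gap in your success criterion. The paper states this result by citation (Proposition 3.2 of \cite{naisse2017odd}) and gives no proof of its own, so a direct computation in $H^2$ is a perfectly reasonable route and is in fact what Naisse and Vaz do. But you propose to find homogeneous $x,y,z$ whose two bracketings ``disagree by more than a unit of $R$,'' i.e.\ are not $R^\times$-proportional, and you conclude non-associativity from the products differing ``by an element of $R\setminus R^\times$.'' This can never happen: the whole point of Subsections \ref{ss:oddarcs}--\ref{ss:tracecat} (and of \cite{naisse2020odd}) is that $H^n$ is \emph{quasi-associative}, so for homogeneous elements one always has $\mu(\mu(x,y),z)=\alpha(\abs{x},\abs{y},\abs{z})\,\mu(x,\mu(y,z))$ with $\alpha$ valued in $R^\times$ (axiom (A.II) of Definition \ref{def:algebras}, realized for $H^n$ by the grading category $(\mathcal{G},\alpha)$). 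The two bracketings of a triple of monomials in $v_\pm$ are therefore \emph{always} unit multiples of one another; a search for an ``$R^\times$-independent'' pair, or for a discrepancy that is not a monomial in $X,Y,Z^{\pm1}$, would come up empty, and your ladybug-based expectation of a genuinely different linear combination on the two sides is not what occurs.

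The fix is simple and is the actual content of the Naisse--Vaz argument: non-associativity only requires $\mu(\mu(x,y),z)\neq\mu(x,\mu(y,z))$, so it suffices to exhibit one triple in $H^2$ whose common product is nonzero and whose associator value is a unit different from $1$ (a change-of-chronology unit such as $Y$ or $XZ$, coming from commuting a split past another saddle; it specializes to $-1$ in the odd theory and to $1$ in the even theory). Since each arc space $\mathcal{F}(c\,1_2\,a)$ is a free $R$-module and $u\neq 1$ in $R$, the equality $um=m$ forces $m=0$, so a nonzero product with $u\neq 1$ already proves non-associativity; no comparison of non-proportional elements is needed or possible. Your reduction from $H^n$ to $H^2$ by padding with identity arcs and $v_+$ factors is fine in spirit but should also be phrased this way: one checks that after padding the associator value of the chosen triple is still a nontrivial unit and the product is still nonzero, rather than that some non-unit discrepancy ``persists.''
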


Other problems include the fact that $H^n$ is not unital, and that the composition maps $\mu[t,s]$ fail to preserve $\mathbb{Z}^2$ grading. These can all be resolved using grading categories.

\subsection{The grading category $\mathcal{G}$ and its trace}
\label{ss:tracecat}

Let $s_{abc}(t,s)$ denote the $\mathbb{Z}^2$-degree of $W_{abc}(t,s)$.

\begin{lemma}
\label{lem:gassoc}
For any triple of flat tangles $r\in B_{\abs{c}}^{\abs{d}}$, $s\in B_{\abs{b}}^{\abs{c}}$, $t\in B_{\abs{a}}^{\abs{b}}$, we have that
\[
s_{abc}(t,s) + s_{acd}(s \circ t, r) = s_{bcd}(s,r) + s_{abd}(t, r\circ s)
\]
\end{lemma}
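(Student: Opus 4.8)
The plan is to show that both sides of the claimed identity compute the $\mathbb{Z}^2$-degree of the \emph{same} cobordism, namely the minimal cobordism
\[
(d \circ r \circ \overline{c}) \circ (c \circ s \circ \overline{b}) \circ (b \circ t \circ a) \to d \circ (r \circ s \circ t) \circ a,
\]
obtained by contracting the symmetric arcs of $\overline{c} \circ c$ and of $\overline{b} \circ b$ via saddles. Both orders of bracketing---first fuse $s \circ t$ then attach $r$, or first fuse $r \circ s$ then attach $t$---produce cobordisms that are related by a change of chronology; more precisely, the two composites $W_{acd}(s\circ t, r) \circ (W_{abc}(t,s) \otimes \mathbbm{1})$ and $W_{abd}(t, r\circ s) \circ (\mathbbm{1} \otimes W_{bcd}(s,r))$ are embedded cobordisms with the same source and target, differing only by the order in which certain saddle moves (which are supported in disjoint cylinders, since the $\overline{b}\circ b$ saddles and the $\overline{c}\circ c$ saddles live in disjoint regions of $\mathbb{R}\times[0,1]^2$) are performed. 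Hence they are related by a locally vertical change of chronology, and by Proposition~\ref{putyrahammer} they represent the same morphism up to a scalar in $R$; in particular they have the same underlying unframed cobordism, so the same $\mathbb{Z}^2$-degree.

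First I would make precise the two composite cobordisms above and check that each is the minimal cobordism realizing the full fusion $r \circ s \circ t$ (this is essentially the ``gluing of minimal cobordisms is minimal'' observation already implicit in the construction of $\mu[t,s]$ and the associativity-up-to-scalar discussion for $H^n$). Then I would invoke additivity of the $\mathbb{Z}^2$-grading under vertical and horizontal composition of cobordisms: $\deg$ is a homomorphism for $\circ$ and $\otimes$ because births, deaths, merges, and splits simply add up. Under this additivity, $\deg$ of the first composite is $s_{abc}(t,s) + s_{acd}(s\circ t, r)$ (the horizontal identity factor contributes $0$), and $\deg$ of the second composite is $s_{bcd}(s,r) + s_{abd}(t, r\circ s)$. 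Since the two composites are the same unframed cobordism, these two expressions are equal, which is exactly the claimed identity.

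The main obstacle---though it is more bookkeeping than genuine difficulty---will be verifying carefully that the two bracketings really do yield the \emph{same} embedded cobordism (not merely cobordisms of the same Euler characteristic), so that Proposition~\ref{putyrahammer} applies cleanly. One has to check that the saddle contractions of $\overline{c}\circ c$ and those of $\overline{b}\circ b$ occur in disjoint cylinders and therefore commute, and that in both orders one ends with the identity tangle $1_{\abs{b}}$ on the $b$-arcs and $1_{\abs{c}}$ on the $c$-arcs over the flat tangle $r\circ s\circ t$. Alternatively, and perhaps more cleanly, one can avoid the cobordism-level argument entirely and reduce the identity to a combinatorial count: $s_{xyz}(p,q)$ equals (number of births minus merges, deaths minus splits) in $W_{xyz}(p,q)$, and the number of saddles in $W_{xyz}(p,q)$ is $|y|$; using the known formula for $s_{xyz}$ in terms of the number of circles created and destroyed when forming $z\circ(q\circ p)\circ x$ from $(z\circ q\circ \overline{y})\circ(y\circ p\circ x)$, one checks the four-term identity by a direct Euler-characteristic / circle-counting argument, since $\chi$ is additive and the total cobordism $(d\circ r\circ s\circ t\circ a)$ assembled either way has the same boundary circles. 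Either route works; I would present the cobordism-equality route as the conceptual proof and remark that the combinatorial count gives an independent check.
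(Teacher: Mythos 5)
Your proposal is correct and is in essence the paper's own argument: the paper's one-line proof appeals to minimality (Euler characteristic) of the canonical cobordisms, which is exactly your observation that both sides compute the $\mathbb{Z}\times\mathbb{Z}$-degree of the single minimal cobordism from $(d\circ r\circ\overline{c})\circ(c\circ s\circ\overline{b})\circ(b\circ t\circ a)$ to $d\circ(r\circ s\circ t)\circ a$, using additivity of the degree under vertical and horizontal composition. One small streamlining: you do not need Proposition \ref{putyrahammer} (or any chronology/framing discussion) here, since the bidegree of such a cobordism is already determined by its Euler characteristic together with the circle counts of its source and target, hence is automatically independent of the order in which the two disjoint families of saddles are attached.
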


\begin{proof}
This follows from the minimality condition on the Euler characteristic of the canonical cobordisms $W_{abc}(t,s)$.
\end{proof}

\begin{definition}
Define $\mathcal{G}$ to be the category whose objects are crossingless matchings $a\in B^\bullet$ and for any $a,b\in \mathrm{Ob}(\mathcal{G})$,
\[
\mathrm{Hom}_\mathcal{G}(a, b) = B_m^n \times \mathbb{Z}^2
\]
with composition defined by
\[
(s, q) \circ (t,p) = (s \circ t, p + q + s_{abc}(t,s))
\]
for $(t,p) \in \mathrm{Hom}_\mathcal{G}(a,b)$ and $(s,q) \in \mathrm{Hom}_\mathcal{G}(b,c)$. This composition is associative by Lemma \ref{lem:gassoc}. For each $a\in \mathrm{Ob}(\mathcal{G})$, the distinguished identity element $\mathrm{Id}_a \in \mathrm{End}_\mathcal{G}(a)$ is given by
\[
\mathrm{Id}_a = (1_m, (m, 0))
\]
where $\abs{a} = m$.
\end{definition}

To turn $\mathcal{G}$ into a grading category, define $\alpha: \mathcal{G}^{[3]}\to R^\times$ as follows. For 
\[
\left(a \xrightarrow{(t_1,p_1)} b \xrightarrow{(t_2, p_2)} c \xrightarrow{(t_3, p_3)} d \right) \in \mathcal{G}^{[3]}
\]
let
\[
\alpha((t_1,p_1), (t_2, p_2), (t_3, p_3)) = \alpha_1 \alpha_2
\]
where 
\[
\alpha_1(t_1,p_1), (t_2, p_2), (t_3, p_3)) = \iota(H_\alpha)
\]
for $H_\alpha$ the locally vertical change of chronology
\[
H_\alpha: W_{acd}(t_3, t_2 \circ t_1) \circ (\mathbbm{1}_{d \circ t_3 \circ c} \otimes W_{abc}(t_1, t_2))
\Rightarrow
W_{abd} (t_3 \circ t_2, t_1) \circ (W_{bcd}(t_2, t_3) \otimes \mathbbm{1}_{b \circ t \circ a})
\]
and 
\[
\alpha_2(t_1,p_1), (t_2, p_2), (t_3, p_3)) = \lambda(s_{bcd}(t_2, t_3), p_1).
\]
Note that $\alpha_1$ depends only on the first coordinates of the morphisms involved.

Schematics in terms of leveled, binary trees are very helpful for ensuing computations. Each trivalent vertex corresponds to a canonical cobordism $W_{abc}(t,s)$; in this language, we can describe $\alpha$ as coming from
\[
\begin{tikzcd}
\tikz[scale=.45, baseline={([yshift=-.5ex]current bounding box.center)}, x=1.5cm, y=1.5cm]{
	\draw (0,1) node[below]{\tiny${}_a(t_1\text{,}\, p_1)_b$} .. controls (0,1.5) and (.5,1.5) .. (.5,2) .. controls (.5,2.5) and (1.25,2.5) .. (1.25,3); 
	\draw (1,.5) node[below]{\tiny${}_b(t_2\text{,}\, p_2)_c$} -- (1,1) .. controls (1,1.5) and (.5,1.5) .. (.5,2);
	\draw (2, 0) node[below]{\tiny${}_c(t_3\text{,}\, p_3)_d$} -- (2,2) .. controls (2,2.5) and (1.25,2.5) .. (1.25,3);
} \arrow[r, "\alpha_1"]
&
\alpha_1
\tikz[scale=.45, baseline={([yshift=-.5ex]current bounding box.center)}, x=1.5cm, y=1.5cm]{
	\draw (0,1) node[below]{\tiny${}_a(t_1\text{,}\, p_1)_b$} -- (0,2) .. controls (0,2.5) and (.75,2.5) .. (.75,3);
	\draw (1,.5) node[below]{\tiny${}_b(t_2\text{,}\, p_2)_c$} .. controls (1,1) and (1.5,1) .. (1.5,1.5);
	\draw (2, 0) node[below]{\tiny${}_c(t_3\text{,}\, p_3)_d$} -- (2,.5) .. controls (2,1) and (1.5,1) .. (1.5,1.5) -- (1.5,2) .. controls (1.5,2.5) and (.75,2.5) .. (.75,3);
} 
\arrow[r, "\alpha_2"]
&
\alpha_1 \alpha_2
\tikz[scale=.45, baseline={([yshift=-.5ex]current bounding box.center)}, x=1.5cm, y=1.5cm]{
	\draw (0,2) node[below]{\tiny${}_a(t_1\text{,}\, p_1)_b$} .. controls (0,2.5) and (.75,2.5) .. (.75,3);
	\draw (1,.5) node[below]{\tiny${}_b(t_2\text{,}\, p_2)_c$} .. controls (1,1) and (1.5,1) .. (1.5,1.5);
	\draw (2, 0) node[below]{\tiny${}_c(t_3\text{,}\, p_3)_d$} -- (2,.5) .. controls (2,1) and (1.5,1) .. (1.5,1.5) -- (1.5,2) .. controls (1.5,2.5) and (.75,2.5) .. (.75,3);
}
\end{tikzcd}
\]
where we write, \textit{e.g.}, ${}_a(t_1\text{,}\, p_1)_b$ to remember that $(t_1, p_1): a\to b$. 

\begin{proposition}[Proposition 5.4 of \cite{naisse2020odd}]
$(\mathcal{G}, \alpha)$ is a grading category.
\end{proposition}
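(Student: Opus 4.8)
The plan is to verify directly that $\alpha = \alpha_1 \alpha_2$ satisfies the $3$-cocycle condition $d\alpha(f_1,f_2,f_3,f_4) = 1$ for every composable chain of four morphisms in $\mathcal{G}$. Since $\alpha$ factors as a product, and the cocycle differential $d$ is multiplicative, it suffices to check $d\alpha_1 \equiv 1$ and $d\alpha_2 \equiv 1$ separately — with the caveat that only the product need vanish, so in practice I expect one of them to be a coboundary correction for the other, and the cleanest route is to show each factor is individually a cocycle given how they are defined. For $\alpha_1$, which depends only on the underlying flat tangles and is defined as $\iota$ applied to a locally vertical change of chronology, the key input is Proposition \ref{putyrahammer} (Proposition 4.4 of \cite{putyra20152categorychronologicalcobordismsodd}): locally vertical changes of chronology with the same source and target are unique up to homotopy, hence $\iota$ assigns them a well-defined value. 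The pentagon/cocycle identity for $\alpha_1$ then follows because the two ways of rebracketing a four-fold composite $W$-cobordism tower are related by a locally vertical change of chronology, and the composite of the five elementary rebracketings around the pentagon is a locally vertical change of chronology from the source configuration to itself; by uniqueness it must be homotopic to the identity, so $\iota$ of it is $1$. This is essentially Putyra's and Naisse–Putyra's argument, and I would cite their verification rather than reprove it.

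For $\alpha_2((t_1,p_1),(t_2,p_2),(t_3,p_3)) = \lambda(s_{bcd}(t_2,t_3), p_1)$, the plan is a purely bilinear-algebra computation. Writing out $d\alpha_2(f_1,f_2,f_3,f_4)$ as the alternating product of five $\lambda$-values, one uses (i) bilinearity of $\lambda$ in each slot, (ii) the composition law in $\mathcal{G}$, which says the second coordinate of $f_2 \circ f_1$ is $p_1 + p_2 + s_{\bullet}(t_1,t_2)$, so the $p$-arguments add in a controlled way under composition, and (iii) Lemma \ref{lem:gassoc}, the $4$-term relation $s_{abc}(t,s) + s_{acd}(s\circ t, r) = s_{bcd}(s,r) + s_{abd}(t,r\circ s)$, which is exactly what is needed for the $s$-arguments appearing in the various $\lambda$-slots to cancel in pairs. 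Since $\lambda(x,y)\lambda(y,x) = 1$ (noted after equation \eqref{eq:slide2}), the terms that survive bilinearity match up and cancel. I would organize this by fixing the chain $a \xrightarrow{(t_1,p_1)} b \xrightarrow{(t_2,p_2)} c \xrightarrow{(t_3,p_3)} d \xrightarrow{(t_4,p_4)} e$, expanding each of the five factors of $d\alpha_2$ explicitly in terms of $\lambda$ and $s$, and then collapsing using bilinearity plus Lemma \ref{lem:gassoc}; this is routine and I would not grind through every term, but would present the key cancellation.

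The main obstacle is bookkeeping in the $\alpha_2$ computation: there are several $\lambda$ factors, each with two $\mathbb{Z}^2$-arguments that are themselves sums of $p_i$'s and $s$-degrees, and getting the signs and the direction of each $\lambda$ (recall $\lambda(x,y)^{-1} = \lambda(y,x)$) exactly right is the delicate part. A secondary subtlety is making sure the interaction term between $\alpha_1$ and $\alpha_2$ really does vanish — i.e., that $d\alpha_1$ and $d\alpha_2$ don't merely combine to $1$ but each vanish, or alternatively tracking the combined identity carefully; I expect $\alpha_1$ to be a genuine cocycle (from the topological uniqueness argument) and $\alpha_2$ to be a genuine cocycle (from Lemma \ref{lem:gassoc}), so the product is too, but this should be stated explicitly. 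Since this proposition is quoted verbatim from \cite{naisse2020odd} (Proposition 5.4), my proof would largely recapitulate their verification, emphasizing the two structural inputs — Proposition \ref{putyrahammer} for $\alpha_1$ and Lemma \ref{lem:gassoc} together with bilinearity of $\lambda$ for $\alpha_2$ — and referring the reader there for the full elementary check.
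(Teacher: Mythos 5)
There is a genuine gap, and it is exactly the point the paper flags in the remark following Proposition \ref{prop:looperforG}: in the proof that $\alpha$ is an associator, the $\alpha_1$ contribution cancels against the $\alpha_2$ contribution --- neither factor is a $3$-cocycle on its own, only their product is. Your plan asserts the opposite. Concretely, run your own $\alpha_2$ computation for a chain $a \xrightarrow{f_1} b \xrightarrow{f_2} c \xrightarrow{f_3} d \xrightarrow{f_4} e$: bilinearity of $\lambda$ together with Lemma \ref{lem:gassoc} does kill all the terms involving $p_1$ and $p_2$, but the fourth factor $\alpha_2(f_2\circ f_1, f_3, f_4)^{-1} = \lambda\bigl(s_{cde}(t_3,t_4),\, p_1+p_2+s_{abc}(t_1,t_2)\bigr)^{-1}$ contributes a cross term that nothing else matches, leaving
\[
d\alpha_2(f_1,f_2,f_3,f_4) \;=\; \lambda\bigl(s_{cde}(t_3,t_4),\, s_{abc}(t_1,t_2)\bigr)^{-1},
\]
which is a nontrivial unit of $R$ in general. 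So $\alpha_2$ is not a cocycle, and your fallback ("each factor vanishes") cannot be repaired by more careful bookkeeping.

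Correspondingly, your argument for $d\alpha_1 \equiv 1$ via Proposition \ref{putyrahammer} is where the topology actually fails. The pentagon composite of the five elementary rebracketing changes of chronology is not homotopic to the identity: going around the pentagon interchanges the chronological order of the two canonical cobordisms $W_{abc}(t_1,t_2)$ and $W_{cde}(t_3,t_4)$, which are supported over disjoint regions, and Proposition \ref{putyrahammer} only applies to locally vertical changes of chronology \emph{with respect to the same cylinders}, which is exactly what breaks when distant critical points trade heights. By the slide relations (\ref{eq:slide1})--(\ref{eq:slide2}), $\iota$ of this composite is $\lambda\bigl(s_{cde}(t_3,t_4), s_{abc}(t_1,t_2)\bigr)$, which cancels the residual term in $d\alpha_2$ above; that cancellation is the actual content of the proof (and is why the $\mathbb{Z}^2$-coordinate and the $\alpha_2$ factor appear in the definition of $\mathcal{G}$ at all --- if $\alpha_1$ alone were a cocycle, flat tangles without the degree shifts would already form a grading category). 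A sanity check for the contrast: in the looper proof (Proposition \ref{prop:looperforG}) the hexagon composite genuinely is locally vertical, so $\alpha_1$-$\varepsilon_1$ coherence holds topologically and the $\alpha_2$-$\varepsilon_2$ terms cancel among themselves; the pentagon for the associator does not have this split structure, and your proposal imports the wrong cancellation pattern.
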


We omit the proof, but note that the forthcoming proof of Proposition \ref{prop:looperforG} is very similar in spirit.

\subsubsection{The trace of $\mathcal{G}$}

The trace of $\mathcal{G}$ is the set flat tangles in the annulus $S^1 \times I$ paired with an element of $\mathbb{Z}^2$. Denote the annular closure of a flat tangle $t$ by $\tilde{t}$. The quotient map
\[
\mathrm{tr}: \coprod_{a\in \mathrm{Ob}(\mathcal{G})} \mathrm{End}_\mathcal{G}(a) \to \mathrm{Tr}(\mathcal{G})
\]
is given by
\[
\mathrm{tr}_a(t,p) = (\tilde{t}, p + \abs{W_a(t)})
\]
where $W_a(t)$ is the canonical cobordism obtained by contracting symmetric arcs of $\overline{a} \circ a$. To see this, recall that the defining quality of $\mathrm{tr}$ is that the diagram in Equation (\ref{eq:cohotrace}) commutes. When $\mathcal{C} = \mathcal{G}$, that diagram takes the following form.
\[
\tikz{
	\node(L) at (0,0) {$
	\tikz{
	\draw[rounded corners=0.5mm] (-0.8, 0) to[out=90, in=-90] (-0.35,1);
	\draw[rounded corners=0.5mm] (-1.2, 0) to[out=90, in=180] (-0.7, 1.25) to [out=0, in=90] (-0.35,1);
	\draw[rounded corners=0.5mm] (0.8, 0) to[out=90, in=-90] (0.35,1);
	\draw[rounded corners=0.5mm] (1.2, 0) to[out=90, in=0] (0.7, 1.25) to [out=180, in=90] (0.35,1);
	\draw[rounded corners=0.5mm] (-0.8, 0) to[out=-90, in=90] (-0.35,-1);
	\draw[rounded corners=0.5mm] (-1.2, 0) to[out=-90, in=180] (-0.7, -1.25) to [out=0, in=-90] (-0.35,-1);
	\draw[rounded corners=0.5mm] (0.8, 0) to[out=-90, in=90] (0.35,-1);
	\draw[rounded corners=0.5mm] (1.2, 0) to[out=-90, in=0] (0.7, -1.25) to [out=180, in=-90] (0.35,-1);
	\node[draw, fill=white] at (-1,0) {\phantom{{}=={}}};
	\node at (-1,0) {$t$};
	\node[draw, fill=white] at (1,0) {\phantom{{}=={}}};
	\node at (1,0) {$s$};
	\node[draw, rounded corners, fill=white] at (-0.35,1) {\phantom{{}={}}};
	\node at (-0.35,1) {$b$};
	\node[draw, rounded corners, fill=white] at (0.35,1) {\phantom{{}={}}};
	\node at (0.35,1) {$b$};
	\node[draw, rounded corners, fill=white] at (-0.35,-1) {\phantom{{}={}}};
	\node at (-0.35,-1) {$a$};
	\node[draw, rounded corners, fill=white] at (0.35,-1) {\phantom{{}={}}};
	\node at (0.35,-1) {$a$};
	}
	$};
	\node(U) at (6,2) {$
\tikz{
	\draw[rounded corners=0.5mm] (-0.8, 0) to[out=90, in=180] (0, 0.8) to[out=0, in=90] (0.8,0);
	\draw[rounded corners=0.5mm] (-1.2, 0) to[out=90, in=180] (0, 1.2) to[out=0, in=90] (1.2,0);
	\draw[rounded corners=0.5mm] (-0.8, 0) to[out=-90, in=90] (-0.35,-1);
	\draw[rounded corners=0.5mm] (-1.2, 0) to[out=-90, in=180] (-0.7, -1.25) to [out=0, in=-90] (-0.35,-1);
	\draw[rounded corners=0.5mm] (0.8, 0) to[out=-90, in=90] (0.35,-1);
	\draw[rounded corners=0.5mm] (1.2, 0) to[out=-90, in=0] (0.7, -1.25) to [out=180, in=-90] (0.35,-1);
	\node[draw, fill=white] at (-1,0) {\phantom{{}=={}}};
	\node at (-1,0) {$t$};
	\node[draw, fill=white] at (1,0) {\phantom{{}=={}}};
	\node at (1,0) {$s$};
	\node[draw, rounded corners, fill=white] at (-0.35,-1) {\phantom{{}={}}};
	\node at (-0.35,-1) {$a$};
	\node[draw, rounded corners, fill=white] at (0.35,-1) {\phantom{{}={}}};
	\node at (0.35,-1) {$a$};
	}
	$};
	\node(D) at (6,-2) {$
\tikz{
	\draw[rounded corners=0.5mm] (-0.8, 0) to[out=-90, in=180] (0, -0.8) to[out=0, in=-90] (0.8,0);
	\draw[rounded corners=0.5mm] (-1.2, 0) to[out=-90, in=180] (0, -1.2) to[out=0, in=-90] (1.2,0);
	\draw[rounded corners=0.5mm] (-0.8, 0) to[out=90, in=-90] (-0.35,1);
	\draw[rounded corners=0.5mm] (-1.2, 0) to[out=90, in=180] (-0.7, 1.25) to [out=0, in=90] (-0.35,1);
	\draw[rounded corners=0.5mm] (0.8, 0) to[out=90, in=-90] (0.35,1);
	\draw[rounded corners=0.5mm] (1.2, 0) to[out=90, in=0] (0.7, 1.25) to [out=180, in=90] (0.35,1);
	\node[draw, fill=white] at (-1,0) {\phantom{{}=={}}};
	\node at (-1,0) {$t$};
	\node[draw, fill=white] at (1,0) {\phantom{{}=={}}};
	\node at (1,0) {$s$};
	\node[draw, rounded corners, fill=white] at (-0.35,1) {\phantom{{}={}}};
	\node at (-0.35,1) {$b$};
	\node[draw, rounded corners, fill=white] at (0.35,1) {\phantom{{}={}}};
	\node at (0.35,1) {$b$};
	}
	$};
	\node(R) at (12, 0) {$
	\tikz{
	\draw[rounded corners=0.5mm] (-0.8, 0) to[out=90, in=180] (0, 0.8) to[out=0, in=90] (0.8,0);
	\draw[rounded corners=0.5mm] (-1.2, 0) to[out=90, in=180] (0, 1.2) to[out=0, in=90] (1.2,0);
	\draw[rounded corners=0.5mm] (-0.8, 0) to[out=-90, in=180] (0, -0.8) to[out=0, in=-90] (0.8,0);
	\draw[rounded corners=0.5mm] (-1.2, 0) to[out=-90, in=180] (0, -1.2) to[out=0, in=-90] (1.2,0);
	\node[draw, fill=white] at (-1,0) {\phantom{{}=={}}};
	\node at (-1,0) {$t$};
	\node[draw, fill=white] at (1,0) {\phantom{{}=={}}};
	\node at (1,0) {$s$};
	}
	$};
	\draw[->] (L) to node[above]{$\circ_b$} (U);
	\draw[->] (L) to node[below]{$\circ_a$} (D);
	\draw[->] (U) to node[above]{$\mathrm{tr}_a$} (R);
	\draw[->] (D) to node[below]{$\mathrm{tr}_b$}(R);
}
\]
The upper path takes
\[
((t, p), (s, q)) \mapsto \underbrace{(s\circ t, p + q + s_{aba}(t,s))}_{\in \mathrm{End}_\mathcal{G}(a)} \mapsto \underbrace{(\widetilde{s \circ t}, p + q + s_{aba}(t,s) + \abs{W_a(s\circ t)})}_{\in \mathrm{Tr}(\mathcal{G})}
\]
while the lower path takes
\[
((t, p), (s, q)) \mapsto \underbrace{(t\circ s, q + p + s_{bab}(s,t))}_{\in \mathrm{End}_\mathcal{G}(b)} \mapsto \underbrace{(\widetilde{t \circ s}, q + p + s_{bab}(s,t) + \abs{W_b(t\circ s)})}_{\in \mathrm{Tr}(\mathcal{G})}.
\]
Clearly $\widetilde{s\circ t} = \widetilde{t \circ s}$. Moreover,
\[
s_{aba}(t,s) + \abs{W_a(s\circ t)} = s_{bab}(s,t) + \abs{W_b(t \circ s)}
\]
follows from (an argument completely analagous to) Lemma \ref{lem:gassoc}, and we conclude that the diagram commutes.

We claim that $(\mathcal{G}, \alpha)$ admits a looper. Define $\varepsilon: \Omega_2\mathcal{G} \to R^\times$ as follows. For
\[
\left(
\begin{tikzcd}
	a \arrow[r, "(t\text{,}\, p)", bend left=10mm] & b \arrow[l, "(s\text{,}\, q)", bend left=10mm]
\end{tikzcd}
\right)
\in \Omega_2 \mathcal{G}
\]
let
\[
\varepsilon((t, p), (s, q)) = \varepsilon_1 \varepsilon_2
\]
where
\[
\varepsilon_1((t,p), (s,q)) = \iota(H_\varepsilon)
\]
for $H_\varepsilon$ the locally vertical change of chronology
\[
H_\varepsilon: W_a(s\circ t) \circ W_{aba}(t, s) \Rightarrow W_b(t\circ s) \circ W_{bab}(s,t)
\]
and
\[
\varepsilon_2((t,p), (s,q)) = \lambda(p, q).
\]
In terms of tree schematics, we explain $\varepsilon$ as coming from 
\[
\begin{tikzcd}
\tikz[scale=.45, baseline={([yshift=-.5ex]current bounding box.center)}, x=3cm, y=3cm]{
	\draw (0,1) node[below]{\small${}_a(t\text{,}\, p)_b$} .. controls (0,1.5) and (.5,1.5) .. (.5,2); 
	\draw (1,.5) node[below]{\small${}_b(s\text{,}\, q)_a$} -- (1,1) .. controls (1,1.5) and (.5,1.5) .. (.5,2);
}
\arrow[r, "\varepsilon_1"]
&
\varepsilon_1
\tikz[scale=.45, baseline={([yshift=-.5ex]current bounding box.center)}, x=3cm, y=3cm]{
	\draw(0, 0.5) node[below]{\small${}_b(s\text{,}\, q)_a$} -- (0,1) .. controls (0,1.5) and (.5,1.5) .. (.5,2);
	\draw (1, 1) node[below]{\small${}_a(t\text{,}\, p)_b$} .. controls (1,1.5) and (.5,1.5) .. (.5,2);
}
\arrow[r, "\varepsilon_2"]
&
\varepsilon_1 \varepsilon_2
\tikz[scale=.45, baseline={([yshift=-.5ex]current bounding box.center)}, x=3cm, y=3cm]{
	\draw (0,1) node[below]{\small${}_b(s\text{,}\, q)_a$} .. controls (0,1.5) and (.5,1.5) .. (.5,2); 
	\draw (1,.5) node[below]{\small${}_a(t\text{,}\, p)_b$} -- (1,1) .. controls (1,1.5) and (.5,1.5) .. (.5,2);
}
\end{tikzcd}
\]
Again, note that $\varepsilon_1$ depends only on the first coordinates of the morphisms involved.

\begin{proposition}
\label{prop:looperforG}
$\varepsilon$ is a looper for $(\mathcal{G}, \alpha)$.
\end{proposition}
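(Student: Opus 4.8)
The plan is to verify the two conditions of Definition~\ref{def:looper} for the function $\varepsilon = \varepsilon_1\varepsilon_2$ defined above, treating the two factors separately. Condition~(i) asks that $\varepsilon(f,g)^{-1} = \varepsilon(g,f)$. For $\varepsilon_2$ this is immediate from bilinearity of $\lambda$: $\varepsilon_2((t,p),(s,q)) = \lambda(p,q)$ and $\lambda(p,q)^{-1} = \lambda(q,p) = \varepsilon_2((s,q),(t,p))$. For $\varepsilon_1$, the key point is that the change of chronology $H_\varepsilon : W_a(s\circ t)\circ W_{aba}(t,s) \Rightarrow W_b(t\circ s)\circ W_{bab}(s,t)$ is locally vertical, so by Proposition~\ref{putyrahammer} it is determined up to homotopy by its source and target; running it in reverse gives a locally vertical change of chronology with swapped source and target, and since $\iota$ is multiplicative with respect to $\star$-composition we get $\iota(H_\varepsilon)^{-1} = \iota(H_\varepsilon^{\mathrm{op}})$, which is exactly $\varepsilon_1((s,q),(t,p))$.

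Condition~(ii), $\alpha$-$\varepsilon$ coherence, is the substantial part. We must show that for any loop of length three $(t_3,p_3)\circ(t_2,p_2)\circ(t_1,p_1)$ in $\mathcal{G}$,
\[
\alpha(f,g,h)\,\varepsilon(f,h\circ g)\,\alpha(g,h,f)\,\varepsilon(g,f\circ h)\,\alpha(h,f,g)\,\varepsilon(h,g\circ f) = 1,
\]
writing $f = (t_1,p_1)$, $g = (t_2,p_2)$, $h = (t_3,p_3)$. The strategy is to split this into the ``$\lambda$-part'' (the $\alpha_2$ and $\varepsilon_2$ factors) and the ``cobordism part'' (the $\alpha_1$ and $\varepsilon_1$ factors), and show each contributes $1$ separately. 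The $\lambda$-part is a bookkeeping computation: each $\alpha_2$ term is of the form $\lambda(s_{\bullet\bullet\bullet}(\cdot,\cdot), p_i)$ and each $\varepsilon_2$ term is of the form $\lambda(p_i,p_j)$; using bilinearity of $\lambda$, the relation Lemma~\ref{lem:gassoc} (in the looped form), and the identity $\lambda(x,y)\lambda(y,x)=1$, all these cancel. I would organize this by collecting, for each index $i$, the total $\lambda$-exponent paired against $p_i$ and checking it vanishes in $\mathbb{Z}^2$.

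The cobordism part is the main obstacle. Here I need to show that the product of the six factors $\iota(H_\alpha)$ and $\iota(H_\varepsilon)$ appearing in the hexagon equals $1$ in $R^\times$. The clean way is the same one used for Lemma~\ref{lem:simpcon}: build the relevant diagram of leveled binary-tree (or ``loop-smoothing'') schematics, observe that every $H_\alpha$ and $H_\varepsilon$ is a \emph{locally vertical} change of chronology, and invoke Proposition~\ref{putyrahammer} to conclude that the composite change of chronology realizing the hexagon loop is homotopic to the identity — hence evaluated by $\iota$ to $1$. Concretely, the hexagon corresponds to transporting a fixed presentation of a $3$-partitioned loop around the six binary matchings of $\mathcal{T}(3)$ and back, which is a contractible loop; since $\iota$ is a well-defined monoidal-functor-type invariant (Putyra's theorem, $\iota(H'\star H) = \iota(H')\iota(H)$ and $\iota(H'\circ H) = \iota(H')\iota(H)$), a null-homotopic composite evaluates to $1$. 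The delicate step is checking that the canonical cobordisms are glued compatibly so that the six changes of chronology actually compose head-to-tail around the hexagon — this is where one must carefully match source and target $1$-manifolds of the $W_{abc}(\cdot,\cdot)$ and $W_a(\cdot)$ pieces, using the minimality/Euler-characteristic normalization from Lemma~\ref{lem:gassoc} and its looped analogue. Once the composability is in hand, Proposition~\ref{putyrahammer} does the rest, exactly as in the proof of Proposition~5.4 of \cite{naisse2020odd} that $(\mathcal{G},\alpha)$ is a grading category, which the authors note is ``very similar in spirit.''
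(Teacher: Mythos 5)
Your proposal is correct and follows essentially the same route as the paper's proof: condition (i) via bilinearity of $\lambda$ together with Proposition~\ref{putyrahammer} and multiplicativity of $\iota$ applied to the reversed locally vertical change of chronology, and condition (ii) split into the ``$\alpha_1$-$\varepsilon_1$'' cobordism part (the hexagon of schematics, again handled by Proposition~\ref{putyrahammer} and $\star$-multiplicativity of $\iota$) and the ``$\alpha_2$-$\varepsilon_2$'' part, which cancels by expanding each $\varepsilon_2$ term via bilinearity and pairing $\lambda(x,y)$ against $\lambda(y,x)$. The only cosmetic difference is that your appeal to Lemma~\ref{lem:gassoc} in the $\lambda$-bookkeeping is unnecessary (the cancellation is purely pairwise), and you should justify the hexagon by the change-of-chronology argument itself rather than by contractibility of $\mathcal{T}(3)$, which is defined by gluing in the coherence 2-cell and so cannot be used as input.
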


\begin{proof}
First, note that $\varepsilon((t,p), (s,q))^{-1} = \varepsilon((s, q), (t, p))$. On one hand
\[
\varepsilon_2((s, q), (t, p)) = \lambda(q, p) = \lambda(p, q)^{-1} = \varepsilon_2((t, p), (s, q))^{-1}.
\]
On the other hand, consider the sequence of changes of chronology
\[
\begin{tikzcd}
W_a(s\circ t) \circ W_{aba}(t, s) \arrow[rr, Rightarrow, bend right=7mm, "\mathrm{Id}"']
\arrow[r, Rightarrow, "H_\varepsilon"]
&
W_b(t\circ s) \circ W_{bab}(s, t) 
\arrow[r, Rightarrow, "H_\varepsilon' "]
&
W_a(s\circ t) \circ W_{aba}(t, s).
\end{tikzcd}
\]
Since $H_\varepsilon$ and $H_\varepsilon'$ are locally vertical changes of chronology, we see that $H_\varepsilon' \star H_\varepsilon$ is homotopic to the identity change of chronology. Thus, using Proposition \ref{putyrahammer}
\[
\iota(H_\varepsilon') \iota(H_\varepsilon) = \iota(H_\varepsilon' \star H_\varepsilon) = \iota(\mathrm{Id}) = 1.
\]
Since $R$ is a commutative ring, we conclude that $\varepsilon_1((t,p), (s, q))^{-1} = \varepsilon_1((s, q), (t, p))$, which satisfies part (i) of Definition \ref{def:looper}.

To show (ii) (\textit{i.e.}, $\alpha$-$\varepsilon$ coherence), we break our proof into two parts: ``$\alpha_1$-$\varepsilon_1$ coherence'' and ``$\alpha_2$-$\varepsilon_2$ coherence.'' Suppose that
\[
\left(
\tikz[baseline={([yshift=-.5ex]current bounding box.center)}, scale=0.8,x=1cm, y=1cm]{
		%\draw (0,0) circle [radius=1];
	%DOTS
	\node(x) at (0,-1) {$a$};
	\node(z) at (1.73205/2, 0.5) {$b$};
	\node(y) at (-1.73205/2, 0.5) {$c$};
	% ARROWS
	\draw[->] (x) to node(f)[below, sloped]{\small$(t_, p_1)$} (y);
	\draw[->] (y) to node(g)[above]{\small$(t_2, p_2)$} (z);
	\draw[->] (z) to node(h)[below, sloped]{\small$(t_3, p_3)$} (x);
	}
\right)
\in \Omega_3\mathcal{G}.
\]
Considering just the first coordinates of the morphisms involved (as $\alpha_1$ and $\varepsilon_1$ depend only on these entries), $\alpha$-$\varepsilon$ coherence translates to the following hexagon of schematics
\[
\begin{tikzcd}[column sep=huge, row sep=tiny]
&
\tikz[scale=.45, baseline={([yshift=-.5ex]current bounding box.center)}, x=1.75cm, y=1.5cm]{
	\draw (0,0) node[below]{\tiny${}_a(t_1)_b$} -- (0,2) .. controls (0,2.5) and (.75,2.5) .. (.75,3);
	\draw (1,0) node[below]{\tiny${}_b(t_2)_c$} -- (1,1) .. controls (1,1.5) and (1.5,1.5) .. (1.5,2);
	\draw (2, 0) node[below]{\tiny${}_c(t_3)_d$} -- (2,1) .. controls (2,1.5) and (1.5,1.5) .. (1.5,2) -- (1.5,2) .. controls (1.5,2.5) and (.75,2.5) .. (.75,3);
}
\arrow[r, "\varepsilon_1(t_1\text{,}\, t_3 \circ t_2)"]
&
\tikz[scale=.45, baseline={([yshift=-.5ex]current bounding box.center)}, x=1.75cm, y=1.5cm]{
	\draw (0,0) node[below]{\tiny${}_b(t_2)_c$} -- (0,1) .. controls (0,1.5) and (.5,1.5) .. (.5,2) .. controls (.5,2.5) and (1.25,2.5) .. (1.25,3); 
	\draw (1,0) node[below]{\tiny${}_c(t_3)_a$} -- (1,1) .. controls (1,1.5) and (.5,1.5) .. (.5,2);
	\draw (2, 0) node[below]{\tiny${}_a(t_1)_b$} -- (2,2) .. controls (2,2.5) and (1.25,2.5) .. (1.25,3);
} 
\arrow[dr, "\alpha_1(t_2\text{,}\, t_3\text{,}\, t_1)"]
&
\\
\tikz[scale=.45, baseline={([yshift=-.5ex]current bounding box.center)}, x=1.75cm, y=1.5cm]{
	\draw (0,0) node[below]{\tiny${}_a(t_1)_b$} -- (0,1) .. controls (0,1.5) and (.5,1.5) .. (.5,2) .. controls (.5,2.5) and (1.25,2.5) .. (1.25,3); 
	\draw (1,0) node[below]{\tiny${}_b(t_2)_c$} -- (1,1) .. controls (1,1.5) and (.5,1.5) .. (.5,2);
	\draw (2, 0) node[below]{\tiny${}_c(t_3)_d$} -- (2,2) .. controls (2,2.5) and (1.25,2.5) .. (1.25,3);
}
\arrow[ur, "\alpha_1(t_1\text{,}\, t_2\text{,}\, t_3)"]
&
&
&
\tikz[scale=.45, baseline={([yshift=-.5ex]current bounding box.center)}, x=1.75cm, y=1.5cm]{
	\draw (0,0) node[below]{\tiny${}_b(t_2)_c$} -- (0,2) .. controls (0,2.5) and (.75,2.5) .. (.75,3);
	\draw (1,0) node[below]{\tiny${}_c(t_3)_a$} -- (1,1) .. controls (1,1.5) and (1.5,1.5) .. (1.5,2);
	\draw (2, 0) node[below]{\tiny${}_a(t_1)_b$} -- (2,1) .. controls (2,1.5) and (1.5,1.5) .. (1.5,2) -- (1.5,2) .. controls (1.5,2.5) and (.75,2.5) .. (.75,3);
}
\arrow[dl, "\varepsilon_1(t_2\text{,}\, t_1 \circ t_3)"]
\\
&
\tikz[scale=.45, baseline={([yshift=-.5ex]current bounding box.center)}, x=1.75cm, y=1.5cm]{
	\draw (0,0) node[below]{\tiny${}_c(t_3)_a$} -- (0,2) .. controls (0,2.5) and (.75,2.5) .. (.75,3);
	\draw (1,0) node[below]{\tiny${}_a(t_1)_b$} -- (1,1) .. controls (1,1.5) and (1.5,1.5) .. (1.5,2);
	\draw (2, 0) node[below]{\tiny${}_b(t_2)_c$} -- (2,1) .. controls (2,1.5) and (1.5,1.5) .. (1.5,2) -- (1.5,2) .. controls (1.5,2.5) and (.75,2.5) .. (.75,3);
}
\arrow[ul, "\varepsilon_1(t_3\text{,}\, t_2 \circ t_1)"]
&
\tikz[scale=.45, baseline={([yshift=-.5ex]current bounding box.center)}, x=1.75cm, y=1.5cm]{
	\draw (0,0) node[below]{\tiny${}_c(t_3)_a$} -- (0,1) .. controls (0,1.5) and (.5,1.5) .. (.5,2) .. controls (.5,2.5) and (1.25,2.5) .. (1.25,3); 
	\draw (1,0) node[below]{\tiny${}_a(t_1)_b$} -- (1,1) .. controls (1,1.5) and (.5,1.5) .. (.5,2);
	\draw (2, 0) node[below]{\tiny${}_b(t_2)_c$} -- (2,2) .. controls (2,2.5) and (1.25,2.5) .. (1.25,3);
}
\arrow[l, "\alpha_1(t_3\text{,}\, t_1\text{,}\, t_2)"]
&
\end{tikzcd}
\]
where each arrow corresponds to a change of chronology. Again, by Proposition \ref{putyrahammer}, we have that the change of chronology equal to the composition of all six of these changes of chronology is homotopic to the identity change of chronology. Thus, by multiplicativity of $\iota$ with respect to $\star$-composition, we have that
\[
\alpha_1(t_1, t_2, t_3) \varepsilon_1(t_1, t_3 \circ t_2)
		\alpha_1(t_2, t_3, t_1) \varepsilon_1(t_2, t_1 \circ t_3)
		\alpha_1(t_3, t_1, t_2) \varepsilon_1(t_3, t_2 \circ t_1)
		=1
\]
which concludes the proof of ``$\alpha_1$-$\varepsilon_1$ coherence.'' For the other, simply note that the contributions made by $\alpha_2$ are
\begin{align*}
& \alpha_2((t_1, p_1), (t_2, p_2), (t_3, p_3)) = \lambda(s_{bca}(t_2, t_3), p_1),
\\
& \alpha_2((t_2, p_2), (t_3, p_3), (t_1, p_1)) = \lambda(s_{cab}(t_3, t_1), p_2),~\text{and}
\\
& \alpha_2((t_3, p_3), (t_1, p_1), (t_2, p_2)) = \lambda(s_{abc}(t_1, t_2), p_3).
\end{align*}
Similarly, the contributions made by $\varepsilon_2$ can be listed as 
\begin{align*}
\varepsilon_2((t_1, p_1), (t_3, p_3) \circ (t_2, p_2))  & = \lambda(p_1, s_{bca}(t_2, t_3) + p_2 + p_3)
\\ &= \lambda(p_1, s_{bca}(t_2, t_3)) \lambda(p_1, p_2) \lambda(p_1, p_3), 
\end{align*}
\begin{align*}
\varepsilon_2( (t_2, p_2), (t_1, p_1) \circ (t_3, p_3) ) & = \lambda (p_2, s_{cab}(t_3, t_1) + p_3 + p_1)
\\ &= \lambda(p_2, s_{cab}(t_3, t_1)) \lambda(p_2, p_3) \lambda(p_2, p_1), 
\end{align*}
\begin{align*}
\varepsilon_2((t_3, p_3), (t_2, p_2) \circ (t_1, p_1)) & = \lambda (p_3, s_{abc}(t_1, t_2) + p_1 + p_2)
\\ &= \lambda(p_3, s_{abc}(t_1, t_2)) \lambda(p_3, p_1) \lambda(p_3, p_2),
\end{align*}
applying bilinearity in the second coordinate. Since $\lambda(x,y)^{-1} = \lambda(y,x)$, we see that the $\alpha_2$ and $\varepsilon_2$ contributions cancel with each other, concluding the proof.
\end{proof}

\begin{remark}
In the proof that $\alpha$ is an associator, the $\alpha_1$ contribution cancels with the $\alpha_2$ contribution, thus all terms in the looper for $(\mathcal{G}, \alpha)$ are essential.
\end{remark}

\section{Up-to-unit funtoriality}
\label{s:utu_fun}

In this section, we compute the zeroth (degree-preserving) Hochschild cohomology of an arbitrary arc algebra $H^n$ and the $\mathcal{G}$-graded automorphisms of flat tangles to give an alternative, relatively quick proof of up-to-unit functoriality for unified Khovanov homology with respect to tangle cobordisms. This argument can be thought of as an update to Khovanov's \cite{MR2171235}; indeed, this entire section is largely modeled after his paper. Our argument implies that Putyra's unified theory for links (over $R$ rather than $\mathbb{Z}[\pi] / \pi^2=1$, as was achieved by Migdail-Wehrli \cite{migdail2024functoriality}) is functorial with respect to link cobordism, and it is also the first proof that the Naisse-Putyra tangle theory (and thus a tangle theory for odd Khovanov homology) is invariant up to unit (or sign) with respect to tangle cobordisms.

The source category for this claim is (an oriented, chronological version of) the 2-tangle 2-category, which we denote by $\mathbf{Tang}$. Following Khovanov \cite{MR2171235}, we use the combinatorial realization of this category. The construction presented here should be attributed to Baez-Langford \cite{MR2020556}, Carter-Rieger-Saito \cite{MR1238875, MR1445361, MR1487374}, Fischer \cite{MR1290200}, Kharlamov-Turaev \cite{MR1386661}, and Roseman \cite{roseman1998reidemeister}. The objects of this category are elements of $\bigcup_{n\ge 0}\{+, -\}^{2n}$, thought of as disjoint points aligned on a horizontal line. We write $\abs{x}$ to denote half of the cardinality of an object. A 1-morphism $x \to y$ is a rigid, oriented tangle from $\abs{x}$ points to $\abs{y}$ points which is generic in the sense that it can be decomposed uniquely into a sequence of the following generating 1-morphisms. 
\[
\tikz[baseline={([yshift=-.5ex]current bounding box.center)}, scale=0.3, y=0.75cm]
{   
    \draw[dotted] (-0.5,4) -- (7.5,4);
    \draw[knot] (0,0) node[below]{$\phantom{(}1\phantom{)}$} -- (0,4);
    \node at (1,2) {$\cdots$};
    \draw[knot] (2,0) -- (2,4);
    \draw[knot] (4,0) to[out=90,in=-90] (3,4);
    \draw[knot, overcross] (3,0) to[out=90, in=-90] (4,4) node[above]{$\phantom{i}$};
    \node[below] at (3,0) {$\phantom{(}i\phantom{)}$};
    \draw[knot] (5,0) -- (5,4);
    \node at (6,2) {$\cdots$};
    \draw[knot] (7,0) node[below]{$\phantom{(}2n\phantom{)}$} -- (7,4);
    \draw[dotted] (-0.5,0) -- (7.5,0);
}
\qquad
\tikz[baseline={([yshift=-.5ex]current bounding box.center)}, scale=0.3, y=0.75cm]
{   
    \draw[dotted] (-0.5,4) -- (7.5,4);
    \draw[knot] (0,0) node[below]{$\phantom{(}1\phantom{)}$} -- (0,4);
    \node at (1,2) {$\cdots$};
    \draw[knot] (2,0) -- (2,4);
    \draw[knot] (3,0) node[below]{$\phantom{(}i\phantom{)}$} to[out=90, in=-90] (4,4) node[above]{$\phantom{i}$};
    \draw[knot, overcross] (4,0) to[out=90,in=-90] (3,4);
    \draw[knot] (5,0) -- (5,4);
    \node at (6,2) {$\cdots$};
    \draw[knot] (7,0) node[below]{$\phantom{(}2n\phantom{)}$} -- (7,4);
    \draw[dotted] (-0.5,0) -- (7.5,0);
}
\qquad
\tikz[baseline={([yshift=-.5ex]current bounding box.center)}, scale=0.3, y=0.75cm]
{   
    \draw[dotted] (-0.5,4) -- (7.5,4);
    \draw[knot] (0,0) node[below]{$\phantom{(}1\phantom{)}$} to[out=90, in=-90] (1,4) node[above]{$\phantom{i}$};
    \node at (1.5,2) {$\cdots$};
    \draw[knot] (2,0) to[out=90, in=-90] (3,4);
    \draw[knot] (5,0) to[out=90, in=-90] (4,4);
    \node at (5.5, 2) {$\cdots$};
    \draw[knot] (7,0) node[below]{$\phantom{(}2n\phantom{)}$} to[out=90, in=-90] (6,4);
    \draw[knot] (3,0) node[below]{$\phantom{(}i\phantom{)}$}   to[out=90, in=180] (3.5, 1.5) to[out=0, in=90] (4,0);
    \draw[dotted] (-0.5,0) -- (7.5,0);
}
\qquad
\tikz[baseline={([yshift=-.5ex]current bounding box.center)}, scale=0.3, y=0.75cm]
{   
    \draw[dotted] (-0.5,4) -- (7.5,4);
    \draw[knot] (0,4) to[out=-90, in=90] (1,0) node[below]{$\phantom{(}1\phantom{)}$};
    \node at (1.5,2) {$\cdots$};
    \draw[knot] (2,4) to[out=-90, in=90] (3,0);
    \draw[knot] (5,4) to[out=-90, in=90] (4,0);
    \node at (5.5, 2) {$\cdots$};
    \draw[knot] (7,4) to[out=-90, in=90] (6,0) node[below]{$2(n-1)$};
    \draw[knot] (3,4) node[above]{$i$} to[out=-90, in=180] (3.5, 2.5) to[out=0, in=-90] (4,4);
    \draw[dotted] (-0.5,0) -- (7.5,0);
}
\qquad
\tikz[baseline={([yshift=-.5ex]current bounding box.center)}, scale=0.3, y=0.75cm]
{   
    \draw[dotted] (0.5,4) -- (6.5,4);
    \draw[knot] (1,0) node[below]{$\phantom{(}1\phantom{)}$} -- (1,4);    
    \draw[knot] (2,0) -- (2,4) node[above]{$\phantom{i}$};
    \node at (3.5,2) {$\cdots$};
    \draw[knot] (5,0) -- (5,4);
    \draw[knot] (6,0) node[below]{$\phantom{(}2n\phantom{)}$} -- (6,4);
    \draw[dotted] (0.5,0) -- (6.5,0);
}
\]
These generating 1-morphisms are denoted by $\sigma_{i,n}$, $\overline{\sigma}_{i,n}$, $\cap_{i, n}$, $\cup_{i, n-1}$, and $1_n$ respectively. The 1-morphisms $\cap_{i, n}$ and $\cup_{i, n-1}$ are interchangeably referred to as U-turns, turnbacks, and (respectively) caps and cups. A ``$+$'' sign means that the strand of that tangle is oriented toward that point, and the ``$-$'' sign means that the strand is oriented away from the point (therefore, there are many objects with no 1-morphisms between them: a strand of a tangle must be oriented from a ``$-$'' sign and toward a ``$+$'' sign). The 2-morphisms are movies of tangle diagrams, which encode framed chronological cobordisms. They are generated by the Morse movies (births, saddles, and deaths), the Reidemeister movies, and three movies encoding isotopy for generic tangle diagrams called the T-movie, the H-Movie, and the N-movie. We picture each of these below.
\[
\tikz[baseline={([yshift=-.5ex]current bounding box.center)}, scale=.4]{
    % FILM
	\draw (.5,0) -- (.5,4);
	\draw (4.5,0) -- (4.5,4);
	\draw (8.5,0) -- (8.5,4);
 	\draw[double,double distance=2pt,line cap=rect] (0,0) -- (9,0);
	\draw[dashed, line width=2pt] (0,0) -- (9,0);
	\draw[double,double distance=2pt,line cap=rect] (0,4) -- (9,4);
	\draw[dashed, line width=2pt] (0,4) -- (9,4);
    % FRAMES
        \node at (6.5,2) {$\tikz{
            \draw[knot] (0,0) circle (10pt);
        }$};
    % ANNOTATIONS
        \node[below] at (4.5,0) {Birth};
 }
\quad
\tikz[baseline={([yshift=-.5ex]current bounding box.center)}, scale=.4]{
    % FILM
	\draw (.5,0) -- (.5,4);
	\draw (4.5,0) -- (4.5,4);
	\draw (8.5,0) -- (8.5,4);
 	\draw[double,double distance=2pt,line cap=rect] (0,0) -- (9,0);
	\draw[dashed, line width=2pt] (0,0) -- (9,0);
	\draw[double,double distance=2pt,line cap=rect] (0,4) -- (9,4);
	\draw[dashed, line width=2pt] (0,4) -- (9,4);
    % FRAMES
        \node at (2.5,2) {$\tikz[scale=0.5]{
            \draw[knot, ->] (0,0) to[out=45, in=-45] (0,2);
            \draw[knot, <-] (2,0) to[out=135, in=-135] (2,2);
        }$};
        \node at (6.5,2) {$\tikz[scale=0.5]{
            \draw[knot, ->] (0,0) to[out=45, in=135] (2,0);
            \draw[knot, <-] (0,2) to[out=-45, in=-135] (2,2);
        }$};
    % ANNOTATIONS
        \node[below] at (4.5,0) {Saddle};
 }
\quad
\tikz[baseline={([yshift=-.5ex]current bounding box.center)}, scale=.4]{
    % FILM
	\draw (.5,0) -- (.5,4);
	\draw (4.5,0) -- (4.5,4);
	\draw (8.5,0) -- (8.5,4);
 	\draw[double,double distance=2pt,line cap=rect] (0,0) -- (9,0);
	\draw[dashed, line width=2pt] (0,0) -- (9,0);
	\draw[double,double distance=2pt,line cap=rect] (0,4) -- (9,4);
	\draw[dashed, line width=2pt] (0,4) -- (9,4);
    % FRAMES
        \node at (2.5,2) {$\tikz{
            \draw[knot] (0,0) circle (10pt);
        }$};
    % ANNOTATIONS
        \node[below] at (4.5,0) {Death};
 }
\]
\[
\tikz[baseline={([yshift=-.5ex]current bounding box.center)}, scale=.4]{
    % FILM
	\draw (.5,0) -- (.5,4);
	\draw (4.5,0) -- (4.5,4);
	\draw (8.5,0) -- (8.5,4);
 	\draw[double,double distance=2pt,line cap=rect] (0,0) -- (9,0);
	\draw[dashed, line width=2pt] (0,0) -- (9,0);
	\draw[double,double distance=2pt,line cap=rect] (0,4) -- (9,4);
	\draw[dashed, line width=2pt] (0,4) -- (9,4);
    % FRAMES
        \node at (2.5,2) {$\tikz[scale=0.8]{
            \draw[knot] (0.5,1.35) to[out=180, in=90] (0,1) to[out=-90, in=90] (1,0);
            \draw[knot, overcross] (0,0) to[out=90, in=-90] (1,1) to[out=90, in=0] (0.5,1.35);
        }$};
        \node at (6.5, 2) {$\tikz[scale=0.8]{
            \draw[knot] (0,0) to[out=90,in=180] (0.5, 1.35) to[out=0, in=90] (1,0);
        }$};
    % ANNOTATIONS
        \node[below] at (4.5,0) {Reidemeister I};
 }
\quad
\tikz[baseline={([yshift=-.5ex]current bounding box.center)}, scale=.4]{
    % FILM
	\draw (.5,0) -- (.5,4);
	\draw (4.5,0) -- (4.5,4);
	\draw (8.5,0) -- (8.5,4);
 	\draw[double,double distance=2pt,line cap=rect] (0,0) -- (9,0);
	\draw[dashed, line width=2pt] (0,0) -- (9,0);
	\draw[double,double distance=2pt,line cap=rect] (0,4) -- (9,4);
	\draw[dashed, line width=2pt] (0,4) -- (9,4);
    % FRAMES
        \node at (2.5,2) {$\tikz{
            \draw[knot] (1,0) to[out=135, in=-90] (0.3, 0.5) to[out=90, in=-135] (1,1);
            \draw[knot, overcross] (0,0) to[out=45, in=-90] (0.7, 0.5) to[out=90, in=-45] (0,1);
        }$};
        \node at (6.5,2) {$\tikz{
            \draw[knot] (1,0) to[out=135, in=-135] (1,1);
            \draw[knot, overcross] (0,0) to[out=45, in=-45] (0,1);
        }$};
    % ANNOTATIONS
        \node[below] at (4.5,0) {Reidemeister II};
 }
\quad
\tikz[baseline={([yshift=-.5ex]current bounding box.center)}, scale=.4]{
    % FILM
	\draw (.5,0) -- (.5,4);
	\draw (4.5,0) -- (4.5,4);
	\draw (8.5,0) -- (8.5,4);
 	\draw[double,double distance=2pt,line cap=rect] (0,0) -- (9,0);
	\draw[dashed, line width=2pt] (0,0) -- (9,0);
	\draw[double,double distance=2pt,line cap=rect] (0,4) -- (9,4);
	\draw[dashed, line width=2pt] (0,4) -- (9,4);
    % FRAMES
        \node at (2.5,2) {$\tikz[scale=0.4, y=1cm]{
            \draw[knot] (0,0) to[out=90, in=-90] (1,1);
            \draw[knot, overcross] (1,0) to[out=90, in=-90] (0,1);
            \draw[knot] (2,0) -- (2,1);
            \draw[knot] (0,1) -- (0,2);
            \draw[knot] (1,1) to[out=90, in=-90] (2,2);
            \draw[knot, overcross] (2,1) to[out=90, in=-90] (1,2);
            \draw[knot] (0,2) to[out=90, in=-90] (1,3);
            \draw[knot, overcross] (1,2) to[out=90, in=-90] (0,3);
            \draw[knot] (2,2) -- (2,3);
        }$};
        \node at (6.5,2) {$\tikz[scale=0.4, y=1cm]{
            \draw[knot] (0,0) -- (0,1);
            \draw[knot] (1,0) to[out=90, in=-90] (2,1);
            \draw[knot, overcross] (2,0) to[out=90, in=-90] (1,1);
            \draw[knot] (2,1) -- (2,2);
            \draw[knot] (0,1) to[out=90, in=-90] (1,2);
            \draw[knot, overcross] (1,1) to[out=90, in=-90] (0,2);
            \draw[knot] (0,2) -- (0,3);
            \draw[knot] (1,2) to[out=90, in=-90] (2,3);
            \draw[knot, overcross] (2,2) to[out=90, in=-90] (1,3);
        }$};
    % ANNOTATIONS
        \node[below] at (4.5,0) {Reidemeister III};
 }
\]
\[
\tikz[baseline={([yshift=-.5ex]current bounding box.center)}, scale=.4]{
    % FILM
	\draw (.5,0) -- (.5,4);
	\draw (4.5,0) -- (4.5,4);
	\draw (8.5,0) -- (8.5,4);
 	\draw[double,double distance=2pt,line cap=rect] (0,0) -- (9,0);
	\draw[dashed, line width=2pt] (0,0) -- (9,0);
	\draw[double,double distance=2pt,line cap=rect] (0,4) -- (9,4);
	\draw[dashed, line width=2pt] (0,4) -- (9,4);
    % FRAMES
        \node at (2.5,2) {$\tikz[y=1.2cm]{
            \draw[knot] (1,0) to[out=90, in=-90] (0.5,0.5) to[out=90, in=-90] (0,1);
        }$};
        \node at (6.5,2) {$\tikz[y=1.2cm]{
            \draw[knot] (1,0) -- (1,0.5) to[out=90, in=90] (0.5, 0.5) to[out=-90, in=-90] (0,0.5) -- (0,1);
        }$};
    % ANNOTATIONS
        \node[below] at (4.5,0) {T-move};
 }
\quad
\tikz[baseline={([yshift=-.5ex]current bounding box.center)}, scale=.4]{
    % FILM
	\draw (.5,0) -- (.5,4);
	\draw (4.5,0) -- (4.5,4);
	\draw (8.5,0) -- (8.5,4);
 	\draw[double,double distance=2pt,line cap=rect] (0,0) -- (9,0);
	\draw[dashed, line width=2pt] (0,0) -- (9,0);
	\draw[double,double distance=2pt,line cap=rect] (0,4) -- (9,4);
	\draw[dashed, line width=2pt] (0,4) -- (9,4);
    % FRAMES
        \node at (2.5,2) {$\tikz[scale=0.6]{
            \draw[knot] (1,0) to[out=90, in=-90] (0,1) to[out=90, in=-90] (1,2);
            \draw[knot, overcross] (0,0) to[out=90,in=180] (1,1) to[out=0, in=90] (2,0);
        }$};
        \node at (6.5,2) {$\tikz[scale=0.6]{
            \draw[knot] (1,0) to[out=90, in=-90] (2,1) to[out=90, in=-90] (1,2);
            \draw[knot, overcross] (0,0) to[out=90,in=180] (1,1) to[out=0, in=90] (2,0);
        }$};
    % ANNOTATIONS
        \node[below] at (4.5,0) {H-move};
 }
\quad
\tikz[baseline={([yshift=-.5ex]current bounding box.center)}, scale=.4]{
    % FILM
	\draw (.5,0) -- (.5,4);
	\draw (4.5,0) -- (4.5,4);
	\draw (8.5,0) -- (8.5,4);
 	\draw[double,double distance=2pt,line cap=rect] (0,0) -- (9,0);
	\draw[dashed, line width=2pt] (0,0) -- (9,0);
	\draw[double,double distance=2pt,line cap=rect] (0,4) -- (9,4);
	\draw[dashed, line width=2pt] (0,4) -- (9,4);
    % FRAMES
        \node at (2.5,2) {$\tikz{
            \draw[dashed] (0,0) -- (0,1);
            \draw[dashed] (0.2,0) -- (0.2,1);
            \draw[dashed] (0.5,0) -- (0.5,1);
            \draw[dashed] (0.8,0) -- (0.8,1);
            \draw[dashed] (1,0) -- (1,1);
            \draw[fill=white] (-0.1,0.5) rectangle (0.3,0.9);
            \draw[fill=white] (0.7,0.1) rectangle (1.1, 0.5);
        }$};
        \node at (6.5,2) {$\tikz{
            \draw[dashed] (0,0) -- (0,1);
            \draw[dashed] (0.2,0) -- (0.2,1);
            \draw[dashed] (0.5,0) -- (0.5,1);
            \draw[dashed] (0.8,0) -- (0.8,1);
            \draw[dashed] (1,0) -- (1,1);
            \draw[fill=white] (-0.1,0.1) rectangle (0.3,0.5);
            \draw[fill=white] (0.7,0.5) rectangle (1.1, 0.9);
        }$};
    % ANNOTATIONS
        \node[below] at (4.5,0) {N-move};
 }
\]
Each of the generating 2-morphisms above present a family of versions, any of which are obtained by picking orientations, changing the direction of the film, reflecting about the horizontal or vertical axis, changing between positive and negative crossings, and picking framings on saddles and deaths (the chronology is determined by the movies themselves). The N-movie in particular has many versions: any generating 1-morphism may be placed in the boxes, and there can be any number of strands between the two boxes.

Finally, the relations between 2-morphisms (movies) are given by the \emph{Carter-Saito} movie moves. See pages 14-18 in \cite{MR2171235} for a list of 30 of the moves (one of which has two parts). We will also draw movie moves as they appear in the proof of up-to-unit functoriality (Subsection \ref{ss:conclusion}). Move 31 is described by the relation that, for horizontally composable 2-morphisms $\alpha:f \Rightarrow f'$ and $\beta: g \Rightarrow g'$, there is an equality
\[
(\alpha \cdot \mathrm{Id}) (\mathrm{Id} \cdot \beta) = (\mathrm{Id} \cdot \beta) (\alpha \cdot \mathrm{Id})
\]
of 2-morphisms from $f g$ to $f' g'$. 

We spend Subsections \ref{ss:unicent}--\ref{ss:NPinvariant} preparing for the proof of Theorem \ref{thm3}, which is given in Subsection \ref{ss:conclusion}. In Subsections \ref{ss:unicent} and \ref{ss:khosadjuns} we show that Khovanov's arguments lift nicely to the $\mathcal{G}$-graded setting. In Subsection \ref{ss:geobims} we give requisite background on $\mathcal{G}$-grading shifts---just enough, we hope, so that an unfamiliar reader can understand the definition of the Naisse-Putyra tangle invariant provided in Subsection \ref{ss:NPinvariant}.

\subsection{The $\mathcal{G}$-graded center of $H^n$}
\label{ss:unicent}

Using results from Subsection \ref{ss:ccenter}, we investigate the $\mathcal{G}$-graded center of $H^n$ and list the implication relevant to us.

\begin{proposition}
\label{prop:centerofH}
For each $n\ge 1$, the $\mathcal{G}$-graded center of $H^n$ is isomorphic to the ring $R$; thus, $Z^\mathcal{G}_*(H^n)$ is the group of units of $R$.
\end{proposition}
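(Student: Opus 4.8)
The plan is to compute $Z^\mathcal{G}(H^n)$ directly from the definition of the $\mathcal{C}$-graded center in Subsection \ref{ss:ccenter}, specialized to $\mathcal{C} = \mathcal{G}$ and $A = H^n$. Recall that $Z^\mathcal{G}(H^n)$ consists of those $z \in H^n$ with $\mu(a, z) = \zeta(\abs{a})\mu(z, a)$ for all homogeneous $a$, where (since the gradings of $H^n$ are supported in loops of the form $\mathrm{Id}_X$ up to the canonical cobordism corrections) $\zeta(\abs{a})$ can be rewritten via the remark following the definition of $Z^\mathcal{C}$ as a value of $\varepsilon$. First I would recall the concrete description of $H^n$ from Subsection \ref{ss:oddarcs}: it decomposes as $\bigoplus_{a, b \in B^n} \mathcal{F}(\overline{b} \circ 1_n \circ a)$, and the summands with $a = b$ contain the units $1_a$, while multiplication $\mu[1_n, 1_n]$ is governed by the chronological TQFT $\mathcal{F}$ applied to the saddle cobordisms $W_{aba}$. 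The key structural input is the well-known fact (in the even setting, due to Khovanov \cite{MR2078414}) that the degree-zero part of the center of $H^n$ is one-dimensional; I would argue that the analog holds $\mathcal{G}$-gradedly with the single generator being $1 = \sum_{a \in B^n} 1_a$ (or rather the appropriate sum of the ``$v_+$ everywhere'' elements), and that the centrality condition forces a central element to be an $R$-multiple of this generator.

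The main steps, in order, would be: (1) observe that a central element $z$, being $\mathcal{G}$-homogeneous with trivial total grading, must lie in the ``diagonal'' part $\bigoplus_a \mathcal{F}(\overline{a} \circ 1_n \circ a)$ — any off-diagonal component $z_{ab}$ with $a \ne b$ fails the centrality relation because $\mu(1_a, z) = z_{ab}$ picks out that component while $\zeta \cdot \mu(z, 1_a)$ picks out $z_{ba}$, and one checks these cannot agree up to unit unless both vanish (this mirrors Khovanov's argument that the center is ``diagonal''); (2) within the diagonal part, use the centrality relation against the ``dotted'' generators (images of the handle cobordisms, i.e. multiplication by $v_-$ in each circle) to force all the $\mathcal{F}(\overline{a} \circ 1_n \circ a)$-components of $z$ to agree, reducing $z$ to an $R$-linear combination of the canonical unit-like element; (3) verify conversely that every $R$-multiple of that element is indeed central, where the $\varepsilon$-twist in the definition of $Z^\mathcal{G}$ is exactly absorbed by the $\iota$-values attached to the locally vertical changes of chronology relating $W_{aba}(1_n, 1_n)$ and its reverse — here Proposition \ref{putyrahammer} and the coherence of $\varepsilon$ from Proposition \ref{prop:looperforG} do the bookkeeping; (4) identify the resulting ring structure with $R$ itself and conclude that $Z^\mathcal{G}_*(H^n)$ is the unit group $R^\times$.

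The hard part will be step (2): controlling the $\varepsilon$- and $\lambda$-correction terms that appear when one slides a central element past the dotted and saddle generators across different diagrams $a \in B^n$. In the even theory this is a triviality because everything commutes on the nose, but in the $\mathcal{G}$-graded (chronological) setting each rearrangement of handles incurs a unit of $R$ coming from $\iota$ of a change of chronology, and one must check these units are precisely the $\zeta(\abs{a})$ prescribed by the definition of $Z^\mathcal{G}$ — equivalently, that the candidate central element satisfies the twisted relation with exactly the right twist and no leftover. I expect this to follow cleanly from the explicit formulas for $\alpha = \alpha_1\alpha_2$ and $\varepsilon = \varepsilon_1\varepsilon_2$ on $\mathcal{G}$ given in Subsection \ref{ss:tracecat} together with Proposition \ref{putyrahammer}, but it is where the genuine computation lies. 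A secondary subtlety is that the statement must be checked for the unified algebra over $R$, not just the odd specialization; since $\iota$ takes values in $R^\times$ throughout, the argument should be uniform, but I would make sure the one-dimensionality in step (1)–(2) does not secretly use a specialization of $X, Y, Z$.
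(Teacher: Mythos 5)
Your target and overall scaffolding are right, but the proof as planned has a genuine gap at step (2), and it stems from under-using the grading condition. The identity morphisms of $\mathcal{G}$ are $\mathrm{Id}_a = (1_n, (n,0))$, so the requirement that a central element be supported in the degrees $\prod_a \mathrm{Id}_a$ constrains not only the flat-tangle component (forcing diagonality, as you note) but also the $\mathbb{Z}^2$-component: the unique homogeneous element of $\mathcal{F}(a 1_n \overline{a})$ in degree $(1_n, (n,0))$ is $e_a = v_+^{\otimes n}$, since every other tensor word in $v_\pm$ has $\mathbb{Z}^2$-degree $(k, -(n-k))$ with $k < n$. Hence the grading alone already pins $z = \sum_a v_a e_a$ with $v_a \in R$, and the "hard part" you anticipate — killing the dotted diagonal elements by commuting them past dotted and saddle generators and tracking $\varepsilon$- and $\lambda$-corrections — is not only unnecessary, it cannot succeed as a centrality computation: in the even specialization $X=Y=Z=1$ the dotted diagonal classes genuinely are central (this is Khovanov's $Z(H^n) \cong H^\bullet(\mathfrak{B}_{n,n})$, the very phenomenon the paper's later remarks point to), so no bookkeeping uniform in $X,Y,Z$ can rule them out; only the degree constraint does. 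This is exactly how the paper argues, which makes the proposition nearly immediate.

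Two further adjustments. First, to force the coefficients to agree you must test centrality against \emph{off-diagonal} homogeneous elements $x \in \mathcal{F}(a 1_n \overline{b})$ with $a \neq b$ (then $\mu(z,x)$ sees $v_a$ and $\mu(x,z)$ sees $v_b$, giving $v_a = v_b$ since the $e_a$ are units); dotted generators inside a single diagonal summand never relate $v_a$ to $v_b$, so as literally described your step (2) does not produce the needed identification even after the reduction to $\sum_a v_a e_a$. Second, the converse direction is much lighter than you fear: since $\{e_a\}$ are precisely the units of $H^n$ (Proposition 6.2 of \cite{naisse2020odd}), the relation $\mu(x, z) = \zeta(\abs{x})\,\mu(z, x)$ for $z = r\sum_a e_a$ follows directly from axiom (A.III) together with the definition of $\zeta$ as the ratio of the typical unitors — no appeal to Proposition \ref{putyrahammer}, nor to the explicit formulas $\alpha = \alpha_1\alpha_2$ and $\varepsilon = \varepsilon_1\varepsilon_2$, is needed. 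With these corrections your outline collapses to the paper's short argument: grading support gives $z = \sum_a v_a e_a$, off-diagonal elements give $v_a = v_b$, and the unit axioms give centrality of $r \sum_a e_a$, whence $Z^\mathcal{G}(H^n) \cong R$ and $Z^\mathcal{G}_*(H^n) \cong R^\times$.
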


\begin{proof}
Recall that $Z^\mathcal{G}(H^n)$ consists of elements with nonhomogeneous $\mathcal{G}$-degree entirely supported in the gradings $\prod_{a\in \mathrm{Ob}(\mathcal{G})} \mathrm{Id}_a$. Note that the identity morphisms of $\mathcal{G}$ take the form
\[
\mathrm{Id}_a = (1_n, (n, 0))
\]
for any crossingless matching $a$ on $2n$ points. Since $H^n = \bigoplus_{a,b \in B^n} \mathcal{F}(a 1_n \overline{b})$, there is a unique homogeneous idempotent 
\[
e_a := \otimes_n v_+ \in \mathcal{F}(a 1_n \overline{a})
\]
supported in the grading $\mathrm{Id}_a$. Therefore, an arbitrary nonhomogeneous element satisfying the grading criteria for $Z^\mathcal{G}(H^n)$ has the form
\[
z = \sum_{a\in \mathrm{Ob}(\mathcal{G})} v_a e_a
\]
for any $v_a \in R$.

Take any homogeneous element $x \in \mathcal{F}(a 1_n \overline{b}) \subset H^n$. Then, for any $z \in Z^\mathcal{G}(H^n)$
\[
\mu(z, x) = v_a \mu(e_a, x) \qquad \text{and} \qquad \mu(x,z)= v_b \mu(x, e_b).
\]
This means that $v_a = v_b$ for each $a, b\in \mathrm{Ob}(\mathcal{G})$. Thus, if $z$ is central, then $z = r \sum_{a} e_a$ for some $r \in R$. Since $\{e_a\}_{a\in \mathrm{Ob}(\mathcal{G})}$ are the units of $H^n$, it follows that 
\[
\mu(x,z) = \zeta(\abs{x}) \mu(z,x)
\]
thus $Z^\mathcal{G}(H^n) \cong R$ as $R$-modules. We refer readers to Proposition 6.2 from \cite{naisse2020odd} for a proof that the units of $H^n$ are indeed $\{e_a\}_{a\in \mathrm{Ob}(\mathcal{G})}$.
\end{proof}

Proposition \ref{prop:centerofH} and the results of Subsection \ref{ss:ccenter} combine to give the following.

\begin{corollary}
\label{cor:centerofH}
If $M$ is an invertible complex of $\mathcal{G}$-graded $(H^n, H^n)$-bimodules, then the only $\mathcal{G}$-graded automorphisms of $M$ are given by $u \cdot \mathrm{Id}$ for $u$ any unit of $R$.
\end{corollary}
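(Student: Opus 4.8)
\textbf{Proof proposal for Corollary \ref{cor:centerofH}.} The plan is to chain together the two results immediately available in the excerpt. By Proposition \ref{prop:invcomplex}, since $M$ is an invertible complex of $\mathcal{G}$-graded $(H^n, H^n)$-bimodules, there is an isomorphism $\mathrm{Hom}_{\mathcal{K}^\mathcal{G}(H^n)}(M, M) \cong \mathrm{Hom}_{\mathrm{Bim}(H^n, H^n)^\mathcal{G}}(H^n, H^n)$ of $\Bbbk$-modules, and this isomorphism restricts to an isomorphism on the groups of automorphisms, identifying $\mathrm{Aut}_{\mathcal{K}^\mathcal{G}(H^n)}(M)$ with $Z_*^\mathcal{G}(H^n)$. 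First I would simply invoke this, noting that it requires only that $H^n$ be a $\mathcal{G}$-graded algebra with typical unitors (which holds, per Section \ref{s:odd} and the discussion preceding Proposition \ref{prop:centerofH}).

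Next I would apply Proposition \ref{prop:centerofH}, which identifies $Z^\mathcal{G}(H^n)$ with the ring $R$ and hence $Z_*^\mathcal{G}(H^n)$ with the group of units $R^\times$. Tracing through the isomorphism of Proposition \ref{prop:c-gradedcenter}, the element $r \in R^\times$ corresponds to the central element $z = r\sum_{a \in \mathrm{Ob}(\mathcal{G})} e_a$, which acts on $H^n$ (as a bimodule over itself) by left multiplication; under the identification $H^n \cong H^n$ this is precisely the scalar $r$ times the identity endomorphism. Following this automorphism through the map $f \mapsto f \otimes \mathrm{id}_N$ and the canonical isomorphisms $(H^n \otimes_{H^n} N) \otimes_{H^n} M \cong M$ of Proposition \ref{prop:invcomplex}, one sees that the corresponding automorphism of $M$ in $\mathcal{K}^\mathcal{G}(H^n)$ is $r \cdot \mathrm{Id}_M$: scalar multiplication by a unit of $R$ commutes with all the structure maps (tensor products, associators, and the defining relation of $\otimes_{A^e}$) since $R$ is commutative and central. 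Conversely, every automorphism of $M$ arises this way, by the surjectivity half of Proposition \ref{prop:invcomplex}.

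I expect no serious obstacle here: the corollary is genuinely a two-line consequence once Propositions \ref{prop:invcomplex} and \ref{prop:centerofH} are in hand. The only point requiring a moment of care is confirming that the abstract isomorphism $Z_*^\mathcal{G}(H^n) \cong \mathrm{Aut}_{\mathcal{K}^\mathcal{G}(H^n)}(M)$ sends the unit $r \in R^\times$ to the \emph{honest scalar} $r \cdot \mathrm{Id}_M$ rather than to some twisted automorphism; this follows because the generator $\mathrm{id}_M \otimes \mathrm{id}_N$ of $\mathrm{End}_{\mathcal{K}^\mathcal{G}(H^n)}(M \otimes_{H^n} N) \cong Z^\mathcal{G}(H^n) \cong R$ corresponds to $1 \in R$, so the $R$-module structure on $\mathrm{End}_{\mathcal{K}^\mathcal{G}(H^n)}(M)$ coming from left multiplication by central elements is exactly scalar multiplication, and $\phi$ is $R$-linear by construction. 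Hence $\mathrm{Aut}_{\mathcal{K}^\mathcal{G}(H^n)}(M) = \{u \cdot \mathrm{Id}_M : u \in R^\times\}$, as claimed.
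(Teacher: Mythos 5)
Your proposal is correct and follows exactly the route the paper intends: the corollary is stated as an immediate consequence of Proposition \ref{prop:centerofH} combined with Proposition \ref{prop:invcomplex} from Subsection \ref{ss:ccenter}, which is precisely the chain you assemble. Your extra verification that the unit $r \in R^\times$ corresponds to the honest scalar automorphism $r \cdot \mathrm{Id}_M$ is a reasonable point of care, but it is the same argument the paper leaves implicit.
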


\begin{remark}
Note that, in \cite{MR2078414}, Khovanov proved that the derived center of $H^n$ is isomoprhic to the cohomology ring of the $(n,n)$-Springer fiber. Morevoer, in \cite{naisse2017odd}, Naisse and Vaz proved that their ``odd center'' (which can be seen to be the specialization of the $\mathcal{G}$-graded center to the odd setting) applied to the odd arc algebras results in the oddification of the cohomology of the $(n,n)$-Springer variety (see \cite{MR3257552}). This prompts the question: how is $Z^\mathcal{G}(H^n)$ related to the cohomologies of the $(n,n)$-Springer variety?
\end{remark}

\subsection{Complexes of geometric $\mathcal{G}$-graded bimodules}
\label{ss:geobims}

To define categories of complexes of $\mathcal{G}$-graded bimodules, we need to define $\mathcal{G}$-grading shifts. See Subsection \ref{ss:nonhomoHH} for a summary of the formal theory, introduced originally in Sections 4.2--6 of \cite{naisse2020odd}. The monoid $\mathcal{I}$ underlying the $\mathcal{G}$-shifting system of \cite{naisse2020odd} consists of pairs $(W, v)$ of a chronological cobordism $W$ and $v\in \mathbb{Z} \times \mathbb{Z}$ together with a formal identity element $\mathbf{e}$ and absorbing element $0$:
\[
\mathcal{I} = \{(W, v)\}_{W, v} \sqcup \{\mathbf{e}, 0\}.
\]
We define $W_1^{v_1} \bullet W_2^{v_2} = (W_1 \bullet W_2)^{v_1 + v_2}$ to be the horizontal ($W_1$ and then $W_2$) stacking of cobordisms if they are compatible, and $0$ otherwise.

A flat tangle $t \in B_m^n$ can be interpreted as a morphism from any crossingless matching $a$ with $\abs{a} = m$ to any other $b$ with $\abs{b} = n$. Whenever $W$ is a cobordism $t \to s$, we set 
\[
\varphi_{(W, v)}(t, p) = (s, p + v + \abs{\mathbbm{1}_a W \mathbbm{1}_{\overline{b}}})
\]
where $\mathbbm{1}_a W \mathbbm{1}_{\overline{b}}$ is the coboridms $W$ turned into a cobordism without corners by gluing the identity cobordisms of $a$ and $b$ on either end. We pick the wide subcategory $Z$ of $\mathcal{G}$ (see Definition \ref{def:shiftingsyst}) to be the one whose morphisms consist of all identity tangles between potentially distinct crossingless matchings:
\[
\{(1_n: a\to b, p): n\ge 0, a, b\in B^n, p\in \mathbb{Z} \times \mathbb{Z}\}.
\]
Moreover, the components of the identity shift functor are 
\[
\mathcal{I}_{\mathrm{Id}} = \{(\mathbbm{1}_t, (0,0)) : t~\text{is a planar tangle}\} \subset \mathcal{I}
\]
where $\mathbbm{1}_t$ is the identity cobordism on $t$. We extend the shifting system to a 2-system by defining vertical composition $W_2^{v_2} \circ W_1^{v_1} = (W_2 \circ W_1)^{v_1 + v_2}$ if the intermediate flat tangles align, and zero otherwise. We omit a description of the witness maps $\beta$, $\gamma$, and $\Xi$, though they arise naturally; see Sections 5.1 and 5.2 of \cite{naisse2020odd}.

This $\mathcal{G}$-grading shifting 2-system is interesting for a few reasons. First, changes of chronology $H: W \Rightarrow W'$ induce natural transformations of grading shifting functors $\varphi_H: \varphi_W \Rightarrow \varphi_{W'}$ given by $\varphi_H(M): \varphi_W(M) \to \varphi_{W'}(M)$ where
\[
\varphi_W(m) \mapsto \iota({}_{a}H_b)^{-1} \varphi_{W'}(m)
\]
for any $\mathcal{G}$-graded $R$-module $M$ and homogeneous $m$ with $\abs{m}: a \to b$, where ${}_aH_b$ is the change of chronology $\mathbbm{1}_a W \mathbbm{1}_{\overline{b}} \Rightarrow \mathbbm{1}_a W' \mathbbm{1}_{\overline{b}}$ induced by $H$. It follows from the multiplicativity of $\iota$ that 
\begin{equation}
\label{eq:compcoc}
\varphi_{H'} \circ \varphi_H \cong \varphi_{H' \star H}
\end{equation}
for any two changes of chronology $H: W \Rightarrow W'$ and $H': W' \Rightarrow W''$. Suppose $W: t \to s$ is a cobordism with corners and let $m(t,s)$ denote the Euler characteristic of a minimal cobordism $t \to s$. Then, we define
\[
\widehat{\chi}(W) := \left(\frac{m(t,s) - \chi(W)}{2}, \frac{m(t,s) - \chi(W)}{2}\right) \in \mathbb{Z} \times \mathbb{Z}.
\]
Using (\ref{eq:compcoc}), it follows that any grading shift $\varphi_{(W, v)}$ admits a left inverse up to natural isomorphism given by
\[
\varphi_{(W, v)}^{-1} := \varphi_{(\overline{W}, -v - \widehat{\chi}(\overline{W} \circ W))}
\]
for $\overline{W}$ the mirror image of $W$ with respect to the horizontal plane, in the sense that
\[
\varphi_{(W, v)}^{-1} \circ \varphi_{(W, v)} \cong \varphi_{\mathbbm{1}_t}.
\]
For more details, see Subsection 5.3 of \cite{naisse2020odd}.

As one useful application, note that an elementary saddle cobordism $\tikz[baseline={([yshift=-.5ex]current bounding box.center)}, scale=0.45]
{
    \begin{scope}[rotate=90]
	\draw[dotted] (3,-2) circle(0.707);
	\draw (2.5,-1.5) .. controls (2.75,-1.75) and (3.25,-1.75) .. (3.5,-1.5);
	\draw (2.5,-2.5) .. controls  (2.75,-2.25) and (3.25,-2.25) .. (3.5,-2.5);
        \draw[red,thick] (3,-1.7) -- (3,-2.3);
    \end{scope}
} : \tikz[baseline={([yshift=-.5ex]current bounding box.center)}, scale=0.45]
{
    \begin{scope}[rotate=90]
	\draw[dotted] (3,-2) circle(0.707);
	\draw (2.5,-1.5) .. controls (2.75,-1.75) and (3.25,-1.75) .. (3.5,-1.5);
	\draw (2.5,-2.5) .. controls  (2.75,-2.25) and (3.25,-2.25) .. (3.5,-2.5);
    \end{scope}
} \to \tikz[baseline={([yshift=-.5ex]current bounding box.center)}, scale=0.45]
{
	\draw[dotted] (3,-2) circle(0.707);
	\draw (2.5,-1.5) .. controls (2.75,-1.75) and (3.25,-1.75) .. (3.5,-1.5);
	\draw (2.5,-2.5) .. controls  (2.75,-2.25) and (3.25,-2.25) .. (3.5,-2.5);
}$ induces the graded map 
\[
\mathcal{F}\left(\tikz[baseline={([yshift=-.5ex]current bounding box.center)}, scale=0.45]
{
    \begin{scope}[rotate=90]
	\draw[dotted] (3,-2) circle(0.707);
	\draw (2.5,-1.5) .. controls (2.75,-1.75) and (3.25,-1.75) .. (3.5,-1.5);
	\draw (2.5,-2.5) .. controls  (2.75,-2.25) and (3.25,-2.25) .. (3.5,-2.5);
        \draw[red,thick] (3,-1.7) -- (3,-2.3);
    \end{scope}
}\right) : \varphi_{\tikz[baseline={([yshift=-.5ex]current bounding box.center)}, scale=0.45]
{
    \begin{scope}[rotate=90]
	\draw[dotted] (3,-2) circle(0.707);
	\draw (2.5,-1.5) .. controls (2.75,-1.75) and (3.25,-1.75) .. (3.5,-1.5);
	\draw (2.5,-2.5) .. controls  (2.75,-2.25) and (3.25,-2.25) .. (3.5,-2.5);
        \draw[red,thick] (3,-1.7) -- (3,-2.3);
    \end{scope}
}} \mathcal{F}\left(\tikz[baseline={([yshift=-.5ex]current bounding box.center)}, scale=0.45]
{
    \begin{scope}[rotate=90]
	\draw[dotted] (3,-2) circle(0.707);
	\draw (2.5,-1.5) .. controls (2.75,-1.75) and (3.25,-1.75) .. (3.5,-1.5);
	\draw (2.5,-2.5) .. controls  (2.75,-2.25) and (3.25,-2.25) .. (3.5,-2.5);
    \end{scope}
}\right) \to \mathcal{F}\left(\tikz[baseline={([yshift=-.5ex]current bounding box.center)}, scale=0.45]
{
	\draw[dotted] (3,-2) circle(0.707);
	\draw (2.5,-1.5) .. controls (2.75,-1.75) and (3.25,-1.75) .. (3.5,-1.5);
	\draw (2.5,-2.5) .. controls  (2.75,-2.25) and (3.25,-2.25) .. (3.5,-2.5);
}\right).
\]
Consider the isomorphism induced by the change of chronology
\[
\varphi_H: \mathrm{Id} \Rightarrow \varphi_{\tikz[baseline={([yshift=-.5ex]current bounding box.center)}, scale=0.45]
{
    \begin{scope}[rotate=90]
	\draw[dotted] (3,-2) circle(0.707);
	\draw (2.5,-1.5) .. controls (2.75,-1.75) and (3.25,-1.75) .. (3.5,-1.5);
	\draw (2.5,-2.5) .. controls  (2.75,-2.25) and (3.25,-2.25) .. (3.5,-2.5);
        \draw[red,thick] (3,-1.7) -- (3,-2.3);
    \end{scope}
}}^{-1} \circ \varphi_{\tikz[baseline={([yshift=-.5ex]current bounding box.center)}, scale=0.45]
{
    \begin{scope}[rotate=90]
	\draw[dotted] (3,-2) circle(0.707);
	\draw (2.5,-1.5) .. controls (2.75,-1.75) and (3.25,-1.75) .. (3.5,-1.5);
	\draw (2.5,-2.5) .. controls  (2.75,-2.25) and (3.25,-2.25) .. (3.5,-2.5);
        \draw[red,thick] (3,-1.7) -- (3,-2.3);
    \end{scope}
}}.
\]
Then, precomposing with $\varphi_H$, the saddle can be reinterpreted as the following graded map.
\[
\mathcal{F}\left(\tikz[baseline={([yshift=-.5ex]current bounding box.center)}, scale=0.45]
{
    \begin{scope}[rotate=90]
	\draw[dotted] (3,-2) circle(0.707);
	\draw (2.5,-1.5) .. controls (2.75,-1.75) and (3.25,-1.75) .. (3.5,-1.5);
	\draw (2.5,-2.5) .. controls  (2.75,-2.25) and (3.25,-2.25) .. (3.5,-2.5);
        \draw[red,thick] (3,-1.7) -- (3,-2.3);
    \end{scope}
}\right) \circ \varphi_H : \mathcal{F}\left(\tikz[baseline={([yshift=-.5ex]current bounding box.center)}, scale=0.45]
{
    \begin{scope}[rotate=90]
	\draw[dotted] (3,-2) circle(0.707);
	\draw (2.5,-1.5) .. controls (2.75,-1.75) and (3.25,-1.75) .. (3.5,-1.5);
	\draw (2.5,-2.5) .. controls  (2.75,-2.25) and (3.25,-2.25) .. (3.5,-2.5);
    \end{scope}
}\right) \to \varphi_{\tikz[baseline={([yshift=-.5ex]current bounding box.center)}, scale=0.45]
{
    \begin{scope}[rotate=90]
	\draw[dotted] (3,-2) circle(0.707);
	\draw (2.5,-1.5) .. controls (2.75,-1.75) and (3.25,-1.75) .. (3.5,-1.5);
	\draw (2.5,-2.5) .. controls  (2.75,-2.25) and (3.25,-2.25) .. (3.5,-2.5);
        \draw[red,thick] (3,-1.7) -- (3,-2.3);
    \end{scope}
}}^{-1} \mathcal{F}\left(\tikz[baseline={([yshift=-.5ex]current bounding box.center)}, scale=0.45]
{
	\draw[dotted] (3,-2) circle(0.707);
	\draw (2.5,-1.5) .. controls (2.75,-1.75) and (3.25,-1.75) .. (3.5,-1.5);
	\draw (2.5,-2.5) .. controls  (2.75,-2.25) and (3.25,-2.25) .. (3.5,-2.5);
}\right).
\]
We compute that $\varphi_{\tikz[baseline={([yshift=-.5ex]current bounding box.center)}, scale=0.45]
{
    \begin{scope}[rotate=90]
	\draw[dotted] (3,-2) circle(0.707);
	\draw (2.5,-1.5) .. controls (2.75,-1.75) and (3.25,-1.75) .. (3.5,-1.5);
	\draw (2.5,-2.5) .. controls  (2.75,-2.25) and (3.25,-2.25) .. (3.5,-2.5);
        \draw[red,thick] (3,-1.7) -- (3,-2.3);
    \end{scope}
}}^{-1} = \varphi_{\left(\tikz[baseline={([yshift=-.5ex]current bounding box.center)}, scale=0.45]
{
	\draw[dotted] (3,-2) circle(0.707);
	\draw (2.5,-1.5) .. controls (2.75,-1.75) and (3.25,-1.75) .. (3.5,-1.5);
	\draw (2.5,-2.5) .. controls  (2.75,-2.25) and (3.25,-2.25) .. (3.5,-2.5);
        \draw[red,thick] (3,-1.7) -- (3,-2.3);
}\,,~ (1,1)\right)}$ since $\tikz[baseline={([yshift=-.5ex]current bounding box.center)}, scale=0.45]
{
	\draw[dotted] (3,-2) circle(0.707);
	\draw (2.5,-1.5) .. controls (2.75,-1.75) and (3.25,-1.75) .. (3.5,-1.5);
	\draw (2.5,-2.5) .. controls  (2.75,-2.25) and (3.25,-2.25) .. (3.5,-2.5);
        \draw[red,thick] (3,-1.7) -- (3,-2.3);
} \circ \tikz[baseline={([yshift=-.5ex]current bounding box.center)}, scale=0.45]
{
    \begin{scope}[rotate=90]
	\draw[dotted] (3,-2) circle(0.707);
	\draw (2.5,-1.5) .. controls (2.75,-1.75) and (3.25,-1.75) .. (3.5,-1.5);
	\draw (2.5,-2.5) .. controls  (2.75,-2.25) and (3.25,-2.25) .. (3.5,-2.5);
        \draw[red,thick] (3,-1.7) -- (3,-2.3);
    \end{scope}
}$ produces a tube. In general,
\[
\varphi_{\left(\tikz[baseline={([yshift=-.5ex]current bounding box.center)}, scale=0.45]
{
    \begin{scope}[rotate=90]
	\draw[dotted] (3,-2) circle(0.707);
	\draw (2.5,-1.5) .. controls (2.75,-1.75) and (3.25,-1.75) .. (3.5,-1.5);
	\draw (2.5,-2.5) .. controls  (2.75,-2.25) and (3.25,-2.25) .. (3.5,-2.5);
        \draw[red,thick] (3,-1.7) -- (3,-2.3);
    \end{scope}
}\,,~ (u,v)\right)}^{-1} = \varphi_{\left(\tikz[baseline={([yshift=-.5ex]current bounding box.center)}, scale=0.45]
{
	\draw[dotted] (3,-2) circle(0.707);
	\draw (2.5,-1.5) .. controls (2.75,-1.75) and (3.25,-1.75) .. (3.5,-1.5);
	\draw (2.5,-2.5) .. controls  (2.75,-2.25) and (3.25,-2.25) .. (3.5,-2.5);
        \draw[red,thick] (3,-1.7) -- (3,-2.3);
}\,,~ (1-u,1-v)\right)}.
\]

The $\mathbb{Z} \times \mathbb{Z}$-grading shift functor $\{v_1, v_2\}$ is defined by setting
\[
M\{v_1, v_2\} := \bigoplus_{\text{planar tangles}~t} \varphi_{\left(\mathbbm{1}_t, (v_2, v_1)\right)} (M).
\]
We note that non-ambiguous cobordisms (\textit{e.g.}, saddles which can be realized as either a split or a merge) admit grading shift functors which are naturally isomorphic to grading shift functors which are  nontrivial in the $\mathbb{Z} \times \mathbb{Z}$-degree only. We list them below:
\[
\varphi_{\tikz[baseline={([yshift=-.5ex]current bounding box.center)}, scale=0.3]{
	\draw (1,2) .. controls (1,1) and (2,1) .. (2,2);
	\draw (1,2) .. controls (1,1.75) and (2,1.75) .. (2,2);
	\draw (1,2) .. controls (1,2.25) and (2,2.25) .. (2,2);
}} \cong \{1,0\}
\qquad
\varphi_{\tikz[baseline={([yshift=-.5ex]current bounding box.center)}, scale=.3]{
	\draw (0,0) .. controls (0,1) and (1,1) .. (1,2);
	\draw (1,0) .. controls (1,1) and (2,1) .. (2,0);
	\draw (3,0) .. controls (3,1) and (2,1) .. (2,2);
	\draw (0,0) .. controls (0,-.25) and (1,-.25) .. (1,0);
	\draw[dashed] (0,0) .. controls (0,.25) and (1,.25) .. (1,0);
	\draw (2,0) .. controls (2,-.25) and (3,-.25) .. (3,0);
	\draw[dashed] (2,0) .. controls (2,.25) and (3,.25) .. (3,0);
	\draw (1,2) .. controls (1,1.75) and (2,1.75) .. (2,2);
	\draw (1,2) .. controls (1,2.25) and (2,2.25) .. (2,2);
}} \cong \{-1,0\}
\qquad
\varphi_{\tikz[baseline={([yshift=-.5ex]current bounding box.center)}, scale=.3]{
	\draw  (1,2) .. controls (1,3) and (0,3) .. (0,4);
	\draw  (2,2) .. controls (2,3) and (3,3) .. (3,4);
	\draw (1,4) .. controls (1,3) and (2,3) .. (2,4);
	\draw (0,4) .. controls (0,3.75) and (1,3.75) .. (1,4);
	\draw (0,4) .. controls (0,4.25) and (1,4.25) .. (1,4);
	\draw (2,4) .. controls (2,3.75) and (3,3.75) .. (3,4);
	\draw (2,4) .. controls (2,4.25) and (3,4.25) .. (3,4);
	\draw (1,2) .. controls (1,1.75) and (2,1.75) .. (2,2);
	\draw[dashed] (1,2) .. controls (1,2.25) and (2,2.25) .. (2,2);
}} \cong \{0, -1\}
\qquad
\varphi_{\tikz[baseline={([yshift=-.5ex]current bounding box.center)}, scale=.3]{
	\draw (1,0) .. controls (1,1) and (2,1) .. (2,0);
	\draw (1,0) .. controls (1,-.25) and (2,-.25) .. (2,0);
	\draw[dashed] (1,0) .. controls (1,.25) and (2,.25) .. (2,0);
}} \cong \{0,1\}
\]
for births, merges, splits, and deaths respectively. 

\begin{definition}
A \emph{geometric $(H^m, H^n)$-bimodule} is a $(H^m, H^n)$-bimodule which is isomorphic to a finite direct sum of bimodules $\varphi_{(W, v)} \mathcal{F}(t)$ for flat tangles $t$ and $\mathcal{G}$-grading shifts $\varphi_{(W, v)}$. Denote by $\mathcal{K}_m^n$ the category whose objects are bounded complexes of geometric $(H^m, H^n)$-bimodules (with grading-preserving differential) up to chain homotopy, and whose morphisms are grading-preserving homomorphisms of complexes up to chain homotopy. Finally, denote by $\mathsf{K}_m^n$ the result of localizing the category $\mathcal{K}_m^n$ along quasi-isomorphisms.
\end{definition}

Thus we have the following corollary of Proposition \ref{prop:centerofH}.

\begin{corollary}
\label{cor:invs}
If $f: M \to N$ is an isomorphism of invertible objects in $\mathcal{K}_n^n$, any other isomorphism differs only up to multiplication by a unit in $R$.
\end{corollary}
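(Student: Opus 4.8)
The plan is to deduce Corollary \ref{cor:invs} as a direct consequence of Proposition \ref{prop:invcomplex} (equivalently, Corollary \ref{cor:centerofH}), once the statement is set up in the category $\mathcal{K}_n^n = \mathcal{K}^{\mathcal{G}}(H^n)$ rather than in $\mathrm{Bim}$. The key observation is that ``any other isomorphism differs only by a unit of $R$'' is really a statement about the automorphism group: if $f, f' \colon M \to N$ are two isomorphisms of invertible objects, then $f'^{-1} \circ f$ (or $f' \circ f^{-1}$, composed appropriately up to homotopy) is an automorphism of $M$ in $\mathcal{K}_n^n$, so it suffices to show that every automorphism of an invertible object $M$ in $\mathcal{K}_n^n$ has the form $u \cdot \mathrm{id}_M$ for some $u \in R^\times$.

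First I would recall that, by Proposition \ref{prop:invcomplex}, for an invertible complex $M$ of $\mathcal{G}$-graded $(H^n, H^n)$-bimodules we have $\mathrm{Hom}_{\mathcal{K}^{\mathcal{G}}(H^n)}(M, M) \cong \mathrm{Hom}_{\mathrm{Bim}(H^n, H^n)^{\mathcal{G}}}(H^n, H^n) \cong Z^{\mathcal{G}}(H^n)$ as $\Bbbk$-modules, and that under this isomorphism the group of automorphisms of $M$ in $\mathcal{K}_n^n$ corresponds to $Z_*^{\mathcal{G}}(H^n)$, the group of invertible central elements. Then I would invoke Proposition \ref{prop:centerofH}, which identifies $Z^{\mathcal{G}}(H^n) \cong R$ and hence $Z_*^{\mathcal{G}}(H^n) \cong R^\times$. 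Chasing through the isomorphisms of Proposition \ref{prop:c-gradedcenter} and \ref{prop:invcomplex}, the central element $r \sum_{a} e_a \in Z^{\mathcal{G}}(H^n)$ acts on $M$ precisely as the scalar endomorphism $r \cdot \mathrm{id}_M$ (this is exactly the content of the $Z^{\mathcal{G}}(H^n)$-module structure used in the surjectivity argument of Proposition \ref{prop:invcomplex}, where $\mathrm{End}(M \otimes_{H^n} N)$ is generated over $Z^{\mathcal{G}}(H^n)$ by $\mathrm{id}_M \otimes \mathrm{id}_N$). So an automorphism of $M$ is $u \cdot \mathrm{id}_M$ with $u \in R^\times$, and an arbitrary pair of isomorphisms $M \to N$ differ by such an automorphism, precomposed with a fixed isomorphism.

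Concretely the steps are: (1) reduce to the automorphism statement by writing $f' = f \circ (f^{-1} \circ f')$ where $f^{-1} \circ f' \in \mathrm{Aut}_{\mathcal{K}_n^n}(M)$, using that $f$ admits a homotopy inverse because invertibility of $M$ (and $N$) forces $f$ to be a quasi-isomorphism, hence an isomorphism in the localized/homotopy category; (2) apply Proposition \ref{prop:invcomplex} to get $\mathrm{Aut}_{\mathcal{K}_n^n}(M) \cong Z_*^{\mathcal{G}}(H^n)$; (3) apply Proposition \ref{prop:centerofH} to get $Z_*^{\mathcal{G}}(H^n) \cong R^\times$; (4) track that the isomorphism in step (2) sends $r \in R^\times$ to the scalar $r \cdot \mathrm{id}_M$, which is the only mildly delicate bookkeeping point. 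I expect the main (minor) obstacle to be this last tracking: verifying that the composite isomorphism $\mathrm{Aut}_{\mathcal{K}_n^n}(M) \to Z_*^{\mathcal{G}}(H^n)$ from Proposition \ref{prop:invcomplex} really identifies a central unit $u$ with multiplication by $u$ on all of $M$, rather than merely with \emph{some} automorphism of $M$; this follows from the explicit generation statement in the proof of Proposition \ref{prop:invcomplex} together with the fact that under $\mathrm{End}_{\mathcal{K}_n^n}(H^n) \cong Z^{\mathcal{G}}(H^n)$ the element $r \sum_a e_a$ corresponds to the bimodule map $x \mapsto \mu(x, r\sum_a e_a) = r x$, i.e.\ scalar multiplication, which is preserved under $- \otimes_{H^n} M$.
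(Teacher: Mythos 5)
Your proposal is correct and follows essentially the same route the paper intends: the corollary is stated as an immediate consequence of Proposition \ref{prop:centerofH} combined with Proposition \ref{prop:invcomplex} (equivalently Corollary \ref{cor:centerofH}), by composing one isomorphism with the inverse of the other to get an automorphism of the invertible object, which must be $u \cdot \mathrm{Id}$ for a unit $u \in R$. The only superfluous step is your appeal to quasi-isomorphisms to produce $f^{-1}$: since $f$ is by hypothesis an isomorphism in the homotopy category $\mathcal{K}_n^n$, its inverse exists by definition.
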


\subsection{Khovanov adjunctions}
\label{ss:khosadjuns}

In \cite{MR2171235}, Khovanov showed that the functors 
\[
\mathcal{F}(\cap_{i,n}) \otimes_{H^{n-1}}  - \qquad \text{and} \qquad \mathcal{F}(\cup_{i, n-1}) \otimes_{H^n} -
\]
are biadjoint up to grading shifts. We will show that a generalization of this statement exists in the $\mathcal{G}$-graded setting.

\begin{proposition}
\label{prop:Kadjun}
Suppose $M$ is an object of $\mathcal{K}_{n-1}^{n-1}$ and $N$ is an object of $\mathcal{K}_{n-1}^n$. Then,
\begin{equation}
\label{eq:adjun1}
\mathrm{Hom}_{\mathcal{K}_{n-1}^n}\left(
\tikz[baseline={([yshift=-.5ex]current bounding box.center)}, scale=.3, y=0.5cm]
{   
    \draw[dotted] (0.5,4) -- (6.5,4);
    \draw[knot] (1,4) to[out=-90, in=90] (2,1);
    \draw[knot] (2,4) to[out=-90, in=90] (3,1);
    \draw[knot] (5,4) to[out=-90, in=90] (4,1);
    \draw[knot] (6,4) to[out=-90, in=90] (5,1);
    \draw[knot] (3,4) to[out=-90, in=180] (3.5, 2.5) to[out=0, in=-90] (4,4);
%    \draw[dotted] (0.5,0) -- (6.5,0);
    \draw[knot] (1.5, -3) rectangle (5.5, 1);
    \node at (3.5, -1) {$M$};
    \draw[knot] (2,-3) -- (2,-4);
    \draw[knot] (3,-3) -- (3,-4);
    \draw[knot] (4,-3) -- (4,-4);
    \draw[knot] (5,-3) -- (5,-4);
    \draw[dotted] (0.5,-4) -- (6.5,-4);
}
,\,
\varphi_{\left(
\tikz[baseline={([yshift=-.5ex]current bounding box.center)}, scale=.2, y=0.5cm]
{   
    \draw[dotted] (0.5,4) -- (6.5,4);
    \draw[knot] (1,4) -- (1,1);
    \draw[knot] (2,4) -- (2,1);
    \draw[knot] (3,4) -- (3,1);
    \draw[knot] (4,4) -- (4,1);
    \draw[knot] (5,4) -- (5,1);
    \draw[knot] (6,4) -- (6,1);
    \draw[knot,red] (3,2.5) -- (4,2.5);
%    \draw[dotted] (0.5,0) -- (6.5,0);
    \draw[knot] (0.75, -3) rectangle (6.25, 1);
 %   \node at (3.5, -1) {$N$};
    \draw[knot] (2,-3) -- (2,-4);
    \draw[knot] (3,-3) -- (3,-4);
    \draw[knot] (4,-3) -- (4,-4);
    \draw[knot] (5,-3) -- (5,-4);
    \draw[dotted] (0.5,-4) -- (6.5,-4);
}
,\, (1,1)\right)}
\tikz[baseline={([yshift=-.5ex]current bounding box.center)}, scale=.3, y=0.5cm]
{   
    \draw[dotted] (0.5,4) -- (6.5,4);
    \draw[knot] (1,4) -- (1,2);
    \draw[knot] (2,4) -- (2,2);
    \draw[knot] (3,4) -- (3,2);
    \draw[knot] (4,4) -- (4,2);
    \draw[knot] (5,4) -- (5,2);
    \draw[knot] (6,4) -- (6,2);
%    \draw[dotted] (0.5,0) -- (6.5,0);
    \draw[knot] (0.75, -2) rectangle (6.25, 2);
    \node at (3.5, 0) {$N$};
    \draw[knot] (2,-2) -- (2,-4);
    \draw[knot] (3,-2) -- (3,-4);
    \draw[knot] (4,-2) -- (4,-4);
    \draw[knot] (5,-2) -- (5,-4);
    \draw[dotted] (0.5,-4) -- (6.5,-4);
}
\right)
\cong
\mathrm{Hom}_{\mathcal{K}_{n-1}^{n-1}} \left(
\tikz[baseline={([yshift=-.5ex]current bounding box.center)}, scale=.3, y=0.5cm]
{   
    \draw[dotted] (0.5,4) -- (6.5,4);
    \draw[knot] (2,4) -- (2,2);
    \draw[knot] (3,4) -- (3,2);
    \draw[knot] (4,4) -- (4,2);
    \draw[knot] (5,4) -- (5,2);
%    \draw[dotted] (0.5,0) -- (6.5,0);
    \draw[knot] (1.5, -2) rectangle (5.5, 2);
    \node at (3.5, 0) {$M$};
    \draw[knot] (2,-2) -- (2,-4);
    \draw[knot] (3,-2) -- (3,-4);
    \draw[knot] (4,-2) -- (4,-4);
    \draw[knot] (5,-2) -- (5,-4);
    \draw[dotted] (0.5,-4) -- (6.5,-4);
}
,\,
\tikz[baseline={([yshift=-.5ex]current bounding box.center)}, scale=.3, y=0.5cm]
{   
    \draw[dotted] (0.5,4) -- (6.5,4);
    \draw[knot] (1,1) to[out=90, in=-90] (2,4);
    \draw[knot] (2,1) to[out=90, in=-90] (3,4);
    \draw[knot] (3,1) to[out=90, in=180] (3.5, 2.5) to[out=0, in=90] (4,1);
    \draw[knot] (5,1) to[out=90, in=-90] (4,4);
    \draw[knot] (6,1) to[out=90, in=-90] (5,4);
%    \draw[dotted] (0.5,0) -- (6.5,0);
    \draw[knot] (0.75, -3) rectangle (6.25, 1);
    \node at (3.5, -1) {$N$};
    \draw[knot] (2,-3) -- (2,-4);
    \draw[knot] (3,-3) -- (3,-4);
    \draw[knot] (4,-3) -- (4,-4);
    \draw[knot] (5,-3) -- (5,-4);
    \draw[dotted] (0.5,-4) -- (6.5,-4);
}
\right).
\end{equation}
Similarly, for $L$ an object of $\mathcal{K}_n^{n-1}$,
\begin{equation}
\label{eq:adjun2}
\mathrm{Hom}_{\mathcal{K}_{n}^{n-1}}\left(
\tikz[baseline={([yshift=-.5ex]current bounding box.center)}, scale=.3, y=0.5cm]
    {   
    \draw[dotted] (0.5,4) -- (6.5,4);
    \draw[knot] (1,-4) to[out=90, in=-90] (2,-1);
    \draw[knot] (2,-4) to[out=90, in=-90] (3,-1);
    \draw[knot] (5,-4) to[out=90, in=-90] (4,-1);
    \draw[knot] (6,-4) to[out=90, in=-90] (5,-1);
    \draw[knot] (3,-4) to[out=90, in=180] (3.5, -2.5) to[out=0, in=90] (4,-4);
    \draw[knot] (1.5, 3) rectangle (5.5, -1);
    \node at (3.5, 1) {$M$};
    \draw[knot] (2,3) -- (2,4);
    \draw[knot] (3,3) -- (3,4);
    \draw[knot] (4,3) -- (4,4);
    \draw[knot] (5,3) -- (5,4);
    \draw[dotted] (0.5,-4) -- (6.5,-4);
    }
,\,
\varphi_{\left(
    \tikz[baseline={([yshift=-.5ex]current bounding box.center)}, scale=.2, y=0.5cm]
    {   
    \draw[dotted] (0.5,4) -- (6.5,4);
    \draw[knot] (1,-4) -- (1,-1);
    \draw[knot] (2,-4) -- (2,-1);
    \draw[knot] (3,-4) -- (3,-1);
    \draw[knot] (4,-4) -- (4,-1);
    \draw[knot] (5,-4) -- (5,-1);
    \draw[knot] (6,-4) -- (6,-1);
    \draw[knot,red] (3,-2.5) -- (4,-2.5);
%    \draw[dotted] (0.5,0) -- (6.5,0);
    \draw[knot] (0.75, 3) rectangle (6.25, -1);
%    \node at (3.5, 1) {$L$};
    \draw[knot] (2,3) -- (2,4);
    \draw[knot] (3,3) -- (3,4);
    \draw[knot] (4,3) -- (4,4);
    \draw[knot] (5,3) -- (5,4);
    \draw[dotted] (0.5,-4) -- (6.5,-4);
    }, \, (1,1)\right)}
\tikz[baseline={([yshift=-.5ex]current bounding box.center)}, scale=.3, y=0.5cm]
{   
    \draw[dotted] (0.5,4) -- (6.5,4);
    \draw[knot] (2,4) -- (2,2);
    \draw[knot] (3,4) -- (3,2);
    \draw[knot] (4,4) -- (4,2);
    \draw[knot] (5,4) -- (5,2);
%    \draw[dotted] (0.5,0) -- (6.5,0);
    \draw[knot] (0.75, -2) rectangle (6.25, 2);
    \node at (3.5, 0) {$L$};
    \draw[knot] (2,-2) -- (2,-4);
    \draw[knot] (3,-2) -- (3,-4);
    \draw[knot] (4,-2) -- (4,-4);
    \draw[knot] (5,-2) -- (5,-4);
    \draw[knot] (1,-2) -- (1,-4);
    \draw[knot] (6,-2) -- (6,-4);
    \draw[dotted] (0.5,-4) -- (6.5,-4);
}
\right)
\cong
\mathrm{Hom}_{\mathcal{K}_{n-1}^{n-1}}\left(
\tikz[baseline={([yshift=-.5ex]current bounding box.center)}, scale=.3, y=0.5cm]
    {   
    \draw[dotted] (0.5,4) -- (6.5,4);
    \draw[knot] (2,4) -- (2,2);
    \draw[knot] (3,4) -- (3,2);
    \draw[knot] (4,4) -- (4,2);
    \draw[knot] (5,4) -- (5,2);
    \draw[knot] (1.5, -2) rectangle (5.5, 2);
    \node at (3.5, 0) {$M$};
    \draw[knot] (2,-2) -- (2,-4);
    \draw[knot] (3,-2) -- (3,-4);
    \draw[knot] (4,-2) -- (4,-4);
    \draw[knot] (5,-2) -- (5,-4);
    \draw[dotted] (0.5,-4) -- (6.5,-4);
    }
,\,
\tikz[baseline={([yshift=-.5ex]current bounding box.center)}, scale=.3, y=0.5cm]
    {   
    \draw[dotted] (0.5,4) -- (6.5,4);
    \draw[knot] (1,-1) to[out=-90, in=90] (2,-4);
    \draw[knot] (2,-1) to[out=-90, in=90] (3,-4);
    \draw[knot] (3,-1) to[out=-90, in=180] (3.5, -2.5) to[out=0, in=-90] (4,-1);
    \draw[knot] (5,-1) to[out=-90, in=90] (4,-4);
    \draw[knot] (6,-1) to[out=-90, in=90] (5,-4);
%    \draw[dotted] (0.5,0) -- (6.5,0);
    \draw[knot] (0.75, 3) rectangle (6.25, -1);
    \node at (3.5, 1) {$L$};
    \draw[knot] (2,3) -- (2,4);
    \draw[knot] (3,3) -- (3,4);
    \draw[knot] (4,3) -- (4,4);
    \draw[knot] (5,3) -- (5,4);
    \draw[dotted] (0.5,-4) -- (6.5,-4);
    }
\right).
\end{equation}
\end{proposition}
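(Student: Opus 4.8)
The plan is to prove Proposition \ref{prop:Kadjun} by establishing the two displayed adjunction isomorphisms (\ref{eq:adjun1}) and (\ref{eq:adjun2}) at the level of the underlying geometric bimodules, then lifting to complexes and to the homotopy/derived categories $\mathcal{K}_\bullet^\bullet$. The core of the matter is the statement that, as functors, composition with $\mathcal{F}(\cap_{i,n})$ and composition with $\mathcal{F}(\cup_{i,n-1})$ are biadjoint up to the $\mathcal{G}$-grading shift $\varphi_{(W,(1,1))}$ appearing in the statement, where $W$ is the relevant saddle cobordism. Since the $\mathcal{G}$-grading shift functors are invertible up to natural isomorphism (Subsection \ref{ss:geobims}), and since the shift $\{1,1\}$ is what the saddle-induced isomorphisms contribute, it suffices to produce unit and counit natural transformations and verify the triangle identities.

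First I would reduce to the bimodule level. By the tensor-hom machinery of Naisse-Putyra summarized in Subsection \ref{ss:nonhomoHH} and the ``derived Hom'' description, morphisms between complexes of geometric bimodules reduce, degreewise, to morphisms between geometric bimodules $\varphi_{(W,v)}\mathcal{F}(t)$; and every geometric bimodule is (up to $\mathcal{G}$-shift) of the form $\mathcal{F}(t)$ for a flat tangle $t$. So the whole statement is controlled by the behavior of the arc-space functor $\mathcal{F}$ under the composition maps $\mu[t,s]$ and the geometric identities $\mathcal{F}(s)\otimes_{H^n}\mathcal{F}(t)\cong \mathcal{F}(s\circ t)$ up to a computable $\mathcal{G}$-shift. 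The key geometric inputs are: (i) the canonical isotopies $\cap_{i,n}\circ(\text{cup below})$ and the zig-zag ($\text{S-shaped}$) identities for U-turns, which on the level of flat tangles collapse a $\cup$ followed by a $\cap$ to an identity tangle up to a ``fork'' cobordism; (ii) the induced graded maps from saddle cobordisms, reinterpreted via the change-of-chronology isomorphisms $\varphi_H$ as in Subsection \ref{ss:geobims} (precisely the reinterpretation already carried out there for the elementary saddle), which supply the candidate unit and counit; and (iii) the $\mathcal{G}$-graded associator $\alpha$ and looper $\varepsilon$ of Section \ref{s:odd}, whose role is to guarantee that the various reassociations and reorderings of tensor factors needed in the triangle identities multiply to $1$. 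The plan is to define the unit $\eta: M \Rightarrow \mathcal{F}(\cup)\otimes \mathcal{F}(\cap)\otimes M$ (suitably shifted) and counit $\epsilon: \mathcal{F}(\cap)\otimes\mathcal{F}(\cup)\otimes N \Rightarrow N$ as the maps induced by the birth/saddle and saddle/death movies, with the $\varphi_{(W,(1,1))}$ bookkeeping inserted exactly as the computation $\varphi_{\text{saddle}}^{-1}\cong\varphi_{(\text{saddle},(1,1))}$ in Subsection \ref{ss:geobims} dictates, and then to check the two triangle identities. For (\ref{eq:adjun2}) the argument is symmetric: it is the ``other'' biadjunction, obtained by reflecting the movies vertically, and reduces to the same geometric zig-zag after applying $\mathcal{F}$.

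The main obstacle I expect is the verification of the triangle identities at the level of $\mathcal{G}$-graded scalars: each composite of a unit and a counit produces, after expanding the geometric bimodule identities, a product of values of $\alpha$, of the looper $\varepsilon$, of the unitors $\mathcal{L},\mathcal{R}$, and of the $\beta,\gamma,\Xi$-witnesses for the $\mathcal{G}$-shifting $2$-system, and one must show this product is $1$ (equivalently, that the ``correction scalar'' is a unit that can be absorbed into the stated $\mathcal{G}$-shift). This is exactly the kind of computation the space-of-topographies technology of Subsection \ref{ss:cotrace} is designed to handle: the relevant scalar describes a closed path in $\mathcal{T}(n)$ for small $n$ (here essentially $\mathcal{T}(3)$ and $\mathcal{T}(4)$), which is simply connected by Lemmas \ref{lem:topo_counting} and \ref{lem:simpcon}, together with distant commutativity and $\alpha$-$\varepsilon$ coherence. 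A secondary, more mechanical obstacle is correctly tracking the $\mathbb{Z}\times\mathbb{Z}$ quantum-degree shift so that it matches the displayed $\varphi_{(W,(1,1))}$ on the nose; this follows from the Euler-characteristic bookkeeping $\widehat{\chi}$ of Subsection \ref{ss:geobims} applied to the fork cobordism. Once the bimodule-level biadjunction is in hand, passing to bounded complexes is formal (adjunctions pass to homotopy categories), and passing to $\mathsf{K}_m^n$ by localization along quasi-isomorphisms is likewise formal since the unit and counit are isomorphisms after applying $\mathcal{F}$ and $\mathcal{F}$ preserves quasi-isomorphisms. I would close by remarking, as the paper does for the even case, that this biadjunction is the $\mathcal{G}$-graded lift of Proposition $3$ of \cite{MR2171235}.
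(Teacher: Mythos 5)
Your plan is, at its core, the same computation the paper performs. The paper's proof exhibits explicit mutually inverse maps $\phi$ and $\psi$ between the two Hom-spaces: $\phi(f)$ is ``insert a birth of a circle under the cap, then apply $\mathrm{Id}\otimes f$,'' and $\psi(g)$ is ``apply $\mathrm{Id}\otimes g$, then the saddle map corrected by the change-of-chronology isomorphism $\varphi_H$''; the verification that $\psi\phi=\mathrm{id}$ and $\phi\psi=\mathrm{id}$ is exactly your zig-zag mechanism, namely that $f$ (resp.\ $g$) slides past the saddle (resp.\ birth) because they occupy disjoint cylinders, and that a merge following a birth cancels in $R\mathbf{ChCob}$, with the $\{1,0\}$-type bookkeeping handled by the $\varphi^{-1}$ computation of Subsection \ref{ss:geobims}, as you anticipate. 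Where you differ: the paper works directly with morphisms in $\mathcal{K}_{n-1}^{n}$ and $\mathcal{K}_{n-1}^{n-1}$, so there is no reduction to the bimodule level, no abstract unit/counit-plus-triangle-identities packaging, and no localization step (the Proposition is stated for $\mathcal{K}$, not $\mathsf{K}$, and since the maps are grading-preserving chain maps built from identities on the cup/cap factors, compatibility with differentials and homotopies is immediate); these extra layers in your plan are harmless but unnecessary. One genuine misdirection worth flagging: the scalar coherence here is \emph{not} governed by the looper $\varepsilon$ or the space of topographies of Subsection \ref{ss:cotrace} --- no loop condition on gradings arises in this adjunction and $\varepsilon$ plays no role --- rather, what is used is the naturality and multiplicativity of the isomorphisms $\varphi_H$ induced by locally vertical changes of chronology (ultimately Proposition \ref{putyrahammer}) together with the handle-cancellation relation that a merge after a birth, and a death after a split, is the identity. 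Replacing your appeal to $\mathcal{T}(3)$, $\mathcal{T}(4)$ and $\alpha$--$\varepsilon$ coherence by these inputs, your argument goes through and coincides with the paper's.
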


Notice that Khovanov's adjunction (Proposition 3 in \cite{MR2171235}) is obtained by summing the $\mathbb{Z} \times \mathbb{Z}$-degrees and subtracting the number of saddles in each of the grading shifts.

\begin{proof}
We prove (\ref{eq:adjun1}); the proof of (\ref{eq:adjun2}) is analogous. Define maps 
\[
\phi:
 \mathrm{Hom}_{\mathcal{K}_{n-1}^n} \left(\mathcal{F}(\cup_{i, n-1}) \otimes_{H^{n-1}} M, \varphi_{(W, (1,1))} N\right) 
\rightleftarrows
\mathrm{Hom}_{\mathcal{K}_{n-1}^{n-1}} \left(M, \mathcal{F}(\cap_{i,n}) \otimes_{H^n} N\right)
: \psi
\]
(for $W$ the cobordism appearing in (\ref{eq:adjun1})) as follows. We define $\phi(f)$ to be the composition
\[
\tikz[baseline={([yshift=-.5ex]current bounding box.center)}, scale=.5]
{
    \node (A) at (-2,0) {$
    \tikz[baseline={([yshift=-.5ex]current bounding box.center)}, scale=.4, y=0.5cm]
    {   
    \draw[dotted] (0.5,4) -- (6.5,4);
    \draw[knot] (2,4) -- (2,2);
    \draw[knot] (3,4) -- (3,2);
    \draw[knot] (4,4) -- (4,2);
    \draw[knot] (5,4) -- (5,2);
    \draw[knot] (1.5, -2) rectangle (5.5, 2);
    \node at (3.5, 0) {$M$};
    \draw[knot] (2,-2) -- (2,-4);
    \draw[knot] (3,-2) -- (3,-4);
    \draw[knot] (4,-2) -- (4,-4);
    \draw[knot] (5,-2) -- (5,-4);
    \draw[dotted] (0.5,-4) -- (6.5,-4);
    }
    $};
    \node (B) at (10,0) {$
    \tikz[baseline={([yshift=-.5ex]current bounding box.center)}, scale=.4, y=0.5cm]
    {   
    \draw[dotted] (0.5,4) -- (6.5,4);
    \draw[knot] (2,0) to[out=90, in=-90] (1,2) to[out=90, in=-90] (2,4);
    \draw[knot] (3,0) to[out=90, in=-90] (2,2) to[out=90, in=-90] (3,4);
    %\draw (3,2) to[out=90, in=180] (3.5, 3.5) to[out=0, in=90] (4,2) to[out=-90,in=0] (3.5, 0.5) to[out=180, in=-90] cycle;
    \draw[knot] (3.5,2) circle (20pt);
    \draw[knot] (4,0) to[out=90, in=-90] (5,2) to[out=90, in=-90] (4,4);
    \draw[knot] (5,0) to[out=90, in=-90] (6,2) to[out=90, in=-90] (5,4);
    \draw[dotted] (0.5,2) -- (6.5,2);
    \draw[knot] (1.5, -3) rectangle (5.5, 0);
    \node at (3.5, -1.5) {$M$};
    \draw[knot] (2,-3) -- (2,-4);
    \draw[knot] (3,-3) -- (3,-4);
    \draw[knot] (4,-3) -- (4,-4);
    \draw[knot] (5,-3) -- (5,-4);
    \draw[dotted] (0.5,-4) -- (6.5,-4) node[right]{$\{-1,0\}$};
    }
    $};
    \node (C) at (22,0) {$
    \tikz[baseline={([yshift=-.5ex]current bounding box.center)}, scale=.4, y=0.5cm]
    {   
    \draw[dotted] (0.5,4) -- (6.5,4);
    \draw[knot] (1,1) to[out=90, in=-90] (2,4);
    \draw[knot] (2,1) to[out=90, in=-90] (3,4);
    \draw[knot] (3,1) to[out=90, in=180] (3.5, 2.5) to[out=0, in=90] (4,1);
    \draw[knot] (5,1) to[out=90, in=-90] (4,4);
    \draw[knot] (6,1) to[out=90, in=-90] (5,4);
%    \draw[dotted] (0.5,0) -- (6.5,0);
    \draw[knot] (0.75, -3) rectangle (6.25, 1);
    \node at (3.5, -1) {$N$};
    \draw[knot] (2,-3) -- (2,-4);
    \draw[knot] (3,-3) -- (3,-4);
    \draw[knot] (4,-3) -- (4,-4);
    \draw[knot] (5,-3) -- (5,-4);
    \draw[dotted] (0.5,-4) -- (6.5,-4);
    }
    $};
    \draw[->] (A) to node[pos=0.5, above]{$\tikz[baseline={([yshift=-.5ex]current bounding box.center)}, scale=0.5]{
	\draw[-] (1,2) .. controls (1,1) and (2,1) .. (2,2);
	\draw[-] (1,2) .. controls (1,1.75) and (2,1.75) .. (2,2);
	\draw[-] (1,2) .. controls (1,2.25) and (2,2.25) .. (2,2);
    }$} (B);
    \draw[->] (B) to node[pos=0.5, above]{$\mathrm{Id}_{\cap_{i, n}} \otimes f$} (C);
}
\]
This is a $\mathcal{G}$-grading preserving map since the composition of grading shifts 
\[
\{-1,0\} \circ \varphi_{
    \left(
    \tikz[baseline={([yshift=-.5ex]current bounding box.center)}, scale=.25, y=0.5cm]
    {   
    \draw[dotted] (0.5,4) -- (6.5,4);
    \draw[knot, red] (3, 1.5) -- (4,1.5);
    \draw[knot] (1,1) to[out=90, in=-90] (2,4);
    \draw[knot] (2,1) to[out=90, in=-90] (3,4);
    \draw[knot] (3,1) to[out=90, in=180] (3.5, 2.5) to[out=0, in=90] (4,1);
    \draw[knot] (5,1) to[out=90, in=-90] (4,4);
    \draw[knot] (6,1) to[out=90, in=-90] (5,4);
    \draw[knot] (0.75, -3) rectangle (6.25, 1);
    \node at (3.5, -1) {$N$};
    \draw[knot] (2,-3) -- (2,-4);
    \draw[knot] (3,-3) -- (3,-4);
    \draw[knot] (4,-3) -- (4,-4);
    \draw[knot] (5,-3) -- (5,-4);
    \draw[dotted] (0.5,-4) -- (6.5,-4);
    }
,\, (1,1)
\right)
}
\]
is naturally isomorphic to the identity shift (because the cobordism involved is the split map). Conversely, define $\psi(g)$ to be the composition
\[
\tikz[baseline={([yshift=-.5ex]current bounding box.center)}, scale=.5]
{
    \node (A) at (-2,0) {$
    \tikz[baseline={([yshift=-.5ex]current bounding box.center)}, scale=.4, y=0.5cm]
    {   
    \draw[dotted] (0.5,4) -- (6.5,4);
    \draw[knot] (1,4) to[out=-90, in=90] (2,1);
    \draw[knot] (2,4) to[out=-90, in=90] (3,1);
    \draw[knot] (5,4) to[out=-90, in=90] (4,1);
    \draw[knot] (6,4) to[out=-90, in=90] (5,1);
    \draw[knot] (3,4) to[out=-90, in=180] (3.5, 2.5) to[out=0, in=-90] (4,4);
    \draw[knot] (1.5, -3) rectangle (5.5, 1);
    \node at (3.5, -1) {$M$};
    \draw[knot] (2,-3) -- (2,-4);
    \draw[knot] (3,-3) -- (3,-4);
    \draw[knot] (4,-3) -- (4,-4);
    \draw[knot] (5,-3) -- (5,-4);
    \draw[dotted] (0.5,-4) -- (6.5,-4);
    }
    $};
    \node (B) at (8,0) {$
    \tikz[baseline={([yshift=-.5ex]current bounding box.center)}, scale=.4, y=0.5cm]
    {   
    \draw[dotted] (0.5,4) -- (6.5,4);
    \draw[knot] (1,4) -- (1,1);
    \draw[knot] (2,4) -- (2,1);
    \draw[knot] (3,4) to[out=-90, in=180] (3.5,3) to[out=0, in=-90] (4,4);
    \draw[knot] (3,1) to[out=90, in=180] (3.5,2) to[out=0, in=90] (4,1);
    \draw[knot] (5,4) -- (5,1);
    \draw[knot] (6,4) -- (6,1);
    \draw[knot] (0.75, -3) rectangle (6.25, 1);
    \node at (3.5, -1) {$N$};
    \draw[knot] (2,-3) -- (2,-4);
    \draw[knot] (3,-3) -- (3,-4);
    \draw[knot] (4,-3) -- (4,-4);
    \draw[knot] (5,-3) -- (5,-4);
    \draw[dotted] (0.5,-4) -- (6.5,-4);
    }
    $};
    \node (C) at (18,0) {$
    \tikz[baseline={([yshift=-.5ex]current bounding box.center)}, scale=.4, y=0.5cm]
    {   
    \draw[dotted] (0.5,4) -- (6.5,4);
    \draw[knot] (1,4) -- (1,2);
    \draw[knot] (2,4) -- (2,2);
    \draw[knot] (3,4) -- (3,2);
    \draw[knot] (4,4) -- (4,2);
    \draw[knot] (5,4) -- (5,2);
    \draw[knot] (6,4) -- (6,2);
    \draw[knot] (0.75, -2) rectangle (6.25, 2);
    \node at (3.5, 0) {$N$};
    \draw[knot] (2,-2) -- (2,-4);
    \draw[knot] (3,-2) -- (3,-4);
    \draw[knot] (4,-2) -- (4,-4);
    \draw[knot] (5,-2) -- (5,-4);
    \draw[dotted] (0.5,-4) -- (6.5,-4);
    \draw (0.5,-4) node[left]{$\varphi_{
    \tikz[baseline={([yshift=-.5ex]current bounding box.center)}, scale=.25, y=0.5cm]
    {   
    \draw[dotted] (0.5,4) -- (6.5,4);
    \draw[knot] (1,4) -- (1,1);
    \draw[knot] (2,4) -- (2,1);
    \draw[knot] (3,4) to[out=-90, in=180] (3.5,3) to[out=0, in=-90] (4,4);
    \draw[knot] (3,1) to[out=90, in=180] (3.5,2) to[out=0, in=90] (4,1);
    \draw[knot] (5,4) -- (5,1);
    \draw[knot] (6,4) -- (6,1);
    \draw[knot,red] (3.5,3) -- (3.5,2);
%    \draw[dotted] (0.5,0) -- (6.5,0);
    \draw[knot] (0.75, -3) rectangle (6.25, 1);
    \draw[knot] (2,-3) -- (2,-4);
    \draw[knot] (3,-3) -- (3,-4);
    \draw[knot] (4,-3) -- (4,-4);
    \draw[knot] (5,-3) -- (5,-4);
    \draw[dotted] (0.5,-4) -- (6.5,-4);
    }
    }^{-1}$};
    }
    $};
    \draw[->] (A) to node[pos=0.5, above]{$
    \mathrm{Id}_{\cup_{i, n-1}}  \otimes g
    $} (B);
    \draw[->] (B) to node[pos=0.5, above]{$
    \tikz[baseline={([yshift=-.5ex]current bounding box.center)}, scale=.4, y=0.5cm]
    {   
    \draw[knot, -] (3,4) to[out=-90, in=180] (3.5,3) to[out=0, in=-90] (4,4);
    \draw[knot, -] (3,1) to[out=90, in=180] (3.5,2) to[out=0, in=90] (4,1);
    \draw[knot,red, -] (3.5,3) -- (3.5,2);
    }
    \circ \varphi_H
    $} (C);
}
\]
where $\varphi_H: \mathrm{Id} \Rightarrow \varphi_{
\tikz[baseline={([yshift=-.5ex]current bounding box.center)}, scale=.25, y=0.5cm]
    {   
    \draw[dotted] (0.5,4) -- (6.5,4);
    \draw[knot] (1,4) -- (1,1);
    \draw[knot] (2,4) -- (2,1);
    \draw[knot] (3,4) to[out=-90, in=180] (3.5,3) to[out=0, in=-90] (4,4);
    \draw[knot] (3,1) to[out=90, in=180] (3.5,2) to[out=0, in=90] (4,1);
    \draw[knot] (5,4) -- (5,1);
    \draw[knot] (6,4) -- (6,1);
    \draw[knot,red] (3.5,3) -- (3.5,2);
%    \draw[dotted] (0.5,0) -- (6.5,0);
    \draw[knot] (0.75, -3) rectangle (6.25, 1);
    \draw[knot] (2,-3) -- (2,-4);
    \draw[knot] (3,-3) -- (3,-4);
    \draw[knot] (4,-3) -- (4,-4);
    \draw[knot] (5,-3) -- (5,-4);
    \draw[dotted] (0.5,-4) -- (6.5,-4);
    }}^{-1} \circ
    \varphi_{
    \tikz[baseline={([yshift=-.5ex]current bounding box.center)}, scale=.25, y=0.5cm]
	{   
    \draw[dotted] (0.5,4) -- (6.5,4);
    \draw[knot] (1,4) -- (1,1);
    \draw[knot] (2,4) -- (2,1);
    \draw[knot] (3,4) -- (3,1);
    \draw[knot] (4,4) -- (4,1);
    \draw[knot] (5,4) -- (5,1);
    \draw[knot] (6,4) -- (6,1);
    \draw[knot,red] (3,2.5) -- (4,2.5);
    \draw[knot] (0.75, -3) rectangle (6.25, 1);
    \node at (3.5, -1) {$N$};
    \draw[knot] (2,-3) -- (2,-4);
    \draw[knot] (3,-3) -- (3,-4);
    \draw[knot] (4,-3) -- (4,-4);
    \draw[knot] (5,-3) -- (5,-4);
    \draw[dotted] (0.5,-4) -- (6.5,-4);
	}
    }$. This map is graded correctly since 
    \[
    \varphi_{
    \tikz[baseline={([yshift=-.5ex]current bounding box.center)}, scale=.25, y=0.5cm]
    {   
    \draw[dotted] (0.5,4) -- (6.5,4);
    \draw[knot] (1,4) -- (1,1);
    \draw[knot] (2,4) -- (2,1);
    \draw[knot] (3,4) to[out=-90, in=180] (3.5,3) to[out=0, in=-90] (4,4);
    \draw[knot] (3,1) to[out=90, in=180] (3.5,2) to[out=0, in=90] (4,1);
    \draw[knot] (5,4) -- (5,1);
    \draw[knot] (6,4) -- (6,1);
    \draw[knot,red] (3.5,3) -- (3.5,2);
%    \draw[dotted] (0.5,0) -- (6.5,0);
    \draw[knot] (0.75, -3) rectangle (6.25, 1);
    \draw[knot] (2,-3) -- (2,-4);
    \draw[knot] (3,-3) -- (3,-4);
    \draw[knot] (4,-3) -- (4,-4);
    \draw[knot] (5,-3) -- (5,-4);
    \draw[dotted] (0.5,-4) -- (6.5,-4);
    }}^{-1}
    \cong
    \varphi_{\left(
    \tikz[baseline={([yshift=-.5ex]current bounding box.center)}, scale=.25, y=0.5cm]
{   
    \draw[dotted] (0.5,4) -- (6.5,4);
    \draw[knot] (1,4) -- (1,1);
    \draw[knot] (2,4) -- (2,1);
    \draw[knot] (3,4) -- (3,1);
    \draw[knot] (4,4) -- (4,1);
    \draw[knot] (5,4) -- (5,1);
    \draw[knot] (6,4) -- (6,1);
    \draw[knot,red] (3,2.5) -- (4,2.5);
%    \draw[dotted] (0.5,0) -- (6.5,0);
    \draw[knot] (0.75, -3) rectangle (6.25, 1);
    \node at (3.5, -1) {$N$};
    \draw[knot] (2,-3) -- (2,-4);
    \draw[knot] (3,-3) -- (3,-4);
    \draw[knot] (4,-3) -- (4,-4);
    \draw[knot] (5,-3) -- (5,-4);
    \draw[dotted] (0.5,-4) -- (6.5,-4);
}
,\, (1,1)\right)}
    \]
    using results of the previous subsection. It remains to prove that $\psi(\phi(f)) = f$ and $\phi(\psi(g)) = g$. We compute $\psi(\phi(f))$ as the composition
\[
\tikz[baseline={([yshift=-.5ex]current bounding box.center)}, scale=.5]
{
    \node (A) at (0,0) {$
    \tikz[baseline={([yshift=-.5ex]current bounding box.center)}, scale=.4, y=0.5cm]
    {   
    \draw[dotted] (0.5,4) -- (6.5,4);
    \draw[knot] (1,4) to[out=-90, in=90] (2,1);
    \draw[knot] (2,4) to[out=-90, in=90] (3,1);
    \draw[knot] (5,4) to[out=-90, in=90] (4,1);
    \draw[knot] (6,4) to[out=-90, in=90] (5,1);
    \draw[knot] (3,4) to[out=-90, in=180] (3.5, 2.5) to[out=0, in=-90] (4,4);
    \draw[knot] (1.5, -3) rectangle (5.5, 1);
    \node at (3.5, -1) {$M$};
    \draw[knot] (2,-3) -- (2,-4);
    \draw[knot] (3,-3) -- (3,-4);
    \draw[knot] (4,-3) -- (4,-4);
    \draw[knot] (5,-3) -- (5,-4);
    \draw[dotted] (0.5,-4) -- (6.5,-4);
    }
    $};
    \node (B) at (7,0) {$
    \tikz[baseline={([yshift=-.5ex]current bounding box.center)}, scale=.4, y=0.5cm]
    {   
    \draw[dotted] (0.5,7) -- (6.5,7);
    \draw[knot] (3,7) to[out=-90, in=180] (3.5, 5.5) to[out=0, in=-90] (4,7);
    \draw[knot] (1,7) -- (1,4);
    \draw[knot] (2,7) -- (2,4);
    \draw[knot] (5,7) -- (5,4);
    \draw[knot] (6,7) -- (6,4);
    \draw[knot] (3.5,3.5) circle (15pt);
    \draw[dotted] (0.5,3.5) -- (6.5,3.5);
    \draw[knot] (1,4) to[out=-90, in=90] (2,1);
    \draw[knot] (2,4) to[out=-90, in=90] (3,1);
    \draw[knot] (5,4) to[out=-90, in=90] (4,1);
    \draw[knot] (6,4) to[out=-90, in=90] (5,1);
    \draw[knot] (1.5, -3) rectangle (5.5, 1);
    \node at (3.5, -1) {$M$};
    \draw[knot] (2,-3) -- (2,-4);
    \draw[knot] (3,-3) -- (3,-4);
    \draw[knot] (4,-3) -- (4,-4);
    \draw[knot] (5,-3) -- (5,-4);
    \draw[dotted] (0.5,-4) -- (6.5,-4) node[below]{$\{-1,0\}$};
    }
    $};
    \node (C) at (15,0) {$
    \tikz[baseline={([yshift=-.5ex]current bounding box.center)}, scale=.4, y=0.5cm]
    {   
    \draw[dotted] (0.5,4) -- (6.5,4);
    \draw[knot] (1,4) -- (1,1);
    \draw[knot] (2,4) -- (2,1);
    \draw[knot] (3,4) to[out=-90, in=180] (3.5,3) to[out=0, in=-90] (4,4);
    \draw[knot] (3,1) to[out=90, in=180] (3.5,2) to[out=0, in=90] (4,1);
    \draw[knot] (5,4) -- (5,1);
    \draw[knot] (6,4) -- (6,1);
    \draw[knot] (0.75, -3) rectangle (6.25, 1);
    \node at (3.5, -1) {$N$};
    \draw[knot] (2,-3) -- (2,-4);
    \draw[knot] (3,-3) -- (3,-4);
    \draw[knot] (4,-3) -- (4,-4);
    \draw[knot] (5,-3) -- (5,-4);
    \draw[dotted] (0.5,-4) -- (6.5,-4);
    }
    $};
    \node (D) at (26,0) {$
    \tikz[baseline={([yshift=-.5ex]current bounding box.center)}, scale=.4, y=0.5cm]
    {   
    \draw[dotted] (0.5,4) -- (6.5,4);
    \draw[knot] (1,4) -- (1,2);
    \draw[knot] (2,4) -- (2,2);
    \draw[knot] (3,4) -- (3,2);
    \draw[knot] (4,4) -- (4,2);
    \draw[knot] (5,4) -- (5,2);
    \draw[knot] (6,4) -- (6,2);
    \draw[knot] (0.75, -2) rectangle (6.25, 2);
    \node at (3.5, 0) {$N$};
    \draw[knot] (2,-2) -- (2,-4);
    \draw[knot] (3,-2) -- (3,-4);
    \draw[knot] (4,-2) -- (4,-4);
    \draw[knot] (5,-2) -- (5,-4);
    \draw[dotted] (0.5,-4) -- (6.5,-4);
    \draw (0.5,-4) node[left]{$\varphi_{
    \tikz[baseline={([yshift=-.5ex]current bounding box.center)}, scale=.25, y=0.5cm]
    {   
    \draw[dotted] (0.5,4) -- (6.5,4);
    \draw[knot] (1,4) -- (1,1);
    \draw[knot] (2,4) -- (2,1);
    \draw[knot] (3,4) to[out=-90, in=180] (3.5,3) to[out=0, in=-90] (4,4);
    \draw[knot] (3,1) to[out=90, in=180] (3.5,2) to[out=0, in=90] (4,1);
    \draw[knot] (5,4) -- (5,1);
    \draw[knot] (6,4) -- (6,1);
    \draw[knot,red] (3.5,3) -- (3.5,2);
%    \draw[dotted] (0.5,0) -- (6.5,0);
    \draw[knot] (0.75, -3) rectangle (6.25, 1);
    \draw[knot] (2,-3) -- (2,-4);
    \draw[knot] (3,-3) -- (3,-4);
    \draw[knot] (4,-3) -- (4,-4);
    \draw[knot] (5,-3) -- (5,-4);
    \draw[dotted] (0.5,-4) -- (6.5,-4);
    }
    }^{-1}$};
    }
    $};
    \draw[->] (A) to node[pos=0.5, above]{$\tikz[baseline={([yshift=-.5ex]current bounding box.center)}, scale=0.4]{
	\draw[-] (1,2) .. controls (1,1) and (2,1) .. (2,2);
	\draw[-] (1,2) .. controls (1,1.75) and (2,1.75) .. (2,2);
	\draw[-] (1,2) .. controls (1,2.25) and (2,2.25) .. (2,2);
    }$} (B);
    \draw[->] (B) ton node[pos=0.5, above]{$\mathrm{Id}\otimes f$} (C);
    \draw[->] (C) to node[pos=0.5, above]{$
    \tikz[baseline={([yshift=-.5ex]current bounding box.center)}, scale=.4, y=0.5cm]
    {   
    \draw[knot, -] (3,4) to[out=-90, in=180] (3.5,3) to[out=0, in=-90] (4,4);
    \draw[knot, -] (3,1) to[out=90, in=180] (3.5,2) to[out=0, in=90] (4,1);
    \draw[knot,red, -] (3.5,3) -- (3.5,2);
    }
    \circ \varphi_H
    $} (D);
}
\]
Then, notice that $f$ and the saddle occur in disjoint cylinders. Thus, these maps can slide past one another as long as we compensate by a grading shift induced by this locally vertical change of chronology. Since
\[
\varphi_{
    \left(
    \tikz[baseline={([yshift=-.5ex]current bounding box.center)}, scale=.25, y=0.5cm]
    {   
    \draw[dotted] (0.5,4) -- (6.5,4);
    \draw[knot, red] (3, 1.5) -- (4,1.5);
    \draw[knot] (1,1) to[out=90, in=-90] (2,4);
    \draw[knot] (2,1) to[out=90, in=-90] (3,4);
    \draw[knot] (3,1) to[out=90, in=180] (3.5, 2.5) to[out=0, in=90] (4,1);
    \draw[knot] (5,1) to[out=90, in=-90] (4,4);
    \draw[knot] (6,1) to[out=90, in=-90] (5,4);
    \draw[knot] (0.75, -3) rectangle (6.25, 1);
    \node at (3.5, -1) {$N$};
    \draw[knot] (2,-3) -- (2,-4);
    \draw[knot] (3,-3) -- (3,-4);
    \draw[knot] (4,-3) -- (4,-4);
    \draw[knot] (5,-3) -- (5,-4);
    \draw[dotted] (0.5,-4) -- (6.5,-4);
    }
,\, (1,1)
\right)}
\cong \{1,0\} \cong
\varphi_{\tikz[baseline={([yshift=-.5ex]current bounding box.center)}, scale=.3]{
	\draw (0,0) .. controls (0,1) and (1,1) .. (1,2);
	\draw (1,0) .. controls (1,1) and (2,1) .. (2,0);
	\draw (3,0) .. controls (3,1) and (2,1) .. (2,2);
	\draw (0,0) .. controls (0,-.25) and (1,-.25) .. (1,0);
	\draw[dashed] (0,0) .. controls (0,.25) and (1,.25) .. (1,0);
	\draw (2,0) .. controls (2,-.25) and (3,-.25) .. (3,0);
	\draw[dashed] (2,0) .. controls (2,.25) and (3,.25) .. (3,0);
	\draw (1,2) .. controls (1,1.75) and (2,1.75) .. (2,2);
	\draw (1,2) .. controls (1,2.25) and (2,2.25) .. (2,2);
}}^{-1}
\]
it follows that this composition is equivalent to 
\[
\tikz[baseline={([yshift=-.5ex]current bounding box.center)}, scale=.5]
{
    \node (A) at (0,0) {$
    \tikz[baseline={([yshift=-.5ex]current bounding box.center)}, scale=.4, y=0.5cm]
    {   
    \draw[dotted] (0.5,4) -- (6.5,4);
    \draw[knot] (1,4) to[out=-90, in=90] (2,1);
    \draw[knot] (2,4) to[out=-90, in=90] (3,1);
    \draw[knot] (5,4) to[out=-90, in=90] (4,1);
    \draw[knot] (6,4) to[out=-90, in=90] (5,1);
    \draw[knot] (3,4) to[out=-90, in=180] (3.5, 2.5) to[out=0, in=-90] (4,4);
    \draw[knot] (1.5, -3) rectangle (5.5, 1);
    \node at (3.5, -1) {$M$};
    \draw[knot] (2,-3) -- (2,-4);
    \draw[knot] (3,-3) -- (3,-4);
    \draw[knot] (4,-3) -- (4,-4);
    \draw[knot] (5,-3) -- (5,-4);
    \draw[dotted] (0.5,-4) -- (6.5,-4);
    }
    $};
    \node (B) at (7,0) {$
    \tikz[baseline={([yshift=-.5ex]current bounding box.center)}, scale=.4, y=0.5cm]
    {   
    \draw[dotted] (0.5,7) -- (6.5,7);
    \draw[knot] (3,7) to[out=-90, in=180] (3.5, 5.5) to[out=0, in=-90] (4,7);
    \draw[knot] (1,7) -- (1,4);
    \draw[knot] (2,7) -- (2,4);
    \draw[knot] (5,7) -- (5,4);
    \draw[knot] (6,7) -- (6,4);
    \draw[knot] (3.5,3.5) circle (15pt);
    \draw[knot] (1,4) to[out=-90, in=90] (2,1);
    \draw[knot] (2,4) to[out=-90, in=90] (3,1);
    \draw[knot] (5,4) to[out=-90, in=90] (4,1);
    \draw[knot] (6,4) to[out=-90, in=90] (5,1);
    \draw[knot] (1.5, -3) rectangle (5.5, 1);
    \node at (3.5, -1) {$M$};
    \draw[knot] (2,-3) -- (2,-4);
    \draw[knot] (3,-3) -- (3,-4);
    \draw[knot] (4,-3) -- (4,-4);
    \draw[knot] (5,-3) -- (5,-4);
    \draw[dotted] (0.5,-4) -- (6.5,-4) node[below]{$\{-1,0\}$};
    }
    $};
    \node (C) at (15,0) {$
    \tikz[baseline={([yshift=-.5ex]current bounding box.center)}, scale=.4, y=0.5cm]
    {   
    \draw[dotted] (0.5,4) -- (6.5,4);
    \draw[knot] (1,4) to[out=-90, in=90] (2,1);
    \draw[knot] (2,4) to[out=-90, in=90] (3,1);
    \draw[knot] (5,4) to[out=-90, in=90] (4,1);
    \draw[knot] (6,4) to[out=-90, in=90] (5,1);
    \draw[knot] (3,4) to[out=-90, in=180] (3.5, 2.5) to[out=0, in=-90] (4,4);
    \draw[knot] (1.5, -3) rectangle (5.5, 1);
    \node at (3.5, -1) {$M$};
    \draw[knot] (2,-3) -- (2,-4);
    \draw[knot] (3,-3) -- (3,-4);
    \draw[knot] (4,-3) -- (4,-4);
    \draw[knot] (5,-3) -- (5,-4);
    \draw[dotted] (0.5,-4) -- (6.5,-4);
    }
    $};
    \node (D) at (24.5,0) {$
    \tikz[baseline={([yshift=-.5ex]current bounding box.center)}, scale=.4, y=0.5cm]
    {   
    \draw[dotted] (0.5,4) -- (6.5,4);
    \draw[knot] (1,4) -- (1,2);
    \draw[knot] (2,4) -- (2,2);
    \draw[knot] (3,4) -- (3,2);
    \draw[knot] (4,4) -- (4,2);
    \draw[knot] (5,4) -- (5,2);
    \draw[knot] (6,4) -- (6,2);
    \draw[knot] (0.75, -2) rectangle (6.25, 2);
    \node at (3.5, 0) {$N$};
    \draw[knot] (2,-2) -- (2,-4);
    \draw[knot] (3,-2) -- (3,-4);
    \draw[knot] (4,-2) -- (4,-4);
    \draw[knot] (5,-2) -- (5,-4);
    \draw[dotted] (0.5,-4) -- (6.5,-4);
    \draw (0.5,-4) node[left]{$\varphi_{\left(
    \tikz[baseline={([yshift=-.5ex]current bounding box.center)}, scale=.25, y=0.5cm]
{   
    \draw[dotted] (0.5,4) -- (6.5,4);
    \draw[knot] (1,4) -- (1,1);
    \draw[knot] (2,4) -- (2,1);
    \draw[knot] (3,4) -- (3,1);
    \draw[knot] (4,4) -- (4,1);
    \draw[knot] (5,4) -- (5,1);
    \draw[knot] (6,4) -- (6,1);
    \draw[knot,red] (3,2.5) -- (4,2.5);
%    \draw[dotted] (0.5,0) -- (6.5,0);
    \draw[knot] (0.75, -3) rectangle (6.25, 1);
    \draw[knot] (2,-3) -- (2,-4);
    \draw[knot] (3,-3) -- (3,-4);
    \draw[knot] (4,-3) -- (4,-4);
    \draw[knot] (5,-3) -- (5,-4);
    \draw[dotted] (0.5,-4) -- (6.5,-4);
}
,\, (1,1)\right)}$};
    }
    $};
    \draw[->] (A) to node[pos=0.5, above]{$\tikz[baseline={([yshift=-.5ex]current bounding box.center)}, scale=0.4]{
	\draw[-] (1,2) .. controls (1,1) and (2,1) .. (2,2);
	\draw[-] (1,2) .. controls (1,1.75) and (2,1.75) .. (2,2);
	\draw[-] (1,2) .. controls (1,2.25) and (2,2.25) .. (2,2);
    }$} (B);
    \draw[->] (B) to node[pos=0.5, above]{$
    \tikz[baseline={([yshift=-.5ex]current bounding box.center)}, scale=.4, y=0.5cm]
    {   
    \draw[knot,red, -] (3.5,3) -- (3.5,2);
    \draw[knot] (3.5,4) circle (15pt);
    \draw[knot, -] (3,1) to[out=90, in=180] (3.5,2) to[out=0, in=90] (4,1);
    }
    $} (C);
    \draw[->] (C) to node[pos=0.5, above]{$f$} (D);
}
\]
Merges following a birth cancel in the linearized chronological cobordism category, so the desired result follows. To conclude the proof, we compute $\phi(\psi(g))$ as the composition
\[
\tikz[baseline={([yshift=-.5ex]current bounding box.center)}, scale=.5]
{
    \node (A) at (0,0) {$
    \tikz[baseline={([yshift=-.5ex]current bounding box.center)}, scale=.4, y=0.5cm]
    {   
    \draw[dotted] (0.5,4) -- (6.5,4);
    \draw[knot] (2,4) -- (2,2);
    \draw[knot] (3,4) -- (3,2);
    \draw[knot] (4,4) -- (4,2);
    \draw[knot] (5,4) -- (5,2);
    \draw[knot] (1.5, -2) rectangle (5.5, 2);
    \node at (3.5, 0) {$M$};
    \draw[knot] (2,-2) -- (2,-4);
    \draw[knot] (3,-2) -- (3,-4);
    \draw[knot] (4,-2) -- (4,-4);
    \draw[knot] (5,-2) -- (5,-4);
    \draw[dotted] (0.5,-4) -- (6.5,-4);
    }
    $};
    \node (B) at (8.5,0) {$
    \tikz[baseline={([yshift=-.5ex]current bounding box.center)}, scale=.4, y=0.5cm]
    {   
    \draw[dotted] (0.5,4) -- (6.5,4);
    \draw[knot] (2,0) to[out=90, in=-90] (1,2) to[out=90, in=-90] (2,4);
    \draw[knot] (3,0) to[out=90, in=-90] (2,2) to[out=90, in=-90] (3,4);
    %\draw (3,2) to[out=90, in=180] (3.5, 3.5) to[out=0, in=90] (4,2) to[out=-90,in=0] (3.5, 0.5) to[out=180, in=-90] cycle;
    \draw[knot] (3.5,2) circle (20pt);
    \draw[knot] (4,0) to[out=90, in=-90] (5,2) to[out=90, in=-90] (4,4);
    \draw[knot] (5,0) to[out=90, in=-90] (6,2) to[out=90, in=-90] (5,4);
    \draw[dotted] (0.5,0.25) -- (6.5,0.25);
    \draw[knot] (1.5, -3) rectangle (5.5, 0);
    \node at (3.5, -1.5) {$M$};
    \draw[knot] (2,-3) -- (2,-4);
    \draw[knot] (3,-3) -- (3,-4);
    \draw[knot] (4,-3) -- (4,-4);
    \draw[knot] (5,-3) -- (5,-4);
    \draw[dotted] (0.5,-4) -- (6.5,-4) node[right]{$\{-1,0\}$};
    }
    $};
    \node (C) at (18.5,0) {$
    \tikz[baseline={([yshift=-.5ex]current bounding box.center)}, scale=.4, y=0.5cm]
    {   
    \draw[dotted] (0.5,7) -- (6.5,7);
    \draw[knot] (2,7) to[out=-90, in=90] (1,1);
    \draw[knot] (3,7) to[out=-90, in=90] (2,1);
    \draw[knot] (3.5, 4) circle (15pt);
    \draw[knot] (3,1) to[out=90, in=180] (3.5,2) to[out=0, in=90] (4,1);
    \draw[knot] (4,7) to[out=-90, in=90] (5,1);
    \draw[knot] (5,7) to[out=-90, in=90] (6,1);
    \draw[knot] (0.75, -3) rectangle (6.25, 1);
    \node at (3.5, -1) {$N$};
    \draw[knot] (2,-3) -- (2,-4);
    \draw[knot] (3,-3) -- (3,-4);
    \draw[knot] (4,-3) -- (4,-4);
    \draw[knot] (5,-3) -- (5,-4);
    \draw[dotted] (0.5,-4) -- (6.5,-4) node[right]{$\{-1,0\}$};
    }
    $};
    \node (D) at (28,0) {$
        \tikz[baseline={([yshift=-.5ex]current bounding box.center)}, scale=.4, y=0.5cm]
    {   
    \draw[dotted] (0.5,4) -- (6.5,4);
    \draw[knot] (1,1) to[out=90, in=-90] (2,4);
    \draw[knot] (2,1) to[out=90, in=-90] (3,4);
    \draw[knot] (3,1) to[out=90, in=180] (3.5, 2.5) to[out=0, in=90] (4,1);
    \draw[knot] (5,1) to[out=90, in=-90] (4,4);
    \draw[knot] (6,1) to[out=90, in=-90] (5,4);
%    \draw[dotted] (0.5,0) -- (6.5,0);
    \draw[knot] (0.75, -3) rectangle (6.25, 1);
    \node at (3.5, -1) {$N$};
    \draw[knot] (2,-3) -- (2,-4);
    \draw[knot] (3,-3) -- (3,-4);
    \draw[knot] (4,-3) -- (4,-4);
    \draw[knot] (5,-3) -- (5,-4);
    \draw[dotted] (0.5,-4) -- (6.5,-4);
    }
    $};
    \draw[->] (A) to node[pos=0.5, above]{$\tikz[baseline={([yshift=-.5ex]current bounding box.center)}, scale=0.4]{
	\draw[-] (1,2) .. controls (1,1) and (2,1) .. (2,2);
	\draw[-] (1,2) .. controls (1,1.75) and (2,1.75) .. (2,2);
	\draw[-] (1,2) .. controls (1,2.25) and (2,2.25) .. (2,2);
    }$} (B);
    \draw[->] (B) ton node[pos=0.5, above]{$\mathrm{Id}\otimes g$} (C);
    \draw[->] (C) to node[pos=0.5, above]{$
    \tikz[baseline={([yshift=-.5ex]current bounding box.center)}, scale=.4, y=0.5cm]
    {   
    \draw[knot,red, -] (3.5,3) -- (3.5,2);
    \draw[knot] (3.5,4) circle (15pt);
    \draw[knot, -] (3,1) to[out=90, in=180] (3.5,2) to[out=0, in=90] (4,1);
    }
    $} (D);
}
\]
Since $g$ is a $\mathcal{G}$-graded map, it commutes with the birth up to a grading shift functor which is isomorphic to the identity shift, yielding the following composite.
\[
\tikz[baseline={([yshift=-.5ex]current bounding box.center)}, scale=.5]
{
    \node (A) at (0,0) {$
    \tikz[baseline={([yshift=-.5ex]current bounding box.center)}, scale=.4, y=0.5cm]
    {   
    \draw[dotted] (0.5,4) -- (6.5,4);
    \draw[knot] (2,4) -- (2,2);
    \draw[knot] (3,4) -- (3,2);
    \draw[knot] (4,4) -- (4,2);
    \draw[knot] (5,4) -- (5,2);
    \draw[knot] (1.5, -2) rectangle (5.5, 2);
    \node at (3.5, 0) {$M$};
    \draw[knot] (2,-2) -- (2,-4);
    \draw[knot] (3,-2) -- (3,-4);
    \draw[knot] (4,-2) -- (4,-4);
    \draw[knot] (5,-2) -- (5,-4);
    \draw[dotted] (0.5,-4) -- (6.5,-4);
    }
    $};
    \node (B) at (8.5,0) {$
        \tikz[baseline={([yshift=-.5ex]current bounding box.center)}, scale=.4, y=0.5cm]
    {   
    \draw[dotted] (0.5,4) -- (6.5,4);
    \draw[knot] (1,1) to[out=90, in=-90] (2,4);
    \draw[knot] (2,1) to[out=90, in=-90] (3,4);
    \draw[knot] (3,1) to[out=90, in=180] (3.5, 2.5) to[out=0, in=90] (4,1);
    \draw[knot] (5,1) to[out=90, in=-90] (4,4);
    \draw[knot] (6,1) to[out=90, in=-90] (5,4);
%    \draw[dotted] (0.5,0) -- (6.5,0);
    \draw[knot] (0.75, -3) rectangle (6.25, 1);
    \node at (3.5, -1) {$N$};
    \draw[knot] (2,-3) -- (2,-4);
    \draw[knot] (3,-3) -- (3,-4);
    \draw[knot] (4,-3) -- (4,-4);
    \draw[knot] (5,-3) -- (5,-4);
    \draw[dotted] (0.5,-4) -- (6.5,-4);
    }
    $};
    \node (C) at (18.5,0) {$
    \tikz[baseline={([yshift=-.5ex]current bounding box.center)}, scale=.4, y=0.5cm]
    {   
    \draw[dotted] (0.5,7) -- (6.5,7);
    \draw[knot] (2,7) to[out=-90, in=90] (1,1);
    \draw[knot] (3,7) to[out=-90, in=90] (2,1);
    \draw[knot] (3.5, 4) circle (15pt);
    \draw[knot] (3,1) to[out=90, in=180] (3.5,2) to[out=0, in=90] (4,1);
    \draw[knot] (4,7) to[out=-90, in=90] (5,1);
    \draw[knot] (5,7) to[out=-90, in=90] (6,1);
    \draw[knot] (0.75, -3) rectangle (6.25, 1);
    \node at (3.5, -1) {$N$};
    \draw[knot] (2,-3) -- (2,-4);
    \draw[knot] (3,-3) -- (3,-4);
    \draw[knot] (4,-3) -- (4,-4);
    \draw[knot] (5,-3) -- (5,-4);
    \draw[dotted] (0.5,-4) -- (6.5,-4) node[right]{$\{-1,0\}$};
    }
    $};
    \node (D) at (28,0) {$
        \tikz[baseline={([yshift=-.5ex]current bounding box.center)}, scale=.4, y=0.5cm]
    {   
    \draw[dotted] (0.5,4) -- (6.5,4);
    \draw[knot] (1,1) to[out=90, in=-90] (2,4);
    \draw[knot] (2,1) to[out=90, in=-90] (3,4);
    \draw[knot] (3,1) to[out=90, in=180] (3.5, 2.5) to[out=0, in=90] (4,1);
    \draw[knot] (5,1) to[out=90, in=-90] (4,4);
    \draw[knot] (6,1) to[out=90, in=-90] (5,4);
%    \draw[dotted] (0.5,0) -- (6.5,0);
    \draw[knot] (0.75, -3) rectangle (6.25, 1);
    \node at (3.5, -1) {$N$};
    \draw[knot] (2,-3) -- (2,-4);
    \draw[knot] (3,-3) -- (3,-4);
    \draw[knot] (4,-3) -- (4,-4);
    \draw[knot] (5,-3) -- (5,-4);
    \draw[dotted] (0.5,-4) -- (6.5,-4);
    }
    $};
    \draw[->] (A) to node[pos=0.5, above]{$g$} (B);
    \draw[->] (B) to node[pos=0.5, above]{$\tikz[baseline={([yshift=-.5ex]current bounding box.center)}, scale=0.4]{
	\draw[-] (1,2) .. controls (1,1) and (2,1) .. (2,2);
	\draw[-] (1,2) .. controls (1,1.75) and (2,1.75) .. (2,2);
	\draw[-] (1,2) .. controls (1,2.25) and (2,2.25) .. (2,2);
    }$} (C);
    \draw[->] (C) to node[pos=0.5, above]{$
    \tikz[baseline={([yshift=-.5ex]current bounding box.center)}, scale=.4, y=0.5cm]
    {   
    \draw[knot,red, -] (3.5,3) -- (3.5,2);
    \draw[knot] (3.5,4) circle (15pt);
    \draw[knot, -] (3,1) to[out=90, in=180] (3.5,2) to[out=0, in=90] (4,1);
    }
    $} (D);
}
\]
By the same argument as before, we obtain $\phi(\psi(g)) = g$.
\end{proof}

\begin{corollary}
\label{cor:cup_cap}
The only $\mathcal{G}$-graded endomorphisms of the bimodules $\mathcal{F}(\cup_{i, n-1})$ and $\mathcal{F}(\cap_{i, n})$ are multiplications by elements of $R$. Thus, the only $\mathcal{G}$-graded automorphisms of these bimodules are multiplications by the units of $R$.
\end{corollary}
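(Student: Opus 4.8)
The plan is to run the $\mathcal{G}$-graded analogue of Khovanov's argument from \cite{MR2171235}, using the adjunctions of Proposition \ref{prop:Kadjun} to reduce to the $\mathcal{G}$-graded center of a smaller arc algebra. I would treat $\mathcal{F}(\cup_{i,n-1})$ explicitly; the bimodule $\mathcal{F}(\cap_{i,n})$ is then handled by the same reduction with (\ref{eq:adjun2}) in place of (\ref{eq:adjun1}), or formally from biadjointness: since $\mathcal{F}(\cup_{i,n-1})$ and $\mathcal{F}(\cap_{i,n})$ are adjoint up to $\mathcal{G}$-grading shift and $R$ is commutative, $\mathrm{End}_{\mathcal{G}}(\mathcal{F}(\cap_{i,n}))\cong\mathrm{End}_{\mathcal{G}}(\mathcal{F}(\cup_{i,n-1}))^{\op}$, so it suffices to compute one of them.

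First I would write $\mathcal{F}(\cup_{i,n-1})=\mathcal{F}(\cup_{i,n-1})\otimes_{H^{n-1}}H^{n-1}$ and apply (\ref{eq:adjun1}) with $M=H^{n-1}$, choosing $N$ so that $\varphi_{(W,(1,1))}N$ is the complex $\mathcal{F}(\cup_{i,n-1})$ concentrated in degree $0$. This yields a natural isomorphism
\[
\mathrm{End}_{\mathcal{G}}(\mathcal{F}(\cup_{i,n-1}))\;\cong\;\mathrm{Hom}_{\mathrm{Bim}(H^{n-1},H^{n-1})^{\mathcal{G}}}\bigl(H^{n-1},\;\mathcal{F}(\cap_{i,n})\otimes_{H^n}\varphi_{(W,(1,1))}^{-1}\mathcal{F}(\cup_{i,n-1})\bigr).
\]
Since composing geometric bimodules computes $\mathcal{F}$ of the composite flat tangle, and $\cap_{i,n}\circ\cup_{i,n-1}$ is $1_{n-1}$ together with one closed circle, the coefficient bimodule is isomorphic, as a $\mathcal{G}$-graded $(H^{n-1},H^{n-1})$-bimodule, to a grading shift of $H^{n-1}\otimes_R V$; this splits along the two rank-one pieces $H^{n-1}\otimes v_+$ and $H^{n-1}\otimes v_-$. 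The adjunction isomorphism sends $\mathrm{id}_{\mathcal{F}(\cup_{i,n-1})}$ to a grading-preserving map, which forces one of these summands to be $H^{n-1}$ with the zero shift; the other is then $H^{n-1}$ shifted by a nonzero element of $\mathbb{Z}\times\mathbb{Z}$ of total quantum weight $+2$. By the reduction of the Hochschild cocomplex following Theorem \ref{thm:cohored}, $\mathrm{Hom}_{\mathrm{Bim}(H^{n-1},H^{n-1})^{\mathcal{G}}}(H^{n-1},H^{n-1})=\mathsf{HH}^0(H^{n-1})=Z^{\mathcal{G}}(H^{n-1})\cong R$ by Proposition \ref{prop:centerofH}, so the zero-shift summand contributes exactly $R$. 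For the other summand, a $\mathcal{G}$-graded bimodule map $H^{n-1}\to H^{n-1}\{k\}$ with $\{k\}$ of strictly positive quantum weight is a homogeneous $(H^{n-1},H^{n-1})$-bimodule endomorphism of $H^{n-1}$ that strictly lowers the quantum grading; exactly as in the proof of Proposition \ref{prop:c-gradedcenter}, such an endomorphism is determined by the images of the idempotents $e_a$, which sit in quantum degree $0$ and must therefore land in a negative quantum degree of $H^{n-1}$, where $H^{n-1}$ vanishes. Hence that $\mathrm{Hom}$-space is zero and $\mathrm{End}_{\mathcal{G}}(\mathcal{F}(\cup_{i,n-1}))=R$. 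The automorphism statement is then immediate, since scalar multiplication by $r\in R$ is invertible precisely when $r\in R^\times$.

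The main obstacle is the shift bookkeeping in the previous paragraph: one must verify that the ``circle summand'' $H^{n-1}\otimes v_-$ lands at a strictly \emph{positive} shift under $\varphi_{(W,(1,1))}^{-1}$ (equivalently, that (\ref{eq:adjun1}), rather than its mirror, is the adjunction relevant for this bimodule), since at a negative shift it would instead contribute a copy of $\mathsf{HH}^0(H^{n-1})$ in positive quantum degree, which by Khovanov's identification $\mathsf{HH}^0(H^{n-1})\cong H^\bullet(\mathfrak{B}_{n-1,n-1};\mathbb{Z})$ is nonzero once $n\ge 3$. This is a finite computation using the identifications of $\mathcal{G}$-grading shifts with $\mathbb{Z}\times\mathbb{Z}$-shifts from Subsection \ref{ss:geobims} (a birth giving $\{1,0\}$, a merge giving $\{-1,0\}$, and so on) together with the explicit form of the unit of the $(\mathcal{F}(\cup_{i,n-1}),\mathcal{F}(\cap_{i,n}))$-adjunction as $\mathrm{id}\otimes(\mathrm{birth})$ in the proof of Proposition \ref{prop:Kadjun}; it is the one step where a sign error would be fatal.
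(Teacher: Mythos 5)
Your route is the paper's route: the paper also applies (\ref{eq:adjun1}) of Proposition \ref{prop:Kadjun} with $M=H^{n-1}$ and $N=\mathcal{F}(\cup_{i,n-1})$, deloops the free circle in $\mathcal{F}(\cap_{i,n})\otimes_{H^n}\mathcal{F}(\cup_{i,n-1})$, identifies the zero-shift summand's contribution with $Z^{\mathcal{G}}(H^{n-1})\cong R$ via Corollary \ref{cor:centerofH}, kills the other summand by a quantum-degree argument, and treats $\mathcal{F}(\cap_{i,n})$ by the same reduction using (\ref{eq:adjun2}). So the structure is correct and not new.

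The one step you flagged as unverified is where your bookkeeping goes wrong \emph{in this paper's conventions}, although the conclusion survives. Here $v_+$ has quantum degree $+1$, $v_-$ has $-1$, and there is no overall normalization shift, so the units $e_a$ of $H^{n-1}$ sit in quantum degree $n-1$, which is the \emph{top} of the support of $H^{n-1}$ (they are not in degree $0$, and $H^{n-1}$ certainly does not vanish in negative quantum degrees). Carrying out the shift computation as in the paper, the relevant grading shift in (\ref{eq:adjun1}) is $\{1,0\}$, so after delooping the coefficient bimodule becomes $H^{n-1}\oplus H^{n-1}\{-1,-1\}$: the circle summand lands at quantum weight $-2$, not $+2$. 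The vanishing of $\mathrm{Hom}_{\mathcal{K}_{n-1}^{n-1}}(H^{n-1},H^{n-1}\{-1,-1\})$ then holds because a graded bimodule map is determined by the images of the units (as in the proof of Proposition \ref{prop:c-gradedcenter}), and these images would have to have quantum degree $n+1>n-1$. Your worry about the negative shift contributing Springer-variety classes is inverted for the same reason: in this paper's grading the non-degree-preserving bimodule endomorphisms of $H^{n-1}$ (the classes of $\mathsf{HH}^0(H^{n-1})$ beyond $R$) strictly \emph{lower} quantum degree, so it is precisely the negatively shifted summand that is harmless, while a positively shifted summand would have been the dangerous one. Your statements are the correct ones in Khovanov's normalized convention (idempotents in degree $0$, algebra supported in non-negative degrees), but that convention is not the one used here, so as written the sign check you deferred would come out opposite to what you asserted; once the conventions are made consistent, your argument coincides with the paper's. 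Your alternative handling of $\mathcal{F}(\cap_{i,n})$ via ``biadjointness implies an anti-isomorphism of endomorphism rings'' is also more than is literally provided by Propositions \ref{prop:Kadjun} and \ref{prop:KadjunB} (which are stated as isomorphisms of Hom-modules, not as unit--counit adjunctions); the direct application of (\ref{eq:adjun2}), which you also mention, is the safe choice and is what the paper does.
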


\begin{proof}
To prove the first claim, we apply (\ref{eq:adjun1}) from Proposition \ref{prop:Kadjun} for $M = H^{n-1}$ and $N= \mathcal{F}(\cup_{i, n-1})$. Again, notice that the relevant grading shift is
\[
\varphi_{\left(
\tikz[baseline={([yshift=-.5ex]current bounding box.center)}, scale=.3, y=0.7cm]
{   
    \draw[dotted] (0.5,4) -- (6.5,4);
    \draw[knot, red] (3, 3.5) -- (4,3.5);
    \draw[knot] (1,4) to[out=-90, in=90] (2,0);
    \draw[knot] (2,4) to[out=-90, in=90] (3,0);
    \draw[knot] (5,4) to[out=-90, in=90] (4,0);
    \draw[knot] (6,4) to[out=-90, in=90] (5,0);
    \draw[knot] (3,4) to[out=-90, in=180] (3.5, 2.5) to[out=0, in=-90] (4,4);
    \draw[dotted] (0.5,0) -- (6.5,0);
},\, (1,1)\right)} \cong \{1,0\}.
\]
pictured for $n = 3$. Then, we compute
\begin{align*}
\mathrm{Hom}_{\mathcal{K}_{n-1}^n}\left(
\tikz[baseline={([yshift=-.5ex]current bounding box.center)}, scale=.3, y=0.7cm]
{   
    \draw[dotted] (0.5,4) -- (6.5,4);
    \draw[knot] (1,4) to[out=-90, in=90] (2,0);
    \draw[knot] (2,4) to[out=-90, in=90] (3,0);
    \draw[knot] (5,4) to[out=-90, in=90] (4,0);
    \draw[knot] (6,4) to[out=-90, in=90] (5,0);
    \draw[knot] (3,4) to[out=-90, in=180] (3.5, 2.5) to[out=0, in=-90] (4,4);
    \draw[dotted] (0.5,0) -- (6.5,0);
},\,
\tikz[baseline={([yshift=-.5ex]current bounding box.center)}, scale=.3, y=0.7cm]
{   
    \draw[dotted] (0.5,4) -- (6.5,4);
    \draw[knot] (1,4) to[out=-90, in=90] (2,0);
    \draw[knot] (2,4) to[out=-90, in=90] (3,0);
    \draw[knot] (5,4) to[out=-90, in=90] (4,0);
    \draw[knot] (6,4) to[out=-90, in=90] (5,0);
    \draw[knot] (3,4) to[out=-90, in=180] (3.5, 2.5) to[out=0, in=-90] (4,4);
    \draw[dotted] (0.5,0) -- (6.5,0);
}
\right) & \cong
\mathrm{Hom}_{\mathcal{K}_{n-1}^{n-1}}\left(
H^{n-1},\,
\tikz[baseline={([yshift=-.5ex]current bounding box.center)}, scale=.3, y=0.7cm]
{   
    \draw[dotted] (0.5,4) -- (6.5,4);
    \draw[knot] (2,0) to[out=90, in=-90] (1,2) to[out=90, in=-90] (2,4);
    \draw[knot] (3,0) to[out=90, in=-90] (2,2) to[out=90, in=-90] (3,4);
    %\draw (3,2) to[out=90, in=180] (3.5, 3.5) to[out=0, in=90] (4,2) to[out=-90,in=0] (3.5, 0.5) to[out=180, in=-90] cycle;
    \draw[knot] (3.5,2) circle (20pt);
    \draw[knot] (4,0) to[out=90, in=-90] (5,2) to[out=90, in=-90] (4,4);
    \draw[knot] (5,0) to[out=90, in=-90] (6,2) to[out=90, in=-90] (5,4);
    \draw[dotted] (0.5,0) -- (6.5,0);
}
\{-1, 0\}
\right)
\\ & \cong \mathrm{Hom}_{\mathcal{K}_{n-1}^{n-1}}(H^{n-1}, H^{n-1} \oplus H^{n-1}\{-1,-1\})
\\ & \cong \mathrm{Hom}_{\mathcal{K}_{n-1}^{n-1}}(H^{n-1}, H^{n-1})
\\ & \cong R.
\end{align*}
The first line is from the adjunction, after rearranging the $\mathbb{Z} \times \mathbb{Z}$-grading shift. The second isomorphism follows from removing the free loop, and the fourth comes from Corollary \ref{cor:centerofH}. The third comes from the observation that
\[
\mathrm{Hom}_{\mathcal{K}_{n-1}^{n-1}}(H^{n-1}, H^{n-1}\{-1, -1\}) = 0.
\]
The second claim follows by applying the same arguments for (\ref{eq:adjun2}) of Proposition \ref{prop:Kadjun}, taking $M = H^{n-1}$ and $L = \mathcal{F}(\cap_{i,n)}$.
\end{proof}

Actually, in general,
\[
\mathrm{Hom}_{\mathcal{K}_n^n}(H^n, H^n\{k,k\}) = 0
\]
whenever $k\le -1$. Thus, iterating the argument of the above proof, we have the following more general statement.

\begin{corollary}
\label{cor:cups_caps}
If $M$ is the tensor product of bimodules 
\[
M = \mathcal{F}(\cap_{i_1, n_1-1}) \otimes_{H^{n_1-1}} \cdots \otimes_{H^{n_j-1}} \mathcal{F}(\cap_{i_{j+1}, n_{j+1}-1})
\]
then the only $\mathcal{G}$-graded endomorphisms $M$ are multiplications by elements of $R$, and the only $\mathcal{G}$-graded automorphisms of $M$ are multiplications by the units of $R$. The same holds if $M$ is a tensor product of such bimodules and invertible complexes. Everything stated also holds if we replace $\cap$ with $\cup$ everywhere.
\end{corollary}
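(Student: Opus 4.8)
The plan is to induct on the number $j$ of cap factors in $M$, peeling them off one at a time with the biadjunction of Proposition~\ref{prop:Kadjun}, exactly as in the proof of Corollary~\ref{cor:cup_cap} but iterated. It is convenient to prove the slightly stronger statement that for every $M$ of the given form --- a tensor product of $j$ cap bimodules together with any number of invertible complexes of $\mathcal{G}$-graded $(H^n,H^n)$-bimodules --- the graded $R$-module $\mathrm{Hom}_{\mathcal{K}_n^n}(M, M\{k,k\})$ vanishes whenever $k \le -1$, while for $k = 0$ it is free of rank one, generated by $\mathrm{Id}_M$. Only the diagonal shifts $\{k,k\}$ need be considered, since each cap--cup cancellation produces a free loop which, after absorbing the adjunction shift, contributes precisely a summand shifted by $\{-1,-1\}$ relative to the unshifted one.

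For the base case $j = 0$, the bimodule $M$ is a (possibly empty) tensor product of invertible complexes and is therefore itself invertible, because $H^n$ is the monoidal unit of $\mathcal{K}_n^n$ and a tensor product of invertible objects is invertible. Propositions~\ref{prop:invcomplex}, \ref{prop:c-gradedcenter} and \ref{prop:centerofH} then give $\mathrm{Hom}_{\mathcal{K}_n^n}(M, M) \cong Z^\mathcal{G}(H^n) \cong R$, with $\mathrm{Id}_M$ mapping to a generator; and tensoring with $M^{-1}$ identifies $\mathrm{Hom}_{\mathcal{K}_n^n}(M, M\{k,k\})$ with $\mathrm{Hom}_{\mathcal{K}_n^n}(H^n, H^n\{k,k\})$, which vanishes for $k \le -1$ by the remark immediately following Corollary~\ref{cor:cup_cap}.

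For the inductive step, since tensoring with an invertible complex is an equivalence (Proposition~\ref{prop:invcomplex}), any leading invertible-complex factors of $M$ may be cancelled against their inverses without changing $j$ or the isomorphism type of the relevant Hom groups and while carrying $\mathrm{Id}$ to $\mathrm{Id}$; so we may assume the leftmost factor of $M$ is a cap, $M = \mathcal{F}(\cap_{i,m}) \otimes_{H^m} B$, where $B$ is a tensor product of caps and invertible complexes with $j - 1$ caps. Applying the adjunction of Proposition~\ref{prop:Kadjun} to both arguments of $\mathrm{Hom}_{\mathcal{K}_n^n}(M, M\{k,k\})$ and then, exactly as in the proof of Corollary~\ref{cor:cup_cap}, rearranging the $\mathbb{Z} \times \mathbb{Z}$-grading shift and using that the resulting cap--cup composite together with the adjunction shift is isomorphic to $H^{m} \oplus H^{m}\{-1,-1\}$ (the composite contains a free loop, contributing a copy of $V$), one obtains an $R$-linear isomorphism
\[
\mathrm{Hom}_{\mathcal{K}_n^n}(M, M\{k,k\}) \;\cong\; \mathrm{Hom}_{\mathcal{K}_n^n}(B, B\{k,k\}) \;\oplus\; \mathrm{Hom}_{\mathcal{K}_n^n}(B, B\{k-1,k-1\})
\]
sending $\mathrm{Id}_M$ to $(\mathrm{Id}_B, 0)$. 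For $k \le -1$ both summands vanish by the inductive hypothesis for $B$; for $k = 0$ the first summand equals $R \cdot \mathrm{Id}_B$ and the second vanishes because $k - 1 = -1$, so $\mathrm{Hom}_{\mathcal{K}_n^n}(M, M) \cong R \cdot \mathrm{Id}_M$. Taking $k = 0$ gives the first assertion of the corollary, and the automorphisms of $M$ are then precisely the maps $u \cdot \mathrm{Id}_M$ with $u$ a unit of $R$. The statement with $\cup$ in place of $\cap$ everywhere follows by the identical argument, using the other half of the biadjunction of Proposition~\ref{prop:Kadjun}.

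The main obstacle is the grading bookkeeping: one must verify that at each peeling step the extra direct summand always appears with a shift $\{k-1,k-1\}$ that is again diagonal and strictly more negative, so that the crude diagonal vanishing $\mathrm{Hom}_{\mathcal{K}_n^n}(H^n, H^n\{k,k\}) = 0$ for $k \le -1$ is all that is needed. This is exactly the computation already carried out in the proof of Corollary~\ref{cor:cup_cap}, where the adjunction shift $\{-1,0\}$ combines with the loop's contribution $R\{1,0\} \oplus R\{0,-1\}$ to produce $H^{m} \oplus H^{m}\{-1,-1\}$. The interaction with invertible-complex factors interleaved among the caps is harmless, because peeling the leftmost cap via Proposition~\ref{prop:Kadjun} does not see anything lying to its right.
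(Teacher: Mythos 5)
Your proof is correct and follows essentially the same route as the paper, which simply iterates the adjunction-plus-delooping argument of Corollary~\ref{cor:cup_cap} using the vanishing of $\mathrm{Hom}(H^n, H^n\{k,k\})$ for $k\le -1$. Your contribution is only to make that iteration explicit as an induction on the number of cap factors with the strengthened statement about diagonally shifted endomorphisms, which is exactly the bookkeeping the paper leaves implicit.
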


Actually, whenever $\mathcal{F}(\cup_{i, n-1}) \otimes_{H^{n-1}} M = N$ or $M \otimes_{H^{n-1}} \mathcal{F}(\cap_{i, n}) = L$, the grading shifts of Proposition \ref{prop:Kadjun} are naturally isomorphic to $\{1,0\}$. Suppose $t$ is a flat tangle without closed components and fix a decomposition of $t$ into a product of cups and caps. To prove that $\mathcal{G}$-graded endomorphisms of $\mathcal{F}(t)$ are multiplications by elements of $R$, we will need one more adjunction statement. It turns out to be slightly more natural than the previous one.

\begin{proposition}
\label{prop:KadjunB}
Suppose $M$ is an object of $\mathcal{K}_n^n$ and $N$ is an object of $\mathcal{K}_{n-1}^n$. Then
\begin{equation}
\label{eq:adjunB1}
\mathrm{Hom}_{\mathcal{K}_{n-1}^n} \left(
\tikz[baseline={([yshift=-.5ex]current bounding box.center)}, scale=.3, y=0.5cm]
    {   
    \draw[dotted] (0.5,4) -- (6.5,4);
    \draw[knot] (1,3) -- (1,4);
    \draw[knot] (2,3) -- (2,4);
    \draw[knot] (3,3) -- (3,4);
    \draw[knot] (4,3) -- (4,4);
    \draw[knot] (5,3) -- (5,4);
    \draw[knot] (6,3) -- (6,4);
    \draw[knot] (0.5, -1) rectangle (6.5, 3);
    \node at (3.5, 1) {$M$};
    \draw[knot] (1,-1) to[out=-90, in=90] (2,-4);
    \draw[knot] (2,-1) to[out=-90, in=90] (3,-4);
    \draw[knot] (3,-1) to[out=-90, in=180] (3.5, -2.5) to[out=0, in=-90] (4,-1);
    \draw[knot] (5,-1) to[out=-90, in=90] (4,-4);
    \draw[knot] (6,-1) to[out=-90, in=90] (5,-4);
    \draw[dotted] (0.5,-4) -- (6.5,-4);
    }
~, \, 
\tikz[baseline={([yshift=-.5ex]current bounding box.center)}, scale=.3, y=0.5cm]
{   
    \draw[dotted] (0.5,4) -- (6.5,4);
    \draw[knot] (1,4) -- (1,2);
    \draw[knot] (2,4) -- (2,2);
    \draw[knot] (3,4) -- (3,2);
    \draw[knot] (4,4) -- (4,2);
    \draw[knot] (5,4) -- (5,2);
    \draw[knot] (6,4) -- (6,2);
%    \draw[dotted] (0.5,0) -- (6.5,0);
    \draw[knot] (0.75, -2) rectangle (6.25, 2);
    \node at (3.5, 0) {$N$};
    \draw[knot] (2,-2) -- (2,-4);
    \draw[knot] (3,-2) -- (3,-4);
    \draw[knot] (4,-2) -- (4,-4);
    \draw[knot] (5,-2) -- (5,-4);
    \draw[dotted] (0.5,-4) -- (6.5,-4);
}
\right)
\cong
\mathrm{Hom}_{\mathcal{K}_n^n} \left(
\varphi_{
\tikz[baseline={([yshift=-.5ex]current bounding box.center)}, scale=.2, y=0.5cm]
    {   
    \draw[knot, red] (3,-2.5) -- (4,-2.5);
    \draw[dotted] (0.5,4) -- (6.5,4);
    \draw[knot] (1,3) -- (1,4);
    \draw[knot] (2,3) -- (2,4);
    \draw[knot] (3,3) -- (3,4);
    \draw[knot] (4,3) -- (4,4);
    \draw[knot] (5,3) -- (5,4);
    \draw[knot] (6,3) -- (6,4);
    \draw[knot] (0.5, -1) rectangle (6.5, 3);
    %\node at (3.5, 1) {$M$};
    \draw[knot] (1,-1) -- (1,-4);
    \draw[knot] (2,-1) -- (2,-4);
    \draw[knot] (3,-1) -- (3,-4);
    \draw[knot] (4,-1) -- (4,-4);
    \draw[knot] (5,-1) -- (5,-4);
    \draw[knot] (6,-1) -- (6,-4);
    \draw[dotted] (0.5,-4) -- (6.5,-4);
    }
}~
\tikz[baseline={([yshift=-.5ex]current bounding box.center)}, scale=.3, y=0.5cm]
    {
    \draw[dotted] (0.5,4) -- (6.5,4);   
    \draw[knot] (1,2) -- (1,4);
    \draw[knot] (2,2) -- (2,4);
    \draw[knot] (3,2) -- (3,4);
    \draw[knot] (4,2) -- (4,4);
    \draw[knot] (5,2) -- (5,4);
    \draw[knot] (6,2) -- (6,4);
    \draw[knot] (0.5, -2) rectangle (6.5, 2);
    \node at (3.5, 0) {$M$};
    \draw[knot] (1,-2) -- (1,-4);
    \draw[knot] (2,-2) -- (2,-4);
    \draw[knot] (3,-2) -- (3,-4);
    \draw[knot] (4,-2) -- (4,-4);
    \draw[knot] (5,-2) -- (5,-4);
    \draw[knot] (6,-2) -- (6,-4);
    \draw[dotted] (0.5,-4) -- (6.5,-4);
    }
~, \, 
\tikz[baseline={([yshift=-.5ex]current bounding box.center)}, scale=.3, y=0.5cm]
    {   
    \draw[dotted] (0.5,4) -- (6.5,4);
    \draw[knot] (1,-4) to[out=90, in=-90] (2,-1);
    \draw[knot] (2,-4) to[out=90, in=-90] (3,-1);
    \draw[knot] (5,-4) to[out=90, in=-90] (4,-1);
    \draw[knot] (6,-4) to[out=90, in=-90] (5,-1);
    \draw[knot] (3,-4) to[out=90, in=180] (3.5, -2.5) to[out=0, in=90] (4,-4);
    \draw[knot] (0.5, 3) rectangle (6.5, -1);
    \node at (3.5, 1) {$N$};
    \draw[knot] (1,3) -- (1,4);
    \draw[knot] (2,3) -- (2,4);
    \draw[knot] (3,3) -- (3,4);
    \draw[knot] (4,3) -- (4,4);
    \draw[knot] (5,3) -- (5,4);
    \draw[knot] (6,3) -- (6,4);
    \draw[dotted] (0.5,-4) -- (6.5,-4);
    }
\right) .
\end{equation}
Similarly, for $L$ an object of $\mathcal{K}_n^{n-1}$,
\begin{equation}
\label{eq:adjunB2}
\mathrm{Hom}_{\mathcal{K}_n^{n-1}}\left(
\tikz[baseline={([yshift=-.5ex]current bounding box.center)}, scale=.3, y=0.5cm]
    {   
    \draw[dotted] (0.5,4) -- (6.5,4);
    \draw[knot] (2,4) to[out=-90, in=90] (1,1);
    \draw[knot] (3,4) to[out=-90, in=90] (2,1);
    \draw[knot] (3,1) to[out=90, in=180] (3.5, 2.5) to[out=0, in=90] (4,1);
    \draw[knot] (4,4) to[out=-90, in=90] (5,1);
    \draw[knot] (5,4) to[out=-90, in=90] (6,1);
    \draw[knot] (0.5, -3) rectangle (6.5, 1);
    \node at (3.5, -1) {$M$};
    \draw[knot] (1,-3) -- (1,-4);
    \draw[knot] (2,-3) -- (2,-4);
    \draw[knot] (3,-3) -- (3,-4);
    \draw[knot] (4,-3) -- (4,-4);
    \draw[knot] (5,-3) -- (5,-4);
    \draw[knot] (6,-3) -- (6,-4);
    \draw[dotted] (0.5,-4) -- (6.5,-4);
    }
~, \,
\tikz[baseline={([yshift=-.5ex]current bounding box.center)}, scale=.3, y=0.5cm]
{   
    \draw[dotted] (0.5,4) -- (6.5,4);
    \draw[knot] (2,4) -- (2,2);
    \draw[knot] (3,4) -- (3,2);
    \draw[knot] (4,4) -- (4,2);
    \draw[knot] (5,4) -- (5,2);
%    \draw[dotted] (0.5,0) -- (6.5,0);
    \draw[knot] (0.75, -2) rectangle (6.25, 2);
    \node at (3.5, 0) {$L$};
    \draw[knot] (2,-2) -- (2,-4);
    \draw[knot] (3,-2) -- (3,-4);
    \draw[knot] (4,-2) -- (4,-4);
    \draw[knot] (5,-2) -- (5,-4);
    \draw[knot] (1,-2) -- (1,-4);
    \draw[knot] (6,-2) -- (6,-4);
    \draw[dotted] (0.5,-4) -- (6.5,-4);
}
\right)
\cong
\mathrm{Hom}_{\mathcal{K}_n^n}\left(
\varphi_{
\tikz[baseline={([yshift=-.5ex]current bounding box.center)}, scale=.2, y=0.5cm]
    {   
    \draw[knot, red] (3,2.5) -- (4,2.5);
    \draw[dotted] (0.5,4) -- (6.5,4);
    \draw[knot] (1,1) -- (1,4);
    \draw[knot] (2,1) -- (2,4);
    \draw[knot] (3,1) -- (3,4);
    \draw[knot] (4,1) -- (4,4);
    \draw[knot] (5,1) -- (5,4);
    \draw[knot] (6,1) -- (6,4);
    \draw[knot] (0.5, -3) rectangle (6.5, 1);
    %\node at (3.5, 1) {$M$};
    \draw[knot] (1,-3) -- (1,-4);
    \draw[knot] (2,-3) -- (2,-4);
    \draw[knot] (3,-3) -- (3,-4);
    \draw[knot] (4,-3) -- (4,-4);
    \draw[knot] (5,-3) -- (5,-4);
    \draw[knot] (6,-3) -- (6,-4);
    \draw[dotted] (0.5,-4) -- (6.5,-4);
    }
}~
\tikz[baseline={([yshift=-.5ex]current bounding box.center)}, scale=.3, y=0.5cm]
    {
    \draw[dotted] (0.5,4) -- (6.5,4);   
    \draw[knot] (1,2) -- (1,4);
    \draw[knot] (2,2) -- (2,4);
    \draw[knot] (3,2) -- (3,4);
    \draw[knot] (4,2) -- (4,4);
    \draw[knot] (5,2) -- (5,4);
    \draw[knot] (6,2) -- (6,4);
    \draw[knot] (0.5, -2) rectangle (6.5, 2);
    \node at (3.5, 0) {$M$};
    \draw[knot] (1,-2) -- (1,-4);
    \draw[knot] (2,-2) -- (2,-4);
    \draw[knot] (3,-2) -- (3,-4);
    \draw[knot] (4,-2) -- (4,-4);
    \draw[knot] (5,-2) -- (5,-4);
    \draw[knot] (6,-2) -- (6,-4);
    \draw[dotted] (0.5,-4) -- (6.5,-4);
    }
~, \, 
\tikz[baseline={([yshift=-.5ex]current bounding box.center)}, scale=.3, y=0.5cm]
{   
    \draw[dotted] (0.5,4) -- (6.5,4);
    \draw[knot] (1,4) to[out=-90, in=90] (2,1);
    \draw[knot] (2,4) to[out=-90, in=90] (3,1);
    \draw[knot] (3,4) to[out=-90, in=180] (3.5,2.5) to[out=0, in=-90] (4,4);
    \draw[knot] (5,4) to[out=-90, in=90] (4,1);
    \draw[knot] (6,4) to[out=-90, in=90] (5,1);
%    \draw[dotted] (0.5,0) -- (6.5,0);
    \draw[knot] (0.75, -3) rectangle (6.25, 1);
    \node at (3.5, -1) {$L$};
    \draw[knot] (1,-3) -- (1,-4);
    \draw[knot] (2,-3) -- (2,-4);
    \draw[knot] (3,-3) -- (3,-4);
    \draw[knot] (4,-3) -- (4,-4);
    \draw[knot] (5,-3) -- (5,-4);
    \draw[knot] (6,-3) -- (6,-4);
    \draw[dotted] (0.5,-4) -- (6.5,-4);
}
\right) .
\end{equation}
\end{proposition}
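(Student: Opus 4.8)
The plan is to mimic the proof of Proposition \ref{prop:Kadjun}, constructing explicit mutually inverse maps $\phi$ and $\psi$ between the two $\mathrm{Hom}$-modules and checking well-definedness with the change-of-chronology technology of Section \ref{s:odd}. I will write out only (\ref{eq:adjunB1}); (\ref{eq:adjunB2}) follows by reflecting every picture about the horizontal axis. Write $C := \mathcal{F}(\cap_{i,n})$ for the geometric $(H^n, H^{n-1})$-bimodule attached to the cap tangle and $U := \mathcal{F}(\cup_{i,n-1})$ for the $(H^{n-1}, H^n)$-bimodule attached to the cup tangle, so the left-hand side of (\ref{eq:adjunB1}) is $\mathrm{Hom}_{\mathcal{K}_{n-1}^n}(M \otimes_{H^n} C,\, N)$ and the right-hand side is $\mathrm{Hom}_{\mathcal{K}_n^n}(\varphi M,\, N \otimes_{H^{n-1}} U)$ with $\varphi = \varphi_{(W,v)}$ the stated $\mathcal{G}$-grading shift. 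The maps $\phi$ and $\psi$ are the standard ``snake'' (zigzag) bijections of an ambidextrous adjunction: $\phi(f)$ is obtained from $f\colon M\otimes_{H^n} C \to N$ by first inserting the unit cobordism $1_n \Rightarrow \cup_{i,n-1}\circ\cap_{i,n}$ (a birth creating the turnback circle together with the identification of the free circle), which realizes $M \cong M \otimes_{H^n} C \otimes_{H^{n-1}} U$ up to a grading shift, and then applying $f \otimes \mathrm{Id}_U$; dually, $\psi(g)$ is obtained from $g\colon \varphi M \to N\otimes_{H^{n-1}} U$ by applying $\mathrm{Id}_C \otimes g$ and post-composing with the counit cobordism $\cap_{i,n}\circ\cup_{i,n-1} \Rightarrow 1_{n-1}$ (a merge/death removing the circle), again up to a grading shift.

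Next I would check well-definedness and do the grading bookkeeping. Each elementary cobordism used here (birth, merge, death) has a grading-shift functor which, by the computations recalled in Subsection \ref{ss:geobims}, is naturally isomorphic to a pure $\mathbb{Z}\times\mathbb{Z}$-shift ($\{1,0\}$, $\{-1,0\}$, or $\{0,1\}$), while the turnback cobordism contributes the shift $\varphi_{(W,v)}$ appearing in the statement. Composing these grading-shift functors and invoking the identity $\varphi_{(W,v)}^{-1} \cong \varphi_{(\overline{W},\, -v - \widehat{\chi}(\overline{W}\circ W))}$ from Subsection \ref{ss:geobims}, one checks that the source and target of $\phi$, respectively of $\psi$, match exactly the shifts written in (\ref{eq:adjunB1}); this is a direct, if tedious, computation using only the list of identifications of grading-shift functors in Subsection \ref{ss:geobims} together with the coherence (\ref{eq:compcoc}). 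In particular, as remarked after Corollary \ref{cor:cups_caps}, whenever $N \otimes_{H^{n-1}} U$ or $M \otimes_{H^n} C$ is itself (a shift of) a flat-tangle bimodule, the shift $\varphi_{(W,v)}$ collapses to $\{1,0\}$, recovering Khovanov's original degree conventions after summing $\mathbb{Z}\times\mathbb{Z}$-degrees and subtracting saddles.

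Then I would verify $\psi\circ\phi = \mathrm{Id}$ and $\phi\circ\psi = \mathrm{Id}$. Expanding the composites, $f$ (respectively $g$) and the cup/cap cobordism occur in disjoint cylinders, so the locally vertical change of chronology that slides them past one another is homotopic to the identity by Proposition \ref{putyrahammer} and hence evaluated to $1 \in R^\times$ by $\iota$. After this slide, the leftover cobordism is a birth followed by a merge (respectively a death following a merge), which cancel in $R\mathbf{ChCob}$ exactly as in the proof of Proposition \ref{prop:Kadjun}, leaving $\mathrm{Id}_M$ (respectively $\mathrm{Id}_N$) tensored with an identity bimodule. The grading shifts produced by these cancellations are precisely inverse to those introduced in the definitions of $\phi$ and $\psi$, by the same grading-shift identities used above, so the composites are the identity on the nose rather than merely up to a unit of $R$.

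The main obstacle is not the categorical snake argument — which is formal once the unit and counit are in hand — but the bookkeeping of $\mathcal{G}$-grading shift functors: one must verify that every composite of the birth, merge and turnback grading-shift functors that appears agrees, up to the canonical natural isomorphisms $\beta$, $\gamma$, $\Xi$ of the $\mathcal{G}$-shifting $2$-system, with the single functor $\varphi_{(W,v)}$ displayed in (\ref{eq:adjunB1}); equivalently, one must check that all auxiliary changes of chronology are locally vertical (so Proposition \ref{putyrahammer} applies) and that $\iota$ evaluates the relevant $\star$-composites to $1$. The orientation and framing choices on the saddle in $W$ are exactly what make this work, in the same way that the associator term $\alpha_1$ and the looper term $\varepsilon_1$ of Section \ref{s:odd} are controlled by $\iota$ of locally vertical changes of chronology. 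Once this grading calculus is settled, the four displayed isomorphisms of $\mathrm{Hom}$-modules follow immediately.
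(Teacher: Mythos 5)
Your proposal is correct and is essentially the paper's intended argument: the paper's own proof of Proposition \ref{prop:KadjunB} simply defers to the zigzag construction of Proposition \ref{prop:Kadjun}, which is exactly what you reproduce (unit/counit cobordisms, disjoint-cylinder slides via Proposition \ref{putyrahammer}, handle cancellation in $R\mathbf{ChCob}$, and the grading-shift bookkeeping). Two small inaccuracies that do not affect the argument: the unit $1_{2n}\Rightarrow \cup_{i,n-1}\circ\cap_{i,n}$ is a saddle, not a birth (no free circle appears there), and the locally vertical slide of $f$ past the saddle is not literally $\iota$-evaluated to $1$ but is compensated by a natural isomorphism of grading-shift functors, exactly as in the proof of Proposition \ref{prop:Kadjun} and as your own final paragraph's shift calculus already accounts for.
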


\begin{proof}
Left to the reader (use the same ideas presented in the proof of Proposition \ref{prop:Kadjun}).
\end{proof}

Now, notice that whenever $M = N \otimes_{H^{n-1}} \mathcal{F}(\cap_{i, n})$ or $M = \mathcal{F}(\cup_{i, n-1}) \otimes_{H^{n-1}} L$, the grading shifts of Proposition \ref{prop:KadjunB} are $\{0,-1\}$. In particular, we compute
\begin{align*}
\mathrm{Hom}_{\mathcal{K}_1^1}\left(
\tikz[baseline={([yshift=-.5ex]current bounding box.center)}, scale=.25, y=0.4cm]
    {   
    \draw[dotted] (0.5,4) -- (5.5,4);
    \draw[dotted] (0.5,-4) -- (5.5,-4);
    \draw[knot] (2,4) to[out=-90, in=180] (3, 2) to[out=0, in=-90] (4,4);
    \draw[knot] (2,-4) to[out=90, in=180] (3, -2) to[out=0, in=90] (4,-4);
    }
,\, 
\tikz[baseline={([yshift=-.5ex]current bounding box.center)}, scale=.25, y=0.4cm]
    {   
    \draw[dotted] (0.5,4) -- (5.5,4);
    \draw[dotted] (0.5,-4) -- (5.5,-4);
    \draw[knot] (2,4) to[out=-90, in=180] (3, 2) to[out=0, in=-90] (4,4);
    \draw[knot] (2,-4) to[out=90, in=180] (3, -2) to[out=0, in=90] (4,-4);
    }
\right)
	&\cong
	\mathrm{Hom}_{\mathcal{K}_0^1}\left(
\tikz[baseline={([yshift=-.5ex]current bounding box.center)}, scale=.25, y=0.4cm]
    {   
    \draw[dotted] (0.5,4) -- (5.5,4);
    \draw[dotted] (0.5,-4) -- (5.5,-4);
    \draw[knot] (2,4) to[out=-90, in=180] (3, 2) to[out=0, in=-90] (4,4);
    \draw[knot] (3,-1) circle (25pt);
    } \{0, 1\}
    ,\, 
\tikz[baseline={([yshift=-.5ex]current bounding box.center)}, scale=.25, y=0.4cm]
    {   
    \draw[dotted] (0.5,4) -- (5.5,4);
    \draw[dotted] (0.5,-4) -- (5.5,-4);
    \draw[knot] (2,4) to[out=-90, in=180] (3, 0) to[out=0, in=-90] (4,4);
    }
	\right)
	\\ &\cong
	\mathrm{Hom}_{\mathcal{K}_0^1}\left(
\tikz[baseline={([yshift=-.5ex]current bounding box.center)}, scale=.25, y=0.4cm]
    {   
    \draw[dotted] (0.5,4) -- (5.5,4);
    \draw[dotted] (0.5,-4) -- (5.5,-4);
    \draw[knot] (2,4) to[out=-90, in=180] (3, 0) to[out=0, in=-90] (4,4);
    } \{1,1\}
    \oplus
\tikz[baseline={([yshift=-.5ex]current bounding box.center)}, scale=.25, y=0.4cm]
    {   
    \draw[dotted] (0.5,4) -- (5.5,4);
    \draw[dotted] (0.5,-4) -- (5.5,-4);
    \draw[knot] (2,4) to[out=-90, in=180] (3, 0) to[out=0, in=-90] (4,4);
    }
	,\,
\tikz[baseline={([yshift=-.5ex]current bounding box.center)}, scale=.25, y=0.4cm]
    {   
    \draw[dotted] (0.5,4) -- (5.5,4);
    \draw[dotted] (0.5,-4) -- (5.5,-4);
    \draw[knot] (2,4) to[out=-90, in=180] (3, 0) to[out=0, in=-90] (4,4);
    }
	\right)
	\\ &\cong
	\mathrm{Hom}_{\mathcal{K}_0^1}\left(
\tikz[baseline={([yshift=-.5ex]current bounding box.center)}, scale=.25, y=0.4cm]
    {   
    \draw[dotted] (0.5,4) -- (5.5,4);
    \draw[dotted] (0.5,-4) -- (5.5,-4);
    \draw[knot] (2,4) to[out=-90, in=180] (3, 0) to[out=0, in=-90] (4,4);
    }
	,\,
\tikz[baseline={([yshift=-.5ex]current bounding box.center)}, scale=.25, y=0.4cm]
    {   
    \draw[dotted] (0.5,4) -- (5.5,4);
    \draw[dotted] (0.5,-4) -- (5.5,-4);
    \draw[knot] (2,4) to[out=-90, in=180] (3, 0) to[out=0, in=-90] (4,4);
    }
	\right)
	\\ &\cong
	R.
\end{align*}
The first line follows from an application of (\ref{eq:adjunB1}), and the last line comes from Corollary \ref{cor:cup_cap}. The third isomorphism follows from grading considerations. This argument works equally well in higher generality:

\begin{corollary}
\label{cor:totturns}
For any $i, j = 1, \ldots, 2n -1$, 
\[
\mathrm{End}_{\mathcal{K}_n^n}\left(
\tikz[baseline={([yshift=-.5ex]current bounding box.center)}, scale=.3, y=0.5cm]
    {   
    \draw[dotted] (-0.5,4) -- (10.5,4);
    \draw[dotted] (-0.5,-4) -- (10.5,-4);
    \draw[knot] (0,-4) node[below]{$1$} -- (0,4);
        \node at (1,0) {$\cdots$};
    \draw[knot] (2,-4) -- (2,4);
    \draw[knot] (3,4) node[above]{$i$} to[out=-90,in=180] (3.5, 2.5) to[out=0,in=-90] (4,4);
    \draw[knot] (3,-4) to[out=90,in=-90] (5,4);
        \node at (5,0) {$\cdots$};
    \draw[knot] (5,-4) to[out=90,in=-90] (7,4);
    \draw[knot] (6,-4) node[below]{$j$} to[out=90,in=180] (6.5,-2.5) to[out=0,in=90] (7,-4);
    \draw[knot] (8,-4) -- (8,4);
        \node at (9,0) {$\cdots$};
    \draw[knot] (10,-4) node[below]{$2n$} -- (10,4);
    }
\right)
=R.
\]
\end{corollary}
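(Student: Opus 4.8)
The plan is to imitate, for arbitrary $n$ and arbitrary turnback positions, the $\mathcal{K}_1^1$ computation displayed just before the statement. Write $\tau$ for the flat $(2n,2n)$-tangle appearing in the statement: it has one turnback arc at top position $i$, one turnback arc at bottom position $j$, and $2n-2$ through strands. Since $\tau$ is crossingless with exactly one bottom turnback, deleting that turnback produces a crossingless flat $(2n-2,2n)$-tangle with a single (top) turnback, necessarily $\cup_{i,n-1}$; likewise deleting the top turnback produces the single-turnback tangle realizing $\cap_{j,n}$. Using the functoriality of $\mathcal{F}$ under composition of flat tangles up to $\mathcal{G}$-graded isomorphism (recalled in Subsection~\ref{ss:geobims}), this gives a $\mathcal{G}$-graded isomorphism $\mathcal{F}(\tau)\cong\mathcal{F}(\cup_{i,n-1})\otimes_{H^{n-1}}\mathcal{F}(\cap_{j,n})$.

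Next I would apply the adjunction (\ref{eq:adjunB1}), read from right to left, to peel the factor $\mathcal{F}(\cap_{j,n})$ off $\mathrm{End}_{\mathcal{K}_n^n}(\mathcal{F}(\tau))$. Exactly as in the displayed computation, the two copies of the bottom turnback close up against one another into a free circle; removing that circle tensors with $\mathcal{F}(\bigcirc)\cong R\{1,0\}\oplus R\{0,-1\}$, and together with the shift supplied by (\ref{eq:adjunB1}) --- which is naturally isomorphic here to the pure $\mathbb{Z}\times\mathbb{Z}$-shift $\{0,1\}$, since the cups, caps, births and deaths involved are non-ambiguous cobordisms (Subsection~\ref{ss:geobims}) --- this yields
\[
\mathrm{End}_{\mathcal{K}_n^n}\big(\mathcal{F}(\tau)\big)\ \cong\ \mathrm{Hom}_{\mathcal{K}_{n-1}^n}\big(\mathcal{F}(\cup_{i,n-1})\{1,1\}\oplus\mathcal{F}(\cup_{i,n-1}),\ \mathcal{F}(\cup_{i,n-1})\big).
\]
The first summand contributes zero: after shifting gradings and applying (\ref{eq:adjunB1}) once more to the remaining turnback of $\cup_{i,n-1}$, the group $\mathrm{Hom}_{\mathcal{K}_{n-1}^n}(\mathcal{F}(\cup_{i,n-1})\{1,1\},\mathcal{F}(\cup_{i,n-1}))$ reduces to a $\mathrm{Hom}$ out of $H^{n-1}$ into a negative $\mathbb{Z}\times\mathbb{Z}$-shift of $H^{n-1}$, which vanishes since $\mathrm{Hom}_{\mathcal{K}_{n-1}^{n-1}}(H^{n-1},H^{n-1}\{k,k\})=0$ for $k\le -1$ (the vanishing noted after Corollary~\ref{cor:cups_caps}). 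The surviving summand is $\mathrm{End}_{\mathcal{K}_{n-1}^n}(\mathcal{F}(\cup_{i,n-1}))$, which is $R$ by Corollary~\ref{cor:cup_cap} (the complex is concentrated in a single homological degree, so working up to homotopy imposes nothing). Hence $\mathrm{End}_{\mathcal{K}_n^n}(\mathcal{F}(\tau))$ is a free $R$-module of rank one on the identity, so $r\mapsto r\cdot\mathrm{Id}$ is an isomorphism of rings $R\cong\mathrm{End}_{\mathcal{K}_n^n}(\mathcal{F}(\tau))$.

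I expect the main obstacle to be bookkeeping rather than conceptual: confirming that stripping the bottom turnback via (\ref{eq:adjunB1}) yields precisely one free circle together with the single-turnback tangle $\cup_{i,n-1}$ (with no spurious rearrangement of the through strands and no extra turnbacks), and checking that every grading shift produced along the way collapses to a pure $\mathbb{Z}\times\mathbb{Z}$-shift, so that the grading vanishing $\mathrm{Hom}_{\mathcal{K}_m^m}(H^m,H^m\{k,k\})=0$ for $k\le -1$ and the center computation $\mathrm{End}_{\mathcal{K}_{n-1}^{n-1}}(H^{n-1})\cong R$ of Corollary~\ref{cor:centerofH} apply verbatim. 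With those in hand, the argument is the displayed $\mathcal{K}_1^1$ computation transcribed to general $n$.
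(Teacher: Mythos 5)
Your argument is correct and is essentially the paper's own proof: apply the adjunction (\ref{eq:adjunB1}) to strip the bottom turnback, deloop the resulting free circle, identify the unshifted summand's endomorphisms as $R$ via Corollary \ref{cor:cup_cap}, and kill the shifted summand by a second turnback-stripping adjunction plus the vanishing $\mathrm{Hom}_{\mathcal{K}_{n-1}^{n-1}}(H^{n-1},H^{n-1}\{k,k\})=0$ for $k\le -1$. The only cosmetic differences are that the paper carries out the vanishing step with (\ref{eq:adjun1}) rather than a second use of (\ref{eq:adjunB1}) (either works, and the resulting shifts are indeed the diagonal ones $\{-1,-1\}$ and $\{-2,-2\}$), and that under the paper's composition convention the decomposition should read $\mathcal{F}(\tau)\cong\mathcal{F}(\cap_{j,n})\otimes_{H^{n-1}}\mathcal{F}(\cup_{i,n-1})$.
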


\begin{proof}
The adjunction (\ref{eq:adjunB1}) tells us that
\[
\mathrm{End}_{\mathcal{K}_n^n} \left(
\tikz[baseline={([yshift=-.5ex]current bounding box.center)}, scale=.25, y=0.5cm]
    {   
    \draw[dotted] (-0.5,4) -- (10.5,4);
    \draw[dotted] (-0.5,-4) -- (10.5,-4);
    \draw[knot] (0,-4) node[below]{$1$} -- (0,4);
        \node at (1,0) {\small$\cdots$};
    \draw[knot] (2,-4) -- (2,4);
    \draw[knot] (3,4) node[above]{$i$} to[out=-90,in=180] (3.5, 2.5) to[out=0,in=-90] (4,4);
    \draw[knot] (3,-4) to[out=90,in=-90] (5,4);
        \node at (5,0) {\small$\cdots$};
    \draw[knot] (5,-4) to[out=90,in=-90] (7,4);
    \draw[knot] (6,-4) node[below]{$j$} to[out=90,in=180] (6.5,-2.5) to[out=0,in=90] (7,-4);
    \draw[knot] (8,-4) -- (8,4);
        \node at (9,0) {\small$\cdots$};
    \draw[knot] (10,-4) node[below]{$2n$} -- (10,4);
    }
\right)
	=
\mathrm{Hom}_{\mathcal{K}_{n-1}^n}\left(
\tikz[baseline={([yshift=-.5ex]current bounding box.center)}, scale=.25, y=0.5cm]
    {   
    \draw[dotted] (-0.5,4) -- (10.5,4);
    \draw[dotted] (-0.5,-4) -- (10.5,-4);
    \draw[knot] (0,-4) node[below]{$1$} -- (0,4);
        \node at (1,0) {\small$\cdots$};
    \draw[knot] (2,-4) -- (2,4);
    \draw[knot] (3,4) node[above]{$i$} to[out=-90,in=180] (3.5, 2.5) to[out=0,in=-90] (4,4);
    \draw[knot] (3,-4) to[out=90,in=-90] (5,4);
        \node at (5,0) {\small$\cdots$};
    \draw[knot] (5,-4) to[out=90,in=-90] (7,4);
%    \draw[knot] (6,-4) node[below]{$j$} to[out=90,in=180] (6.5,-2.5) to[out=0,in=90] (7,-4);
	\draw[knot] (6.75, -2) circle (20pt);
    \draw[knot] (8,-4) -- (8,4);
        \node at (9,0) {\small$\cdots$};
    \draw[knot] (10,-4) node[below]{$2n$} -- (10,4);
    } \{0,1\}
,\, 
\tikz[baseline={([yshift=-.5ex]current bounding box.center)}, scale=.25, y=0.5cm]
    {   
    \draw[dotted] (-0.5,4) -- (10.5,4);
    \draw[dotted] (-0.5,-4) -- (10.5,-4);
    \draw[knot] (0,-4) node[below]{$1$} -- (0,4);
        \node at (1,0) {\small$\cdots$};
    \draw[knot] (2,-4) -- (2,4);
    \draw[knot] (3,4) node[above]{$i$} to[out=-90,in=180] (3.5, 2.5) to[out=0,in=-90] (4,4);
    \draw[knot] (3,-4) to[out=90,in=-90] (5,4);
        \node at (5,0) {\small$\cdots$};
    \draw[knot] (5,-4) to[out=90,in=-90] (7,4);
%    \draw[knot] (6,-4) node[below]{$j$} to[out=90,in=180] (6.5,-2.5) to[out=0,in=90] (7,-4);
    \draw[knot] (8,-4) -- (8,4);
        \node at (9,0) {\small$\cdots$};
    \draw[knot] (10,-4) node[below]{$2n$} -- (10,4);
    }
\right).
\]
Then, after applying the delooping isomorphism, Corollary \ref{cor:cup_cap} gives the desired result, since 
\begin{align*}
\mathrm{Hom}_{\mathcal{K}_{n-1}^n}\left(
\tikz[baseline={([yshift=-.5ex]current bounding box.center)}, scale=.3, y=0.7cm]
{   
    \draw[dotted] (0.5,4) -- (6.5,4);
    \draw[knot] (1,4) to[out=-90, in=90] (2,0);
    \draw[knot] (2,4) to[out=-90, in=90] (3,0);
    \draw[knot] (5,4) to[out=-90, in=90] (4,0);
    \draw[knot] (6,4) to[out=-90, in=90] (5,0);
    \draw[knot] (3,4) to[out=-90, in=180] (3.5, 2.5) to[out=0, in=-90] (4,4);
    \draw[dotted] (0.5,0) -- (6.5,0);
} \{1,1\} ,\,
\tikz[baseline={([yshift=-.5ex]current bounding box.center)}, scale=.3, y=0.7cm]
{   
    \draw[dotted] (0.5,4) -- (6.5,4);
    \draw[knot] (1,4) to[out=-90, in=90] (2,0);
    \draw[knot] (2,4) to[out=-90, in=90] (3,0);
    \draw[knot] (5,4) to[out=-90, in=90] (4,0);
    \draw[knot] (6,4) to[out=-90, in=90] (5,0);
    \draw[knot] (3,4) to[out=-90, in=180] (3.5, 2.5) to[out=0, in=-90] (4,4);
    \draw[dotted] (0.5,0) -- (6.5,0);
}
\right) & \cong
\mathrm{Hom}_{\mathcal{K}_{n-1}^n}\left(
\tikz[baseline={([yshift=-.5ex]current bounding box.center)}, scale=.3, y=0.7cm]
{   
    \draw[dotted] (0.5,4) -- (6.5,4);
    \draw[knot] (1,4) to[out=-90, in=90] (2,0);
    \draw[knot] (2,4) to[out=-90, in=90] (3,0);
    \draw[knot] (5,4) to[out=-90, in=90] (4,0);
    \draw[knot] (6,4) to[out=-90, in=90] (5,0);
    \draw[knot] (3,4) to[out=-90, in=180] (3.5, 2.5) to[out=0, in=-90] (4,4);
    \draw[dotted] (0.5,0) -- (6.5,0);
} ,\,
\tikz[baseline={([yshift=-.5ex]current bounding box.center)}, scale=.3, y=0.7cm]
{   
    \draw[dotted] (0.5,4) -- (6.5,4);
    \draw[knot] (1,4) to[out=-90, in=90] (2,0);
    \draw[knot] (2,4) to[out=-90, in=90] (3,0);
    \draw[knot] (5,4) to[out=-90, in=90] (4,0);
    \draw[knot] (6,4) to[out=-90, in=90] (5,0);
    \draw[knot] (3,4) to[out=-90, in=180] (3.5, 2.5) to[out=0, in=-90] (4,4);
    \draw[dotted] (0.5,0) -- (6.5,0);
} \{-1,-1\}
\right)
\\ & \cong
\mathrm{Hom}_{\mathcal{K}_{n-1}^n}\left(
\tikz[baseline={([yshift=-.5ex]current bounding box.center)}, scale=.3, y=0.7cm]
{   
    \draw[dotted] (0.5,4) -- (6.5,4);
    \draw[knot] (1,4) to[out=-90, in=90] (2,0);
    \draw[knot] (2,4) to[out=-90, in=90] (3,0);
    \draw[knot] (5,4) to[out=-90, in=90] (4,0);
    \draw[knot] (6,4) to[out=-90, in=90] (5,0);
    \draw[knot] (3,4) to[out=-90, in=180] (3.5, 2.5) to[out=0, in=-90] (4,4);
    \draw[dotted] (0.5,0) -- (6.5,0);
} ,\,
\tikz[baseline={([yshift=-.5ex]current bounding box.center)}, scale=.3, y=0.7cm]
{   
    \draw[dotted] (0.5,4) -- (6.5,4);
    \draw[knot] (1,4) to[out=-90, in=90] (2,0);
    \draw[knot] (2,4) to[out=-90, in=90] (3,0);
    \draw[knot] (5,4) to[out=-90, in=90] (4,0);
    \draw[knot] (6,4) to[out=-90, in=90] (5,0);
    \draw[knot] (3,4) to[out=-90, in=180] (3.5, 2.5) to[out=0, in=-90] (4,4);
    \draw[dotted] (0.5,0) -- (6.5,0);
} \{1-2, 0-1\}
\right)
\\ & \cong
\mathrm{Hom}_{\mathcal{K}_{n-1}^{n-1}}\left(
H^{n-1},\,
\tikz[baseline={([yshift=-.5ex]current bounding box.center)}, scale=.3, y=0.7cm]
{   
    \draw[dotted] (0.5,4) -- (6.5,4);
    \draw[knot] (2,0) to[out=90, in=-90] (1,2) to[out=90, in=-90] (2,4);
    \draw[knot] (3,0) to[out=90, in=-90] (2,2) to[out=90, in=-90] (3,4);
    %\draw (3,2) to[out=90, in=180] (3.5, 3.5) to[out=0, in=90] (4,2) to[out=-90,in=0] (3.5, 0.5) to[out=180, in=-90] cycle;
    \draw[knot] (3.5,2) circle (20pt);
    \draw[knot] (4,0) to[out=90, in=-90] (5,2) to[out=90, in=-90] (4,4);
    \draw[knot] (5,0) to[out=90, in=-90] (6,2) to[out=90, in=-90] (5,4);
    \draw[dotted] (0.5,0) -- (6.5,0);
}
\{-2, -1\}
\right)
\\ & \cong
\mathrm{Hom}_{\mathcal{K}_{n-1}^{n-1}}\left(H^{n-1}, H^{n-1}\{-1,-1\} \oplus H^{n-1}\{-2,-2\}\right) = 0
\end{align*}
after an application of (\ref{eq:adjun1}) from Proposition \ref{prop:Kadjun}.
\end{proof}

In even further generality, we can use Propositions \ref{prop:Kadjun} and \ref{prop:KadjunB} in tandem (as demonstrated in the proof of Corollary \ref{cor:totturns}) to prove the following.

\begin{corollary}
\label{cor:endFT}
If $t$ is a flat tangle without closed components, then the only $\mathcal{G}$-graded endomorphisms of $\mathcal{F}(t)$ are multiplications by elements of $R$, so that the only such automorphisms are of the form $u \cdot \mathrm{Id}$ for $u$ a unit of $R$.
\end{corollary}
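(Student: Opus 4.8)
The plan is to mimic Khovanov's inductive argument from \cite{MR2171235}, using the adjunctions of Propositions \ref{prop:Kadjun} and \ref{prop:KadjunB} to strip the flat tangle down to an identity tangle, where Proposition \ref{prop:centerofH} provides the base case $\mathrm{End}_{\mathcal{K}_n^n}(H^n)\cong Z^\mathcal{G}(H^n)\cong R$. Since $t$ has no closed components, I first fix a decomposition of $t$ as a vertical stacking of elementary cup and cap tangles $\cup_{i,k}$ and $\cap_{j,k}$; this is always possible for a flat tangle without circles. Then $\mathcal{F}(t)$ is, up to an overall $\mathcal{G}$-grading shift (which is irrelevant for computing $\mathcal{G}$-graded endomorphisms), isomorphic to the iterated tensor product over the intermediate arc algebras of the corresponding geometric bimodules $\mathcal{F}(\cup_{i,k})$ and $\mathcal{F}(\cap_{j,k})$, as reviewed in Subsection \ref{ss:geobims}.

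Next I would induct on the number of elementary cups and caps in this decomposition, with the base case $\mathrm{End}_{\mathcal{K}_n^n}(H^n)\cong R$ just recalled. For the inductive step, write $\mathcal{F}(t)=\mathcal{F}(s)\otimes_{H^k}\mathcal{F}(c)$ where $c$ is a terminal elementary cup or cap and $s$ is the remainder; if $c$ sits on the top or bottom boundary, apply the biadjunction of Proposition \ref{prop:Kadjun} (for the biadjoint pair $\mathcal{F}(\cap_{i,n})\otimes_{H^n}-$ and $\mathcal{F}(\cup_{i,n-1})\otimes_{H^{n-1}}-$), and if $c$ is an interior turnback first use \eqref{eq:adjunB1} or \eqref{eq:adjunB2} of Proposition \ref{prop:KadjunB} to move it to the boundary, exactly as in the proof of Corollary \ref{cor:totturns}. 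Each such move rewrites $\mathrm{End}_{\mathcal{K}}(\mathcal{F}(s)\otimes_{H^k}\mathcal{F}(c))$ as a $\mathrm{Hom}$ group between $\mathcal{F}(s)$ and $\mathcal{F}(s)$ with a free loop attached, up to an explicit grading shift; delooping replaces the free loop by a direct sum $H^k\oplus H^k\{\text{shift}\}$ (or simply $H^k$ for the turnback compositions). This produces a copy of $\mathrm{End}_{\mathcal{K}}(\mathcal{F}(s))$, which is $R$ by the inductive hypothesis, together with terms of the form $\mathrm{Hom}_{\mathcal{K}}(\mathcal{F}(s),\mathcal{F}(s)\{a,b\})$ with $a,b\le -1$, which vanish by the same grading argument used in the proofs of Corollaries \ref{cor:cup_cap} and \ref{cor:totturns}. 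This closes the induction and gives $\mathrm{End}_{\mathcal{K}}(\mathcal{F}(t))\cong R$; the automorphism statement follows at once, since the units of this ring are precisely multiplication by units of $R$, and Corollary \ref{cor:invs} guarantees uniqueness of isomorphisms between invertible objects up to a unit.

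The main obstacle is the grading-shift bookkeeping: each application of Propositions \ref{prop:Kadjun} and \ref{prop:KadjunB} introduces a shift functor $\varphi_{(W,v)}$, and one must recognize these as naturally isomorphic to honest bidegree shifts $\{a,b\}$ (using the identities for $\varphi_{(W,v)}^{-1}$ and the list of delooping isomorphisms in Subsection \ref{ss:geobims}) in order to be able to invoke the vanishing statement $\mathrm{Hom}_{\mathcal{K}_k^k}(H^k,H^k\{a,b\})=0$ for $a,b\le -1$, with $\mathrm{Hom}_{\mathcal{K}_k^k}(H^k,H^k)\cong R$ only in bidegree $(0,0)$. A secondary subtlety is that the decomposition of $t$ into cups and caps is not canonical, so one should check (as Khovanov does) that the conclusion is independent of the chosen decomposition; this is automatic once the computation reduces to $\mathrm{End}_{\mathcal{K}_n^n}(H^n)$, which does not see the decomposition. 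Once this bookkeeping is in place, every reduction step is routine, and the argument is a direct $\mathcal{G}$-graded lift of Khovanov's.
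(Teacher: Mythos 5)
Your argument is essentially the paper's intended proof: the corollary is stated there as following from Propositions \ref{prop:Kadjun} and \ref{prop:KadjunB} used in tandem, exactly as demonstrated in the proof of Corollary \ref{cor:totturns}, and your induction over a cup/cap decomposition with delooping and the vanishing of $\mathrm{Hom}$ in negative bidegrees is precisely that argument written out. One small remark: since $\mathcal{F}(t)$ need not be an invertible complex, the automorphism statement follows directly from $\mathrm{End}\cong R$ (invertible endomorphisms are the units) rather than from Corollary \ref{cor:invs}, but this does not affect correctness.
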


There are other adjunctions similar to Propositions \ref{prop:Kadjun} and \ref{prop:KadjunB}. We do not list them all, but here is one more we use to prove invariance under Carter-Saito movie moves 29 and 30.

\begin{proposition}
\label{prop:KadjunC}
Suppose $M$ and $N$ are objects of $\mathcal{K}_n^n$. Then
\begin{equation}
\label{eq:adjun29-30}
\mathrm{Hom}_{\mathcal{K}_n^n} \left(
\varphi_{
\tikz[baseline={([yshift=-.5ex]current bounding box.center)}, scale=.2, y=0.5cm]
    {   
    \draw[dotted] (0.5,4) -- (6.5,4);
    \draw[knot, red] (3.5, -1) -- (3.5, -2.5);
    \draw[knot] (1,3.5) -- (1,4);
    \draw[knot] (2,3.5) -- (2,4);
    \draw[knot] (3,3.5) -- (3,4);
    \draw[knot] (4,3.5) -- (4,4);
    \draw[knot] (5,3.5) -- (5,4);
    \draw[knot] (6,3.5) -- (6,4);
    \draw[knot] (0.5, 0.5) rectangle (6.5, 3.5);
    \draw[knot] (1,0.5) to[out=-90, in=90] (1,-4);
    \draw[knot] (2,0.5) to[out=-90, in=90] (2,-4);
    \draw[knot] (3,0.5) to[out=-90, in=180] (3.5, -1) to[out=0, in=-90] (4,0.5);
    \draw[knot] (3,-4) to[out=90, in=180] (3.5, -2.5) to[out=0, in=90] (4,-4);
    \draw[knot] (5,0.5) to[out=-90, in=90] (5,-4);
    \draw[knot] (6,0.5) to[out=-90, in=90] (6,-4);
    \draw[dotted] (0.5,-4) -- (6.5,-4);
    }
}~
\tikz[baseline={([yshift=-.5ex]current bounding box.center)}, scale=.3, y=0.5cm]
    {   
    \draw[dotted] (0.5,4) -- (6.5,4);
    \draw[knot] (1,3.5) -- (1,4);
    \draw[knot] (2,3.5) -- (2,4);
    \draw[knot] (3,3.5) -- (3,4);
    \draw[knot] (4,3.5) -- (4,4);
    \draw[knot] (5,3.5) -- (5,4);
    \draw[knot] (6,3.5) -- (6,4);
    \draw[knot] (0.5, -0.5) rectangle (6.5, 3.5);
    \node at (3.5, 1.5) {$M$};
    \draw[knot] (1,-0.5) to[out=-90, in=90] (1,-4);
    \draw[knot] (2,-0.5) to[out=-90, in=90] (2,-4);
    \draw[knot] (3,-0.5) to[out=-90, in=180] (3.5, -2) to[out=0, in=-90] (4,-0.5);
    \draw[knot] (3,-4) to[out=90, in=180] (3.5, -2.5) to[out=0, in=90] (4,-4);
    \draw[knot] (5,-0.5) to[out=-90, in=90] (5,-4);
    \draw[knot] (6,-0.5) to[out=-90, in=90] (6,-4);
    \draw[dotted] (0.5,-4) -- (6.5,-4);
    }
~, \, 
\tikz[baseline={([yshift=-.5ex]current bounding box.center)}, scale=.3, y=0.5cm]
    {
    \draw[dotted] (0.5,4) -- (6.5,4);   
    \draw[knot] (1,2) -- (1,4);
    \draw[knot] (2,2) -- (2,4);
    \draw[knot] (3,2) -- (3,4);
    \draw[knot] (4,2) -- (4,4);
    \draw[knot] (5,2) -- (5,4);
    \draw[knot] (6,2) -- (6,4);
    \draw[knot] (0.5, -2) rectangle (6.5, 2);
    \node at (3.5, 0) {$N$};
    \draw[knot] (1,-2) -- (1,-4);
    \draw[knot] (2,-2) -- (2,-4);
    \draw[knot] (3,-2) -- (3,-4);
    \draw[knot] (4,-2) -- (4,-4);
    \draw[knot] (5,-2) -- (5,-4);
    \draw[knot] (6,-2) -- (6,-4);
    \draw[dotted] (0.5,-4) -- (6.5,-4);
    }
\right)
\cong
\mathrm{Hom}_{\mathcal{K}_{n-1}^n} \left(
\tikz[baseline={([yshift=-.5ex]current bounding box.center)}, scale=.3, y=0.5cm]
    {   
    \draw[dotted] (0.5,4) -- (6.5,4);
    \draw[knot] (1,3) -- (1,4);
    \draw[knot] (2,3) -- (2,4);
    \draw[knot] (3,3) -- (3,4);
    \draw[knot] (4,3) -- (4,4);
    \draw[knot] (5,3) -- (5,4);
    \draw[knot] (6,3) -- (6,4);
    \draw[knot] (0.5, -1) rectangle (6.5, 3);
    \node at (3.5, 1) {$M$};
    \draw[knot] (1,-1) to[out=-90, in=90] (2,-4);
    \draw[knot] (2,-1) to[out=-90, in=90] (3,-4);
    \draw[knot] (3,-1) to[out=-90, in=180] (3.5, -2.5) to[out=0, in=-90] (4,-1);
    \draw[knot] (5,-1) to[out=-90, in=90] (4,-4);
    \draw[knot] (6,-1) to[out=-90, in=90] (5,-4);
    \draw[dotted] (0.5,-4) -- (6.5,-4);
    }
~, \, 
\tikz[baseline={([yshift=-.5ex]current bounding box.center)}, scale=.3, y=0.5cm]
    {   
    \draw[dotted] (0.5,4) -- (6.5,4);
    \draw[knot] (1,3) -- (1,4);
    \draw[knot] (2,3) -- (2,4);
    \draw[knot] (3,3) -- (3,4);
    \draw[knot] (4,3) -- (4,4);
    \draw[knot] (5,3) -- (5,4);
    \draw[knot] (6,3) -- (6,4);
    \draw[knot] (0.5, -1) rectangle (6.5, 3);
    \node at (3.5, 1) {$N$};
    \draw[knot] (1,-1) to[out=-90, in=90] (2,-4);
    \draw[knot] (2,-1) to[out=-90, in=90] (3,-4);
    \draw[knot] (3,-1) to[out=-90, in=180] (3.5, -2.5) to[out=0, in=-90] (4,-1);
    \draw[knot] (5,-1) to[out=-90, in=90] (4,-4);
    \draw[knot] (6,-1) to[out=-90, in=90] (5,-4);
    \draw[dotted] (0.5,-4) -- (6.5,-4);
    }
\right).
\end{equation}
\end{proposition}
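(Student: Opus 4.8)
The plan is to prove (\ref{eq:adjun29-30}) by the same method used for Proposition \ref{prop:Kadjun}: exhibit an explicit pair of mutually inverse maps between the two $\mathrm{Hom}$-spaces and check that the two composites are identities. Write $G := \mathcal{F}(\cap_{i,n})\otimes_{H^n}-$ and $F := \mathcal{F}(\cup_{i,n-1})\otimes_{H^{n-1}}-$, so that $FG = \mathcal{F}(\cup_{i,n-1}\circ\cap_{i,n})\otimes_{H^n}-$ is ``composition below with the U-turn pair'' and the left-hand argument of (\ref{eq:adjun29-30}) is $\varphi\,FG(M)$, where $\varphi$ is the $\mathcal{G}$-grading shift attached to the elementary saddle cobordism $\cup_{i,n-1}\circ\cap_{i,n}\to 1_n$ drawn as the red segment. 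Given $f\colon \varphi\,FG(M)\to N$, I will define $\phi(f)$ to be $G(f)$ precomposed with a map $G(M)\to G\varphi\,FG(M)$ built from a birth/saddle creating the U-turn pair together with the identification $\varphi^{-1}\circ\varphi\cong\mathrm{id}$ of Subsection \ref{ss:geobims}; given $g\colon G(M)\to G(N)$, I will define $\psi(g)$ to be $g$ fed into a copy of $F$, followed by a saddle resolving the resulting $\cap_{i,n}\circ\cup_{i,n-1}$, precomposed with the change-of-chronology isomorphism $\varphi_H$ arranging the composite of shifts to be exactly $\varphi$. This is a verbatim transcription of the zig-zag maps in the proof of Proposition \ref{prop:Kadjun}. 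Conceptually, the statement is just the half of the biadjointness of $F$ and $G$ not already recorded in (\ref{eq:adjunB1}): with $F\dashv G$ up to shift one has
\[
\mathrm{Hom}_{\mathcal{K}_n^n}\bigl(\varphi\,FG(M),\,N\bigr)\;\cong\;\mathrm{Hom}_{\mathcal{K}_{n-1}^n}\bigl(G(M),\,\varphi'\,G(N)\bigr),
\]
and the shift $\varphi'$ acting after an $\mathcal{F}(\cap_{i,n})$ is naturally isomorphic to the identity shift---exactly the phenomenon used in the proofs of Corollaries \ref{cor:cup_cap}--\ref{cor:endFT}---so the right-hand side collapses to $\mathrm{Hom}_{\mathcal{K}_{n-1}^n}(G(M),G(N))$.

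The verification that $\psi\circ\phi=\mathrm{id}$ and $\phi\circ\psi=\mathrm{id}$ then follows the proof of Proposition \ref{prop:Kadjun} line for line. In each composite the inserted copy of $f$ (respectively $g$) and the birth/saddle occupy disjoint cylinders, so by Proposition \ref{putyrahammer} they slide past one another at the cost of a locally vertical change of chronology whose $\iota$-value is absorbed by a $\mathcal{G}$-grading shift functor naturally isomorphic to the identity shift, using the split- and merge-type identifications $\varphi_{\text{saddle}}\cong\{\pm1,0\}$ recorded at the end of Subsection \ref{ss:geobims}. After sliding, the two saddles glued to the two turnbacks of the U-turn pair form a merge immediately following a birth, which cancels in $R\mathbf{ChCob}$; what is left is the identity of $\mathcal{F}(\cap_{i,n})$ (respectively $\mathcal{F}(\cup_{i,n-1})$), so that $\psi\circ\phi$ and $\phi\circ\psi$ are literally identity maps once the grading shifts are matched. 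One may also invoke (\ref{eq:adjunB2}) of Proposition \ref{prop:KadjunB} at this point to reduce to a statement already on the page.

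The main obstacle is not conceptual but bookkeeping: one must check that the chain of natural isomorphisms of $\mathcal{G}$-grading shift functors---$\varphi^{-1}\circ\varphi\cong\mathrm{id}$, the split- and merge-type identifications $\varphi_{\text{saddle}}\cong\{\pm1,0\}$, and the left inverses $\varphi_{(W,v)}^{-1}=\varphi_{(\overline{W},\,-v-\widehat{\chi}(\overline{W}\circ W))}$ of Subsection \ref{ss:geobims}---compose to precisely the shift drawn in (\ref{eq:adjun29-30}) and to the \emph{trivial} shift on $\mathcal{F}(\cap_{i,n})\otimes_{H^n}N$, and that every unit of $R$ introduced along the way (by the elementary change-of-chronology relations and by Proposition \ref{putyrahammer}) cancels. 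This is exactly the routine left to the reader already for Proposition \ref{prop:KadjunB}, and the same remark applies here.
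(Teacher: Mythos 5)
Your proposal is correct and takes essentially the same approach the paper intends: the paper's own proof of this proposition is literally ``Left to the reader (use the same ideas presented in the proof of Proposition \ref{prop:Kadjun})'', and your zig-zag maps $\phi$, $\psi$, the sliding of $f$ (resp.\ $g$) past the saddle in disjoint cylinders via Proposition \ref{putyrahammer}, and the grading-shift bookkeeping are exactly that argument transplanted. The only cosmetic correction is that the cancellation appearing in the zig-zag identities here is the saddle--saddle (snake) relation from the handle cancelation moves (\ref{eq:handlecancelationmoves}) rather than literally a merge following a birth, but this lies in the same family of relations and changes nothing in the argument.
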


\subsection{The Naisse-Putyra tangle invariant and the target category}
\label{ss:NPinvariant}

Given an oriented $(m, n)$-tangle $T$, define the object $\mathbf{K}(T)$ of $\mathcal{K}_m^n$ as follows. First, decompose $T$ as a composition of elementary tangles $T = T_k \cdots T_1$. Then, if $T_j$ is a cup $\cup_{i, n-1}$ or cap $\cap_{i, n}$, we put
\[
\mathbf{K}(T_j) = \mathcal{F}(T_j).
\]
If $T_j$ is a crossing, define
\begin{align*}
\mathbf{K}\left( 
\tikz[baseline={([yshift=-.5ex]current bounding box.center)}, scale=.75]
{
	\draw[dotted] (.5,.5) circle(0.707);
	\draw[knot, ->](0,0) -- (1,1);
	\fill[fill=white] (.5,.5) circle (.15);
	\draw[knot, ->](1,0) -- (0,1);
}
 \right) 
\ &:= \ 
\mathrm{Cone}\left( 
 \varphi_{\tikz[baseline={([yshift=-.5ex]current bounding box.center)}, scale=.45]
{
	\draw[dotted] (3,-2) circle(0.707);
        \draw[knot, red,thick] (3,-1.7) -- (3,-2.3);
	\draw[knot] (2.5,-1.5) .. controls (2.75,-1.75) and (3.25,-1.75) .. (3.5,-1.5);
	\draw[knot] (2.5,-2.5) .. controls  (2.75,-2.25) and (3.25,-2.25) .. (3.5,-2.5);
}}
\mathcal{F}\left(
\tikz[baseline={([yshift=-.5ex]current bounding box.center)}, scale=.75]
{
	\draw[dotted] (3,-2) circle(0.707);
	\draw[knot] (2.5,-1.5) .. controls (2.75,-1.75) and (3.25,-1.75) .. (3.5,-1.5);
	\draw[knot] (2.5,-2.5) .. controls  (2.75,-2.25) and (3.25,-2.25) .. (3.5,-2.5);
}
  \right) 
\xrightarrow{\mathcal{F}\left( \tikz[baseline={([yshift=-.5ex]current bounding box.center)}, scale=.45]
{
	\draw[dotted] (3,-2) circle(0.707);
        \draw[knot, red,thick] (3,-1.7) -- (3,-2.3);
	\draw[knot] (2.5,-1.5) .. controls (2.75,-1.75) and (3.25,-1.75) .. (3.5,-1.5);
	\draw[knot] (2.5,-2.5) .. controls  (2.75,-2.25) and (3.25,-2.25) .. (3.5,-2.5);
} \right)}
\underline{
\mathcal{F}\left(
\tikz[baseline={([yshift=-.5ex]current bounding box.center)}, scale=.75]
{
	\draw[dotted] (-2,-2) circle(0.707);
	\draw[knot] (-1.5,-1.5) .. controls (-1.75,-1.75) and (-1.75,-2.25) .. (-1.5,-2.5);
	\draw[knot] (-2.5,-1.5) .. controls (-2.25,-1.75) and (-2.25,-2.25) ..  (-2.5,-2.5);
}  
 \right)
 }
\right) \{-1,0\},~ \text{and}
\\
\mathbf{K}\left(
\tikz[baseline={([yshift=-.5ex]current bounding box.center)}, scale=.75]
{
	\draw[dotted] (.5,.5) circle(0.707);
	\draw[knot, ->](1,0) -- (0,1);
	\fill[fill=white] (.5,.5) circle (.15);
	\draw[knot, ->](0,0) -- (1,1);
}
 \right) 
\ &:= \ 
\mathrm{Cone}\left( 
\underline{
\mathcal{F}\left(
\tikz[baseline={([yshift=-.5ex]current bounding box.center)}, scale=.75]
{
	\draw[dotted] (-2,-2) circle(0.707);
	\draw[knot] (-1.5,-1.5) .. controls (-1.75,-1.75) and (-1.75,-2.25) .. (-1.5,-2.5);
	\draw[knot] (-2.5,-1.5) .. controls (-2.25,-1.75) and (-2.25,-2.25) ..  (-2.5,-2.5);
} 
  \right) 
  }
\xrightarrow{
\mathcal{F}\left( \tikz[baseline={([yshift=-.5ex]current bounding box.center)}, scale=.45]
{
    \begin{scope}[rotate=90]
	\draw[dotted] (3,-2) circle(0.707);
        \draw[knot, red,thick] (3,-1.7) -- (3,-2.3);
	\draw[knot] (2.5,-1.5) .. controls (2.75,-1.75) and (3.25,-1.75) .. (3.5,-1.5);
	\draw[knot] (2.5,-2.5) .. controls  (2.75,-2.25) and (3.25,-2.25) .. (3.5,-2.5);
    \end{scope}
} \right)
\circ \varphi_H}
\varphi_{\tikz[baseline={([yshift=-.5ex]current bounding box.center)}, scale=.45]
{
    \begin{scope}[rotate=90]
	\draw[dotted] (3,-2) circle(0.707);
        \draw[knot, red,thick] (3,-1.7) -- (3,-2.3);
	\draw[knot] (2.5,-1.5) .. controls (2.75,-1.75) and (3.25,-1.75) .. (3.5,-1.5);
	\draw[knot] (2.5,-2.5) .. controls  (2.75,-2.25) and (3.25,-2.25) .. (3.5,-2.5);
    \end{scope}
}} ^{-1}
\mathcal{F}\left(
\tikz[baseline={([yshift=-.5ex]current bounding box.center)}, scale=.75]
{
	\draw[dotted] (3,-2) circle(0.707);
	\draw[knot] (2.5,-1.5) .. controls (2.75,-1.75) and (3.25,-1.75) .. (3.5,-1.5);
	\draw[knot] (2.5,-2.5) .. controls  (2.75,-2.25) and (3.25,-2.25) .. (3.5,-2.5);
}
\right)
\right)\{0,1\},
\end{align*}
for $\varphi_H: \mathrm{Id} \Rightarrow \varphi_{\tikz[baseline={([yshift=-.5ex]current bounding box.center)}, scale=.45]
{
    \begin{scope}[rotate=90]
	\draw[dotted] (3,-2) circle(0.707);
        \draw[knot, red,thick] (3,-1.7) -- (3,-2.3);
	\draw[knot] (2.5,-1.5) .. controls (2.75,-1.75) and (3.25,-1.75) .. (3.5,-1.5);
	\draw[knot] (2.5,-2.5) .. controls  (2.75,-2.25) and (3.25,-2.25) .. (3.5,-2.5);
    \end{scope}
}} ^{-1} \circ \varphi_{\tikz[baseline={([yshift=-.5ex]current bounding box.center)}, scale=.45]
{
    \begin{scope}[rotate=90]
	\draw[dotted] (3,-2) circle(0.707);
        \draw[knot, red,thick] (3,-1.7) -- (3,-2.3);
	\draw[knot] (2.5,-1.5) .. controls (2.75,-1.75) and (3.25,-1.75) .. (3.5,-1.5);
	\draw[knot] (2.5,-2.5) .. controls  (2.75,-2.25) and (3.25,-2.25) .. (3.5,-2.5);
    \end{scope}
}}$. The underlined terms are in homological degree zero. Then, define
\[
\mathbf{K}(T) := \mathbf{K}(T_1) \otimes_{H^{n_1}} \cdots \otimes_{H^{n_{k-1}}} \mathbf{K}(T_k)
\]
for the appropriate $n_j$ between the composable tangles $T_j$ and $T_{j+1}$. In \cite{naisse2020odd} and \cite{spyropoulos2024}, it was shown that the $\mathcal{G}$-grading is a bit too sensitive to obtain a tangle invariant from $\mathbf{K}(T)$ in the category $\mathsf{K}_m^n$ (or $\mathcal{K}_m^n$).

\begin{proposition}[Lemmas 6.12--17 in \cite{naisse2020odd}]
\label{prop:Rmoves}
We have the following isomorphisms in $\mathsf{K}_m^n$:
\begin{enumerate}
	\item If $T$ and $T'$ differ by planar isotopy, then $\mathbf{K}(T) \cong \mathbf{K})(T')$;
	\item For Reidemeister I moves, 
	\[
	\mathbf{K}
\left(
\tikz[baseline={([yshift=-.5ex]current bounding box.center)}, scale=.6]
{
        \draw[dotted] (.5,.5) circle(0.707);
        \draw[knot] (1, 0.5) to[out=-90, in=-60] (0,1);
        \draw[knot, overcross] (0,0) to[out=60, in =90] (1, 0.5);

}
\right)
\cong
\mathbf{K}
\left(
\tikz[baseline={([yshift=-.5ex]current bounding box.center)}, scale=.6]
{
        \draw[dotted] (.5,.5) circle(0.707);
        \draw[knot] (0,0) to[out=60, in=-90] (.7, .5) to[out=90, in=-60] (0,1);
}
\right)
\cong
\mathbf{K}
\left(
\tikz[baseline={([yshift=-.5ex]current bounding box.center)}, scale=.6]
{
        \draw[dotted] (.5,.5) circle(0.707);
        \draw[knot] (0,0) to[out=60, in =90] (1, 0.5);
        \draw[knot, overcross] (1, 0.5) to[out=-90, in=-60] (0,1);
}
\right);
\]
	\item For Reidemeister II moves,
	\[
	\mathbf{K}
\left(
\tikz[baseline={([yshift=-.5ex]current bounding box.center)}, scale=.6]
{
        \draw[dotted] (.5,.5) circle(0.707);
        \draw[knot, ->] (0,0) to[out=30, in=-90] (0.8, 0.5);
        \draw[knot] (0.8, 0.5) to[out=90, in=-30] (0,1);
        \draw[knot, overcross] (1,0) to[out=150, in=-90] (0.3, 0.5);
        \draw[knot, overcross] (0.3, 0.5) to[out=90, in=210] (1,1);
        \draw[knot, ->] (0.3, 0.499) -- (0.3, 0.501);
}
\right)
\cong
\mathbf{K}
\left(
\tikz[baseline={([yshift=-.5ex]current bounding box.center)}, scale=.6]
{
        \draw[dotted] (.5,.5) circle(0.707);
        \draw[knot] (0, 0) to[out=30, in=-90] (0.35, 0.5);
        \draw[knot] (0.35, 0.5) to[out=90, in=-30] (0, 1);
        \draw[knot] (1, 0) to[out=150, in=-90] (0.65, 0.5);
        \draw[knot] (0.65, 0.5) to[out=90, in=210] (1, 1);
}
\right)
\{-1,1\}
\cong
\mathbf{K}
\left(
\tikz[baseline={([yshift=-.5ex]current bounding box.center)}, scale=.6]
{
        \draw[dotted] (.5,.5) circle(0.707);
        \draw[knot, ->] (1,0) to[out=150, in=-90] (0.3, 0.5);
        \draw[knot] (0.3, 0.5) to[out=90, in=210] (1,1);
        \draw[knot, overcross] (0,0) to[out=30, in=-90] (0.8, 0.5);
        \draw[knot, overcross] (0.8, 0.5) to[out=90, in=-30] (0,1);
        \draw[knot, ->] (0.8, 0.499) -- (0.8, 0.501);
}
\right)
\]
and 
\[
\mathbf{K}
\left(
\tikz[baseline={([yshift=-.5ex]current bounding box.center)}, scale=.6]
{
        \draw[dotted] (.5,.5) circle(0.707);
        \draw[knot, ->] (0,0) to[out=30, in=-90] (0.8, 0.5);
        \draw[knot] (0.8, 0.5) to[out=90, in=-30] (0,1);
        \draw[knot, overcross] (1,0) to[out=150, in=-90] (0.3, 0.5);
        \draw[knot, overcross] (0.3, 0.5) to[out=90, in=210] (1,1);
        \draw[knot, <-] (0.3, 0.499) -- (0.3, 0.501);
}
\right)
\cong
\varphi_{\left(\tikz[baseline={([yshift=-.5ex]current bounding box.center)}, scale=.4]
{
    \begin{scope}[rotate=90]
	\draw[dotted] (3,-2) circle(0.707);
	\draw[knot] (2.5,-1.5) .. controls (2.75,-1.75) and (3.25,-1.75) .. (3.5,-1.5);
	\draw[knot] (2.5,-2.5) .. controls  (2.75,-2.25) and (3.25,-2.25) .. (3.5,-2.5);
        \draw[red, knot] (3,-1.7) -- (3,-2.3);
    \end{scope}
},\, (0,1)\right)}
\mathbf{K}
\left(
\tikz[baseline={([yshift=-.5ex]current bounding box.center)}, scale=.6]
{
        \draw[dotted] (.5,.5) circle(0.707);
        \draw[knot] (0, 0) to[out=30, in=-90] (0.35, 0.5);
        \draw[knot] (0.35, 0.5) to[out=90, in=-30] (0, 1);
        \draw[knot] (1, 0) to[out=150, in=-90] (0.65, 0.5);
        \draw[knot] (0.65, 0.5) to[out=90, in=210] (1, 1);
}
\right)
\cong
\mathbf{K}
\left(
\tikz[baseline={([yshift=-.5ex]current bounding box.center)}, scale=.6]
{
        \draw[dotted] (.5,.5) circle(0.707);
        \draw[knot] (1,0) to[out=150, in=-90] (0.3, 0.5);
        \draw[knot, <-] (0.3, 0.5) to[out=90, in=210] (1,1);
        \draw[knot, overcross] (0,0) to[out=30, in=-90] (0.8, 0.5);
        \draw[knot, overcross] (0.8, 0.5) to[out=90, in=-30] (0,1);
        \draw[knot, ->] (0.8, 0.499) -- (0.8, 0.501);
}
\right);
\]
	\item We have that
	\[
	\mathbf{K}
\left(
\tikz[baseline={([yshift=-.5ex]current bounding box.center)}, scale=.6]
{
        \draw[dotted] (.5,.5) circle(0.707);
        \draw[knot, <-] (0,1) -- (0.45, 0.55);
        \draw[knot] (0.45, 0.55) -- (0.55, 0.45);
        \draw[knot] (0.55, 0.45) -- (1,0);
        \draw[knot] (0,0) -- (0.45, 0.45);
        \draw[knot, overcross] (0.45, 0.45) -- (0.55, 0.55);
        \draw[knot, ->] (0.55, 0.55) -- (1,1);
        \draw[knot, overcross] (0.5, -0.207) to[out=90, in=-90] (0.085, 0.5);
        \draw[knot, overcross] (0.085, 0.5) to[out=90, in=-90] (0.5, 1.207);
        \draw[knot, ->] (0.085, 0.499) -- (0.085, 0.501);
}
\right)
\cong
\mathbf{K}
\left(
\tikz[baseline={([yshift=-.5ex]current bounding box.center)}, scale=.6]
{
        \draw[dotted] (.5,.5) circle(0.707);
        \draw[knot, <-] (0,1) -- (0.45, 0.55);
        \draw[knot] (0.45, 0.55) -- (0.55, 0.45);
        \draw[knot] (0.55, 0.45) -- (1,0);
        \draw[knot] (0,0) -- (0.45, 0.45);
        \draw[knot, overcross] (0.45, 0.45) -- (0.55, 0.55);
        \draw[knot, ->] (0.55, 0.55) -- (1,1);
        \draw[knot, overcross] (0.5, -0.207) to[out=90, in=-90] (0.915, 0.5);
        \draw[knot, overcross] (0.915, 0.5) to[out=90, in=-90] (0.5, 1.207);
        \draw[knot, ->] (0.915, 0.499) -- (0.915, 0.501);
}
\right)
	\]
	if strands are co-oriented (other cases being similar) and
	\[
	\mathbf{K}
\left(
\tikz[baseline={([yshift=-.5ex]current bounding box.center)}, scale=.6]
{
        \draw[dotted] (.5,.5) circle(0.707);
        \draw[knot, <-] (0,1) -- (0.45, 0.55);
        \draw[knot] (0.45, 0.55) -- (0.55, 0.45);
        \draw[knot] (0.55, 0.45) -- (1,0);
        \draw[knot] (0,0) -- (0.45, 0.45);
        \draw[knot, overcross] (0.45, 0.45) -- (0.55, 0.55);
        \draw[knot, ->] (0.55, 0.55) -- (1,1);
        \draw[knot, overcross] (0.5, -0.207) to[out=90, in=-90] (0.085, 0.5);
        \draw[knot, overcross] (0.085, 0.5) to[out=90, in=-90] (0.5, 1.207);
        \draw[knot, <-] (0.085, 0.499) -- (0.085, 0.501);
}
\right)
\cong
\varphi_{
\tikz[baseline={([yshift=-.5ex]current bounding box.center)}, scale=.4]
{
        \draw[dotted] (.5,.5) circle(0.707); 
        \draw[knot, red] (0.2, 0.5) -- (0.5, 0.5);
        \draw[knot] (0,0) to[out=45, in=-45] (0,1);
        \draw[knot] (1,0) to[out=135, in=-135] (1,1);
        \draw[knot] (0.5, -0.207) -- (0.5, 1.207);
}
}
\circ
\varphi_{
\tikz[baseline={([yshift=-.5ex]current bounding box.center)}, scale=.4]
{
        \draw[dotted] (.5,.5) circle(0.707); 
        \draw[knot, red] (0.8, 0.5) -- (0.5, 0.5);
        \draw[knot] (0,0) to[out=45, in=-45] (0,1);
        \draw[knot] (1,0) to[out=135, in=-135] (1,1);
        \draw[knot] (0.5, -0.207) -- (0.5, 1.207);
}
}^{-1}
\mathbf{K}
\left(
\tikz[baseline={([yshift=-.5ex]current bounding box.center)}, scale=.6]
{
        \draw[dotted] (.5,.5) circle(0.707);
        \draw[knot, <-] (0,1) -- (0.45, 0.55);
        \draw[knot] (0.45, 0.55) -- (0.55, 0.45);
        \draw[knot] (0.55, 0.45) -- (1,0);
        \draw[knot] (0,0) -- (0.45, 0.45);
        \draw[knot, overcross] (0.45, 0.45) -- (0.55, 0.55);
        \draw[knot, ->] (0.55, 0.55) -- (1,1);
        \draw[knot, overcross] (0.5, -0.207) to[out=90, in=-90] (0.915, 0.5);
        \draw[knot, overcross] (0.915, 0.5) to[out=90, in=-90] (0.5, 1.207);
        \draw[knot, <-] (0.915, 0.499) -- (0.915, 0.501);

}
\right)
	\]
	if strands are not co-oriented (see Lemma 7.11 in \cite{spyropoulos2024});
\end{enumerate}
where orientations are omitted if they do not affect the isomorphism. In addition, if $T$ is an $(m, n)$ tangle and $S$ is a $(n, p)$ tangle, then
\[
\mathbf{K}(T) \otimes_{H^n} \mathbf{K}(S) \cong \mathbf{K}(S \circ T) 
\]
in $\mathsf{K}_m^p$.
\end{proposition}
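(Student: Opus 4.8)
The plan is to follow Khovanov's original proof of tangle invariance \cite{MR1928174}, upgrading each step to the $\mathcal{G}$-graded setting using the theory of $\mathcal{G}$-grading shifts from Subsection \ref{ss:geobims}. The first step is to show that $\mathbf{K}(T)$ is independent, up to homotopy equivalence in $\mathsf{K}_m^n$, of the chosen decomposition of $T$ into elementary tangles. Granting this, planar isotopy invariance (statement~1) and the tensor--composition compatibility $\mathbf{K}(T)\otimes_{H^n}\mathbf{K}(S)\cong\mathbf{K}(S\circ T)$ follow immediately: a planar isotopy of tangle diagrams is realized by finitely many of the defining relations among flat tangles (isotopy of crossingless matchings; the exchange relations sliding disjoint elementary pieces past one another; the circle-removal relation $\mathcal{F}(t\sqcup\bigcirc)\cong\mathcal{F}(t)\{1,0\}\oplus\mathcal{F}(t)\{0,-1\}$), and $\mathbf{K}(S\circ T)$ is by definition built from \emph{some} decomposition of $S\circ T$, so the statement is just a rebracketing of tensor factors. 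The rebracketing uses associativity of $-\otimes_{H^n}-$, which in the $\mathcal{G}$-graded world is the coherence isomorphism $\alpha$; since $\alpha$ satisfies the pentagon relation (the $3$-cocycle condition of the grading category $(\mathcal{G},\alpha)$), all rebracketings agree, and this is precisely where genuine non-associativity of $H^n$ is absorbed.

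For the Reidemeister moves (statements~2--4), I would compute $\mathbf{K}$ of the relevant local tangle --- a short complex of geometric bimodules obtained by tensoring the mapping cones $\mathbf{K}(\sigma)$ of the crossings involved --- and simplify by Gaussian elimination, cancelling contractible summands. The two local inputs are: (i) in $R\mathbf{ChCob}$, a merge (or a saddle) following a birth evaluates to the identity up to a unit of $R$, which produces the acyclic subcomplexes eliminated in Reidemeister~I and~II; and (ii) the delooping isomorphism above. For Reidemeister~I one checks that the curl complex deloops to a single copy of the identity-strand bimodule, with the shifts $\{-1,0\}$ (positive crossing) and $\{0,1\}$ (negative crossing) from the definition of $\mathbf{K}(\sigma)$ arranged so that the surviving shift is trivial. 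For Reidemeister~II the two-crossing tangle gives a length-three complex whose outer differentials are saddles; Gaussian elimination against these saddles leaves the two-parallel-strand bimodule, up to the shift recorded in statement~3, and one uses the adjunctions of Subsection \ref{ss:khosadjuns} to recognise the resulting object. Reidemeister~III and the slide moves are then deduced from Reidemeister~II by the standard ``pull a strand across the crossing'' argument, or verified by a direct (by then routine) comparison of complexes.

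The main obstacle is bookkeeping of the $\mathcal{G}$-grading throughout Gaussian elimination. Classically one tracks a single $q$-grading shift, but here every bimodule carries a $\mathcal{G}$-grading and every morphism space is graded by the $\mathcal{G}$-shifting $2$-system of Subsection \ref{ss:geobims}; consequently each elementary cancellation requires the relevant connecting map to be an isomorphism of $\mathcal{G}$-graded bimodules, which forces the grading-shift functors $\varphi_{(W,v)}$ on the two sides to agree up to the canonical natural isomorphisms built from $\beta$, $\gamma$, and $\Xi$. The $\iota$-correction scalars that appear in these maps are consistent precisely because, at each step, the ambiguity is realized by a \emph{locally vertical} change of chronology, so Putyra's rigidity result (Proposition \ref{putyrahammer}) makes the choice canonical; checking that the changes of chronology in play are indeed locally vertical is the recurring technical chore. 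A secondary point, following \cite{spyropoulos2024}, is that the co-oriented and non-co-oriented variants of Reidemeister~II (and their oriented cousins) must be treated separately because the grading shifts differ, though each reduces to the same underlying elimination. All of this is carried out in \cite{naisse2020odd} (Lemmas 6.12--17) and \cite{spyropoulos2024} (Lemma 7.11); the statement here simply records the precise shifts we will need later in this section.
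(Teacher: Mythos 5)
Your proposal is consistent with how the paper handles this statement: the paper does not prove Proposition \ref{prop:Rmoves} itself but imports it from Lemmas 6.12--17 of \cite{naisse2020odd} (with the non-co-oriented Reidemeister II case from Lemma 7.11 of \cite{spyropoulos2024}), exactly as your closing paragraph acknowledges, and your sketch of delooping, Gaussian elimination, and bookkeeping of $\mathcal{G}$-shifts via $\beta$, $\gamma$, $\Xi$ and locally vertical changes of chronology matches the strategy of those cited proofs. No gap to flag.
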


In order to obtain the honest link invariant, we must broaden $\mathcal{K}_m^n$. Suppose $M$ is a $\mathcal{G}$-graded $R$-module and $x\in M$ is homogeneous with
\[
\abs{x}_{\mathcal{G}} = (t: a\to b, (v_1, v_2)).
\]
We define the \emph{quantum grading} of $x$ to be
\[
\abs{x}_q = v_1 + v_2 \in \mathbb{Z}.
\]
In this way, we interpret a $\mathcal{G}$-graded $R$-module to be $\mathcal{Z} \times \mathcal{G}$-graded; that is, possessing an integral quantum grading and a $\mathcal{G}$-grading, though the former is obtained from the latter. A $\mathcal{G}$-grading shift extends to a $\mathcal{Z} \times \mathcal{G}$-grading shift by declaring the quantum component of a $\mathcal{G}$-grading shift on $x$ to be
\[
\deg_q(\varphi_{(W, v)}(x)) : =  \deg_q(x) + \deg_q(W) + v_1 + v_2
\]
where
\[
\deg_q(W) := \#\text{births} + \#\text{deaths} - \#\text{saddles}.
\]
Thus, following Proposition \ref{prop:Rmoves}, we obtain an honest tangle invariant if we reinterpret $\mathbf{K}(T)$ as belonging to a new category, $\check{\mathcal{K}}_m^n$, defined as the category whose objects are bounded complexes of geometric $\mathcal{G}$-graded $(H^m, H^n)$-bimodules with $\mathcal{G}$-homogeneous differential which preserves the underlying quantum grading. The morphisms of $\check{\mathcal{K}}_m^n$ are $\mathcal{G}$-homogeneous, $q$-preserving, homomorphisms of complexes up to chain homotopy. We let $\check{\mathsf{K}}_m^n$ denote the corresponding category obtained by localizing along quasi-isomorphisms and $\mathbf{K}_q(T)$ denote $\mathbf{K}(T)$ reinterpreted as an object of $\check{\mathcal{K}}_m^n$.

\begin{theorem}[Theorem 6.21 in \cite{naisse2020odd}]
If $T$ and $T'$ are isotopic $(m, n)$-tangles, then there is an isomorphism $\mathbf{K}_q(T) \cong \mathbf{K}_q(T')$ in $\check{\mathsf{K}}_m^n$.
\end{theorem}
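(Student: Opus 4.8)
The plan is to reduce the statement to Proposition \ref{prop:Rmoves} and then to observe that the residual $\mathcal{G}$-grading shifts appearing there all become trivial once the quantum grading is imposed. I would first invoke the Reidemeister calculus for rigid, generically presented tangle diagrams: any two diagrams of isotopic $(m,n)$-tangles are joined by a finite sequence $T = T^{(0)}, T^{(1)}, \dots, T^{(k)} = T'$ in which consecutive diagrams differ by a planar isotopy of the generic diagram, by a Reidemeister move of type I, II or III, or by one of the turnback/slide moves listed in Proposition \ref{prop:Rmoves}; moreover any two decompositions of a fixed diagram into elementary tangles are related by planar isotopies. It therefore suffices to prove $\mathbf{K}_q(T^{(i)}) \cong \mathbf{K}_q(T^{(i+1)})$ for each elementary move and then to compose. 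Since $\check{\mathsf{K}}_m^n$ is a genuine (localized) category and, by the last statement of Proposition \ref{prop:Rmoves}, $\otimes_{H^\bullet}$ descends to it, a local isomorphism near a disc where the move occurs may be tensored with the identity on the rest of the diagram and the results chained.

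Next I would upgrade each isomorphism of Proposition \ref{prop:Rmoves} from $\mathsf{K}_m^n$ to $\check{\mathsf{K}}_m^n$. Each such isomorphism has the shape $\mathbf{K}(T^{(i)}) \cong \varphi_{(W,v)}\{a,b\}\,\mathbf{K}(T^{(i+1)})$ for a (possibly trivial) $\mathcal{G}$-grading shift $\varphi_{(W,v)}$. Regarded inside $\check{\mathcal{K}}_m^n$, a $\mathcal{G}$-grading shift $\varphi_{(W,v)}$ acts on the underlying quantum grading by the integer $\deg_q(W) + v_1 + v_2$, where $\deg_q(W) = \#\mathrm{births} + \#\mathrm{deaths} - \#\mathrm{saddles}$; when this integer is zero the shift is (quantum-)isomorphic to the identity. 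The heart of the argument is the verification, case by case, that every shift occurring in Proposition \ref{prop:Rmoves} has vanishing quantum degree: the planar-isotopy and Reidemeister I and III cases carry no shift; the Reidemeister II cases carry either the shift $\{-1,1\}$, of quantum degree $0$, or a composite of two saddle shifts whose quantum degrees cancel; and the turnback moves of item (4) likewise carry saddle/identity shifts of total quantum degree $0$ (computing the inverse of a $\mathcal{G}$-shift, when needed, via $\varphi_{(W,v)}^{-1} = \varphi_{(\overline W,\, -v - \widehat\chi(\overline W\circ W))}$). Consequently the isomorphisms of Proposition \ref{prop:Rmoves} descend to honest isomorphisms $\mathbf{K}_q(T^{(i)}) \cong \mathbf{K}_q(T^{(i+1)})$ in $\check{\mathsf{K}}_m^n$, and chaining them proves the theorem.

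For this to make sense one also checks at the outset that $\mathbf{K}_q(T)$ genuinely lies in $\check{\mathcal{K}}_m^n$, i.e.\ that its differential is $\mathcal{G}$-homogeneous and preserves the underlying quantum grading. This is built into the construction: the differential is a signed sum of saddle maps $\mathcal{F}(\text{saddle})$ precomposed with the transition isomorphisms $\varphi_H$ of Subsection \ref{ss:geobims}, each of which is $\mathcal{G}$-homogeneous, and the internal $\{-1,0\}$ and $\{0,1\}$ shifts inserted at each crossing in the definition of $\mathbf{K}$ are exactly what is needed to make the total differential quantum-degree preserving.

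I expect the main obstacle to be the bookkeeping of the second step: one must unwind the definition of $\mathbf{K}$ at a crossing in each of the oriented sub-cases of Reidemeister II and of the slide moves, identify precisely which grading-shift functors $\varphi_{(W,v)}$ (and which inverse shifts, with their $\widehat\chi$-corrections) enter the isomorphisms of Proposition \ref{prop:Rmoves}, and confirm in every case that the net quantum contribution is $0$. A secondary, more foundational point worth spelling out is the legitimacy of the reduction itself --- that ``isotopic'' in the sense relevant here is generated exactly by the moves appearing in Proposition \ref{prop:Rmoves} together with planar isotopy of generic diagrams --- since without it the appeal to that proposition is not justified.
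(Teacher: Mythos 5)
Your proposal is correct and follows essentially the same route as the paper, which states this result as a citation of Naisse--Putyra and justifies it exactly as you do: the isomorphisms of Proposition \ref{prop:Rmoves} acquire only $\mathcal{G}$-grading shifts of total quantum degree zero (e.g.\ $\{-1,1\}$ or saddle shifts $\varphi_{(W,(0,1))}$ with $\deg_q(W)=-1$), so they become honest isomorphisms once $\mathbf{K}(T)$ is reinterpreted in $\check{\mathsf{K}}_m^n$. Your additional checks (that $\mathbf{K}_q(T)$ lies in $\check{\mathcal{K}}_m^n$ and that local moves may be tensored with identities and composed) are exactly the supporting points implicit in the paper's treatment.
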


\begin{corollary}
If $T$ is a link, then the homology $H^*(\mathbf{K}_q(T))$ is isomorphic to unified Khovanov homology \cite{putyra20152categorychronologicalcobordismsodd}. For a tangle, if the specialization $X = Y = Z = 1$ is chosen, we obtain the (even) Khovanov complex of a tangle \cite{MR1928174}; if $X = Z = 1$ and $Y = -1$ are chosen, we obtain the odd Khovanov complex of \cite{naisse2020odd}, generalizing odd Khovanov homology for links \cite{MR3071132}.
\end{corollary}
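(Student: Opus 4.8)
The plan is to unwind the definition of $\mathbf{K}_q(T)$ and observe that, in each of the three regimes, it reduces to a previously constructed complex; no genuinely new argument is required beyond a careful comparison of conventions. I use throughout that $\mathbf{K}_q(T)$ was built by decomposing $T$ into elementary tangles and tensoring the associated complexes, and that when $T$ is a link there are no boundary points, so the relevant arc algebra is $H^0 = \mathcal{F}(1_0) \cong R$ and a geometric $(H^0,H^0)$-bimodule is just a $\mathcal{G}$-graded $R$-module supported in the $n=0$ part of $\mathcal{G}$.

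First I would treat the link case. Tensoring the elementary pieces $\mathbf{K}(\sigma_{i,n})$ and $\mathbf{K}(\overline{\sigma}_{i,n})$ together with the flat caps and cups over the various $H^{n_j}$ produces, after collapsing everything down to $H^0 \cong R$, precisely the total complex of the cube of resolutions of a diagram for $T$: each vertex is $\mathcal{F}$ applied to the corresponding crossingless diagram, each edge is a chronological saddle map, and the quantum-degree shifts $\{-1,0\}$ and $\{0,1\}$ attached to the crossings assemble into the usual overall shift by the writhe. This is exactly the chronological Khovanov complex defined by Putyra in \cite{putyra20152categorychronologicalcobordismsodd}; as already observed in the introduction, the $\mathcal{G}$-degree of a link is determined by its quantum degree, so $\mathbf{K}_q(T)$ agrees with Putyra's complex as a $\mathbb{Z}$-bigraded object, and taking cohomology gives the first claim. (At the level of links this identification is the content of the specialization of the Naisse--Putyra invariant established in \cite{naisse2020odd}.)

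Next I would handle the two algebraic specializations by base change along the ring homomorphisms $R \to \mathbb{Z}$ sending $(X,Y,Z) \mapsto (1,1,1)$, respectively $(X,Y,Z)\mapsto (1,-1,1)$. For $(1,1,1)$ every value $\iota(H)$ of a change of chronology becomes $1$, so the relations defining $R\mathbf{ChCob}$ collapse to those of the ordinary (neck-cutting) cobordism category; hence $\mathcal{F}\otimes_R\mathbb{Z}$ is Khovanov's Frobenius-algebra TQFT on $V = \mathbb{Z}\langle v_+\rangle \oplus \mathbb{Z}\langle v_-\rangle$, the algebra $H^n \otimes_R \mathbb{Z}$ is Khovanov's arc algebra, the geometric bimodules become Khovanov's $\mathcal{F}(t)$, and the cone-of-saddle presentation of a crossing together with its grading shift reproduces Khovanov's cube, so that $\mathbf{K}(T)\otimes_R\mathbb{Z} \cong \mathcal{F}(T)$ from \cite{MR1928174}. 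For $(1,-1,1)$ the same mechanism identifies $H^n\otimes_R\mathbb{Z}$ with the Naisse--Vaz odd arc algebra --- this is precisely how the odd arc algebra was introduced at the definition of $H^n$ --- and $\mathcal{F}\otimes_R\mathbb{Z}$ with the odd TQFT of \cite{naisse2017odd}, so $\mathbf{K}(T)\otimes_R\mathbb{Z}$ is the odd Khovanov complex of \cite{naisse2020odd}; restricting further to links recovers the odd Khovanov homology of \cite{MR3071132}.

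The only real work, and hence the main obstacle, is bookkeeping: one must check that the quantum-grading conventions agree --- that the shifts $\{-1,0\}$, $\{0,1\}$ and the $\varphi_H$-correction maps appearing in the definition of $\mathbf{K}_q$ specialize to the homological and $q$-degree shifts used in \cite{MR1928174}, \cite{putyra20152categorychronologicalcobordismsodd}, and \cite{MR3071132} --- and that the differentials match up to sign. For the odd specialization one must also recall that the ladybug choice fixed in Figure \ref{fig:elementaryCoC} gives the ``type-$Y$'' theory, which agrees with the ``type-$X$'' convention of \cite{MR3071132} via the known isomorphism between the two; after incorporating this, the remaining verification is routine.
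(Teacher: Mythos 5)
Your proposal is correct and matches the paper's intent: the paper states this corollary without a written proof, treating it as an immediate consequence of the definition of $\mathbf{K}_q(T)$ (where, for a link, the only arc algebra involved is $H^0 \cong R$ and the $\mathcal{G}$-degree collapses to the quantum degree) together with the specialization results $X=Y=Z=1$ and $X=Z=1$, $Y=-1$ already established in \cite{putyra20152categorychronologicalcobordismsodd} and \cite{naisse2020odd}. Your base-change unwinding, including the remark about the type-$Y$ ladybug convention, is exactly the bookkeeping the paper implicitly delegates to those references.
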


\subsubsection{Complexes with $\mathcal{C}$-homogeneous differential}

We will sate ourselves with a brief description of $\mathcal{C}$-graded DG bimodules with $\mathcal{C}$-homogeneous differential. See Sections 4.7, 4.8, 5.5, and 6.5 of \cite{naisse2020odd} for a more complete description.

\begin{definition}
A \emph{$\mathcal{C}$-graded DG bimodule with homogeneous differential}, $(M, d_M)$, over a pair of $\mathcal{C}$-graded algebras $A$ and $B$ is a $\mathcal{Z} \times \mathcal{C}$-graded $(A, B)$-bimodule $M = \bigoplus_{n \in \mathbb{Z}, g\in \mathrm{Mor}(\mathcal{C})} M_g^n$ with differential $d_M = \sum_{j\in J \subset \widetilde{\mathcal{I}}} d_{M, j}$ where $J$ is finite, such that for each homogeneous $a \in A$, $m\in M$, and $b\in B$,
\begin{itemize}
	\item $d_{M, j}(M_g^n) \subset M_{\varphi_j(g)}^{n+1}$ if $g\in \mathsf{D}_j$, and $d_{M, j} = 0$ otherwise, 
	\item $d_{M,j} (\rho_L(a, m)) = \beta_{\mathbf{e}, j} (\abs{y}_\mathcal{C}, \abs{m}_\mathcal{C})^{-1} \rho_L(a, d_{M,j} (m))$, 
	\item$ d_{M,j} (\rho_R(m, b)) = \beta_{j, \mathbf{e}}(\abs{m}_\mathcal{C}, \abs{b}_\mathcal{C})^{-1} \rho_R(d_{M,j}(m), b)$, and
	\item $d_M \circ d_M = 0$.
\end{itemize}
A map between these DG bimodules is a homogeneous map of $\mathcal{C}$-graded bimodules which preserves the homological grading and $\mathcal{C}$-graded commutes (in the sense of (\ref{eq:cgradedcomp})) with the differentials.
\end{definition}

Denote by $\mathrm{BIM}_{DG}^\mathcal{C}(A, B)$ the additive category of such $\mathcal{C}$-graded DG bimodules. Notice that the homology is a $\mathbb{Z} \times \mathbb{Z}$-graded space, but no longer a bimodule with a $\mathcal{G}$-grading. Much of the theory of these DG bimodules depends on the existence, but not the choice, of a \emph{commutativity system} for the grading category $\mathcal{C}$ and its chosen $\mathcal{C}$-shifting 2-system. In particular, the tensor product of these bimodules is defined as before, but must be twisted by a constant determined by the commutativity system. Similarly, the definition of chain homotopy is also twisted by elements of this commutativity system; see Section 4.8 of \cite{naisse2020odd}. This does not present any difficulties: Naisse and Putyra show that there exists a natural choice of commutativity system for $\mathcal{G}$ and the associated $\mathcal{G}$-shifting 2-system; see Section 5.5 of \cite{naisse2020odd}.

\subsubsection{The target category and 2-functor}

Borrowing notation from Khovanov, define the 2-category $\mathbb{K}$ whose
\begin{itemize}
	\item objects are non-negative integers,
	\item 1-morphisms are $\mathrm{Hom}_\mathbb{K}(m, n) = \mathsf{K}_m^n$, and whose
	\item 2-morphisms are
	\[
		\mathrm{Hom}_{\mathbb{K}}(M, N) = \bigoplus_{i \in \widetilde{\mathcal{I}}} \mathrm{Hom}_{\mathsf{K}_m^n}(\varphi_i(M), N) / \{u: u~\text{is a unit of}~R\}.
	\]
\end{itemize}
To be clear, this means that all 2-morphisms are (sums of) homogeneous maps each considered up to multiplication by units of $R$. In view of Proposition \ref{prop:Rmoves}, the target category we will consider is $\check{\mathbb{K}}$, which is the same as $\mathbb{K}$ except that we replace $\mathsf{K}_m^n$ everywhere with $\check{\mathsf{K}}_m^n$.

Construct a 2-functor $\mathbf{Tang} \to \check{\mathbb{K}}$ as follows. On objects, send $x\in \{+, -\}^{2n}$ to $n$. On 1-morphisms, send generating morphisms $T$ to $\mathbf{K}_q(T)$ and extend by sending compositions of 1-morphisms to the tensor product, just as in the definition of $\mathbf{K}_q$. Finally, the 2-functor on generating 2-morphisms is defined as follows. On Morse moves, simply assign the bimodule homomorphisms induced by $\mathcal{F}$. For Reidemeister moves, assign the isomorphisms from parts (2), (3), and (4) of Proposition \ref{prop:Rmoves}. For the isotopy moves, assign the isomorphism from part (1) of Proposition \ref{prop:Rmoves}. To see a complete description of these isomorphisms, consult Section 6.4 of \cite{naisse2020odd}.

\subsection{Proof of functoriality}
\label{ss:conclusion}

We conclude by proving that the assignment $\mathbf{Tang} \to \check{\mathbb{K}}$ described above extends uniquely to a 2-functor. This proves Theorem \ref{thm3}.

Each of the movies appearing in a Carter-Saito move correspond to an element of $\mathrm{Hom}_{\check{\mathcal{K}}_m^n}(t_0, t_\infty)$ (up to grading shifts), where $t_0$ denotes the first frame of the movies, and $t_\infty$ denotes the last. We will picture the Carter-Saito moves shortly; we call the movie appearing on the top of a Carter-Saito move $\Sigma_T$ and the movie appearing on the bottom $\Sigma_B$. Our method of proof is to use results of this section to show that these morphisms differ by at most a unit of $R$. In most of the situations, the morphisms make sense as $\mathcal{G}$-graded maps, and we can argue that the morphisms agree up to a unit in $\mathrm{Hom}_{\mathsf{K}_m^n}(t_0, t_\infty)$, so that the result holds. However, moves 6, 7, 14, 23a, 25, 26, 28, and 30 require special attention. This is because they contain Reidemeister II and/or III moves, which correspond to only $q$-grading preserving isomorphisms (not $\mathcal{G}$-grading preserving isomorphisms) thus we cannot apply previous corollaries carelessly. To evade this inconvenience, the trick is to notice that ``$\mathcal{G}$-homogeneous and $q$-preserving'' is equivalent to ``$\mathcal{G}$-preserving'' in the context of maps $f:H^n \to H^n$ as $\mathcal{G}$-graded $(H^n, H^n)$-bimodules.

\begin{lemma}
\label{lem:qcenter}
Let $\mathrm{Bim}^q(H^n)$ denote the category of $\mathcal{G}$-graded $(H^n, H^n)$-bimodules whose morphisms are $\mathcal{G}$-homogeneous, $q$-preserving maps. Then, if $M$ is an invertible complex in $\check{\mathcal{K}}_n^n$,
\[
\mathrm{End}_{\check{\mathcal{K}}_n^n}(M) \cong \mathrm{End}_{\mathrm{Bim}^q(H^n)}(H^n) \cong R.
\]
That is, there are no more endomorphisms of an invertible complex after extending to $\mathcal{G}$-homogeneous maps as long as the quantum grading is required to be preserved.
\end{lemma}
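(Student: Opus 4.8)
Reduction to $H^n$. The plan is to first dispose of the complex $M$ and reduce everything to an endomorphism computation for $H^n$ sitting in homological degree zero, and then to prove that the passage from $\mathcal{G}$-grading preserving morphisms to $\mathcal{G}$-homogeneous $q$-preserving morphisms does not enlarge the relevant endomorphism ring. Since $H^n$, viewed as a complex concentrated in a single homological degree, admits no nonzero nullhomotopic endomorphisms, we have $\mathrm{End}_{\check{\mathcal{K}}_n^n}(H^n) = \mathrm{End}_{\mathrm{Bim}^q(H^n)}(H^n)$. For the $M$-part I would run the argument of Proposition \ref{prop:invcomplex} verbatim in the $q$-preserving setting: writing $N$ for a homotopy inverse of $M$, so that $M \otimes_{H^n} N \simeq H^n$ in $\check{\mathcal{K}}_n^n$, the map $\phi\colon \mathrm{End}_{\check{\mathcal{K}}_n^n}(M) \to \mathrm{End}_{\check{\mathcal{K}}_n^n}(M\otimes_{H^n}N) \cong \mathrm{End}_{\check{\mathcal{K}}_n^n}(H^n)$, $f\mapsto f\otimes\mathrm{id}_N$, has a one-sided inverse built out of the associator and the delooping isomorphism, so it is injective, and it is surjective because $\mathrm{id}_M\otimes\mathrm{id}_N$ generates its target over itself once that target is known to be $R$. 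Thus it all comes down to the claim $\mathrm{End}_{\mathrm{Bim}^q(H^n)}(H^n)\cong R$.

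The heart of the matter. The claim I would prove is that a $\mathcal{G}$-homogeneous, $q$-preserving homomorphism $f\colon H^n\to H^n$ of $(H^n,H^n)$-bimodules is automatically $\mathcal{G}$-grading preserving; granting this, $\mathrm{End}_{\mathrm{Bim}^q(H^n)}(H^n) = \mathrm{Hom}_{\mathrm{Bim}(H^n,H^n)^\mathcal{G}}(H^n,H^n) \cong Z^\mathcal{G}(H^n) \cong R$ by Propositions \ref{prop:c-gradedcenter} and \ref{prop:centerofH}. To prove the claim, decompose $f = \sum_{i\in J}f_i$ with $J\subset\widetilde{\mathcal{I}}$ finite and $f_i$ homogeneous of degree $i$; it suffices to show each nonzero $f_i$ is $\mathcal{G}$-grading preserving. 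Recall $H^n = \bigoplus_{a,b}\mathcal{F}(a1_n\overline{b})$ has all of its $\mathcal{G}$-gradings lying over identity tangles, i.e., supported on the wide subcategory $Z$, and that $f_i$, being a bimodule map, preserves the decomposition by $(a,b)$ (via the idempotents $e_a$, $e_b$). If $i = (W,v)$ with $W$ the underlying cobordism, then for $\varphi_i$ to be defined on a grading $(1_n\colon a\to b, p)$ of $H^n$ and to land again in $H^n$ we need $W$ to be a cobordism $1_n\to 1_n$, and then $\varphi_i$ acts on that $(a,b)$-block purely by the $\mathbb{Z}\times\mathbb{Z}$-shift $w_{a,b} := v + |\mathbbm{1}_aW\mathbbm{1}_{\overline b}|$. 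Now on each block $\mathcal{F}(a1_n\overline{b})\cong V^{\otimes k}$ (with $k$ the number of circles of $a\overline{b}$) the $\mathbb{Z}\times\mathbb{Z}$-degrees that occur are exactly the $(j,\,j-k)$ for $0\le j\le k$, all lying on a line of direction $(1,1)$, and the quantum grading is the sum of the two coordinates. Hence for $f_i$ to be nonzero on that block, $w_{a,b}$ must preserve the line, i.e., $w_{a,b}$ is parallel to $(1,1)$; and the $q$-preserving hypothesis forces its coordinate sum to vanish. A vector parallel to $(1,1)$ with zero coordinate sum is $0$, so $w_{a,b} = (0,0)$, i.e., $f_i$ is $\mathcal{G}$-grading preserving on each block it is nonzero on — and therefore as a map $H^n\to H^n$. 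Summing, $f$ is $\mathcal{G}$-grading preserving.

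The main obstacle. The delicate points are all in the last paragraph and are essentially bookkeeping about the $\mathcal{G}$-shifting 2-system of Subsection \ref{ss:geobims}: one must confirm that "$\varphi_i$ defined on a grading of $H^n$ with image again a grading of $H^n$" really does force the underlying cobordism to be $1_n\to 1_n$ (using that gradings of $H^n$ live over identity tangles and that $H^n$ is supported by $Z$), that the correction term $|\mathbbm{1}_aW\mathbbm{1}_{\overline b}|$ and the definition of $\deg_q$ interact as expected, and that the $\mathbb{Z}\times\mathbb{Z}$-degree description of $\mathcal{F}(a1_n\overline b)$ is as stated. None of this is hard, but it is the step where the proof could silently go wrong if, say, a nontrivial cobordism $1_n\to 1_n$ with a handle produced a $q$-degree-zero shift that nonetheless acted nontrivially; the line argument is exactly what rules this out. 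Once the claim is established, the remaining assertions are immediate from the $\mathcal{G}$-graded center computations and the invertibility argument already recorded above.
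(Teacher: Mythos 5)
Your proposal is correct, and its overall skeleton is the same as the paper's: reduce $\mathrm{End}_{\check{\mathcal{K}}_n^n}(M)$ to $\mathrm{End}_{\mathrm{Bim}^q(H^n)}(H^n)$ by rerunning the invertible-complex argument of Proposition \ref{prop:invcomplex} in the $q$-preserving setting, then show that $\mathcal{G}$-homogeneity plus $q$-preservation forces $\mathcal{G}$-preservation, and conclude via $Z^{\mathcal{G}}(H^n)\cong R$. The one place you genuinely diverge is the mechanism for that middle step. The paper evaluates at the idempotents: $f$ is determined by $f(1)=\sum_a f(1_a)$, each $1_a$ has quantum degree $n$, and homogeneous elements of $H^n$ of quantum degree $n$ are exactly (multiples of) the units $e_a$, so the shift must be trivial. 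You instead argue component-by-component and block-by-block: since every $\mathbb{Z}\times\mathbb{Z}$-degree occurring in $\mathcal{F}(a1_n\overline{b})\cong V^{\otimes k}$ is of the form $(j,j-k)$, any shift acting nontrivially must be parallel to $(1,1)$, while $q$-preservation forces it to be of the form $(m,-m)$, so it vanishes. Both arguments exploit the same rigidity of the $\mathbb{Z}\times\mathbb{Z}$-degrees of $H^n$; yours is slightly more self-contained (it avoids the fact that quantum degree $n$ characterizes the units and does not route through evaluation at $1$, and it makes explicit why the underlying cobordism of an admissible shift must go $1_n\to 1_n$), whereas the paper's is shorter. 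Two cosmetic remarks: the left inverse in the Proposition \ref{prop:invcomplex} argument is built from $-\otimes\mathrm{id}_M$ and the associator, not the delooping isomorphism; and your concluding identification of $\mathrm{End}_{\mathrm{Bim}^q(H^n)}(H^n)$ with $\mathrm{Hom}_{\mathrm{Bim}(H^n,H^n)^{\mathcal{G}}}(H^n,H^n)$ silently sets the $\beta$-twists in Definition \ref{def:homogeneousmaps} equal to $1$ once the shift is trivial, but the paper makes exactly the same move, so this is not a gap relative to the paper's standard of rigor.
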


\begin{proof}
We start by considering the image of
\[
f \mapsto f(1) = \sum_{a \in \mathrm{Ob}(\mathcal{G})} f(1_a)
\]
in $H^n$ for $f \in \mathrm{End}_{\mathrm{Bim}^q(H^n)}(H^n)$. Using the same arguments of the proof of Proposition \ref{prop:c-gradedcenter} (and appealing to Definition \ref{def:homogeneousmaps}), it is easy to see that this is isomorphic to something we could call the \emph{$q$-graded center of $H^n$}, which is given by
\[
Z^q(H^n) := \{
z\in H^n : \mu(x, z) = C \mu(z, x) ~\text{for each}~ x \in H^n
\}
\]
where
\[
C :=
\alpha(\mathrm{Id}_a, \mathrm{Id}_a, \abs{x})
\alpha(\abs{x}, \mathrm{Id}_b, \mathrm{Id}_b)
\beta_{i, \mathbf{e}}(\mathrm{Id}_a, \abs{x})^{-1}
\beta_{\mathbf{e}, i}(\abs{x}, \mathrm{Id}_b)
\]
for $\abs{x}: a \to b$ and $\abs{f} = i$. From here, the first isomorphism follows by the same arguments of the proof of Proposition \ref{prop:invcomplex}. We claim that the second follows from grading considerations. Notice that the grading of $x\in {}_a H^n_b \subset H^n$ is given by 
\[
\abs{x}_\mathcal{G} = (1_n: a\to b, \deg_R(x))
\]
hence $\mathcal{G}$-grading shifts for elements of $H^n$ are only potentially nontrivial in the $\mathbb{Z} \times \mathbb{Z}$-component. Thus, the $\mathcal{G}$-grading shifts which preserve quantum degree are of the form $(n, -n)$ for any $n\in \mathbb{Z}$. Since $1_a$ has $\mathcal{G}$-grading $\mathrm{Id}_a = (1_n:a\to a, (n,0))$, and thus quantum grading $n$, $f(1_a)$ must have quantum grading $n$. But a homogeneous element of $H^n$ has quantum degree $n$ if and only if it is a unit of $H^n$. Thus, the assumption that $f$ is a $\mathcal{G}$-homogeneous and quantum-degree preserving map of $(H^n, H^n)$-bimodules means that it is actually $\mathcal{G}$-preserving. Thus the $\beta$-terms are each equal to 1, and we have that
\[
Z^q(H^n) = Z^\mathcal{G}(H^n) \cong R
\]
as desired.
\end{proof}

Now we have everything necessary to work through the Carter-Saito moves. First, we note that we can ignore the choice of framing on deaths and saddles as the linearized category of chronological cobordisms $R\mathbf{ChCob}$ admits the change of framing moves (\ref{eq:changeofframingmoves}), which state that the framing can be reversed as long as we compensate by some unit (see Subsection \ref{ss:chroncob}). Thus, we can ignore framings of the chronological cobordisms involved. 

Notice that each of moves 1, 2, 3, 4, and 5 consist of a single Reidemeister move and its inverse. Moreover, the fact that $\mathcal{F}(\Sigma_T) = \mathcal{F}(\Sigma_B)$ follows for these moves as long as the isomorphism assigned to the inverse of a Reidemeister move is the inverse of the isomorphism assigned to that Reidemeister move. This is the case, so we move on.
\[
\tikz[baseline={([yshift=-.5ex]current bounding box.center)}, scale=.35]{
    % FILM
	\draw (.5,0) -- (.5,4);
	\draw (4.5,0) -- (4.5,4);
	\draw (8.5,0) -- (8.5,4);
        \draw (12.5,0) -- (12.5,4);
 	\draw[double,double distance=2pt,line cap=rect] (0,0) -- (13,0);
	\draw[dashed, line width=2pt] (0,0) -- (13,0);
	\draw[double,double distance=2pt,line cap=rect] (0,4) -- (13,4);
	\draw[dashed, line width=2pt] (0,4) -- (13,4);
    % FRAMES
        \node at (2.5,2) {$\tikz[scale=0.8]{
        \draw[knot] (0,0) to[out=90,in=180] (0.5, 1.35) to[out=0, in=90] (1,0);
        }$};
        \node at (6.5, 2) {$\tikz[scale=0.8]{
        \draw[knot] (0,0) to[out=90,in=180] (0.5, 1.35) to[out=0, in=90] (1,0);
        }$};
        \node at (10.5,2) {$\tikz[scale=0.8]{
        \draw[knot] (0,0) to[out=90,in=180] (0.5, 1.35) to[out=0, in=90] (1,0);
        }$};
    % ANNOTATIONS
        %\node[below] at (10.5,0) {Move 6 A};
        \node at (6.5, -1) {Move 1};
        \draw[<->, very thick] (11.5, -0.25) -- (11.5,-1.75);
        \draw[<->, very thick] (1.5, -0.25) -- (1.5,-1.75);
    \begin{scope}[yshift=-6cm]
    % FILM
	\draw (.5,0) -- (.5,4);
	\draw (4.5,0) -- (4.5,4);
	\draw (8.5,0) -- (8.5,4);
        \draw (12.5,0) -- (12.5,4);
 	\draw[double,double distance=2pt,line cap=rect] (0,0) -- (13,0);
	\draw[dashed, line width=2pt] (0,0) -- (13,0);
	\draw[double,double distance=2pt,line cap=rect] (0,4) -- (13,4);
	\draw[dashed, line width=2pt] (0,4) -- (13,4);
    % FRAMES
        \node at (2.5,2) {$\tikz[scale=0.8]{
        \draw[knot] (0,0) to[out=90,in=180] (0.5, 1.35) to[out=0, in=90] (1,0);
        }$};
        \node at (6.5,2) {$\tikz[scale=0.8]{
        \draw[knot] (0.5,1.35) to[out=180, in=90] (0,1) to[out=-90, in=90] (1,0);
        \draw[knot, overcross] (0,0) to[out=90, in=-90] (1,1) to[out=90, in=0] (0.5,1.35);
        }$};
        \node at (10.5,2) {$\tikz[scale=0.8]{
        \draw[knot] (0,0) to[out=90,in=180] (0.5, 1.35) to[out=0, in=90] (1,0);
        }$};
    % ANNOTATIONS
        %\node[below] at (10.5,0) {Move 6 B};
    \end{scope}
 }
\qquad
\tikz[baseline={([yshift=-.5ex]current bounding box.center)}, scale=.35]{
    % FILM
	\draw (.5,0) -- (.5,4);
	\draw (4.5,0) -- (4.5,4);
	\draw (8.5,0) -- (8.5,4);
        \draw (12.5,0) -- (12.5,4);
 	\draw[double,double distance=2pt,line cap=rect] (0,0) -- (13,0);
	\draw[dashed, line width=2pt] (0,0) -- (13,0);
	\draw[double,double distance=2pt,line cap=rect] (0,4) -- (13,4);
	\draw[dashed, line width=2pt] (0,4) -- (13,4);
    % FRAMES
        \node at (2.5,2) {$\tikz[scale=0.8]{
        \draw[knot] (0.5,1.35) to[out=180, in=90] (0,1) to[out=-90, in=90] (1,0);
        \draw[knot, overcross] (0,0) to[out=90, in=-90] (1,1) to[out=90, in=0] (0.5,1.35);
        }$};
        \node at (6.5,2) {$\tikz[scale=0.8]{
        \draw[knot] (0,0) to[out=90,in=180] (0.5, 1.35) to[out=0, in=90] (1,0);
        }$};
        \node at (10.5,2) {$\tikz[scale=0.8]{
        \draw[knot] (0.5,1.35) to[out=180, in=90] (0,1) to[out=-90, in=90] (1,0);
        \draw[knot, overcross] (0,0) to[out=90, in=-90] (1,1) to[out=90, in=0] (0.5,1.35);
        }$};
    % ANNOTATIONS
        %\node[below] at (10.5,0) {Move 6 A};
        \node at (6.5, -1) {Move 2};
        \draw[<->, very thick] (11.5, -0.25) -- (11.5,-1.75);
        \draw[<->, very thick] (1.5, -0.25) -- (1.5,-1.75);
    \begin{scope}[yshift=-6cm]
    % FILM
	\draw (.5,0) -- (.5,4);
	\draw (4.5,0) -- (4.5,4);
	\draw (8.5,0) -- (8.5,4);
        \draw (12.5,0) -- (12.5,4);
 	\draw[double,double distance=2pt,line cap=rect] (0,0) -- (13,0);
	\draw[dashed, line width=2pt] (0,0) -- (13,0);
	\draw[double,double distance=2pt,line cap=rect] (0,4) -- (13,4);
	\draw[dashed, line width=2pt] (0,4) -- (13,4);
    % FRAMES
        \node at (2.5,2) {$\tikz[scale=0.8]{
        \draw[knot] (0.5,1.35) to[out=180, in=90] (0,1) to[out=-90, in=90] (1,0);
        \draw[knot, overcross] (0,0) to[out=90, in=-90] (1,1) to[out=90, in=0] (0.5,1.35);
        }$};
        \node at (6.5,2) {$\tikz[scale=0.8]{
        \draw[knot] (0.5,1.35) to[out=180, in=90] (0,1) to[out=-90, in=90] (1,0);
        \draw[knot, overcross] (0,0) to[out=90, in=-90] (1,1) to[out=90, in=0] (0.5,1.35);
        }$};
        \node at (10.5,2) {$\tikz[scale=0.8]{
        \draw[knot] (0.5,1.35) to[out=180, in=90] (0,1) to[out=-90, in=90] (1,0);
        \draw[knot, overcross] (0,0) to[out=90, in=-90] (1,1) to[out=90, in=0] (0.5,1.35);
        }$};
    % ANNOTATIONS
        %\node[below] at (10.5,0) {Move 6 B};
    \end{scope}
 }
\qquad
\tikz[baseline={([yshift=-.5ex]current bounding box.center)}, scale=.35]{
    % FILM
	\draw (.5,0) -- (.5,4);
	\draw (4.5,0) -- (4.5,4);
	\draw (8.5,0) -- (8.5,4);
        \draw (12.5,0) -- (12.5,4);
 	\draw[double,double distance=2pt,line cap=rect] (0,0) -- (13,0);
	\draw[dashed, line width=2pt] (0,0) -- (13,0);
	\draw[double,double distance=2pt,line cap=rect] (0,4) -- (13,4);
	\draw[dashed, line width=2pt] (0,4) -- (13,4);
    % FRAMES
        \node at (2.5,2) {$\tikz[scale=0.8, y=1.3cm]{
        \draw[knot] (1,0) to[out=135, in=-135] (1,1);
        \draw[knot, overcross] (0,0) to[out=45, in=-45] (0,1);
        }$};
        \node at (6.5,2) {$\tikz[scale=0.8, y=1.3cm]{
        \draw[knot] (1,0) to[out=135, in=-90] (0.3, 0.5) to[out=90, in=-135] (1,1);
        \draw[knot, overcross] (0,0) to[out=45, in=-90] (0.7, 0.5) to[out=90, in=-45] (0,1);
        }$};
        \node at (10.5,2) {$\tikz[scale=0.8, y=1.3cm]{
        \draw[knot] (1,0) to[out=135, in=-135] (1,1);
        \draw[knot, overcross] (0,0) to[out=45, in=-45] (0,1);
        }$};
    % ANNOTATIONS
        %\node[below] at (10.5,0) {Move 6 A};
        \node at (6.5, -1) {Move 3};
        \draw[<->, very thick] (11.5, -0.25) -- (11.5,-1.75);
        \draw[<->, very thick] (1.5, -0.25) -- (1.5,-1.75);
    \begin{scope}[yshift=-6cm]
    % FILM
	\draw (.5,0) -- (.5,4);
	\draw (4.5,0) -- (4.5,4);
	\draw (8.5,0) -- (8.5,4);
        \draw (12.5,0) -- (12.5,4);
 	\draw[double,double distance=2pt,line cap=rect] (0,0) -- (13,0);
	\draw[dashed, line width=2pt] (0,0) -- (13,0);
	\draw[double,double distance=2pt,line cap=rect] (0,4) -- (13,4);
	\draw[dashed, line width=2pt] (0,4) -- (13,4);
    % FRAMES
        \node at (2.5,2) {$\tikz[scale=0.8, y=1.3cm]{
        \draw[knot] (1,0) to[out=135, in=-135] (1,1);
        \draw[knot, overcross] (0,0) to[out=45, in=-45] (0,1);
        }$};
        \node at (6.5,2) {$\tikz[scale=0.8, y=1.3cm]{
        \draw[knot] (1,0) to[out=135, in=-135] (1,1);
        \draw[knot, overcross] (0,0) to[out=45, in=-45] (0,1);
        }$};
        \node at (10.5,2) {$\tikz[scale=0.8, y=1.3cm]{
        \draw[knot] (1,0) to[out=135, in=-135] (1,1);
        \draw[knot, overcross] (0,0) to[out=45, in=-45] (0,1);
        }$};
    % ANNOTATIONS
        %\node[below] at (10.5,0) {Move 6 B};
    \end{scope}
 }
\]
\[
\tikz[baseline={([yshift=-.5ex]current bounding box.center)}, scale=.35]{
    % FILM
	\draw (.5,0) -- (.5,4);
	\draw (4.5,0) -- (4.5,4);
	\draw (8.5,0) -- (8.5,4);
        \draw (12.5,0) -- (12.5,4);
 	\draw[double,double distance=2pt,line cap=rect] (0,0) -- (13,0);
	\draw[dashed, line width=2pt] (0,0) -- (13,0);
	\draw[double,double distance=2pt,line cap=rect] (0,4) -- (13,4);
	\draw[dashed, line width=2pt] (0,4) -- (13,4);
    % FRAMES
        \node at (2.5,2) {$\tikz[scale=0.8, y=1.3cm]{
        \draw[knot] (1,0) to[out=135, in=-90] (0.3, 0.5) to[out=90, in=-135] (1,1);
        \draw[knot, overcross] (0,0) to[out=45, in=-90] (0.7, 0.5) to[out=90, in=-45] (0,1);
        }$};
        \node at (6.5,2) {$\tikz[scale=0.8, y=1.3cm]{
        \draw[knot] (1,0) to[out=135, in=-90] (0.3, 0.5) to[out=90, in=-135] (1,1);
        \draw[knot, overcross] (0,0) to[out=45, in=-90] (0.7, 0.5) to[out=90, in=-45] (0,1);
        }$};
        \node at (10.5,2) {$\tikz[scale=0.8, y=1.3cm]{
        \draw[knot] (1,0) to[out=135, in=-90] (0.3, 0.5) to[out=90, in=-135] (1,1);
        \draw[knot, overcross] (0,0) to[out=45, in=-90] (0.7, 0.5) to[out=90, in=-45] (0,1);
        }$};
    % ANNOTATIONS
        %\node[below] at (10.5,0) {Move 6 A};
        \node at (6.5, -1) {Move 4};
        \draw[<->, very thick] (11.5, -0.25) -- (11.5,-1.75);
        \draw[<->, very thick] (1.5, -0.25) -- (1.5,-1.75);
    \begin{scope}[yshift=-6cm]
    % FILM
	\draw (.5,0) -- (.5,4);
	\draw (4.5,0) -- (4.5,4);
	\draw (8.5,0) -- (8.5,4);
        \draw (12.5,0) -- (12.5,4);
 	\draw[double,double distance=2pt,line cap=rect] (0,0) -- (13,0);
	\draw[dashed, line width=2pt] (0,0) -- (13,0);
	\draw[double,double distance=2pt,line cap=rect] (0,4) -- (13,4);
	\draw[dashed, line width=2pt] (0,4) -- (13,4);
    % FRAMES
        \node at (2.5,2) {$\tikz[scale=0.8, y=1.3cm]{
        \draw[knot] (1,0) to[out=135, in=-90] (0.3, 0.5) to[out=90, in=-135] (1,1);
        \draw[knot, overcross] (0,0) to[out=45, in=-90] (0.7, 0.5) to[out=90, in=-45] (0,1);
        }$};
        \node at (6.5,2) {$\tikz[scale=0.8, y=1.3cm]{
        \draw[knot] (1,0) to[out=135, in=-135] (1,1);
        \draw[knot, overcross] (0,0) to[out=45, in=-45] (0,1);
        }$};
        \node at (10.5,2) {$\tikz[scale=0.8, y=1.3cm]{
        \draw[knot] (1,0) to[out=135, in=-90] (0.3, 0.5) to[out=90, in=-135] (1,1);
        \draw[knot, overcross] (0,0) to[out=45, in=-90] (0.7, 0.5) to[out=90, in=-45] (0,1);
        }$};
    % ANNOTATIONS
        %\node[below] at (10.5,0) {Move 6 B};
    \end{scope}
 }
\qquad
\tikz[baseline={([yshift=-.5ex]current bounding box.center)}, scale=.35]{
    % FILM
	\draw (.5,0) -- (.5,4);
	\draw (4.5,0) -- (4.5,4);
	\draw (8.5,0) -- (8.5,4);
        \draw (12.5,0) -- (12.5,4);
 	\draw[double,double distance=2pt,line cap=rect] (0,0) -- (13,0);
	\draw[dashed, line width=2pt] (0,0) -- (13,0);
	\draw[double,double distance=2pt,line cap=rect] (0,4) -- (13,4);
	\draw[dashed, line width=2pt] (0,4) -- (13,4);
    % FRAMES
        \node at (2.5,2) {$\tikz[scale=0.5, y=0.8cm]{
            \draw[knot] (0,0) to[out=90, in=-90] (1,1);
            \draw[knot, overcross] (1,0) to[out=90, in=-90] (0,1);
            \draw[knot] (2,0) -- (2,1);
            \draw[knot] (0,1) -- (0,2);
            \draw[knot] (1,1) to[out=90, in=-90] (2,2);
            \draw[knot, overcross] (2,1) to[out=90, in=-90] (1,2);
            \draw[knot] (0,2) to[out=90, in=-90] (1,3);
            \draw[knot, overcross] (1,2) to[out=90, in=-90] (0,3);
            \draw[knot] (2,2) -- (2,3);
        }$};
        \node at (6.5,2) {$\tikz[scale=0.5, y=0.8cm]{
            \draw[knot] (0,0) to[out=90, in=-90] (1,1);
            \draw[knot, overcross] (1,0) to[out=90, in=-90] (0,1);
            \draw[knot] (2,0) -- (2,1);
            \draw[knot] (0,1) -- (0,2);
            \draw[knot] (1,1) to[out=90, in=-90] (2,2);
            \draw[knot, overcross] (2,1) to[out=90, in=-90] (1,2);
            \draw[knot] (0,2) to[out=90, in=-90] (1,3);
            \draw[knot, overcross] (1,2) to[out=90, in=-90] (0,3);
            \draw[knot] (2,2) -- (2,3);
        }$};
        \node at (10.5,2) {$\tikz[scale=0.5, y=0.8cm]{
            \draw[knot] (0,0) to[out=90, in=-90] (1,1);
            \draw[knot, overcross] (1,0) to[out=90, in=-90] (0,1);
            \draw[knot] (2,0) -- (2,1);
            \draw[knot] (0,1) -- (0,2);
            \draw[knot] (1,1) to[out=90, in=-90] (2,2);
            \draw[knot, overcross] (2,1) to[out=90, in=-90] (1,2);
            \draw[knot] (0,2) to[out=90, in=-90] (1,3);
            \draw[knot, overcross] (1,2) to[out=90, in=-90] (0,3);
            \draw[knot] (2,2) -- (2,3);
        }$};
    % ANNOTATIONS
        %\node[below] at (10.5,0) {Move 6 A};
        \node at (6.5, -1) {Move 5};
        \draw[<->, very thick] (11.5, -0.25) -- (11.5,-1.75);
        \draw[<->, very thick] (1.5, -0.25) -- (1.5,-1.75);
    \begin{scope}[yshift=-6cm]
    % FILM
	\draw (.5,0) -- (.5,4);
	\draw (4.5,0) -- (4.5,4);
	\draw (8.5,0) -- (8.5,4);
        \draw (12.5,0) -- (12.5,4);
 	\draw[double,double distance=2pt,line cap=rect] (0,0) -- (13,0);
	\draw[dashed, line width=2pt] (0,0) -- (13,0);
	\draw[double,double distance=2pt,line cap=rect] (0,4) -- (13,4);
	\draw[dashed, line width=2pt] (0,4) -- (13,4);
    % FRAMES
        \node at (2.5,2) {$\tikz[scale=0.5, y=0.8cm]{
            \draw[knot] (0,0) to[out=90, in=-90] (1,1);
            \draw[knot, overcross] (1,0) to[out=90, in=-90] (0,1);
            \draw[knot] (2,0) -- (2,1);
            \draw[knot] (0,1) -- (0,2);
            \draw[knot] (1,1) to[out=90, in=-90] (2,2);
            \draw[knot, overcross] (2,1) to[out=90, in=-90] (1,2);
            \draw[knot] (0,2) to[out=90, in=-90] (1,3);
            \draw[knot, overcross] (1,2) to[out=90, in=-90] (0,3);
            \draw[knot] (2,2) -- (2,3);
        }$};
        \node at (6.5,2) {$\tikz[scale=0.5, y=0.8cm]{
            \draw[knot] (0,0) -- (0,1);
            \draw[knot] (1,0) to[out=90, in=-90] (2,1);
            \draw[knot, overcross] (2,0) to[out=90, in=-90] (1,1);
            \draw[knot] (2,1) -- (2,2);
            \draw[knot] (0,1) to[out=90, in=-90] (1,2);
            \draw[knot, overcross] (1,1) to[out=90, in=-90] (0,2);
            \draw[knot] (0,2) -- (0,3);
            \draw[knot] (1,2) to[out=90, in=-90] (2,3);
            \draw[knot, overcross] (2,2) to[out=90, in=-90] (1,3);
        }$};
        \node at (10.5,2) {$\tikz[scale=0.5, y=0.8cm]{
            \draw[knot] (0,0) to[out=90, in=-90] (1,1);
            \draw[knot, overcross] (1,0) to[out=90, in=-90] (0,1);
            \draw[knot] (2,0) -- (2,1);
            \draw[knot] (0,1) -- (0,2);
            \draw[knot] (1,1) to[out=90, in=-90] (2,2);
            \draw[knot, overcross] (2,1) to[out=90, in=-90] (1,2);
            \draw[knot] (0,2) to[out=90, in=-90] (1,3);
            \draw[knot, overcross] (1,2) to[out=90, in=-90] (0,3);
            \draw[knot] (2,2) -- (2,3);
        }$};
    % ANNOTATIONS
        %\node[below] at (10.5,0) {Move 6 B};
    \end{scope}
 }
\]

Actually, moves 3, 4, and 5 fit into a larger family with moves 6, 12, 13, 23a, and 25. Notice that each of the movies involved with these moves consist of an isotopy or a Reidemeister move, and that at least one of $t_0$ and $t_\infty$ is a braid. The first observation tells us (by Proposition \ref{prop:Rmoves} for $\mathsf{K}_m^n$ or $\check{\mathsf{K}}_m^n$ in the case of moves 6, 23a, and 25) that $\mathcal{F}(\Sigma_T)$ and $\mathcal{F}(\Sigma_B)$ are isomorphisms. The second observation tells us that $\mathcal{F}(\Sigma_T)$ and $\mathcal{F}(\Sigma_B)$ must be automorphisms of an invertible complex. By Corollary \ref{cor:invs} or Lemma \ref{lem:qcenter} for moves 6, 23a, and 25, we conclude that $\mathcal{F}(\Sigma_T) = u \mathcal{F}(\Sigma_B)$ for some unit $u$ of $R$. 

\[
\tikz[baseline={([yshift=-.5ex]current bounding box.center)}, scale=.35]{
    % FILM
	\draw (.5,0) -- (.5,4);
	\draw (4.5,0) -- (4.5,4);
	\draw (8.5,0) -- (8.5,4);
        \draw (12.5,0) -- (12.5,4);
        \draw (16.5,0) -- (16.5,4);
        \draw (20.5,0) -- (20.5,4);
 	\draw[double,double distance=2pt,line cap=rect] (0,0) -- (21,0);
	\draw[dashed, line width=2pt] (0,0) -- (21,0);
	\draw[double,double distance=2pt,line cap=rect] (0,4) -- (21,4);
	\draw[dashed, line width=2pt] (0,4) -- (21,4);
    % FRAMES
        \node at (2.5,2) {$\tikz[scale=0.3, y=0.8cm]{
            \draw[knot] (0,0) to[out=90, in=-90] (3,3) to[out=90, in=-90] (3,5);
            \draw[knot, overcross] (1,0) to[out=90, in=-90] (0,1) to[out=90, in=-90] (0,2) to[out=90, in=-90] (2,4) to[out=90, in=-90] (2,5);
            \draw[knot, overcross] (2,0) to[out=90, in=-90] (2,1) to[out=90, in=-90] (0,3) to[out=90, in=-90] (0,4) to[out=90, in=-90] (1,5);
            \draw[knot, overcross] (3,0) to[out=90, in=-90] (3,2) to[out=90, in=-90] (0,5);
        }$};
        \node at (6.5,2) {$\tikz[scale=0.3, y=0.65cm]{
            \draw[knot] (0,0) to[out=90, in=-90] (3,3) to[out=90, in=-90] (3,6);
            \draw[knot, overcross] (1,0) to[out=90, in=-90] (0,1) to[out=90, in=-90] (0,4) to[out=90, in=-90] (2,6);
            \draw[knot, overcross] (2,0) to[out=90, in=-90] (2,1) to[out=90, in=-90] (1,2) to[out=90, in=-90](1,3) to[out=90, in=-90] (2,4) to[out=90, in=-90] (2,5) to[out=90, in=-90] (1,6);
            \draw[knot, overcross] (3,0) to[out=90, in=-90] (3,2) to[out=90, in=-90] (0,5) to[out=90, in=-90] (0,6);
        }$};
        \node at (10.5,2) {$\tikz[scale=0.3, y=0.8cm]{
            \draw[knot] (0,0) to[out=90, in=-90] (3,3) to[out=90, in=-90] (3,5);
            \draw[knot, overcross] (1,0) to[out=90, in=-90] (0,1) to[out=90, in=-90] (0,3) to[out=90, in=-90] (2,5);
            \draw[knot, overcross] (2,0) to[out=90, in=-90] (3,1) to[out=90, in=-90] (3,2) to[out=90, in=-90](2,3) to[out=90, in=-90] (2,4) to[out=90, in=-90] (1,5);
            \draw[knot, overcross] (3,0) to[out=90, in=-90] (1,2) to[out=90, in=-90] (1,3) to[out=90, in=-90] (0,4) to[out=90, in=-90] (0,5);
        }$};
        \node at (14.5,2) {$\tikz[scale=0.3, y=0.65cm]{
            \draw[knot] (0,0) to[out=90, in=-90] (0,2) to[out=90, in=-90] (3,5) to[out=90, in=-90] (3,6);
            \draw[knot, overcross] (1,0) to[out=90, in=-90] (1,1) to[out=90, in=-90] (2,2) to[out=90, in=-90] (2,3) to[out=90, in=-90] (1,4) to[out=90, in=-90] (1,5) to[out=90, in=-90] (2,6);
            \draw[knot, overcross] (2,0) to[out=90, in=-90] (3,1) to[out=90, in=-90] (3,4) to[out=90, in=-90] (1,6);
            \draw[knot, overcross] (3,0) to[out=90, in=-90] (0,3) to[out=90, in=-90] (0,6);
        }$};
        \node at (18.5,2) {$\tikz[scale=0.3, y=0.8cm]{
            \draw[knot] (0,0) to[out=90, in=-90] (0,2) to[out=90, in=-90] (3,5);
            \draw[knot, overcross] (1,0) to[out=90, in=-90] (1,1) to[out=90, in=-90] (3,3) to[out=90, in=-90] (3,4) to[out=90, in=-90] (2,5);
            \draw[knot, overcross] (2,0) to[out=90, in=-90] (3,1) to[out=90, in=-90] (3,2) to[out=90, in=-90] (1,4) to[out=90, in=-90] (1,5);
            \draw[knot, overcross] (3,0) to[out=90, in=-90] (0,3) to[out=90, in=-90] (0,5);
        }$};
    % ANNOTATIONS
        %\node[below] at (10.5,0) {Move 6 A};
        \node at (10.5, -1) {Move 6};
        \draw[<->, very thick] (15.5, -0.25) -- (15.5,-1.75);
        \draw[<->, very thick] (5.5, -0.25) -- (5.5,-1.75);
    \begin{scope}[yshift=-6cm]
    % FILM
	\draw (.5,0) -- (.5,4);
	\draw (4.5,0) -- (4.5,4);
	\draw (8.5,0) -- (8.5,4);
        \draw (12.5,0) -- (12.5,4);
        \draw (16.5,0) -- (16.5,4);
        \draw (20.5,0) -- (20.5,4);
 	\draw[double,double distance=2pt,line cap=rect] (0,0) -- (21,0);
	\draw[dashed, line width=2pt] (0,0) -- (21,0);
	\draw[double,double distance=2pt,line cap=rect] (0,4) -- (21,4);
	\draw[dashed, line width=2pt] (0,4) -- (21,4);
    % FRAMES
        \node at (2.5,2) {$\tikz[scale=0.3, y=0.8cm]{
            \draw[knot] (0,0) to[out=90, in=-90] (3,3) to[out=90, in=-90] (3,5);
            \draw[knot, overcross] (1,0) to[out=90, in=-90] (0,1) to[out=90, in=-90] (0,2) to[out=90, in=-90] (2,4) to[out=90, in=-90] (2,5);
            \draw[knot, overcross] (2,0) to[out=90, in=-90] (2,1) to[out=90, in=-90] (0,3) to[out=90, in=-90] (0,4) to[out=90, in=-90] (1,5);
            \draw[knot, overcross] (3,0) to[out=90, in=-90] (3,2) to[out=90, in=-90] (0,5);
        }$};
        \node at (6.5,2) {$\tikz[scale=0.3, y=0.65cm]{
            \draw[knot] (0,0) to[out=90, in=-90] (0,1) to[out=90, in=-90] (3,4) to[out=90, in=-90] (3,6);
            \draw[knot, overcross] (1,0) to[out=90, in=-90] (2,1) to[out=90, in=-90] (2,2) to[out=90, in=-90] (1,3) to[out=90, in=-90] (1,4) to[out=90, in=-90] (2,5) to[out=90, in=-90] (2,6);
            \draw[knot, overcross] (2,0) to[out=90, in=-90] (0,2) to[out=90, in=-90] (0,5) to[out=90, in=-90] (1,6);
            \draw[knot, overcross] (3,0) to[out=90, in=-90] (3,3) to[out=90, in=-90] (0,6);
        }$};
        \node at (10.5,2) {$\tikz[scale=0.3, y=0.8cm]{
            \draw[knot] (0,0) to[out=90, in=-90] (0,1) to[out=90, in=-90] (1,2) to[out=90, in=-90] (1,3) to[out=90, in=-90] (3,5);
            \draw[knot, overcross] (1,0) to[out=90, in=-90] (2,1) to[out=90, in=-90] (2,2) to[out=90, in=-90] (3,3) to[out=90, in=-90] (3,4) to[out=90, in=-90] (2,5);
            \draw[knot, overcross] (2,0) to[out=90, in=-90] (0,2) to[out=90, in=-90] (0,4) to[out=90, in=-90] (1,5);
            \draw[knot, overcross] (3,0) to[out=90, in=-90] (3,2) to[out=90, in=-90] (0,5);
        }$};
        \node at (14.5,2) {$\tikz[scale=0.3, y=0.65cm]{
            \draw[knot] (0,0) to[out=90, in=-90] (0,3) to[out=90, in=-90] (3,6);
            \draw[knot, overcross] (1,0) to[out=90, in=-90] (3,2) to[out=90, in=-90] (3,5) to[out=90, in=-90] (2,6);
            \draw[knot, overcross] (2,0) to[out=90, in=-90] (1,1) to[out=90, in=-90] (1,2) to[out=90, in=-90] (2,3) to[out=90, in=-90] (2,4) to[out=90, in=-90] (1,5) to[out=90, in=-90] (1,6);
            \draw[knot, overcross] (3,0) to[out=90, in=-90] (3,1) to[out=90, in=-90] (0,4) to[out=90, in=-90] (0,6);
        }$};
        \node at (18.5,2) {$\tikz[scale=0.3, y=0.8cm]{
            \draw[knot] (0,0) to[out=90, in=-90] (0,2) to[out=90, in=-90] (3,5);
            \draw[knot, overcross] (1,0) to[out=90, in=-90] (1,1) to[out=90, in=-90] (3,3) to[out=90, in=-90] (3,4) to[out=90, in=-90] (2,5);
            \draw[knot, overcross] (2,0) to[out=90, in=-90] (3,1) to[out=90, in=-90] (3,2) to[out=90, in=-90] (1,4) to[out=90, in=-90] (1,5);
            \draw[knot, overcross] (3,0) to[out=90, in=-90] (0,3) to[out=90, in=-90] (0,5);
        }$};
    % ANNOTATIONS
        %\node[below] at (10.5,0) {Move 6 B};
    \end{scope}
 }
\qquad
\tikz[baseline={([yshift=-.5ex]current bounding box.center)}, scale=.35]{
    % FILM
	\draw (.5,0) -- (.5,4);
	\draw (4.5,0) -- (4.5,4);
	\draw (8.5,0) -- (8.5,4);
        \draw (12.5,0) -- (12.5,4);
        \draw (16.5,0) -- (16.5,4);
 	\draw[double,double distance=2pt,line cap=rect] (0,0) -- (17,0);
	\draw[dashed, line width=2pt] (0,0) -- (17,0);
	\draw[double,double distance=2pt,line cap=rect] (0,4) -- (17,4);
	\draw[dashed, line width=2pt] (0,4) -- (17,4);
    % FRAMES
        \node at (2.5,2) {$\tikz[scale=0.8, y=0.9cm]{
        \draw[knot] (0,0) to[out=90, in=-90] (0, 0.3) to[out=90, in=180] (0.6, 1.1);
        \draw[knot] (0.6, 1.1) to[out=0, in=90] (0.85, 0.75) to[out=-90, in=0] (0.6, 0.4);
        \draw[knot, overcross] (0.6, 0.4) to[out=180, in=-90] (0,1.2) -- (0,1.5);
        }$};
        \node at (6.5,2) {$\tikz[scale=0.8, y=0.9cm]{
        \draw[knot] (0.6, 1.1) to[out=180, in=90] (0.6, 0);
        \draw[knot, overcross] (0, 1.5) -- (0,0.3) to [out=-90, in=-90] (0.85, 0.75) to[out=90, in=0] (0.6, 1.1);
        }$};
        \node at (10.5,2) {$\tikz[scale=0.8, y=0.9cm]{
        \draw[knot] (0, 1.5) -- (0, 0.3) to[out=-90, in=-90] (0.5, 0.3) -- (0.5, 1.1) to[out=90, in=90] (1, 1.1) -- (1, 0);
        }$};
        \node at (14.5,2) {$\tikz[scale=0.8, y=0.9cm]{
        \draw[knot] (0,1.5) to[out=-90, in=90] (1,0);
        }$};
    % ANNOTATIONS
        %\node[below] at (10.5,0) {Move 6 A};
        \node at (8.5, -1) {Move 12};
        \draw[<->, very thick] (13.5, -0.25) -- (13.5,-1.75);
        \draw[<->, very thick] (3.5, -0.25) -- (3.5,-1.75);
    \begin{scope}[yshift=-6cm]
    % FILM
	\draw (.5,0) -- (.5,4);
	\draw (4.5,0) -- (4.5,4);
	\draw (8.5,0) -- (8.5,4);
        \draw (12.5,0) -- (12.5,4);
        \draw (16.5,0) -- (16.5,4);
 	\draw[double,double distance=2pt,line cap=rect] (0,0) -- (17,0);
	\draw[dashed, line width=2pt] (0,0) -- (17,0);
	\draw[double,double distance=2pt,line cap=rect] (0,4) -- (17,4);
	\draw[dashed, line width=2pt] (0,4) -- (17,4);
    % FRAMES
        \node at (2.5,2) {$\tikz[scale=0.8, y=0.9cm]{
        \draw[knot] (0,0) to[out=90, in=-90] (0, 0.3) to[out=90, in=180] (0.6, 1.1);
        \draw[knot] (0.6, 1.1) to[out=0, in=90] (0.85, 0.75) to[out=-90, in=0] (0.6, 0.4);
        \draw[knot, overcross] (0.6, 0.4) to[out=180, in=-90] (0,1.2) -- (0,1.5);
        }$};
        \node at (6.5,2) {$\tikz[scale=0.8, y=0.9cm]{
        \draw[knot] (0,0) -- (0, 1.2) to[out=90, in=90] (0.85, 0.75) to[out=-90,in=0] (0.6, 0.4);
        \draw[knot, overcross] (0.6, 0.4) to[out=180, in=-90] (0.6, 1.5);
        }$};
        \node at (10.5,2) {$\tikz[scale=0.8, y=0.9cm]{
        \draw[knot] (0,0) -- (0, 1.2) to[out=90, in=90] (0.5, 1.2) -- (0.5, 0.4) to[out=-90,in=-90] (1, 0.4) -- (1, 1.5);
        }$};
        \node at (14.5,2) {$\tikz[scale=0.8, y=0.9cm]{
        \draw[knot] (0,1.5) to[out=-90, in=90] (1,0);
        }$};
    % ANNOTATIONS
        %\node[below] at (10.5,0) {Move 6 B};
    \end{scope}
 }
\]
\[
\tikz[baseline={([yshift=-.5ex]current bounding box.center)}, scale=.35]{
    % FILM
	\draw (.5,0) -- (.5,4);
	\draw (4.5,0) -- (4.5,4);
	\draw (8.5,0) -- (8.5,4);
        \draw (12.5,0) -- (12.5,4);
 	\draw[double,double distance=2pt,line cap=rect] (0,0) -- (13,0);
	\draw[dashed, line width=2pt] (0,0) -- (13,0);
	\draw[double,double distance=2pt,line cap=rect] (0,4) -- (13,4);
	\draw[dashed, line width=2pt] (0,4) -- (13,4);
    % FRAMES
        \node at (2.5,2) {$\tikz[scale=0.8, y=0.7cm, x=0.7cm]{
        \draw[knot] (1,0) -- (1,2);
        \draw[knot, overcross] (2,0) to[out=90, in=0] (1.5, 1.25) to[out=180, in=0] (0.5, 0.75) to[out=180, in=-90] (0,2);
        }$};
        \node at (6.5, 2) {$\tikz[scale=0.8, y=0.75cm, x=0.8cm]{
        \draw[knot] (1,0) to[out=135, in=-90] (0.5, 0.5) to[out=90, in=-90] (1,1.5) -- (1,2);
        \draw[knot, overcross, rounded corners=3mm] (2,0) -- (1.5, 1.25) -- (1,0.5) -- (0,2);
        }$};
        \node at (10.5,2) {$\tikz[scale=0.8, y=0.7cm, x=0.7cm]{
        \draw[knot] (1,0) -- (1,2);
        \draw[knot, overcross] (2,0) to[out=90, in=-90] (0,2);
        }$};
    % ANNOTATIONS
        %\node[below] at (10.5,0) {Move 6 A};
        \node at (6.5, -1) {Move 13};
        \draw[<->, very thick] (11.5, -0.25) -- (11.5,-1.75);
        \draw[<->, very thick] (1.5, -0.25) -- (1.5,-1.75);
    \begin{scope}[yshift=-6cm]
    % FILM
	\draw (.5,0) -- (.5,4);
	\draw (4.5,0) -- (4.5,4);
	\draw (8.5,0) -- (8.5,4);
        \draw (12.5,0) -- (12.5,4);
 	\draw[double,double distance=2pt,line cap=rect] (0,0) -- (13,0);
	\draw[dashed, line width=2pt] (0,0) -- (13,0);
	\draw[double,double distance=2pt,line cap=rect] (0,4) -- (13,4);
	\draw[dashed, line width=2pt] (0,4) -- (13,4);
    % FRAMES
        \node at (2.5,2) {$\tikz[scale=0.8, y=0.7cm, x=0.7cm]{
        \draw[knot] (1,0) -- (1,2);
        \draw[knot, overcross] (2,0) to[out=90, in=0] (1.5, 1.25) to[out=180, in=0] (0.5, 0.75) to[out=180, in=-90] (0,2);
        }$};
        \node at (6.5,2) {$\tikz[scale=0.8, y=0.75cm, x=0.8cm]{
        \draw[knot] (1,0) to[out=90, in=-90] (1,0.5) to[out=90, in=-90] (1.5, 1.5) to[out=90, in=-45] (1,2);
        \draw[knot, overcross, rounded corners=3mm] (2,0) -- (1,1.5) -- (0.5, 0.75) -- (0,2);
        }$};
        \node at (10.5,2) {$\tikz[scale=0.8, y=0.7cm, x=0.7cm]{
        \draw[knot] (1,0) -- (1,2);
        \draw[knot, overcross] (2,0) to[out=90, in=-90] (0,2);
        }$};
    % ANNOTATIONS
        %\node[below] at (10.5,0) {Move 6 B};
    \end{scope}
 }
\qquad
\tikz[baseline={([yshift=-.5ex]current bounding box.center)}, scale=.35]{
    % FILM
	\draw (.5,0) -- (.5,4);
	\draw (4.5,0) -- (4.5,4);
	\draw (8.5,0) -- (8.5,4);
        \draw (12.5,0) -- (12.5,4);
 	\draw[double,double distance=2pt,line cap=rect] (0,0) -- (13,0);
	\draw[dashed, line width=2pt] (0,0) -- (13,0);
	\draw[double,double distance=2pt,line cap=rect] (0,4) -- (13,4);
	\draw[dashed, line width=2pt] (0,4) -- (13,4);
    % FRAMES
        \node at (2.5,2) {$\tikz[scale=0.8, y=1.4cm]{
        \draw[knot] (0,0) -- (1,1);
        \draw[knot, overcross] (1,0) -- (0,1);
        }$};
        \node at (6.5, 2) {$\tikz[scale=0.8, y=1.4cm]{
        \draw[knot] (0,0) -- (1,1);
        \draw[knot, overcross] (1,0) -- (0,1);
        }$};
        \node at (10.5,2) {$\tikz[scale=0.8, y=1.4cm]{
        \draw[knot] (0,0) -- (1,1);
        \draw[knot, overcross] (1,0) -- (0,1);
        }$};
    % ANNOTATIONS
        %\node[below] at (10.5,0) {Move 6 A};
        \node at (6.5, -1) {Move 23a};
        \draw[<->, very thick] (11.5, -0.25) -- (11.5,-1.75);
        \draw[<->, very thick] (1.5, -0.25) -- (1.5,-1.75);
    \begin{scope}[yshift=-6cm]
    % FILM
	\draw (.5,0) -- (.5,4);
	\draw (4.5,0) -- (4.5,4);
	\draw (8.5,0) -- (8.5,4);
        \draw (12.5,0) -- (12.5,4);
 	\draw[double,double distance=2pt,line cap=rect] (0,0) -- (13,0);
	\draw[dashed, line width=2pt] (0,0) -- (13,0);
	\draw[double,double distance=2pt,line cap=rect] (0,4) -- (13,4);
	\draw[dashed, line width=2pt] (0,4) -- (13,4);
    % FRAMES
        \node at (2.5,2) {$\tikz[scale=0.8, y=1.4cm]{
        \draw[knot] (0,0) -- (1,1);
        \draw[knot, overcross] (1,0) -- (0,1);
        }$};
        \node at (6.5,2) {$\tikz[scale=0.8, x=1.1cm, y=0.45cm]{
        \draw[knot, rounded corners] (0,0) -- (1,1) -- (0,2) -- (1,3);
        \draw[knot, rounded corners, overcross] (1,0) -- (0,1) -- (1,2) -- (0,3);
        }$};
        \node at (10.5,2) {$\tikz[scale=0.8, y=1.4cm]{
        \draw[knot] (0,0) -- (1,1);
        \draw[knot, overcross] (1,0) -- (0,1);
        }$};
    % ANNOTATIONS
        %\node[below] at (10.5,0) {Move 6 B};
    \end{scope}
 }
\qquad
\tikz[baseline={([yshift=-.5ex]current bounding box.center)}, scale=.35]{
    % FILM
	\draw (.5,0) -- (.5,4);
	\draw (4.5,0) -- (4.5,4);
	\draw (8.5,0) -- (8.5,4);
        \draw (12.5,0) -- (12.5,4);
 	\draw[double,double distance=2pt,line cap=rect] (0,0) -- (13,0);
	\draw[dashed, line width=2pt] (0,0) -- (13,0);
	\draw[double,double distance=2pt,line cap=rect] (0,4) -- (13,4);
	\draw[dashed, line width=2pt] (0,4) -- (13,4);
    % FRAMES
        \node at (2.5,2) {$\tikz[scale=0.8, x=0.8cm, y=0.75cm]{
        \draw[knot] (0,0) to[out=90, in=-90] (2,1.5) -- (2,2);
        \draw[knot, overcross] (1,0) to[out=90, in=-90] (0,2);
        \draw[knot, overcross] (2,0) to[out=90, in=-90] (1,2);
        }$};
        \node at (6.5, 2) {$\tikz[scale=0.8, x=0.8cm, y=0.75cm]{
        \draw[knot] (0,0) to[out=90, in=-90] (2,1.5) -- (2,2);
        \draw[knot, overcross] (2,0) to[out=90, in=-90] (1,1) to[out=90, in=-90] (0.5,1.5) to[out=90, in=-90] (1,2);
        \draw[knot, overcross] (1,0) to[out=90, in=-90] (0.5, 1) to[out=90, in=-90] (1, 1.5) to[out=90, in=-90] (0,2);
        }$};
        \node at (10.5,2) {$\tikz[scale=0.8, x=0.8cm, y=0.75cm]{
        \draw[knot] (0,0) to[out=90, in=-90] (2,2);
        \draw[knot, overcross] (2,0) to[out=150, in=-90] (0.67, 1) to[out=90, in=-110] (1,2);
        \draw[knot, overcross] (1,0) to[out=70, in=-90] (1.5, 1) to[out=90, in=-30] (0,2);
        }$};
    % ANNOTATIONS
        %\node[below] at (10.5,0) {Move 6 A};
        \node at (6.5, -1) {Move 25};
        \draw[<->, very thick] (11.5, -0.25) -- (11.5,-1.75);
        \draw[<->, very thick] (1.5, -0.25) -- (1.5,-1.75);
    \begin{scope}[yshift=-6cm]
    % FILM
	\draw (.5,0) -- (.5,4);
	\draw (4.5,0) -- (4.5,4);
	\draw (8.5,0) -- (8.5,4);
        \draw (12.5,0) -- (12.5,4);
 	\draw[double,double distance=2pt,line cap=rect] (0,0) -- (13,0);
	\draw[dashed, line width=2pt] (0,0) -- (13,0);
	\draw[double,double distance=2pt,line cap=rect] (0,4) -- (13,4);
	\draw[dashed, line width=2pt] (0,4) -- (13,4);
    % FRAMES
        \node at (2.5,2) {$\tikz[scale=0.8, x=0.8cm, y=0.75cm]{
        \draw[knot] (0,0) to[out=90, in=-90] (2,1.5) -- (2,2);
        \draw[knot, overcross] (1,0) to[out=90, in=-90] (0,2);
        \draw[knot, overcross] (2,0) to[out=90, in=-90] (1,2);
        }$};
        \node at (6.5,2) {$\tikz[scale=0.8, x=0.8cm, y=0.75cm]{
        \draw[knot] (0,0) -- (0,0.5) to[out=90, in=-90] (2,2);
        \draw[knot, overcross] (2,0) to[out=90, in=-90] (1,0.5) to[out=90, in=-90] (1.5, 1) to[out=90, in=-90] (1,2);
        \draw[knot, overcross] (1,0) to[out=90, in=-90] (1.5, 0.5) to[out=90, in=-90] (1,1) to[out=90, in=-90] (0,2);
        %\draw[knot, overcross] (2,0) to[out=90, in=-90] (1,1) to[out=90, in=-90] (0.5,1.5) to[out=90, in=-90] (1,2);
        %\draw[knot, overcross] (1,0) to[out=90, in=-90] (0.5, 1) to[out=90, in=-90] (1, 1.5) to[out=90, in=-90] (0,2);
        }$};
        \node at (10.5,2) {$\tikz[scale=0.8, x=0.8cm, y=0.75cm]{
        \draw[knot] (0,0) to[out=90, in=-90] (2,2);
        \draw[knot, overcross] (2,0) to[out=150, in=-90] (0.67, 1) to[out=90, in=-110] (1,2);
        \draw[knot, overcross] (1,0) to[out=70, in=-90] (1.5, 1) to[out=90, in=-30] (0,2);
        }$};
    % ANNOTATIONS
        %\node[below] at (10.5,0) {Move 6 B};
    \end{scope}
 }
\]

Moves 7, 11, 14, and 26 also relate movies which consist only of Reidemeister moves, and thus $\mathcal{F}(\Sigma_T)$ and $\mathcal{F}(\Sigma_B)$ are isomorphisms $\mathcal{F}(t_0) \to \mathcal{F}(t_\infty)$, but they do not start or end at braids. Notice, however, that $t_\infty$ for each of these moves is single cap composed with a single crossing ($t_0$ of move 26 is a single cap). Thus, by Corollary \ref{cor:cups_caps}, we have that $\mathcal{F}(\Sigma_T) = u \mathcal{F}(\Sigma_B)$ for some unit $u$ of $R$. More precisely, for moves 7, 14, and 26, since all adjunctions (and, in particular, those of Proposition \ref{prop:Kadjun}) hold after flattening $\mathcal{G}$-degree to $q$-degree, the arguments preceding Corollary \ref{cor:cups_caps} hold, appealing to Lemma \ref{lem:qcenter}. Thus Corollary \ref{cor:cups_caps} holds in the $\check{\mathcal{K}}_m^n$ setting.
\[
\tikz[baseline={([yshift=-.5ex]current bounding box.center)}, scale=.35]{
    % FILM
	\draw (.5,0) -- (.5,4);
	\draw (4.5,0) -- (4.5,4);
	\draw (8.5,0) -- (8.5,4);
        \draw (12.5,0) -- (12.5,4);
        \draw (16.5,0) -- (16.5,4);
        \draw (20.5,0) -- (20.5,4);
        \draw (24.5,0) -- (24.5,4);
 	\draw[double,double distance=2pt,line cap=rect] (0,0) -- (25,0);
	\draw[dashed, line width=2pt] (0,0) -- (25,0);
	\draw[double,double distance=2pt,line cap=rect] (0,4) -- (25,4);
	\draw[dashed, line width=2pt] (0,4) -- (25,4);
    % FRAMES
        \node at (2.5,2) {$\tikz[scale=0.8, x=0.3cm, y=0.425cm]{
        \draw[knot] (3,2) to[out=-90, in=90] (5,0);
        \draw[knot, overcross] (0,0) to[out=90, in=-90] (4,2) to[out=90, in=0] (3.5, 3) to[out=180, in=90] (3,2);
        \draw[knot, overcross] (2.5, 0) -- (2.5, 3.5);
        }$};
        \node at (6.5,2) {$\tikz[scale=0.8, x=0.3cm, y=0.425cm]{
        \draw[knot] (2.1,2) to[out=-90, in=90] (5,0);
        \draw[knot, overcross] (0,0) to[out=90, in=-90] (4.1,2) to[out=90, in=0] (3, 2.85) to[out=180, in=90] (2.1,2);
        \draw[knot, overcross] (2.5, 0) -- (2.5, 3.5);
        }$};
        \node at (10.5,2) {$\tikz[scale=0.8, x=0.3cm, y=0.425cm]{
        \draw[knot] (1,2) to[out=-90, in=90] (5,0);
        \draw[knot, overcross] (0,0) to[out=90, in=-90] (4.5,1.5) to[out=90, in=0] (1.75, 3.2) to[out=180, in=90] (1,2);
        \draw[knot, overcross] (2.5, 0) -- (2.5, 3.5);
        }$};
        \node at (14.5,2) {$\tikz[scale=0.8, x=0.3cm, y=0.425cm]{
        \draw[knot] (2.9,2) to[out=-90, in=90] (0,0);
        \draw[knot, overcross] (5,0) to[out=90, in=-90] (0.9,2) to[out=90, in=180] (2, 2.85) to[out=0, in=90] (2.9,2);
        \draw[knot, overcross] (2.5, 0) -- (2.5, 3.5);
        }$};
        \node at (18.5,2) {$\tikz[scale=0.8, x=0.3cm, y=0.425cm]{
        \draw[knot] (1,2) to[out=-90, in=90] (5,0);
        \draw[knot, overcross] (0,0) to[out=90, in=-90] (2,2) to[out=90, in=0] (1.5, 3) to[out=180, in=90] (1,2);
        \draw[knot, overcross] (2.5, 0) -- (2.5, 3.5);
        }$};
        \node at (22.5,2) {$\tikz[scale=0.8, x=0.3cm, y=0.425cm]{
        \draw[knot] (0,0) to[out=90, in=135] (1.5, 3) to[out=-45, in=90] (5,0);
        \draw[knot, overcross] (2.5, 0) -- (2.5, 3.5);
        }$};
    % ANNOTATIONS
        %\node[below] at (10.5,0) {Move 6 A};
        \node at (12.5, -1) {Move 7};
        \draw[<->, very thick] (17.5, -0.25) -- (17.5,-1.75);
        \draw[<->, very thick] (7.5, -0.25) -- (7.5,-1.75);
    \begin{scope}[yshift=-6cm]
    % FILM
	\draw (.5,0) -- (.5,4);
	\draw (4.5,0) -- (4.5,4);
	\draw (8.5,0) -- (8.5,4);
        \draw (12.5,0) -- (12.5,4);
        \draw (16.5,0) -- (16.5,4);
        \draw (20.5,0) -- (20.5,4);
        \draw (24.5,0) -- (24.5,4);
 	\draw[double,double distance=2pt,line cap=rect] (0,0) -- (25,0);
	\draw[dashed, line width=2pt] (0,0) -- (25,0);
	\draw[double,double distance=2pt,line cap=rect] (0,4) -- (25,4);
	\draw[dashed, line width=2pt] (0,4) -- (25,4);
    % FRAMES
        \node at (2.5,2) {$\tikz[scale=0.8, x=0.3cm, y=0.425cm]{
        \draw[knot] (3,2) to[out=-90, in=90] (5,0);
        \draw[knot, overcross] (0,0) to[out=90, in=-90] (4,2) to[out=90, in=0] (3.5, 3) to[out=180, in=90] (3,2);
        \draw[knot, overcross] (2.5, 0) -- (2.5, 3.5);
        }$};
        \node at (6.5,2) {$\tikz[scale=0.8, x=0.3cm, y=0.425cm]{
        \draw[knot] (0,0) to[out=90, in=225] (3.5, 3) to[out=45, in=90] (5,0);
        \draw[knot, overcross] (2.5, 0) -- (2.5, 3.5);
        }$};
        \node at (10.5,2) {$\tikz[scale=0.8, x=0.3cm, y=0.425cm]{
        \draw[knot] (0,0) to[out=90, in=135] (1.5, 3) to[out=-45, in=90] (5,0);
        \draw[knot, overcross] (2.5, 0) -- (2.5, 3.5);
        }$};
        \node at (14.5,2) {$\tikz[scale=0.8, x=0.3cm, y=0.425cm]{
        \draw[knot] (0,0) to[out=90, in=135] (1.5, 3) to[out=-45, in=90] (5,0);
        \draw[knot, overcross] (2.5, 0) -- (2.5, 3.5);
        }$};
        \node at (18.5,2) {$\tikz[scale=0.8, x=0.3cm, y=0.425cm]{
        \draw[knot] (0,0) to[out=90, in=135] (1.5, 3) to[out=-45, in=90] (5,0);
        \draw[knot, overcross] (2.5, 0) -- (2.5, 3.5);
        }$};
        \node at (22.5,2) {$\tikz[scale=0.8, x=0.3cm, y=0.425cm]{
        \draw[knot] (0,0) to[out=90, in=135] (1.5, 3) to[out=-45, in=90] (5,0);
        \draw[knot, overcross] (2.5, 0) -- (2.5, 3.5);
        }$};
    % ANNOTATIONS
        %\node[below] at (10.5,0) {Move 6 B};
    \end{scope}
 }
\qquad 
\tikz[baseline={([yshift=-.5ex]current bounding box.center)}, scale=.35]{
    % FILM
	\draw (.5,0) -- (.5,4);
	\draw (4.5,0) -- (4.5,4);
	\draw (8.5,0) -- (8.5,4);
        \draw (12.5,0) -- (12.5,4);
 	\draw[double,double distance=2pt,line cap=rect] (0,0) -- (13,0);
	\draw[dashed, line width=2pt] (0,0) -- (13,0);
	\draw[double,double distance=2pt,line cap=rect] (0,4) -- (13,4);
	\draw[dashed, line width=2pt] (0,4) -- (13,4);
    % FRAMES
        \node at (2.5,2) {$\tikz[scale=0.6]{
            \draw[knot] (1,0) to[out=90, in=-90] (0,1) to[out=90, in=-90] (1,2);
            \draw[knot, overcross] (0,0) to[out=90,in=180] (1,1) to[out=0, in=90] (2,0);
        }$};
        \node at (6.5, 2) {$\tikz[scale=0.6]{
            \draw[knot] (1,0) to[out=90, in=-90] (0,1) to[out=90, in=-90] (1,2);
            \draw[knot, overcross] (0,0) to[out=90,in=180] (1,1) to[out=0, in=90] (2,0);
        }$};
        \node at (10.5,2) {$\tikz[scale=0.6]{
            \draw[knot] (1,0) to[out=90, in=-90] (0,1) to[out=90, in=-90] (1,2);
            \draw[knot, overcross] (0,0) to[out=90,in=180] (1,1) to[out=0, in=90] (2,0);
        }$};
    % ANNOTATIONS
        %\node[below] at (10.5,0) {Move 6 A};
        \node at (6.5, -1) {Move 11};
        \draw[<->, very thick] (11.5, -0.25) -- (11.5,-1.75);
        \draw[<->, very thick] (1.5, -0.25) -- (1.5,-1.75);
    \begin{scope}[yshift=-6cm]
    % FILM
	\draw (.5,0) -- (.5,4);
	\draw (4.5,0) -- (4.5,4);
	\draw (8.5,0) -- (8.5,4);
        \draw (12.5,0) -- (12.5,4);
 	\draw[double,double distance=2pt,line cap=rect] (0,0) -- (13,0);
	\draw[dashed, line width=2pt] (0,0) -- (13,0);
	\draw[double,double distance=2pt,line cap=rect] (0,4) -- (13,4);
	\draw[dashed, line width=2pt] (0,4) -- (13,4);
    % FRAMES
        \node at (2.5,2) {$\tikz[scale=0.6]{
            \draw[knot] (1,0) to[out=90, in=-90] (0,1) to[out=90, in=-90] (1,2);
            \draw[knot, overcross] (0,0) to[out=90,in=180] (1,1) to[out=0, in=90] (2,0);
        }$};
        \node at (6.5,2) {$\tikz[scale=0.6]{
            \draw[knot] (1,0) to[out=90, in=-90] (2,1) to[out=90, in=-90] (1,2);
            \draw[knot, overcross] (0,0) to[out=90,in=180] (1,1) to[out=0, in=90] (2,0);
        }$};
        \node at (10.5,2) {$\tikz[scale=0.6]{
            \draw[knot] (1,0) to[out=90, in=-90] (0,1) to[out=90, in=-90] (1,2);
            \draw[knot, overcross] (0,0) to[out=90,in=180] (1,1) to[out=0, in=90] (2,0);
        }$};
    % ANNOTATIONS
        %\node[below] at (10.5,0) {Move 6 B};
    \end{scope}
 }
\]
\[
\tikz[baseline={([yshift=-.5ex]current bounding box.center)}, scale=.35]{
    % FILM
	\draw (.5,0) -- (.5,4);
	\draw (4.5,0) -- (4.5,4);
	\draw (8.5,0) -- (8.5,4);
        \draw (12.5,0) -- (12.5,4);
        \draw (16.5,0) -- (16.5,4);
        \draw (20.5,0) -- (20.5,4);
        \draw (24.5,0) -- (24.5,4);
 	\draw[double,double distance=2pt,line cap=rect] (0,0) -- (25,0);
	\draw[dashed, line width=2pt] (0,0) -- (25,0);
	\draw[double,double distance=2pt,line cap=rect] (0,4) -- (25,4);
	\draw[dashed, line width=2pt] (0,4) -- (25,4);
    % FRAMES
        \node at (2.5,2) {$\tikz[scale=0.8, x=0.3cm, y=0.3cm]{
        \draw[knot] (0,0) -- (0,2) to[out=90, in=210] (2.5,4.5) to[out=30, in=90] (5,3) --(5,0);
        \draw[knot, overcross] (1,0) to[out=90, in=-90] (4,3) -- (4,5);
        \draw[knot, overcross] (4,0) to[out=90, in=-90] (1,3) -- (1,5);
        }$};
        \node at (6.5,2) {$\tikz[scale=0.8, x=0.3cm, y=0.3cm]{
        \draw[knot] (0,0) to[out=90, in=225] (4.5,5) to[out=45, in=90] (5,3) --(5,0);
        \draw[knot, overcross] (1,0) to[out=90, in=-90] (4,3) -- (4,5);
        \draw[knot, overcross] (4,0) to[out=90, in=-90] (1,3) -- (1,5);
        }$};
        \node at (10.5,2) {$\tikz[scale=0.8, x=0.3cm, y=0.3cm]{
        \draw[knot] (0,0) to[out=90, in=180] (2.5, 1) to[out=0, in=225] (4.5,4) to[out=45, in=90] (5,3) --(5,0);
        \draw[knot, overcross] (1,0) -- (1,2) to[out=90, in=-90] (4,5);
        \draw[knot, overcross] (4,0) -- (4,2) to[out=90, in=-90] (1,5);
        }$};
        \node at (14.5,2) {$\tikz[scale=0.8, x=0.3cm, y=0.3cm]{
        \draw[knot] (0,0) to[out=90, in=180] (2.5, 1) to[out=0, in=225] (4.5,2.5) to[out=45, in=90] (5,1.75) --(5,0);
        \draw[knot, overcross] (1,0) -- (1,2) to[out=90, in=-90] (4,5);
        \draw[knot, overcross] (4,0) -- (4,2) to[out=90, in=-90] (1,5);
        }$};
        \node at (18.5,2) {$\tikz[scale=0.8, x=0.3cm, y=0.3cm]{
        \draw[knot] (0,0) to[out=90, in=210] (2.5,1.75) to[out=30, in=90] (5,0);
        \draw[knot, overcross] (1,0) -- (1,2) to[out=90, in=-90] (4,5);
        \draw[knot, overcross] (4,0) -- (4,2) to[out=90, in=-90] (1,5);
        }$};
        \node at (22.5,2) {$\tikz[scale=0.8, x=0.3cm, y=0.3cm]{
        \draw[knot] (0,0) to[out=90, in=150] (2.5,1.75) to[out=-30, in=90] (5,0);
        \draw[knot, overcross] (1,0) -- (1,2) to[out=90, in=-90] (4,5);
        \draw[knot, overcross] (4,0) -- (4,2) to[out=90, in=-90] (1,5);
        }$};
    % ANNOTATIONS
        %\node[below] at (10.5,0) {Move 6 A};
        \node at (12.5, -1) {Move 14};
        \draw[<->, very thick] (17.5, -0.25) -- (17.5,-1.75);
        \draw[<->, very thick] (7.5, -0.25) -- (7.5,-1.75);
    \begin{scope}[yshift=-6cm]
    % FILM
	\draw (.5,0) -- (.5,4);
	\draw (4.5,0) -- (4.5,4);
	\draw (8.5,0) -- (8.5,4);
        \draw (12.5,0) -- (12.5,4);
        \draw (16.5,0) -- (16.5,4);
        \draw (20.5,0) -- (20.5,4);
        \draw (24.5,0) -- (24.5,4);
 	\draw[double,double distance=2pt,line cap=rect] (0,0) -- (25,0);
	\draw[dashed, line width=2pt] (0,0) -- (25,0);
	\draw[double,double distance=2pt,line cap=rect] (0,4) -- (25,4);
	\draw[dashed, line width=2pt] (0,4) -- (25,4);
    % FRAMES
        \node at (2.5,2) {$\tikz[scale=0.8, x=0.3cm, y=0.3cm]{
        \draw[knot] (0,0) -- (0,2) to[out=90, in=210] (2.5,4.5) to[out=30, in=90] (5,3) --(5,0);
        \draw[knot, overcross] (1,0) to[out=90, in=-90] (4,3) -- (4,5);
        \draw[knot, overcross] (4,0) to[out=90, in=-90] (1,3) -- (1,5);
        }$};
        \node at (6.5,2) {$\tikz[scale=0.8, x=0.3cm, y=0.3cm]{
        \draw[knot] (0,0) -- (0,2) to[out=90, in=150] (2.5,4.5) to[out=-30, in=90] (5,3) --(5,0);
        \draw[knot, overcross] (1,0) to[out=90, in=-90] (4,3) -- (4,5);
        \draw[knot, overcross] (4,0) to[out=90, in=-90] (1,3) -- (1,5);
        }$};
        \node at (10.5,2) {$\tikz[scale=0.8, x=0.3cm, y=0.3cm]{
        \draw[knot] (0,0) -- (0,3) to[out=90, in=135] (0.5,5) to[out=-45, in=90] (5,0);
        \draw[knot, overcross] (1,0) to[out=90, in=-90] (4,3) -- (4,5);
        \draw[knot, overcross] (4,0) to[out=90, in=-90] (1,3) -- (1,5);
        }$};
        \node at (14.5,2) {$\tikz[scale=0.8, x=0.3cm, y=0.3cm]{
        \draw[knot] (5,0) to[out=90, in=0] (2.5, 1) to[out=180, in=-45] (0.5,4) to[out=135, in=90] (0,3) --(0,0);
        \draw[knot, overcross] (1,0) -- (1,2) to[out=90, in=-90] (4,5);
        \draw[knot, overcross] (4,0) -- (4,2) to[out=90, in=-90] (1,5);
        }$};
        \node at (18.5,2) {$\tikz[scale=0.8, x=0.3cm, y=0.3cm]{
        \draw[knot] (5,0) to[out=90, in=0] (2.5, 1) to[out=180, in=-45] (0.5,2.5) to[out=135, in=90] (0,1.75) --(0,0);
        \draw[knot, overcross] (1,0) -- (1,2) to[out=90, in=-90] (4,5);
        \draw[knot, overcross] (4,0) -- (4,2) to[out=90, in=-90] (1,5);
        }$};
        \node at (22.5,2) {$\tikz[scale=0.8, x=0.3cm, y=0.3cm]{
        \draw[knot] (0,0) to[out=90, in=150] (2.5,1.75) to[out=-30, in=90] (5,0);
        \draw[knot, overcross] (1,0) -- (1,2) to[out=90, in=-90] (4,5);
        \draw[knot, overcross] (4,0) -- (4,2) to[out=90, in=-90] (1,5);
        }$};
    % ANNOTATIONS
        %\node[below] at (10.5,0) {Move 6 B};
    \end{scope}
 }
\qquad
\tikz[baseline={([yshift=-.5ex]current bounding box.center)}, scale=.35]{
    % FILM
	\draw (.5,0) -- (.5,4);
	\draw (4.5,0) -- (4.5,4);
	\draw (8.5,0) -- (8.5,4);
        \draw (12.5,0) -- (12.5,4);
 	\draw[double,double distance=2pt,line cap=rect] (0,0) -- (13,0);
	\draw[dashed, line width=2pt] (0,0) -- (13,0);
	\draw[double,double distance=2pt,line cap=rect] (0,4) -- (13,4);
	\draw[dashed, line width=2pt] (0,4) -- (13,4);
    % FRAMES
        \node at (2.5,2) {$\tikz[scale=0.6, y=0.7cm]{
        \draw[knot] (0,0) -- (0,2) to[out=90, in=180] (0.5, 2.5) to[out=0, in=90] (1,2) -- (1,0);
        }$};
        \node at (6.5, 2) {$\tikz[scale=0.6, y=0.7cm]{
        \draw[knot] (1,0) to[out=90, in=-90] (0,1) to[out=90, in=-90] (1,2);
        \draw[knot, overcross] (0,0) to[out=90, in=-90] (1,1) to[out=90, in=-90] (0,2) to[out=90, in=180] (0.5, 2.5) to[out=0, in=90] (1,2);
        }$};
        \node at (10.5,2) {$\tikz[scale=0.6, y=0.7cm]{
        \draw[knot] (1,0) to[out=90, in=-90] (0,1) -- (0,2) to[out=90, in=180] (0.5, 2.5) to[out=0, in=90] (1,2);
        \draw[knot, overcross] (0,0) to[out=90, in=-90] (1,1) -- (1,2);
        }$};
    % ANNOTATIONS
        %\node[below] at (10.5,0) {Move 6 A};
        \node at (6.5, -1) {Move 26};
        \draw[<->, very thick] (11.5, -0.25) -- (11.5,-1.75);
        \draw[<->, very thick] (1.5, -0.25) -- (1.5,-1.75);
    \begin{scope}[yshift=-6cm]
    % FILM
	\draw (.5,0) -- (.5,4);
	\draw (4.5,0) -- (4.5,4);
	\draw (8.5,0) -- (8.5,4);
        \draw (12.5,0) -- (12.5,4);
 	\draw[double,double distance=2pt,line cap=rect] (0,0) -- (13,0);
	\draw[dashed, line width=2pt] (0,0) -- (13,0);
	\draw[double,double distance=2pt,line cap=rect] (0,4) -- (13,4);
	\draw[dashed, line width=2pt] (0,4) -- (13,4);
    % FRAMES
        \node at (2.5,2) {$\tikz[scale=0.6, y=0.7cm]{
        \draw[knot] (0,0) -- (0,2) to[out=90, in=180] (0.5, 2.5) to[out=0, in=90] (1,2) -- (1,0);
        }$};
        \node at (6.5,2) {$\tikz[scale=0.6, y=0.7cm]{
        \draw[knot] (1,0) to[out=90, in=-90] (0,2) to[out=90, in=180] (0.5, 2.5) to[out=0, in=90] (1,2);
        \draw[knot, overcross] (0,0) to[out=90, in=-90] (1,2);
        }$};
        \node at (10.5,2) {$\tikz[scale=0.6, y=0.7cm]{
        \draw[knot] (1,0) to[out=90, in=-90] (0,1) -- (0,2) to[out=90, in=180] (0.5, 2.5) to[out=0, in=90] (1,2);
        \draw[knot, overcross] (0,0) to[out=90, in=-90] (1,1) -- (1,2);
        }$};
    % ANNOTATIONS
        %\node[below] at (10.5,0) {Move 6 B};
    \end{scope}
 }
\]

As an aside, we note that one may not even need the machinery of Lemma \ref{lem:qcenter} for these moves. For example, in move 26, $\mathcal{F}(\Sigma_B)$ is a $\mathcal{G}$-graded isomorphism clearly. Moreover, notice that $\mathcal{F}(\Sigma_T)$ consists of a RII move which is necessarily of the second form of (3) in Proposition \ref{prop:Rmoves}. Closing the top of the $\mathcal{G}$-degree shift, we see that the grading shift is canonically isomorphic to the identity shift, and thus $\mathcal{F}(\Sigma_T)$ is also a $\mathcal{G}$-graded isomorphism. This means that we need only appeal to Corollary \ref{cor:cups_caps} for this move.

Moves 8, 9, 10, 23b, and 24 consist of movies in which no frame has a crossing, so each frame change corresponds to either an isotopy or a Morse move. Moves 8 and 9 actually only consist of $T$-moves and their inverses, so we have that $\mathcal{F}(\Sigma_T)=\mathcal{F}(\Sigma_B)$. Move 10 is comprised of isotopies only, so we have that $\mathcal{F}(\Sigma_T)$ and $\mathcal{F}(\Sigma_B)$ are isomorphisms. Moreover $t_0 = t_\infty$ is a cap for move 10, so Corollary \ref{cor:cup_cap} tells us that $\mathcal{F}(\Sigma_T) = u \mathcal{F}(\Sigma_B)$ for some unit $u$ of $R$. The move 23b says that a saddle following a merge or a death following a split is the same (up to unit) as the identity---this is satisfied immediately by the definition of the unified TQFT $\mathcal{F}$. For move 24, let $t_T$ and $t_B$ denote the intermediate frames of $\Sigma_T$ and $\Sigma_B$ and decompose each as $\Sigma_T: t_0 \xrightarrow{P_T} t_T \xrightarrow{Q_T} t_\infty$ and $\Sigma_B: t_0 \xrightarrow{P_B} t_B \xrightarrow{Q_B} t_\infty$. Then $P_T$ and $Q_B$ are both saddles (between the same components), and $Q_T$ and $P_B$ are isotopies. In particular, $Q_T$ and $P_B$ are isotopies on caps, so by Corollary \ref{cor:cup_cap}, $\mathcal{F}(Q_T)$ and $\mathcal{F}(P_B)$ are multiplications by units of $R$, say $u_T$ and $u_B$. Thus $\mathcal{F}(\Sigma_T) = u_T^{-1} u_B \mathcal{F}(\Sigma_B)$ for the unit $u_T^{-1} u_B$ of $R$.

\[
\tikz[baseline={([yshift=-.5ex]current bounding box.center)}, scale=.35]{
    % FILM
	\draw (.5,0) -- (.5,4);
	\draw (4.5,0) -- (4.5,4);
	\draw (8.5,0) -- (8.5,4);
        \draw (12.5,0) -- (12.5,4);
 	\draw[double,double distance=2pt,line cap=rect] (0,0) -- (13,0);
	\draw[dashed, line width=2pt] (0,0) -- (13,0);
	\draw[double,double distance=2pt,line cap=rect] (0,4) -- (13,4);
	\draw[dashed, line width=2pt] (0,4) -- (13,4);
    % FRAMES
        \node at (2.5,2) {$\tikz[y=1.2cm]{
            \draw[knot] (1,0) to[out=90, in=-90] (0.5,0.5) to[out=90, in=-90] (0,1);
        }$};
        \node at (6.5, 2) {$\tikz[y=1.2cm]{
            \draw[knot] (1,0) -- (1,0.5) to[out=90, in=90] (0.5, 0.5) to[out=-90, in=-90] (0,0.5) -- (0,1);
        }$};
        \node at (10.5,2) {$\tikz[y=1.2cm]{
            \draw[knot] (1,0) to[out=90, in=-90] (0.5,0.5) to[out=90, in=-90] (0,1);
        }$};
    % ANNOTATIONS
        %\node[below] at (10.5,0) {Move 6 A};
        \node at (6.5, -1) {Move 8};
        \draw[<->, very thick] (11.5, -0.25) -- (11.5,-1.75);
        \draw[<->, very thick] (1.5, -0.25) -- (1.5,-1.75);
    \begin{scope}[yshift=-6cm]
    % FILM
	\draw (.5,0) -- (.5,4);
	\draw (4.5,0) -- (4.5,4);
	\draw (8.5,0) -- (8.5,4);
        \draw (12.5,0) -- (12.5,4);
 	\draw[double,double distance=2pt,line cap=rect] (0,0) -- (13,0);
	\draw[dashed, line width=2pt] (0,0) -- (13,0);
	\draw[double,double distance=2pt,line cap=rect] (0,4) -- (13,4);
	\draw[dashed, line width=2pt] (0,4) -- (13,4);
    % FRAMES
        \node at (2.5,2) {$\tikz[y=1.2cm]{
            \draw[knot] (1,0) to[out=90, in=-90] (0.5,0.5) to[out=90, in=-90] (0,1);
        }$};
        \node at (6.5,2) {$\tikz[y=1.2cm]{
            \draw[knot] (1,0) to[out=90, in=-90] (0.5,0.5) to[out=90, in=-90] (0,1);
        }$};
        \node at (10.5,2) {$\tikz[y=1.2cm]{
            \draw[knot] (1,0) to[out=90, in=-90] (0.5,0.5) to[out=90, in=-90] (0,1);
        }$};
    % ANNOTATIONS
        %\node[below] at (10.5,0) {Move 6 B};
    \end{scope}
 }
\quad
\tikz[baseline={([yshift=-.5ex]current bounding box.center)}, scale=.35]{
    % FILM
	\draw (.5,0) -- (.5,4);
	\draw (4.5,0) -- (4.5,4);
	\draw (8.5,0) -- (8.5,4);
        \draw (12.5,0) -- (12.5,4);
 	\draw[double,double distance=2pt,line cap=rect] (0,0) -- (13,0);
	\draw[dashed, line width=2pt] (0,0) -- (13,0);
	\draw[double,double distance=2pt,line cap=rect] (0,4) -- (13,4);
	\draw[dashed, line width=2pt] (0,4) -- (13,4);
    % FRAMES
        \node at (2.5,2) {$\tikz[y=1.2cm]{
            \draw[knot] (1,0) -- (1,0.5) to[out=90, in=90] (0.5, 0.5) to[out=-90, in=-90] (0,0.5) -- (0,1);
        }$};
        \node at (6.5, 2) {$\tikz[y=1.2cm]{
            \draw[knot] (1,0) to[out=90, in=-90] (0.5,0.5) to[out=90, in=-90] (0,1);
        }$};
        \node at (10.5,2) {$\tikz[y=1.2cm]{
            \draw[knot] (1,0) -- (1,0.5) to[out=90, in=90] (0.5, 0.5) to[out=-90, in=-90] (0,0.5) -- (0,1);
        }$};
    % ANNOTATIONS
        %\node[below] at (10.5,0) {Move 6 A};
        \node at (6.5, -1) {Move 9};
        \draw[<->, very thick] (11.5, -0.25) -- (11.5,-1.75);
        \draw[<->, very thick] (1.5, -0.25) -- (1.5,-1.75);
    \begin{scope}[yshift=-6cm]
    % FILM
	\draw (.5,0) -- (.5,4);
	\draw (4.5,0) -- (4.5,4);
	\draw (8.5,0) -- (8.5,4);
        \draw (12.5,0) -- (12.5,4);
 	\draw[double,double distance=2pt,line cap=rect] (0,0) -- (13,0);
	\draw[dashed, line width=2pt] (0,0) -- (13,0);
	\draw[double,double distance=2pt,line cap=rect] (0,4) -- (13,4);
	\draw[dashed, line width=2pt] (0,4) -- (13,4);
    % FRAMES
        \node at (2.5,2) {$\tikz[y=1.2cm]{
            \draw[knot] (1,0) -- (1,0.5) to[out=90, in=90] (0.5, 0.5) to[out=-90, in=-90] (0,0.5) -- (0,1);
        }$};
        \node at (6.5,2) {$\tikz[y=1.2cm]{
            \draw[knot] (1,0) -- (1,0.5) to[out=90, in=90] (0.5, 0.5) to[out=-90, in=-90] (0,0.5) -- (0,1);
        }$};
        \node at (10.5,2) {$\tikz[y=1.2cm]{
            \draw[knot] (1,0) -- (1,0.5) to[out=90, in=90] (0.5, 0.5) to[out=-90, in=-90] (0,0.5) -- (0,1);
        }$};
    % ANNOTATIONS
        %\node[below] at (10.5,0) {Move 6 B};
    \end{scope}
 }
\quad
\tikz[baseline={([yshift=-.5ex]current bounding box.center)}, scale=.35]{
    % FILM
	\draw (.5,0) -- (.5,4);
	\draw (4.5,0) -- (4.5,4);
	\draw (8.5,0) -- (8.5,4);
        \draw (12.5,0) -- (12.5,4);
        \draw (16.5,0) -- (16.5,4);
 	\draw[double,double distance=2pt,line cap=rect] (0,0) -- (17,0);
	\draw[dashed, line width=2pt] (0,0) -- (17,0);
	\draw[double,double distance=2pt,line cap=rect] (0,4) -- (17,4);
	\draw[dashed, line width=2pt] (0,4) -- (17,4);
    % FRAMES
        \node at (2.5,2) {$\tikz[scale=0.8]{
        \draw[knot] (0,0) to[out=90,in=180] (0.5, 1.35) to[out=0, in=90] (1,0);
        }$};
        \node at (6.5,2) {$\tikz[scale=0.8, y=1cm]{
        \draw[knot] (0,0) -- (0,1) to[out=90, in=180] (0.25, 1.25) to[out=0, in=90] (0.5, 1) -- (0.5, 0.5) to[out=-90, in=180] (0.75, 0.25) to[out=0, in=-90] (1, 0.5) -- (1,0.75) to[out=90, in=180] (1.25, 1) to[out=0, in=90] (1.5, 0.75) -- (1.5,0);
        }$};
        \node at (10.5,2) {$\tikz[scale=0.8, y=1cm]{
        \draw[knot] (0,0) -- (0,0.75) to[out=90, in=180] (0.25, 1) to[out=0, in=90] (0.5, 0.75) -- (0.5, 0.5) to[out=-90, in=180] (0.75, 0.25) to[out=0, in=-90] (1, 0.5) -- (1,1) to[out=90, in=180] (1.25, 1.25) to[out=0, in=90] (1.5, 1) -- (1.5,0);
        }$};
        \node at (14.5,2) {$\tikz[scale=0.8]{
        \draw[knot] (0,0) to[out=90,in=180] (0.5, 1.35) to[out=0, in=90] (1,0);
        }$};
    % ANNOTATIONS
        %\node[below] at (10.5,0) {Move 6 A};
        \node at (8.5, -1) {Move 10};
        \draw[<->, very thick] (13.5, -0.25) -- (13.5,-1.75);
        \draw[<->, very thick] (3.5, -0.25) -- (3.5,-1.75);
    \begin{scope}[yshift=-6cm]
    % FILM
	\draw (.5,0) -- (.5,4);
	\draw (4.5,0) -- (4.5,4);
	\draw (8.5,0) -- (8.5,4);
        \draw (12.5,0) -- (12.5,4);
        \draw (16.5,0) -- (16.5,4);
 	\draw[double,double distance=2pt,line cap=rect] (0,0) -- (17,0);
	\draw[dashed, line width=2pt] (0,0) -- (17,0);
	\draw[double,double distance=2pt,line cap=rect] (0,4) -- (17,4);
	\draw[dashed, line width=2pt] (0,4) -- (17,4);
    % FRAMES
        \node at (2.5,2) {$\tikz[scale=0.8]{
        \draw[knot] (0,0) to[out=90,in=180] (0.5, 1.35) to[out=0, in=90] (1,0);
        }$};
        \node at (6.5,2) {$\tikz[scale=0.8]{
        \draw[knot] (0,0) to[out=90,in=180] (0.5, 1.35) to[out=0, in=90] (1,0);
        }$};
        \node at (10.5,2) {$\tikz[scale=0.8]{
        \draw[knot] (0,0) to[out=90,in=180] (0.5, 1.35) to[out=0, in=90] (1,0);
        }$};
        \node at (14.5,2) {$\tikz[scale=0.8]{
        \draw[knot] (0,0) to[out=90,in=180] (0.5, 1.35) to[out=0, in=90] (1,0);
        }$};
    % ANNOTATIONS
        %\node[below] at (10.5,0) {Move 6 B};
    \end{scope}
 }
\]
\[
\tikz[baseline={([yshift=-.5ex]current bounding box.center)}, scale=.35]{
    % FILM
	\draw (.5,0) -- (.5,4);
	\draw (4.5,0) -- (4.5,4);
	\draw (8.5,0) -- (8.5,4);
        \draw (12.5,0) -- (12.5,4);
 	\draw[double,double distance=2pt,line cap=rect] (0,0) -- (13,0);
	\draw[dashed, line width=2pt] (0,0) -- (13,0);
	\draw[double,double distance=2pt,line cap=rect] (0,4) -- (13,4);
	\draw[dashed, line width=2pt] (0,4) -- (13,4);
    % FRAMES
        \node at (2.5,2) {$\tikz[scale=0.8, x=1.3cm, y=0.9cm]{
        \draw[knot] (0,0) to[out=45, in=135] (1,0);
        \draw[knot, color=white] (0.5,1) circle (0.5cm); 
        }$};
        \node at (6.5, 2) {$\tikz[scale=0.8, x=1.3cm, y=0.9cm]{
        \draw[knot] (0,0) to[out=45, in=135] (1,0);
        \draw[knot, fill=white] (0.5,1) circle (0.5cm); 
        }$};
        \node at (10.5,2) {$\tikz[scale=0.8]{
        \draw[knot] (0,0) to[out=90,in=180] (0.5, 1.35) to[out=0, in=90] (1,0);
        }$};
    % ANNOTATIONS
        %\node[below] at (10.5,0) {Move 6 A};
        \node at (6.5, -1) {Move 23b};
        \draw[<->, very thick] (11.5, -0.25) -- (11.5,-1.75);
        \draw[<->, very thick] (1.5, -0.25) -- (1.5,-1.75);
    \begin{scope}[yshift=-6cm]
    % FILM
	\draw (.5,0) -- (.5,4);
	\draw (4.5,0) -- (4.5,4);
	\draw (8.5,0) -- (8.5,4);
        \draw (12.5,0) -- (12.5,4);
 	\draw[double,double distance=2pt,line cap=rect] (0,0) -- (13,0);
	\draw[dashed, line width=2pt] (0,0) -- (13,0);
	\draw[double,double distance=2pt,line cap=rect] (0,4) -- (13,4);
	\draw[dashed, line width=2pt] (0,4) -- (13,4);
    % FRAMES
        \node at (2.5,2) {$\tikz[scale=0.8, x=1.3cm, y=0.9cm]{
        \draw[knot] (0,0) to[out=45, in=135] (1,0);
        \draw[knot, color=white] (0.5,1) circle (0.5cm); 
        }$};
        \node at (6.5,2) {$\tikz[scale=0.8, x=1.3cm, y=0.9cm]{
        \draw[knot, color=white] (0.5,1) circle (0.5cm); 
        \draw[knot] (0,0) to[out=90,in=180] (0.5, 1) to[out=0, in=90] (1,0);
        }$};
        \node at (10.5,2) {$\tikz[scale=0.8]{
        \draw[knot] (0,0) to[out=90,in=180] (0.5, 1.35) to[out=0, in=90] (1,0);
        }$};
    % ANNOTATIONS
        %\node[below] at (10.5,0) {Move 6 B};
    \end{scope}
 }
\qquad
\tikz[baseline={([yshift=-.5ex]current bounding box.center)}, scale=.35]{
    % FILM
	\draw (.5,0) -- (.5,4);
	\draw (4.5,0) -- (4.5,4);
	\draw (8.5,0) -- (8.5,4);
        \draw (12.5,0) -- (12.5,4);
 	\draw[double,double distance=2pt,line cap=rect] (0,0) -- (13,0);
	\draw[dashed, line width=2pt] (0,0) -- (13,0);
	\draw[double,double distance=2pt,line cap=rect] (0,4) -- (13,4);
	\draw[dashed, line width=2pt] (0,4) -- (13,4);
    % FRAMES
        \node at (2.5,2) {$\tikz[scale=0.6, y=0.9cm]{
        \draw[knot] (0,0) to[out=90, in=210] (1,2);
        \draw[knot] (1,0) to[out=90, in=180] (1.5, 1) to[out=0, in=90] (2,0);
        }$};
        \node at (6.5, 2) {$\tikz[scale=0.6, y=0.9cm]{
        \draw[knot, rounded corners] (2,0) to[out=90, in=0] (0.5, 1.6) -- (1,2);
        \draw[knot] (0,0) to[out=90, in=180] (0.5, 1) to[out=0, in=90] (1,0);
        }$};
        \node at (10.5,2) {$\tikz[scale=0.6, y=0.9cm]{
        \draw[knot] (2,0) to[out=90, in=-30] (1,2);
        \draw[knot] (0,0) to[out=90, in=180] (0.5, 1) to[out=0, in=90] (1,0);
        }$};
    % ANNOTATIONS
        %\node[below] at (10.5,0) {Move 6 A};
        \node at (6.5, -1) {Move 24};
        \draw[<->, very thick] (11.5, -0.25) -- (11.5,-1.75);
        \draw[<->, very thick] (1.5, -0.25) -- (1.5,-1.75);
    \begin{scope}[yshift=-6cm]
    % FILM
	\draw (.5,0) -- (.5,4);
	\draw (4.5,0) -- (4.5,4);
	\draw (8.5,0) -- (8.5,4);
        \draw (12.5,0) -- (12.5,4);
 	\draw[double,double distance=2pt,line cap=rect] (0,0) -- (13,0);
	\draw[dashed, line width=2pt] (0,0) -- (13,0);
	\draw[double,double distance=2pt,line cap=rect] (0,4) -- (13,4);
	\draw[dashed, line width=2pt] (0,4) -- (13,4);
    % FRAMES
        \node at (2.5,2) {$\tikz[scale=0.6, y=0.9cm]{
        \draw[knot] (0,0) to[out=90, in=210] (1,2);
        \draw[knot] (1,0) to[out=90, in=180] (1.5, 1) to[out=0, in=90] (2,0);
        }$};
        \node at (6.5,2) {$\tikz[scale=0.6, y=0.9cm]{
        \draw[knot, rounded corners] (0,0) to[out=90, in=180] (1.5, 1.6) -- (1,2);
        \draw[knot] (2,0) to[out=90, in=0] (1.5, 1) to[out=180, in=90] (1,0);
        }$};
        \node at (10.5,2) {$\tikz[scale=0.6, y=0.9cm]{
        \draw[knot] (2,0) to[out=90, in=-30] (1,2);
        \draw[knot] (0,0) to[out=90, in=180] (0.5, 1) to[out=0, in=90] (1,0);
        }$};
    % ANNOTATIONS
        %\node[below] at (10.5,0) {Move 6 B};
    \end{scope}
 }
\]

Moves 15, 16, 17, 18, 19, 20, and 22 are the so-called ``semi-local'' moves and have to do with sliding crossings or turnbacks past one another. They encode larger families of moves since the boxes could stand for a single crossing or a single cup or cap. We group moves 15, 16, 17, 18, 19 and 20, since each consists of only isotopies or Reidemeister moves, so that $\mathcal{F}(\Sigma_B)$ and $\mathcal{F}(\Sigma_T)$ are isomorphisms. For move 15, we have $\mathcal{F}(\Sigma_B) = \mathcal{F}(\Sigma_T)$ since the inverse of an isotopy is the inverse of the isomorphism assigned to the isotopy. Move 16 requires a bit of casework. In $t_0$, denote the boxes by $A$, $B$, and $C$. If all of $A$, $B$, and $C$ are crossings, then the result follows by Corollary \ref{cor:invs}. If all are U-turns, then we can appeal to Corollary \ref{cor:endFT}. If $A$ and $B$, or $B$ and $C$ are both crossings or both U-turns, then we can use either of Corollaries \ref{cor:cups_caps} or \ref{cor:totturns}. Finally, if $A$ and $C$ share the same type (that is, both are crossings or both are U-turns), but $B$ differs, the trick is to notice that the second frame of the isomorphisms has the type of the previous case, so we can apply the same argument. For move 17, if the box is a crossing, we apply Corollary \ref{cor:invs} and if the box is a U-turn, we apply Corollary \ref{cor:cups_caps}. Move 18 follows by the exact same argument. For move 19 the required corollary again depends on the input for the indeterminate box: appeal to either of Corollaries \ref{cor:cups_caps} or \ref{cor:totturns}. Similar arguments can be used for move 20 as well.

\[
\tikz[baseline={([yshift=-.5ex]current bounding box.center)}, scale=.35]{
    % FILM
	\draw (.5,0) -- (.5,4);
	\draw (4.5,0) -- (4.5,4);
	\draw (8.5,0) -- (8.5,4);
        \draw (12.5,0) -- (12.5,4);
 	\draw[double,double distance=2pt,line cap=rect] (0,0) -- (13,0);
	\draw[dashed, line width=2pt] (0,0) -- (13,0);
	\draw[double,double distance=2pt,line cap=rect] (0,4) -- (13,4);
	\draw[dashed, line width=2pt] (0,4) -- (13,4);
    % FRAMES
        \node at (2.5,2) {$\tikz[scale=0.35, x=0.7cm, y=0.7cm]{
        \draw[knot, dashed] (0,0) -- (0,5);
        \draw[knot, dashed] (1,0) -- (1,5);
        \draw[knot, dashed] (2,0) -- (2,5);
        \draw[knot, dashed] (3,0) -- (3,5);
        \draw[knot, dashed] (4,0) -- (4,5);
        \draw[fill=white] (-0.2,2.8) rectangle (1.2, 4.2);
        \draw[fill=white] (2.8, 0.8) rectangle (4.2, 2.2);
        }$};
        \node at (6.5, 2) {$\tikz[scale=0.35, x=0.7cm, y=0.7cm]{
        \draw[knot, dashed] (0,0) -- (0,5);
        \draw[knot, dashed] (1,0) -- (1,5);
        \draw[knot, dashed] (2,0) -- (2,5);
        \draw[knot, dashed] (3,0) -- (3,5);
        \draw[knot, dashed] (4,0) -- (4,5);
        \draw[fill=white] (-0.2,0.8) rectangle (1.2, 2.2);
        \draw[fill=white] (2.8, 2.8) rectangle (4.2, 4.2);
        }$};
        \node at (10.5,2) {$\tikz[scale=0.35, x=0.7cm, y=0.7cm]{
        \draw[knot, dashed] (0,0) -- (0,5);
        \draw[knot, dashed] (1,0) -- (1,5);
        \draw[knot, dashed] (2,0) -- (2,5);
        \draw[knot, dashed] (3,0) -- (3,5);
        \draw[knot, dashed] (4,0) -- (4,5);
        \draw[fill=white] (-0.2,2.8) rectangle (1.2, 4.2);
        \draw[fill=white] (2.8, 0.8) rectangle (4.2, 2.2);
        }$};
    % ANNOTATIONS
        %\node[below] at (10.5,0) {Move 6 A};
        \node at (6.5, -1) {Move 15};
        \draw[<->, very thick] (11.5, -0.25) -- (11.5,-1.75);
        \draw[<->, very thick] (1.5, -0.25) -- (1.5,-1.75);
    \begin{scope}[yshift=-6cm]
    % FILM
	\draw (.5,0) -- (.5,4);
	\draw (4.5,0) -- (4.5,4);
	\draw (8.5,0) -- (8.5,4);
        \draw (12.5,0) -- (12.5,4);
 	\draw[double,double distance=2pt,line cap=rect] (0,0) -- (13,0);
	\draw[dashed, line width=2pt] (0,0) -- (13,0);
	\draw[double,double distance=2pt,line cap=rect] (0,4) -- (13,4);
	\draw[dashed, line width=2pt] (0,4) -- (13,4);
    % FRAMES
        \node at (2.5,2) {$\tikz[scale=0.35, x=0.7cm, y=0.7cm]{
        \draw[knot, dashed] (0,0) -- (0,5);
        \draw[knot, dashed] (1,0) -- (1,5);
        \draw[knot, dashed] (2,0) -- (2,5);
        \draw[knot, dashed] (3,0) -- (3,5);
        \draw[knot, dashed] (4,0) -- (4,5);
        \draw[fill=white] (-0.2,2.8) rectangle (1.2, 4.2);
        \draw[fill=white] (2.8, 0.8) rectangle (4.2, 2.2);
        }$};
        \node at (6.5,2) {$\tikz[scale=0.35, x=0.7cm, y=0.7cm]{
        \draw[knot, dashed] (0,0) -- (0,5);
        \draw[knot, dashed] (1,0) -- (1,5);
        \draw[knot, dashed] (2,0) -- (2,5);
        \draw[knot, dashed] (3,0) -- (3,5);
        \draw[knot, dashed] (4,0) -- (4,5);
        \draw[fill=white] (-0.2,2.8) rectangle (1.2, 4.2);
        \draw[fill=white] (2.8, 0.8) rectangle (4.2, 2.2);
        }$};
        \node at (10.5,2) {$\tikz[scale=0.35, x=0.7cm, y=0.7cm]{
        \draw[knot, dashed] (0,0) -- (0,5);
        \draw[knot, dashed] (1,0) -- (1,5);
        \draw[knot, dashed] (2,0) -- (2,5);
        \draw[knot, dashed] (3,0) -- (3,5);
        \draw[knot, dashed] (4,0) -- (4,5);
        \draw[fill=white] (-0.2,2.8) rectangle (1.2, 4.2);
        \draw[fill=white] (2.8, 0.8) rectangle (4.2, 2.2);
        }$};
    % ANNOTATIONS
        %\node[below] at (10.5,0) {Move 6 B};
    \end{scope}
 }
\qquad
\tikz[baseline={([yshift=-.5ex]current bounding box.center)}, scale=.35]{
    % FILM
	\draw (.5,0) -- (.5,4);
	\draw (4.5,0) -- (4.5,4);
	\draw (8.5,0) -- (8.5,4);
        \draw (12.5,0) -- (12.5,4);
        \draw (16.5,0) -- (16.5,4);
 	\draw[double,double distance=2pt,line cap=rect] (0,0) -- (17,0);
	\draw[dashed, line width=2pt] (0,0) -- (17,0);
	\draw[double,double distance=2pt,line cap=rect] (0,4) -- (17,4);
	\draw[dashed, line width=2pt] (0,4) -- (17,4);
    % FRAMES
        \node at (2.5,2) {$\tikz[scale=0.35, x=0.675cm, y=0.675cm]{
        \draw[knot, dashed] (0,0) -- (0,5);
        \draw[knot, dashed] (1,0) -- (1,5);
        \draw[knot, dashed] (2,0) -- (2,5);
        \draw[knot, dashed] (3,0) -- (3,5);
        \draw[knot, dashed] (4,0) -- (4,5);
        \draw[knot, dashed] (5,0) -- (5,5);
        \draw[fill=white] (-0.2,3.4) rectangle (1.2,4.8);
        \draw[fill=white] (1.8,1.8) rectangle (3.2,3.2);
        \draw[fill=white] (3.8,0.2) rectangle (5.2,1.6);
        }$};
        \node at (6.5,2) {$\tikz[scale=0.35, x=0.675cm, y=0.675cm]{
        \draw[knot, dashed] (0,0) -- (0,5);
        \draw[knot, dashed] (1,0) -- (1,5);
        \draw[knot, dashed] (2,0) -- (2,5);
        \draw[knot, dashed] (3,0) -- (3,5);
        \draw[knot, dashed] (4,0) -- (4,5);
        \draw[knot, dashed] (5,0) -- (5,5);
        \draw[fill=white] (-0.2,1.8) rectangle (1.2,3.2);
        \draw[fill=white] (1.8,3.4) rectangle (3.2,4.8);
        \draw[fill=white] (3.8,0.2) rectangle (5.2,1.6);
        }$};
        \node at (10.5,2) {$\tikz[scale=0.35, x=0.675cm, y=0.675cm]{
        \draw[knot, dashed] (0,0) -- (0,5);
        \draw[knot, dashed] (1,0) -- (1,5);
        \draw[knot, dashed] (2,0) -- (2,5);
        \draw[knot, dashed] (3,0) -- (3,5);
        \draw[knot, dashed] (4,0) -- (4,5);
        \draw[knot, dashed] (5,0) -- (5,5);
        \draw[fill=white] (-0.2,0.2) rectangle (1.2,1.6);
        \draw[fill=white] (1.8,3.4) rectangle (3.2,4.8);
        \draw[fill=white] (3.8,1.8) rectangle (5.2,3.2);
        }$};
        \node at (14.5,2) {$\tikz[scale=0.35, x=0.675cm, y=0.675cm]{
        \draw[knot, dashed] (0,0) -- (0,5);
        \draw[knot, dashed] (1,0) -- (1,5);
        \draw[knot, dashed] (2,0) -- (2,5);
        \draw[knot, dashed] (3,0) -- (3,5);
        \draw[knot, dashed] (4,0) -- (4,5);
        \draw[knot, dashed] (5,0) -- (5,5);
        \draw[fill=white] (-0.2,0.2) rectangle (1.2,1.6);
        \draw[fill=white] (1.8,1.8) rectangle (3.2,3.2);
        \draw[fill=white] (3.8,3.4) rectangle (5.2,4.8);        }$};
    % ANNOTATIONS
        %\node[below] at (10.5,0) {Move 6 A};
        \node at (8.5, -1) {Move 16};
        \draw[<->, very thick] (13.5, -0.25) -- (13.5,-1.75);
        \draw[<->, very thick] (3.5, -0.25) -- (3.5,-1.75);
    \begin{scope}[yshift=-6cm]
    % FILM
	\draw (.5,0) -- (.5,4);
	\draw (4.5,0) -- (4.5,4);
	\draw (8.5,0) -- (8.5,4);
        \draw (12.5,0) -- (12.5,4);
        \draw (16.5,0) -- (16.5,4);
 	\draw[double,double distance=2pt,line cap=rect] (0,0) -- (17,0);
	\draw[dashed, line width=2pt] (0,0) -- (17,0);
	\draw[double,double distance=2pt,line cap=rect] (0,4) -- (17,4);
	\draw[dashed, line width=2pt] (0,4) -- (17,4);
    % FRAMES
        \node at (2.5,2) {$\tikz[scale=0.35, x=0.675cm, y=0.675cm]{
        \draw[knot, dashed] (0,0) -- (0,5);
        \draw[knot, dashed] (1,0) -- (1,5);
        \draw[knot, dashed] (2,0) -- (2,5);
        \draw[knot, dashed] (3,0) -- (3,5);
        \draw[knot, dashed] (4,0) -- (4,5);
        \draw[knot, dashed] (5,0) -- (5,5);
        \draw[fill=white] (-0.2,3.4) rectangle (1.2,4.8);
        \draw[fill=white] (1.8,1.8) rectangle (3.2,3.2);
        \draw[fill=white] (3.8,0.2) rectangle (5.2,1.6);
        }$};
        \node at (6.5,2) {$\tikz[scale=0.35, x=0.675cm, y=0.675cm]{
        \draw[knot, dashed] (0,0) -- (0,5);
        \draw[knot, dashed] (1,0) -- (1,5);
        \draw[knot, dashed] (2,0) -- (2,5);
        \draw[knot, dashed] (3,0) -- (3,5);
        \draw[knot, dashed] (4,0) -- (4,5);
        \draw[knot, dashed] (5,0) -- (5,5);
        \draw[fill=white] (-0.2,3.4) rectangle (1.2,4.8);
        \draw[fill=white] (1.8,0.2) rectangle (3.2,1.6);
        \draw[fill=white] (3.8,1.8) rectangle (5.2,3.2);
        }$};
        \node at (10.5,2) {$\tikz[scale=0.35, x=0.675cm, y=0.675cm]{
        \draw[knot, dashed] (0,0) -- (0,5);
        \draw[knot, dashed] (1,0) -- (1,5);
        \draw[knot, dashed] (2,0) -- (2,5);
        \draw[knot, dashed] (3,0) -- (3,5);
        \draw[knot, dashed] (4,0) -- (4,5);
        \draw[knot, dashed] (5,0) -- (5,5);
        \draw[fill=white] (-0.2,1.8) rectangle (1.2,3.2);
        \draw[fill=white] (1.8,0.2) rectangle (3.2,1.6);
        \draw[fill=white] (3.8,3.4) rectangle (5.2,4.8);
        }$};
        \node at (14.5,2) {$\tikz[scale=0.35, x=0.675cm, y=0.675cm]{
        \draw[knot, dashed] (0,0) -- (0,5);
        \draw[knot, dashed] (1,0) -- (1,5);
        \draw[knot, dashed] (2,0) -- (2,5);
        \draw[knot, dashed] (3,0) -- (3,5);
        \draw[knot, dashed] (4,0) -- (4,5);
        \draw[knot, dashed] (5,0) -- (5,5);
        \draw[fill=white] (-0.2,0.2) rectangle (1.2,1.6);
        \draw[fill=white] (1.8,1.8) rectangle (3.2,3.2);
        \draw[fill=white] (3.8,3.4) rectangle (5.2,4.8);
        }$};
    % ANNOTATIONS
        %\node[below] at (10.5,0) {Move 6 B};
    \end{scope}
 }
\]
\[
\tikz[baseline={([yshift=-.5ex]current bounding box.center)}, scale=.35]{
    % FILM
	\draw (.5,0) -- (.5,4);
	\draw (4.5,0) -- (4.5,4);
	\draw (8.5,0) -- (8.5,4);
        \draw (12.5,0) -- (12.5,4);
        \draw (16.5,0) -- (16.5,4);
        \draw (20.5,0) -- (20.5,4);
 	\draw[double,double distance=2pt,line cap=rect] (0,0) -- (21,0);
	\draw[dashed, line width=2pt] (0,0) -- (21,0);
	\draw[double,double distance=2pt,line cap=rect] (0,4) -- (21,4);
	\draw[dashed, line width=2pt] (0,4) -- (21,4);
    % FRAMES
        \node at (2.5,2) {$\tikz[scale=0.35, x=0.675cm, y=0.675cm]{
        \draw[knot, dashed] (0,0) -- (0,5);
        \draw[knot, dashed] (1,0) -- (1,5);
        \draw[fill=white] (-0.2, 3.3) rectangle (1.2, 4.7);
        \draw[knot] (2,0) to[out=90, in=-90] (3,1) to[out=90, in=-90] (4,2) -- (4,5);
        \draw[knot, overcross] (3,0) to[out=90,in=-90] (2,1) -- (2,2) to[out=90,in=-90] (3,3) -- (3,5);
        \draw[knot, overcross] (4,0) -- (4,1) to[out=90,in=-90] (3,2) to[out=90,in=-90] (2,3) -- (2,5);
        }$};
        \node at (6.5,2) {$\tikz[scale=0.35, x=0.675cm, y=0.675cm]{
        \draw[knot, dashed] (0,0) -- (0,5);
        \draw[knot, dashed] (1,0) -- (1,5);
        \draw[fill=white] (-0.2, 2.3) rectangle (1.2, 3.7);
        \draw[knot] (2,0) to[out=90, in=-90] (3,1) to[out=90, in=-90] (4,2) -- (4,5);
        \draw[knot, overcross] (3,0) to[out=90,in=-90] (2,1) -- (2,4) to[out=90,in=-90] (3,5);
        \draw[knot, overcross] (4,0) -- (4,1) to[out=90,in=-90] (3,2) -- (3,4) to[out=90,in=-90] (2,5);
        }$};
        \node at (10.5,2) {$\tikz[scale=0.35, x=0.675cm, y=0.675cm]{
        \draw[knot, dashed] (0,0) -- (0,5);
        \draw[knot, dashed] (1,0) -- (1,5);
        \draw[fill=white] (-0.2, 1.3) rectangle (1.2, 2.7);
        \draw[knot] (2,0) to[out=90, in=-90] (3,1) -- (3,3) to[out=90, in=-90] (4,4) -- (4,5);
        \draw[knot, overcross] (3,0) to[out=90,in=-90] (2,1) -- (2,4) to[out=90,in=-90] (3,5);
        \draw[knot, overcross] (4,0) -- (4,3) to[out=90,in=-90](3,4) to[out=90,in=-90] (2,5);
        }$};
        \node at (14.5,2) {$\tikz[scale=0.35, x=0.675cm, y=0.675cm]{
        \draw[knot, dashed] (0,0) -- (0,5);
        \draw[knot, dashed] (1,0) -- (1,5);
        \draw[fill=white] (-0.2, 0.3) rectangle (1.2, 1.7);
        \draw[knot] (2,0) -- (2,2) to[out=90, in=-90] (3,3) to[out=90, in=-90] (4,4) -- (4,5);
        \draw[knot, overcross] (3,0) -- (3,2) to[out=90,in=-90] (2,3) -- (2,4) to[out=90,in=-90] (3,5);
        \draw[knot, overcross] (4,0) -- (4,3) to[out=90,in=-90](3,4) to[out=90,in=-90] (2,5);
        }$};
        \node at (18.5,2) {$\tikz[scale=0.35, x=0.675cm, y=0.675cm]{
        \draw[knot, dashed] (0,0) -- (0,5);
        \draw[knot, dashed] (1,0) -- (1,5);
        \draw[fill=white] (-0.2, 0.3) rectangle (1.2, 1.7);
        \draw[knot] (2,0) -- (2,3) to[out=90,in=-90] (3,4) to[out=90,in=-90] (4,5);
        \draw[knot, overcross] (3,0) -- (3,2) to[out=90,in=-90] (4,3) -- (4,4) to[out=90,in=-90] (3,5);
        \draw[knot, overcross] (4,0) -- (4,2) to[out=90,in=-90] (3,3) to[out=90,in=-90] (2,4) -- (2,5);
        }$};
    % ANNOTATIONS
        %\node[below] at (10.5,0) {Move 6 A};
        \node at (10.5, -1) {Move 17};
        \draw[<->, very thick] (15.5, -0.25) -- (15.5,-1.75);
        \draw[<->, very thick] (5.5, -0.25) -- (5.5,-1.75);
    \begin{scope}[yshift=-6cm]
    % FILM
	\draw (.5,0) -- (.5,4);
	\draw (4.5,0) -- (4.5,4);
	\draw (8.5,0) -- (8.5,4);
        \draw (12.5,0) -- (12.5,4);
        \draw (16.5,0) -- (16.5,4);
        \draw (20.5,0) -- (20.5,4);
 	\draw[double,double distance=2pt,line cap=rect] (0,0) -- (21,0);
	\draw[dashed, line width=2pt] (0,0) -- (21,0);
	\draw[double,double distance=2pt,line cap=rect] (0,4) -- (21,4);
	\draw[dashed, line width=2pt] (0,4) -- (21,4);
    % FRAMES
        \node at (2.5,2) {$\tikz[scale=0.35, x=0.675cm, y=0.675cm]{
        \draw[knot, dashed] (0,0) -- (0,5);
        \draw[knot, dashed] (1,0) -- (1,5);
        \draw[fill=white] (-0.2, 3.3) rectangle (1.2, 4.7);
        \draw[knot] (2,0) to[out=90, in=-90] (3,1) to[out=90, in=-90] (4,2) -- (4,5);
        \draw[knot, overcross] (3,0) to[out=90,in=-90] (2,1) -- (2,2) to[out=90,in=-90] (3,3) -- (3,5);
        \draw[knot, overcross] (4,0) -- (4,1) to[out=90,in=-90] (3,2) to[out=90,in=-90] (2,3) -- (2,5);
        }$};
        \node at (6.5,2) {$\tikz[scale=0.35, x=0.675cm, y=0.675cm]{
        \draw[knot, dashed] (0,0) -- (0,5);
        \draw[knot, dashed] (1,0) -- (1,5);
        \draw[fill=white] (-0.2, 3.3) rectangle (1.2, 4.7);
        \draw[knot] (2,0) -- (2,1) to[out=90,in=-90] (3,2) to[out=90,in=-90] (4,3) -- (4,5);
        \draw[knot, overcross] (3,0) to[out=90,in=-90] (4,1) -- (4,2) to[out=90,in=-90] (3,3) -- (3,5);
        \draw[knot, overcross] (4,0) to[out=90,in=-90] (3,1) to[out=90,in=-90] (2,2) -- (2,5);
        }$};
        \node at (10.5,2) {$\tikz[scale=0.35, x=0.675cm, y=0.675cm]{
        \draw[knot, dashed] (0,0) -- (0,5);
        \draw[knot, dashed] (1,0) -- (1,5);
        \draw[fill=white] (-0.2, 2.3) rectangle (1.2, 3.7);
        \draw[knot] (2,0) -- (2,1) to[out=90,in=-90] (3,2) -- (3,4) to[out=90,in=-90] (4,5);
        \draw[knot, overcross] (3,0) to[out=90,in=-90] (4,1) -- (4,4) to[out=90,in=-90] (3,5);
        \draw[knot, overcross] (4,0) to[out=90,in=-90] (3,1) to[out=90,in=-90] (2,2) -- (2,5);
        }$};
        \node at (14.5,2) {$\tikz[scale=0.35, x=0.675cm, y=0.675cm]{
        \draw[knot, dashed] (0,0) -- (0,5);
        \draw[knot, dashed] (1,0) -- (1,5);
        \draw[fill=white] (-0.2, 1.3) rectangle (1.2, 2.7);
        \draw[knot] (2,0) -- (2,3) to[out=90,in=-90] (3,4) to[out=90,in=-90] (4,5);
        \draw[knot, overcross] (3,0) to[out=90,in=-90] (4,1) -- (4,4) to[out=90,in=-90] (3,5);
        \draw[knot, overcross] (4,0) to[out=90,in=-90] (3,1) -- (3,3) to[out=90,in=-90] (2,4) -- (2,5);
        }$};
        \node at (18.5,2) {$\tikz[scale=0.35, x=0.675cm, y=0.675cm]{
        \draw[knot, dashed] (0,0) -- (0,5);
        \draw[knot, dashed] (1,0) -- (1,5);
        \draw[fill=white] (-0.2, 0.3) rectangle (1.2, 1.7);
        \draw[knot] (2,0) -- (2,3) to[out=90,in=-90] (3,4) to[out=90,in=-90] (4,5);
        \draw[knot, overcross] (3,0) -- (3,2) to[out=90,in=-90] (4,3) -- (4,4) to[out=90,in=-90] (3,5);
        \draw[knot, overcross] (4,0) -- (4,2) to[out=90,in=-90] (3,3) to[out=90,in=-90] (2,4) -- (2,5);
        }$};
    % ANNOTATIONS
        %\node[below] at (10.5,0) {Move 6 B};
    \end{scope}
 }
\qquad
\tikz[baseline={([yshift=-.5ex]current bounding box.center)}, scale=.35]{
    % FILM
	\draw (.5,0) -- (.5,4);
	\draw (4.5,0) -- (4.5,4);
	\draw (8.5,0) -- (8.5,4);
        \draw (12.5,0) -- (12.5,4);
        \draw (16.5,0) -- (16.5,4);
 	\draw[double,double distance=2pt,line cap=rect] (0,0) -- (17,0);
	\draw[dashed, line width=2pt] (0,0) -- (17,0);
	\draw[double,double distance=2pt,line cap=rect] (0,4) -- (17,4);
	\draw[dashed, line width=2pt] (0,4) -- (17,4);
    % FRAMES
        \node at (2.5,2) {$\tikz[scale=0.35, x=0.675cm, y=0.675cm]{
        \draw[knot, dashed] (0,0) -- (0,5);
        \draw[knot, dashed] (1,0) -- (1,5);
        \draw[fill=white] (-0.2, 3.4) rectangle (1.2,4.8);
        \draw[knot] (2,0) to[out=90, in=-90] (4,5);
        }$};
        \node at (6.5,2) {$\tikz[scale=0.35, x=0.675cm, y=0.675cm]{
        \draw[knot, dashed] (0,0) -- (0,5);
        \draw[knot, dashed] (1,0) -- (1,5);
        \draw[fill=white] (-0.2, 3.4) rectangle (1.2,4.8);
        \draw[knot, rounded corners=2.75mm] (2,0) -- (2.5, 3) -- (3.5,1) -- (4,5);
        %\draw[knot] (2,0) to[out=90, in=-135] (2.5, 3) to[out=45, in=-90] (3.5, 1) to[out=90, in=-90] (4,5);
        }$};
        \node at (10.5,2) {$\tikz[scale=0.35, x=0.675cm, y=0.675cm]{
        \draw[knot, dashed] (0,0) -- (0,5);
        \draw[knot, dashed] (1,0) -- (1,5);
        \draw[fill=white] (-0.2, 1.8) rectangle (1.2,3.2);
        \draw[knot, rounded corners=5mm] (2,0) -- (2.5, 4.5) -- (3.5,0.5) -- (4,5);
        }$};
        \node at (14.5,2) {$\tikz[scale=0.35, x=0.675cm, y=0.675cm]{
        \draw[knot, dashed] (0,0) -- (0,5);
        \draw[knot, dashed] (1,0) -- (1,5);
        \draw[fill=white] (-0.2, 0.2) rectangle (1.2,1.6);
        \draw[knot, rounded corners=2.75mm] (2,0) -- (2.5, 4) -- (3.5,2) -- (4,5);
        }$};
    % ANNOTATIONS
        %\node[below] at (10.5,0) {Move 6 A};
        \node at (8.5, -1) {Move 18};
        \draw[<->, very thick] (13.5, -0.25) -- (13.5,-1.75);
        \draw[<->, very thick] (3.5, -0.25) -- (3.5,-1.75);
    \begin{scope}[yshift=-6cm]
    % FILM
	\draw (.5,0) -- (.5,4);
	\draw (4.5,0) -- (4.5,4);
	\draw (8.5,0) -- (8.5,4);
        \draw (12.5,0) -- (12.5,4);
        \draw (16.5,0) -- (16.5,4);
 	\draw[double,double distance=2pt,line cap=rect] (0,0) -- (17,0);
	\draw[dashed, line width=2pt] (0,0) -- (17,0);
	\draw[double,double distance=2pt,line cap=rect] (0,4) -- (17,4);
	\draw[dashed, line width=2pt] (0,4) -- (17,4);
    % FRAMES
        \node at (2.5,2) {$\tikz[scale=0.35, x=0.675cm, y=0.675cm]{
        \draw[knot, dashed] (0,0) -- (0,5);
        \draw[knot, dashed] (1,0) -- (1,5);
        \draw[fill=white] (-0.2, 3.4) rectangle (1.2,4.8);
        \draw[knot] (2,0) to[out=90, in=-90] (4,5);
        }$};
        \node at (6.5,2) {$\tikz[scale=0.35, x=0.675cm, y=0.675cm]{
        \draw[knot, dashed] (0,0) -- (0,5);
        \draw[knot, dashed] (1,0) -- (1,5);
        \draw[fill=white] (-0.2, 1.8) rectangle (1.2,3.2);
        \draw[knot] (2,0) to[out=90, in=-90] (4,5);
        }$};
        \node at (10.5,2) {$\tikz[scale=0.35, x=0.675cm, y=0.675cm]{
        \draw[knot, dashed] (0,0) -- (0,5);
        \draw[knot, dashed] (1,0) -- (1,5);
        \draw[fill=white] (-0.2, 0.2) rectangle (1.2,1.6);
        \draw[knot] (2,0) to[out=90, in=-90] (4,5);
        }$};
        \node at (14.5,2) {$\tikz[scale=0.35, x=0.675cm, y=0.675cm]{
        \draw[knot, dashed] (0,0) -- (0,5);
        \draw[knot, dashed] (1,0) -- (1,5);
        \draw[fill=white] (-0.2, 0.2) rectangle (1.2,1.6);
        \draw[knot, rounded corners=2.75mm] (2,0) -- (2.5, 4) -- (3.5,2) -- (4,5);
        }$};
    % ANNOTATIONS
        %\node[below] at (10.5,0) {Move 6 B};
    \end{scope}
 }
\]
\[
\tikz[baseline={([yshift=-.5ex]current bounding box.center)}, scale=.35]{
    % FILM
	\draw (.5,0) -- (.5,4);
	\draw (4.5,0) -- (4.5,4);
	\draw (8.5,0) -- (8.5,4);
        \draw (12.5,0) -- (12.5,4);
        \draw (16.5,0) -- (16.5,4);
 	\draw[double,double distance=2pt,line cap=rect] (0,0) -- (17,0);
	\draw[dashed, line width=2pt] (0,0) -- (17,0);
	\draw[double,double distance=2pt,line cap=rect] (0,4) -- (17,4);
	\draw[dashed, line width=2pt] (0,4) -- (17,4);
    % FRAMES
        \node at (2.5,2) {$\tikz[scale=0.35, x=0.675cm, y=0.675cm]{
        \draw[knot, dashed] (0,0) -- (0,5);
        \draw[knot, dashed] (1,0) -- (1,5);
        \draw[fill=white] (-0.2, 3.4) rectangle (1.2,4.8);
        \draw[knot] (2,0) -- (2,2);
        \draw[knot] (4,0) -- (4,2);
        \draw[knot] (2,2) to[out=90, in=180] (3,3) to[out=0, in=90] (4,2);
        }$};
        \node at (6.5,2) {$\tikz[scale=0.35, x=0.675cm, y=0.675cm]{
        \draw[knot, dashed] (0,0) -- (0,5);
        \draw[knot, dashed] (1,0) -- (1,5);
        \draw[fill=white] (-0.2, 3.4) rectangle (1.2,4.8);
        \draw[knot] (2,0) to[out=90, in=-90] (4,2);
        \draw[knot, overcross] (4,0) to[out=90, in=-90] (2,2);
        \draw[knot] (2,2) to[out=90, in=180] (3,3) to[out=0, in=90] (4,2);
        }$};
        \node at (10.5,2) {$\tikz[scale=0.35, x=0.675cm, y=0.675cm]{
        \draw[knot, dashed] (0,0) -- (0,5);
        \draw[knot, dashed] (1,0) -- (1,5);
        \draw[fill=white] (-0.2, 1.8) rectangle (1.2,3.2);
        \draw[knot] (2,0) to[out=90, in=-90] (4,2) -- (4,3);
        \draw[knot, overcross] (4,0) to[out=90, in=-90] (2,2) -- (2,3);
        \draw[knot] (2,3) to[out=90, in=180] (3,4) to[out=0, in=90] (4,3);
        }$};
        \node at (14.5,2) {$\tikz[scale=0.35, x=0.675cm, y=0.675cm]{
        \draw[knot, dashed] (0,0) -- (0,5);
        \draw[knot, dashed] (1,0) -- (1,5);
        \draw[fill=white] (-0.2, 0.2) rectangle (1.2,1.6);
        \draw[knot] (2,0) -- (2,1) to[out=90, in=-90] (4,3);
        \draw[knot, overcross] (4,0) -- (4,1) to[out=90, in=-90] (2,3);
        \draw[knot] (2,3) to[out=90, in=180] (3,4) to[out=0, in=90] (4,3);
        }$};
    % ANNOTATIONS
        %\node[below] at (10.5,0) {Move 6 A};
        \node at (8.5, -1) {Move 19};
        \draw[<->, very thick] (13.5, -0.25) -- (13.5,-1.75);
        \draw[<->, very thick] (3.5, -0.25) -- (3.5,-1.75);
    \begin{scope}[yshift=-6cm]
    % FILM
	\draw (.5,0) -- (.5,4);
	\draw (4.5,0) -- (4.5,4);
	\draw (8.5,0) -- (8.5,4);
        \draw (12.5,0) -- (12.5,4);
        \draw (16.5,0) -- (16.5,4);
 	\draw[double,double distance=2pt,line cap=rect] (0,0) -- (17,0);
	\draw[dashed, line width=2pt] (0,0) -- (17,0);
	\draw[double,double distance=2pt,line cap=rect] (0,4) -- (17,4);
	\draw[dashed, line width=2pt] (0,4) -- (17,4);
    % FRAMES
        \node at (2.5,2) {$\tikz[scale=0.35, x=0.675cm, y=0.675cm]{
        \draw[knot, dashed] (0,0) -- (0,5);
        \draw[knot, dashed] (1,0) -- (1,5);
        \draw[fill=white] (-0.2, 3.4) rectangle (1.2,4.8);
        \draw[knot] (2,0) -- (2,2);
        \draw[knot] (4,0) -- (4,2);
        \draw[knot] (2,2) to[out=90, in=180] (3,3) to[out=0, in=90] (4,2);
        }$};
        \node at (6.5,2) {$\tikz[scale=0.35, x=0.675cm, y=0.675cm]{
        \draw[knot, dashed] (0,0) -- (0,5);
        \draw[knot, dashed] (1,0) -- (1,5);
        \draw[fill=white] (-0.2, 3.4) rectangle (1.2,4.8);
        \draw[knot] (2,0) -- (2,2);
        \draw[knot] (4,0) -- (4,2);
        \draw[knot] (2,2) to[out=90, in=180] (3,3) to[out=0, in=90] (4,2);
        }$};
        \node at (10.5,2) {$\tikz[scale=0.35, x=0.675cm, y=0.675cm]{
        \draw[knot, dashed] (0,0) -- (0,5);
        \draw[knot, dashed] (1,0) -- (1,5);
        \draw[fill=white] (-0.2, 0.2) rectangle (1.2,1.6);
        \draw[knot] (2,0) -- (2,3);
        \draw[knot] (4,0) -- (4,3);
        \draw[knot] (2,3) to[out=90, in=180] (3,4) to[out=0, in=90] (4,3);
        }$};
        \node at (14.5,2) {$\tikz[scale=0.35, x=0.675cm, y=0.675cm]{
        \draw[knot, dashed] (0,0) -- (0,5);
        \draw[knot, dashed] (1,0) -- (1,5);
        \draw[fill=white] (-0.2, 0.2) rectangle (1.2,1.6);
        \draw[knot] (2,0) -- (2,1) to[out=90, in=-90] (4,3);
        \draw[knot, overcross] (4,0) -- (4,1) to[out=90, in=-90] (2,3);
        \draw[knot] (2,3) to[out=90, in=180] (3,4) to[out=0, in=90] (4,3);
        }$};
    % ANNOTATIONS
        %\node[below] at (10.5,0) {Move 6 B};
    \end{scope}
 }
\qquad
\tikz[baseline={([yshift=-.5ex]current bounding box.center)}, scale=.35]{
    % FILM
	\draw (.5,0) -- (.5,4);
	\draw (4.5,0) -- (4.5,4);
	\draw (8.5,0) -- (8.5,4);
        \draw (12.5,0) -- (12.5,4);
        \draw (16.5,0) -- (16.5,4);
 	\draw[double,double distance=2pt,line cap=rect] (0,0) -- (17,0);
	\draw[dashed, line width=2pt] (0,0) -- (17,0);
	\draw[double,double distance=2pt,line cap=rect] (0,4) -- (17,4);
	\draw[dashed, line width=2pt] (0,4) -- (17,4);
    % FRAMES
        \node at (2.5,2) {$\tikz[scale=0.35, x=0.675cm, y=0.675cm]{
        \draw[knot, dashed] (0,0) -- (0,5);
        \draw[knot, dashed] (1,0) -- (1,5);
        \draw[fill=white] (-0.2, 3.4) rectangle (1.2,4.8);
        \draw[knot] (3,0) -- (3,1) to[out=90, in=-90] (4,2) to[out=90, in=-90] (3,3) -- (3,5);
        \draw[knot, overcross] (2,0) -- (2,1) to[out=90, in=180] (3,2) to[out=0, in=90] (4,1) -- (4,0);
        }$};
        \node at (6.5,2) {$\tikz[scale=0.35, x=0.675cm, y=0.675cm]{
        \draw[knot, dashed] (0,0) -- (0,5);
        \draw[knot, dashed] (1,0) -- (1,5);
        \draw[fill=white] (-0.2, 1.8) rectangle (1.2,3.2);
        \draw[knot] (3,0) to[out=90, in=-90] (4,2) -- (4,4) to[out=90, in=-90] (3,5);
        \draw[knot, overcross] (2,0) -- (2,2) to[out=90, in=180] (2.5,4) to[out=0, in=90] (3,2) to[out=-90, in=90] (4,0);
        }$};
        \node at (10.5,2) {$\tikz[scale=0.35, x=0.675cm, y=0.675cm]{
        \draw[knot, dashed] (0,0) -- (0,5);
        \draw[knot, dashed] (1,0) -- (1,5);
        \draw[fill=white] (-0.2, 0.2) rectangle (1.2,1.6);
        \draw[knot] (3,0) -- (3,2) to[out=90, in=-90] (4,3) to[out=90, in=-90] (3,4) -- (3,5);
        \draw[knot, overcross] (2,0) -- (2,2) to[out=90, in=180] (3,3) to[out=0, in=90] (4,2) -- (4,0);
        }$};
        \node at (14.5,2) {$\tikz[scale=0.35, x=0.675cm, y=0.675cm]{
        \draw[knot, dashed] (0,0) -- (0,5);
        \draw[knot, dashed] (1,0) -- (1,5);
        \draw[fill=white] (-0.2, 0.2) rectangle (1.2,1.6);
        \draw[knot] (3,0) -- (3,2) to[out=90, in=-90] (2,3) to[out=90, in=-90] (3,4) -- (3,5);
        \draw[knot, overcross] (2,0) -- (2,2) to[out=90, in=180] (3,3) to[out=0, in=90] (4,2) -- (4,0);
        }$};
    % ANNOTATIONS
        %\node[below] at (10.5,0) {Move 6 A};
        \node at (8.5, -1) {Move 20};
        \draw[<->, very thick] (13.5, -0.25) -- (13.5,-1.75);
        \draw[<->, very thick] (3.5, -0.25) -- (3.5,-1.75);
    \begin{scope}[yshift=-6cm]
    % FILM
	\draw (.5,0) -- (.5,4);
	\draw (4.5,0) -- (4.5,4);
	\draw (8.5,0) -- (8.5,4);
        \draw (12.5,0) -- (12.5,4);
        \draw (16.5,0) -- (16.5,4);
 	\draw[double,double distance=2pt,line cap=rect] (0,0) -- (17,0);
	\draw[dashed, line width=2pt] (0,0) -- (17,0);
	\draw[double,double distance=2pt,line cap=rect] (0,4) -- (17,4);
	\draw[dashed, line width=2pt] (0,4) -- (17,4);
    % FRAMES
        \node at (2.5,2) {$\tikz[scale=0.35, x=0.675cm, y=0.675cm]{
        \draw[knot, dashed] (0,0) -- (0,5);
        \draw[knot, dashed] (1,0) -- (1,5);
        \draw[fill=white] (-0.2, 3.4) rectangle (1.2,4.8);
        \draw[knot] (3,0) -- (3,1) to[out=90, in=-90] (4,2) to[out=90, in=-90] (3,3) -- (3,5);
        \draw[knot, overcross] (2,0) -- (2,1) to[out=90, in=180] (3,2) to[out=0, in=90] (4,1) -- (4,0);
        }$};
        \node at (6.5,2) {$\tikz[scale=0.35, x=0.675cm, y=0.675cm]{
        \draw[knot, dashed] (0,0) -- (0,5);
        \draw[knot, dashed] (1,0) -- (1,5);
        \draw[fill=white] (-0.2, 3.4) rectangle (1.2,4.8);
        \draw[knot] (3,0) -- (3,1) to[out=90, in=-90] (2,2) to[out=90, in=-90] (3,3) -- (3,5);
        \draw[knot, overcross] (2,0) -- (2,1) to[out=90, in=180] (3,2) to[out=0, in=90] (4,1) -- (4,0);
        }$};
        \node at (10.5,2) {$\tikz[scale=0.35, x=0.675cm, y=0.675cm]{
        \draw[knot, dashed] (0,0) -- (0,5);
        \draw[knot, dashed] (1,0) -- (1,5);
        \draw[fill=white] (-0.2, 1.8) rectangle (1.2,3.2);
        \draw[knot] (3,0) to[out=90, in=-90] (2,2) -- (2,4) to[out=90, in=-90] (3,5);
        \draw[knot, overcross] (4,0) -- (4,2) to[out=90, in=0] (3.5,4) to[out=180, in=90] (3,2) to[out=-90, in=90] (2,0);
        }$};
        \node at (14.5,2) {$\tikz[scale=0.35, x=0.675cm, y=0.675cm]{
        \draw[knot, dashed] (0,0) -- (0,5);
        \draw[knot, dashed] (1,0) -- (1,5);
        \draw[fill=white] (-0.2, 0.2) rectangle (1.2,1.6);
        \draw[knot] (3,0) -- (3,2) to[out=90, in=-90] (2,3) to[out=90, in=-90] (3,4) -- (3,5);
        \draw[knot, overcross] (2,0) -- (2,2) to[out=90, in=180] (3,3) to[out=0, in=90] (4,2) -- (4,0);
        }$};
    % ANNOTATIONS
        %\node[below] at (10.5,0) {Move 6 B};
    \end{scope}
 }
\]

Moves 21, 29, and 30 all involve frames $t_0$ or $t_\infty$ which have a cap on the bottom and a cup on the top. Move 21 is relatively simple using familiar arguments: each movie consists entirely of isotopies or Reidemeister moves, so  $\mathcal{F}(\Sigma_B)$ and $\mathcal{F}(\Sigma_T)$ are isomorphisms, and up-to-unit invariance follows from Corollary \ref{cor:totturns}. For moves 29 and 30, we start by noticing that
\[
\mathrm{Hom}_{\mathcal{K}_n^n}\left(
\varphi_{
\tikz[baseline={([yshift=-.5ex]current bounding box.center)}, scale=0.2, y=0.6cm]
{   
    \draw[dotted] (1.5,4) -- (5.5,4);
    \draw[knot] (2,0) -- (2,4);
    \draw[knot, red] (3.5,1.5) -- (3.5,2.5);
    \draw[knot] (3,0) to[out=90, in=180] (3.5, 1.5) to[out=0, in=90] (4,0);
    \draw[knot] (3,4) to[out=-90, in=180] (3.5, 2.5) to[out=0, in=-90] (4,4);
    \draw[knot] (5,0) -- (5,4);
    \draw[dotted] (1.5,0) -- (5.5,0);
}
}
\tikz[baseline={([yshift=-.5ex]current bounding box.center)}, scale=0.3, y=0.6cm]
{   
    \draw[dotted] (1.5,4) -- (5.5,4);
    \draw[knot] (2,0) -- (2,4);
    \draw[knot] (3,0) to[out=90, in=180] (3.5, 1.5) to[out=0, in=90] (4,0);
    \draw[knot] (3,4) to[out=-90, in=180] (3.5, 2.5) to[out=0, in=-90] (4,4);
    \draw[knot] (5,0) -- (5,4);
    \draw[dotted] (1.5,0) -- (5.5,0);
}
,\,
H^n
\right)
\cong R.
\]
To see this, apply the adjunction of Proposition \ref{prop:KadjunC} and then Corollary \ref{cor:cup_cap}. The saddle cobordism $\mathcal{F}\left(\tikz[baseline={([yshift=-.5ex]current bounding box.center)}, scale=0.2, y=0.6cm]
{   
    \draw[dotted] (1.5,4) -- (5.5,4);
    \draw[knot] (2,0) -- (2,4);
    \draw[knot, red] (3.5,1.5) -- (3.5,2.5);
    \draw[knot] (3,0) to[out=90, in=180] (3.5, 1.5) to[out=0, in=90] (4,0);
    \draw[knot] (3,4) to[out=-90, in=180] (3.5, 2.5) to[out=0, in=-90] (4,4);
    \draw[knot] (5,0) -- (5,4);
    \draw[dotted] (1.5,0) -- (5.5,0);
}\right)$ is a generator for this hom-set. For Move 29, notice that both movies are produced by a Reidemeister I move followed by a saddle, thus $\mathcal{F}(\Sigma_T)$ and $\mathcal{F}(\Sigma_B)$ are generators of 
$
\mathrm{Hom}\left(
\varphi_{
\tikz[baseline={([yshift=-.5ex]current bounding box.center)}, scale=0.2, y=0.6cm]
{   
    \draw[dotted] (2.5,4) -- (4.5,4);
    \draw[knot, red] (3.5,1.5) -- (3.5,2.5);
    \draw[knot] (3,0) to[out=90, in=180] (3.5, 1.5) to[out=0, in=90] (4,0);
    \draw[knot] (3,4) to[out=-90, in=180] (3.5, 2.5) to[out=0, in=-90] (4,4);
    \draw[dotted] (2.5,0) -- (4.5,0);
}
}
\mathcal{F}(t_0), 
\mathcal{F}(t_\infty)
\right) \cong R$ 
and must differ by a unit at most. (Actually, Reidemeister I moves are isomorphisms in the $\mathcal{G}$-graded sense, so invariance of this move holds in $\mathcal{K}_n^n$ as well as $\check{\mathcal{K}}_n^n$.) Move 30 follows from the same style of argument, though we have to work in $\check{\mathcal{K}}_n^n$ since Reidemeister II moves are involved. 

\[
\tikz[baseline={([yshift=-.5ex]current bounding box.center)}, scale=.35]{
    % FILM
	\draw (.5,0) -- (.5,4);
	\draw (4.5,0) -- (4.5,4);
	\draw (8.5,0) -- (8.5,4);
        \draw (12.5,0) -- (12.5,4);
        \draw (16.5,0) -- (16.5,4);
        \draw (20.5,0) -- (20.5,4);
        \draw (24.5,0) -- (24.5,4);
 	\draw[double,double distance=2pt,line cap=rect] (0,0) -- (25,0);
	\draw[dashed, line width=2pt] (0,0) -- (25,0);
	\draw[double,double distance=2pt,line cap=rect] (0,4) -- (25,4);
	\draw[dashed, line width=2pt] (0,4) -- (25,4);
    % FRAMES
        \node at (2.5,2) {$\tikz[scale=0.8, x=0.7cm, y=0.7cm]{
        \draw[knot] (0,0) to[out=90, in=180] (1.25, 1.25) to[out=0, in=90] (2,0);
        \draw[knot, overcross] (0,2) to[out=-90, in=180] (0.75, 0.75) to[out=0, in=-90] (2,2);
        }$};
        \node at (6.5,2) {$\tikz[scale=0.8, x=0.7cm, y=0.7cm]{
        \draw[knot] (0,0) to[out=90, in=180] (0.75, 1.25) to[out=0, in=90] (2,0);
        \draw[knot, overcross] (0,2) to[out=-90, in=180] (1.25, 0.75) to[out=0, in=-90] (2,2);
        }$};
        \node at (10.5,2) {$\tikz[scale=0.8, x=0.7cm, y=0.7cm]{
        \draw[knot] (0,0) to[out=0, in=-90] (1.25, 0.5) to[out=90, in=180] (1.5, 2) to[out=0, in=90] (2, 0.5) to[out=-90, in=90] (2,0);
        \draw[knot, overcross] (0,2) to[out=-90, in=180] (0.5, 0.5) to[out=0, in=-90] (1,1) to[out=90, in=210] (2,2);
        }$};
        \node at (14.5,2) {$\tikz[scale=0.8, x=0.7cm, y=0.7cm]{
        \draw[knot] (0,0) to[out=0, in=-90] (1.25, 0.5) to[out=90, in=-90] (0.75,1.25) to[out=90, in=210] (1.5, 1.5) to[out=30, in=90] (2,0);
        \draw[knot, overcross] (0,2) to[out=-90, in=180] (0.5, 0.5) to[out=0, in=-90] (1,1) to[out=90, in=210] (2,2);
        }$};
        \node at (18.5,2) {$\tikz[scale=0.8, x=0.7cm, y=0.7cm]{
        \draw[knot] (0,0) to[out=0, in=-90] (1.15,0.75) to[out=90, in=180] (1.5, 1.5) to[out=0, in=90] (2,0);
        \draw[knot] (0,2) to[out=-90, in=180] (0.5, 0.5) to[out=0, in=-90] (0.85, 1.25) to[out=90, in=180] (2,2);
        }$};
        \node at (22.5,2) {$\tikz[scale=0.8, x=0.7cm, y=0.7cm]{
        \draw[knot] (0,0) to[out=90, in=180] (1,0.75) to[out=0, in=90] (2,0);
        \draw[knot] (0,2) to[out=-90, in=180] (1,1.25) to[out=0, in=-90] (2,2);
        }$};
    % ANNOTATIONS
        %\node[below] at (10.5,0) {Move 6 A};
        \node at (12.5, -1) {Move 21};
        \draw[<->, very thick] (17.5, -0.25) -- (17.5,-1.75);
        \draw[<->, very thick] (7.5, -0.25) -- (7.5,-1.75);
    \begin{scope}[yshift=-6cm]
    % FILM
	\draw (.5,0) -- (.5,4);
	\draw (4.5,0) -- (4.5,4);
	\draw (8.5,0) -- (8.5,4);
        \draw (12.5,0) -- (12.5,4);
        \draw (16.5,0) -- (16.5,4);
        \draw (20.5,0) -- (20.5,4);
        \draw (24.5,0) -- (24.5,4);
 	\draw[double,double distance=2pt,line cap=rect] (0,0) -- (25,0);
	\draw[dashed, line width=2pt] (0,0) -- (25,0);
	\draw[double,double distance=2pt,line cap=rect] (0,4) -- (25,4);
	\draw[dashed, line width=2pt] (0,4) -- (25,4);
    % FRAMES
        \node at (2.5,2) {$\tikz[scale=0.8, x=0.7cm, y=0.7cm]{
        \draw[knot] (0,0) to[out=90, in=180] (1.25, 1.25) to[out=0, in=90] (2,0);
        \draw[knot, overcross] (0,2) to[out=-90, in=180] (0.75, 0.75) to[out=0, in=-90] (2,2);
        }$};
        \node at (6.5,2) {$\tikz[scale=0.8, x=0.7cm, y=0.7cm]{
        \draw[knot] (0,0) to[out=90, in=-90] (0,0.5) to[out=90, in=180] (0.5,2) to[out=0, in=90] (0.75, 0.5) to[out=-90, in=180] (2,0);
        \draw[knot, overcross] (0,2) to[out=-30, in=90] (1,1) to[out=-90, in=180] (1.5, 0.5) to[out=0, in=-90] (2,2);
        }$};
        \node at (10.5,2) {$\tikz[scale=0.8, x=0.7cm, y=0.7cm]{
        \draw[knot] (0,0) to[out=90, in=150] (0.5, 1.5) to[out=-30, in=90] (1.5, 1.25) to[out=-90, in=90] (0.75, 0.5) to[out=-90, in=180] (2,0);
        \draw[knot, overcross] (0,2) to[out=0, in=90] (1,1.25) to[out=-90, in=180] (1.5, 0.5) to[out=0, in=-90] (2,2);
        }$};
        \node at (14.5,2) {$\tikz[scale=0.8, x=0.7cm, y=0.7cm]{
        \draw[knot] (0,0) to[out=90, in=180] (0.5,1.5) to[out=0, in=90] (0.85, 0.75) to[out=-90, in=180] (2,0);
        \draw[knot] (0,2) to[out=0, in=90] (1.15, 1.25) to[out=-90, in=180] (1.5, 0.5) to[out=0, in=-90] (2,2);
        }$};
        \node at (18.5,2) {$\tikz[scale=0.8, x=0.7cm, y=0.7cm]{
        \draw[knot] (0,0) to[out=90, in=180] (1,0.75) to[out=0, in=90] (2,0);
        \draw[knot] (0,2) to[out=-90, in=180] (1,1.25) to[out=0, in=-90] (2,2);
        }$};
        \node at (22.5,2) {$\tikz[scale=0.8, x=0.7cm, y=0.7cm]{
        \draw[knot] (0,0) to[out=90, in=180] (1,0.75) to[out=0, in=90] (2,0);
        \draw[knot] (0,2) to[out=-90, in=180] (1,1.25) to[out=0, in=-90] (2,2);
        }$};
    % ANNOTATIONS
        %\node[below] at (10.5,0) {Move 6 B};
    \end{scope}
 }
\]
\[
\tikz[baseline={([yshift=-.5ex]current bounding box.center)}, scale=.35]{
    % FILM
	\draw (.5,0) -- (.5,4);
	\draw (4.5,0) -- (4.5,4);
	\draw (8.5,0) -- (8.5,4);
        \draw (12.5,0) -- (12.5,4);
 	\draw[double,double distance=2pt,line cap=rect] (0,0) -- (13,0);
	\draw[dashed, line width=2pt] (0,0) -- (13,0);
	\draw[double,double distance=2pt,line cap=rect] (0,4) -- (13,4);
	\draw[dashed, line width=2pt] (0,4) -- (13,4);
    % FRAMES
        \node at (2.5,2) {$\tikz[scale=0.8, x=0.7cm, y=0.7cm]{
        \draw[knot] (0,0) to[out=90, in=180] (0.5, 0.67) to[out=0, in=90] (1,0);
        \draw[knot] (0,2) to[out=-90, in=180] (0.5, 1.33) to[out=0, in=-90] (1,2);
        }$};
        \node at (6.5, 2) {$\tikz[scale=0.8, x=0.7cm, y=0.7cm]{
        \draw[knot] (1,0) to[out=90, in=-90] (0,0.75) to[out=90, in=180] (0.5,1) to[out=0, in=90] (1,0.75);
        \draw[knot, overcross] (0,0) to[out=90, in=-90] (1,0.75);
        \draw[knot] (0,2) to[out=-90, in=180] (0.5, 1.33) to[out=0, in=-90] (1,2);
        }$};
        \node at (10.5,2) {$\tikz[scale=0.8, x=0.7cm, y=0.7cm]{
        \draw[knot] (1,0) to[out=90, in=-90] (0,2);
        \draw[knot, overcross] (0,0) to[out=90, in=-90] (1,2);
        }$};
    % ANNOTATIONS
        %\node[below] at (10.5,0) {Move 6 A};
        \node at (6.5, -1) {Move 29};
        \draw[<->, very thick] (11.5, -0.25) -- (11.5,-1.75);
        \draw[<->, very thick] (1.5, -0.25) -- (1.5,-1.75);
    \begin{scope}[yshift=-6cm]
    % FILM
	\draw (.5,0) -- (.5,4);
	\draw (4.5,0) -- (4.5,4);
	\draw (8.5,0) -- (8.5,4);
        \draw (12.5,0) -- (12.5,4);
 	\draw[double,double distance=2pt,line cap=rect] (0,0) -- (13,0);
	\draw[dashed, line width=2pt] (0,0) -- (13,0);
	\draw[double,double distance=2pt,line cap=rect] (0,4) -- (13,4);
	\draw[dashed, line width=2pt] (0,4) -- (13,4);
    % FRAMES
        \node at (2.5,2) {$\tikz[scale=0.8, x=0.7cm, y=0.7cm]{
        \draw[knot] (0,0) to[out=90, in=180] (0.5, 0.67) to[out=0, in=90] (1,0);
        \draw[knot] (0,2) to[out=-90, in=180] (0.5, 1.33) to[out=0, in=-90] (1,2);
        }$};
        \node at (6.5,2) {$\tikz[scale=0.8, x=0.7cm, y=0.7cm]{
        \draw[knot] (0,0) to[out=90, in=180] (0.5, 0.67) to[out=0, in=90] (1,0);
        \draw[knot] (0,2) to[out=-90, in=90] (1, 1.25) to[out=-90, in=0] (0.5, 1) to[out=180, in=-90] (0,1.25);
        \draw[knot, overcross] (1,2) to[out=-90, in=90] (0,1.25);
        }$};
        \node at (10.5,2) {$\tikz[scale=0.8, x=0.7cm, y=0.7cm]{
        \draw[knot] (1,0) to[out=90, in=-90] (0,2);
        \draw[knot, overcross] (0,0) to[out=90, in=-90] (1,2);
        }$};
    % ANNOTATIONS
        %\node[below] at (10.5,0) {Move 6 B};
    \end{scope}
 }
\qquad
\tikz[baseline={([yshift=-.5ex]current bounding box.center)}, scale=.35]{
    % FILM
	\draw (.5,0) -- (.5,4);
	\draw (4.5,0) -- (4.5,4);
	\draw (8.5,0) -- (8.5,4);
        \draw (12.5,0) -- (12.5,4);
        \draw (16.5,0) -- (16.5,4);
 	\draw[double,double distance=2pt,line cap=rect] (0,0) -- (17,0);
	\draw[dashed, line width=2pt] (0,0) -- (17,0);
	\draw[double,double distance=2pt,line cap=rect] (0,4) -- (17,4);
	\draw[dashed, line width=2pt] (0,4) -- (17,4);
    % FRAMES
        \node at (2.5,2) {$\tikz[scale=0.6, x=0.7cm, y=0.65cm]{
        \draw[knot] (1,0) to[out=90, in=-90] (0,1) to[out=90, in=-90] (2,2) to[out=90, in=-90] (1,3);
        \draw[knot, overcross] (0,0) to[out=90, in=180] (1,1) to[out=0, in=90] (2,0);
        \draw[knot, overcross] (0,3) to[out=-90, in=180] (1,2) to[out=0, in=-90] (2,3); 
        }$};
        \node at (6.5,2) {$\tikz[scale=0.6, x=0.7cm, y=0.65cm]{
        \draw[knot] (1,0) to[out=90, in=-90] (0,1.5) to[out=90, in=-90] (1,3);
        \draw[knot, overcross] (0,0) to[out=90, in=180] (1,1) to[out=0, in=90] (2,0);
        \draw[knot, overcross] (0,3) to[out=-90, in=180] (1,2) to[out=0, in=-90] (2,3); 
        }$};
        \node at (10.5,2) {$\tikz[scale=0.6, x=0.7cm, y=0.65cm]{
        \draw[knot] (1,0) to[out=90, in=-90] (0,1.5) to[out=90, in=-90] (1,3);
        \draw[knot, overcross] (0,0) to[out=90, in=-90] (1,1.5) to[out=90, in=-90] (0,3);
        \draw[knot] (2,0) to[out=90, in=-90] (1.6, 1.5) to[out=90, in=-90] (2,3);
        }$};
        \node at (14.5,2) {$\tikz[scale=0.6, x=0.7cm, y=0.65cm]{
        \draw[knot] (0,0) to[out=90, in=-90] (0.4, 1.5) to[out=90, in=-90] (0,3);
        \draw[knot] (1,0) -- (1,3);
        \draw[knot] (2,0) to[out=90, in=-90] (1.6, 1.5) to[out=90, in=-90] (2,3);
        }$};
    % ANNOTATIONS
        %\node[below] at (10.5,0) {Move 6 A};
        \node at (8.5, -1) {Move 30};
        \draw[<->, very thick] (13.5, -0.25) -- (13.5,-1.75);
        \draw[<->, very thick] (3.5, -0.25) -- (3.5,-1.75);
    \begin{scope}[yshift=-6cm]
    % FILM
	\draw (.5,0) -- (.5,4);
	\draw (4.5,0) -- (4.5,4);
	\draw (8.5,0) -- (8.5,4);
        \draw (12.5,0) -- (12.5,4);
        \draw (16.5,0) -- (16.5,4);
 	\draw[double,double distance=2pt,line cap=rect] (0,0) -- (17,0);
	\draw[dashed, line width=2pt] (0,0) -- (17,0);
	\draw[double,double distance=2pt,line cap=rect] (0,4) -- (17,4);
	\draw[dashed, line width=2pt] (0,4) -- (17,4);
    % FRAMES
        \node at (2.5,2) {$\tikz[scale=0.6, x=0.7cm, y=0.65cm]{
        \draw[knot] (1,0) to[out=90, in=-90] (0,1) to[out=90, in=-90] (2,2) to[out=90, in=-90] (1,3);
        \draw[knot, overcross] (0,0) to[out=90, in=180] (1,1) to[out=0, in=90] (2,0);
        \draw[knot, overcross] (0,3) to[out=-90, in=180] (1,2) to[out=0, in=-90] (2,3); 
        }$};
        \node at (6.5,2) {$\tikz[scale=0.6, x=0.7cm, y=0.65cm]{
        \draw[knot] (1,0) to[out=90, in=-90] (2,1.5) to[out=90, in=-90] (1,3);
        \draw[knot, overcross] (0,0) to[out=90, in=180] (1,1) to[out=0, in=90] (2,0);
        \draw[knot, overcross] (0,3) to[out=-90, in=180] (1,2) to[out=0, in=-90] (2,3); 
        }$};
        \node at (10.5,2) {$\tikz[scale=0.6, x=0.7cm, y=0.65cm]{
        \draw[knot] (0,0) to[out=90, in=-90] (0.4, 1.5) to[out=90, in=-90] (0,3);
        \draw[knot] (1,0) to[out=90, in=-90] (2,1.5) to[out=90, in=-90] (1,3);
        \draw[knot, overcross] (2,0) to[out=90, in=-90] (1,1.5) to[out=90, in=-90] (2,3);
        }$};
        \node at (14.5,2) {$\tikz[scale=0.6, x=0.7cm, y=0.65cm]{
        \draw[knot] (0,0) to[out=90, in=-90] (0.4, 1.5) to[out=90, in=-90] (0,3);
        \draw[knot] (1,0) -- (1,3);
        \draw[knot] (2,0) to[out=90, in=-90] (1.6, 1.5) to[out=90, in=-90] (2,3);
        }$};
    % ANNOTATIONS
        %\node[below] at (10.5,0) {Move 6 B};
    \end{scope}
 }
\]

Moves 22, 27, and 28 involve births/deaths, so $\mathcal{F}(\Sigma_B)$ and $\mathcal{F}(\Sigma_T)$ are also not isomorphisms. In each, let $\star \in \{T, B\}$ and decompose  $\Sigma_\star$ as $t_0 \xrightarrow{P_\star} t_\star \xrightarrow{Q_\star} t_\infty$. For each of the moves in this group, $t_\star$ will denote the second frame of the movie $\Sigma_\star$. For move 22, $P_\star$ is an isotopy and $Q_\star$ is a death. Thus, $\mathcal{F}(P_\star)$ is an isomorphism. By the delooping isomorphism, $\mathcal{F}(t_\star) \cong \mathcal{F}(t_\infty) \{-1,0\} \oplus \mathcal{F}(t_\infty) \{0,1\}$, so we have that
\[
\mathrm{Hom}_{\mathcal{K}_n^n}\left(
\varphi_{\tikz[baseline={([yshift=-.5ex]current bounding box.center)}, scale=.3]{
	\draw (1,0) .. controls (1,1) and (2,1) .. (2,0);
	\draw (1,0) .. controls (1,-.25) and (2,-.25) .. (2,0);
	\draw[dashed] (1,0) .. controls (1,.25) and (2,.25) .. (2,0);
}}
\mathcal{F}(t_T),
\mathcal{F}(t_\infty)
\right)
\cong
\mathrm{Hom}_{\mathcal{K}_n^n} \left(\mathcal{F}(t_\infty)\{1,1\} \oplus \mathcal{F}(t_\infty), \mathcal{F}(t_\infty) \right)
\cong
R.
\]
For the last isomorphism, we refer to Corollary \ref{cor:totturns} and its proof if the box is a U-turn, and to Corollary \ref{cor:invs} if the box is a crossing. Then, $\mathcal{F}(Q_T)$ and $\mathcal{F}(Q_B) \circ \mathcal{F}(P_B) \circ \mathcal{F}(P_T)^{-1}$ both generate this hom-set, and thus differ only up to multiplication by a unit. It follows that $\mathcal{F}(\Sigma_T)$ and $\mathcal{F}(\Sigma_B)$ also only differ up to multiplication by a unit. For move 27, $\mathcal{F}(P_T) = \mathcal{F}(P_B)$ as both are identical births, and $\mathcal{F}(Q_T)^{-1} \circ \mathcal{F}(Q_B)$ is an isomorphism. Also, notice by the delooping isomorphism, we have that
\[
\mathrm{Hom}\left(
\varphi_{\tikz[baseline={([yshift=-.5ex]current bounding box.center)}, scale=0.3]{
	\draw (1,2) .. controls (1,1) and (2,1) .. (2,2);
	\draw (1,2) .. controls (1,1.75) and (2,1.75) .. (2,2);
	\draw (1,2) .. controls (1,2.25) and (2,2.25) .. (2,2);
}}
H^n, 
H^n \otimes V
\right)
\cong
\mathrm{Hom}\left(
H^n \{1,0\},
H^n \{1,0\} \oplus H^n \{0,-1\
\right)
\cong R.
\]
The maps $\mathcal{F}(P_T)$ and $\mathcal{F}(Q_T)^{-1} \circ \mathcal{F}(Q_B) \circ \mathcal{F}(P_B)$ are generators of this hom-set, so they differ up to multiplication by a unit, and thus $\mathcal{F}(\Sigma_T)$ and $\mathcal{F}(\Sigma_B)$ also differ only up to a unit. For move 28, $P_T$ and $P_B$ are both births. Notice that $\mathcal{F}(Q_B)^{-1} \circ \mathcal{F}(Q_T): \mathcal{F}(t_T) \to \mathcal{F}(t_B)$ is a grading preserving isomorphism: indeed, this follows in the $\mathcal{G}$-graded setting. To see this, notice that one of the movies involves a Reidemeister II move of the first type in (3) of Proposition \ref{prop:Rmoves}, and the other has a Reidemeister II move of the second type. However, since one of the components is closed, the saddle cobordism is a merge, and is canonically isomorphic to the grading shift $\{-1,1\}$. By the delooping isomorphism, we have that $\mathcal{F}(t_\star) \cong H^n \{1,0\} \oplus H^n \{0,-1\}$, so it follows that
\[
\mathrm{Hom}\left(
\varphi_{\tikz[baseline={([yshift=-.5ex]current bounding box.center)}, scale=0.3]{
	\draw (1,2) .. controls (1,1) and (2,1) .. (2,2);
	\draw (1,2) .. controls (1,1.75) and (2,1.75) .. (2,2);
	\draw (1,2) .. controls (1,2.25) and (2,2.25) .. (2,2);
}}
H^n, 
\mathcal{F}(t_\star)
\right)
\cong R.
\]
as before. Now, $\mathcal{F}(Q_B)^{-1} \circ \mathcal{F}(Q_T) \circ \mathcal{F}(P_T)$ and $\mathcal{F}(P_B)$ are generators of this hom-set, and thus differ by at most a unit. Thus $\mathcal{F}(\Sigma_T)$ and $\mathcal{F}(\Sigma_B)$ differ by at must a unit of $R$.

\[
\tikz[baseline={([yshift=-.5ex]current bounding box.center)}, scale=.35]{
    % FILM
	\draw (.5,0) -- (.5,4);
	\draw (4.5,0) -- (4.5,4);
	\draw (8.5,0) -- (8.5,4);
        \draw (12.5,0) -- (12.5,4);
 	\draw[double,double distance=2pt,line cap=rect] (0,0) -- (13,0);
	\draw[dashed, line width=2pt] (0,0) -- (13,0);
	\draw[double,double distance=2pt,line cap=rect] (0,4) -- (13,4);
	\draw[dashed, line width=2pt] (0,4) -- (13,4);
    % FRAMES
        \node at (2.5,2) {$\tikz[scale=0.35, x=0.675cm, y=0.675cm]{
        \draw[knot, dashed] (0,0) -- (0,5);
        \draw[knot, dashed] (1,0) -- (1,5);
        \draw[fill=white] (-0.2, 1.8) rectangle (1.2,3.2);
        \draw[knot] (3,2.5) ellipse (0.9cm and 1.5cm);
        }$};
        \node at (6.5, 2) {$\tikz[scale=0.35, x=0.675cm, y=0.675cm]{
        \draw[knot, dashed] (0,0) -- (0,5);
        \draw[knot, dashed] (1,0) -- (1,5);
        \draw[fill=white] (-0.2, 0.2) rectangle (1.2,1.6);
        \draw[knot, white] (3,2.5) ellipse (0.9cm and 1.2cm);
        \draw[knot] (3,3.5) ellipse (0.8cm and 1.2cm);
        }$};
        \node at (10.5,2) {$\tikz[scale=0.35, x=0.675cm, y=0.675cm]{
        \draw[knot, dashed] (0,0) -- (0,5);
        \draw[knot, dashed] (1,0) -- (1,5);
        \draw[fill=white] (-0.2, 1.8) rectangle (1.2,3.2);
        \draw[knot, white] (3,2.5) ellipse (0.9cm and 1.2cm);
        }$};
    % ANNOTATIONS
        %\node[below] at (10.5,0) {Move 6 A};
        \node at (6.5, -1) {Move 22};
        \draw[<->, very thick] (11.5, -0.25) -- (11.5,-1.75);
        \draw[<->, very thick] (1.5, -0.25) -- (1.5,-1.75);
    \begin{scope}[yshift=-6cm]
    % FILM
	\draw (.5,0) -- (.5,4);
	\draw (4.5,0) -- (4.5,4);
	\draw (8.5,0) -- (8.5,4);
        \draw (12.5,0) -- (12.5,4);
 	\draw[double,double distance=2pt,line cap=rect] (0,0) -- (13,0);
	\draw[dashed, line width=2pt] (0,0) -- (13,0);
	\draw[double,double distance=2pt,line cap=rect] (0,4) -- (13,4);
	\draw[dashed, line width=2pt] (0,4) -- (13,4);
    % FRAMES
        \node at (2.5,2) {$\tikz[scale=0.35, x=0.675cm, y=0.675cm]{
        \draw[knot, dashed] (0,0) -- (0,5);
        \draw[knot, dashed] (1,0) -- (1,5);
        \draw[fill=white] (-0.2, 1.8) rectangle (1.2,3.2);
        \draw[knot] (3,2.5) ellipse (0.9cm and 1.5cm);
        }$};
        \node at (6.5,2) {$\tikz[scale=0.35, x=0.675cm, y=0.675cm]{
        \draw[knot, dashed] (0,0) -- (0,5);
        \draw[knot, dashed] (1,0) -- (1,5);
        \draw[fill=white] (-0.2, 3.4) rectangle (1.2,4.8);
        \draw[knot, white] (3,2.5) ellipse (0.9cm and 1.2cm);
        \draw[knot] (3,1.5) ellipse (0.8cm and 1.2cm);
        }$};
        \node at (10.5,2) {$\tikz[scale=0.35, x=0.675cm, y=0.675cm]{
        \draw[knot, dashed] (0,0) -- (0,5);
        \draw[knot, dashed] (1,0) -- (1,5);
        \draw[fill=white] (-0.2, 1.8) rectangle (1.2,3.2);
        \draw[knot, white] (3,2.5) ellipse (0.9cm and 1.2cm);
        }$};
    % ANNOTATIONS
        %\node[below] at (10.5,0) {Move 6 B};
    \end{scope}
 }
\]
\[
\tikz[baseline={([yshift=-.5ex]current bounding box.center)}, scale=.35]{
    % FILM
	\draw (.5,0) -- (.5,4);
	\draw (4.5,0) -- (4.5,4);
	\draw (8.5,0) -- (8.5,4);
        \draw (12.5,0) -- (12.5,4);
        \draw (16.5,0) -- (16.5,4);
 	\draw[double,double distance=2pt,line cap=rect] (0,0) -- (17,0);
	\draw[dashed, line width=2pt] (0,0) -- (17,0);
	\draw[double,double distance=2pt,line cap=rect] (0,4) -- (17,4);
	\draw[dashed, line width=2pt] (0,4) -- (17,4);
    % FRAMES
        \node at (2.5,2) {$\tikz[scale=0.8]{
        }$};
        \node at (6.5,2) {$\tikz[scale=0.8, x=0.75cm, y=0.75cm]{
        \draw[knot] (0.5,0.5) ellipse (0.45cm and 0.7cm);
        }$};
        \node at (10.5,2) {$\tikz[scale=0.8, x=0.8cm, y=0.8cm]{
        \draw[knot] (0,0) to[out=90, in=-90] (0.75,1);
        \draw[knot, overcross] (1,0) to[out=90, in=-90] (0.25,1);
        \draw[knot] (0,0) to[out=-90, in=-90] (1,0);
        \draw[knot] (0.25,1) to[out=90, in=90] (0.75, 1);
        }$};
        \node at (14.5,2) {$\tikz[scale=0.8, x=0.8cm, y=0.8cm]{
        \draw[knot] (0,0) to[out=90, in=-90] (1,1) to[out=90, in=90] (0,1);
        \draw[knot, overcross] (0,0) to[out=-90, in=-90] (1,0) to[out=90, in=-90] (0,1);
        }$};
    % ANNOTATIONS
        %\node[below] at (10.5,0) {Move 6 A};
        \node at (8.5, -1) {Move 27};
        \draw[<->, very thick] (13.5, -0.25) -- (13.5,-1.75);
        \draw[<->, very thick] (3.5, -0.25) -- (3.5,-1.75);
    \begin{scope}[yshift=-6cm]
    % FILM
	\draw (.5,0) -- (.5,4);
	\draw (4.5,0) -- (4.5,4);
	\draw (8.5,0) -- (8.5,4);
        \draw (12.5,0) -- (12.5,4);
        \draw (16.5,0) -- (16.5,4);
 	\draw[double,double distance=2pt,line cap=rect] (0,0) -- (17,0);
	\draw[dashed, line width=2pt] (0,0) -- (17,0);
	\draw[double,double distance=2pt,line cap=rect] (0,4) -- (17,4);
	\draw[dashed, line width=2pt] (0,4) -- (17,4);
    % FRAMES
        \node at (2.5,2) {$\tikz[scale=0.8, x=0.8cm, y=0.8cm]{
        }$};
        \node at (6.5,2) {$\tikz[scale=0.8, x=0.8cm, y=0.8cm]{
        \draw[knot] (0.5,0.5) ellipse (0.45cm and 0.7cm);
        }$};
        \node at (10.5,2) {$\tikz[scale=0.8, x=0.8cm, y=0.8cm]{
        \draw[knot] (0.25,0) to[out=90, in=-90] (1,1);
        \draw[knot, overcross] (0.75,0) to[out=90, in=-90] (0,1);
        \draw[knot] (0.25,0) to[out=-90, in=-90] (0.75, 0);
        \draw[knot] (0,1) to[out=90,in=90] (1,1);
        }$};
        \node at (14.5,2) {$\tikz[scale=0.8, x=0.8cm, y=0.8cm]{
        \draw[knot] (0,0) to[out=90, in=-90] (1,1) to[out=90, in=90] (0,1);
        \draw[knot, overcross] (0,0) to[out=-90, in=-90] (1,0) to[out=90, in=-90] (0,1);
        }$};
    % ANNOTATIONS
        %\node[below] at (10.5,0) {Move 6 B};
    \end{scope}
 }
\qquad
\tikz[baseline={([yshift=-.5ex]current bounding box.center)}, scale=.35]{
    % FILM
	\draw (.5,0) -- (.5,4);
	\draw (4.5,0) -- (4.5,4);
	\draw (8.5,0) -- (8.5,4);
        \draw (12.5,0) -- (12.5,4);
        \draw (16.5,0) -- (16.5,4);
 	\draw[double,double distance=2pt,line cap=rect] (0,0) -- (17,0);
	\draw[dashed, line width=2pt] (0,0) -- (17,0);
	\draw[double,double distance=2pt,line cap=rect] (0,4) -- (17,4);
	\draw[dashed, line width=2pt] (0,4) -- (17,4);
    % FRAMES
        \node at (2.5,2) {$\tikz[scale=0.8, x=0.8cm, y=0.7cm]{
        \draw[knot] (0.5,0) -- (0.5,2);
        }$};
        \node at (6.5,2) {$\tikz[scale=0.8, x=0.8cm, y=0.7cm]{
        \draw[knot] (0.5,0) -- (0.5,2);
        \draw[knot] (0, 1) ellipse (0.25cm and 0.5cm);
        \draw[knot, color=white] (1, 1) circle (0.25cm);
        }$};
        \node at (10.5,2) {$\tikz[scale=0.8, x=0.8cm, y=0.7cm]{
        \draw[knot, rounded corners=0.2cm, color=white] (1,1) -- (1,1.75) -- (-0.25, 1) -- (1,0.25) -- cycle;
        \draw[knot, rounded corners=2.15mm] (0,1) -- (0,1.75) -- (1.25, 1) -- (0,0.25) -- cycle;
        \draw[knot, overcross] (0.5,0) -- (0.5,2);
        }$};
        \node at (14.5,2) {$\tikz[scale=0.8, x=0.8cm, y=0.7cm]{
        \draw[knot, rotate around={45:(0.5,1)}] (0.5,1) ellipse (0.6cm and 0.3cm);
        \draw[knot, overcross] (0.5,0) -- (0.5,2);
        }$};
    % ANNOTATIONS
        %\node[below] at (10.5,0) {Move 6 A};
        \node at (8.5, -1) {Move 28};
        \draw[<->, very thick] (13.5, -0.25) -- (13.5,-1.75);
        \draw[<->, very thick] (3.5, -0.25) -- (3.5,-1.75);
    \begin{scope}[yshift=-6cm]
    % FILM
	\draw (.5,0) -- (.5,4);
	\draw (4.5,0) -- (4.5,4);
	\draw (8.5,0) -- (8.5,4);
        \draw (12.5,0) -- (12.5,4);
        \draw (16.5,0) -- (16.5,4);
 	\draw[double,double distance=2pt,line cap=rect] (0,0) -- (17,0);
	\draw[dashed, line width=2pt] (0,0) -- (17,0);
	\draw[double,double distance=2pt,line cap=rect] (0,4) -- (17,4);
	\draw[dashed, line width=2pt] (0,4) -- (17,4);
    % FRAMES
        \node at (2.5,2) {$\tikz[scale=0.8, x=0.8cm, y=0.7cm]{
        \draw[knot] (0.5,0) -- (0.5,2);
        }$};
        \node at (6.5,2) {$\tikz[scale=0.8, x=0.8cm, y=0.7cm]{
        \draw[knot] (0.5,0) -- (0.5,2);
        \draw[knot, color=white] (0, 1) circle (0.25cm);
        \draw[knot] (1, 1) circle (0.25cm and 0.5cm);
        }$};
        \node at (10.5,2) {$\tikz[scale=0.8, x=0.8cm, y=0.7cm]{
        \draw[knot, rounded corners=0.2cm, color=white] (0,1) -- (0,1.75) -- (1.25, 1) -- (0,0.25) -- cycle;
        \draw[knot, rounded corners=2.15mm] (1,1) -- (1,1.75) -- (-0.25, 1) -- (1,0.25) -- cycle;
        \draw[knot, overcross] (0.5,0) -- (0.5,2);
        }$};
        \node at (14.5,2) {$\tikz[scale=0.8, x=0.8cm, y=0.7cm]{
        \draw[knot, rotate around={45:(0.5,1)}] (0.5,1) ellipse (0.6cm and 0.3cm);
        \draw[knot, overcross] (0.5,0) -- (0.5,2);
        }$};
    % ANNOTATIONS
        %\node[below] at (10.5,0) {Move 6 B};
    \end{scope}
 }
\]

Move 31 requires that
\[
(f \otimes \mathrm{Id}) \circ (\mathrm{Id} \otimes g) = (\mathrm{Id} \otimes g) \circ (f \otimes \mathrm{Id})
\]
for $f, g$ morphisms of $\check{\mathcal{K}}_n^n$. In general, Proposition 4.33 of \cite{naisse2020odd} states that for homogeneous maps $M_1 \xrightarrow{f_1} M_2 \xrightarrow{f_2} M_3$ and $N_1 \xrightarrow{g_1} N_2 \xrightarrow{g_2} N_3$ of $\mathcal{C}$-graded $(A,A)$-bimodules,
\[
\left[(f_2 \circ_\mathcal{C} f_1) \otimes (g_2 \circ_\mathcal{C} g_1)\right](m \otimes n)
=
\Xi_{\substack{\abs{f_1}, \abs{f_2} \\ \abs{g_1}, \abs{g_2}}}(\abs{m} \bullet \abs{n}) \left[(f_2 \otimes g_2) \circ_\mathcal{C} (f_1 \otimes g_1)\right] (m \otimes n)
\]
for units $\Xi_{\substack{\abs{f_1}, \abs{f_2} \\ \abs{g_1}, \abs{g_2}}}$, where we recall that $\circ_\mathcal{C}$ is $\mathcal{C}$-graded vertical composition (\ref{eq:cgradedcomp}). Then, since
\[
(f \circ_\mathcal{C} \mathrm{Id}) \otimes (\mathrm{Id} \circ_{\mathcal{C}} g)
=
f \otimes g
=
(\mathrm{Id} \circ_\mathcal{C} f) \otimes (g \circ_\mathcal{C} \mathrm{Id})
\]
the result follows. \hfill $\square$

\newpage

\bibliographystyle{alpha}
\bibliography{OKH_and_HH}

\newcommand{\etalchar}[1]{$^{#1}$}
\begin{thebibliography}{BHPW23}

\bibitem[AGH{\etalchar{+}}23]{MR4644963}
Katharine Adamyk, Teena Gerhardt, Kathryn Hess, Inbar Klang, and Hana~Jia Kong.
\newblock A shadow perspective on equivariant {H}ochschild homologies.
\newblock {\em Int. Math. Res. Not. IMRN}, (18):15299--15357, 2023.

\bibitem[AM99]{albuquerque1998quasialgebra}
Helena Albuquerque and Shahn Majid.
\newblock Quasialgebra structure of the octonions.
\newblock {\em J. Algebra}, 220(1):188--224, 1999.

\bibitem[Bar24]{MR4768952}
Justin Barhite.
\newblock Bicategorical traces and cotraces.
\newblock {\em Theory Appl. Categ.}, 41:Paper No. 22, 707--759, 2024.

\bibitem[BHPW23]{MR4598808}
Anna Beliakova, Matthew Hogancamp, Krzysztof~K. Putyra, and Stephan~M. Wehrli.
\newblock On the functoriality of {$sl_2$} tangle homology.
\newblock {\em Algebr. Geom. Topol.}, 23(3):1303--1361, 2023.

\bibitem[BL03]{MR2020556}
John~C. Baez and Laurel Langford.
\newblock Higher-dimensional algebra. {IV}. 2-tangles.
\newblock {\em Adv. Math.}, 180(2):705--764, 2003.

\bibitem[Bla10]{MR2647055}
Christian Blanchet.
\newblock An oriented model for {K}hovanov homology.
\newblock {\em J. Knot Theory Ramifications}, 19(2):291--312, 2010.

\bibitem[BM12]{MR2928988}
Andrew~J. Blumberg and Michael~A. Mandell.
\newblock Localization theorems in topological {H}ochschild homology and
  topological cyclic homology.
\newblock {\em Geom. Topol.}, 16(2):1053--1120, 2012.

\bibitem[BN05]{MR2174270}
Dror Bar-Natan.
\newblock Khovanov's homology for tangles and cobordisms.
\newblock {\em Geom. Topol.}, 9:1443--1499, 2005.

\bibitem[BPW19]{MR3910068}
Anna Beliakova, Krzysztof~K. Putyra, and Stephan~M. Wehrli.
\newblock Quantum link homology via trace functor {I}.
\newblock {\em Invent. Math.}, 215(2):383--492, 2019.

\bibitem[Cap08]{MR2443094}
Carmen~Livia Caprau.
\newblock {$sl(2)$} tangle homology with a parameter and singular cobordisms.
\newblock {\em Algebr. Geom. Topol.}, 8(2):729--756, 2008.

\bibitem[CMW09]{MR2496052}
David Clark, Scott Morrison, and Kevin Walker.
\newblock Fixing the functoriality of {K}hovanov homology.
\newblock {\em Geom. Topol.}, 13(3):1499--1582, 2009.

\bibitem[CRS97]{MR1445361}
J.~Scott Carter, Joachim~H. Rieger, and Masahico Saito.
\newblock A combinatorial description of knotted surfaces and their isotopies.
\newblock {\em Adv. Math.}, 127(1):1--51, 1997.

\bibitem[CS93]{MR1238875}
J.~Scott Carter and Masahico Saito.
\newblock Reidemeister moves for surface isotopies and their interpretation as
  moves to movies.
\newblock {\em J. Knot Theory Ramifications}, 2(3):251--284, 1993.

\bibitem[CS98]{MR1487374}
J.~Scott Carter and Masahico Saito.
\newblock {\em Knotted surfaces and their diagrams}, volume~55 of {\em
  Mathematical Surveys and Monographs}.
\newblock American Mathematical Society, Providence, RI, 1998.

\bibitem[Dae15]{daemi2015abelian}
Aliakbar Daemi.
\newblock Abelian gauge theory, knots and odd {K}hovanov homology.
\newblock {\em arXiv preprint arXiv:1508.07650}, 2015.

\bibitem[EKL14]{MR3168401}
Alexander~P. Ellis, Mikhail Khovanov, and Aaron~D. Lauda.
\newblock The odd nil{H}ecke algebra and its diagrammatics.
\newblock {\em Int. Math. Res. Not. IMRN}, (4):991--1062, 2014.

\bibitem[Far08]{faro2008trace}
Emilio Faro.
\newblock On the trace of an endofunctor of a small category.
\newblock {\em Centre de Recerca Matem{\`a}tica}, 2008.

\bibitem[Fis94]{MR1290200}
John~E. Fischer, Jr.
\newblock {$2$}-categories and {$2$}-knots.
\newblock {\em Duke Math. J.}, 75(2):493--526, 1994.

\bibitem[GOR13]{MR3171092}
Eugene Gorsky, Alexei Oblomkov, and Jacob Rasmussen.
\newblock On stable {K}hovanov homology of torus knots.
\newblock {\em Exp. Math.}, 22(3):265--281, 2013.

\bibitem[Hog19]{https://doi.org/10.48550/arxiv.1405.2574}
Matthew Hogancamp.
\newblock A polynomial action on colored {$\mathfrak{sl}_2$} link homology.
\newblock {\em Quantum Topol.}, 10(1):1--75, 2019.

\bibitem[Hog20]{MR4193863}
Matthew Hogancamp.
\newblock Morphisms between categorified spin networks.
\newblock {\em J. Knot Theory Ramifications}, 29(11):2050045, 33, 2020.

\bibitem[Jac04]{MR2113903}
Magnus Jacobsson.
\newblock An invariant of link cobordisms from {K}hovanov homology.
\newblock {\em Algebr. Geom. Topol.}, 4:1211--1251, 2004.

\bibitem[Kel94]{MR1258406}
Bernhard Keller.
\newblock Deriving {DG} categories.
\newblock {\em Ann. Sci. \'Ecole Norm. Sup. (4)}, 27(1):63--102, 1994.

\bibitem[Kho00]{10.1215/S0012-7094-00-10131-7}
Mikhail Khovanov.
\newblock A categorification of the {J}ones polynomial.
\newblock {\em Duke Math. J.}, 101(3):359--426, 2000.

\bibitem[Kho02]{MR1928174}
Mikhail Khovanov.
\newblock A functor-valued invariant of tangles.
\newblock {\em Algebr. Geom. Topol.}, 2:665--741, 2002.

\bibitem[Kho04]{MR2078414}
Mikhail Khovanov.
\newblock Crossingless matchings and the cohomology of {$(n,n)$} {S}pringer
  varieties.
\newblock {\em Commun. Contemp. Math.}, 6(4):561--577, 2004.

\bibitem[Kho06]{MR2171235}
Mikhail Khovanov.
\newblock An invariant of tangle cobordisms.
\newblock {\em Trans. Amer. Math. Soc.}, 358(1):315--327, 2006.

\bibitem[Kho07]{MR2339573}
Mikhail Khovanov.
\newblock Triply-graded link homology and {H}ochschild homology of {S}oergel
  bimodules.
\newblock {\em Internat. J. Math.}, 18(8):869--885, 2007.

\bibitem[KT96]{MR1386661}
V.~M. Kharlamov and V.~G. Turaev.
\newblock On the definition of the {$2$}-category of {$2$}-knots.
\newblock In {\em Mathematics in {S}t.\ {P}etersburg}, volume 174 of {\em Amer.
  Math. Soc. Transl. Ser. 2}, pages 205--221. Amer. Math. Soc., Providence, RI,
  1996.

\bibitem[Lei04]{MR2056094}
Tom Leinster.
\newblock Operads in higher-dimensional category theory.
\newblock {\em Theory Appl. Categ.}, 12:No. 3, 73--194, 2004.

\bibitem[LLS23]{MR4611197}
Tyler Lawson, Robert Lipshitz, and Sucharit Sarkar.
\newblock Khovanov spectra for tangles.
\newblock {\em J. Inst. Math. Jussieu}, 22(4):1509--1580, 2023.

\bibitem[Lod98]{MR1600246}
Jean-Louis Loday.
\newblock {\em Cyclic homology}, volume 301 of {\em Grundlehren der
  mathematischen Wissenschaften [Fundamental Principles of Mathematical
  Sciences]}.
\newblock Springer-Verlag, Berlin, second edition, 1998.
\newblock Appendix E by Mar\'ia O. Ronco, Chapter 13 by the author in
  collaboration with Teimuraz Pirashvili.

\bibitem[LR14]{MR3257552}
Aaron~D. Lauda and Heather~M. Russell.
\newblock Oddification of the cohomology of type {$A$} {S}pringer varieties.
\newblock {\em Int. Math. Res. Not. IMRN}, (17):4822--4854, 2014.

\bibitem[MW24]{migdail2024functoriality}
Jacob Migdail and Stephan Wehrli.
\newblock Functoriality of odd and generalized {K}hovanov homology in
  $\mathbb{R}^3 \times {I}$.
\newblock {\em arXiv preprint arXiv:2410.23455}, 2024.

\bibitem[NP20]{naisse2020odd}
Gr{\'e}goire Naisse and Krzysztof Putyra.
\newblock Odd {K}hovanov homology for tangles.
\newblock {\em arXiv preprint arXiv:2003.14290}, 2020.

\bibitem[NV18]{naisse2017odd}
Gr\'egoire Naisse and Pedro Vaz.
\newblock Odd {K}hovanov's arc algebra.
\newblock {\em Fund. Math.}, 241(2):143--178, 2018.

\bibitem[ORS13]{MR3071132}
Peter~S. Ozsv\'ath, Jacob Rasmussen, and Zolt\'an Szab\'o.
\newblock Odd {K}hovanov homology.
\newblock {\em Algebr. Geom. Topol.}, 13(3):1465--1488, 2013.

\bibitem[OS05]{ozsvath2003heegaard}
Peter Ozsv\'ath and Zolt\'an Szab\'o.
\newblock On the {H}eegaard {F}loer homology of branched double-covers.
\newblock {\em Adv. Math.}, 194(1):1--33, 2005.

\bibitem[Prz10]{MR2657644}
Jozef~H. Przytycki.
\newblock When the theories meet: {K}hovanov homology as {H}ochschild homology
  of links.
\newblock {\em Quantum Topol.}, 1(2):93--109, 2010.

\bibitem[PS13]{MR3095324}
Kate Ponto and Michael Shulman.
\newblock Shadows and traces in bicategories.
\newblock {\em J. Homotopy Relat. Struct.}, 8(2):151--200, 2013.

\bibitem[Put14]{putyra20152categorychronologicalcobordismsodd}
Krzysztof~K. Putyra.
\newblock A 2-category of chronological cobordisms and odd {K}hovanov homology.
\newblock In {\em Knots in {P}oland {III}. {P}art {III}}, volume 103 of {\em
  Banach Center Publ.}, pages 291--355. Polish Acad. Sci. Inst. Math., Warsaw,
  2014.

\bibitem[Ros98]{roseman1998reidemeister}
Dennis Roseman.
\newblock Reidemeister-type moves for surfaces in four-dimensional space.
\newblock {\em Banach Center Publications}, 42(1):347--380, 1998.

\bibitem[Roz10]{rozansky2010categorification}
Lev Rozansky.
\newblock A categorification of the stable ${S}{U} (2)$
  {W}itten-{R}eshetikhin-{T}uraev invariant of links in ${S}^2 \times {S}^1$.
\newblock {\em arXiv preprint arXiv:1011.1958}, 2010.

\bibitem[San21]{MR4376719}
Taketo Sano.
\newblock Fixing the functoriality of {K}hovanov homology: a simple approach.
\newblock {\em J. Knot Theory Ramifications}, 30(11):Paper No. 2150074, 12,
  2021.

\bibitem[Sca15]{MR3394316}
Christopher~W. Scaduto.
\newblock Instantons and odd {K}hovanov homology.
\newblock {\em J. Topol.}, 8(3):744--810, 2015.

\bibitem[Sch22]{Sch_tz_2022}
Dirk Sch\"utz.
\newblock A scanning algorithm for odd {K}hovanov homology.
\newblock {\em Algebr. Geom. Topol.}, 22(3):1287--1324, 2022.

\bibitem[Shu11]{MR2777025}
Alexander~N. Shumakovitch.
\newblock Patterns in odd {K}hovanov homology.
\newblock {\em J. Knot Theory Ramifications}, 20(1):203--222, 2011.

\bibitem[Spy24]{spyropoulos2024}
Dean Spyropoulos.
\newblock Jones-{W}enzl projectors and odd {K}hovanov homology.
\newblock {\em arXiv preprint arXiv:2410.09269}, 2024.

\bibitem[SV23]{schelstraete2023odd}
L{\'e}o Schelstraete and Pedro Vaz.
\newblock Odd {K}hovanov homology and higher representation theory.
\newblock {\em arXiv preprint arXiv:2311.14394}, 2023.

\bibitem[SW24]{stoffregen2024joneswenzlprojectorskhovanovhomotopy}
Matthew Stoffregen and Michael Willis.
\newblock Jones-{W}enzl projectors and the {K}hovanov homotopy of the infinite
  twist.
\newblock {\em arXiv preprint arXiv:2402.10332}, 2024.

\bibitem[Vaz20]{MR4190457}
Pedro Vaz.
\newblock Not even {K}hovanov homology.
\newblock {\em Pacific J. Math.}, 308(1):223--256, 2020.

\bibitem[Wal79]{10.1007/BFb0088094}
Friedhelm Waldhausen.
\newblock Algebraic {K}-theory of topological spaces. {I}{I}.
\newblock In Johan~Louis Dupont and Ib~Henning Madsen, editors, {\em Algebraic
  Topology Aarhus 1978}, pages 356--394, Berlin, Heidelberg, 1979. Springer
  Berlin Heidelberg.

\bibitem[Wil21]{MR4332675}
Michael Willis.
\newblock Khovanov homology for links in {$\#^r(S^2\times S^1)$}.
\newblock {\em Michigan Math. J.}, 70(4):675--748, 2021.

\end{thebibliography}

\end{document}